\documentclass[reqno,draft,intlimits]{amsart}
\usepackage{amsmath,pst-all,amsfonts,graphicx,pstricks,graphics,color}
\usepackage{amssymb}
\usepackage[active]{srcltx}

\usepackage[reqno]{amsmath}
\usepackage{amssymb}

\usepackage{wrapfig}
\usepackage[all]{xy}

\makeatletter
\long\def\@makecaption#1#2{%
  \vskip3pt
  \sbox\@tempboxa{\small#1. #2}%
    \ifdim \wd\@tempboxa >\hsize
    \small#1. #2\par
  \else
    \global \@minipagefalse
    \hb@xt@\hsize{\hfil\box\@tempboxa\hfil}%
  \fi
  \vskip0pt}
\makeatother

\def\figone{%
\begin{picture}(50,110) \multiput(10,123)(7,0){10}{-} \put(78,123){0}
\multiput(10,80)(7,0){10}{-} \put(78,80){1} \multiput(10,40)(7,0){10}{-}
\put(78,40){2}  \multiput(10,0)(7,0){10}{-} \put(78,0){3}
\put(35,123){\circle*{3}} \put(35,123){\line(-2,-3){27}}
\put(35,123){\line(2,-5){33}} \put(35,123){\line(0,-1){40}}
\put(9,83){\circle*{3}} \put(9,83){\line(0,-1){40}} \put(9,83){\line(1,-2){40}}
\put(35,83){\circle*{3}}
\put(51,83){\circle*{3}} \put(51,83){\line(0,-1){40}} \put(51,83){\line(-1,-6){7}}
\put(9,43){\circle*{3}}
\put(29,43){\circle*{3}} \put(29,43){\line(0,-1){40}}
\put(29,43){\line(-1,-2){20}} \put(44,43){\circle*{3}}
\put(51,43){\circle*{3}}
\put(67,43){\circle*{3}} \put(67,43){\line(0,-1){40}} \put(67,43){\line(-1,-6){7}}
\put(9,3){\circle*{3}} \put(29,3){\circle*{3}} \put(49,3){\circle*{3}}
\put(60,3){\circle*{3}} \put(67,3){\circle*{3}}
\end{picture}}

\def\figvert{%
\begin{picture}(1,3)
\put(1,0){\line(0,1){3}}
\end{picture}}

\def\vertpunkt{%
\begin{picture}(10,10)
\multiput(0,0)(0,7){12}{\figvert}
\end{picture}}

\def\treea{%
\begin{picture}(10,10)
\put(0,0){\circle*{3}}
\put(0,0){\line(5,3){60}}
\put(60,35){\circle*{3}}
\put(0,0){\line(6,-1){60}}
\put(60,-10){\circle*{3}}
\put(0,0){\line(1,0){40}}
\put(0,0){\line(6,1){40}}
\put(0,0){\line(5,2){40}}
\put(45,3){\vdots}
\end{picture}}
\def\figtwo{%
\begin{picture}(365,90)
\multiput(0,0)(60,0){7}{\vertpunkt}
\put(0,0){\circle*{3}}
\put(0,0){\line(1,0){180}}
\put(0,0){\line(3,2){60}}
\put(60,40){\treea}
\put(60,0){\circle*{3}}
\put(60,0){\line(5,2){60}}
\put(120,24){\treea}
\put(180,0){\circle*{3}}
\put(180,0){\line(5,2){40}}
\multiput(180,0)(4,0){15}{.}
\put(240,0){\circle*{3}}
\put(240,0){\line(5,3){60}}
\put(300,36){\treea}
\end{picture}}

\def\figthree{%
\begin{picture}(130,40)
\put(43,25){\circle*{3}} \put(17,13){\circle*{3}} \put(69,13){\circle*{3}} \put(43,25){\line(-2,-1){27}}
\put(43,25){\line(2,-1){27}} \put(13,19){{\text{\small A}}} \put(66,19){{\text{\small B}}}
\put(43,29){{\text{\small C}}} \put(19,0){$\lfloor A,B|C\rceil$} \put(19,0){$\lfloor A,B|C\rceil$}
\put(89,13){\circle*{3}} \put(115,25){\circle*{3}} \put(89,13){\vector(2,1){25}}
\put(85,19){{\text{\small D}}} \put(115,29){{\text{\small E}}} \put(91,0){$\lfloor D|E\rceil$}
\end{picture}}


\newtheorem{thm}{Theorem}[section] 
 
\newtheorem{proc}{Proposition}[section] 
        
\newtheorem{cor}{Corollary}[section]  
          
\newtheorem{lem}{Lemma}[section]        
               
\newtheorem{defi}{Definition}[section]
\newtheorem{rmk}{Remark}[section]
\newtheorem{ex} {Example}[section]

\newtheorem*{conj}{Conjecture}

\renewcommand{\a}{\alpha}

\def\b{\beta}

\def\e{\varepsilon} 
\def\i{\iota}

\def\ls{{\rm [\![}}
\def\rs{{\rm ]\!]}}
  
 \DeclareMathOperator{\id}{id} 
\DeclareMathOperator{\Div}{div} \DeclareMathOperator{\Ker}{Ker} 
  
 \DeclareMathOperator{\im}{Im} 
 \DeclareMathOperator{\const}{const}

\DeclareMathOperator{\Hom}{Hom} \DeclareMathOperator{\defin}{def} 
  
\DeclareMathOperator{\End}{End}

\newcommand{\tr}{\mathrm{tr}}
\newcommand{\ad}{\mathrm{ad}}
\newcommand{\df}{\stackrel{\mathrm{def}}{=}}

\newcommand{\pit}{\pitchfork}

\newcommand{\A}{\mathcal A}
\newcommand{\V}{\mathcal V}

\newcommand{\R}{\mathbb R}
\newcommand{\Z}{\mathbb Z}
\newcommand{\C}{\mathbb C}
\newcommand{\Q}{\mathbb Q}
\newcommand{\X}{\mathbb X}
\newcommand{\dF}{\mathbb F}

\newcommand{\gk}{\boldsymbol{k}}

\newcommand{\bg}{\boldsymbol{\gamma}}

\newcommand{\gG}{\boldsymbol{\mathfrak g}}
\newcommand{\gH}{\boldsymbol{\mathfrak h}}
\newcommand{\gS}{\boldsymbol{\mathfrak S}}
\newcommand{\gs}{\boldsymbol{\mathfrak s}}
\newcommand{\gA}{\boldsymbol{\mathfrak a}}

\newcommand{\gR}{\boldsymbol{\mathfrak r}}
\newcommand{\gl}{\boldsymbol{\mathfrak l}}
\newcommand{\go}{\boldsymbol{\mathfrak o}}
\newcommand{\gu}{\boldsymbol{\mathfrak u}}
\newcommand{\gp}{\boldsymbol{\mathfrak p}}
\newcommand{\DEF}{\overset\defin =}

\DeclareFontFamily{OT1}{wncyi}{} \DeclareFontShape{OT1}{wncyi}{m}{it}{
   <5> <6> <7> <8> <9> gen * wncyi
   <10> <10.95> <12> <14.4> <17.28> <20.74> <24.88> wncyi10
  }{}
\DeclareSymbolFont{cyrletters}{OT1}{wncyi}{m}{it} 
\DeclareSymbolFontAlphabet{\cyrmath}{cyrletters} 
\DeclareMathSymbol{\rE}{\cyrmath}{cyrletters}{003} 
\DeclareMathSymbol{\rD}{\cyrmath}{cyrletters}{068} 
\DeclareMathSymbol{\rG}{\cyrmath}{cyrletters}{017} 
\DeclareMathSymbol{\rI}{\cyrmath}{cyrletters}{073} 
\DeclareMathSymbol{\rL}{\cyrmath}{cyrletters}{076} 
\DeclareMathSymbol{\rZ}{\cyrmath}{cyrletters}{090}

\newdimen\theight
\def \refright#1{%
             \vadjust{\setbox0=\hbox{\quad\vtop{\hsize5cm\bf\noindent #1}}%
             \theight=\ht0
             \advance\theight by \dp0    \advance\theight by \lineskip
             \kern -\theight \vbox to \theight{\rightline{\rlap{\box0}}%
             \vss}%
             }}%

\begin{document}

\newcommand{\PsP}{Poisson structure $P$}
\def\ldb{{\rm [\![}}
\def\rdb{{\rm ]\!]}}
\def\a{\alpha}
\def\o{\omega}
\def\r{\rho}
\def\th{\theta}
\def\s{\sigma}
\def\t{\tau}
\def\be{\begin{equation}}
\def\ee{\end{equation}}
\def\bea{\begin{eqnarray}}
\def\eea{\end{eqnarray}}
\def\nn{\nonumber}

\begin{center}
{\LARGE \bf  ASSEMBLING LIE ALGEBRAS FROM LIEONS.} \footnote{.}
\end{center}
\vspace{.5cm}
\begin{center}

{\Large A.M.Vinogradov}
\end{center}

\bigskip

\begin{center}
$^{1}$
Levi-Civita Institute, 83050 Santo Stefano del Sole (AV), Italia.
\end{center}
\smallskip

\vspace{1.0cm}

\noindent {\bf Abstract.}
If a Lie algebra structures $\gG$ on a vector space is the sum of a family of mutually compatible
Lie algebra structures $\gG_i$, we say that $\gG$ is \emph{simply assembled} from $\gG_s$'s. By 
repeating this procedure several times one gets a family of Lie algebras  \emph{assembled} from 
$\gG_s$'s. The central result of this paper is that any finite dimensional Lie algebra over $\R$ or
$\C$ can be assembled from two constituents, called $\between$- and $\pitchfork$-\emph{lieons}.
A lieon is the direct sum of an abelian Lie algebra with a 2-dmensional nonabelian Lie algebra or 
with the 3-dimensional Heisenberg algebra.

Some techniques of disassembling Lie algebras are introduced and various results concerning assembling-disassembling procedures are obtained. In particular, it is shown how classical Lie 
algebras are assembled from lieons and is obtained the complete list of Lie algebras, which 
can be simply assembled from lieons.

\vspace{5.0cm}

\pagebreak

\tableofcontents

\section{Introduction}
According to the modern view, matter is of a compound nature. The constituents, elementary 
particles,  are characterized by their symmetry properties. Since these properties are formalized 
in terms of Lie algebras, one may hypothesize that the compound nature of matter is somehow 
mirrored in the structure of symmetry algebras. This and some other similar considerations lead
to a suspicion that Lie algebras possess, in a sense, compound structure. The study, some results 
of which are presented in this paper, was motivated by this question. The main result we have 
found is that finite dimensional Lie algebras over $\R$ and $\C$ are made from two "elementary 
particles", which we call \emph{lieons}.

Obviously, prior to approaching ``elementary particle theory" of ``Lie matter" the exact meaning 
of ``made from" in the context of Lie algebras should be established. A suggestion of how it could 
be done comes from Poisson geometry. Namely, from the one hand, a Lie algebra is naturally 
interpreted as a linear Poisson structure on its dual. On the other hand, it is natural to think that 
a Poisson structure/bivector $P$ is ``made from" Poisson structures $P_1$ and $P_2$, if 
$P=P_1+P_2$. In such a case  $P_1$ and $P_2$ are called \emph{compatible}. So, by translating this idea into the language of Lie algebras  we get the following.
 
Lie algebra structures $[\cdot,\cdot]_1$ and $[\cdot,\cdot]_2$ on a vector space $V$ are \emph{compatible}, if $[\cdot,\cdot]_1+[\cdot,\cdot]_2$ is  a Lie algebra structure as well. 
If a Lie algebra structure $[\cdot,\cdot]$ is presented in the form
$$
[\cdot,\cdot]=[\cdot,\cdot]_1+\dots+[\cdot,\cdot]_m
$$
with mutually compatible structures $[\cdot,\cdot]_i$'s, we speak of a \emph{disassembling} 
of $[\cdot,\cdot]$, i.e.,  that $[\cdot,\cdot]$ is ``made from" $[\cdot,\cdot]_i$'s. Note that 
compatibility is not a transitive notion. Hence it has sense to go ahead by disassembling all \emph{compounds} $[\cdot,\cdot]_i$. This way we get a 2-step disassembling of $[\cdot,\cdot]$, 
and so on.  The central result of this paper states that any $n$-dimensional Lie algebras can be 
in this sense disassembled  up to  \emph{lieons}
$\between_n$ and $\pitchfork_n$, called $\between$- and $\pitchfork$-lieons, respectively. 
Here $\between_n=\between\oplus\gamma_{n-2}$ and 
$\pitchfork_n=\pitchfork\oplus\gamma_{n-3}$ with $\gamma_m$ standing for the $m$-dimensional
abelian Lie algebra, $\between$ for a non-abelian $2$-dimensional algebra (all such algebras
are isomorphic) and $\pitchfork$ for the $3$-dimensional Heisenberg algebra. For instance, the 
algebra $\gu(2)=\gs\go(3)$ can be disassembled into 3 pieces each of which is equivalent to 
$\pitchfork$. Speculatively, one might interpret this fact by saying that $\gu(2)$, the symmetry 
algebra  of nucleons, is composed of 3 $\pitchfork$-lieons, each of them is the symmetry algebra 
of a hypothetical particle called, say, ``quark", etc.  We, however, do not discuss eventual physical applications in this paper by concentrating only on purely mathematical questions.

The concept of compatible Poisson structures originates in  F.\,Magri's work \cite{Mag} on 
bihamitonian systems, and was subsequently developed and exploited  by many authors in the 
context of integrable systems and Poisson geometry.  However, as far as we know, it was not systematically studied in theory of Lie algebras. Also, it worth mentioning that translation 
of techniques and constructions of Poisson geometry into context of Lie algebras is very useful,
and we exploit this possibility at full scale. This is why some parts of this paper are dedicated to 
necessary elements of Poisson geometry.

The contents, results and organization of the paper are as follows. Generalities concerning 
differential forms, multivector fields and Schouten bracket formalism we need are collected
in section\,\ref{preliminaries}. Here we recall notions of compatibility of Poisson and Lie 
algebra structures and discuss their simplest properties.

Modularity properties of Poisson and Lie algebra structures are considered in section\,\ref{mdlr}.
Here we specify the general compatibility conditions for unimodular Poisson structures and as a 
result prove that the Lie rank of an unimodular Lie algebra is strictly lesser than its dimension. 
Then we show that a Poisson structure $P$ can be disassembled into unimodular and 
completely non-unimodular parts. The Poisson bivector corresponding to the non-unimodular 
part is $P_{\nu}\wedge\Xi=\ls P,\nu\Xi\rs$ where $\Xi$ is the modular vector
field of $P$ and $\Xi(\nu)=1, \,\nu\in C^{\infty}(M)$. This bivector is of rank two (if nontrivial) and 
characterized by the property that its unimodular part is trivial. 

The second part of  section\,\ref{mdlr} is dedicated to the \emph{matching problem}: what are
different (up to equivalence) ways to assemble Poisson structures from given ones. This problem
in full generality seems to be very difficult. By this reason, we restrict ourself to a particular case
of two completely non-unimodular structures.  It turns out that even in this case equivalent classes 
of matchings are labeled by functional parameters (proposition\,\ref{InvariantsMatchings}).

The result and constructions concerning modular disassembling of Poisson structures are then
adopted to Lie algebras in section\,\ref{Lie-modlarity}. In particular, we call \emph{modular} 
Lie algebras whose Poisson bivector is completely non-unimodular and show that by subtracting
from a Lie algebra a suitable modular algebra one gets an unimodular algebra. The structure
of modular Lie algebras is very simple. So, in this sense this result reduces the study of general
Lie algebras to unimodular ones. Here we also discuss compatibility conditions for modular and 
unimodular Lie algebras, and, in particular, show that semisimple and modular Lie algebras are 
incompatible. In the second part of this section the matching problem for modular Lie algebras
is solved. In contrast with general Poisson structures, this problem admits a complete solution.
Essentially, matchings of modular Lie algebras are labeled by representations of 2-dimensional
algebras (see theorem\,\ref{LieMathing}).

Section\,\ref{dis-probl} is central in the paper. Here we prove that any finite dimensional Lie
algebra over an algebraically closed ground field of zero characteristic, or over $\R$ can be 
assembled from lieons (theorems\,\ref{C-dis} and \ref{R-dis}). The proof of this result  naturally 
splits into ``solvable" and ``semisimple" parts, and we show that any solvable algebra over 
arbitrary ground field of characteristic zero can be assembled from lieons 
(proposition\,\ref{dis-solv}). This part of the proof is rather simple. On the contrary, the
``semisimple" one is more delicate. As a preliminary step, we reduce this part to the problem 
of disassembling abelian extensions of simple algebras.

The last problem is, essentially, a question on representations of simple Lie algebras, and as
such could be analyzed on the basis of the well-known description of them. However, such an
approach would be rather cumbersome and hardly instructive, if not to say ``amoral".  Moreover,
the fact of compound structure of Lie algebras seems to be more fundamental than classification 
of simple Lie algebras and their representations and, by this reason, must logically precede it.
By all these reasons we have chosen another approach. It is based on the 
\emph{stripping procedure} (see subsection\,\ref{striptease}), which reduces the problem to representations of \emph{simplest} algebras, i.e., simple algebras without proper nonabelian 
subalgebras. Simplest algebras do not exists over an algebraically closed ground field of zero characteristic. The only simplest algebra over $\R$ is $\gs\go(3)$.  This is, at the end, 
why the assembling-from-lieons theorem was proven in these two cases. In this connection
it is worth mentioning that the reduction to representations of simplest algebras is based on
representations of $\gs\gl(2)$.

Besides the proof of these two main theorems, some useful disassembling techniques are also 
developed  in section\,\ref{dis-probl}. Being mostly interested to some applications to 
differential geometry and theoretical physics we have been initially restricted in this paper to 
ground fields $\R$ and $\C$. But it turned out that many of developed here constructions and 
techniques works well for arbitrary ground fields too. In particular, they indicate possible 
approaches to the  assembling-from-lieons problem for arbitrary fields. They are briefly discussed
at the end of this section. 

In section\,\ref{1st-level} we study first level Lie algebras, i.e., the algebras that can be assembled 
from lieons in one step. With this purpose, we analyze compatibility conditions of two lieons and 
show that these can be expressed in a purely geometrical manner,  namely, in terms of the 
relative position of subspaces carrying centers and derived algebras of lieons in question. One 
of consequences of this fact is that the structure of first level algebras does not depend on
the ground field in the sense that it is described exclusively in terms of the above-mentioned 
subspaces. The results of this sections are used in section\,\ref{coaxial} where we study 
the ``chemistry" of a special class of Lie algebras, called \emph{coaxial}.

How to assemble classical Lie algebras from $\pitchfork$-lieons over arbitrary ground fields 
of characteristic zero is shown in section\,\ref{classical}. A remarkable fact is that this can done
in no more than 4 steps. More exactly, all simple 3-dimensional Lie algebras can be directly
assembled from 3 $\pitchfork$-lieons, i.e.,  in one step. Simple algebras of higher dimensions
require at least 2 steps. For instance, orthogonal algebras can be assembled from 
$\pitchfork$-lieons in 2 steps.

Sections\,\ref{coaxial} and \ref{clusters} are dedicated to a natural question: what are all
possible combinations of mutually compatible lieons. Informally speaking, we ask what are
simple ``molecules", which can be synthesized from lieons. Essentially, this question is
equivalent to the classification problem: what are maximal families of mutually compatible 
lieons. We solve a simplified version of this problem, when only \emph{coaxial}, i.e., naturally 
related with a chosen base lieons are considered. This version is not only interested by itself
but also gives useful hints toward the general ``chemistry" of Lie algebras. The result we have
obtained looks encouraging. Namely, it turned out that maximal families of mutually compatible 
coaxial lieons, called \emph{clusters}, are composed of \emph{structural groups} surrounded by \emph{casings} and connected by \emph{connectives} like in the usual chemistry. 
 
The distribution of the material in these two sections is such that in the first of them we introduce
basic techniques and necessary terminology, and solve the problem for clusters    
composed only of $\pitchfork$-lions, or only of $\between$-lieons. In the second one we
describe general clusters and on this basis describe the structure of coaxial Lie algebras.
In particular, it turns out that the semisimple part of a coaxial algebra consists of 3-dimensional
simple algebras, and the derived series of its solvable part is of length $\leq 3$. Here we also
give some examples of infinite-dimensional Lie algebras assembled from lions. 

In conclusive section\,\ref{problems} we briefly discuss some problems and perspectives of
the theory we have started in this paper.

\section{Preliminaries}\label{preliminaries}
In this section we collect necessary for the sequel facts concerning the calculus of multivectors and differential forms, Poisson geometry, compatibility of Poisson and Lie algebra structures, etc, and fix the notation. More details concerning  material reported in this section the reader will find in \cite{CabVin, V90}. Everything in this article is assumed to be smooth.

\subsection{Multivectors and differential forms} We use $M$ for an $n$--dimensional manifold and

\begin{enumerate}
\item $D_{*}(M)=\bigoplus_{k\geq 0}D_k(M)$ for the exterior algebra of multivectors on $M$, 
$D(M)=D_1(M)$ for the $C^{\infty}(M)$--module of vector fields on $M$, and $``\wedge"$ for the 
wedge product in $D_{*}(M)$;
\item $\ldb\cdot,\cdot\rdb$ for the Schouten bracket in $D_{*}(M)$;
\item $\Lambda^{*}(M)=\bigoplus_{k\geq 0}\Lambda^{*}(M)$ for the exterior algebra of 
differential forms on $M$ and $``\wedge"$ for the wedge product in it;
\end{enumerate}

If $S$ is a $\Z$-graded object, say, a multivector, then we use $(-1)^{\dots S\dots}$ 
(resp., $(-1)^{\dots \bar{S}\dots}$) for $(-1)^{\dots \mathrm{deg}S\dots}$ (resp., $(-1)^{\dots (\mathrm{deg}S-1)\dots}$).
For instance, if $P\in D_k(M)$ and $Q\in D_l(M)$, then $(-1)^{P\bar{Q}}=(-1)^{k(l-1)}$ and 
$(-1)^{P+\bar{Q}}=(-1)^{k+l-1}$. This notation makes the formulas that involve signs of 
$\Z$-graded objects, more readable. In particular, graded anticommutativity and Jacobi 
identity for the Schouten bracket reads
\begin{equation}\label{Schouten-skew}
\ldb P,Q\rdb=-(-1)^{\bar{P}\bar{Q}}\ldb Q,P \rdb
\end{equation}
\begin{equation}\label{Schouten-Jacobi}
(-1)^{\bar{P}\bar{R}}\ldb P, \ldb Q,R \rdb \rdb+(-1)^{\bar{R}\bar{Q}}\ldb R, \ldb P,Q \rdb \rdb+
(-1)^{\bar{Q}\bar{R}}\ldb Q, \ldb R,P \rdb \rdb=0
\end{equation}

Denote by $\mathrm{Hgr}\;\Lambda^*(M)$ the totality of graded $\R$--linear operators acting on the graded 
space $\Lambda^*(M)$ and by $[\cdot,\cdot]^{gr}$ the graded commutator of such operators. An operator
$\Delta\in \mathrm{Hgr}\;\Lambda^*(M)$ is a \emph{(graded) differential operator} over $\Lambda^*(M)$ if
$$
[\omega_0,[\omega_1,\dots,[\omega_k,\Delta]^{gr},\dots,]^{gr}]^{gr}=0, \quad \forall\omega_o,\omega_1,\dots,\omega_k\in \Lambda^*(M),
$$
where $\omega_i'$s are understood to be left multiplication operators.

Insertion of a multivector $Q\in D_k(M)$ into a differential form $\omega\in\Lambda^l$ we denote by 
$Q\rfloor\omega\in\Lambda^{l-k}$, and by $i_Q : \Lambda^{*}(M)\rightarrow\Lambda^{*}(M)$ the operator
$\omega\mapsto Q\rfloor\omega$, i.e., $i_Q(\omega)=Q\rfloor\omega$. Obviously, 
$$
i_P\circ i_Q=i_{P\wedge Q} \quad \mbox{and} \quad [i_P,i_Q]^{gr}=0,  \;P,Q\in D_{*}(M).
$$

The correspondence  $Q\Leftrightarrow i_Q$ identifies the algebra $D_{*}(M)$ with the algebra of
$C^{\infty}(M)$--linear differential operators  over $\Lambda^{*}(M)$. More exactly,
these operators of order $k$ correspond to $k$--vectors. In terms of this identification the Schouten bracket is described by
the formula
\begin{equation}\label{Schouten}
i_{\ldb P,Q\rdb}=[[i_P,d]^{gr},i_Q]^{gr}=-(-1)^{\mathrm{deg}Q}[i_P,[i_Q,d]^{gr}]^{gr}, \quad P,Q\in D_{*}(M).
\end{equation}

The \emph{Lie derivative operator} along a multivector $Q$ is defined as
\begin{equation}\label{Lie derivative}
L_Q=[i_Q,d]^{gr}:\Lambda^*(M)\rightarrow\Lambda^*(M)
\end{equation}
and (\ref{Schouten}) reads
\begin{equation}\label{Schouten-Lie}
i_{\ldb P,Q\rdb}=[L_P,i_Q]^{gr}=-(-1)^Q[i_P,L_Q]^{gr}.
\end{equation}

Here  the following useful formula  should be mentioned:
\begin{equation}\label{[L_X,i_P]}
[i_Q, L_X]^{gr}=i_{L_X(Q)},\quad X\in D(M), \;Q\in D_{*}(M),
\end{equation}
where $L_X(Q)=\ldb Q,X\rdb$.

The \emph{liezation} operation $L:Q\mapsto L_Q$ is a (graded right) derivation of the algebra $D_{*}(M)$:
\begin{equation}\label{L as derivation}
L_{P\wedge Q}=i_P\circ L_Q+(-1)^Q L_P\circ i_Q.
\end{equation}
Another useful interpretation of the Schouten bracket is easily derived from (\ref{Schouten-Lie}) and (\ref{L as derivation}):
\begin{equation}\label{Schouten via L}
i_{\ldb P,Q\rdb}=(-1)^Q L_{P\wedge Q}-(-1)^Q i_P\circ L_Q-
(-1)^{\bar{P}Q} i_Q\circ L_P.
\end{equation}

A convenient coordinate-wise descriptions of the above operations is as follows. Let $x=(x_1,\dots,x_n)$ be a local chart
on $M$. Instead of the standard local expression 
$$
Q=\sum_{1\leq i_1<\dots<i_k\leq n}a_{ i_1,\dots,i_k}(x)\frac{\partial}{\partial x_{i_1}}\wedge\dots\wedge\frac{\partial}{\partial x_{i_n}}, \quad Q\in D_k(M),
$$
we shall use
\begin{equation}\label{antipolinomials}
Q=\sum_{1\leq i_1<\dots<i_k\leq n}a_{ i_1,\dots,i_k}(x)\xi_{i_1}\dots\xi_{i_n}
\end{equation}
assuming that the variables $\xi_i$'s anticommute, i.e., $\xi_i\xi_j=-\xi_j\xi_i$. This allows 
one to introduce ``partial derivatives" $\frac{\partial}{\partial x_i}$ and $\frac{\partial}{\partial \xi_i}$ 
acting on  skew-commutative polynomials (\ref{antipolinomials}) in $\xi_i$'s. Namely,
the first of them just acts on coefficients $a_{ i_1,\dots,i_k}(x)$, while the second is 
$C^{\infty}(M)$-linear and commutes with the multiplication by $\xi_j$ operator by the  rule
$\frac{\partial}{\partial \xi_i}\circ \xi_j+\xi_j\circ\frac{\partial}{\partial \xi_i}=\delta_{ij}$. In these terms
the Schouten bracket reads
\begin{equation}\label{Schouten in coordinates}
\ldb P,Q \rdb=-\sum_{i}\left(\frac{\partial P}{\partial x_i}\frac{\partial Q}{\partial \xi_i}+
(-1)^P \frac{\partial P}{\partial \xi_i}\frac{\partial Q}{\partial x_i}\right).
\end{equation}
In particular, by introducing the operator $X_P:D_{*}(M)\rightarrow D_{*}(M), \;X_P(Q)=\ldb P,Q \rdb$,
we have
\begin{equation}\label{Schouten-Hamilton field}
X_P=-\sum_{i}\left(\frac{\partial P}{\partial x_i}\frac{\partial}{\partial \xi_i}+
(-1)^P\frac{\partial P}{\partial \xi_i}\frac{\partial }{\partial x_i}\right).
\end{equation}

\subsection{Poisson manifolds}

 Recall that \emph{Poisson structure} on a manifold $M$ is a Lie algebra structure on the $\R$-- vector space $C^{\infty}(M)$
 $$
 (f,g)\mapsto \{f,g\}\in C^{\infty}(M), \quad  f,g\in C^{\infty}(M),
 $$
which at the same time is  a \emph{biderivation}, i.e.,
$$
\{fg,h\}=f\{g,h\}+g\{f,h\} \quad \mbox{and}  \quad\{f,gh\}=g\{f,h\}+h\{f,g\}.
$$
$P\in D_2(M)$ is a \emph{Poisson} bivector if $\ldb P,P\rdb=0$.  The formula
$$
\{f,g\}=P(df,dg), \quad f,g\in C^{\infty}(M).
$$
establishes one-to-one correspondence
between Poisson  bivectors and Poisson structures on $M$. The Poisson bracket associated 
this way with the Poisson bivector $P$ will be denoted by $\{\cdot,\cdot\}_P$.

A Poisson manifold is \emph{nondegenerate}, if the corresponding Poisson bivector is 
nondegenerate, i.e.,  the correspondence
$$
\gamma_P:\Lambda^1(M)\rightarrow D(M), \quad\omega\mapsto P(\omega,\cdot),
$$
is an isomorphism of $C^{\infty}(M)$--modules.  $\gamma_P$ naturally extends to an homomorphism 
of exterior algebras still denoted 
$$
\gamma_P:\Lambda^{*}(M)\rightarrow D_{*}(M).
$$
It is an isomorphism if $P$ is nondegenerate. In this case 
$\gamma_P(P)\in \Lambda^2(M)$ is a symplectic form on $M$. This way the class of nondegenerate 
Poisson manifolds is identified with the class of symplectic manifolds.

The Poisson differential 
$$
\partial_P:D_{*}(M)\rightarrow D_{*+1}(M), \quad \partial_P(Q)=\ldb P,Q\rdb,
$$ 
associated with a Poisson bivector $P$ supplies $D_{*}(M)$ with a cochain complex structure. The vector field
\begin{equation}\label{P-Ham field}
P_f\DEF\partial_P(f)=\ldb P,f\rdb=-\gamma_P(df)=-df\rfloor P
\end{equation}
is called \emph{$P$--Hamiltonian} corresponding to the \emph{Hamiltonian function} f.

The following definition is central for this paper.
\begin{defi}\label{Poisson compatibility}
Poisson structures $P_1$ and $P_2$ on a manifold $M$ are called \emph{compatible} if $P_1+P_2$ is a Poisson structure as well.
\end{defi}
\begin{proc}
Poisson structures $P_1$ and $P_2$ are compatible if one of the following equivalent conditions holds:
\begin{enumerate}
\item $\ldb P_1,P_2\rdb=0$;
\item $sP_1+tP_2, \,s,t\in\R$, is a Poisson structure $\forall s,t$;
\item the bracket $\{\cdot,\cdot\}=s\{\cdot,\cdot\}_{P_1}+t\{\cdot,\cdot\}_{P_2},  \,s,t\in\R$, is a Lie algebras structure
on $C^{\infty}(M)$;
\item $\partial_{P_1}+\partial_{P_2}$ is a differential in $D_{*}(M)$, or, equivalently, $\partial_{P_1}\partial_{P_2}+\partial_{P_2}\partial_{P_1}=0$.
\end{enumerate} 
\end{proc}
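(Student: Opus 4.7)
The plan is to take the defining condition for compatibility, namely that $P_1+P_2$ is itself a Poisson bivector (so $\ldb P_1+P_2,P_1+P_2\rdb=0$), and unfold it into each of the four reformulations. The central elementary computation is to expand by bilinearity of the Schouten bracket:
\begin{equation*}
\ldb P_1+P_2,P_1+P_2\rdb=\ldb P_1,P_1\rdb+\ldb P_1,P_2\rdb+\ldb P_2,P_1\rdb+\ldb P_2,P_2\rdb.
\end{equation*}
The outer two terms vanish because $P_1$ and $P_2$ are themselves Poisson, and the graded skew-symmetry (\ref{Schouten-skew}) applied with $\bar P_i=1$ forces $\ldb P_2,P_1\rdb=\ldb P_1,P_2\rdb$, so the whole expression equals $2\ldb P_1,P_2\rdb$. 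This is the equivalence of compatibility with (1).

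For (2), the same expansion applied to $sP_1+tP_2$ yields $\ldb sP_1+tP_2,sP_1+tP_2\rdb=2st\,\ldb P_1,P_2\rdb$, which vanishes identically in $s,t$ precisely when (1) holds. For (3), I would invoke the $\R$-linear bijective correspondence $P\leftrightarrow\{\cdot,\cdot\}_P$, under which $s\{\cdot,\cdot\}_{P_1}+t\{\cdot,\cdot\}_{P_2}$ is exactly the bracket associated with the bivector $sP_1+tP_2$; its being a Lie algebra structure is then equivalent to that bivector being Poisson, i.e., to (2).

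The remaining and slightly more technical piece is (4). The plan is to derive the operator identity
\begin{equation*}
\partial_{P_1}\circ\partial_{P_2}+\partial_{P_2}\circ\partial_{P_1}=\partial_{\ldb P_1,P_2\rdb}
\end{equation*}
on $D_*(M)$ from the graded Jacobi identity (\ref{Schouten-Jacobi}) applied to the triple $(P_1,P_2,Q)$ with $Q\in D_*(M)$ arbitrary; using graded skew-symmetry to bring $Q$ into the outer slot of each cyclic term, the signs collapse (thanks to $\bar P_i=1$) and one recovers the displayed identity. Since $\partial_{P_i}^2=0$ is a consequence of $\ldb P_i,P_i\rdb=0$ via the same mechanism, the square $(\partial_{P_1}+\partial_{P_2})^2$ equals the right-hand side above. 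Finally, vanishing of $\partial_{\ldb P_1,P_2\rdb}$ as an operator forces the trivector $\ldb P_1,P_2\rdb$ itself to vanish: evaluating the operator on an arbitrary function $f$ produces $-df\rfloor\ldb P_1,P_2\rdb$, and the requirement that this bivector be zero for every $f$ forces the trivector to vanish pointwise. The only step that is not entirely mechanical is the sign-bookkeeping in the derivation of the displayed identity; this is the main obstacle, but it is routine given (\ref{Schouten-skew}) and (\ref{Schouten-Jacobi}).
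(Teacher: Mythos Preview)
Your proof is correct and follows the same approach as the paper: the key identity $\ldb P_1+P_2,P_1+P_2\rdb=2\ldb P_1,P_2\rdb$ is exactly what the paper uses, and the paper simply declares (2)--(4) to be ``obvious consequences'' of (1). Your treatment of (4), deriving $\partial_{P_1}\partial_{P_2}+\partial_{P_2}\partial_{P_1}=\partial_{\ldb P_1,P_2\rdb}$ from the graded Jacobi identity and then arguing nondegeneracy, is more thorough than the paper's one-line dismissal but does not diverge from it in substance.
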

\begin{proof}
The first assertion directly follows from $\ldb P_1+P_2,P_1+P_2\rdb=2\ldb P_1,P_2\rdb$, while (2) - (4) are obvious consequences of it.
\end{proof}

\subsection{Lie algebras.} 

In the literature the term "Lie algebra" is commonly used in two different 
meanings, namely, either as a concrete \emph{Lie algebra structure} on a vector space, or as an isomorphism class
of such structures. In various situations in this paper this distinction is essential and we will use "Lie algebra structure"
instead of "Lie algebra" in an ambiguous in this sense context.

Lie algebra structures will be denoted by bold Fraktur characters, say, $\gG, \gH$, etc. The symbol 
$|\gG|$ refers to the supporting $\gG$ vector space. We use square brackets, if necessary with 
various indexes, for Lie product operations.

Let $\gG$ be a Lie algebra over a ground field $\gk$ and  $V=|\gG|$. A Lie algebra structure is 
naturally defined in the algebra $\gk(V^*)$ of polynomials on $V^*=\Hom_{\gk}(V,\gk)$. Namely, 
denoting by $f_v$ the linear function on $V^*$ corresponding to $v\in V$, we define the 
``Poisson bracket" $\{\cdot,\cdot\}$ on linear functions by putting
$$
\{f_v,f_w\}\DEF f_{[v,w]}, \quad v,w,\in V,
$$ 
and extend it onto the whole algebra  as a biderivation. This construction remains valid for any 
larger algebra $A\supset \gk(V^*)$ with the property that any derivation of  $\gk(V^*)$ uniquely 
extends to $A$. For instance, $C^{\infty}(V^*)$ is such an algebra if $\gk=\R$. We shall refer 
to the so-defined Lie algebra as the \emph{Poisson structure on the dual to the Lie algebra} 
$\gG$. The corresponding Poisson bivector on $V^*$ will be denoted $P_{\gG}$.

Let $\{e_i\}$ be a basis in $V$. Put $x_i=f_{e_i}$. Then
\begin{equation}\label{ coord}
\{f,g\}=\sum_{i,j,k}c_{ij}^kx_k\frac{\partial f}{\partial x_i}\frac{\partial g}{\partial x_j}
\end{equation}
where $c_{ij}^k$ are structure constant of $\gG$ in the considered basis, and
\begin{equation}\label{Poisson dual}
P_{\gG}=\sum_{i,j,k}c_{ij}^kx_k\xi_i\xi_j.
\end{equation}
Poisson structures of the form $P_{\gG}$  have linear coefficients in any cartesian chart on $V^*$ and vice versa. By this reason they are also called \emph{linear}. If $Q\in D_{*}(W)$ is a \emph{linear}, i.e., with linear in a cartesian chart coefficients, multivector on a vector space $W$, then
$$
Q_{\theta}=\ldb X_{\theta},Q\rdb, \quad \theta\in W,
$$
where $Q_{\theta}$ is the value of $Q$ at $\theta$ and $X_{\theta}$ is the  corresponding to
$\theta$ constant vector field on $W$. This observation is useful when dealing with linear Poisson structures.

Let $\gG_1$ and $\gG_2$ be Lie algebra structures on a $\gk$-vector space $V$ and $[\cdot,\cdot]_1, [\cdot,\cdot]_2$ the corresponding Lie products. The following is the analogue of definition\,\ref{Poisson compatibility} for Lie algebras.

\begin{defi}
 Lie algebra structures $\gG_1$ and $\gG_2$ are called \emph{compatible} if $[\cdot,\cdot]\DEF[\cdot,\cdot]_1,+[\cdot,\cdot]_2$
 is a Lie product in $V$.
\end{defi}
The  Lie algebra structure on $V$ defined by the Lie product $[\cdot,\cdot]_1,+[\cdot,\cdot]_2$ will be denoted by 
 $\gG_1+\gG_2$ . Obviously, we have
 \begin{proc}\label{Poisson-lie dual}
Lie algebra structures $\gG_1$ and $\gG_2$ on $V$ are compatible if and only if the corresponding Poisson structures on $V^*$ are 
compatible. $\qquad\qquad\qquad\Box$
\end{proc}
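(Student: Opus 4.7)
The plan is to reduce the proposition to the linearity of the correspondence $\gG\mapsto P_{\gG}$ combined with the criterion for Poisson compatibility recalled just above.

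First I would observe that from the coordinate expression (\ref{Poisson dual}) the assignment sending a bilinear skew product on $V$ to the linear bivector on $V^{*}$ with the same structure constants is $\gk$-linear. Concretely, if $[\cdot,\cdot]_{1}$ and $[\cdot,\cdot]_{2}$ are skew products with structure constants $c^{k}_{ij,1}$ and $c^{k}_{ij,2}$, the skew product $[\cdot,\cdot]_{1}+[\cdot,\cdot]_{2}$ has structure constants $c^{k}_{ij,1}+c^{k}_{ij,2}$ and therefore the linear bivector it defines on $V^{*}$ is exactly $P_{\gG_{1}}+P_{\gG_{2}}$. Thus the linearity of the construction $\gG\mapsto P_{\gG}$ is the only ingredient really used.

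Next I would recall the well-known characterization on which the whole correspondence rests: a linear bivector $P$ on $V^{*}$ satisfies $\ldb P,P\rdb=0$ if and only if the antisymmetric bilinear operation it induces on $V\subset\gk(V^{*})$ (via $\{f_{v},f_{w}\}_{P}=f_{[v,w]}$) satisfies the Jacobi identity, i.e. defines a Lie algebra structure on $V$. This is an immediate consequence of the coordinate formula (\ref{Schouten in coordinates}) applied to the degree-two linear bivector, since the only nontrivial component of $\ldb P,P\rdb$ is quadratic in coordinates and its coefficients are precisely the Jacobiator of the induced bracket.

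Combining these two observations the proof is essentially formal. By definition $\gG_{1}$ and $\gG_{2}$ are compatible iff the sum of their Lie products satisfies Jacobi, which by the preceding characterization is equivalent to $\ldb P_{\gG_{1}}+P_{\gG_{2}},P_{\gG_{1}}+P_{\gG_{2}}\rdb=0$, i.e. to $P_{\gG_{1}}+P_{\gG_{2}}$ being Poisson. By item (2) of the previous proposition the latter condition is equivalent to the compatibility of $P_{\gG_{1}}$ and $P_{\gG_{2}}$. There is no serious obstacle here; the only point that must be stated explicitly is that one cannot yet write ``$P_{\gG_{1}+\gG_{2}}$" on the left, since the symbol $\gG_{1}+\gG_{2}$ is defined only once compatibility is known, and the argument must be phrased in terms of the well-defined bivector $P_{\gG_{1}}+P_{\gG_{2}}$ instead.
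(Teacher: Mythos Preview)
Your argument is correct and is exactly what the paper has in mind: the proposition is stated with a bare $\Box$ and prefaced by ``Obviously, we have'', so no proof is given beyond the implicit observation that $\gG\mapsto P_{\gG}$ is linear and that a linear bivector is Poisson iff the induced bracket on $V$ satisfies Jacobi. You have simply spelled out these evident details.
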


\subsection{Lie rank of Poisson manifolds and Lie Algebras.}

Recall that a bivector field $Q\in D_2(M)$ generates  a distribution  (with singularities) on 
$M$. This distribution is defined as a $C^{\infty}(M)$--submodule $D_Q(M)$ of 
$D(M)$ generated by vector fields $Q_f=df\rfloor Q, \;f\in C^{\infty}(M)$.

Geometrically,  $D_Q(M)$ may be viewed as a family of vector spaces 
$M\ni x\mapsto \triangle_Q(x)\subset T_{x}M$ on $M$ where the subspace 
$\triangle_Q(x)\subset T_{x}M$ is generated by vectors of the form 
$Q_{f,x}\in T_{x}M,\quad \forall f \in C^\infty(M).$ The function
$$
M\ni x\mapsto {\rm rank}_Q(x) \DEF\dim\triangle_Q(x)
$$
is, obviousely, lower semicontinous with values in even integers. In 
particular, ${\rm rank}_Q(x)$ is is locally constant except a thin closed
subset in $M$ and reaches its maximum valiue, say 2k, in an open domain of $M$.  
The number $k$ is uniquely characterized as the number such that $Q^k\neq 0$ and 
$Q^{k+1}=0$ (here $Q^{i}$ stands for $i$-th wedge power of $Q$). Alternatively, 
2k is equal to a maximal number of independent vector fields of the form $Q_f.$ 
Below we shall also deal  with polinomial (Poisson) bivectors on a $\gk$--linear space 
${\bf V}$, and the above-said remains valid in this context as well. In particular, in this 
case bivectors have the same rank almost everywhere except an algebraic subveraiety 
of $\bf V.$
\begin{defi}\label{Oppa}
1) A bivector field $Q$ is said to be of \emph{rank 2k} if 
$$
Q^k\neq 0,\quad Q^{k+1}=0
$$
2) A Lie algebra is said to be of Lie rank 2k if the associated linear Poisson 
bivector on its dual is of rank 2k.
\end{defi}

\begin{ex}
If  $n=2k+\epsilon, \quad\epsilon=0, 1,$  then the direct sum of 
$k$ copies of the non-commutative 2-dimensional Lie algebra and the 1-dimensional algebra, 
if $\epsilon=1$, is an n-dimensional Lie algebra of maximal  Lie rank $2k.$ 
\end{ex}

If $Q$ is a Poisson bivector, then $[Q_f,Q_g]=Q_{\{f,g\}}$, i.e.,  the distribution $D_Q(M)$ 
is a Frobenius one. Locally maximal integral submanifolds of $D_Q(M)$ constitute the 
canonical \emph{symplectic foliation} of $M$ associated with $Q$ (see \cite{VKra, Wein}). 
If $Q=P_{\gG}$, then the leaves of this foliation are \emph{orbits of the coadjoint 
representation of $\gG$}.

\section{Modularity of Poisson and Lie algebra structures}\label{mdlr}

In this section we introduce and study a splitting of a Poisson or a Lie algebra structure 
into {\it unimodular} and {\it non-unimodular} parts. This splitting is canonical up to a 
gauge transformation and reduces, in a sense, the study of general Poisson or Lie algebras 
structures to unimodular ones. The central in this construction are notion of modular 
vector field and modular class introduced by J.-L. Koszul \cite{Kz}. In survey 
\cite{Ksmn} the reader will find an extensive bibliography about. 
 
\subsection{Unimodular Poisson structures.}

If $\omega\in\Lambda^n(M)$ is a volume from, then the map
$$
Q\mapsto Q\mathop{\rfloor} \omega,\qquad Q\in  D_*(M),
$$
which we shall call \emph{$\omega$-duality},
is an isomorphism between $C^{\infty}(M)$--modules $D_*(M)$ and 
$\Lambda^*(M).$ In particular, the $(n-2)$--form 
$\alpha=\alpha_{P,\omega}=P\rfloor\omega$, $\omega$-dual to the Poisson bivector 
$P$, completely characterizes this bivector. 
\begin{proc}\label{Dual}
$P \in D_2(M)$ is a Poisson bivector on $M$ if and only if
\begin{equation}\label {DUAL}
d(P\rfloor\alpha)=2P\rfloor d\alpha \quad\mathrm{with}\quad  \alpha=P\rfloor\omega.
\end{equation}
\end{proc}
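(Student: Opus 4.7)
The plan is to translate the Poisson condition $\ldb P,P\rdb = 0$ into a condition on differential forms via $\omega$-duality, and to extract the identity directly from the Schouten formula (\ref{Schouten-Lie}) applied to $\omega$. Since $\omega$ is a volume form, the contraction map $Q \mapsto Q\rfloor\omega$ is an isomorphism of $C^{\infty}(M)$-modules $D_*(M) \to \Lambda^*(M)$; in particular, $\ldb P,P\rdb$ is a $3$-vector and so $\ldb P,P\rdb = 0$ if and only if $\ldb P,P\rdb\rfloor\omega = 0$. Hence it suffices to show that this $(n-3)$-form coincides with $2\,P\rfloor d\alpha - d(P\rfloor\alpha)$.

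To obtain that expression I would specialize (\ref{Schouten-Lie}) to $Q = P$, which gives $i_{\ldb P,P\rdb} = [L_P, i_P]^{gr}$, and then evaluate both sides on $\omega$. Expanding $L_P = [i_P,d]^{gr} = i_Pd - di_P$ and using $d\omega = 0$ yields $L_P\omega = -d\alpha$. Substituting into $[L_P,i_P]^{gr}(\omega) = L_P(i_P\omega) - i_P(L_P\omega)$ reduces the problem to computing $L_P\alpha$, and the same formula for $L_P$ applied once more gives $L_P\alpha = P\rfloor d\alpha - d(P\rfloor\alpha)$. Together with the contribution $-i_P(L_P\omega) = P\rfloor d\alpha$ this yields $\ldb P,P\rdb \rfloor \omega = 2\,P\rfloor d\alpha - d(P\rfloor\alpha)$, and invoking the isomorphism property of $\omega$-duality closes both directions of the equivalence simultaneously.

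The only delicate point I anticipate is the correct evaluation of the graded commutator $[L_P,i_P]^{gr}$: one must take into account that $L_P$ has odd degree ($-1$) as an operator on forms while $i_P$ has even degree ($-2$), so the graded sign rule produces $[L_P,i_P]^{gr} = L_Pi_P - i_PL_P$ with no flip. This is precisely what gives the coefficient $2$ on the right-hand side rather than $0$; had both operators been of the same parity the two contributions $P\rfloor d\alpha$ would have cancelled. Once this sign bookkeeping is secured, the proof is essentially a two-line application of (\ref{Schouten-Lie}) to the volume form, and no further machinery is required.
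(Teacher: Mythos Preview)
Your proof is correct and follows essentially the same route as the paper: translate $\ldb P,P\rdb=0$ via $\omega$-duality and compute $i_{\ldb P,P\rdb}(\omega)$ using the Lie-derivative formalism. The only cosmetic difference is that the paper starts from the identity $i_{\ldb P,P\rdb}=L_{P\wedge P}-2\,i_P\circ L_P$ (i.e.\ formula~(\ref{Schouten via L}) specialized to $P=Q$) and computes $L_{P\wedge P}(\omega)=-d(P\rfloor\alpha)$ in one stroke, whereas you start from $i_{\ldb P,P\rdb}=[L_P,i_P]^{gr}$ and expand $L_P\alpha$; both unwindings land on the same line and your sign analysis of the graded commutator is correct.
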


{\it Proof.} Formula (\ref{Schouten-Lie}) for $P=P_1=P_2$ can be rewritten as
$$
L_{P\wedge P}-2i_P\circ L_{P}= i_{\ldb P,P\rdb}
$$
and hence
\begin{equation}\label{PPP}
L_{P\wedge P}(\omega) -2P\mathop{\rfloor}L_{P}(\omega) = \ldb 
P,P\rdb\mathop{\rfloor}(\omega)
\end{equation}

On the other hand, by (\ref{Lie derivative}), we have
\begin{equation}\label{Pdd}
L_{P}(\omega)=-d(P\mathop{\rfloor}\omega)=-d\alpha
\end{equation}
and
$$
 L_{P\wedge P}(\omega)=-d((i_{P\wedge P)}(\omega))=
 -d(P\mathop{\rfloor}(P\mathop{\rfloor}\omega))=-d(P\mathop{\rfloor}\alpha)
$$
 With these substitutions (\ref{PPP}) takes the form
$$
 d(P\mathop{\rfloor}\alpha)-2P\mathop{\rfloor} d\alpha=
-\ldb P,P\rdb\mathop{\rfloor}\omega
$$
Finally, observe that $Q=0 \Leftrightarrow Q\mathop{\rfloor}\omega=0$ for any $Q\in D_{*}(M)$. 
$\quad\Box$

\begin{defi}\label{UUU}
A Poisson structure on $M$ is called \emph{unimodular} with respect to $\omega$ (shortly, 
\emph{$\omega$--unimodular}) if $ L_{P}(\omega)=0.$
\end{defi}

\begin{proc}\label{Uni}
A Poisson structure $P$ is unimodular if and only if one of the following relations holds
\begin{enumerate}
\item $
d\alpha=0\quad\mbox{with}\quad\alpha=P\mathop{\rfloor}\omega$;
\item $L_{P_{f}}(\omega)=0,\quad\forall\, f\in{\rm C}^\infty(M),$
i.e. Lie derivatives of $\omega$ along all $P$--hamiltonian fields vanish.
\end{enumerate}
\end{proc}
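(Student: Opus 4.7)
The plan is to reduce everything to the single identity $L_{P_f}(\omega)=df\wedge d\alpha$ and then read off both equivalences.

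First I would dispatch condition (1) immediately. By Definition \ref{UUU}, unimodularity means $L_P(\omega)=0$. But formula (\ref{Pdd}), established in the proof of Proposition \ref{Dual}, already says $L_P(\omega)=-d\alpha$. Hence $L_P(\omega)=0$ iff $d\alpha=0$, which is (1).

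For (2), the main task is to express $L_{P_f}(\omega)$ in terms of $\alpha$. I would first compute $P_f\rfloor\omega$. Applying (\ref{Schouten-Lie}) to $P_f=\ldb P,f\rdb$, and noting that $i_f$ acts on forms as multiplication by $f$ so its operator degree is $0$, the graded commutator $[L_P,i_f]^{gr}$ collapses to $L_P\circ f-f\circ L_P$. Evaluating on $\omega$ and substituting (\ref{Pdd}) for both $L_P(\omega)$ and $L_P(f\omega)=-d(f\alpha)$, the $fd\alpha$ terms cancel and I obtain
\begin{equation*}
P_f\rfloor\omega=-df\wedge\alpha.
\end{equation*}
Since $P_f$ is a vector field and $\omega$ is a top form (so $d\omega=0$), Cartan's identity—i.e., definition (\ref{Lie derivative}) specialized to $Q=P_f\in D_1(M)$—yields $L_{P_f}(\omega)=d(P_f\rfloor\omega)=d(-df\wedge\alpha)=df\wedge d\alpha$.

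With the identity $L_{P_f}(\omega)=df\wedge d\alpha$ in hand, both directions of the equivalence with (2) are immediate. If $d\alpha=0$ (equivalently, $P$ is unimodular), then $L_{P_f}(\omega)=0$ for every $f$. Conversely, if $L_{P_f}(\omega)=0$ for all $f\in C^\infty(M)$, then $df\wedge d\alpha=0$ identically; since at each point of $M$ one can choose $f_1,\dots,f_n$ (e.g., local coordinates) whose differentials span the cotangent space, this forces $d\alpha=0$, which by (1) is unimodularity.

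I do not anticipate a genuine obstacle: everything is a bookkeeping exercise in the Schouten/Lie-derivative calculus already laid out in Section \ref{preliminaries}. The only point requiring mild care is step two, where one must handle the operator-degree signs in $[L_P,i_f]^{gr}$ correctly (the sign is trivial here precisely because $i_f$ has operator degree $0$), and then see that the $fd\alpha$ contributions cancel, leaving the clean formula $P_f\rfloor\omega=-df\wedge\alpha$ on which the whole argument pivots.
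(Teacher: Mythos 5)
Your proof is correct and follows essentially the same route as the paper: part (1) from $L_P(\omega)=-d\alpha$, and part (2) by establishing $L_{P_f}(\omega)=df\wedge d\alpha$ (the paper writes this as $L_{P_f}(\omega)=-df\wedge L_P(\omega)$, its formula (\ref{LPf})) and then using that wedging with all $df$ kills only the zero form in degree $<n$. The only cosmetic difference is that you reach $P_f\rfloor\omega=-df\wedge\alpha$ via the graded commutator $[L_P,i_f]^{gr}$ rather than the direct contraction identity $(df\rfloor P)\rfloor\omega=df\wedge(P\rfloor\omega)$, which changes nothing of substance.
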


{\it Proof.} In view of (\ref{Pdd})  the first assertion is obvious. Then, by 
applying (\ref{Lie derivative}), we have
$$
\begin{array}{l}
L_{P_{f}}(\omega)=d(P_{f}\mathop{\rfloor}\omega)=
-d((df\mathop{\rfloor}P)\mathop{\rfloor}\omega)=
-d(df\wedge(P\mathop{\rfloor}\omega))=\\df\wedge d(P\mathop{\rfloor}\omega) 
=-df\wedge L_{P}(\omega)
\end{array}
$$
i.e.
\begin{eqnarray}\label{LPf}
L_{P_{f}}(\omega)=-df\wedge L_{ P}(\omega)
\end{eqnarray}
Since $df\wedge\rho=0,\quad\forall\, f\in C^\infty( M),$ implies $\rho=0$ for a 
from $\rho\in \Lambda^k( M),\, k<n,$ we  see that
$$
L_{P_{f}}(\omega)=0,\quad\forall\,f\Longleftrightarrow df\wedge L_{P}(\omega)= 
0,\quad\forall\, f\Longleftrightarrow L_{P}(\omega)=0. \qquad\Box
$$
\begin{rmk}\label{COO}
In fact, formula (\ref{LPf}) shows that unimodularity of $P$ is guaranteed 
by $L_{P_{x_i}}(\omega)=0,\,i=1,\dots,n,$ for a local chart 
$(x_1,\dots,x_n)$ on $M.$
\end{rmk}

Compatibility conditions  for unimodular Poisson structures are simplified as follows.
\begin{proc}\label{PPO}
Let $P_1,P_2,$ be $\omega$-unimodular Poisson structures on $M$. They are 
compatible if and only if $L_{P_1\wedge P_2}(\omega)=0$.
\end{proc}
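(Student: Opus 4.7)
The plan is to polarize the identity (\ref{PPP}) used in the proof of Proposition~\ref{Dual} and then kill the unwanted terms using the $\omega$-unimodularity hypotheses, so that the Schouten bracket $\ldb P_1, P_2 \rdb$ becomes $\omega$-dual precisely to $L_{P_1 \wedge P_2}(\omega)$.

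First I would recall that by proposition on compatibility, $P_1$ and $P_2$ are compatible if and only if $\ldb P_1, P_2 \rdb = 0$, and since $\omega$-duality $Q \mapsto Q \rfloor \omega$ is an isomorphism between $D_\ast(M)$ and $\Lambda^\ast(M)$, this is equivalent to $\ldb P_1, P_2 \rdb \rfloor \omega = 0$. So the whole task is to identify $\ldb P_1, P_2 \rdb \rfloor \omega$ with $L_{P_1 \wedge P_2}(\omega)$ under the unimodularity assumption.

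The key identity comes from specializing (\ref{Schouten via L}) to $P = P_1$, $Q = P_2$ of degree $2$, which makes every sign trivial and yields
\begin{equation*}
i_{\ldb P_1, P_2 \rdb} = L_{P_1 \wedge P_2} - i_{P_1} \circ L_{P_2} - i_{P_2} \circ L_{P_1}.
\end{equation*}
Evaluating both sides on $\omega$ gives
\begin{equation*}
\ldb P_1, P_2 \rdb \rfloor \omega = L_{P_1 \wedge P_2}(\omega) - P_1 \rfloor L_{P_2}(\omega) - P_2 \rfloor L_{P_1}(\omega).
\end{equation*}
(Alternatively, this identity can be obtained directly by polarizing (\ref{PPP}) applied to $P_1 + P_2$ and subtracting the analogous relations for $P_1$ and $P_2$ separately; the two computations agree.)

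By hypothesis $L_{P_1}(\omega) = L_{P_2}(\omega) = 0$, so the last two terms vanish, leaving
\begin{equation*}
\ldb P_1, P_2 \rdb \rfloor \omega = L_{P_1 \wedge P_2}(\omega).
\end{equation*}
Invoking $\omega$-duality one more time, the vanishing of the left side is equivalent to that of the right side, which is precisely the stated compatibility criterion. There is no real obstacle here once (\ref{Schouten via L}) is in hand; the only thing to watch is the bookkeeping of signs, which trivializes because both bivectors have even degree.
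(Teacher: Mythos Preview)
Your proof is correct and follows essentially the same route as the paper: both specialize formula~(\ref{Schouten via L}) to bivectors, apply it to $\omega$, and use $L_{P_1}(\omega)=L_{P_2}(\omega)=0$ to reduce the resulting identity to $\ldb P_1,P_2\rdb\rfloor\omega = L_{P_1\wedge P_2}(\omega)$, after which $\omega$-duality gives the equivalence. Your presentation is arguably a bit cleaner in tracking the signs explicitly.
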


{\it Proof}. Since $L_{P_1\wedge P_2}(\omega)=0$  formula (\ref{Schouten via L}) applied 
to $\omega$ gives:
$$
P_1\rfloor L_{P_2}(\omega)+P_2\rfloor L_{P_1}(\omega) + \ldb 
P_1,P_2\rdb\mathop{\rfloor}\omega=0
$$
Due to unimodularity of $P_1$ and $P_2$ this for\-mu\-la reduces to $\ldb 
P_1,P_2\rdb\mathop{\rfloor}\omega=0$. which is equivalent to $\ldb P_1,P_2\rdb 
=0.\quad\Box$

\begin{cor}\label{PROD}
Two $\omega$--unimodular Poisson structures $P_1$ and $P_2$ are compatible if 
$P_1\wedge P_2=0.\qquad\Box$
\end{cor}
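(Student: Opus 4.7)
The plan is to derive this immediately from Proposition \ref{PPO}. That proposition says that two $\omega$-unimodular Poisson structures $P_1,P_2$ on $M$ are compatible if and only if $L_{P_1\wedge P_2}(\omega)=0$. So the entire content of the corollary is that the hypothesis $P_1\wedge P_2=0$ forces the left-hand side of this criterion to vanish.

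First I would observe that the liezation operator $L$ is $\gk$-linear on multivectors (cf.\ formula (\ref{L as derivation}), which shows it is a graded derivation, in particular additive). Therefore $L_0$ is the zero operator on $\Lambda^*(M)$, and in particular $L_0(\omega)=0$. Substituting $P_1\wedge P_2=0$ gives $L_{P_1\wedge P_2}(\omega)=L_0(\omega)=0$.

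Next, I would apply Proposition \ref{PPO} to conclude that $P_1$ and $P_2$ are compatible. Since both $P_i$ are assumed $\omega$-unimodular and the additional vanishing condition $L_{P_1\wedge P_2}(\omega)=0$ now holds, the hypotheses of that proposition are met, and the conclusion $\ldb P_1,P_2\rdb=0$ follows, which is exactly compatibility.

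There is essentially no obstacle here; the corollary is a direct specialization of the previous proposition to the geometrically simplest situation in which the wedge product itself, and not merely its Lie derivative along $\omega$, vanishes. The only thing worth noting is that one could alternatively argue directly from (\ref{Schouten via L}) applied to $\omega$: since $P_1\wedge P_2=0$ kills the first term and unimodularity of each $P_i$ kills the remaining two, we immediately obtain $\ldb P_1,P_2\rdb\rfloor\omega=0$, hence $\ldb P_1,P_2\rdb=0$. Either route yields the same one-line proof.
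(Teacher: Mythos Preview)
Your proof is correct and matches the paper's approach exactly: the corollary is marked with a bare $\Box$ in the paper precisely because it is an immediate specialization of Proposition~\ref{PPO}, and your argument spells out that specialization. The alternative route you mention via (\ref{Schouten via L}) is in fact the computation underlying the proof of Proposition~\ref{PPO} itself, so both routes are really the same.
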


\begin{cor}\label{prod}
Any two $\omega$--unimodular Poisson structures on a 3-dimensional $M$ are 
compatible.
\end{cor}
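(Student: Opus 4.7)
The plan is to reduce the statement directly to Corollary \ref{PROD}, which guarantees compatibility of two $\omega$-unimodular Poisson structures whenever their wedge product vanishes. In the three-dimensional setting the wedge product $P_1\wedge P_2$ is a $4$-vector on a $3$-manifold, hence it lies in $D_4(M)=0$, so the sufficient condition of Corollary \ref{PROD} is trivially satisfied.

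More concretely, I would first observe that for any $P_1,P_2\in D_2(M)$ the element $P_1\wedge P_2$ belongs to $D_4(M)$, and recall that $D_k(M)=0$ whenever $k>\dim M$. With $\dim M=3$ this forces $P_1\wedge P_2=0$ identically. Then an appeal to Corollary \ref{PROD} immediately yields $\ldb P_1,P_2\rdb=0$, i.e.\ compatibility of $P_1$ and $P_2$.

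Alternatively, and essentially equivalently, one could invoke Proposition \ref{PPO} directly: the compatibility criterion there reads $L_{P_1\wedge P_2}(\omega)=0$, and with $P_1\wedge P_2=0$ in three dimensions the corresponding Lie derivative operator is identically zero, so the criterion holds automatically. Either route is immediate; there is really no ``main obstacle'' to speak of, the content of the corollary being that the sufficient conditions from the preceding statements become vacuous in dimension three. The only point worth stressing in the write-up is the dimensional count $\deg(P_1\wedge P_2)=4>3=\dim M$ justifying $P_1\wedge P_2=0$.
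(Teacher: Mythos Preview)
Your proposal is correct and matches the paper's intended argument exactly: the corollary is stated immediately after Corollary~\ref{PROD} with no proof, the point being precisely that $P_1\wedge P_2\in D_4(M)=0$ when $\dim M=3$.
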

\begin{rmk}\label{LOB}
Condition $L_{P_1\wedge P_2} =0$ 
implying, obviously, compatibility of $P_1$ and $P_2$ is not, in fact, weaker 
than the condition $P_1\wedge P_2=0$, since $L_Q=0$ is equivalent to $Q=0$ 
for any $Q\in D_*(M)$.
\end{rmk}

\subsection{Lie rank of unimodular Lie algebras}

The following example shows existence of unimodular odd-dimensional Lie algebras of
maximal possible Lie rank, i.e., of rank $2k$ if $n=2k+1$.
 \begin{ex}
Let $(x_1,x_2,\dots,x_{2k+1})$ be a cartesian chart on $V^*, \;\dim\;V=2k+1$. The bivector
$$
P=x_{2k+1}(\xi_1\wedge\xi_2+\dots +\xi_{2k-1}\wedge\xi_{2k})
$$
is a Poisson one as well as bivectors $P_s=x_{2k+1}(\xi_{2s-1}\wedge\xi_{2s}, \;s=1,\dots,k$,
of  rank one.
Obviously, the rank of $P$ is $2k$, $P_s$'s are mutually compatible and $P=P_1+\dots+P_k$.
So, Lie algebra structures $\gG, \gG_1,\dots, \gG_k$ on $V$ corresponding $P, P_1,\dots, P_k$, respectively,
are mutually compatible, $\gG=\gG_1+\dots+\gG_k$ and the Lie rank of $\gG$ is $2k$. 
\end {ex}

Now we will show that the Lie rank of an unimodular Lie aldgebra can not 
coincide with its dimension.

\begin{proc}\label{XUP}
If $Q$ is an $\omega$-modular Poisson structure on $M$ of rank $Q$ and $n=2k$, 
then $Q^k\rfloor\omega=const\neq 0.$
\end{proc}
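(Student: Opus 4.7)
The plan is to show $L_{Q^k}(\omega)=0$; then, since $\omega$ has top degree $n=2k$ and $L_R=[i_R,d]^{gr}$, the identity $L_{Q^k}(\omega)=-d(Q^k\rfloor\omega)$ forces the function $Q^k\rfloor\omega$ to be locally constant. Nonvanishing will follow from $Q^k\neq 0$ (the rank hypothesis) together with the fact that $\omega$-duality $R\mapsto R\rfloor\omega$ is a fibrewise isomorphism $D_{2k}(M)\to\Lambda^0(M)$ because $\omega$ is a volume form.

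The core of the argument is an induction on $m$ proving $L_{Q^m}(\omega)=0$ for every $m\geq 1$. The base case $m=1$ is just $\omega$-unimodularity. For the inductive step two ingredients are needed. First, because $\ldb Q,Q\rdb=0$, the Leibniz rule for the Schouten bracket gives $\ldb Q,Q^m\rdb=0$ for all $m$ (equivalently, $\partial_Q(Q^m)=0$). By (\ref{Schouten-Lie}) this translates into $[L_Q,i_{Q^m}]^{gr}=0$; as $L_Q$ has (odd) degree $-1$ and $i_{Q^m}$ has (even) degree $-2m$, the graded commutator coincides with the ordinary one, so
\[
L_Q\circ i_{Q^m}=i_{Q^m}\circ L_Q.
\]
Second, (\ref{L as derivation}) applied to the factorization $Q^m=Q\wedge Q^{m-1}$, with the sign $(-1)^{\deg Q^{m-1}}=(-1)^{2(m-1)}=+1$, yields $L_{Q^m}=i_Q\circ L_{Q^{m-1}}+L_Q\circ i_{Q^{m-1}}$. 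Evaluating at $\omega$, the second summand becomes $L_Q(Q^{m-1}\rfloor\omega)=i_{Q^{m-1}}(L_Q\omega)=0$ by the commutation above and by unimodularity, so
\[
L_{Q^m}(\omega)=Q\rfloor L_{Q^{m-1}}(\omega),
\]
and the inductive hypothesis closes the argument.

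Specializing to $m=k$ gives $d(Q^k\rfloor\omega)=0$, so $Q^k\rfloor\omega$ is constant on each connected component of $M$. Since $Q^k\neq 0$ by the rank hypothesis and $\omega$-duality is bijective on fibres, this constant is nonzero. The only delicate point is keeping the Koszul signs of (\ref{L as derivation}) and the graded commutator straight, but since the relevant degrees are either even (for $Q^m$ and $i_{Q^m}$) or equal to $-1$ (for $L_Q$), no genuine obstacle arises; the whole proof is a rather short structural consequence of the Poisson and unimodularity conditions.
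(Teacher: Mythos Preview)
Your proof is correct and follows essentially the same route as the paper: both establish $L_{Q^k}(\omega)=0$ by induction from the graded-derivation identities for $L$ and $i$, then conclude $d(Q^k\rfloor\omega)=0$. The only cosmetic difference is that the paper first proves the operator identity $L_{Q^s}=s\,i_{Q^{s-1}}\circ L_Q$ (using (\ref{Schouten via L}) directly) and then evaluates at $\omega$, whereas you split that identity into its two constituents (\ref{L as derivation}) and (\ref{Schouten-Lie}) and evaluate at $\omega$ inside the induction; the content is the same.
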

{\it Proof.} Since $Q$  is a Poisson structure, then, according to (\ref{Schouten via L}), 
$L_{Q^2}=2i_Q\circ L_Q$. Inductively, one easily finds that
$$
L_{Q^s}=si_{Q^{s-1}}\circ L_Q,\quad\forall s\ge 2
$$
If $Q^k\ne 0$, then the  function $f=Q^k\rfloor\omega=(Q^k,\omega)$ is 
different from zero.

On the other hand,  due to $\omega$-unimodularity of $Q$, $L_Q(\omega)=0$ and 
hence
$$
df=d(Q^k\rfloor\omega)=L_{Q^k}(\omega)=kQ^{k-1}\rfloor L_Q(\omega)=0. 
\quad\quad\Box
$$
\begin{cor}\label{6.1f}
The Lie rank of an unimodular Lie algebra is strictly lesser than its dimension.
\end{cor}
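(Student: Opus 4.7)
The plan is to derive the corollary as a direct application of the preceding Proposition \ref{XUP}, working on the dual space $V^{*} = |\gG|^{*}$ with the translation-invariant (constant-coefficient) volume form $\omega = dx_{1}\wedge\dots\wedge dx_{n}$ in a cartesian chart. First I would dispense with the trivial parity case: since the rank of a bivector is always even, if $n = \dim\gG$ is odd then the Lie rank of $\gG$ is at most $n-1 < n$, and there is nothing to prove. So we may assume $n = 2k$ is even.

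Next I would argue by contradiction. Suppose the Lie rank equals $n = 2k$, which, in view of Definition \ref{Oppa}, means $P_{\gG}^{k} \neq 0$ (the relation $P_{\gG}^{k+1}=0$ holds automatically on an $n$-dimensional carrier). By hypothesis $\gG$ is unimodular, which translates into $L_{P_{\gG}}(\omega) = 0$ for the translation-invariant $\omega$ above; this is precisely $\omega$-unimodularity in the sense of Definition \ref{UUU}. Proposition \ref{XUP} then forces
\begin{equation*}
P_{\gG}^{k}\rfloor\omega = c \neq 0, \qquad c \in \R.
\end{equation*}
On the other hand, the coefficients of $P_{\gG}$, given by (\ref{Poisson dual}), are homogeneous linear polynomials in $x_{1},\dots,x_{n}$, so $P_{\gG}^{k}$ has coefficients that are homogeneous polynomials of degree $k \geq 1$. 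Since $\omega$ has constant coefficients, the function $P_{\gG}^{k}\rfloor\omega$ is itself homogeneous of degree $k \geq 1$ in the $x_{i}$'s, hence vanishes at the origin. A nonzero constant cannot vanish at the origin, contradicting the equality displayed above.

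The only point requiring care, and the closest thing to an obstacle, is the justification that Proposition \ref{XUP} applies verbatim to the polynomial/algebraic setting of a linear Poisson bivector on a vector space, and that the standard notion of an unimodular Lie algebra (vanishing of $\tr \ad_{x}$ for all $x\in\gG$) really coincides with $\omega$-unimodularity of $P_{\gG}$ for the translation-invariant volume form. The first is already flagged by the author in the discussion immediately preceding Definition \ref{Oppa} (``the above-said remains valid in this context as well''); the second is a direct computation from (\ref{Poisson dual}) using the coordinate description (\ref{LPf}) of $L_{P_{x_{i}}}(\omega)$. Everything else is homogeneity-degree bookkeeping.
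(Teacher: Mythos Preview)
Your proof is correct and follows essentially the same approach as the paper: apply Proposition~\ref{XUP} to the linear Poisson bivector $P_{\gG}$ with the cartesian volume form, then observe that the resulting nonzero constant $P_{\gG}^{k}\rfloor\omega$ contradicts the homogeneity of degree $k$ forced by the linearity of $P_{\gG}$. Your treatment is slightly more explicit (handling the odd-dimensional case and justifying the identification of Lie-algebraic unimodularity with $\omega$-unimodularity), but the core argument is identical.
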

{\it Proof.} Let $P$ be the associated linear Poisson bivector on $\bf V$ and
${\rm dim V}=2k$. If $k$ is the rank of $P$, then
$f=P^k\rfloor \omega=const\ne 0$ where $\omega$ is a cartesian volume form on $\bf 
V$. This contradicts the fact that $f$ is a homogeneous polynomial of order 
$k.\qquad\Box$

\subsection{Modular vector fields}

Let $\omega$ be a volume form and $P$ a Poisson vector on $M$.
\begin{defi}
The vector field $\Xi=\Xi_{P,\omega}$ is uniquely defined by the relation
\begin{equation}\label{POL}
\Xi\mathop{\rfloor}\omega=d(P\rfloor\omega)
\end{equation}
and is called the \emph{modular}  vector field of $P$  with respect to $\omega$ 
(shortly, \emph{$\omega$--modular}).
\end{defi}

It follows from (\ref{Pdd}) and (\ref{LPf})  that
$$
L_{P_f}(\omega)=df\wedge (\Xi\mathop{\rfloor}\omega)
$$
On the other hand, $0=\Xi\mathop{\rfloor}(df\wedge\omega)= 
\Xi(f)\omega-df\wedge(\Xi\mathop{\rfloor}\omega).$ So,
\begin{equation}\label{DIV}
L_{P_f}(\omega)=\Xi(f)\omega \ee i.e. $\Xi(f)={\rm div}_\omega (P_f).$ In other words, 
$\Xi_{P,\omega}$ mesures divergence of $P$-ha\-mil\-to\-ni\-an vec\-tor 
fields with respect to the volume from $\omega.$ This property was the 
original definition of modular fields.

\begin{proc}\label {LIST} The following relations hold for the modular field
$\Xi=\Xi_{P,\omega}$ of a Poisson structure $P$ and its dual form 
$\alpha=\alpha_{P,\omega}=P\rfloor\omega$ :
$$
\begin{array}{lcc}
({\rm i})\quad L_{\Xi}(\alpha)=0\\
({\rm ii})\quad L_P(d\alpha)=0\\
({\rm iii})\quad  L_P(\alpha)=-\Xi\mathop{\rfloor}\alpha\\
({\rm iv})\quad L_\Xi(\omega)=0\\
({\rm v})\quad L_\Xi(P)=\ldb P,\Xi\rdb=\partial_P(\Xi)=0\\
({\rm vi})\quad P_f\mathop{\rfloor}\omega=-df\wedge\alpha;\\
({\rm vii})\quad [\Xi, P_{f}]=P_{\Xi(f)}.
\end{array}
$$
\end{proc}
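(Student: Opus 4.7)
My plan is to exploit the two $\omega$-duality identifications $\alpha = P \rfloor \omega$ and $d\alpha = \Xi \rfloor \omega$, and to propagate information between $P$ and $\Xi$ using Cartan's magic formula, the graded commutator identity (\ref{Schouten-Lie}), the derivation property (\ref{L as derivation}), and the Schouten--Jacobi identity (\ref{Schouten-Jacobi}). The single geometric observation that drives almost the entire list is
\[
\Xi \rfloor \alpha \;=\; i_{\Xi \wedge P}\omega \;=\; i_{P \wedge \Xi}\omega \;=\; P \rfloor d\alpha,
\]
valid because $P \wedge \Xi = \Xi \wedge P$ ($P$ has even degree). I would first dispatch the three routine items: for (vi) I rerun the calculation from the proof of Proposition \ref{Uni}, $P_f \rfloor \omega = -(df \rfloor P) \rfloor \omega = -df \wedge \alpha$; for (iv), Cartan gives $L_\Xi(\omega) = d(\Xi \rfloor \omega) = d^2\alpha = 0$; and for (iii) I apply $[L_P, i_P]^{gr}\omega = i_{\ldb P, P\rdb}\omega = 0$ (which, since $|L_P|\,|i_P|$ is even, reads $L_P(i_P\omega) = i_P(L_P\omega)$), obtaining $L_P(\alpha) = -P \rfloor d\alpha = -\Xi \rfloor \alpha$ by the displayed identity.

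Next I handle (i), (ii), and (v) in sequence. For (i), Cartan applied to $\alpha$ yields $L_\Xi(\alpha) = \Xi \rfloor d\alpha + d(\Xi \rfloor \alpha)$; the first term vanishes since $\Xi \wedge \Xi = 0$, and the second equals $d(P \rfloor d\alpha) = \tfrac{1}{2}d^2(P \rfloor \alpha) = 0$ by the Poisson criterion (\ref{DUAL}). For (ii), the anticommutation $L_P\,d = -d\,L_P$ on forms (which holds because $\deg P$ is even) combined with (iii) gives $L_P(d\alpha) = -d L_P(\alpha) = d(\Xi \rfloor \alpha) = 0$. For (v), I insert $\omega$ into (\ref{Schouten via L}) for the pair $(P, \Xi)$: the three resulting contributions are, up to signs, $L_{P \wedge \Xi}(\omega)$, $P \rfloor L_\Xi(\omega)$, and $\Xi \rfloor L_P(\omega)$, and each vanishes -- the second by (iv), the third because it equals $-\Xi \rfloor d\alpha = 0$, and the first because (\ref{L as derivation}) rewrites it as $-L_P(d\alpha)$, zero by (ii). Since $\omega$-duality is an isomorphism this forces $\ldb P, \Xi \rdb = 0$, which coincides with $L_\Xi(P)$ and $\partial_P(\Xi)$ by their definitions. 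Finally, (vii) follows from the Schouten--Jacobi identity (\ref{Schouten-Jacobi}) applied to the triple $(P, f, \Xi)$: the cycled term containing $\ldb \Xi, P \rdb$ is killed by (v), and the remaining two terms, after using $\ldb f, \Xi \rdb = \Xi(f)$ and converting the Schouten bracket of two vector fields to the ordinary Lie bracket, yield $[\Xi, P_f] = P_{\Xi(f)}$.

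Conceptually nothing deep enters: the whole list is driven by the identity $\Xi \rfloor \alpha = P \rfloor d\alpha$ together with $d^2 = 0$ and the Poisson criterion (\ref{DUAL}). The main obstacle I anticipate is pure sign bookkeeping -- tracking the parities hidden in $[\cdot,\cdot]^{gr}$, the factors $(-1)^{\bar S \bar T}$ in (\ref{Schouten-skew})--(\ref{Schouten-Jacobi}), and the orderings in (\ref{L as derivation})--(\ref{Schouten via L}) -- all of which must be handled carefully but none of which presents genuine mathematical difficulty.
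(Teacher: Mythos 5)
Your proposal is correct, and its engine is the same as the paper's: everything runs on the identity $\Xi\rfloor\alpha=P\rfloor(\Xi\rfloor\omega)=P\rfloor d\alpha$, Cartan's formula, $d^2=0$, and the Poisson criterion (\ref{DUAL}), exactly as in the paper's proof of (i)--(iv) and (vi). You diverge only at two points, in both cases harmlessly. For (v) the paper is shorter: it applies formula (\ref{[L_X,i_P]}) to the already-proved identity $L_\Xi(\alpha)=0$, getting $0=L_\Xi(P\rfloor\omega)=-L_\Xi(P)\rfloor\omega+P\rfloor L_\Xi(\omega)$ in one line, whereas you expand (\ref{Schouten via L}) on $\omega$ and kill three terms using (ii) and (iv); both are valid. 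For (vii) the paper uses the Leibniz rule $L_X(\rho\rfloor Q)=-L_X(\rho)\rfloor Q+\rho\rfloor L_X(Q)$ with $X=\Xi$, $\rho=df$, $Q=P$, while you invoke the graded Jacobi identity (\ref{Schouten-Jacobi}) on the triple $(P,f,\Xi)$ together with (v); this is an equally legitimate (arguably more structural) route, since (vii) is just the statement that $\partial_P$ is a derivation of the bracket combined with $\partial_P(\Xi)=0$. One small caution there: in the paper's conventions one has $\ldb\Xi,f\rdb=\Xi(f)$ but $\ldb f,\Xi\rdb=-\Xi(f)$ (see (\ref{Schouten-skew}) and (\ref{Schouten in coordinates})), so the term you write as $\ldb f,\Xi\rdb=\Xi(f)$ carries the opposite sign; with the orderings dictated by (\ref{Schouten-Jacobi}) the signs do come out to $[\Xi,P_f]=P_{\Xi(f)}$, but this is precisely the bookkeeping you flagged and it must be done in the stated order.
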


{\it Proof.} First, we have
$
 L_{\Xi}(\alpha)=d(\Xi\mathop{\rfloor}\alpha)+\Xi\mathop{\rfloor}d\alpha.
$
Since
$$
\Xi\mathop{\rfloor}\alpha= \Xi\mathop{\rfloor}(P\mathop{\rfloor}\omega)= 
P\mathop{\rfloor}(\Xi\mathop{\rfloor}\omega)= P\mathop{\rfloor}d\alpha
$$
then, by (\ref{DUAL}),
$$
d(\Xi\mathop{\rfloor}\alpha)=d(P\mathop{\rfloor}d\alpha)= 
\frac{1}{2}d^2(P\mathop{\rfloor}\alpha)=0
$$
On the other hand,
 $\Xi\mathop{\rfloor}d\alpha=
\Xi\mathop{\rfloor}(\Xi\mathop{\rfloor}\omega)=0$ This proves (i). 

Similarly,
$$
L_P(d\alpha)=[i_P,d]^{gr}(d\alpha)=-d(P\mathop{\rfloor}d\alpha)=0
$$
This proves (ii).

In its turn (iii) directly results  from (\ref{DUAL}):
$$
L_P(\alpha)=[i_P,d](\alpha)=P\mathop{\rfloor}d\alpha- 
d(P\mathop{\rfloor}\alpha)=-P\mathop{\rfloor}d\alpha= 
-\Xi\mathop{\rfloor}\alpha
$$
Concerning (iv) we have:
$$
L_\Xi(\omega)=d(\Xi\mathop{\rfloor}\alpha)=d^2\alpha=0
$$
To prove (v) it suffices to show that
  $L_\Xi(P)\mathop{\rfloor}\omega=0.$ But according to (i) and (\ref{[L_X,i_P]}) we have
 $$0=L_\Xi(\a)=L_\Xi(P\mathop{\rfloor}\o)=
-L_\Xi(P)\mathop{\rfloor}\omega+P\mathop{\rfloor}L_\Xi(\omega)= 
-L_\Xi(P)\mathop{\rfloor}\omega.
$$

Now, a particular case of (\ref{Schouten}) is $i_{\ldb f, P\rdb}=[[f,d]^{gr},i_P]^{gr}=-[df,i_P]$, and one gets (vi) 
by applying this relation to $\omega$. 

Finally, recall the general formula
$$
L_X(\rho\mathop{\rfloor}Q)=-L_X(\rho)\mathop{\rfloor}Q+\rho 
\mathop{\rfloor}L_X(Q)
$$
with $X\in D(M),\quad Q\in D_i(M),\quad \rho\in\Lambda^j(M).$ By specifying 
it to $X=\Xi,\quad Q=P$ and $\o=d\nu$ one finds that
$$
[ \Xi, P_f]=-L_\Xi(P_f)=L_\Xi(df\rfloor P)=-L_{\Xi}(df)\rfloor P+df\rfloor L_{\Xi}(P).
$$
Now the desired result follows from assertion (v) by observing that $L_\Xi(df)=d\,\Xi(f)$. $\qquad\Box$

Additivity is an important property of $\omega$--modular fields.
\begin{proc}\label{PFA}
If Poisson structure $P_1$ and $P_2$ are compatible and $\Xi_1,\Xi_2$
 are their $\omega$--modular fields, respectively, then
$\Xi_1+\Xi_2$ is the $\omega$--modular field of $P_1+P_2.$
\end{proc}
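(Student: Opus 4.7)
The statement is, in the end, just a linearity check against the defining relation $\Xi \rfloor \omega = d(P \rfloor \omega)$ of the $\omega$-modular vector field, so I would prove it by unwinding the definition rather than by any clever manipulation.

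First, I would point out why the right-hand side of the claim even makes sense: the modular vector field is attached only to Poisson structures, and here one needs $P_1+P_2$ to be Poisson in order to speak of $\Xi_{P_1+P_2,\omega}$ at all. This is exactly guaranteed by the hypothesis of compatibility, via the proposition characterizing compatibility (in particular $\ldb P_1,P_2\rdb=0$ forces $\ldb P_1+P_2,P_1+P_2\rdb=0$). So compatibility enters the statement only through this existence issue, not through a hidden identity.

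Second, I would invoke uniqueness: since $\omega$ is a volume form, the map $X \mapsto X\rfloor\omega$ from $D(M)$ to $\Lambda^{n-1}(M)$ is injective, so a vector field is completely determined by its contraction with $\omega$. Therefore to prove $\Xi_1+\Xi_2=\Xi_{P_1+P_2,\omega}$ it suffices to show the two sides agree after $\rfloor\omega$.

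Third, I would finish by the direct computation, using linearity of interior product and of the exterior differential:
\begin{equation*}
(\Xi_1+\Xi_2)\rfloor\omega \;=\; \Xi_1\rfloor\omega+\Xi_2\rfloor\omega \;=\; d(P_1\rfloor\omega)+d(P_2\rfloor\omega) \;=\; d\bigl((P_1+P_2)\rfloor\omega\bigr),
\end{equation*}
and the last expression is by definition $\Xi_{P_1+P_2,\omega}\rfloor\omega$. There is no real obstacle here; the only conceptual point worth flagging is that the claim implicitly relies on compatibility to make the notion of modular field for $P_1+P_2$ meaningful, after which the additivity is an immediate consequence of the additivity of $d\circ (\,\cdot\,\rfloor\omega)$.
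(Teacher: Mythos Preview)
Your proposal is correct and essentially matches the paper's approach: both are one-line linearity checks. The only cosmetic difference is that the paper cites the divergence characterization $L_{P_f}(\omega)=\Xi(f)\omega$ (formula~(\ref{DIV})), while you work directly from the defining relation $\Xi\rfloor\omega=d(P\rfloor\omega)$; either way the content is the additivity of $P\mapsto d(P\rfloor\omega)$ together with injectivity of $\,\cdot\,\rfloor\omega$, and your remark that compatibility enters only to make $\Xi_{P_1+P_2,\omega}$ well-defined is exactly right.
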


{\it Proof.} Straightforwardly from 
(\ref{DIV}).$\qquad\Box$
\begin{cor}\label{BUU} If $P_i,  \,\Xi_i, \,i=1,2$, are as above, then
\begin{equation}\label{buu}
L_{\Xi_1}(P_2)+L_{\Xi_2}(P_1)=0
\end{equation}
\end{cor}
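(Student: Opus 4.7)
The plan is to apply Proposition \ref{LIST}(v) to the compound Poisson structure $P_1+P_2$, using Proposition \ref{PFA} to identify its modular field, and then to expand via bilinearity of the Schouten bracket.

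First, recall that in the notation of the paper $L_\Xi(P) = \ldb P,\Xi\rdb$ by formula (\ref{[L_X,i_P]}), so assertion (v) of Proposition \ref{LIST} reads $\ldb P,\Xi_{P,\omega}\rdb = 0$ for any Poisson bivector $P$. Since $P_1$ and $P_2$ are assumed compatible, $P_1+P_2$ is itself a Poisson bivector, and Proposition \ref{PFA} tells us that its $\omega$-modular field is exactly $\Xi_1+\Xi_2$. Therefore I may apply (v) to the pair $(P_1+P_2,\Xi_1+\Xi_2)$ to obtain
\begin{equation*}
\ldb P_1+P_2,\;\Xi_1+\Xi_2\rdb = 0.
\end{equation*}

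Expanding the Schouten bracket by bilinearity yields
\begin{equation*}
\ldb P_1,\Xi_1\rdb + \ldb P_1,\Xi_2\rdb + \ldb P_2,\Xi_1\rdb + \ldb P_2,\Xi_2\rdb = 0.
\end{equation*}
By Proposition \ref{LIST}(v) applied separately to $P_1$ with its modular field $\Xi_1$ and to $P_2$ with its modular field $\Xi_2$, the first and the last terms vanish. The remaining identity $\ldb P_1,\Xi_2\rdb + \ldb P_2,\Xi_1\rdb = 0$ is precisely $L_{\Xi_2}(P_1) + L_{\Xi_1}(P_2)=0$, which is (\ref{buu}).

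There is no real obstacle here: the corollary is a direct bilinear unfolding of the two previous results, and the only thing to check is that one is entitled to invoke (v) for $P_1+P_2$, which requires exactly the compatibility hypothesis (so that $P_1+P_2$ is Poisson) together with Proposition \ref{PFA} (to identify its modular field as $\Xi_1+\Xi_2$ rather than some other vector field). Once these two facts are in place the computation is a one-liner.
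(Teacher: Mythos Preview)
Your proof is correct and follows exactly the paper's approach: expand $L_{\Xi_1+\Xi_2}(P_1+P_2)=0$ (valid by Proposition~\ref{LIST}(v) together with Proposition~\ref{PFA}) and cancel the two diagonal terms using Proposition~\ref{LIST}(v) for each $P_i$. The paper compresses this into a single line, but the argument is identical.
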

{\it Proof.} Directly from $L_{\Xi_1}(P_1)=L_{\Xi_2}(P_2)=
L_{\Xi+\Xi_2}(P_1+P_2)=0$ (proposition\;\ref{LIST},(v)). $\qquad\Box$

 \subsection{$\omega$--modular class}
 
The following well-known fact describes dependence of the $\omega$--modular field on $\omega$. 
For completeness we report a short proof of it.
\begin{proc}\label{GAU}
If $\omega^\prime=f\o$ is another volume form on $M$, then
\begin{equation}\label{gau}
\Xi_{P,f\omega}=\Xi_{P,\omega}-P_{\ln|f|}
\end{equation}
\end{proc}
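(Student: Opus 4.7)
My plan is to argue directly from the defining relation $\Xi\mathop{\rfloor}\omega = d(P\mathop{\rfloor}\omega)$ in Definition \ref{POL}, computing how both sides transform under the rescaling $\omega \mapsto \omega' = f\omega$, where $f$ is nowhere-vanishing since both $\omega$ and $\omega'$ are volume forms.

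First I would compute the rescaled $\alpha$. Since $P \in D_2(M)$ and insertion is $C^\infty(M)$-linear in the form argument,
$$
\alpha' \DEF P \mathop{\rfloor} \omega' = P \mathop{\rfloor} (f\omega) = f \cdot (P\mathop{\rfloor}\omega) = f\alpha.
$$
Applying the de Rham differential and the ordinary Leibniz rule,
$$
\Xi_{P,f\omega} \mathop{\rfloor} (f\omega) = d\alpha' = d(f\alpha) = df \wedge \alpha + f\,d\alpha = df \wedge \alpha + f\,(\Xi_{P,\omega}\mathop{\rfloor}\omega).
$$
On the other hand $\Xi_{P,f\omega} \mathop{\rfloor}(f\omega) = f(\Xi_{P,f\omega}\mathop{\rfloor}\omega)$, and since $f$ is nowhere zero I can divide through to obtain
$$
\Xi_{P,f\omega}\mathop{\rfloor}\omega \;=\; \Xi_{P,\omega}\mathop{\rfloor}\omega \;+\; \tfrac{1}{f}\,df \wedge \alpha \;=\; \Xi_{P,\omega}\mathop{\rfloor}\omega \;+\; d(\ln|f|)\wedge \alpha.
$$

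The next step is to recognize the last term as $-P_{\ln|f|}\mathop{\rfloor}\omega$, using assertion (vi) of Proposition \ref{LIST}, which gives $P_g\mathop{\rfloor}\omega = -dg \wedge \alpha$ for any $g \in C^\infty(M)$. Applying this with $g = \ln|f|$ yields $d(\ln|f|)\wedge \alpha = -P_{\ln|f|}\mathop{\rfloor}\omega$, so
$$
\Xi_{P,f\omega}\mathop{\rfloor}\omega = \bigl(\Xi_{P,\omega} - P_{\ln|f|}\bigr)\mathop{\rfloor}\omega.
$$
Since $\omega$ is a volume form, $\omega$-duality $X \mapsto X\mathop{\rfloor}\omega$ is an isomorphism $D(M) \to \Lambda^{n-1}(M)$, so the equality of contractions forces the equality of vector fields, giving the claimed $\Xi_{P,f\omega} = \Xi_{P,\omega} - P_{\ln|f|}$.

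I do not expect a serious obstacle: the proof is essentially a one-line computation of $d(f\alpha)$ plus an invocation of the already-proved identity (vi). The only points requiring a word of care are (a) that $f$ is nowhere zero so that $\ln|f|$ is a legitimate smooth function locally (and its differential $df/f$ is globally defined), and (b) the injectivity of $\omega$-duality on $D(M)$, which is what lets us strip off $\mathop{\rfloor}\omega$ at the last step.
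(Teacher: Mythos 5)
Your proof is correct and follows essentially the same route as the paper's: both compute $d(f\alpha)=df\wedge\alpha+f\,d\alpha$ from the defining relation, reduce to $d(\ln|f|)\wedge\alpha$, and invoke Proposition \ref{LIST}\,(vi) together with the injectivity of $\omega$-duality. The only cosmetic difference is that the paper writes $\Xi_{P,f\omega}=\Xi_{P,\omega}+Y$ and solves for $Y$, whereas you manipulate the contractions directly.
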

{\it Proof.} The vector field $\Xi_{P,f\omega}$ is determined as solution of
\begin{equation}\label{gag}
\Xi_{P,f\omega}\mathop{\rfloor}(f\omega)=d\alpha_{P,f\omega}
\end{equation}
By putting $\Xi_{P,f\omega}=\Xi_{P,\omega}+Y$ and noticing that 
$\alpha_{P,f\omega}=P\mathop{\rfloor}(f\omega)=f\alpha$ where 
$\alpha=\alpha_{P,\omega}$ one may rewrite  (\ref{gag}) in the form
$$
fY\mathop{\rfloor}\omega=df\wedge\alpha\Longleftrightarrow 
Y\mathop{\rfloor}\omega=d(\ln|f|)\wedge\alpha
$$
(f is nowhere zero, since $f\o$ is a volume form). Now proposition \ref{LIST}, 
(vi), shows that $Y=-P_{\ln|f|}.\qquad\Box$

This result has the following cohomological interpretation. First, observe that 
(proposition (\ref{LIST},(v))
$$
\partial_P(\Xi)=\ldb P,\Xi\rdb=L_\Xi(P)=0,
$$
i.e., $\Xi$ is a 1-cocycle of the complex $\{ D_*(M), \partial_P\}$. Moreover, $P$--hamiltonian
 fields are coboundaries of this complex, namely,
$P_g=\partial_P(g)$. Hence proposition \ref{GAU} yields:

\begin{cor}\label{COH}
The cohomology class of the $\omega$--modular field $\Xi_{P,\omega}$ in 
$\{D_*(M),\partial_P\}$ does not depend on $\omega$ and, therefore, is 
well-defined by $P$. $\qquad\Box$
\end{cor}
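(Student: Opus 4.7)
The plan is to deduce the statement directly from Proposition \ref{GAU}, which already gives the exact dependence of $\Xi_{P,\omega}$ on $\omega$ up to a $P$-Hamiltonian field, together with the observation preceding the corollary that $P$-Hamiltonian fields are $\partial_P$-coboundaries. So the work is essentially a matter of packaging.

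First I would fix two volume forms $\omega$ and $\omega'$ on $M$. Since both are nowhere-zero sections of $\Lambda^n(M)$, there exists a unique nowhere-zero smooth function $f\in C^\infty(M)$ with $\omega'=f\omega$. The function $|f|$ is smooth and strictly positive (nowhere-vanishing of $f$ means $f$ has constant sign on each connected component of $M$, so $|f|$ is smooth on all of $M$), and hence $\ln|f|\in C^\infty(M)$ is globally well-defined.

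Next I would invoke Proposition \ref{GAU} to obtain
\begin{equation*}
\Xi_{P,\omega'}-\Xi_{P,\omega}=-P_{\ln|f|}.
\end{equation*}
By definition (\ref{P-Ham field}) of the $P$-Hamiltonian field, $P_{\ln|f|}=\partial_P(\ln|f|)$, which is a coboundary in the complex $\{D_*(M),\partial_P\}$. Combined with Proposition \ref{LIST}\,(v), which ensures that each $\Xi_{P,\omega}$ is a $\partial_P$-cocycle, this shows that $\Xi_{P,\omega'}$ and $\Xi_{P,\omega}$ represent the same cohomology class, proving the corollary.

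There is essentially no obstacle in this argument: both key ingredients (cocycle property and the transformation rule under change of volume form) have been established in Propositions \ref{LIST} and \ref{GAU}. The only point deserving a brief remark is the global smoothness of $\ln|f|$, which follows from the constant-sign property of $f$ on connected components and justifies applying Proposition \ref{GAU} globally rather than just locally.
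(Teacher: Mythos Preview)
Your proof is correct and follows essentially the same approach as the paper: invoke Proposition~\ref{GAU} to see that $\Xi_{P,\omega'}-\Xi_{P,\omega}=-P_{\ln|f|}=\partial_P(-\ln|f|)$ is a $\partial_P$-coboundary, and use Proposition~\ref{LIST}\,(v) to know each $\Xi_{P,\omega}$ is a cocycle. The only addition you make is the explicit remark on global smoothness of $\ln|f|$, which the paper leaves tacit.
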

\begin{defi}
The $\partial_P$-cohomology class of the $\omega$-modular field is called 
the \emph{modular} class of $P$.
\end{defi}
\begin{cor}\label{HMM}
A Poisson structure $P$  is $\omega$--unimodular with respect to a volume form 
$\omega$ if and only if its modular class vanishes.$\qquad\Box$
\end{cor}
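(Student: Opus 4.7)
The plan is to read the statement as asserting the equivalence of (a) the existence of some volume form $\omega$ with respect to which $P$ is $\omega$-unimodular, and (b) the vanishing of the intrinsic modular class of $P$ (which by Corollary \ref{COH} is independent of the auxiliary $\omega$). Both directions fall out by chasing definitions once one has the gauge transformation formula (\ref{gau}) in hand.

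For the forward direction, suppose $P$ is $\omega$-unimodular, i.e.\ $L_P(\omega)=0$. Formula (\ref{Pdd}) gives $L_P(\omega)=-d\alpha$ with $\alpha=P\rfloor\omega$, so $d\alpha=0$. The defining relation (\ref{POL}) for the modular field then reads $\Xi_{P,\omega}\rfloor\omega=d\alpha=0$, and since insertion into a volume form is injective on vector fields we conclude $\Xi_{P,\omega}=0$. In particular $\Xi_{P,\omega}$ represents the zero cohomology class, so the modular class of $P$ vanishes.

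For the reverse direction, assume the modular class is trivial and fix any volume form $\omega$. Then $\Xi_{P,\omega}$ is a $\partial_P$-coboundary; recalling that coboundaries in $\{D_*(M),\partial_P\}$ are precisely $P$-Hamiltonian fields $P_g=\partial_P(g)$, we may write $\Xi_{P,\omega}=P_g$ for some $g\in C^\infty(M)$. Set $f=e^g$, which is everywhere positive, so $\ln|f|=g$. Applying Proposition \ref{GAU} to the rescaled volume form $\omega'=f\omega$ gives
\begin{equation*}
\Xi_{P,\omega'}=\Xi_{P,\omega}-P_{\ln|f|}=P_g-P_g=0,
\end{equation*}
so $\omega'\rfloor\Xi_{P,\omega'}=0$, and unwinding via (\ref{Pdd}) yields $L_P(\omega')=0$. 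Hence $P$ is $\omega'$-unimodular.

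There is essentially no genuine obstacle: the content of the corollary is already packaged into Propositions \ref{Uni} and \ref{GAU} plus the observation that $P$-Hamiltonian fields are the coboundaries in the Poisson complex. The only mild subtlety is interpreting the statement correctly — "$\omega$-unimodular for some $\omega$" rather than "for the given $\omega$" — since the modular class is an invariant of $P$ alone, not of the pair $(P,\omega)$.
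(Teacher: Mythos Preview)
Your argument is correct and is exactly the unpacking the paper leaves implicit: the corollary is stated with a bare $\square$, since it follows immediately from Proposition~\ref{GAU} (the gauge formula) together with the identification of $P$-Hamiltonian fields as $\partial_P$-coboundaries. Your reading of the statement---existence of \emph{some} $\omega$ for which $P$ is $\omega$-unimodular---is the intended one, and your two directions match what the paper has in mind.
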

If $P$ is nondegenerate, then $(M, \gamma_P(P))$ is a symplectic
manifold. In this case the isomorphism $\gamma_P:D_*(M)\rightarrow\Lambda^*(M)$  is also an isomorphism of
complexes  $\{\Lambda^*(M),d\}$ and $\{D_*(M),d_P\}$.
Therefore, if $H^1(M)=0,$ then any nondegenerate Poisson structure on $M$ is $\o$--unimodular
with respect to a suitable volume form $\o.$

\subsection{Modular disassembling of a Poisson structure.}

Now we shall show that the $\omega$--modular vector field of a Poisson structure $P$ allows 
one to disassemble this structure, at 
least, locally,  into two parts, one of which is 
$\omega$--unimodular, while  all ``$\omega$--non-- unimodularity" 
of $P$ is concentrated the second part.

\begin {proc}\label{SPLIT}
Let $\Xi$ be the $\omega$--modular vector field of a Poisson structure $P$ and $\nu\in C^\infty(M)$ be such 
that $\Xi(\nu)=1$. Then
\begin{enumerate}
\item $\Xi\wedge P_\nu$ is a Poisson structure compatible with $P$;
\item $L_{\Xi\wedge P_\nu}(\omega)=-L_P(\omega)$;
\item $P+{\Xi\wedge P_\nu}$ is an  $\omega$-unimodular 
compatible with $P$ Poisson structure  and $\nu$  is a Casimir function of it;
\item $P_\nu\wedge \Xi=\ldb P, \nu\Xi\rdb=\partial_P(\nu\Xi)$.
\end{enumerate}
\end{proc}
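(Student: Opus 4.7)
The plan is to prove (4) first, as it gives a convenient closed-form expression for $\Xi\wedge P_\nu$ that drives the rest, then derive (1) from it, verify (2) by direct computation, and finally assemble (3) from (1) and (2).

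For (4), a single application of the graded Leibniz rule yields
$$\partial_P(\nu\Xi)=\ldb P,\nu\rdb\wedge\Xi+\nu\ldb P,\Xi\rdb=P_\nu\wedge\Xi,$$
the second bracket vanishing by proposition \ref{LIST}(v). For (1), proposition \ref{LIST}(vii) gives $[\Xi,P_\nu]=P_{\Xi(\nu)}=P_1=0$, so $\Xi$ and $P_\nu$ commute and $\Xi\wedge P_\nu$ is automatically a Poisson bivector (the Schouten square of a decomposable bivector $X\wedge Y$ vanishes whenever $[X,Y]=0$). Compatibility with $P$ is then immediate from (4): $\ldb P,\Xi\wedge P_\nu\rdb=-\partial_P^2(\nu\Xi)=0$, since $\partial_P^2=0$ follows from $\ldb P,P\rdb=0$ and graded Jacobi.

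For (2), I apply the derivation identity (\ref{L as derivation}) with the roles of $P,Q$ played by $\Xi,P_\nu$, to $\omega$:
$$L_{\Xi\wedge P_\nu}(\omega)=i_\Xi\bigl(L_{P_\nu}(\omega)\bigr)-L_\Xi\bigl(P_\nu\rfloor\omega\bigr).$$
The first summand collapses to $\Xi\rfloor\omega=d\alpha$ via (\ref{DIV}) (which gives $L_{P_\nu}(\omega)=\Xi(\nu)\omega=\omega$) together with the defining relation (\ref{POL}) of $\Xi$. The second summand vanishes: writing $P_\nu\rfloor\omega=-d\nu\wedge\alpha$ via proposition \ref{LIST}(vi), the two terms produced by $L_\Xi$ are killed respectively by $\Xi(\nu)=1$ (so $d\Xi(\nu)=0$) and by proposition \ref{LIST}(i). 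Combined with $L_P(\omega)=-d\alpha$ from (\ref{Pdd}), this yields (2).

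Finally, (3) packages (1) and (2). Poissonness of $P+\Xi\wedge P_\nu$ and its compatibility with $P$ are (1); $\omega$-unimodularity is $L_P(\omega)+L_{\Xi\wedge P_\nu}(\omega)=0$ by (2) and additivity of $L$; and $\nu$ being Casimir reduces to $\ldb\Xi\wedge P_\nu,\nu\rdb=-P_\nu$, a one-line contraction using $\Xi(\nu)=1$ and $\{\nu,\nu\}_P=0$, which exactly cancels the contribution $\ldb P,\nu\rdb=P_\nu$. Once proposition \ref{LIST} is available, the whole argument is very short; the main obstacle is merely sign bookkeeping in the Schouten and contraction formulas.
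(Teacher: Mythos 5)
Your proof is correct and rests on exactly the same ingredients as the paper's: proposition \ref{LIST}, the biderivation property of the Schouten bracket, formula (\ref{L as derivation}) applied to $\omega$, and (\ref{DIV}). The only (harmless) variations are that you prove (4) first and deduce the compatibility in (1) from $\partial_P^2=0$ rather than expanding $\ldb P,\Xi\wedge P_\nu\rdb$ term by term, and that you kill $L_\Xi(P_\nu\rfloor\omega)$ via \ref{LIST}(vi) and \ref{LIST}(i) instead of the commutator identity (\ref{[L_X,i_P]}); both shortcuts are valid.
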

{\it Proof.} First, from proposition \ref{LIST},(vii), we see that 
$$
\ldb \Xi, P_{\nu}\rdb=[\Xi, P_{\nu}]=-L_{\Xi}(d\nu\rfloor P)=P_{\Xi(\nu)}=P_1=0.
$$
Since the Schouten bracket is a graded biderivation of the exterior algebra $D_{*}(M)$, this implies
$$
\ldb\Xi\wedge P_\nu, \Xi\wedge P_\nu\rdb=0
$$
i.e. $\Xi\wedge P_\nu$ is a Poisson structure. By the same reason we have 
$$
\ldb P,\Xi\wedge P_\nu\rdb=\ldb P,\Xi\rdb\wedge P_\nu- \Xi\wedge\ldb 
P,P_\nu\rdb
$$
The right hand side of this equality vanishes by proposition 
\ref{LIST},(v), and the fact that $P_\nu$ ia a $P$--hamiltonian field. So, 
$\Xi\wedge P_\nu$ is compatible with $P.$

To prove the second assertion we specify (\ref{L as derivation}) to $P=\Xi, \;Q=P_\nu$ 
and then apply the result to $\omega$:
$$
L_{\Xi\wedge P_\nu}(\o)=\Xi\mathop{\rfloor}L_{P_\nu}(\omega)- 
L_\Xi(P_{\nu}\mathop{\rfloor}\omega)
$$
Similarly, formula (\ref{[L_X,i_P]}), specified to $X=\Xi, \;Q=P_\nu$ and then applied to $\omega$, gives
$$
L_\Xi(P_\nu\mathop{\rfloor}\omega)=-L_\Xi(P_\nu)\mathop{\rfloor}\omega+ 
P_\nu\mathop{\rfloor}L_{\Xi}(\omega)
$$
By proposition \ref{LIST},(v), (vii), $L_{\Xi}(\omega)=0$  and $L_\Xi(P_\nu)=0$. Hence
$$
L_{\Xi\wedge P_\nu}(\o)=\Xi\mathop{\rfloor}L_{P_\nu}(\omega)
$$
On the other hand, according to (\ref{DIV}), $L_{P_\nu}(\omega)=\Xi(\nu)\omega=\omega$ and hence
$$
L_{\Xi\wedge P_\nu}(\o)=\Xi\rfloor\omega=d\alpha=-L_P(\omega).
$$

Next, $\omega$--unimodularity of $P+\Xi\wedge P_\nu$
directly follows from assertion (2), while
$$
d\nu\mathop{\rfloor}(P+\Xi\wedge  P_\nu)=d\nu\mathop{\rfloor}P+ 
\Xi(\nu){P_\nu}=0
$$
proves that $\nu$ is a Casimir function of this structure.

The fact that $\partial_P$ is a graded derivation of the exterior algebra $D_{*}(M)$
together with $P_\nu=\ldb P, \nu\rdb, \;\ldb P, \Xi\rdb=0$ (proposition\;\ref{LIST}, (v)) proves the last assertion.
  $\qquad\Box$

Thus  $P$  is presented as the sum
\begin{equation}\label{BRA}
P=(P-P_\nu\wedge\Xi)+P_\nu\wedge\Xi
\end{equation}
of two compatible Poisson structures,
one of which is $\omega$-unimodular and another $\omega$-non-unimodular (if different 
from zero). $P_\nu\wedge\Xi$ is an \emph{$\omega$-modular bivector} associated 
with $P$. Note that $P_\nu\wedge\Xi$ is of rank 2 (if different from zero), i.e., is a 
smallest possible $\omega$--non--unimodular part of $P$.
This may be interpreted as a canonical disassembling of $P$ into $\omega$-unimodular and 
$\omega$-non-unimodular parts.
 
 Obviously, all $\omega$-modular bivectors associated with $P$ form an affine subspace  
 in $D_2(M)$ modelled on the subspace $\{P_f\wedge\Xi \;|\;\Xi(f)=0\}$. 

All $\omega$-modular bivectors associated with $P$ are compatible each other. Indeed, by 
proposition\;\ref{LIST}, (viii), $[P_{f}, \Xi]=0$ 
if $\Xi(f)=\const$. Since the Schouten bracket is a graded biderivation of $D_{*}(M)$, this implies
$$
\ldb P_\nu\wedge\Xi, P_{\mu}\wedge\Xi\rdb=0 \quad \mathrm{if} \quad \Xi(\nu)=\Xi(\mu)=1.
$$

It is remarkable that an $\omega$-modular bivector coincides with its 
$\omega$-non-unimodular part, i.e., is \emph{completely $\omega$-non-modular}.
\begin{proc}\label{IDE} If $P,\omega,\Xi$ and $\nu$ are as above, then
\begin{enumerate}
\item the $\omega$--modular field of the Poisson structure 
 $P_\nu\wedge \Xi$  coincides with  $\Xi$;
\item if $\Xi(\mu)=1$, then  $(P_\nu\wedge\Xi)_\mu\wedge\Xi=P_\nu\wedge\Xi$.
\end{enumerate}
\end{proc}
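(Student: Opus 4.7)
The plan is to prove (1) by a direct Cartan-calculus computation of $d(Q\rfloor\omega)$ for $Q=P_\nu\wedge\Xi$, and then to obtain (2) by a one-line algebraic manipulation (which does not even use (1) substantially).

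For (1), I would use the defining relation $\Xi_Q\rfloor\omega=d(Q\rfloor\omega)$ of the $\omega$-modular field of a Poisson bivector $Q$. Since $i_{A\wedge B}=i_A\circ i_B$, I can write $Q\rfloor\omega=i_{P_\nu}i_\Xi\omega$. Expanding with the standard identities $di_X=L_X-i_Xd$ and $L_Xi_Y-i_YL_X=i_{[X,Y]}$ (both valid for vector fields) and using $d\omega=0$, I get
\begin{equation*}
d(i_{P_\nu}i_\Xi\omega)=i_\Xi L_{P_\nu}\omega\;+\;i_{[P_\nu,\Xi]}\omega\;-\;i_{P_\nu}L_\Xi\omega.
\end{equation*}
At this point I would invoke the three facts already established in the excerpt:
(a) $[\Xi,P_\nu]=P_{\Xi(\nu)}=P_1=0$ by Proposition \ref{LIST}(vii), so the middle term vanishes;
(b) $L_\Xi\omega=0$ by Proposition \ref{LIST}(iv), killing the last term;
(c) $L_{P_\nu}\omega=\Xi(\nu)\omega=\omega$ by the divergence formula (\ref{DIV}).
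Substitution gives $d(Q\rfloor\omega)=i_\Xi\omega=\Xi\rfloor\omega$, whence $\Xi_Q=\Xi$ by uniqueness.

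For (2), I would compute $(P_\nu\wedge\Xi)_\mu=-d\mu\rfloor(P_\nu\wedge\Xi)$ using the contraction rule $d\mu\rfloor(X\wedge Y)=X(\mu)Y-Y(\mu)X$ for vectors $X,Y$. Together with $\Xi(\mu)=1$ this gives
\begin{equation*}
(P_\nu\wedge\Xi)_\mu=\Xi(\mu)P_\nu-P_\nu(\mu)\Xi=P_\nu-P_\nu(\mu)\Xi.
\end{equation*}
Wedging with $\Xi$ and using $\Xi\wedge\Xi=0$ yields $(P_\nu\wedge\Xi)_\mu\wedge\Xi=P_\nu\wedge\Xi$, as required.

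There is essentially no hard step: (2) is a one-line wedge computation, and (1) reduces to the Cartan-type expansion above, whose three resulting terms each collapse by an earlier identity. The only care needed is in checking sign conventions for $\rfloor$ versus $i_Q$ and for the graded Leibniz/Cartan formulas, but these match the conventions fixed in Section \ref{preliminaries}.
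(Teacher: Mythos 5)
Your proof is correct and follows essentially the same route as the paper: part (2) is the paper's own one-line contraction computation (your $P_\nu(\mu)$ is the paper's $\{\mu,\nu\}$), and part (1) simply carries out inline the Cartan-calculus argument that the paper delegates to proposition\,\ref{SPLIT},\,(2), using the identical ingredients $[\Xi,P_\nu]=0$, $L_\Xi(\omega)=0$ and $L_{P_\nu}(\omega)=\Xi(\nu)\omega=\omega$.
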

{\it Proof.} By definition (\ref{POL}), the first assertion is just an interpretation of 
proposition \ref{SPLIT}, (2). The second one follows from:
$$
(P_{\nu}\wedge\Xi)_\mu=-d\mu\mathop{\rfloor}(P_\nu\wedge\Xi)= 
\Xi(\mu)P_\nu-\{\mu,\nu\}\Xi\qquad\Box
$$

So, in contrast to general Poison structures, the non-unimodular part of 
an $\omega$--modular bivector is unique, i.e. does not depend on 
the choice of a normalizing function $\nu$.

Denote by $\{\cdot,\cdot\}_{non}$ (resp., $\{\cdot,\cdot\}_{uni}$) the Poisson 
bracket corresponding to the $\omega$-non-uni-modular
 (resp., $\omega$-unimodular) part of $P$ according to (\ref{BRA}).Then
$$
\{f,g\}_{non}=\{f,\nu\}\Xi(g)-\{g,\nu\}\Xi(f).
$$
If $P$--hamiltonian fields $P_f$ and $P_g$ are $\omega$--divergenceless i.e., 
$\Xi(f)=\Xi(g)=0$ (see (\ref{DIV}), then
$$
\{f,g\}_{non}=0 \Longleftrightarrow \{f,g\}_{uni}=\{f,g\}.
$$
This shows that restriction of the bracket $\{\cdot,\cdot\}_{uni}$ to the
$\omega$--divergenceless part of the original Poisson structure does not depend 
on the choice of the normalizing function $\nu$ (see (\ref{BRA})).

An abstract description of 
$\omega$--modular bivectors is as follows.
\begin{proc}\label{UAB}
Let $X,\Xi\in D(M)$ and the volume form $\omega\in\Lambda^n(M)$ be such that
\begin{equation}\label{uab}
[X,\Xi]=0, \quad L_\Xi(\omega)=0,\quad L_X(\omega)=\omega
\end{equation}
Then $P=X\wedge\Xi$ is a Poisson structure, which coincides with its 
$\omega$--non--unimodular part, and $\Xi$ is the  $\omega$--modular field of it.
\end{proc}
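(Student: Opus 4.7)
The plan is to verify the three assertions in turn, each being a short calculation once the right identity is invoked.

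First, I would check that $P=X\wedge\Xi$ is Poisson, i.e.\ $\ldb P,P\rdb=0$. Since the Schouten bracket is a graded biderivation of $\wedge$ and for vector fields it reduces to the ordinary commutator, every term in the expansion of $\ldb X\wedge\Xi,X\wedge\Xi\rdb$ is a wedge product involving one of $[X,X]$, $[\Xi,\Xi]$ or $[X,\Xi]$, all of which vanish by hypothesis. Hence $\ldb P,P\rdb=0$.

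Second, I would show that $\Xi$ is the $\omega$-modular field of $P$, i.e.\ $\Xi\rfloor\omega=d(P\rfloor\omega)$. Using $i_P=i_X\circ i_\Xi$, one has $P\rfloor\omega=X\rfloor(\Xi\rfloor\omega)$. Set $\alpha_\Xi=\Xi\rfloor\omega$; since $L_\Xi(\omega)=0$ and $d\omega=0$, the Cartan formula gives $d\alpha_\Xi=0$, so
\begin{equation*}
d(X\rfloor\alpha_\Xi)=L_X(\alpha_\Xi)-X\rfloor d\alpha_\Xi=L_X(\Xi\rfloor\omega).
\end{equation*}
Now formula (\ref{[L_X,i_P]}) applied to $Q=\Xi$ (or equivalently the familiar identity $[L_X,i_\Xi]=i_{[X,\Xi]}$) yields
\begin{equation*}
L_X(\Xi\rfloor\omega)=\Xi\rfloor L_X(\omega)+[X,\Xi]\rfloor\omega=\Xi\rfloor\omega,
\end{equation*}
where the last equality uses precisely the two remaining hypotheses $L_X(\omega)=\omega$ and $[X,\Xi]=0$. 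This is the heart of the proposition: the normalization $L_X(\omega)=\omega$ is exactly what forces $\Xi$ to be the modular field of $X\wedge\Xi$. Thus $d(P\rfloor\omega)=\Xi\rfloor\omega$, proving the second claim.

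Third, I would show that $P$ coincides with its $\omega$-non-unimodular part. By the description in proposition \ref{IDE}, it suffices to pick locally a function $\nu$ with $\Xi(\nu)=1$ (possible wherever $\Xi\neq 0$) and verify that $P_\nu\wedge\Xi=P$. A direct computation gives
\begin{equation*}
P_\nu=-d\nu\rfloor(X\wedge\Xi)=\Xi(\nu)\,X-X(\nu)\,\Xi=X-X(\nu)\,\Xi,
\end{equation*}
and therefore $P_\nu\wedge\Xi=(X-X(\nu)\Xi)\wedge\Xi=X\wedge\Xi=P$, since $\Xi\wedge\Xi=0$. Hence the canonical decomposition (\ref{BRA}) of $P$ has trivial unimodular part, which is exactly the statement that $P$ is completely $\omega$-non-unimodular.

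I do not expect a serious obstacle: the computation is almost bookkeeping, and the only subtlety is recognizing that the three hypotheses $[X,\Xi]=0$, $L_\Xi(\omega)=0$, $L_X(\omega)=\omega$ get used once each in step two precisely to kill, respectively, the commutator term, the $\Xi\rfloor L_X(\omega)$ term's deviation, and to produce the right-hand side $\Xi\rfloor\omega$. The mildly delicate point is that step three is phrased locally (needing $\Xi(\nu)=1$), but since $\Xi\wedge\Xi=0$ makes the final identity independent of $\nu$, the conclusion is unambiguous.
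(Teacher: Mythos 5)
Your proof is correct and follows essentially the same route as the paper: Schouten-bracket expansion for the Poisson condition, the identity $L_X(\Xi\rfloor\omega)=\Xi\rfloor L_X(\omega)+[X,\Xi]\rfloor\omega$ for the modular field, and a local normalizing function $\nu$ for the last claim. The only (harmless) variation is in the final step, where the paper chooses $\nu$ with both $\Xi(\nu)=1$ and $X(\nu)=0$ so that $P_\nu=X$ exactly, while you only impose $\Xi(\nu)=1$ and let $\Xi\wedge\Xi=0$ absorb the extra term $-X(\nu)\,\Xi$ — a slightly cleaner bookkeeping of the same computation.
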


{\it Proof.} First, observe that conditions (\ref{uab}) implies independence of vector fields
$X$ and $\Xi$. Since $\ldb X\wedge\Xi,X\wedge\Xi\rdb 
=2[X,\Xi]\wedge X\wedge\Xi$, the condition $[X,\Xi]=0$ implies that $P$ is a 
Poisson structure. On the other hand, in view of (\ref{[L_X,i_P]}), (\ref{L as derivation}) and 
(\ref{uab}) we have
$$
\begin{array}{l}
L_{P}(\omega)=X\mathop{\rfloor}L_\Xi(\omega)-L_X(\Xi\mathop{\rfloor}\omega)=
L_X(\Xi)\mathop{\rfloor}\omega-\Xi\mathop{\rfloor}L_X(\omega)= 
-\Xi\mathop{\rfloor}\omega
\end{array}
$$
This shows (see (\ref{Pdd}) and (\ref{POL})) that $\Xi$ is the 
$\omega$--modular vector field of $P$.

Since $X$ and $\Xi$  commute, there exists, at least, locally,  a function $\nu$ such that 
$\Xi(\nu)=1,\quad X(\nu)=0$. For such a function $P_\nu=-d\nu\rfloor(X\wedge\Xi)=X,$ i.e., 
locally, $P=P_\nu\wedge\Xi$ and, therefore, $P$ coincides with its $\omega$--non--unimodular
part.  $\qquad\Box$

\begin{defi}\label{PNU} A Poisson bivector described in proposition (\ref{UAB})
 is called an \emph{$\omega$--modular bivector}.
\end{defi}

In what follows we shall assume satsified conditions of proposition\,\ref{UAB} when 
referring to an $\omega$--modular bivector presented in the form $X\wedge\Xi$.

\subsection{Compatibility of $\omega$--modular bivectors.}

Now we shall discuss compatibility conditions involving  $\omega$--modular bivectors.  First, we consider
the inverse to the $\omega$--modular splitting procedure.
\begin {proc}\label{VUB}
An  $\omega$--modular bivector $X\wedge\Xi$
and an unimodular  structure $Q$ are compatible if and only if
$$
\L_\Xi(Q)=0,\quad\Xi\wedge L_X(Q)=0.
$$
\end{proc}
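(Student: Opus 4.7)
The plan is to reduce the compatibility equation $\ldb X\wedge\Xi,Q\rdb=0$ to a sum of two linearly independent wedge expressions and then read off the two conditions.

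First I would use the graded Leibniz property of the Schouten bracket, applied with $X,\Xi\in D(M)$ (both of degree $1$) and $Q\in D_{2}(M)$, to expand
\[
\ldb X\wedge\Xi,Q\rdb \;=\; X\wedge \ldb \Xi,Q\rdb \;-\; \ldb X,Q\rdb\wedge\Xi.
\]
Using the graded antisymmetry (\ref{Schouten-skew}) and the identity $L_{Y}(Q)=\ldb Q,Y\rdb$ for a vector field $Y$, this rewrites as
\[
\ldb X\wedge\Xi,Q\rdb \;=\; L_{X}(Q)\wedge\Xi \;-\; X\wedge L_{\Xi}(Q).
\]
Observe that $L_{X}(Q)\in D_{2}(M)$, so $L_{X}(Q)\wedge\Xi=\Xi\wedge L_{X}(Q)$; thus the right hand side is a sum of two $3$-vectors, one of the form $\Xi\wedge(\cdot)$ and one of the form $X\wedge(\cdot)$.

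For the sufficiency direction, assuming $L_{\Xi}(Q)=0$ and $\Xi\wedge L_{X}(Q)=0$, both summands in the above formula vanish, so $\ldb X\wedge\Xi,Q\rdb=0$ and compatibility follows. For the necessity direction, I would apply corollary \ref{BUU} to the compatible pair $P_{1}=X\wedge\Xi$, $P_{2}=Q$. The $\omega$-modular field of $X\wedge\Xi$ is $\Xi$ (by proposition \ref{UAB}), while the $\omega$-modular field of $Q$ is zero since $Q$ is $\omega$-unimodular. The identity (\ref{buu}) therefore collapses to $L_{\Xi}(Q)=0$. Substituting this back into the expansion of $\ldb X\wedge\Xi,Q\rdb=0$ yields $L_{X}(Q)\wedge\Xi=0$, hence $\Xi\wedge L_{X}(Q)=0$.

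The only real obstacle is bookkeeping: keeping track of the signs in the graded Leibniz rule so that the cross-term $X\wedge L_{\Xi}(Q)$ is cleanly separated from $\Xi\wedge L_{X}(Q)$. Once the bracket is written in that form, the two conditions appear as the vanishing of two summands involving different factors ($X$ versus $\Xi$), and the additivity of modular fields does the rest in the converse direction, so no further structural input about $Q$ is needed beyond its unimodularity.
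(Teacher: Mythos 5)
Your proof is correct and follows essentially the same route as the paper: the paper likewise expands $\ldb X\wedge\Xi,Q\rdb$ by the graded Leibniz rule into the two terms involving $L_\Xi(Q)$ and $L_X(Q)\wedge\Xi$, and obtains $L_\Xi(Q)=0$ in the necessity direction from additivity of modular fields (proposition \ref{PFA}) combined with proposition \ref{LIST},(v) — which is exactly what corollary \ref{BUU} packages when you take $\Xi_2=0$ for the unimodular $Q$. The only cosmetic difference is a sign in the cross term of the Leibniz expansion, which does not affect the conclusion.
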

{\it Proof.} First, observe that $\Xi$ is the  $\omega$--modular field of
the Poisson structure $X\wedge\Xi+Q$ (proposition\,\ref{PFA}). 
Therefore, in view of proposition\,\ref{LIST}, (v), 
$0=L_\Xi(X\wedge\Xi+Q)=L_\Xi(Q).$ On the other hand, the compatibility 
condition of $X\wedge\Xi$ and  $Q$ is
$$
\begin{array}{l}
0=\ldb X\wedge\Xi,Q\rdb=X\wedge\ldb\Xi,Q\rdb-\ldb X,Q\rdb
\wedge\Xi=
\L_\Xi(Q)\wedge X+ L_X(Q)\wedge \Xi
\end{array}
$$
Since $\L_\Xi(Q)=0$, this gives  the desired result.$\qquad\Box $
\begin{rmk}
Generally, $X\wedge\Xi$ is not an $\omega$--modular bivector associated with $P=X\wedge\Xi+Q$.
For example, if $M=\R^2,  \,\omega=dx_1\wedge dx_2, X=x_1\xi_1, \,\Xi=\xi_2$ and $Q=\xi_1\xi_2$, then
$X\wedge\Xi$ is an  $\omega$--modular bivector compatible with the unimodular Poisson bivector $Q$. But
in this case $P$ is another $\omega$--modular bivector, i.e., the unimodular part of $P$ is trivial. 
\end{rmk}

Now we shall discuss compatibility of two $\omega$-modular bivectors.
Assume $X_i,\Xi_i\in D(M), i=1,2,$ to be as in proposition (\ref{UAB}). 
By developing the compatibility condition $\ldb P_1,P_2\rdb=0$ of $\omega$-modular 
bivectors $P_1=X_1\wedge \Xi_1$ and $P_2=X_2\wedge 
\bar{\Xi}_2$ we obtain
\begin{eqnarray}\label{kom}
 \ldb X_1,X_2\rdb\wedge\Xi_1\wedge\Xi_2 +
\ldb\Xi_1,\Xi_2 \rdb\wedge X_1\wedge X_2=\nonumber\\
 \ldb X_1,\Xi_2 \rdb\wedge \Xi_1\wedge X_2+
\ldb \Xi_1,X_2 \rdb\wedge X_1\wedge\Xi_2.
\end{eqnarray}

However, formula (\ref{kom}) does not reflect modularity properties of $P_1$ and $P_2$, and
these are to be added. Since (\ref{kom}) guarantees that $P=P_1+P_2$ is a Poisson bivector
with the modular field $\Xi=\Xi_1+\Xi_2$, proposition\,\ref{LIST}  and its consequences are 
valid for $P, \Xi$. 

For instance, formula (\ref{buu}) in the considered case  can be rewritten as
\begin{equation}\label{lko}
\ldb X_2,\Xi_1\rdb\wedge\Xi_2+\ldb X_1,\Xi_2 \rdb\wedge \Xi_1
+(X_1-X_2)\wedge\ldb\Xi_1,\Xi_2 \rdb=0
\end{equation}
Note that (\ref{lko}) is a formal conseguence of (\ref{kom}) and modularity property of 
$P_i$'s. Moreover, by 
multiplying (\ref{lko}) by $X_2$, we can bring formula (\ref{kom})  to the form
\begin{equation}\label{rdc}
\ldb X_1,X_2\rdb\wedge\Xi_1\wedge\Xi_2= \ldb \Xi_1,X_2 \rdb \wedge (X_1-X_2) 
\wedge\Xi_2
\end{equation}
or, similarly, to 
\begin{equation}\label{rds}
\ldb X_1,X_2\rdb\wedge\Xi_1\wedge\Xi_2= \ldb X_1,\Xi_2 \rdb \wedge (X_1-X_2) 
\wedge\Xi_1
\end{equation}
Hence we have
\begin{proc}\label{OMP}
$\omega$-modular bivectors $X_1\wedge\Xi_1$ and  $X_2\wedge\Xi_2$ are 
compatible if and only if (\ref{lko}) and one of formulae (\ref{rdc}) or (\ref{rds}) 
holds.
 $\qquad\Box$
\end {proc}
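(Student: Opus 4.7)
The plan is to show that the single compatibility condition $\ldb P_1,P_2\rdb=0$ is equivalent to the pair of formulas (\ref{lko}) and (\ref{rdc}) (the case of (\ref{rds}) being symmetric under $1\leftrightarrow 2$). The whole argument is an exercise in the graded Leibniz rule for the Schouten bracket combined with the modularity identity already recorded as (\ref{buu}).

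First I would expand $\ldb X_1\wedge\Xi_1, X_2\wedge\Xi_2\rdb$ by the biderivation property of $\ldb\cdot,\cdot\rdb$. Since all four vector fields have degree one, the only surviving terms are the four pairwise brackets, each wedged with the two remaining factors, and collecting them reproduces exactly (\ref{kom}). Thus $\ldb P_1,P_2\rdb=0$ is literally the statement (\ref{kom}).

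Next, assuming compatibility, proposition\,\ref{PFA} identifies $\Xi_1+\Xi_2$ as the $\omega$--modular field of $P_1+P_2$, so corollary\,\ref{BUU} yields $L_{\Xi_1}(P_2)+L_{\Xi_2}(P_1)=0$. Using $L_{\Xi}(Q)=\ldb Q,\Xi\rdb$ and expanding both brackets by Leibniz gives
\[
\ldb X_2,\Xi_1\rdb\wedge\Xi_2+X_2\wedge\ldb \Xi_2,\Xi_1\rdb+\ldb X_1,\Xi_2\rdb\wedge\Xi_1+X_1\wedge\ldb \Xi_1,\Xi_2\rdb=0,
\]
which, after using $\ldb\Xi_2,\Xi_1\rdb=-\ldb\Xi_1,\Xi_2\rdb$, is exactly (\ref{lko}). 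To obtain (\ref{rdc}) I would then wedge (\ref{lko}) by $X_2$; the self-wedge $X_2\wedge X_2$ kills one term, and solving for $\ldb\Xi_1,\Xi_2\rdb\wedge X_1\wedge X_2$ allows me to substitute it into (\ref{kom}). The algebraic simplification
\[
X_1\wedge\Xi_2+\Xi_2\wedge X_2=(X_1-X_2)\wedge\Xi_2
\]
then delivers (\ref{rdc}) precisely. This completes the forward implication.

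For the converse, assume (\ref{lko}) and (\ref{rdc}). Wedging (\ref{lko}) with $X_2$ and rearranging expresses $\ldb\Xi_1,\Xi_2\rdb\wedge X_1\wedge X_2$ in terms of the right-hand-side atoms of (\ref{kom}); adding this to (\ref{rdc}) recovers (\ref{kom}), hence $\ldb P_1,P_2\rdb=0$. The argument with (\ref{rds}) is identical after swapping the roles of the two bivectors.

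The only real obstacle is sign-bookkeeping: the wedge reshuffles of three one-vectors produce cyclic signs, and one must keep straight that $L_\Xi(Q)=\ldb Q,\Xi\rdb$ when invoking corollary\,\ref{BUU}. No deeper ingredient is required beyond the Leibniz rule, the modularity of each $P_i$ (which enters only through (\ref{buu})), and proposition\,\ref{PFA}.
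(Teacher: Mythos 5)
Your proposal is correct and follows essentially the same route as the paper: the paper's own argument is the discussion immediately preceding the proposition, namely expanding $\ldb P_1,P_2\rdb=0$ into (\ref{kom}), deriving (\ref{lko}) from (\ref{buu}) via proposition\,\ref{PFA}, and wedging (\ref{lko}) with $X_2$ to pass between (\ref{kom}) and (\ref{rdc}) (or (\ref{rds})), which is exactly what you do in both directions. No gaps.
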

\begin{rmk}\label{TCN}
Condition (\ref{buu}) is manifestly satisfied if $\Xi_1=\lambda\Xi_2, \quad 0\ne \lambda\in C^{\infty}(M)$,
i.e., two $\omega$-modular bivectors are compatible if their $\omega$-modular 
fields are proprtional.
\end{rmk}

\subsection{On complexity of the matching problem}

Let $\mathcal{P}_i$  (resp.,  $\mathcal{G}_i$), $i=1,\dots, m$, be diffeomorphism 
(resp., isomorphism) classes of Poisson (resp., Lie algebras) structures of the same 
dimension. The \emph{matching problem} is to classify various realizations of $P_i$'s 
(resp., $\gG_i$'s) of these structures on the same manifold (resp., vector space) for
which $P_i$ and $P_j$ (resp., $\gG_i$ and $\gG_j$) are compatible for all $i$ and $j$. 
Such a realization will be called a \emph{matching}. An equivalence of two matchings 
is  defined in an obvious manner, and the matching problem is : what are different, i.e.,
nonequivalent, matchings of given Poisson (resp., Lie algebra) structures?

This problem seems to be rather difficult. Below we shall discuss it for two 
$\omega$--modular bivectors in order to show its complexity.  Namely, we shall solve 
compatibility conditions for $\omega$--modular bivectors $P_1=X_1\wedge \Xi_1$ and 
$P_2=X_2\wedge \Xi_2$ assuming that $\Xi_1$ and $\Xi_2$are independent and  
$[\Xi_1,\Xi_2]=0$. The last assumption is automatically satisfied for Lie algebras. 

With these assummptions (\ref{lko}) becomes
$$
\ldb X_2,\Xi_1\rdb\wedge\Xi_2+\ldb X_1,\Xi_2\rdb\wedge\Xi_1=0,
$$
or, equivalently,
\begin{equation}\label{yhi}
\ldb X_1,\Xi_2\rdb=f_1\Xi_1+\lambda\Xi_2, \qquad
\ldb X_2,\Xi_1\rdb=\lambda\Xi_1+f_2\Xi_2
\end{equation}
for some $f_1, f_2, \lambda\in C^\infty(M)$. Now each of formulae
(\ref{rdc}) and (\ref{rds}) can be brought to the form
$$
(\ldb X_1,X_2\rdb- \lambda( X_1-X_2))\wedge\Xi_1\wedge\Xi_2=0
$$
The last relation is equivalent to
\begin{equation}\label{ihy}
\ldb X_1,X_2\rdb=\lambda( X_1-X_2)+\mu_1\Xi_1-\mu_2\Xi_2
\end{equation}
for some $\mu_1,\mu_2\in C^\infty(M)$.

\begin{lem}\label{UNI}
 If $X_1,X_2,\Xi_1, \Xi_2$
are as above, then functions $f_1,f_2,\lambda,\mu_1,\mu_2\,$ occuring in 
(\ref{yhi}) and (\ref{ihy}) satisfy relations
\begin{eqnarray}\label{erl}
\Xi_1(\lambda)=\Xi_2(\lambda)=
\Xi_1(f_1)=\Xi_2(f_2)=0\nonumber\\
\Xi_1(\mu_2)=2f_2\lambda+X_1(f_2)\nonumber\\
\Xi_2(\mu_1)=2f_1\lambda+X_2(f_1)\nonumber\\
\lambda^2+f_1f_2=-\Xi_1(\mu_1)-X_1(\lambda)= -\Xi_2(\mu_2)-X_2(\lambda).
\end{eqnarray}
\end{lem}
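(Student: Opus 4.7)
The plan is to apply $\mathrm{ad}_{\Xi_1}$ and $\mathrm{ad}_{\Xi_2}$ to both sides of (\ref{yhi}) and (\ref{ihy}), then invoke the Jacobi identity together with the defining properties of the two $\omega$-modular bivectors — namely $[\Xi_1,X_1]=[\Xi_2,X_2]=0$ from proposition \ref{UAB} and the standing assumption $[\Xi_1,\Xi_2]=0$ — to collapse each resulting double bracket. Since $\Xi_1,\Xi_2$ are independent by hypothesis, equating $\Xi_1$- and $\Xi_2$-coefficients in the resulting identities forces the five claimed relations.

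For the first quartet of vanishings, I would apply $\mathrm{ad}_{\Xi_1}$ to the first equation in (\ref{yhi}). The left side $\ldb\Xi_1,\ldb X_1,\Xi_2\rdb\rdb$ equals $\ldb\ldb\Xi_1,X_1\rdb,\Xi_2\rdb+\ldb X_1,\ldb\Xi_1,\Xi_2\rdb\rdb$ by Jacobi, and both summands vanish. The right side expands by Leibniz (using $[\Xi_1,\Xi_2]=0$) to $\Xi_1(f_1)\Xi_1+\Xi_1(\lambda)\Xi_2$, so independence of $\Xi_1,\Xi_2$ forces $\Xi_1(f_1)=\Xi_1(\lambda)=0$. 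Applying $\mathrm{ad}_{\Xi_2}$ to the second equation in (\ref{yhi}) symmetrically produces $\Xi_2(f_2)=\Xi_2(\lambda)=0$.

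For the remaining relations, I would apply $\mathrm{ad}_{\Xi_2}$ to (\ref{ihy}). Using $[\Xi_2,X_2]=0$ Jacobi reduces $\ldb\Xi_2,\ldb X_1,X_2\rdb\rdb$ to $\ldb\ldb\Xi_2,X_1\rdb,X_2\rdb$; substituting $\ldb\Xi_2,X_1\rdb=-f_1\Xi_1-\lambda\Xi_2$ from (\ref{yhi}) and expanding via the Leibniz rules $\ldb aY,Z\rdb=a\ldb Y,Z\rdb-Z(a)Y$ together with $\ldb\Xi_1,X_2\rdb=-\lambda\Xi_1-f_2\Xi_2$ yields $(X_2(f_1)+\lambda f_1)\Xi_1+(X_2(\lambda)+f_1f_2)\Xi_2$. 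On the other hand, expanding $\ldb\Xi_2,\lambda(X_1-X_2)+\mu_1\Xi_1-\mu_2\Xi_2\rdb$ directly — using the just-established $\Xi_2(\lambda)=0$ — gives $(\Xi_2(\mu_1)-\lambda f_1)\Xi_1-(\Xi_2(\mu_2)+\lambda^2)\Xi_2$. Matching $\Xi_1$-coefficients delivers $\Xi_2(\mu_1)=2f_1\lambda+X_2(f_1)$, and matching $\Xi_2$-coefficients delivers $\lambda^2+f_1f_2=-\Xi_2(\mu_2)-X_2(\lambda)$. The mirror calculation with $\mathrm{ad}_{\Xi_1}$ applied to (\ref{ihy}) then produces the companion identities $\Xi_1(\mu_2)=2f_2\lambda+X_1(f_2)$ and $\lambda^2+f_1f_2=-\Xi_1(\mu_1)-X_1(\lambda)$, exhausting the list in (\ref{erl}).

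The main obstacle is purely bookkeeping: one must be scrupulous with signs in the two Leibniz rules $\ldb aY,Z\rdb=a\ldb Y,Z\rdb-Z(a)Y$ and $\ldb Y,aZ\rdb=Y(a)Z+a\ldb Y,Z\rdb$, and must never read $\ldb\Xi_1,X_2\rdb$ off (\ref{yhi}) without the minus sign coming from skew-symmetry. No further geometric input beyond the two modular conditions and $[\Xi_1,\Xi_2]=0$ is needed; in particular the Poisson bivector equations (\ref{kom})--(\ref{ihy}) themselves are used only to identify the coefficients $f_i,\lambda,\mu_i$, not to close the argument.
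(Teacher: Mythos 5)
Your proposal is correct and follows exactly the route the paper indicates: applying the Jacobi identity to the triples $\{X_1,\Xi_1,\Xi_2\}$, $\{X_2,\Xi_2,\Xi_1\}$ and $\{\Xi_i,X_1,X_2\}$, using $[X_i,\Xi_i]=0$, $[\Xi_1,\Xi_2]=0$ and the pointwise independence of $\Xi_1,\Xi_2$ to equate coefficients. The paper gives only the first of these computations explicitly (with a typo, $\Xi_1(\alpha)$ for $\Xi_1(f_1)$); your write-up supplies the remaining details correctly, including the signs.
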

{\it Proof.} These are consequences of  relations (\ref{yhi}), (\ref{ihy}) and Jacobi identities 
involving vector fields $X_1,X_2, \Xi_1,\Xi_2$. For instance,  Jacobi identity for  
$X_1, \Xi_1,\Xi_2$ gives $\Xi_1(\a)=\Xi_1(\lambda)=0$. 
$\qquad\Box$

The gauge $X_i\mapsto X_i+g_i\Xi_i$ with $\Xi_i(g_i)=0$ annihilates $f_i, i=1,2,$ if 
$\Xi_1(g_2)=-f_2,  \,\Xi_2(g_1)=-f_1$. Due to relations $\Xi_i(f_i)=0$ from the above lemma 
such functions $g_i$ exist, at least, locally and relations (\ref{erl}) are simplified to
\begin{eqnarray}\label{erlm}
\Xi_1(\lambda)=\Xi_2(\lambda)=\Xi_1(\mu_2)=\Xi_2(\mu_1)=0,\nonumber\\
\Xi_1(\mu_1)+X_1(\lambda)= \Xi_2(\mu_2)+X_2(\lambda)=-\lambda^2.
\end{eqnarray}
Note that the so-normalized vectors $X_i$'s are uniquely defined up to a transformation 
$X_i\mapsto X_i+\phi_i\Xi_i$ with $\Xi_j(\phi_i)=0, \;i,j,=1,2$. Such a transformation induces 
a transformation $\mu_i\mapsto \mu_i+\psi_i, \,i=1,2,$ with 
$\Xi_j(\psi_i)=0, \;i,j,=1,2$. Here $\psi_1, \psi_2$ arbitrary satisfying the last conditions
functions.
 
Now It is convenient to pass to vectors $Z=X_1-X_2$ and $W=X_1+X_2$ instead of 
$X_1$ and $X_2$ which will be assumed normalized as above. In these terms relations 
(\ref{yhi}) and (\ref{ihy}) become
\begin{eqnarray}\label{newihy}
\ldb Z,\Xi_1\rdb=-\lambda\Xi_1, \quad \ldb Z,\Xi_2\rdb=-\lambda\Xi_2, 
\quad\ldb W,\Xi_i\rdb=\lambda\Xi_i, \nonumber \\
\ldb Z,W\rdb=2\lambda\,Z+2\,\mu_1\Xi_1-2\,\mu_2\Xi_2
\end{eqnarray}
with (see (\ref{erl}))
\begin{equation}\label{newerlm}
Z(\lambda)=\Xi_2(\mu_2)-\Xi_1(\mu_1), \quad W(\lambda)=-(2\lambda^2+\Xi_1(\mu_1)+\Xi_2(\mu_2)).
\end{equation}

Now suppose that the bidimensional foliation generated by $\Xi_1$ and $\Xi_2$ is a 
fibration $\pi:M\rightarrow N$. This takes place, at least, locally. Then formula 
(\ref{newihy}) tells that vector fields $Z$ and $W$ are $\pi$-projectable. Moreover,
it follows from (\ref{newihy}) and (\ref{newerlm}) that $\Xi_i(\mu_i)=0, \;i=1,2,$ and hence
$$
\lambda=\pi^*(z), \quad \Xi_i(\mu_i)=\pi^*(\nu_i), \;i=1,2,  \quad \mbox{for some} \quad z, \,\nu_1, \,\nu_2\in\,C^{\infty}(N). 
$$

Let $\bar{Z}=\pi(Z), \; \bar{W}=\pi(W), \;u=\nu_2-\nu_1$ and $v=\nu_1+\nu_2$. Then
\begin{equation}\label{projrel}
\bar{Z}(z)=u, \quad \bar{W}(z)=-(2z^2+u), \quad [\bar{Z},\bar{W}]=2z\bar{Z}.
\end{equation}
We shall explicitly solve these relations assuming  that functions $z, u$ and $v$  are
functionally independent (the \emph{generic} case). To this end note that there is a local 
chart $(z,u,v, y_1,\dots,y_{n-5})$ on $N$ such that $\bar{Z}(y_i)=\bar{W}(y_i)=0, \forall\,i$, 
and in this chart
\begin{equation}\label{exactfields}
\bar{Z}=u\partial_z+\alpha\partial_u+\beta\partial_v, \quad \bar{W}=
-(2z^2+v)\partial_z+a\partial_u+b\partial_v
\end{equation}
with $\alpha, \beta, a$ and $b$ being some functions of $z, u,$ and $v$. By using first 
two relations in (\ref{projrel})
and (\ref{exactfields}) one finds that $[\bar{Z}, \bar{W}](z)=-(4zu+a+\beta)$. On the other 
hand, the third relation in (\ref{projrel}) gives  $[\bar{Z}, \bar{W}](z)=2zu$. Hence
\begin{equation}\label{a}
a=-(\beta+6zu)
\end{equation}
Next, by substituting fields (\ref{exactfields}) to $[\bar{Z},\bar{W}]=2z\bar{Z}$ and taking 
into account (\ref{a}) one obtains
\begin{equation}\label{pre-b}
\bar{Z}(a)-\bar{W}(\alpha)=2z\alpha, \quad \bar{Z}(b)-\bar{W}(\beta)=2z\beta
\end{equation}
The first of these equations can be resolved with respect to $b$:
\begin{equation}\label{b}
b=\frac{1}{\alpha_v}\psi(z,u,v,\alpha,\beta, \alpha_z,\dots,\beta_v)
\end{equation}
with $\psi$ being a polynomial of variables $z,u, v$, functions $\alpha$ and $\beta$ and 
their first order derivatives. So, coefficients $a$ and $b$ of $\bar{W}$ are completely 
determined by $\alpha$ and $\beta$.

Now, in view of (\ref{a}) and (\ref{b}), the second of equations (\ref{pre-b}) becomes a 
relation of the form
\begin{equation}\label{Phi}
\Phi(z,u,v,\alpha,\beta, \alpha_z,\dots,\beta_v)=0
\end{equation}
with $\Phi$ being a rational function of all involved arguments. The interested reader will 
easily find explicit expressions for $\phi$ and $\Phi$ which are not so instructive to be 
reported here.
\begin{proc}\label{InvariantsMatchings}
Functions $$\lambda=\pi^*(z), \quad\Xi_1(\mu_1)=-\frac{1}{2}\pi^*(u+v), \quad\Xi_2(\mu_2)=\frac{1}{2}\pi^*(u+v)$$ are differential invariants of matchings of generic $\omega$--modular bivectors with respect to diffeomorphisms preserving the
volume form $\omega$ as well as functions $\pi^*\alpha$ and $\pi^*\beta$ of variables 
$\pi^*(z), \pi^*(u), \pi^*(v)$ which are subject to differential relation (\ref{Phi}).
\end{proc}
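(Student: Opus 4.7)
The plan is to verify invariance of the claimed functions under the two sources of ambiguity in the data $(P_1,P_2,\omega)$ defining a matching, and then to interpret the coordinate calculation preceding the proposition as an explicit normal form. Equivalence of matchings is taken to be realization by a diffeomorphism of $M$ preserving $\omega$ and identifying the pairs $(P_1,P_2)$. Since the $\omega$-modular vector field $\Xi_i$ is intrinsic to $(P_i,\omega)$ (proposition \ref{IDE}, (1)), and since the decomposition $P_i=X_i\wedge\Xi_i$ is unique only up to the gauge $X_i\mapsto X_i+\phi_i\Xi_i$ with $\Xi_j(\phi_i)=0$ (the constraint coming from the normalization $f_1=f_2=0$ made right before (\ref{newihy})), the scalars $\lambda,\mu_1,\mu_2$ defined by (\ref{newihy}) must be checked for invariance under this gauge.

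First I would verify gauge invariance directly. Applying $X_1\mapsto X_1+\phi_1\Xi_1$ transforms $Z,W$ by the same $\phi_1\Xi_1$. Expanding $[Z+\phi_1\Xi_1,\Xi_1]$ and using $\Xi_1(\phi_1)=0$, one recovers $-\lambda\Xi_1$ with the same $\lambda$; the analogous computation for the $X_2$-gauge leaves $\lambda$ unchanged. For $\mu_i$ one computes the correction to $[Z,W]$; using the normalization $\ldb X_2,\Xi_1\rdb=\lambda\Xi_1$, $\ldb X_1,\Xi_2\rdb=\lambda\Xi_2$, together with $\Xi_1(\phi_1)=0=\Xi_1(\lambda)$, one finds that the correction to $\mu_1$ lies in $\ker\Xi_1$. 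Hence $\Xi_1(\mu_1)$, and by symmetry $\Xi_2(\mu_2)$, are gauge invariant. Combined with the diffeomorphism covariance of $\Xi_i$ and $P_i$, this establishes that $\lambda$, $\Xi_1(\mu_1)$ and $\Xi_2(\mu_2)$ are differential invariants.

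Next I would descend to the quotient. The relations $\Xi_i(\lambda)=0$ and $\Xi_i(\mu_i)\in\ker\Xi_1\cap\ker\Xi_2$ (read off from Lemma \ref{UNI} after normalization and from the compatibility identities (\ref{newerlm})) show that $\lambda$ and $\Xi_i(\mu_i)$ are $\pi$-basic, so they descend to functions $z,\nu_1,\nu_2$ on $N$. Setting $u=\nu_2-\nu_1$ and $v=\nu_1+\nu_2$ gives the stated formulas. The equations (\ref{newihy}) force $Z$ and $W$ to be $\pi$-projectable to $\bar Z,\bar W$ on $N$, and then (\ref{projrel}) is a direct consequence of (\ref{newihy})-(\ref{newerlm}). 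In the generic case where $z,u,v$ are functionally independent, I would extend $(z,u,v)$ to a chart $(z,u,v,y_1,\dots,y_{n-5})$ on $N$ with $\bar Z(y_i)=\bar W(y_i)=0$; the existence of such $y_i$ as common first integrals uses genericity and the relation $[\bar Z,\bar W]=2z\bar Z$, which integrates the joint distribution spanned by $\bar Z,\bar W$.

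Finally, I would read the coordinate expressions off the calculation already presented: the $z$-components of $\bar Z,\bar W$ are fixed by (\ref{projrel}), their $y_i$-components vanish, and their $(u,v)$-components give the free pair $(\alpha,\beta)$ for $\bar Z$ and $(a,b)$ for $\bar W$. Applying $[\bar Z,\bar W]=2z\bar Z$ first to $z$ yields (\ref{a}), expressing $a$ in terms of $\beta$; the $\partial_u$ component of $[\bar Z,\bar W]=2z\bar Z$ then solves for $b$ in the form (\ref{b}); and the $\partial_v$ component becomes the single compatibility (\ref{Phi}). Therefore $\pi^*\alpha$ and $\pi^*\beta$ are additional differential invariants subject to (\ref{Phi}), completing the list. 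The main technical obstacle is the gauge-invariance computation for $\Xi_i(\mu_i)$, which rests crucially on both normalizations $f_1=f_2=0$ and on $[\Xi_1,\Xi_2]=0$; once these are exploited the remaining steps are essentially bookkeeping.
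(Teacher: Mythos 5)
Your proposal establishes the \emph{invariance} half of the statement carefully (indeed more carefully than the paper, which dismisses the gauge computation for $\lambda$ and $\Xi_i(\mu_i)$ as a consequence of the preceding discussion, having already noted that the residual gauge $X_i\mapsto X_i+\phi_i\Xi_i$ with $\Xi_j(\phi_i)=0$ shifts $\mu_i$ by a function killed by both $\Xi_1$ and $\Xi_2$). But it omits the step that constitutes essentially the whole of the paper's proof: the \emph{realization} (lifting) claim. The paper states explicitly that $\alpha$ and $\beta$ qualify as the invariants of the problem, subject only to relation (\ref{Phi}), under the assumption ``that all vector fields $\bar{Z}, \bar{W}$ resolving relations (\ref{projrel}) can be lifted to vector fields $Z, W$ on $M$ that resolve relations (\ref{newihy}) and (\ref{newerlm})'', and then devotes its proof to exactly this surjectivity statement. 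Without it you can conclude that $\pi^*\alpha,\pi^*\beta$ \emph{satisfy} (\ref{Phi}), but not that (\ref{Phi}) is the \emph{only} constraint — i.e., not that the list of invariants is complete and free modulo (\ref{Phi}), which is the content needed for the subsequent remark that matchings depend on functional parameters.

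Concretely, the missing argument runs as follows. Work in a chart $(x_1,x_2,z,u,v,\dots)$ with $\Xi_i=\partial_{x_i}$, lift $\bar Z,\bar W$ to $\tilde Z,\tilde W$ annihilating $x_1,x_2$, and write the general candidate as $Z=\tilde Z+\lambda(x_1\partial_{x_1}-x_2\partial_{x_2})+\phi_1\partial_{x_1}+\phi_2\partial_{x_2}$, $W=\tilde W-\lambda(x_1\partial_{x_1}+x_2\partial_{x_2})+\psi_1\partial_{x_1}+\psi_2\partial_{x_2}$. After gauging away $\phi_1,\phi_2$, all relations in (\ref{newihy}) hold except the last, which reduces to the linear equations $\tilde Z(\psi_1)-\lambda\psi_1=\mu_1-\Xi_1(\mu_1)x_1$ and $\tilde Z(\psi_2)+\lambda\psi_2=\Xi_2(\mu_2)x_2-\mu_2$; these are solvable locally near a regular point of $\tilde Z$. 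This existence statement is not ``bookkeeping'' — it is the one genuinely analytic step, and your proof plan should include it (or an equivalent surjectivity argument) to match the assertion being made.
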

\begin{proof}
The first assertion of this proposition is obvious from the above discussion. Also, vector 
fields $\bar{Z}$ and $\bar{W}$ are differential invariants of the problem. Hence their 
components $\alpha$ and $\beta$ in the invariant chart $(z,u,v)$
are differential invariants as well assuming that all vector fields $\bar{Z}, \,\bar{W}$ 
resolving relations (\ref{projrel}) can be lifted to vector fields $Z, W$ on $M$ that resolve  
relations (\ref{newihy}) and (\ref{newerlm}).

To prove the last assertion consider a local chart $(x_1, x_2, z,u,v,...)$ in which 
$\Xi_i=\partial_{x_i} \,i=1,2$. Then vector fields $\bar{Z}, \,\bar{W}$  can be lifted to 
vector fields $\tilde{Z}, \,\tilde{W}$ on $M$ such that  $\tilde{Z}(x_i)=\tilde{W}(x_i)=0, \,i=1,2$. 
Vector fields $Z, W$ on $M$ are of the form
$$
Z=\tilde{Z}+\lambda(x_1\partial_{x_1}-x_2\partial_{x_2})+\phi_1\partial_{x_1}+\phi_2\partial_{x_2}, \;
W=\tilde{W}-\lambda(x_1\partial_{x_1}+x_2\partial_{x_2})+\psi_1\partial_{x_1}+\psi_2\partial_{x_2}
$$
with $\Xi_i(\phi_j)=\Xi_i(\psi_j)=0, \;i,j=1,2$. By using the gauge 
$X_1\mapsto X_1+\phi_1\Xi_1, \,X_2\mapsto X_2-\phi_2\Xi_2$ 
we eliminate functions $\phi_1$ and $ \phi_2$. Now all relations in (\ref{newihy}) are 
satisfied except the last one, which is satisfied if
$$
\tilde{Z}(\psi_1)-\lambda\psi_1=\mu_1-\Xi_1(\mu_1)x_1 \quad \mathrm{and} \quad  
\tilde{Z}(\psi_2)+\lambda\psi_2=\Xi_2(\mu_2)x_2-\mu_2.
$$
Obviuosly, these equations admit (local) solutions in a neighborhood of a regular point of $\tilde{Z}$.
\end{proof}

\begin{rmk}Proposition \ref{InvariantsMatchings} tells that matchings of two modular 
Poisson structures depend on functional parameters even when their modular vectors 
commute. If these do not commute the situation becomes much more complicated.
\end{rmk}

\section{Modular structure of Lie algebras.}\label{Lie-modlarity}

Now we shall specify results of the preceding section to linear Poisson structures, i.e.., to 
Lie algebras.  In this case  $M$ is replaced by the dual $V^*$ of an $n$-dimensional vector
space $V$ over a ground field $\gk$. Being algebraically formal the results of the preceding 
section remain valid in the differential calculus over the algebra 
$\gk[V^*]$ of polynomials on $V^*$.

\subsection{Modular disassembling of Lie algebras.}

The \emph{cartesian volume from} 
$\omega=dx_1\wedge\dots\wedge dx_n$ associated with a standard cartesian chart 
$(x_1,\dots,x_n)$ on $V^*$ is well-defined up to a scalar factor. Obviously, the concept 
of $\omega$--modulatity does not change when passing from $\omega$ to $\lambda\omega, \,0\neq\lambda\in\gk$.
So, the {\it cartesian modularity}, i.e.,  $\omega$--modularity with respect
to a cartesian volume form $\omega$, is well--defined on $V^*$ and will be simply referred to as 
\emph{modularity}. In this section we shall  only deal with polynomial tensor fields on $V^*$ and 
use adjectives \emph{constant}, \emph{linear}, etc, by referring to coefficients of these fields.

Below $P$ stands for a linear Poisson structure on $V^*$ which is identified with a 
Lie algebra structure on $V$ (see n.\,2.3). The differential form $\alpha=\alpha_P=
P\mathop{\rfloor}\omega$ is linear, while $d\alpha$ is constant as well as the modular 
vector field $\Xi=\Xi_P$. It is easy to see that $\Xi$ does not depend on the
choice of a cartesian volume form. Being constant the field $\Xi$ is identified with a 
vector $\theta=\theta_P\in V^*$ called the \emph{modular vector} of  $P$ or of the 
corresponding Lie algebra.

Since $\Xi$ is constant, a function $\;\nu\;$ such that $\Xi(\nu)=1$ can be chosen linear 
and, therefore, identified with a vector $v\in V$ such that $\theta(v)=1$. The Poisson 
bivector $P_\nu\wedge\Xi$ is  linear and hence corresponds to a Lie algebra structure 
on $V$. Obviously, it is well--defined by $P$. Therefore, the disassembling 
(\ref{BRA}) defines a disassembling of the Lie algebra associated with $P$ into 
unimodular and non-unimodular parts.  Namely,
\begin{equation}
\gG=\gG_{uni}+\gG_{non} \quad\mathrm{with} \quad P=P_{\gG}, \quad P_{\gG_{uni}}=
P+P_{\nu}\wedge\Xi, \quad
P_{\gG_{non}}=P_{\nu}\wedge\X. 
\end{equation}

A direct description of this disassembling in terms of the Lie algebra $\gG$ is as follows. 
First, recall that with a linear vector field $X$ on $V^*$  a linear operator $A:V\rightarrow V$ 
is naturally associated. Namely, by identifying vectors of $V$ with linear functions on $V^*$, 
this becomes a tautology, namely, $A(u)=X(u)$. In particular, 
for $X=P_{\nu}$ we have 
$$
A(u)\DEF\ldb u,\nu\rdb=P(du,d\nu),\quad u\in{\bf V^*}.
$$
i.e., $A=-\ad_{\gG}\nu$. Now the characteristic property (\ref{DIV}) of $\Xi$ is translated as
\begin{equation}\label{tra}
\theta(u)=-\tr(\ad_{\gG}u)
\end{equation}
This formula may be considered as a direct definition of $\theta$. It also tells that 
unimodular Lie algebras are those for which  operators of the adjoint representation
are traceless.

In these terms, 
the Lie algebra structure
 $\gG_{non}$ corresponding to $P_\nu\wedge\Xi$ reads
\begin{equation}\label{non}
[u,v]_{non}=\th(u)A(v)-\th(v)A(u), \quad u,v\in V, \quad A=\ad_{\gG}\nu,
\end{equation}
or, alternatively, $[u,v]_{non}=\th(u)[\nu,v]-\th(v)[\nu,u]$.
\begin{proc}\label{CAH} The operator $A=\ad_{\nu}$
and  $\theta\in  V^*$ satisfy relations: 
\be\label{usl} 
A^*(\th)=0, \quad \tr\,A=-1, \quad A(\nu)=0, \quad\th(\nu)=1
\ee 
Conversely, if $\theta\in V^*, \,\nu\in V$ and $A: V\rightarrow V$ satisfy the above relations, 
then formula (\ref{non}) defines a Lie algebra, which coincides with its non--unimodular part.
\end{proc}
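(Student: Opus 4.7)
The plan is to handle the two implications separately. The direct implication unwinds the definitions using the trace characterization of the modular covector. The converse will be obtained by identifying the bracket (\ref{non}) with the Poisson bracket of a decomposable bivector on $V^*$ and appealing to Proposition \ref{UAB}.

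For the forward implication, assume $\gG$ is a Lie algebra with modular covector $\theta \in V^*$, fix $\nu \in V$ with $\theta(\nu)=1$, and set $A = \ad_\gG \nu$. Then $\theta(\nu)=1$ holds by the choice of $\nu$, and $A(\nu)=[\nu,\nu]=0$ by skew-symmetry. Formula (\ref{tra}) evaluated at $u=\nu$ gives $\tr A = -\theta(\nu) = -1$; evaluated at $u=[\nu,v]$ for arbitrary $v \in V$ it gives
$$\theta(A(v))=\theta([\nu,v])=-\tr(\ad_\gG[\nu,v])=-\tr\bigl([\ad_\gG\nu,\ad_\gG v]\bigr)=0,$$
since the trace of a commutator of endomorphisms vanishes. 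Hence $A^*(\theta)=0$.

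For the converse, assume $(\theta,\nu,A)$ satisfies (\ref{usl}). Identifying $V$ with the space of linear functions on $V^*$, let $\Xi$ be the constant vector field on $V^*$ corresponding to $\theta$ and let $Y$ be the linear vector field characterized by $Y(f_u)=-f_{A(u)}$. A direct coordinate computation shows
$$\{f_u,f_v\}_{Y\wedge\Xi}=Y(f_u)\,\Xi(f_v)-Y(f_v)\,\Xi(f_u)=\theta(u)f_{A(v)}-\theta(v)f_{A(u)},$$
so, once $Y\wedge\Xi$ is known to be a Poisson bivector, formula (\ref{non}) automatically defines a Lie bracket on $V$ whose associated linear Poisson bivector is $P_{non}=Y\wedge\Xi$. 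I then verify the hypotheses of Proposition \ref{UAB}. Evaluated on each coordinate function $x_k$, one has $[Y,\Xi](x_k)=(A^*\theta)(e_k)$, so $[Y,\Xi]=0$ reduces precisely to $A^*\theta=0$; $L_\Xi(\omega)=0$ is immediate because $\Xi$ is constant; and $L_Y(\omega)=(\mathrm{div}\,Y)\,\omega=-(\tr A)\,\omega=\omega$ because $\tr A=-1$. Independence of $Y$ and $\Xi$ is automatic: $\Xi\neq 0$ from $\theta(\nu)=1$ and $Y\neq 0$ from $\tr A=-1$, and a linear field cannot be a nonzero scalar multiple of a nonzero constant field. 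Proposition \ref{UAB} then simultaneously gives that $Y\wedge\Xi$ is Poisson (so (\ref{non}) is a Lie bracket) and that it is $\omega$-modular, i.e.\ coincides with its own $\omega$-non-unimodular part, which is the required statement.

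The only delicate point is the bookkeeping of signs: the conventions $P_\nu=-\gamma_P(d\nu)$ and $L_X(\omega)=(\mathrm{div}\,X)\,\omega$ on the geometric side must match the normalizations $\tr A=-1$ and $A^*\theta=0$ on the Lie-algebraic side, so that the three hypotheses of Proposition \ref{UAB} are read off correctly from (\ref{usl}). Once this sign check is performed, no further analytic content is needed, since everything substantive is encapsulated in Propositions \ref{SPLIT} and \ref{UAB}.
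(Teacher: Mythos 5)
Your proof is correct, and it rests on exactly the two ingredients the paper uses --- the trace formula (\ref{tra}) and Proposition \ref{UAB} --- but you deploy them in the opposite directions. The paper handles the forward implication by declaring relations (\ref{usl}) to be the Lie-algebraic translation of hypotheses (\ref{uab}) for $X=P_\nu$ (which hold by Proposition \ref{LIST} and (\ref{DIV})); you instead derive $A^*(\theta)=0$ and $\tr A=-1$ directly from $\theta(u)=-\tr(\ad_{\gG}u)$ together with $\ad_{\gG}[\nu,v]=[\ad_{\gG}\nu,\ad_{\gG}v]$ and the vanishing of the trace of a commutator, which is more self-contained and arguably cleaner. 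For the converse the paper stays algebraic: it checks that (\ref{non}) is a Lie bracket once $\theta\circ A=0$, identifies $A=\ad_{\gH}\nu$ from $A(\nu)=0$, $\theta(\nu)=1$, and recomputes the modular vector from (\ref{tra}) using $\tr(u\mapsto\theta(u)z)=\theta(z)$; you instead package the bracket as the linear bivector $Y\wedge\Xi$ and feed it to Proposition \ref{UAB}, whose three hypotheses you correctly match with $A^*(\theta)=0$, the constancy of $\Xi$, and $\tr A=-1$. Both routes are sound. The only (harmless) asymmetry is that your converse never uses $A(\nu)=0$ and $\theta(\nu)=1$; in the paper these serve to identify $A$ with $\ad_{\gH}\nu$ (equivalently, $Y$ with the Hamiltonian field $P_\nu$ of the resulting structure), a point Proposition \ref{UAB} lets you sidestep since it already delivers the conclusion that $Y\wedge\Xi$ coincides with its non-unimodular part.
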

{\it Proof.} The relations to prove are just translations of relations  (\ref{uab}) for 
$X=P_{\nu}$ and $P_{\nu}(\nu)=0$ (see proposition \ref{UAB}).

If, conversely,
$A^*(\theta)=0$, i.e., $\theta(Au)=0, \forall u\in V$, then   (\ref{non}) defines
 a Lie algebra structure $\gH$ on $V$ . Relations   $A(\nu)=0, \;\theta(\nu)=1$ 
imply that $A=ad_{\gH}\nu$. Finally, definition (\ref{tra}) shows that the 
modular vector of $\gH$ coincides with $\theta$, since 
the trace of the operator $u\longmapsto \theta(u)z, 
\;z\in V$, is equal to $\theta(z).\quad\Box$

Relations (\ref{usl}) except $\tr\, A=-1$ mean that
 $V$ splits  into the direct sum of two subspaces
 $ W_0=\ker\,\th$ and  $ W_1=\{\lambda\nu\, |\,\lambda\in \R\}$
of dimensions $n-1$ and 1, respectively. They are invariant with respect to $A$ 
and $\; W_1\subset\mbox{ker}\,A.$ This shows that a Lie algebra defined by 
(\ref{non})
 with $A$ and $\theta$ satisfying (\ref{usl}) is uniquely up to isomorphism determined
 by the operator $A_0=A|_{W_0}:W_0\rightarrow W_0,\quad\rm{tr}\,A_0=1.$ Indeed, 
 let $W_0$ and $W_1$ be vector spaces, 
$\rm{dim}\,W_0=n-1, \rm{dim}\,W_1=1,$ and $0\neq e\in W_1.$ Then with any linear 
operator $A_0:W_0\rightarrow W_0$ one can associate a Lie 
algebra structure on $W=W_0\oplus W_1$ given by the relations
\begin{equation}\label{DKL}
[u,v]=0\quad\rm{for}\quad u,v\in W_0\quad\rm{and}\quad[u,e]=A_0(u)
\end{equation}
This structure is isomorphic to that given by (\ref{non}) and (\ref{usl}) if $\rm{tr}\,A_0\neq 0$. 

\begin{defi}\label{FRC}
A Lie algebra defined by (\ref{DKL}) with $\rm{tr}\,A_0\neq 0$ is called 
\emph{modular}.
\end{defi}

Observe that the product in a modular Lie algebra is of the form (\ref{non}) with $\theta$ and $A$ satisfying relations (\ref{usl}) for some $\nu\in V$. Denote this algebra by $\gl_{A,\theta,\nu}$.

Thus any  Lie algebra structure is the sum of a modular Lie algebra and a compatible with it unimodular one.

\subsection{Compatibility of modular and unimodular Lie algebras.}

Now we shall discuss compatibility conditions of a modular Lie algebra and an unimodular one. 
A modular Lie algebra is of the form $\gG_{X\wedge\Xi}$ where $X$ and $\Xi$ 
are commuting linear and constant vector fields, respectively, satisfying relations (\ref{uab}). 
The product in this algebra is given by (\ref{non}) where $A:V\rightarrow V$ is corresponding 
to $X$ operator.
\begin{proc}
An unimodular Lie algebra algebra $\gG$ is compatible with the algebra 
$\gl_{A,\theta,\nu}$ if and only if
\begin{equation}\label{comp-mod-uni}
\theta([\gG,\gG])=0 \; \mathrm{and} \; \theta(u)([Av,w]+[v,Aw]-A[v,w])+\mathrm{cycle}=0, \;\forall  u,v,w\in V,
\end{equation}
where $[\cdot,\cdot]$ is the product in $\gG$.
\end{proc}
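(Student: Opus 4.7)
The cleanest route is to reduce the statement to proposition \ref{VUB}, which already characterizes compatibility of an $\omega$--modular bivector $X\wedge\Xi$ with an $\omega$--unimodular Poisson structure $Q$ by the two separate conditions $L_\Xi Q=0$ and $\Xi\wedge L_X Q=0$. First I would identify the linear Poisson bivectors on $V^*$ corresponding to the two Lie algebra structures: $Q=P_{\gG}$ for the unimodular side, and $X\wedge\Xi$ for $\gl_{A,\theta,\nu}$, where $\Xi$ is the constant vector field corresponding to $\theta\in V^*$ and $X=P_\nu$ is the linear vector field on $V^*$ corresponding to the operator $-A$ on $V$. The sign is checked by computing $(X\wedge\Xi)(du,dv)$ and matching against formula (\ref{non}); one finds $X(u)=P_\nu(u)=\{u,\nu\}_{\mathrm{mod}}=-[\nu,u]_{\mathrm{mod}}=-Au$. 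With this identification proposition \ref{VUB} applies directly, so it remains to translate each of the two Schouten conditions into the Lie--algebraic conditions in the statement.

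For the first condition, since $\Xi$ is constant, Cartan's formula gives
$$(L_\Xi Q)(du,dv)=\Xi(Q(du,dv))=\theta([u,v]),$$
since $L_\Xi du=L_\Xi dv=0$. Thus $L_\Xi Q=0$ if and only if $\theta([u,v])=0$ for every $u,v\in V$, i.e. $\theta([\gG,\gG])=0$, which is the first condition in the proposition.

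For the second condition, a similar Cartan calculation (using $L_X du=d(X(u))=-d(Au)$) yields
$$(L_X Q)(du,dv)\big|_\eta=\eta\bigl([Au,v]+[u,Av]-A[u,v]\bigr)=\eta\bigl(B(u,v)\bigr),$$
where $B(u,v)=[Au,v]+[u,Av]-A[u,v]$. Wedging with the constant $\Xi$ and expanding via the standard rule $(Y\wedge R)(\alpha,\beta,\gamma)=Y(\alpha)R(\beta,\gamma)-Y(\beta)R(\alpha,\gamma)+Y(\gamma)R(\alpha,\beta)$ produces
$$(\Xi\wedge L_XQ)(du,dv,dw)\big|_\eta=\eta\bigl(\theta(u)B(v,w)+\theta(v)B(w,u)+\theta(w)B(u,v)\bigr),$$
which vanishes for every $\eta\in V^*$ precisely when the cyclic sum in parentheses is zero for all $u,v,w\in V$. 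That is exactly the second condition in the proposition.

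The only delicate point is the sign bookkeeping in step one (the identification $X=P_\nu\leftrightarrow-A$); without it one would translate the Schouten conditions with wrong signs and reach a contradictory looking formula. Everything else is routine Cartan calculus. A direct alternative would be to expand the cross--Jacobi identity for $[\,,\,]+[\,,\,]_{\mathrm{mod}}$, but this produces a single combined identity from which one would still have to separate the two conditions by hand; invoking proposition \ref{VUB} is both shorter and conceptually cleaner because the separation is already built into its statement.
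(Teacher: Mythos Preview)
Your proof is correct and follows essentially the same route as the paper: both reduce to proposition~\ref{VUB} and then translate the two conditions $L_\Xi Q=0$ and $\Xi\wedge L_XQ=0$ via the Cartan-type identity for the Lie derivative of a bivector evaluated on exact $1$-forms. You are slightly more explicit than the paper in two places---the sign identification $X\leftrightarrow -A$ and the expansion of $(\Xi\wedge L_XQ)(du,dv,dw)$ as a cyclic sum---whereas the paper leaves the latter as ``easy to see''; note also that the paper's sign convention for $L_Y Q$ (cf.~formula~(\ref{[L_X,i_P]})) is opposite to the standard one you use, which explains the harmless sign discrepancy $\pm\theta([u,v])$ in the first computation.
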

\begin{proof}
Let $Y\in D(M), \,Q\in D_2(M), \omega,\rho\in \Lambda^1(M)$. Recall the general formula
$$
L_Y(Q)(\omega,\rho)=Q(L_Y(\omega),\rho)+Q(\omega,L_Y(\rho))-Y(Q(\omega,\rho))
$$
By specifying it to $M=V^*, \,Y=\Xi, \,Q=P_{\gG}, \,\omega=du, \,\rho=dv$ 
we find
$$
L_{\Xi}(P_{\gG})(du,dv)=P_{\gG}(d(\theta(u)),dv)+P_{\gG}(du,d(\theta(v)))-\theta([u,v])=-\theta([u,v]),
$$
since $\theta(u)$ and $\theta(v)$ are constant. Hence the condition $L_{\Xi}(P)=0$ of 
proposition\,\ref{LIST} specifies to $\theta([\gG,\gG])=0$. Similarly, for  $Y=X$ we obtain
$$
L_X(P_{\gG})(du,dv)=[Au,v]+[u,Av]-A([u,v])
$$ 
Now it is easy to see that the second relation we have to prove is identical to 
the relation $\Xi\wedge L_X(P)=0$ of proposition\,\ref{VUB}.
\end{proof}
\begin{cor}
If $\gG$ is unimodular and $\gG=[\gG,\gG]$, then no modular Lie algebra is compatible with $\gG$.
In particular, a semi-simple Lie algebra can not be compatible with a modular Lie algebra.
\end{cor}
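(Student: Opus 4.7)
The plan is to apply the preceding proposition directly: compatibility of $\gG$ with $\gl_{A,\theta,\nu}$ requires $\theta([\gG,\gG])=0$, and under the hypothesis $\gG=[\gG,\gG]$ this forces $\theta$ to annihilate all of $V$. I would combine this with the normalization condition $\theta(\nu)=1$ from (\ref{usl}), which rules out $\theta=0$, and conclude a contradiction.

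More precisely, first I would note that by definition of a modular Lie algebra $\gl_{A,\theta,\nu}$ the covector $\theta\in V^*$ is nonzero, since it satisfies $\theta(\nu)=1$. Then, applying the first condition of (\ref{comp-mod-uni}) to $\gG$ compatible with $\gl_{A,\theta,\nu}$ gives $\theta([\gG,\gG])=0$; using $[\gG,\gG]=V$ this yields $\theta=0$, contradicting $\theta(\nu)=1$. Hence no modular Lie algebra is compatible with $\gG$. The second condition of (\ref{comp-mod-uni}) is not even needed.

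For the \emph{in particular} clause, I would recall two classical facts about a semisimple Lie algebra $\gG$ over a field of characteristic zero: (i) it is perfect, i.e.\ $\gG=[\gG,\gG]$, and (ii) it is unimodular, i.e.\ $\tr(\ad u)=0$ for every $u\in\gG$ (which, in view of (\ref{tra}), amounts to $\theta_{\gG}=0$, and indeed follows already from $\gG=[\gG,\gG]$ since $\tr\ad[u,v]=\tr[\ad u,\ad v]=0$). These two facts place $\gG$ in the hypothesis of the first assertion, and the conclusion is immediate.

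The argument is essentially a one-line consequence of the preceding proposition; there is no real obstacle. The only conceptual point worth emphasizing is that unimodularity of a perfect algebra is automatic from (\ref{tra}), so in the semisimple case one only needs the standard fact $\gG=[\gG,\gG]$.
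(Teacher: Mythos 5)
Your argument is correct and is exactly the intended one: the paper states this corollary without proof as an immediate consequence of the preceding proposition, and your route — $\theta([\gG,\gG])=0$ together with $[\gG,\gG]=V$ forces $\theta=0$, contradicting $\theta(\nu)=1$ from (\ref{usl}) — is that consequence. Your side remark that unimodularity of $\gG$ is automatic from perfectness is also accurate and harmless.
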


\subsection{Matching modular Lie algebras.}

 Let $X_1\wedge\Xi_1, X_2\wedge\Xi_2$ be linear Poisson bivectors on 
$V^*$ with $X_i$ and $\Xi_i$ as in sec.\,3.7. 
Being  the modular field of $X_i\wedge\Xi_i$ $\Xi_i$ is a constant 
vector field on $V$ and, therefore, $[\Xi_1,\Xi_2]=0.$ By this reason 
$X_1,X_2,\Xi_1,\Xi_2$ are subject to relations (\ref{yhi}) and (\ref{ihy}) and as 
a consequence, to lemma (\ref{UNI}). Since $X_1,X_2$ are linear vector 
fields on $V^*$ functions $f_1, f_2$ and $\lambda$ in (\ref{yhi}) 
are constant while $\mu_1,\mu_2$ in (\ref{ihy}) are linear. By a suitable gauge 
$X_i\mapsto X_i+g_i\Xi_i, \,i=1,2,$ with linear $g_i$'s functions $f_1$ and $f_2$ 
can be eliminated. In this case relations in lemma\,(\ref{UNI}) reduce to
\begin{equation}\label{ekl}
\Xi_1(\mu_2)= \Xi_2(\mu_1)=0, \quad\Xi_1(\mu_1)= 
\Xi_2(\mu_2)=-\lambda^2.
\end{equation}
Let $\bar V^*$ be the quotient  by the 2-dimensional subspace 
$\mathrm{span}(\Xi_1,\Xi_2)$ space of $V^*$. Then relations (\ref{yhi}) and 
(\ref{ihy}) show that vector fields $X_1$ and 
$X_2$ project to some vector fields $\bar{X_1},\bar{X_2}$ on $\bar{V}^*$, 
respectively, and 
$[\bar{X_1},\bar{X_2}]=\lambda(\bar{X_1}-\bar{X_2}).$ So, $\bar{X_1}$ and $\bar{X_2}$ 
generates a 2-dimensional Lie algebra on $\bar V^*$. 

Let $V_1^*$ be a complement of $V_0^*=\rm{span}(\theta_1,\theta_2)$ in $V^*$ and 
$\pi_i:V^*\rightarrow V_i^*, \,i=0,1$, be the corresponding projections. A projectable on $\bar{V}^*$
vector field $X\in D(V^*)$ can be presented in the form
$$
X=X_0+a_1\Xi_1+a_2\Xi_2, \quad a_1, a_2\in C^{\infty}(V^*)
$$ 
where $X_0$ is parallel to $V_1^*$. If $X$ is linear, then $X_0, a_1$ and $a_2$ are linear too.
So, vector fields $Z=X_1-X_2$ and $W=X_1+X_2$ can be presented as
\begin{equation}\label{fieldsDeco}
Z=Z_0+\alpha_1\Xi_1+\alpha_2\Xi_2, \quad W=W_0+\beta_1\Xi_1+\beta_2\Xi_2.
\end{equation}
In these terms relations (\ref{newihy}) are equivalent to
\begin{eqnarray}\label{1-part}
-\Xi_1(\alpha_1)=\Xi_1(\beta_1)=\Xi_2(\alpha_2)=\Xi_2(\beta_2)=-\lambda, \nonumber \\
\quad \Xi_1(\alpha_2)=\Xi_1(\beta_2)=\Xi_2(\alpha_1)= \Xi_2(\beta_1)=0.
\end{eqnarray} 
\begin{eqnarray}\label{0-part}
Z_0(\beta_1)-W_0(\alpha_1)-\lambda(3\alpha_1+\beta_1)=2\mu_1, \nonumber \\
Z_0(\beta_2)-W_0(\alpha_2)+\lambda(\beta_2-3\alpha_2)=-2\mu_2.
\end{eqnarray} 
Point out that linear functions $\alpha_i$'s and $\beta_i$'s depend on the choice of the 
complement $V_1^*$.

To proceed on we need the following lemma
\begin{lem}\label{div X_0}
Let $\omega$ be a cartesian volume form on $V_1^*$ and 
$\omega=\pi_0^*(\omega_0)\wedge\pi_1^*(\omega_1)$
with $\omega_i$ being a cartesian volume form on $V_i^*$. Then
$$
L_X(\omega)=(\Div_{\omega_0}\bar{X}_0+\Xi_1(\alpha_1)+\Xi_2(\alpha_2))\omega
$$
where $\bar{X}_0$ is the restriction of $X_0$ to $V_1^*$. 
\end{lem}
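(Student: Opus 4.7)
The plan is to reduce the identity to a direct coordinate computation by working in a basis adapted to the splitting $V^*=V_0^*\oplus V_1^*$ and expanding via the Leibniz rule for the Lie derivative. Choose linear coordinates $(y_1,y_2,x_1,\dots,x_{n-2})$ on $V^*$ so that $\Xi_i=\partial/\partial y_i$ and $(x_j)$ is a linear coordinate system on $V_1^*$; then $\pi_0^*\omega_0=dy_1\wedge dy_2$, $\pi_1^*\omega_1=dx_1\wedge\cdots\wedge dx_{n-2}$, $\omega$ is their wedge, and $d\omega=0$. Because $X_0$ is parallel to $V_1^*$, write $X_0=\sum_i c_i\,\partial/\partial x_i$ with each $c_i$ a linear function of $(x,y)$ (this is where linearity of $X$, inherited from the Lie-algebra context, enters).

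I would then use $\R$-linearity of $L_{(\cdot)}(\omega)$ in the vector field,
\[
L_X(\omega)=L_{X_0}(\omega)+L_{\alpha_1\Xi_1}(\omega)+L_{\alpha_2\Xi_2}(\omega),
\]
and handle each piece by Cartan's identity $L_Y=d\circ i_Y+i_Y\circ d$, using $d\omega=0$. For $L_{\alpha_i\Xi_i}(\omega)$ the computation boils down to $\iota_{\alpha_i\Xi_i}\omega=\alpha_i\,(\iota_{\partial_{y_i}}\pi_0^*\omega_0)\wedge\pi_1^*\omega_1$, so applying $d$ picks up only the $dy_i$-component of $d\alpha_i$ — the other components are annihilated by the remaining factor of $\pi_0^*\omega_0$ or by $\pi_1^*\omega_1$ — which yields $\Xi_i(\alpha_i)\,\omega$. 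For $L_{X_0}(\omega)$ the identity $\iota_{X_0}(\pi_0^*\omega_0)=0$ kills the $V_0^*$-factor, leaving the standard expression $\bigl(\sum_i\partial c_i/\partial x_i\bigr)\omega$.

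The main step — the only place where a genuine identification is required — is to recognise the coefficient $\sum_i\partial c_i/\partial x_i$ as $\Div_{\omega_0}\bar X_0$, i.e.\ the divergence of the restricted linear field $\bar X_0=\sum_i c_i(x,0)\,\partial/\partial x_i$ on $V_1^*$. This uses linearity of the $c_i$ essentially: each diagonal partial $\partial c_i/\partial x_i$ is a \emph{constant}, so its value on $V^*$ coincides with $\partial c_i(x,0)/\partial x_i$ on $V_1^*$, which is precisely $\Div_{\omega_0}\bar X_0$. Summing the three contributions yields the claimed formula. I do not anticipate a deeper obstacle; the substantive content of the lemma is this constancy of the diagonal partials, which is exactly what legitimises computing the divergence of $X_0$ on the lower-dimensional factor $V_1^*$.
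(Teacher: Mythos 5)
Your proof is correct and follows essentially the same route as the paper: decompose $X=X_0+\alpha_1\Xi_1+\alpha_2\Xi_2$, show $L_{\alpha_i\Xi_i}(\omega)=\Xi_i(\alpha_i)\,\omega$ and $L_{X_0}(\omega)=\Div\bar{X}_0\cdot\omega$, and add. The only immaterial difference is that the paper pushes $L_{X_0}$ through the pullback $\pi_1^*(\omega_1)$ using projectability of $X_0$ (its components are independent of the $V_0^*$-coordinates), whereas you justify identifying $\sum_i\partial c_i/\partial x_i$ with the divergence of $\bar X_0$ via constancy of the diagonal partials coming from linearity; both hypotheses are available in context.
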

\begin{proof}
Since $X_0\rfloor\pi_0^*(\omega_0)=\Xi_0\rfloor\omega=\Xi_1\rfloor\omega=0$, we have
$$L_{X_0}(\pi_0^*(\omega_0))=0 \quad\mathrm{and} \quad L_{\alpha_i\Xi_i}(\omega)=d\alpha\wedge(\Xi_i\rfloor\omega)=
\Xi_i(\alpha_i)\omega
$$
It remains to note that
$$L_{X_0}(\omega)=\pi_0^*(\omega_0)\wedge L_{X_0}(\pi_1^*(\omega_1))=
\pi_0^*(\omega_0)\wedge\pi_1^*(L_{\bar{X}_0}\omega_1)=\Div_{\omega_0}\bar{X}_0\cdot\omega.
$$
\end{proof}

A linear function $\varphi$ on $V^*$ can be decomposed into the sum 
$\varphi=\varphi^0+\varphi^1$ where $\varphi^i$ 
is linear and vanishes on  $V_i^*, \,i=1,2$. Accordingly, relations (\ref{1-part}) and 
(\ref{0-part}) split into two parts.
First of them explicitly describes functions $\alpha_i^1, \beta_i^1, \,i=1,2$, while the 
second one, in view of (\ref{ekl}), put no additional restrictions on these functions.      
We can assume that $\alpha_1^0=\alpha_2^0=0$ by making use of 
the gauge $X_1\mapsto X_1+\alpha_1^0\Xi_1, \;X_2\mapsto X_2-\alpha_2^0\Xi_2$, 
which is still at our disposal.  In this normalization relations (\ref{0-part}) become
\begin{eqnarray}\label{reduced 0-part}
Z_0(\beta_1^0)-\lambda\beta_1^0=2\mu_1^0,  \quad\quad
Z_0(\beta_2^0)+\lambda\beta_2^0=-2\mu_2^0.
\end{eqnarray} 

Now we are ready to describe matchings of two modular Lie algebras. If $\lambda\neq 0$ 
they are characterized by  ordered quadruples $(\mathcal{V},A,B,\lambda)$ where 
$\mathcal{V}$ is a vector space, $A,B:\mathcal{V}\rightarrow \mathcal{V}$
are linear operators such that $[A,B]=2\lambda A$ and $\tr\,B=2(1+\lambda)$.
If $\lambda=0$, then the matching is characterized by the
quintuple $(\mathcal{V},A,B,\nu_1,\nu_2)$ with commuting 
$A,B:\mathcal{V}\rightarrow \mathcal{V}$ such that $\tr\,A=0, \tr\,B=2$, and
$\nu_1,\nu_2\in\Ker\,A$. Quadruples $(\mathcal{V},A,B,\lambda)$ and 
$(\mathcal{V}',A',B',\lambda')$ are \emph{equivalent} if $\lambda=\lambda'$ and 
there exists an isomorphism $\Phi:\mathcal{V}\rightarrow \mathcal{V}'$
such that $ A'=\Phi B\Phi^{-1}, B'=\Phi B\Phi^{-1}$. Similarly, quintuples 
$(\mathcal{V},A,B,\nu_1,\nu_2)$ and $ (\mathcal{V}',A',B',\nu_1',\nu_2')$ are 
\emph{equivalent} if, additionally, $\nu_i'=\Phi^{-1}\circ\nu_i$. In particular, the group 
$\gG\gl({\mathcal{V}})$ naturally acts on quadruples and quintuples defined on 
${\mathcal{V}}$ and their equivalence classes are  labeled by orbits of this action.

The quadruple (resp., quintuple) associated with a compatible pair $X_1\wedge \Xi_1, \;
X_2\wedge \Xi_2$ is constructed on 
$\mathcal{V}=\bar{V}=V^*/\mathrm{span}(\theta_1,\theta_2)$ with $A$ and $B$ being
restrictions of  $\bar{Z}=\bar{X_1}-\bar{X_2}$ 
and $\bar{W}=\bar{X_1}+\bar{X_2}$ to linear functions on $\mathcal{V}$  
(resp., with $\nu_i=\beta_i^0$ for a suitable choice of $V_1^*$), respectively. 
\begin{thm}\label{LieMathing}
Matchings of $n$--dimensional modular Lie algebras are classified by equivalence classes of
quadruples $(\mathcal{V},A,B,\lambda)$ (resp., quintuples $(\mathcal{V},A,B,\nu_1,\nu_2)$), 
if $\lambda\neq 0$ (resp., if $\lambda=0$).
\end{thm}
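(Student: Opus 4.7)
The plan is to package the calculations immediately preceding the statement into a bijection between equivalence classes of compatible pairs and equivalence classes of quadruples/quintuples. The heavy analytic lifting, namely the derivation of the structural relations (\ref{newihy})--(\ref{newerlm}) together with the simplifications (\ref{1-part})--(\ref{reduced 0-part}), has already been done; what remains is to extract the invariant data cleanly and to construct an inverse map.

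For the forward direction, given a compatible pair $X_1\wedge\Xi_1,X_2\wedge\Xi_2$, I would put $\mathcal{V}=\bar V^*=V^*/\mathrm{span}(\theta_1,\theta_2)$ and take $A,B$ to be the linear operators on $\mathcal{V}$ corresponding to the projected linear vector fields $\bar Z=\bar X_1-\bar X_2$ and $\bar W=\bar X_1+\bar X_2$. Projecting the last relation of (\ref{newihy}) to $\bar V^*$ kills the $\Xi_i$ terms and yields $[\bar Z,\bar W]=2\lambda\bar Z$, i.e.\ $[A,B]=2\lambda A$. The trace condition $\tr B=2(1+\lambda)$ comes from the modularity $L_{X_i}(\omega)=\omega$, which gives $\Div_\omega W=2$; applying Lemma \ref{div X_0} and substituting $\Xi_1(\beta_1)=\Xi_2(\beta_2)=-\lambda$ from (\ref{1-part}) produces $\tr\bar W_0=2+2\lambda$. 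When $\lambda\neq0$, tracing $[A,B]=2\lambda A$ automatically forces $\tr A=0$; when $\lambda=0$, the equation $\Div_\omega Z=0$ gives $\tr A=0$ independently.

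For the converse, given $(\mathcal{V},A,B,\lambda)$ with the required algebraic constraints, I would set $V^*=\mathcal{V}\oplus\gk^2$ with constant fields $\Xi_1,\Xi_2$ spanning the $\gk^2$ factor, take $\bar X_1=(\bar W+\bar Z)/2$ and $\bar X_2=(\bar W-\bar Z)/2$ on $\mathcal{V}$ (where $\bar Z,\bar W$ are the linear vector fields determined by $A,B$), and extend to full linear fields $X_i$ on $V^*$ by adding corrections $\alpha_i\Xi_i+\beta_i\Xi_i$. The components $\alpha_i^1,\beta_i^1$ are forced by (\ref{1-part}), which is solvable in closed form because $\Xi_j$ act as partial derivatives in the added directions. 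When $\lambda\neq0$ the residual gauge $X_i\mapsto X_i+\phi_i\Xi_i$ (linear $\phi_i$) solves (\ref{reduced 0-part}) to kill $\beta_1^0,\beta_2^0$ as well, while for $\lambda=0$ this gauge degenerates and the $\beta_i^0$ become honest invariants which I would identify with $\nu_1,\nu_2\in\mathcal{V}$. The constraint $\nu_i\in\Ker A$ then arises from (\ref{reduced 0-part}) combined with $\Xi_i(\mu_i)=0$, which is forced by (\ref{ekl}) in the case $\lambda=0$. Verification that the resulting $X_1,X_2,\Xi_1,\Xi_2$ satisfy both (\ref{uab}) and the full system (\ref{newihy})--(\ref{newerlm}) is a direct algebraic check using $[A,B]=2\lambda A$ and the trace conditions.

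Finally, the equivalence statement: an equivalence of matchings is realized by a linear automorphism of $V^*$ that preserves the cartesian volume form and permutes (or normalizes) the modular vectors $\theta_1,\theta_2$. Such an automorphism descends to an isomorphism $\Phi\in\mathrm{GL}(\mathcal{V})$ conjugating $A,B$ to $A',B'$ (and sending $\nu_i\mapsto\Phi^{-1}\circ\nu_i$ in the quintuple case), which is exactly the equivalence relation in the theorem. I expect the main obstacle to be the $\lambda=0$ case: precisely identifying the gauge freedom residually acting on $\beta_1^0,\beta_2^0$, checking that $\mathrm{GL}(\mathcal{V})$ acts transitively on the corresponding orbits, and confirming that $\nu_i\in\Ker A$ is both necessary and sufficient for a consistent lift. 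The $\lambda\neq0$ case is comparatively cleaner because the nondegenerate equation $Z_0(\beta_i^0)\mp\lambda\beta_i^0=\pm2\mu_i^0$ admits a unique solution killing the extra data.
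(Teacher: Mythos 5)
Your proposal follows essentially the same route as the paper's proof: extract $(\mathcal{V},A,B,\lambda)$ from the projected fields $\bar{Z},\bar{W}$ on $V^*/\mathrm{span}(\theta_1,\theta_2)$, obtain $[A,B]=2\lambda A$ and $\tr B=2(1+\lambda)$ from (\ref{newihy}) together with Lemma \ref{div X_0}, and invert by building $V^*=\mathcal{V}\oplus\gk^2$ with explicitly prescribed $\alpha_i,\beta_i,\mu_i$. The one loose end you flag --- the residual freedom acting on $\beta_1^0,\beta_2^0$ --- is settled in the paper not by a further gauge but by the (unique) choice of complement $V_1^*$ making $\mu_1^0=\mu_2^0=0$, after which (\ref{reduced 0-part}) reads $A(\beta_i^0)\mp\lambda\beta_i^0=0$: nilpotency of $A$ (forced by $[A,B]=2\lambda A$ when $\lambda\neq0$) makes $A\pm\lambda\id$ invertible and kills $\beta_i^0$, while for $\lambda=0$ it yields exactly $\beta_i^0\in\Ker A$, so the $\nu_i$ are honest invariants rather than gauge artifacts.
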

\begin{proof}
Choose the complement $V_1^*$ so that $\mu_1^0=\mu_2^0=0$. It is not difficult to see that such $V_1^*$ 
exists and is unique. Then functions $\mu_i=\mu_i^1$ are uniquely  defined by relations (\ref{ekl}), 
and (\ref{reduced 0-part}) simplifies to
\begin{eqnarray}\label{simplified 0-part}
A(\beta_1^0)-\lambda\beta_1^0=0,  \quad\quad
A(\beta_2^0)+\lambda\beta_2^0=0.
\end{eqnarray} 
The commutation relation $[\bar{Z},\bar{W}]=2\lambda\bar{Z}$ implies $ [A,B]=2\lambda A$. Hence 
$A$ is nilpotent if $\lambda\neq 0 \;\Rightarrow \;A\pm\lambda\id$ is nondegenerate $\;\Rightarrow \;$
the only solution of (\ref{simplified 0-part}) is $\beta_1^0=\beta_2^0=0$.  Moreover, Lemma 
\ref{div X_0} shows that $\tr\,B=\Div_{\omega_0}\bar{X}_0=2(1+\lambda)$. So, in this case 
the matching is completely characterized by the quadruple $(V_1^*\approx\bar{V},A,B,\lambda)$.
If $\lambda=0$, then (\ref{simplified 0-part}) just tells that $\beta_1^0, \,\beta_2^0\in\Ker\A$ and 
we have no other restrictions on these functions. In this case the matching is completely
characterized by the quintuple $(V_1^*\approx\bar{V},A,B, \beta_1^0,\beta_2^0)$. 

Conversely, by starting from an abstract quadruple (resp., quintuple) one can construct a pair of compatible modular Lie algebras, which is characterized by an equivalent to it quadruple 
(resp., quintuple). Indeed, let $\iota:\mathcal{V}\rightarrow V^*$ be an imbedding. Put 
$V_1^*=\im\,\iota$ and choose a complement $V_0^*$ of $V_1^*$ together with two 
independent vectors $\theta_1,\theta_2\in V_0^*$. Then $\Xi_i$
is defined as the corresponding to $\theta_i$ constant vector field.

Next, if $H:\mathcal{V}\rightarrow\mathcal{V}$  is an operator, then the operator 
$\tilde{H}:V_1^*\rightarrow V_1^*$ is defined as the direct sum of the operator 
$\iota\circ H\circ\iota^{-1}$
on $V_1^*$ and the zero operator on $V_0^*$. Denote by $Y_H$  the corresponding to $\tilde{H}$
linear vector field on $V^*$ and put $Z_0=Y_A, \,W_0=Y_B$.  Finally, define functions
$\alpha_i, \,\beta_i$ and $\mu_i$ by putting 
$$\alpha_i^0=\mu_i^0=0, \,\alpha_1^1=\lambda\varphi_1,  
\,\alpha_2^1=-\lambda\varphi_2, \,\beta_i^1=-\lambda\varphi_i, \,\mu_i^1=-\lambda^2\varphi_i
$$ 
with $\varphi_i$ being the linear function on $V^*$ vanishing on $V_1^*$ and such that 
$\varphi_i(\theta_j)=\delta_{ij}$, and $\beta_i^0=0$ (resp., $\beta_i^0=\pi_1^*(\nu_i)$ if 
$\lambda\neq 0$ (resp., if $\lambda=0$).

Vector fields $Z$ and $W$ (and, therefore, $X_1=\frac{1}{2}(Z+W), X_2=\frac{1}{2}(W-Z))$ are defined by
formula (\ref{fieldsDeco}) with  $\alpha_i$'s and $\beta_i$'s as above. Now a direct check shows that the 
so-constructed linear bivectors $X_i\wedge\Xi_i$ are compatible and modular with $X_i$'s and $\Xi_i$'s  
as in sec.\,3.7. 
\end{proof}

\begin{rmk}
Since representations of bidimensional Lie algebras are well-known, theorem \ref{LieMathing} 
gives an exhaustive description of matchings of modular Lie algebras. Also, it is worth stressing 
that for a given representation the 
trace relation $\tr\,B=2(1+\lambda)$ describes the spectrum of the parameter $\lambda$.
\end{rmk}
\begin{rmk}
It is easy  see that matchings with proportional $\Xi_1$ and $\Xi_2$ are completely characterized 
by quadruples $(\mathcal{V}, A_1,A_2, \nu)$ such that 
$\dim\,\mathcal{V}=n-1, \tr\,A_1=\tr\,A_2=1$ and $0\neq\nu\in\gk$. Namely, 
$\mathcal{V}=V^*/\mathrm{span}(\theta_1), \Xi_2=\nu\Xi_1$ and $A_i$ is the restriction of 
the projected on $\mathcal{V}$ vector field $X_i$ to linear functions on $\mathcal{V}$.  
\end{rmk}

\section{The disassembling problem}\label{dis-probl}

This section is central in this paper. Here we discuss how a Lie algebra can be gradually 
disassembled into some other  Lie algebras.   Here we introduce some basic disassembling 
techniques and then prove  (theorems \ref{C-dis} and \ref{R-dis}) that  any 
finite-dimensional Lie algebra over an algebraically closed field of zero characteristic or 
over  $\R$ can be  assembled in few steps from lions (see below).

In this section ``Lie algebra" refers to a finite dimensional Lie algebra 
over a ground field $\gk$. We start with necessary terminology in order to properly 
state the {\it disassembling  problem}  .  

\subsection{Statement of the problem}\label{statemant}

\par\medskip
\textit{Simple disassemblings and lieons.} 

\begin{defi}\label{base-dis}
A \emph {simple disassembling} of a Lie algebra structure $\gG$ on a vector 
space $V$ is a representation of it as the sum 
\begin{equation}\label{dec}
\gG = \gG_1 + \dots +\gG_k
\end{equation}
of mutually compatible Lie algebra structures $\gG_i$'s on $V$. Lie algebras 
$\gG_i$'s figuring in (\ref{dec}) are called \emph{primary constituents} of 
$\gG$. 
\end{defi}

In such a situation we speak,  slightly abusing the language,  on a (simple) 
disassembling of the Lie algebra $\gG$ into algebras $\gG_1,...,\gG_k$ or, 
alternatively, that  $\gG$ is assembled from $\gG_1,...,\gG_k$. Accordingly,  
we write 
\begin{equation}\label{2-dec}
 \gG_1 + \dots +\gG_k = \gH_1  + \dots + \gH_l,
\end{equation}
in order to express one of the following two facts:
\begin{itemize}
\item a Lie algebra structure on a vector space $V$ admits two 
(different) disassemblings into Lie algebras structures $\gG_i$'s and 
$\gH_i$'s, respectively; 
\item Lie algebras assembled from Lie algebras $\gG_i$'s and $\gH_i$'s, 
respectively, are isomorphic. 
\end{itemize} 

Having disassembled a Lie algebra $\gG$ into constituents $\gG_1,\dots , \gG_k$ 
it is natural to look for further disassembling of $\gG_i$'s and so on. This 
way one gets {\it secondary, ternary}, etc, constituents. The procedure, which is
inverse to such a multi-step disassembling one, will be called an \emph{assembling} 
procedure.
A natural questions arising in this connection is: 
\begin{quote}
{\it What are \emph{``finest" (``simplest")} constituents of which any Lie 
algebra over a given ground field can be assembled?} 
\end{quote}
It will be refereed to as the \emph{disassembling problem}.  

It is not difficult to come to the conclusion that the following Lie algebras 
must be in the list of these ``finest" algebras: 
\begin{itemize}
  \item the 1-dimensional Lie algebra $\boldsymbol{\gamma}$,
  \item the unique non-abelian  2-dimensional Lie algebra $\between$,
  \item the 3-dimensional Heisenberg (over $\gk$) algebra $\pitchfork$.
\end{itemize}
In terms of generators algebras $\between$ and $\pitchfork$ are described 
as follows: 
\begin{align*}
 \between &= \{e_1, e_2 \mid [e_1,e_2]=e_2\}  \\
  \pitchfork &= \{\e_1, \e_2, \e_3 \mid [\e_1,\e_2]=\e_3,\ [\e_1,\e_3]=[\e_2,\e_3]=0\}
  \end{align*}
\noindent They are ``simplest" in any 
reasonable sense of this word. In particular, $\between$ is the ``simplest" 
non-unimodular algebra, while $\pitchfork$ is the ``simplest" nontrivial unimodular one. 

Denote by ${\between}_n, n\geqslant 2$, (resp., ${\pitchfork}_n, n\geqslant 3$) 
the direct sum of $\between$ (resp., of $\pitchfork$) and the 
$(n-2)$-dimensional (resp., $(n-3)$-dimensional) abelian Lie algebra. We will 
also use $\boldsymbol{\gamma}_n$ for $n$-dimensional abelian Lie algebra. 
\begin{defi}\label{lieons}
Lie algebras $\pitchfork_n$ and $\between_n$ are called $n$-dimensional 
$\pitchfork$- and $\between$-\emph{lieons}, respectively.
\end{defi}  

Solution of the disassembling problem for 3-dimensional Lie algebras is not 
difficult (see \cite{MVV}) and is as follows.
\begin{ex}
Any unimodular 3-dimensional Lie algebra can be simply assembled from $l$ 
copies of $\pitchfork$, $l\leq 3$. Any non-unimodular 3-dimensional Lie algebra 
can be simply assembled from $l$ copies of $\pitchfork$, $l\leq 2$, and one 
copy of $\between_3$ . In this sense one can say 
that all 3-dimensional Lie algebras are assembled from $\between$'s and  
$\pitchfork$'s with help of $\boldsymbol{\gamma}$ (in order to construct 
$\between_3$ from $\between$). 
\end{ex} 
\begin{wrapfigure}[13]{o}{3cm}
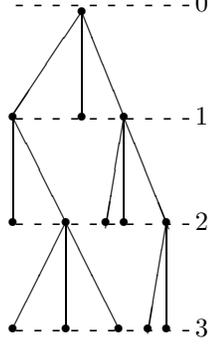
 
\figone
\caption{An a-sche\-me of length~3.}
\end{wrapfigure}

\noindent In this connection see also \cite{Mor} for an explicit description of the algebraic variety 
$\mathrm{Lie}(3)$ of all Lie algebra structures on a 3-dimensiona vector space.

Sometimes it is more expressive to use "chemical" formulas like 
\begin{equation}\label{2h=2b}
 2\pitchfork = 2\between + 2\boldsymbol{\gamma}.
\end{equation}
This formula, which will be proven below, is synonymous to $\pitchfork + 
\pitchfork = \between_3 + \between_3.$ It should be stressed that formulas like 
(\ref{2h=2b}) tell only that a Lie algebra can be in a way assembled both from 
algebras indicated in its left-hand side and from those in its right-hand side. 
\newline
\par\bigskip
\noindent\textit{Assemblage schemas.} 

\noindent Now we pass to a necessary bureaucracy. An {\it assembling scheme} 
(shortly, an {\it a-scheme}) $\gS$ is a finite graph, whose set of vertices 
$vert\,\gS$ is a disjoint union of nonempty subsets $vert_s\gS, \,s=0,\dots ,m$, 
called {\it levels}, such that 
\begin{enumerate}
\item $vert_0\gS$ consists of only one vertex $o_{\gS}$, called
the {\it origin} of $\gS$. 
\item Edges of $\gS$ connect vertices of consecutive levels only. If 
$v_0\in vert_s\gS$ and \\ $v_1\in vert_{s+1}\gS$ are  ends of an edge, 
then they are called \\ the {\it origin} and the {\it end} of this edge, 
respectively. 
\item Any vertex $v\in vert_s\gS , s>0$,  is the end of only one
edge. 
\item None of vertices $v\in vert_s\gS , s<m$, is the origin of only one edge. 
A vertex which is not the origin of an edge is called an {\it end} of $\gS$; 
\end{enumerate} 
The number $m$ is called the {\it length} of $\gS$ and denoted by $|\gS|$. 
Obviously, $\gS$ is a connected graph and there exists at most one edge 
connecting two given vertices of it. All vertices in $vert_m\,\gS$ are ends. 
A-schemes $\gS$ and $\gS'$ are \emph{equivalent} if they are equivalent
as graphs. 

\par\medskip
\noindent\textit{Multi-step disassemblings.} 

\begin{defi}
 Let $\gG$ be a Lie algebra structure on a vector space $V$ and
 $\gS$ be an $a$-scheme. A system $\{\gG_v\}, v\in vert\,\gS$,
 of Lie algebra structures on $V$ is called an $m$-step ($m=|\gS|$)
 \emph{disassembling} of $\gG$ if
 \begin{enumerate}
\item $\gG =\gG_{o_{\gS}}$;
\item If $v_1,\dots,v_p\in vert\,\gS$ are ends of edges having the common
origin v, then structures $\gG_{v_1},\dots ,\gG_{v_p}$ are mutually compatible 
and $\gG_v = \gG_{v_1}+\dots +\gG_{v_p}$. 
 \end{enumerate}
\end{defi}

$\gS$ is the \emph{scheme} of this assembling and we shall speak on a $\gS$-disassembling 
 in order to stress an instance of that.
 The structure $\gG_v, v\in vert_s\gS,$ is called an {\it ($s$-level) term} of the 
$\gS$-disassembling 
It is an {\it end term} of it if $v$ is an end point of $\gS$. We say that $\gG$ 
\emph{is assembled from Lie algebras} $\gG_1,\dots,\gG_r$ if these are in one-to-one 
correspondence with end terms of a disassembling of $\gG$. 

It is worth stressing that if $v$ and $w$ are not ends of two edges of a common 
origin, then  $\gG_v$ and $\gG_w$ are not, generally, compatible.

\begin{defi}\label{finest}
\begin{enumerate}
\item A disassembling of $\gG$ is called \emph{complete} if all its end terms are 
isomorphic either to $\between_n$, or to $\pitchfork_n$. 
\item Disassemblings of isomorphic Lie algebras $\gG$ and $\gH$ are called 
\emph{equivalent} if there exist an equivalence $\sigma:vert\,\gS\rightarrow vert\,\gS'$ 
of the corresponding a-schemes and an isomorphism $\varphi:\gG\rightarrow\gH$ 
which is also an isomorphism of $\gG_v$ onto $\gH_{\sigma(v)}, \,\forall v\in vert\,\gS$.
\end{enumerate}
\end{defi}

Obviously, nonequivalent disassemblings can have equivalent a-schemes. 
The above-mentioned mentioned formula $\pitchfork+\pitchfork=\between_3+\between_3$
is a simple example of that. 

\par\medskip
\noindent\textit{The disassembling problem.}

\noindent  The \emph{disassembling problem} is the question:
\begin{quote}
{\it Whether a given Lie algebra can be completely disassembled?}
\end{quote}
Below we shall develop some disassembling techniques and prove that any Lie 
algebra over an algebraically closed field or over $\R$ 
can be completely disassembled. This result confirms that lieons are 
elementary constituents of which all Lie algebras are made. 
By mimicking physical terminology one may say that $\gamma$ creates the 
necessary ``vacuum", which makes possible interactions between constituents  
$\between$ and $\pitchfork$ of  ``Lie matter".

In this connection it should be mentioned  that the number of elementary 
constituents for Lie algebras can not be reduced to one. Indeed, according 
to proposition\,\ref{PFA}, by 
assembling unimodular Lie algebras one can only get unimodular ones. 
So, only unimodular Lie algebras can be assembled from $\pitchfork$-lieons. 
On the other hand, it is not difficult to show (see \cite{MVV})
that $\pitchfork$ can not be assembled only from $\between$-lieons  (compare 
with (\ref{2h=2b})\,!). Hence the algebra $\pitchfork$ can not be excluded from 
the list of ``finest" Lie algebras. 

Now we pass to some basic techniques and constructions that will be used in our 
analysis of the disassembling problem. 

\par\bigskip
\subsection{Reduction to Solvable and Semisimple Algebras.} \label{ReductionSolvable}

\noindent First of all, we shall show that the problem naturally splits into 
``solvable" and ``semisimple" parts. The semidirect sum of a Lie algebra $\gA$
and of the abelian Lie algebra structure on a vector space $V$, which is defined  
by a representation $\rho:\gA\rightarrow \rm{End}\,V$ will be denoted by 
$\gA\oplus_{\rho}V$.
\begin{proc}\label{s-tr}
Let algebra $\gG$  be the semidirect sum of an its subalgebra $\gG_0$ and an 
ideal $\gH$. Identifying $|\gG|$ and $|\gG_0|\oplus |\gH|$ we have the 
simple disassembling 
\begin{equation}\label{csp}
\gG =(\gG_0 \oplus_{\rho}|\gH |)\,+\,(\bg_m\oplus \gH),
\end{equation}
with $\rho$  being the canonical representation of $\gG_0$ in $|\gH|$\,  and \, 
$\bg_m, \,m=\dim\,\gG_0$,  the abelian structure on $|\gG_0|$. 
\end{proc}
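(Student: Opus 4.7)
The plan is essentially bookkeeping on the bracket of a semidirect sum. I would first fix the identification $|\gG|=|\gG_0|\oplus|\gH|$ and write a generic element as $x+h$ with $x\in|\gG_0|$, $h\in|\gH|$. Then the bracket of $\gG$ decomposes canonically as
\begin{equation*}
[x_1+h_1,x_2+h_2]_{\gG}=[x_1,x_2]_{\gG_0}+\rho(x_1)h_2-\rho(x_2)h_1+[h_1,h_2]_{\gH},
\end{equation*}
where $\rho(x)h=[x,h]_{\gG}$ is the canonical action of $\gG_0$ on $|\gH|$ (well-defined because $\gH$ is an ideal).

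Next I would write out the brackets of the two proposed constituents. On $|\gG_0|\oplus|\gH|$ the structure $\gG_1=\gG_0\oplus_{\rho}|\gH|$ is by definition
\begin{equation*}
[x_1+h_1,x_2+h_2]_{1}=[x_1,x_2]_{\gG_0}+\rho(x_1)h_2-\rho(x_2)h_1,
\end{equation*}
while $\gG_2=\bg_m\oplus\gH$ has
\begin{equation*}
[x_1+h_1,x_2+h_2]_{2}=[h_1,h_2]_{\gH}.
\end{equation*}
Both of these are Lie algebra structures: $\gG_1$ is a semidirect sum built from the representation $\rho$ (Jacobi follows from the fact that $\rho$ is a Lie algebra homomorphism, itself a consequence of Jacobi in $\gG$), and $\gG_2$ is literally the direct sum of two Lie algebras.

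Comparing the three formulas term by term gives $[\,\cdot\,,\,\cdot\,]_{\gG}=[\,\cdot\,,\,\cdot\,]_{1}+[\,\cdot\,,\,\cdot\,]_{2}$. Hence the sum of the two Lie brackets $[\,\cdot\,,\,\cdot\,]_{1}+[\,\cdot\,,\,\cdot\,]_{2}$ is again a Lie bracket (namely the original one on $\gG$), so by the Lie-algebra analogue of Definition \ref{Poisson compatibility} the structures $\gG_1$ and $\gG_2$ are compatible, and $(\ref{csp})$ is a genuine simple disassembling.

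There is no real obstacle here: once one observes that the cross term $\rho(x_1)h_2-\rho(x_2)h_1$ of the bracket in $\gG$ is absorbed into the semidirect-sum piece $\gG_1$ while the $\gH$-internal piece is captured by $\gG_2$, the proposition reduces to a direct check. The only point worth emphasizing is that compatibility does \emph{not} need to be verified by computing $\ldb P_{\gG_1},P_{\gG_2}\rdb=0$ on the dual; it is automatic once one observes that $\gG_1,\gG_2$ are Lie structures and $\gG_1+\gG_2=\gG$.
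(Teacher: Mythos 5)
Your proof is correct and is exactly the explicit version of what the paper intends: the paper's own proof reads simply ``By construction,'' and your term-by-term decomposition of the semidirect-sum bracket into the two constituent brackets, with compatibility following automatically because their sum is the original Lie bracket, is precisely that construction spelled out.
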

\begin{proof}
By construction. 
\end{proof} 

Now, apply proposition \ref{s-tr} to the Levi-Malcev decomposition 
$\gG=\gH\oplus_{\rho}\gR$   of a Lie algebra $\gG$. Here $\gR$ 
is the radical of $\gG$  and $\gH\subset\gG$ a 
complementing $\gR$ semisimple subalgebra. 
\begin{cor}\label{dis-reduct}
The disassembling problem for Lie algebras over a field $\gk$ of 
characteristic zero reduces to that for solvable algebras (over $\gk$) and for 
abelian extensions of semisimle algebras, i.e., algebras of the form 
$\gH\oplus_{\rho}W$ with $\rho:\gH\rightarrow \rm{End}\,W$  
being a finite-dimensional representation of a semisimple algebra $\gH$ (over 
$\gk$). 
\end{cor}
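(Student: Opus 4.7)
The plan is to apply Proposition \ref{s-tr} directly to the Levi--Malcev decomposition. Over a field $\gk$ of characteristic zero, any finite-dimensional Lie algebra $\gG$ admits a decomposition $\gG=\gH\oplus_{\rho}\gR$, where $\gR$ is the radical, $\gH$ is a semisimple Levi subalgebra, and $\rho:\gH\rightarrow\End\,|\gR|$ is the restriction of the adjoint action of $\gH$ on the ideal $\gR$.

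Taking $\gG_0=\gH$ and the ideal to be $\gR$ in Proposition \ref{s-tr}, I immediately obtain the simple disassembling
\begin{equation*}
\gG \;=\; \bigl(\gH\oplus_{\rho}|\gR|\bigr)\;+\;\bigl(\bg_m\oplus\gR\bigr), \qquad m=\dim\gH,
\end{equation*}
into two compatible primary constituents. The first constituent is, by construction, an abelian extension of the semisimple algebra $\gH$ by the vector space $|\gR|$, i.e.\ of the form $\gH\oplus_{\rho}W$ required in the statement. The second constituent is the direct sum of an abelian algebra $\bg_m$ and the solvable algebra $\gR$, hence itself solvable.

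Thus any disassembling of $\gG$ down to lieons can be obtained by assembling (via the above one-step disassembling) a complete disassembling of the solvable piece $\bg_m\oplus\gR$ together with a complete disassembling of the abelian-extension-of-semisimple piece $\gH\oplus_{\rho}|\gR|$, which proves the reduction.

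There is essentially no obstacle here: the statement is a one-line consequence of Proposition \ref{s-tr} applied to Levi--Malcev, the only mild point being the observation that the two constituents land precisely in the two classes named in the corollary. The genuine difficulties, of course, are hidden in the two reduced problems themselves, which are treated in the subsequent subsections (disassembling of solvable algebras, and the stripping procedure for abelian extensions of simple algebras).
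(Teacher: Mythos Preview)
Your proposal is correct and follows exactly the paper's approach: the paragraph preceding the corollary in the paper instructs the reader to apply Proposition~\ref{s-tr} to the Levi--Malcev decomposition $\gG=\gH\oplus_{\rho}\gR$, yielding precisely the two constituents you identify, and the corollary is stated without further proof. Your write-up simply makes explicit what the paper leaves implicit.
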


\par\smallskip
\noindent\textit{Solvable Algebras.} 

\noindent The first of these two problems admits a simple solution. 
\begin{proc}\label{dis-solv}
Any solvable Lie algebra over a field $\gk$ can be completely disassembled. 
\end{proc}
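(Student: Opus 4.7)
The plan is induction on $n = \dim\gG$. For the inductive step with $\gG$ non-abelian solvable, the quotient $\gG/[\gG,\gG]$ is nonzero abelian, so I may choose an ideal $\gH \subset \gG$ of codimension one containing $[\gG,\gG]$ and an element $e \notin \gH$. Writing $A \df \ad_\gG(e)|_\gH$, which is a derivation of $\gH$, one has $\gG = \gk e \oplus_\rho \gH$ with $\rho(e) = A$, and proposition \ref{s-tr} furnishes the simple disassembling
\begin{equation*}
\gG \;=\; \bigl(\gk e \oplus_\rho |\gH|\bigr) \;+\; \bigl(\boldsymbol{\gamma}_1 \oplus \gH\bigr)
\end{equation*}
into two compatible Lie structures on the vector space $V = \gk e \oplus |\gH|$.

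Since $\dim\gH = n-1$, the inductive hypothesis provides a complete disassembling $\gH = \gH_1 + \cdots + \gH_k$, which lifts tautologically to $\boldsymbol{\gamma}_1 \oplus \gH = \sum_j (\boldsymbol{\gamma}_1 \oplus \gH_j)$; each summand is a lieon of dimension $n$, since direct sum with $\boldsymbol{\gamma}_1$ promotes $\between_{n-1}$ to $\between_n$ and $\pitchfork_{n-1}$ to $\pitchfork_n$. The purely abelian base case $\gG = \boldsymbol{\gamma}_n$ ($n \geq 2$) is handled by the identity $\boldsymbol{\gamma}_n = \between_n + \between_n'$, where $\between_n'$ denotes the $\between_n$-structure with its nontrivial bracket negated; via the map $e_1 \mapsto -e_1$ (the remaining generators fixed) this is isomorphic to $\between_n$, so both summands are $\between$-lieons and their sum is the zero bracket.

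The heart of the argument is the first summand, a semidirect sum $\gk e \oplus_A V_0$ with $V_0 = |\gH|$ abelian. I decompose $A$ as a sum $A = A_1 + \cdots + A_r$ of rank-one operators of the form $A_i(v) = \varphi_i(v)\,w_i$, with $\varphi_i \in V_0^*$ and $w_i \in V_0$, and set $\gG^{(i)} \df \gk e \oplus_{A_i} V_0$. Mutual compatibility of the $\gG^{(i)}$ is automatic: since $V_0$ is abelian, any partial sum $\sum_{i\in S}[\cdot,\cdot]_{\gG^{(i)}}$ coincides with the semidirect structure $\gk e \oplus_{\sum_{i\in S} A_i} V_0$, whose Jacobi identity is trivially satisfied. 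It remains to identify each $\gG^{(i)}$ as a lieon. If $\varphi_i(w_i) \neq 0$, rescaling to $\varphi_i(w_i) = 1$ yields $[e,w_i] = w_i$ with every element of $\ker\varphi_i$ central, so $\gG^{(i)} \cong \between_n$; if $\varphi_i(w_i) = 0$ then $A_i^2 = 0$, and choosing $v_i \in V_0$ with $\varphi_i(v_i) = 1$ together with any basis of $\ker\varphi_i$ containing $w_i$ shows that $\{e,v_i,w_i\}$ generates a copy of $\pitchfork$ with the remaining basis vectors central, so $\gG^{(i)} \cong \pitchfork_n$. The step requiring the most care is precisely this case analysis linking $\tr A_i = \varphi_i(w_i)$ to the type of lieon produced; the only degenerate situation is $A = 0$, which forces $\gG = \boldsymbol{\gamma}_1 \oplus \gH$ and reduces directly to the inductive hypothesis on $\gH$.
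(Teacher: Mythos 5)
Your proof is correct and follows essentially the same route as the paper's: split off a codimension-one ideal containing $[\gG,\gG]$, apply proposition\,\ref{s-tr}, induct, and then disassemble the remaining algebra $\Gamma_A$ by decomposing $A$ into rank-one operators, each of which yields a $\between_n$- or $\pitchfork_n$-lieon according to whether its trace $\varphi_i(w_i)$ is nonzero or zero. The only differences are cosmetic — you use general rank-one operators where the paper uses the matrix units $E_{ij}$ (a special case of your decomposition, with the same diagonal/off-diagonal dichotomy), and you treat the abelian base case explicitly via $\boldsymbol{\gamma}_n=\between_n+\between_n'$, which the paper leaves implicit.
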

\begin{proof}
Let $\gG$  be a solvable algebra. Then any subspace of $|\gG|$ containing the 
derived algebra $[\gG , \gG]$ is, obviously, an ideal of $\gG$. Consider such 
an ideal $\gs$ of codimension one and a complementing it one-dimensional 
subspace, which is automatically a subalgebra of $\gG$. This makes evident that 
$\gG$ is a semidirect product $\bg\oplus_{\rho}\gs$. By applying to it proposition 
\ref{s-tr} we see that $\gG$ can be disassembled into 
two structures, one of which is $\bg\oplus\gs$ with $\gs$ being a solvable algebra, 
while  the other one is of the form $\bg\oplus_{\rho}V$ with $\rho$ being a 
representation of $\bg$ in the vector space $V=|\gs|$. Now obvious induction 
arguments reduce the problem to disassembling of algebras of the latter type. 
This can be done as follows. 

Fix a base element $\nu\in \bg$ and put $A=\rho(\nu):V\rightarrow V, 
\,\,0\neq\nu\in \bg$. The product in the algebra $\bg\oplus_{\rho}V$ is  given by
$$
[\nu,v]=Av, \,\,[v_1,v_2]=0, \,\,\,\nu\in\bg, \,v_1,v_2\in V.
$$
i.e., is completely determined by the operator $A$. Denote the so-defined algebra 
by $\Gamma_A$. Since the operator $A$ in this construction is defined up to a 
scalar factor, algebras $\Gamma_A$  and $\Gamma_{\lambda A}, \,\,\lambda\in 
\gk$,  \,are isomorphic. So, it remains to show that algebras $\Gamma_A$'s can 
be completely disassembled. 

First, note that $\between=\Gamma_A$  if $A$ is the identity operator on an 
1-dimensional vector space $V$  and $\pitchfork =\Gamma_A$  if $A$ is a 
nontrivial nilpotent operator on a 2-dimensional vector space $V$. 

Second, if $\|a_{ij}\|$  is the matrix of $A$ in a basis $\{ e_i\}$ of $V$, 
then $A=\sum a_{ij}E_{ij}$, \,where the operator $E_{ij}:V\rightarrow V$   is 
defined by $E_{ij}(e_i)=e_j$ and $E_{ij}(e_k)=0, \,\,k\neq i$. Now it is easy 
to see that the structure $\Gamma_{E_{ij}}$ is isomorphic to $\pitchfork_n$,  
if $i\neq j$,  and to $\between_n$, if $i=j$. Finally,  since 
$\Gamma_{a_{ij}E_{ij}}=\Gamma_{E_{ij}}$, if $a_{ij}\neq 0$, then 
\begin{equation}\label{g-dec}
\Gamma_A=\sum_{i,j,\,a_{ij}\neq 0}\Gamma_{E_{ij}},
\end{equation}
which is the desired disassembling. 
\end{proof}

Disassembling (\ref{g-dec}) depends on the choice of a base in $V$. This fact 
can be used to illustrate nonuniqueness of complete disassemblings of a given 
algebra. For instance, let $\dim\,V=2$ and $A:V\rightarrow V$ be an operator, 
with eigenvalues $\pm 1$. Then in the basis of eigenvectors $e_1$ and $e_2$  
disassembling (\ref{g-dec}) is $\Gamma_A=\Gamma_{E_{11}}+\Gamma_{E_{22}}$, 
i.e., symbolically, $\Gamma_A=2\between_3$. On the other hand, in the basis 
$\{e_1 + e_2, \,\,e_1-e_2\}$ we have 
$\Gamma_A=\Gamma_{E_{12}}+\Gamma_{E_{21}}\Leftrightarrow \Gamma_A=2\pitchfork$. 
This proves formula (\ref{2h=2b}). 

The d-scheme of the complete disassembling procedure for a solvable algebra 
described above is as in fig.\,2. 

\bigskip
\begin{figure}[h]
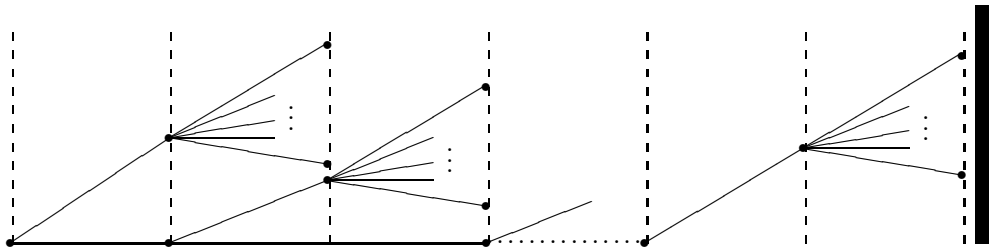

\figtwo
\caption{A-scheme of a complete disassembling of a solvable Lie algebra.}
\end{figure}

\par\medskip
\noindent\textit{From Semisimple to Simple Algebras.} 

\noindent The disassembling problem  for algebras $\gG\oplus_{\rho}V$  with
semisimple $\gG$ is easily reduced to  that for simple $\gG$.
Indeed, observe that the direct sum of Lie algebras 
$\gG=\gG_1\oplus\dots\oplus\gG_k$ is a natural assemblage 
$\gG=\bar{\gG}_1+\dots +\bar{\gG}_k$, where the structure $\bar{\gG}_i$ on 
$|\gG|=|\gG_1|\oplus\dots|\gG_k|$ is the direct sum of abelian structures on 
$|\gG_j|$'s for $ j\neq i$ and the structure $\gG_i$ on $|\gG_i|$. If  
$\rho$ is a representation of $\gG$ in $V$, then the representation $\rho_i$
of $\bar{\gG_i}$ in $V$ is defined to be trivial on $|\gG_j|, \;j\neq i,$ and coinciding 
with $\rho$ on $\gG_i$. Then we have 
\begin{proc}\label{dec-sum}
\begin{equation}\label{dec-repr}
\gG\oplus_{\rho}V=\bar{\gG}_1\oplus_{\rho_1}V+\dots+\bar{\gG}_k\oplus_{\rho_k}V,
\end{equation}
 is a simple disassembling of $\gG\oplus_{\rho}V$. 
\end{proc}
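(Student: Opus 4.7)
The plan is to reduce the statement to a direct verification that (i) each $\bar\gG_i\oplus_{\rho_i} V$ is well-defined as a Lie algebra structure, (ii) their sum, as a bilinear bracket on $|\gG|\oplus V$, coincides with the bracket of $\gG\oplus_\rho V$, and (iii) mutual compatibility follows for free once (ii) is established together with a similar identification on every pair of indices. There is no subtle analytic or algebraic point here; the proof is purely bookkeeping on the direct sum decomposition $|\gG|=|\gG_1|\oplus\cdots\oplus|\gG_k|$.

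For (i), I would observe that $\bar\gG_i$, viewed as a Lie algebra on $|\gG|$, is by construction $\gG_i\oplus\bg_{n_i}$ where $n_i=\sum_{j\neq i}\dim\gG_j$, so it is obviously a Lie algebra. To check that $\rho_i$ is a representation of $\bar\gG_i$, it suffices (by bilinearity) to check the relation $\rho_i([a,b]_{\bar\gG_i})=[\rho_i(a),\rho_i(b)]$ on pairs of basis vectors drawn from the summands $|\gG_j|$: if both lie in $|\gG_i|$ the identity reduces to the fact that $\rho|_{\gG_i}$ is a representation of $\gG_i$; if at least one lies in some $|\gG_j|$, $j\neq i$, then both sides vanish because $\rho_i$ is trivial on $|\gG_j|$ and $[\cdot,\cdot]_{\bar\gG_i}$ vanishes outside $|\gG_i|\times|\gG_i|$. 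Hence each $\bar\gG_i\oplus_{\rho_i} V$ is a legitimate semidirect sum structure.

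For (ii), take $u=\sum_j u_j+v$ and $w=\sum_j w_j+v'$ with $u_j,w_j\in|\gG_j|$ and $v,v'\in V$. In $\gG\oplus_\rho V$ the bracket is
\[
[u,w]=\sum_j [u_j,w_j]_{\gG_j}+\rho(u)v'-\rho(w)v,
\]
because the simple ideals $|\gG_j|$ commute in $\gG$. In $\bar\gG_i\oplus_{\rho_i} V$ the bracket is
\[
[u,w]_i=[u_i,w_i]_{\gG_i}+\rho(u_i)v'-\rho(w_i)v.
\]
Summing over $i$ and using linearity of $\rho$ gives exactly the bracket of $\gG\oplus_\rho V$, which establishes the equality in (\ref{dec-repr}).

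For (iii), the same reasoning applied to any subset $S\subset\{1,\dots,k\}$ shows that $\sum_{i\in S}\bar\gG_i\oplus_{\rho_i} V$ coincides with the semidirect sum $(\bigoplus_{i\in S}\gG_i)\oplus_{\rho_S} V$ extended trivially on the remaining abelian summands $\bigoplus_{i\notin S}|\gG_i|$, where $\rho_S=\rho|_{\bigoplus_{i\in S}\gG_i}$. This is manifestly a Lie algebra structure, so any sub-sum is a Lie algebra; in particular all pairs $\bar\gG_i\oplus_{\rho_i}V$, $\bar\gG_j\oplus_{\rho_j}V$ are compatible, completing the simple disassembling. The only thing one must be slightly careful about is that mutual compatibility as used here is the pairwise condition (as fixed in Definition \ref{Poisson compatibility} and its Lie analogue); the above argument gives it uniformly. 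There is no real obstacle to overcome.
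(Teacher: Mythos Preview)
Your proof is correct and is essentially the same approach as the paper's, which simply reads ``By construction.'' --- you have filled in the routine verification that the paper leaves implicit. One minor remark: you refer to ``the simple ideals $|\gG_j|$'', but the proposition as stated is for an arbitrary direct sum $\gG=\gG_1\oplus\cdots\oplus\gG_k$ of Lie algebras (the simplicity hypothesis enters only in the subsequent corollary); your computation in (ii) is nonetheless valid because the summands of any direct sum of Lie algebras commute by definition.
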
 
\begin{proof}
By construction. 
\end{proof}

\begin{cor}\label{dec-ss}
Let $\gG$ be a semisimple Lie algebra and $\gG=\gG_1\oplus\dots\oplus\gG_k$  
its decomposition into a sum of simple algebras. If $\rho$ is a 
representation of $\gG$ in $V$, then 
\begin{equation}
\gG\oplus_{\rho}V=(\gG_1\oplus\gamma_{l_1})\oplus_{\rho_1}V+\dots+
(\gG_k\oplus\gamma_{l_k})\oplus_{\rho_k}V,
\end{equation} 
where $l_i=\dim\;\gG-\dim\;\gG_i, \,i=1,\dots,k$, is a simple disassembling of 
$(\gG_i\oplus_{\rho}V)$. 
\end{cor}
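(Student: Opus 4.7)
The plan is to derive this corollary as an essentially bookkeeping consequence of Proposition \ref{dec-sum}, since it is obtained from that proposition by specializing the decomposition $\gG=\gG_1\oplus\dots\oplus\gG_k$ to the unique (up to order) decomposition of a semisimple algebra into simple ideals.

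First I would recall from the construction preceding Proposition \ref{dec-sum} that, given $\gG=\gG_1\oplus\dots\oplus\gG_k$, the structure $\bar\gG_i$ on $|\gG|=|\gG_1|\oplus\dots\oplus|\gG_k|$ is by definition the direct sum of the original Lie bracket on $|\gG_i|$ and the abelian brackets on each $|\gG_j|$, $j\neq i$. Consequently $\bar\gG_i$ is canonically isomorphic to the direct sum of the simple algebra $\gG_i$ with the abelian Lie algebra on the complementary subspace $\bigoplus_{j\neq i}|\gG_j|$, whose dimension is exactly $l_i=\dim\gG-\dim\gG_i$. This identifies $\bar\gG_i$ with $\gG_i\oplus\gamma_{l_i}$, so that $\bar\gG_i\oplus_{\rho_i}V\simeq (\gG_i\oplus\gamma_{l_i})\oplus_{\rho_i}V$ as Lie algebra structures on $|\gG|\oplus V$.

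Next I would simply invoke Proposition \ref{dec-sum} for this choice of simple decomposition: the representation $\rho_i$ of $\bar\gG_i$ is defined to coincide with $\rho$ on $|\gG_i|$ and to be trivial on $|\gG_j|$, $j\neq i$, which makes sense precisely because each $\gG_j$ acts on $V$ via $\rho$. Proposition \ref{dec-sum} then gives
\begin{equation*}
\gG\oplus_\rho V=\bar\gG_1\oplus_{\rho_1}V+\dots+\bar\gG_k\oplus_{\rho_k}V
\end{equation*}
as a simple disassembling, and substituting the identifications $\bar\gG_i\simeq\gG_i\oplus\gamma_{l_i}$ yields the claimed decomposition.

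There is essentially no obstacle: the only thing to verify is that the isomorphism $\bar\gG_i\simeq\gG_i\oplus\gamma_{l_i}$ is compatible with the definition of $\rho_i$, i.e.\ that $\rho_i$ factors through the projection onto the $\gG_i$-summand. This is immediate from the fact that $\rho_i$ was defined to be trivial on $|\gG_j|$ for $j\neq i$, so the abelian factor $\gamma_{l_i}$ acts trivially on $V$ and the semidirect product $(\gG_i\oplus\gamma_{l_i})\oplus_{\rho_i}V$ is unambiguously defined. Mutual compatibility of the $k$ summands in the right-hand side is inherited directly from Proposition \ref{dec-sum}, so no further compatibility check is required.
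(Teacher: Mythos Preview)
Your proposal is correct and follows essentially the same approach as the paper: invoke Proposition \ref{dec-sum} and then observe that, in the present situation, each $\bar\gG_i$ is isomorphic to $\gG_i\oplus\gamma_{l_i}$. The paper's proof is a one-line remark to this effect; your version simply spells out the identification and the compatibility with $\rho_i$ in more detail.
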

\begin{proof} Just to observe that in the considered case the algebra $\bar{\gG}_i$ in
(\ref{dec-repr}) is isomorphic to $\gG_i\oplus\gamma_{l_i}, \,i=1,\dots,k$. 
\end{proof}

Thus, proposition \ref{dis-solv} and corollary \ref{dec-ss} reduce the 
disassembling problem to abelian extensions of of simple algebras, i.e., Lie algebras 
of the form $\gG\oplus_\rho V$ with simple 
$\gG$. The {\it stripping procedure} we are passing to describe will be  our main tool 
in disassembling  algebras of this kind. 

\subsection{The stripping procedure}\label{striptease}

First, we shall introduce some special algebras which are used in this procedure.

\par\medskip
\noindent\textit{Dressing algebras.} 

\noindent A {\it dressing algebra} is defined on the direct sum $W_0\oplus W$ 
of two vector spaces $W_0$ and $W$ by means of a bilinear skew-symmetric 
${W_0}$-valued form $\beta :W\times W\rightarrow W_0$. The product in this algebra
is defined by formula   
\begin{equation}
[(w_0,w),(w_0',w')]=(\beta (w,w'),0),\quad w_0, w_0^{\prime}\in W_0,\,\,w, w^{\prime}\in W.
\end{equation}
Denote the so-defined algebra by $\gA_{\beta}$. If $\dim\,W=2, \,\,\dim\,W_0=1$ 
and $\beta\neq 0$, then $\gA_{\beta}$ is isomorphic to 
$\pitchfork$.

A Lie algebra $\gA$ is isomorphic to a dressing one iff the derived subalgebra 
$[\gA,\gA]$ belongs to its center. Indeed, in such a case one can take the 
center for $W$ and any complementary to the center subspace for $W_0$. 

\begin{proc}\label{dress}
Let $\beta$  and $\beta '$ be $W_0$-valued skew-symmetric bilinear forms on 
$W$. Then Lie algebras $\gA_{\beta}$  and $\gA_{\beta '}$ are compatible. 
Moreover, any dressing algebra can be simply disassembled into a number of 
$\pitchfork$-lieons. 
\end{proc}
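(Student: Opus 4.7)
The plan is to observe that dressing algebras are trivially associative under bracket sums because everything lands in the center, and then to realize $\beta$ as a linear combination of ``elementary'' rank-one forms whose associated dressing algebras are $\pitchfork$-lieons.

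For the first assertion I would simply note that the candidate bracket $[\cdot,\cdot]_\beta+[\cdot,\cdot]_{\beta'}$ coincides with the dressing bracket associated with the form $\beta+\beta'$. Because by the very definition of $\gA_\beta$ every bracket $[x,y]$ takes values in $W_0$, and $W_0$ is annihilated by all further brackets, the Jacobi identity for any dressing algebra $\gA_\gamma$ is automatic: every double bracket $[x,[y,z]_\gamma]_\gamma$ vanishes. Hence $\gA_{\beta+\beta'}$ is a Lie algebra, which is exactly the compatibility statement via proposition on equivalent characterizations of compatibility.

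For the disassembling statement I would pick a basis $\{e_1,\dots,e_p\}$ of $W_0$ and a basis $\{f_1,\dots,f_q\}$ of $W$, and expand
\begin{equation*}
\beta(f_i,f_j)=\sum_{k}c_{ij}^{k}e_k,\qquad i<j.
\end{equation*}
For each triple $(i,j,k)$ with $i<j$, let $\beta_{ij}^{k}$ be the skew-symmetric bilinear form on $W$ whose only nonzero value on basis pairs is $\beta_{ij}^{k}(f_i,f_j)=e_k$. Then $\beta=\sum_{i<j,\,k}c_{ij}^{k}\,\beta_{ij}^{k}$, and accordingly
\begin{equation*}
\gA_\beta=\sum_{i<j,\,k,\,c_{ij}^{k}\neq 0}\gA_{c_{ij}^{k}\beta_{ij}^{k}}.
\end{equation*}
Mutual compatibility of the summands is guaranteed by the first part of the proposition. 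Each nonzero summand $\gA_{c\,\beta_{ij}^{k}}$ is, on inspection, the direct sum of the $3$-dimensional Heisenberg algebra on $\mathrm{span}(e_k,f_i,f_j)$ (with bracket $[f_i,f_j]=c\,e_k$, which is isomorphic to $\pitchfork$ since $c\neq 0$) and the abelian Lie algebra on a complementary subspace of dimension $n-3$, where $n=\dim(W_0\oplus W)$. Hence each nonzero summand is isomorphic to $\pitchfork_n$, giving the required disassembling.

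There is essentially no obstacle here: the only thing to verify carefully is that rescaling the form by a nonzero scalar does not change the isomorphism type of $\gA_{\beta_{ij}^{k}}$, which is immediate via the change of basis $e_k\mapsto c\,e_k$. The argument depends only on the linearity of the assignment $\beta\mapsto\gA_\beta$ and on the centrality of $W_0$, both of which are built into the definition.
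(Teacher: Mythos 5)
Your proof is correct and follows essentially the same route as the paper: compatibility via the identity $\gA_{\beta+\beta'}=\gA_{\beta}+\gA_{\beta'}$ (with Jacobi automatic because all brackets land in the central subspace $W_0$), followed by a basis decomposition of $\beta$ into rank-one elementary forms each of whose dressing algebras is a $\pitchfork_n$-lieon. The only difference is cosmetic: you spell out the scalar-rescaling isomorphism and fix a notational slip present in the paper's version, where the roles of the bases of $W_0$ and $W$ are interchanged.
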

\begin{proof} Obviously,
\begin{equation}
\gA_{\beta + \beta '}=\gA_{\beta}+\gA_{\beta '}.
\end{equation}
Hence $\gA_{\beta}$  and $\gA_{\beta '}$ are compatible. 

Choose a base $e_1,\dots,e_m$ in $W_0$  and a base
$\varepsilon_1\dots,\varepsilon_{n-m}$ in $W$. Then 
$$
\beta =\sum_{i,j,k}\beta_{ij}^k\varepsilon_k
$$
for some $\gk$-valued skew-symmetric bilinear forms $\beta_{ij}^k$ on $W$ 
such that $\beta_{ij}^k(e_p,e_q)=0$ if $(p,q)$ differs from $(i,j)$ and $(j,i)$. If 
$\beta_{ij}^k\neq 0$, then the algebra $\gA_{\beta_{ij|k}}$ with 
$\beta_{ij|k}=\beta_{ij}^k\varepsilon_k$ is isomorphic to $\pitchfork_n$. 
Hence, 
\begin{equation}\label{dec-dress}
\gA_{\beta}={\sum_{i,j,k}}'\gA_{\beta_{ij|k}}, \quad
\end{equation}
where the summation $\sum_{i,j,k}'$ is extended on all triples $i,j,k$ for 
which $\beta_{ij}^k\neq 0$. 
\end {proof}

\medskip
\noindent\textit{D-pairs and involutions}.

\noindent Let now $\gG$ be a Lie algebra, $\gs$ an its subalgebra and $W$  be a 
complement of $|\gs |$ in $|\gG|$. If $[W,W]\subset\gs$ and $[\gs,W]\subset W$ 
the pair $(\gs,W)$  \,is called a ${\it d-pair}$ in $\gG$. A $d$-pair $(\gs,W)$ 
is {\it trivial} if $W$ \,is an abelian subalgebra. 

The dressing algebra $\gA_{\beta}$  defined on $|\gG|$ with $W_0=|\gs|$ and 
$\beta(w_1,w_2)=[w_1,w_2], \\ \,w_1,w_2\in W$ will be called {\it associated} with 
$(\gs,W)$. 

\begin{ex}\label{pair}
Let $V$ be a vector space and $V_1, V_2$ be its subspaces complementary one 
to  another. Consider the  subalgebra $\gs=\gs(V_1,V_2)$ of the Lie algebra 
$\gG\gl(V)$ composed of operators, leaving $V_1$ and $V_2$ invariant. The 
linear subspace $W=W(V_1,V_2)$ formed by operators, sending $V_1$ to $V_2$ and 
conversely, is a complement of $\gs(V_1,V_2)$ in $\gG\gl(V)$. Then ($\gs,W$) is 
a $d$-pair in $\gG\gl(V)$. 
\end{ex}
\begin{rmk}
A $d$-pair $(\gs,W)$ in $\gG$ supplies $\gG$ with a structure of a graded 
$\dF_2$-algebra $(\dF_2=\Z/2\Z)$ and vise versa. Namely, if
$\gG$=$\gG_0\oplus\gG_1$, then $\gs=\gG_0, \,W=\gG_1$. 
\end{rmk}
The involution $I:|\gG| \rightarrow |\gG|, \,I^2=id_{|\gG |}$ with $|\gs |$ and $W$ being 
eigenspaces corresponding to eigenvalues $1$ and $-1$, respectively, is naturally 
associated with a $d$-pair $(\gs,W)$. Obviuosly, $I$ is an automorphism of 
$\gG$. Conversely, $\pm 1$-eigenspaces of an involutive automorphism $I$ of 
$\gG$ form a $d$-pair in $\gG$. So, there is a one-to-one correspondence between
d-pairs and involutions of $\gG$. It depends on the context, which of these points of
view is more convenient.
\begin{ex}\label{d-gl-pair}
The matrix transposition $T:M\mapsto M^t, \,M\in \gG\gl(n,\gk)$ is an 
anti-automorphism of $\gG\gl(n,\gk)$, i.e., $[M,N]^t=[N^t,M^t]$. So,  
$\mathfrak{t}=-T$ is an involution  of $\gG\gl(n,\gk)$. The $1$-eigenspace of 
$\mathfrak{t}$ is formed by skew-symmetric matrices and is identified with the 
special orthogonal subalgebra $\gs\go(n,\gk)\subset \gG\gl(n,\gk)$, while the 
$(-1)$-eigenspace $S(n,\gk)$ consists of symmetric matrices. So, 
$(\gs\go(n,\gk),S(n,\gk))$ is a $d$-pair in $\gG\gl(n,\gk)$. The subalgebra 
$\gs\gl(n,\gk)\subset\gG\gl(n,\gk)$ of traceless matrices is 
$\mathfrak{t}$-invariant. Hence $\mathfrak{t}_0=\mathfrak{t}|_{\gs\gl(n,\gk)}$ 
is an involution in $\gs\gl(n,\gk)$ and $(\gs\go(n,\gk),S_0(n,\gk))$ with 
$S_0(n,\gk))$ being the space of symmetric traceless matrices is the 
corresponding $d$-pair.  
\end{ex} 

\bigskip
\noindent\textit{The stripping procedure}. 

 \noindent The following evident fact is, nevertheless, one of most efficient in
disassembling techniques.
 \begin{lem}[The Stripping Lemma]\label{str}
Let $\gG$ be a Lie algebra, $(\gs,W)$  be a $d$-pair in it and $\gA_{\beta}$ be 
the associated dressing algebra. Then 
\begin{equation}\label{strip}
\gG=(\gs\oplus_{\rho}W)+\gA_{\beta},
\end{equation}
with $\rho$ being the restriction of the adjoint representation of $\gG$ to 
$\gs$  is a simple disassembling of $\gG$. 
 \end{lem}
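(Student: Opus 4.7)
The plan is a direct bracket-wise verification, using the defining decomposition $|\gG|=|\gs|\oplus W$. Write a general element of $|\gG|$ as $s+w$ with $s\in|\gs|$, $w\in W$, and expand
\[
[s_1+w_1,s_2+w_2]_{\gG}=[s_1,s_2]_{\gG}+[s_1,w_2]_{\gG}+[w_1,s_2]_{\gG}+[w_1,w_2]_{\gG}.
\]
The $d$-pair conditions $[\gs,\gs]\subset\gs$, $[\gs,W]\subset W$, $[W,W]\subset\gs$ show that the first and fourth summands lie in $|\gs|$ and the middle two lie in $W$. So the bracket of $\gG$ naturally splits into a \emph{mixed part} (first three summands) and a \emph{$W$-$W$ part} (the fourth summand).

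Next I would identify the mixed part with the bracket of $\gs\oplus_{\rho}W$, where $\rho(s)w\DEF[s,w]_{\gG}$. The hypothesis $[\gs,W]\subset W$ makes $\rho$ a well-defined linear action of $\gs$ on $W$, and since $\gs$ is a subalgebra of $\gG$, $\rho$ is a Lie homomorphism $\gs\to\End W$; hence the semidirect sum structure is a genuine Lie algebra. Simultaneously, the $W$-$W$ part is precisely $\beta(w_1,w_2)=[w_1,w_2]_{\gG}\in|\gs|=W_0$, i.e.\ the bracket of the dressing algebra $\gA_{\beta}$. One checks that $\gA_{\beta}$ is a Lie algebra for free: its derived subalgebra is contained in $W_0$, and $W_0$ is central by the very definition of a dressing algebra, so the Jacobi identity holds trivially on any triple.

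With both summands recognized as Lie algebra structures on $|\gG|$, the equality of brackets
\[
[\cdot,\cdot]_{\gG}=[\cdot,\cdot]_{\gs\oplus_{\rho}W}+[\cdot,\cdot]_{\gA_{\beta}}
\]
is immediate from the decomposition above. Compatibility is then automatic: by definition, $\gs\oplus_{\rho}W$ and $\gA_{\beta}$ are compatible iff their sum is a Lie bracket, and this sum is nothing but the given bracket of $\gG$, which of course satisfies Jacobi. Thus (\ref{strip}) is a simple disassembling of $\gG$ in the sense of Definition~\ref{base-dis}.

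There is no real obstacle here; the statement is essentially a bookkeeping identity that uses each of the three $d$-pair inclusions exactly once. The only mild subtlety worth flagging explicitly in the write-up is that one must verify $\gA_{\beta}$ is a Lie algebra (Jacobi is vacuous, not forgotten), since otherwise ``compatibility'' would not even be defined; once that is said, the rest is a line of computation.
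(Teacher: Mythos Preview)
Your proposal is correct and is exactly the ``by construction'' that the paper has in mind, just spelled out in full. The paper's own proof consists of the two words \emph{By construction}; your bracket-wise verification using the three $d$-pair inclusions is precisely what that phrase abbreviates.
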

 \begin{proof} By construction.
 \end{proof}
 \begin{rmk}The dressing algebra $\gA_{\beta}$ may be viewed as a ``mantle" that 
 covers ``shoulders" $\gs$ of $\gs\oplus_{\rho}W$. This motivates the  terminology. 
 According to proposition\,\ref{dress},  $\gA_{\beta}$ can be completely 
 disassembled. This reduces the disassembling problem  for $\gG$ to a simpler algebra, 
 namely, $\gs\oplus_{\rho}W$. 
 \end{rmk}

The {\it stripping procedure} consists of consecutive applications of the 
Stripping Lemma which gradually simplify appearing in its course algebras.
In order to give due rigor  to the term "simplification", we 
define the {\it complexity} $l(\gG)$ of a Lie algebra  $\gG$ as the dimension 
of its "semi-simple part", i.e., of an its Levi subalgebra. Algebras of complexity zero are 
solvable and, according to proposition \ref{dis-solv}, can be completely 
disassembled. Therefore, we see that
\begin{quote}
\emph{all Lie algebras over a given ground field $\gk$ can be 
completely disassembled if any algebra of the form $\gG\oplus_{\rho}V$ with 
simple $\gG$ admits a $d$-pair $(\gs,W)$ such that $l(\gs)<l(\gG)$.} 
\end{quote}
Indeed, the 
dressing algebra $\gA_{\beta}$  in the corresponding disassembling 
$\gG\oplus_{\rho}V=\gs\oplus_{\rho'}W+\gA_{\beta}$ (see (\ref{strip})) can be 
completely disassembled (proposition (\ref{dec-dress})).  On the other hand, 
$l(\gs\oplus_{\rho'}W)=l(\gs)<l(\gG)$. So, by applying proposition \ref{s-tr} 
to the Levi-Malcev decomposition of the algebra $\gs\oplus_{\rho'}W$ we reduce 
the problem to an algebra of the form $\bar{\gG}\oplus_{\bar{\rho}}\bar{V}$ with 
$\bar{\gG}$ being the semisimple  part of $\gs$, since 
$l(\bar{\gG}\oplus_{\bar{\rho}}\bar{V})=l(\bar{\gG})=l(\gs)<\l(\gG)$. Finally, 
according to Proposition \ref{dec-sum}, the algebra 
$\bar{\gG}\oplus_{\bar{\rho}}\bar{V}$ disassembles into algebras of the form 
$\gH\oplus_{\tau}U$ with $l(\gH)\leq l(\bar{\gG})$ and simple $\gH$. 

We shall call a $d$-pair $(\gs,W)$ in a Lie algebra $\gG$ as well as the 
corresponding to it $d$-involution {\it simplifying} if the complexity of $\gs$ 
is lesser than that of $\gG$. In the rest of this section we shall 
concentrate on existence of simplifying $d$-pairs for Lie algebras of the 
form $\gG\oplus_{\rho}V$ \,with simple $\gG$.  

\bigskip
\noindent\textit{Multi-involution disassembling.} 

\noindent 
Keeping in mind that the disassembling problem is reduced to abelian extensions of simple 
Lie algebras we shall adopt the Stripping Lemma to semidirect products 
$\gG=\gH\oplus_{\rho}V$. It is convenient to put the question in a more general context.

Let $P_1,\dots ,P_l$ be commuting involutions of a Lie algebra $\gG$. Denote by 
$\dF_2^l$ the algebra of $\dF_2$-valued $l$-vectors with coordinate-wise 
multiplication. Let $\varsigma=(\varsigma_1,\dots,\varsigma_l)\in\dF_2^l$.
The common eigenspace  of involutions $P_1,\dots,P_l$, which correspond to 
their eigenvalues $\lambda_i=(-1)^{\varsigma_i},  \;i=1,\dots,l$, 
will be denoted by $|\gG |_{\varsigma}$. Then 
\begin{equation}
|\gG |=\bigoplus_{\varsigma\in \dF_2^l}|\gG |_{\varsigma}
\end{equation}
Obviously, $[|\gG |_{\varsigma},|\gG |_{\sigma}]=|\gG |_{\varsigma+\sigma}$.
Associate with any $\varsigma\in \dF_2^l$  a skew-symmetric algebra 
structure denoted  by $\gG_{\varsigma}$ on $|\gG |$ with the product $[\cdot,\cdot]_{\varsigma}$ 
defined  on homogenous elements by the formula 
\begin{equation}
[u,v]_{\varsigma}=[u,v], \;\mbox{if} \;\;\xi \cdot\tau=\varsigma, 
\quad u\in |\gG |_{\xi}, \;v\in |\gG |_{\tau},\quad
\mbox{and zero otherwise}.
\end{equation}

\begin{proc}\label{multi-inv} $\gG_{\varsigma}$ is a Lie algebra structure on $|\gG |$. 
\end{proc}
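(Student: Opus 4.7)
The bracket $[\cdot,\cdot]_{\varsigma}$ is well-defined, bilinear, and skew-symmetric essentially by construction: the defining selection rule $\xi\cdot\tau=\varsigma$ is symmetric in $\xi,\tau$, and the underlying bracket in $\gG$ is skew, so extending by bilinearity gives a skew-symmetric product on $|\gG|$. The substantive point is the Jacobi identity, which I plan to verify on homogeneous triples and then reduce to the Jacobi identity in $\gG$ via a short combinatorial observation about componentwise multiplication in $\dF_2^l$.

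Fix $\varsigma\in\dF_2^l$ and pick homogeneous $u\in|\gG|_{\xi}$, $v\in|\gG|_{\tau}$, $w\in|\gG|_{\eta}$. A direct unfolding of the definition shows that the cyclic term $[u,[v,w]_{\varsigma}]_{\varsigma}$ either equals the genuine double bracket $[u,[v,w]]$ of $\gG$ or is zero; it is nonzero precisely when $\tau\cdot\eta=\varsigma$ (so that the inner bracket survives) \emph{and} $\xi\cdot(\tau+\eta)=\varsigma$ (so that the outer bracket survives, using that $[v,w]\in|\gG|_{\tau+\eta}$). The analogous statement holds for the other two cyclic terms with the evident permutation of $\xi,\tau,\eta$.

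The key combinatorial observation is this: if $\xi\cdot\tau=\varsigma$ in $\dF_2^l$, then for every coordinate $i$ with $\varsigma_i=1$ one has $\xi_i=\tau_i=1$, hence $(\xi+\tau)_i=0$. Consequently $(\xi+\tau)\cdot\eta$ is forced to vanish on the support of $\varsigma$, so the equation $(\xi+\tau)\cdot\eta=\varsigma$ is impossible unless $\varsigma=0$. For $\varsigma\neq 0$ this means all three cyclic terms of the $\gG_{\varsigma}$-Jacobiator vanish automatically, and the Jacobi identity holds trivially. For $\varsigma=0$, the two conditions collapse to "$\tau\cdot\eta=0$ and $\xi\cdot(\tau+\eta)=0$", which is exactly the condition that the supports of $\xi,\tau,\eta$ are pairwise disjoint in $\{1,\dots,l\}$; this is manifestly symmetric in $\xi,\tau,\eta$. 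Therefore either all three cyclic terms are simultaneously nonzero and equal the corresponding genuine double brackets in $\gG$ (whose sum vanishes by Jacobi in $\gG$), or all three vanish. In either case the Jacobi identity for $\gG_{0}$ holds on the chosen homogeneous triple, and bilinearly extending gives the full identity.

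The only real difficulty is noticing the $\dF_2^l$-identity that $\xi\cdot\tau=\varsigma$ forces $\xi+\tau$ to be $0$ on $\mathrm{supp}(\varsigma)$; once this is in hand, the proof is a one-line symmetry argument plus Jacobi in $\gG$.
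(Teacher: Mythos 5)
Your proof is correct and follows essentially the same route as the paper's: both reduce the Jacobi identity to the observation that a cyclic term $[u,[v,w]_{\varsigma}]_{\varsigma}$ can be nonzero only when $\varsigma=0$ and the supports of the three gradings are pairwise disjoint, a condition symmetric in the three arguments, so that either all cyclic terms vanish or all equal the genuine double brackets of $\gG$. Your write-up merely makes explicit the $\dF_2^l$-computation that the paper leaves as "directly follows from the definition."
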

\begin{proof}First, we have to check the Jacobi identity for the bracket 
$[\cdot,\cdot]_{\varsigma}$.  As it directly follows from the definition,
the double bracket $[u,[v,w]_{\varsigma}]_{\varsigma}$
with $\,u\in\gG_{\mu}, 
v\in\gG_{\nu}, w\in\gG_{\xi},$ can be different from zero only if $\varsigma=0$ 
and $\mu\nu=\nu\xi=\xi\mu=0$. In this case $[\cdot,\cdot]_{\varsigma}=[u,[v,w]]$
and  $[u,[v,w]_{\varsigma}]_{\varsigma}+cycle=[u,[v,w]]+cycle=0$.
On the other hand, if $\varsigma, \mu, \nu, \xi$ do not satisfy the above condition, 
all double commutators of elements $u,v$ and $w$ with respect to 
$[\cdot,\cdot]_{\varsigma}$ vanish. 
\end{proof}

Generally, Lie algebras $\gG_{\varsigma}$'s are not mutually compatible. Nevertheless, 
some their combinations implicitly appear in the \emph{multi-involution disassembling 
procedure} which is described below.

Let $I_0$ be the involution of $\gG=\gH\oplus_{\rho}V$ whose proper subspaces 
corresponding to eigenvalues $1$ and $-1$ are $|\gH|$ and $V$, respectively. An 
involution $I$ (or the corresponding d-pair) of $\gG$, which commutes with $I_0$, 
will be called \emph{adopted} (to the semidirect sum structure of $\gG$).
Obviously, both $|\gH|$ and $V$ are $I$-invariant. Let 
$\gH=\gH_0\oplus\gH_1, \,V=V_0\oplus V_1$ be the splittings into proper subspaces  
corresponding to eigenvalues $1$ and $-1$ of $I$, respectively.  The associated with 
$I$ d-pair is $(\gH_0\oplus_{\rho_0} V_0, |\gH_1\oplus_{\rho_1} V_1|)$ where $\rho_i$ 
stands for the restriction of $\rho$ to $\gH_i$ and $V_i$. By removing, according to the 
Stripping Lemma, the associated dressing algebra we get the Lie algebra 
\begin{equation}\label{newSemidirect}
(\gH_0\oplus_{\rho_0}V_0)\oplus_{\varrho}(|\gH_1|\oplus V_1)
\end{equation}
with the representation $\varrho$ defined by formulas
$$
\varrho(h_0)(h_1)=[h_0,h_1],  \;\varrho(h_0)(v_1)=\rho(h_0)(v_1), \;\varrho(v_0)(h_1)=
-\rho(h_1)(v_0), \;\varrho(v_0)(v_1)=0
$$
where $h_i\in \gH_i, v_i\in V_i, \;i=0,1$.  On the other hand, algebra (\ref{newSemidirect}) 
may be viewed as the semidirect product of $\gH_0$ and the ideal $\mathcal{I}$ whose 
support is $V_0\oplus|\gH_1|\oplus V_1$. The product $[\cdot,\cdot]'$ in this ideal is such 
that $[V_0,|\gH_1|]'\subset V_1, \,[V_0,V_1]'=[|\gH_1|,V_1]'=0$. So, $\mathcal{I}$ is 
nilpotent and as such can be completely disassembled. Since
$$
(\gH_0\oplus_{\rho_0}V_0)\oplus_{\varrho}(|\gH_1|\oplus V_1)=
\gH_0\oplus_{\varrho_0}|\mathcal{I}|+\gamma_m\oplus\mathcal{I}, \quad m=\dim\,\gH_0, 
$$
with $\varrho_0$ being the direct sum of natural actions of $\gH_0$ on $|\gH_1|, \,V_0$ 
and $V_1$ the disassembling problem for the algebra (\ref{newSemidirect}) and, 
therefore, for $\gG$ is reduced to that for $\gH_0\oplus_{\varrho_0}|{\mathcal{I}}|$. The 
passage from $\gH\oplus_{\rho}V$ to $\gH_0\oplus_{\varrho_0}|{\mathcal{I}}|$ will be 
called the \emph{stripping of the semidirect product $\gH\oplus_{\rho}V$ by $I$}.
\begin{proc}\label{stripping-semi}
Let $P_1,\dots ,P_l$ be commuting involutions of a Lie algebra $\gG$. Then $\gG$ can be assembled from lieons
and the algebra $\gG_{(0,\dots,0)}\oplus_{\rho} W$ where $W=\oplus_{0\neq\zeta}|\gG_{\zeta}|$
and $\rho$ is the direct sum of natural actions of $\gG_{(0,\dots,0)}$ on $\gG_{\zeta}$'s.
\end{proc}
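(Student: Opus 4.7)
The plan is to iteratively apply the Stripping Lemma (Lemma~\ref{str}) once for each nonzero element of $\dF_2^l$. The $l$ commuting involutions equip $|\gG|$ with the $\dF_2^l$-grading $|\gG|=\bigoplus_{\varsigma}|\gG|_{\varsigma}$ of Proposition~\ref{multi-inv}; for any nonzero $\eta\in\dF_2^l$ the product $Q_\eta=\prod_{i:\eta_i=1}P_i$ is again an involutive automorphism of $\gG$, with $\pm 1$-eigenspaces $\bigoplus_{\varsigma\cdot\eta=0}|\gG|_{\varsigma}$ and $\bigoplus_{\varsigma\cdot\eta=1}|\gG|_{\varsigma}$ respectively. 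Each such pair of eigenspaces is a $d$-pair in $\gG$, so the Stripping Lemma applies to every $Q_\eta$.

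I would strip $\gG$, in some fixed order, by all $2^l-1$ nonzero involutions $Q_\eta$. Each strip decomposes the current algebra as the sum of a dressing algebra (completely disassembled into $\pitchfork$-lieons by Proposition~\ref{dress}) and a residual semidirect product in which the brackets $[|\gG|_{\varsigma},|\gG|_{\varsigma'}]$ with $\varsigma\cdot\eta=\varsigma'\cdot\eta=1$ have been zeroed. The remaining $Q_{\eta'}$ are still involutive automorphisms of the residual algebra, since each $Q_{\eta'}$ preserves the ambient $\dF_2^l$-grading and zeroing a graded family of brackets preserves this equivariance. Hence the iteration is well-defined, and it assembles into a multi-step disassembling tree in which siblings at every internal node are compatible by virtue of the Stripping Lemma.

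After all $2^l-1$ strippings, a bracket $[|\gG|_{\varsigma},|\gG|_{\varsigma'}]$ has been killed precisely when some $0\neq\eta\in\dF_2^l$ satisfies $\varsigma\cdot\eta=\varsigma'\cdot\eta=1$. A short $\dF_2$ linear-algebra check pins down this set: if both $\varsigma,\varsigma'\neq 0$, then the $\dF_2$-linear map $\eta\mapsto(\varsigma\cdot\eta,\varsigma'\cdot\eta)$ from $\dF_2^l$ to $\dF_2^2$ is surjective and in particular attains $(1,1)$; whereas if $\varsigma=0$ or $\varsigma'=0$ no such $\eta$ exists. Thus the killed brackets are exactly those with $\varsigma,\varsigma'$ both nonzero, so the residual Lie structure has $\gG_{(0,\dots,0)}=|\gG|_{(0,\dots,0)}$ as subalgebra, $W=\bigoplus_{\zeta\neq 0}|\gG|_{\zeta}$ as abelian ideal, and the natural actions of $\gG_{(0,\dots,0)}$ on the $|\gG|_{\zeta}$ as the surviving cross-brackets. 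This is exactly $\gG_{(0,\dots,0)}\oplus_\rho W$.

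The main obstacle I expect is bookkeeping: verifying cleanly that at every intermediate stage the commuting involutions restrict to involutive automorphisms of the current residual algebra, and threading the $\dF_2$-pairing argument through the whole sequence so that precisely the right collection of brackets is eliminated. Once this is in place, the final disassembly into lieons plus the claimed semidirect-product target follows immediately from Proposition~\ref{dress} and the Stripping Lemma.
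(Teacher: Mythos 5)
Your argument is correct, but it takes a genuinely different route from the paper's. The paper inducts on $l$ and strips only $l$ times, once per involution $I_j$: after the first application of the Stripping Lemma it passes to an \emph{adopted} involution of the resulting semidirect product and invokes the ``stripping of a semidirect product'' procedure, in which cross-brackets such as $[\gG_{(1,0)},\gG_{(0,1)}]$ are not removed by any dressing algebra but survive inside a nilpotent ideal $\mathcal{I}$ that is then split off via proposition~\ref{s-tr} and disassembled separately. You instead strip $2^l-1$ times, once for each product involution $Q_\eta=\prod_{\eta_i=1}P_i$, so that every unwanted bracket $[|\gG|_{\varsigma},|\gG|_{\varsigma'}]$ with $\varsigma,\varsigma'\neq 0$ is absorbed into some dressing algebra, namely that of any $Q_\eta$ with $\varsigma\cdot\eta=\varsigma'\cdot\eta=1$; your observation that the residual bracket stays $\dF_2^l$-graded, so that the remaining $Q_{\eta'}$ restrict to involutions whose eigenspaces still form d-pairs, is exactly what makes the iteration legitimate. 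This buys a more uniform proof relying only on the bare Stripping Lemma and proposition~\ref{dress}, at the price of a deeper a-scheme ($2^l-1$ stripping levels instead of $l$). One small imprecision: the map $\eta\mapsto(\varsigma\cdot\eta,\varsigma'\cdot\eta)$ is surjective only when $\varsigma\neq\varsigma'$; when $\varsigma=\varsigma'\neq 0$ its image is the diagonal of $\dF_2^2$, which still contains $(1,1)$, so the conclusion you need holds in all cases even though the stated surjectivity does not.
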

\begin{proof}
This is an inductive procedure. First, we use the involution $I_1$ to show (by the 
Stripping Lemma) that $\gG$ can be assembled from lions and the algebra  $\gG_0\oplus_{\varrho_0}W_0, \,W_0=|\gG_1|$, where $|\gG_0|$ and $|\gG_1|$ are 
proper subspaces of $I_1$ corresponding to eigenvalues $1$ and $-1$, respectively, 
and $\varrho_0$ is a natural action of $\gG_0$ on $|\gG_1|$. Since the involution 
$I_2$ commute with $I_1$, it leaves invariant both $|\gG_0|$ and $|\gG_1|$ and, 
therefore, induces an adopted involution $I$ of the algebra 
$\gG_0\oplus_{\varrho_0}W_0$. Now, by stripping the semidirect product 
$\gG_0\oplus_{\varrho_0}W_0$ by $I$, we see that $\gG$ is assembled from lions 
and $\gG_{(0,0)}\oplus_{\varrho_1}W_1$ where $W_1=\oplus_{0\neq\zeta}|\gG_{\zeta}|$ 
with $\zeta\in\dF_2^2$. By continuing this process we get the desired result.
\end{proof}

\bigskip
\noindent\textit{Complete disassembling of classical Lie algebras.}

\noindent One simple application of proposition\,\ref{stripping-semi}
is the following.
\begin{proc}\label{dis-classics}
Lie algebras  $\gs\gl(n,\gk), \,\go(n,\gk), \,\gs\go(n,\gk), \,\gu, \,\gs\gu$ can 
be completely disassembled.
\end{proc}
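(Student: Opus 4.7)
The plan is to apply proposition \ref{stripping-semi} to each classical algebra with a suitable family of commuting involutions, chosen so that the residual ``core'' $\gG_{(0,\dots,0)}\oplus_\rho W$ ends up solvable; proposition \ref{dis-solv} then finishes the disassembly. In all cases the involutions will be conjugations by diagonal sign-change matrices, possibly combined with the transposition-type involution from example \ref{d-gl-pair}.

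For $\gs\gl(n,\gk)$, I would take the $n-1$ involutions $P_j$, $j=1,\dots,n-1$, given by conjugation by $D_j=\diag(1,\dots,1,-1,1,\dots,1)$ with the $-1$ in position $j$. These are mutually commuting automorphisms of $\gs\gl(n,\gk)$ since the $D_j$ pairwise commute. The joint $+1$-eigenspace consists of matrices commuting with every $D_j$, i.e.\ the diagonal traceless matrices $\mathfrak{h}$, which is abelian; the complementary eigenspaces $|\gG_\zeta|$ are spanned by off-diagonal units $E_{ij}$ ($i\neq j$). Hence $\gG_{(0,\dots,0)}\oplus_\rho W=\mathfrak{h}\oplus_\rho W$ is the action of a Cartan on its root spaces, which is solvable. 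One could alternatively add $P_0=-T$ from example \ref{d-gl-pair}, which commutes with all $P_j$ (since $D_j^T=D_j$), and this forces the joint $+1$-eigenspace down to $0$, giving an even cleaner reduction.

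For $\gs\go(n,\gk)$ and $\go(n,\gk)$ the same conjugations $P_j$ restrict to commuting involutions, and since any generator $E_{ij}-E_{ji}$ of $\gs\go(n,\gk)$ is negated by $P_i$, the common $+1$-eigenspace is $0$; what remains is an abelian algebra on $W$, which is a sum of abelian lieons $\boldsymbol{\gamma}$. For $\gu(n)$ and $\gs\gu(n)$ over $\R$ the same real conjugations $P_j$ preserve skew-Hermitian matrices (because $D_j$ is real and symmetric) and pairwise commute; the joint $+1$-eigenspace consists of purely imaginary diagonal skew-Hermitian matrices, which is again abelian, so the residual semidirect product is solvable.

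In each case proposition \ref{stripping-semi} presents the given classical algebra as assembled from lieons together with the residual semidirect product, and proposition \ref{dis-solv} disassembles the residual solvable algebra completely into $\between$- and $\pitchfork$-lieons. The only nontrivial verification is the routine check that in every one of the five cases the common $+1$-eigenspace is abelian and the induced representation on $W$ yields a solvable algebra; I expect this to be the main but very mild obstacle, since the defining symmetry conditions of the classical algebras interact cleanly with the sign-change diagonal conjugations and can be read off directly from the standard matrix bases.
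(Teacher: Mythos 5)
Your proposal is correct and follows essentially the same route as the paper: the paper's commuting involutions $I_j$ (built from the $d$-pairs of example \ref{pair}) are exactly conjugation by the diagonal sign-change matrices $D_j$, the common fixed subalgebra is likewise identified as the (abelian) diagonal part, and the conclusion is drawn from propositions \ref{stripping-semi} and \ref{dis-solv} in the same way. The only cosmetic differences are that you drop one redundant involution for $\gs\gl(n,\gk)$ and treat $\gu(n)$, $\gs\gu(n)$ directly in complex matrix form rather than via the real $2n$-dimensional picture with the complex structure $J$.
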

\begin{proof}
Let $\{e_1,\dots,e_n\}$ be a basis in a $\gk$-vector space $V$. We shall 
identify operators from $\mathrm{End}\,V$ and their matrices in this basis. 
Consider the $d$-pair $(\gs_j,W_j)$ in $\gG=\gG\gl(V)$ associated, 
by the construction of example \ref{pair}, with subspaces 
$V_1=\mathrm{span}\{e_j\}$ and $V_2=\mathrm{span}\{e_1,\dots,\hat 
e_j,\dots,e_n\}$, and denote by $I_j$ the corresponding to it involution of 
$\gG\gl(V)$. Then involutions $I_1,\dots,I_n$ commute each other.  

It is easy to see that the common proper subspace  
$|\gG_{\zeta}|, \zeta\in \dF_2^n$, of involutions $I_1,\dots,I_n$, is 
different from zero iff the unit occurs in $\zeta$ either zero, or $2$ 
times. In the first case the subspace $|\gG_{(0,\dots,0)}|$ is 
composed of operators for which $e_1,\dots,e_n$ are eigenvectors, i.e., 
it consists of diagonal matrices. In the second case, let $\zeta=(ij)\in \dF_2^n, 
i\neq j$, be the $\dF_2$--vector with two nonzero components on the $i$-th 
and $j$-th places. Then the subspace $|\gG_{(ij)}|$ consists of operators 
$\lambda \varepsilon_{ij}+\mu\varepsilon_{ji}, \;\lambda,\mu\in \gk$, with 
$\varepsilon_{ij}$ being the operator sending $e_j$ to $e_i$ and 
annihilating  $e_k$'s for $k\neq j$. So, in the considered situation the algebra 
$\gG_{(0,\dots,0)}$ is abelian. Therefore, the algebra $\gG_{(0,\dots,0)}\oplus_{\rho} W$,
is solvable and as such can be completely disassembled. Now it directly follows from 
proposition\,\ref{stripping-semi} that the algebra $\gG\gl(n,\gk)=\gG\gl(V)$ can be 
completely disassembled too.

Algebras $\gs\gl(n,\gk), \,\go(n,\gk), \,\gs\go(n,\gk)$ are invariant with respect to 
the above constructed involutions $I_j$'s. Obviously, for each of them the subspace
$|\gG_{\zeta}|$ is a subspace of the corresponding subspace for the algebra $\gG\gl(n,k)$.
In particular, this shows that the algebra $\gG_{(0,\dots,0)}\oplus W$ is solvable, and
proposition\,\ref{stripping-semi} gives the desired result.

In order to completely disassemble the symplectic algebra $\gs\gp(n,\gk)$ the preceding
procedure must be slightly modified. Let $\sigma(u,v)$ be a symplectic form on 
$V, \,\dim\,V=2n$. We interpret the algebra  $\gs\gp(n,\gk)$ as the algebra
$$
\gs\gp(\sigma)=\{A\in\mathrm{End}\;V \;|\; \sigma(Au,v)+\sigma(u,Av)=0, \,\forall u,v\in V\}.
$$
Let $V=V_1\oplus\dots\oplus V_n, \,\dim\,V_i=2, \forall i$, be a 
$\sigma$--orthogonal decomposition of $V$ and $P_i:V\rightarrow V$ the associated 
projector on $V_i$. Then $I_i=\id_V-2P_i$ is an involution of $\gG\gl(V)$. It is easy to 
see that involutions $I_i$'s commute and their common proper subspace, on which they 
all are the identity, is $\gs\gp(\sigma_1)\oplus,\dots,\gs\gp(\sigma_n)$ with  \quad 
$\sigma_i=\sigma|_{V_i}$ (=$\gG_{(0.\dots,0)}$ in the notation of 
proposition\,\ref{stripping-semi}). Note that $\gs\gp(\sigma_i)$ is isomorphic to 
$\gs\gp(2,\gk)$. So, in the considered case, in the contrast with the preceding case 
the algebra $\gG_{(0.\dots,0)}\oplus W$ of proposition\,\ref{stripping-semi} is not 
solvable. So, it can not be completely disassembled on the basis of our previous 
results. This small difficulty can be resolved by introducing an additional involution.  

Namely, let $J_i:V_i\rightarrow V_i$ be a complex structure on $V_i$ compatible with 
$\sigma_i$, i.e., $J_i^2=-\id_{V_i}$ and $\sigma_i(J_iu,v)+\sigma_i(u,J_iv)=0$, and 
$J=J_1\oplus\dots\oplus J_n$. Then $\sigma(Ju,v)+\sigma(u,Jv)=0$ and 
$I_0: \End\,V\rightarrow\End\,V, \,A\mapsto -JAJ$ is an involution  which leaves invariant 
the subalgebra $\gs\gp(\sigma)$. Moreover, $I_0$ commutes with involutions 
$I_1,\dots,I_n$, and their common proper subspace on which they act as identity is the abelian subalgebra $\gH$ composed of elements $\lambda_1J_1\oplus\dots\oplus\lambda_nJ_n, 
\,\lambda_1,\dots,\lambda_n\in\gk$.   Now, by applying proposition\,\ref{stripping-semi} to 
involutions $I_0,I_1,\dots,I_n$ and taking into account that in this case the algebra 
$\gG_{(0.\dots,0)}\oplus W=\gH\oplus W$ is solvable, we see that $\gs\gp(\sigma)$ can 
be completely disassembled.

Similar arguments shows that $\gu(n)$ and $\gs\gu(n)$ can be assembled from lions. 
To this end, interpret an $n$--dimensional vector space as a $2n$--dimensional 
$\R$--vector space $V$ supplied with a complex structure $J, \,J^2=-\id_V$ and split 
$V$ into a direct sum of 2-dimensional $J$-invariant subspaces. Then  consider, as 
above, the corresponding involutions $I_1,\dots,I_n$. In this case the algebra 
$\gG_{(0.\dots,0)}$ is abelian. Namely, it consists, exactly as before, of elements 
$\lambda_1J_1\oplus\dots\oplus\lambda_nJ_n, \,\lambda_1,\dots,\lambda_n\in\gk$, 
where $J_i=J|_{V_i}$.  Hence proposition\,\ref{stripping-semi} gives the desired result.
\end{proof}

This proof of proposition\,\ref{stripping-semi} is not very constructive in the sense that 
the corresponding a--scheme is rather complicated to efficiently work with. We reported 
it here with the aim to illustrate the Stripping Lemma at work. A short and constructive 
procedure of complete disassembling of
classical Lie algebras will be described in section\,\ref{classical}.

In fact, any simple Lie algebra $\gG$ over a ground field $\gk$ possesses a nontrivial involution. 
So, the next
natural question is wether it can be extended to the algebra $\gG\oplus_{\rho}V$.

\subsection{Extensions of $d$-pairs/involutions.} 

An extension of a $d$-pair in a Lie algebras $\gG$ to an adopted d-pair
in $\gG\oplus_{\rho}V$ is described as follows. Let $\rho$ be a representation of 
$\gG$ in a vector space $V$ and $(\gs,W)$  a $d$-pair in $\gG$. A decomposition 
$V=V_0\oplus V_1$ will be called an $\rho$-{\it extension} of $(\gs,W)$  if 
\begin{enumerate}
\item $V_i$ \,is invariant with respect to operators $\rho (s), s\in
\gs, i=0,1$; 
\item $\rho(w)(V_0)\subset V_1,\,\, \rho(w)(V_1)\subset V_0$, \,if  $w\in W$. 
\end{enumerate}
\begin{lem}
If $V=V_0\oplus V_1$ is a $\rho$--extension of $(\gs,W)$, then 
$(\gs\oplus_{\rho|_{_{\gs}}}V_0,\,W\oplus V_1)$ is a $d$-pair in the Lie 
algebra $\gG\oplus_{\rho}V$. 
\end{lem}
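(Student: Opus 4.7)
The plan is to verify the two defining properties of a $d$-pair directly from the definition of the bracket in $\gG\oplus_{\rho}V$ and the hypotheses on the decomposition $V=V_0\oplus V_1$. Recall that in $\gG\oplus_{\rho}V$ the product is
$$[(g,v),(g',v')]=([g,g'],\rho(g)v'-\rho(g')v).$$
First I would observe that as vector spaces
$$(|\gs|\oplus V_0)\oplus(W\oplus V_1)=(|\gs|\oplus W)\oplus(V_0\oplus V_1)=|\gG|\oplus V,$$
so $W\oplus V_1$ is indeed a complement of $|\gs\oplus_{\rho|_{\gs}}V_0|$ in $|\gG\oplus_{\rho}V|$, and that $\gs\oplus_{\rho|_{\gs}}V_0$ is a subalgebra (closedness of the bracket follows from $[\gs,\gs]\subset\gs$ and the invariance $\rho(\gs)V_0\subset V_0$ from condition (1) of a $\rho$-extension).

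Next I would check $[W\oplus V_1,W\oplus V_1]\subset \gs\oplus_{\rho|_{\gs}}V_0$. For $w,w'\in W$ and $v_1,v_1'\in V_1$,
$$[(w,v_1),(w',v_1')]=([w,w'],\rho(w)v_1'-\rho(w')v_1).$$
The first component lies in $\gs$ because $(\gs,W)$ is a $d$-pair in $\gG$, and the second lies in $V_0$ by condition (2) of a $\rho$-extension, which sends $W\cdot V_1$ into $V_0$. Hence the bracket lies in $|\gs|\oplus V_0$, as required.

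Finally I would check $[\gs\oplus_{\rho|_{\gs}}V_0,W\oplus V_1]\subset W\oplus V_1$. For $s\in\gs$, $v_0\in V_0$, $w\in W$, $v_1\in V_1$,
$$[(s,v_0),(w,v_1)]=([s,w],\rho(s)v_1-\rho(w)v_0).$$
The first component is in $W$ because $[\gs,W]\subset W$, and for the second, $\rho(s)v_1\in V_1$ by condition (1) and $\rho(w)v_0\in V_1$ by condition (2); so the whole expression lies in $W\oplus V_1$. This completes the two $d$-pair inclusions. There is no genuine obstacle here: the statement is a bookkeeping exercise, and the real content is the definition of $\rho$-extension, which was precisely designed so that the involution associated with $(\gs,W)$ lifts, together with the $\pm 1$-splitting of $V$, to an involution of $\gG\oplus_{\rho}V$ whose eigenspaces give the claimed $d$-pair.
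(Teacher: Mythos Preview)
Your proof is correct and follows essentially the same approach as the paper: both compute the two brackets $[(s,v_0),(w,v_1)]$ and $[(w,v_1),(w',v_1')]$ explicitly and read off the required inclusions from the $d$-pair property of $(\gs,W)$ together with the two conditions defining a $\rho$-extension. You are slightly more explicit about the complement and subalgebra verifications, which the paper simply declares obvious.
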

\begin{proof}
Obviously, $(\gs\oplus_{\rho|_{_{\gs}}}V_0) $  is a subalgebra of 
$\gG\oplus_{\rho}V$. Denoting by $[\cdot,\cdot]_{\rho}$ the Lie product in 
$\gG\oplus_{\rho}V$ we have
\begin{align*}
[(s,v_0),(w,v_1)]_{\rho} &=([s,w],\rho(s)(v_1)-\rho(w)(v_0))\in
W\oplus V_1\\
[(w,v_1),(w',v_1')]_{\rho} &=([w,w'],\rho(w)(v_1')-\rho(w')(v_1)) \in 
\gs\oplus_{\rho|_{_{\gs}}}V_0,
\end{align*}
i.e., that 
$$
[(\gs\oplus_{\rho|_{_{\gs}}}V_0,\,W\oplus V_1]_{\rho}\subset W\oplus V_1,\quad 
[W\oplus V_1,W\oplus V_1]_{\rho}\subset\gs\oplus_{\rho|_{_{\gs}}}V_0.
$$
\end{proof}

Let $(\gs,W)$ and $\rho$ be as above. A linear operator $A:V\rightarrow V$ is 
called {\it splitting} (with respect to $(\gs,W)$ and $\rho$) if 
\begin{equation}\label{spl-oper}
\rho(s)\circ A - A\circ\rho(s)=0,\quad \rho(w)\circ A+A\circ\rho(w)=0,\quad 
s\in\gs,\quad w\in W.
\end{equation}
In particular, $A$ is an endomorphism of the $\gs$-module $(V,\rho|_{_{s}})$. A 
splitting operator $A$ is called a {\it splitting involution} if, in addition, 
$A^2=id_V$. The splitting involution $id_{V_1}\oplus (-id_{V_2})$ is naturally 
associated with a $\rho$-extension $V=V_0\oplus V_1$ and vice versa. Note also 
that splitting operators with respect to $(\gs,W)$ \,and $\rho$ \, form a 
vector space which will be denoted by $\mathcal{S}_1=\mathcal{S}_1(\gs,W,\rho)$, and that 
the product of two splitting operators is an endomorphism of the $\gG$-module $(V,\rho)$. 
So, denoting by $\mathcal{S}_0(\rho)$ the algebra of these endomorphisms we see that
$$
\mathcal{S}(\gs,W,\rho)=\mathcal{S}_0(\rho)\oplus\mathcal{S}_1(\gs,W,\rho)
$$
is an associative $\dF_2$--graded algebra

Denote by $V_{(\lambda)}$ the root space of $A$, corresponding to an eigenvalue 
$\lambda\in \gk$ \, of $A$. 
\begin{lem}
Let $A$ be a splitting operator. Then 
$$
\rho(s)(V_{(\lambda)})\subset V_{(\lambda)}, \quad 
\rho(w)(V_{(\lambda)})\subset V_{(-\lambda)}, \quad \mathrm{if}\quad s\in\gs,\quad w\in W.
$$
\end{lem}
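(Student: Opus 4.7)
The plan is to exploit the two relations in (\ref{spl-oper}) at the level of arbitrary polynomials in $A$. The first relation says $\rho(s)$ commutes with $A$, hence with every power $(A-\lambda\id)^k$, which immediately handles the $\gs$-case. The second relation says $\rho(w)$ anticommutes with $A$, and the key algebraic identity I will first establish by induction on $k$ is
\begin{equation*}
\rho(w)\circ (A-\lambda\id)^{k}=(-1)^{k}(A+\lambda\id)^{k}\circ\rho(w),\qquad w\in W,\ \lambda\in\gk,\ k\geq 0.
\end{equation*}
The base case $k=1$ follows by expanding $\rho(w)(A-\lambda\id)=\rho(w)A-\lambda\rho(w)=-A\rho(w)-\lambda\rho(w)=-(A+\lambda\id)\rho(w)$, and the inductive step is immediate.

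With this identity in hand, the proof is a direct unwinding of the definition of the root space. For $v\in V_{(\lambda)}$ choose $k$ with $(A-\lambda\id)^{k}v=0$. For $s\in\gs$ one has $(A-\lambda\id)^{k}\rho(s)v=\rho(s)(A-\lambda\id)^{k}v=0$, proving $\rho(s)v\in V_{(\lambda)}$; for $w\in W$ the displayed identity gives $(A+\lambda\id)^{k}\rho(w)v=(-1)^{k}\rho(w)(A-\lambda\id)^{k}v=0$, proving $\rho(w)v\in V_{(-\lambda)}$.

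There is essentially no obstacle: the only mild subtlety is to remember that ``root space'' here means the generalized eigenspace (so one must work with the $k$-th power, not with eigenvectors), which is precisely why the polynomial identity above is needed rather than a bare (anti)commutation applied to a single eigenvector.
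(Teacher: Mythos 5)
Your proof is correct and follows essentially the same route as the paper: both establish that $\rho(s)$ commutes and $\rho(w)$ intertwines $(A-\lambda\id)$ with $(A+\lambda\id)$ (up to sign), then apply this to the $k$-th power defining the root space $V_{(\lambda)}=\ker(A-\lambda\id)^k$. The only difference is that you make the induction on $k$ explicit where the paper states that the conclusion ``directly follows''; that is a reasonable amount of added care, not a different argument.
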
\label{pre-spl}
\begin{proof}
Let $I=id_V$. Then, obviously, 
$$
(\lambda I-A)\circ\rho(s)=\rho(s)\circ (\lambda I-A),\quad (\lambda 
I+A)\circ\rho(w)=\rho(w)\circ (\lambda I-A)
$$
for any $s\in\gs,\quad w\in W$. Since $V_{(\lambda)}= \ker (\lambda I-A)^k$ for 
a suitable $k$, the assertion directly follows from the above relations. 
\end{proof}
\begin{cor}\label{ospl}
Assume that eigenvalues of a nondegenerate splitting operator $A$  belong to $\gk$ 
and divide them into two parts $\Lambda_0$ and $\Lambda_1$, in such a way 
that opposite eigenvalues $\lambda$ and $-\lambda$ do not belong to the same 
part. Then the pair 
$$
V_0=\sum_{\lambda\in\Lambda_0}V_{(\lambda)},\quad 
V_1=\sum_{\lambda\in\Lambda_1}V_{(\lambda)}
$$
is an extension of $(\gs,W)$. In particular, if $A$ is a splitting involution, 
then the pair $(V_{(1)},V_{(-1)})$ \,is an extension of $(\gs,W)$. 
\end{cor}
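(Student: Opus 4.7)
The plan is to derive the corollary as a direct consequence of the preceding lemma together with elementary linear algebra about root space decompositions. I expect no serious obstacle here; the statement is essentially bookkeeping once one has the invariance/swapping rule $\rho(s)(V_{(\lambda)})\subset V_{(\lambda)}$ and $\rho(w)(V_{(\lambda)})\subset V_{(-\lambda)}$ in hand.

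First I would observe that, since all eigenvalues of $A$ belong to $\gk$ and $A$ is nondegenerate (so $0$ is not among them), the standard primary decomposition yields $V=\bigoplus_{\lambda}V_{(\lambda)}$ where $\lambda$ runs over the eigenvalues of $A$. The hypothesis partitions these eigenvalues into $\Lambda_0\sqcup\Lambda_1$ so that $\lambda$ and $-\lambda$ always lie in different parts. In particular $V_0=\sum_{\lambda\in\Lambda_0}V_{(\lambda)}$ and $V_1=\sum_{\lambda\in\Lambda_1}V_{(\lambda)}$ are complementary subspaces with $V=V_0\oplus V_1$, giving condition (1) of a $\rho$-extension. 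Note that nondegeneracy of $A$ is essential: if $0$ were an eigenvalue, the root space $V_{(0)}$ would be forced into both $\Lambda_0$ and $\Lambda_1$ simultaneously, since $0=-0$.

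Next I would verify the two invariance conditions. For any $s\in\gs$, the preceding lemma gives $\rho(s)(V_{(\lambda)})\subset V_{(\lambda)}$ for every $\lambda$, so $\rho(s)(V_i)\subset V_i$ for $i=0,1$. For $w\in W$, the same lemma gives $\rho(w)(V_{(\lambda)})\subset V_{(-\lambda)}$; by the partition hypothesis, $\lambda\in\Lambda_0$ implies $-\lambda\in\Lambda_1$, hence $\rho(w)(V_0)\subset V_1$, and symmetrically $\rho(w)(V_1)\subset V_0$. This verifies conditions (1) and (2) in the definition of a $\rho$-extension.

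For the final clause, if $A$ is a splitting involution, then $A^2=\id_V$ implies that the minimal polynomial divides $X^2-1=(X-1)(X+1)$, so the only possible eigenvalues are $\pm 1$ and $V=V_{(1)}\oplus V_{(-1)}$. Taking $\Lambda_0=\{1\}$, $\Lambda_1=\{-1\}$ in the general statement yields that $(V_{(1)},V_{(-1)})$ is a $\rho$-extension of $(\gs,W)$, completing the proof.
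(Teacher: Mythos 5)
Your proof is correct and follows exactly the route the paper intends: the paper disposes of this corollary with ``Straightforwardly from the above lemma,'' and your argument simply spells out that straightforward deduction — primary decomposition, the invariance/swapping rule from the lemma, and the partition hypothesis. The observation about why nondegeneracy is needed (to exclude $V_{(0)}$) is a nice touch the paper leaves implicit.
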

\begin{proof}
Straightforwardly from the above lemma. 
\end{proof}
\begin{rmk}
Let $A, V_0$ and $V_1$ be  as in the 
above corollary. Then one gets a splitting involution just by declaring $V_0$ 
and $V_1$ to be its proper subspaces corresponding to eigenvalues $1$ and $-1$, 
respectively. 
\end{rmk}

The following fact helps in searching for non-trivial 
splitting operators. 
\begin{proc}\label{ext-ext}
Let $\bar{\gk}$ \,be an extension of the ground field $\gk$ and 
$\bar{\gs},\bar{W},\bar{\rho}$ be the corresponding extensions of
$\gs, W$ and $\rho$, respectively. Then 
\begin{enumerate}
\item
$\mathcal{S}_0(\bar{\rho})=\mathcal{S}_0(\rho)\otimes_{\gk}\bar{\gk},  
\quad\mathcal{S}_1(\bar{\gs},\bar{W},\bar{\rho})=\mathcal{S}_1(\gs,W,\rho)
\otimes_{\gk}\bar{\gk}$;  
\item If \;$\mathcal{S}_1(\bar{\gs},\bar{W},\bar{\rho})$ \,contains a
nondegenerate operator, then $\mathcal{S}_1(\gs,W,\rho)$ also contains a 
such one. 
\end{enumerate}
\end{proc}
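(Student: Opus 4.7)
The plan is to reduce both parts to elementary linear algebra by observing that $\mathcal{S}_0$ and $\mathcal{S}_1$ are solution spaces of explicit $\gk$-linear systems, and that everything we care about (kernels, determinants) is preserved under scalar extension.

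First I would prove (1) by noting that $\mathcal{S}_0(\rho)$ is the kernel of the $\gk$-linear map
$$ \Phi_0 \colon \End_{\gk}(V) \longrightarrow \End_{\gk}(V)^{\dim \gG}, \qquad A \longmapsto ([\rho(g_i), A])_i, $$
where $(g_i)$ is a $\gk$-basis of $\gG$, and similarly $\mathcal{S}_1(\gs,W,\rho)$ is the kernel of
$$ \Phi_1 \colon \End_{\gk}(V) \longrightarrow \End_{\gk}(V)^{\dim \gs + \dim W}, \qquad A \longmapsto \bigl(\rho(s_i)A - A\rho(s_i),\ \rho(w_j)A + A\rho(w_j)\bigr)_{i,j}, $$
for bases $(s_i)$ of $\gs$ and $(w_j)$ of $W$. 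When we extend scalars from $\gk$ to $\bar{\gk}$, the maps $\Phi_0$ and $\Phi_1$ become the corresponding maps for $\bar{\rho},\bar{\gs},\bar{W}$. Since $\gk \hookrightarrow \bar{\gk}$ is flat (every field extension is), taking kernels commutes with $-\otimes_{\gk}\bar{\gk}$, and both equalities in (1) follow.

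For (2) I would choose a $\gk$-basis $A_1,\dots,A_N$ of $\mathcal{S}_1(\gs,W,\rho)$, which by (1) is simultaneously a $\bar{\gk}$-basis of $\mathcal{S}_1(\bar{\gs},\bar{W},\bar{\rho})$. Consider the polynomial
$$ f(t_1,\dots,t_N) \;=\; \det\bigl(t_1 A_1 + \dots + t_N A_N\bigr) \;\in\; \gk[t_1,\dots,t_N]. $$
By hypothesis there exist $\bar{c}_1,\dots,\bar{c}_N \in \bar{\gk}$ with $f(\bar{c}_1,\dots,\bar{c}_N) \neq 0$, so $f$ is not the zero polynomial. Since a nonzero polynomial in finitely many variables over an infinite field has a nonvanishing $\gk$-point, we can choose $c_i \in \gk$ with $f(c_1,\dots,c_N) \neq 0$; then $A := \sum_i c_i A_i$ lies in $\mathcal{S}_1(\gs,W,\rho)$ and is nondegenerate, as required.

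The only subtle point is the assumption that $\gk$ is infinite, used in the last step to pass from a generically nonvanishing determinant to an actual $\gk$-rational specialization. This is harmless for the intended applications of the section, where $\gk$ is $\R$, $\C$, or more generally an algebraically closed field of characteristic zero; in all these cases $\gk$ is automatically infinite. I therefore expect no serious obstacle: the substance of the proposition is the flatness of field extensions, and the rest is a density argument on the Zariski open locus $\{f \neq 0\} \subset \gk^N$.
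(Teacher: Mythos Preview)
Your proof is correct and essentially identical to the paper's. The paper phrases part (1) as a dimension count (the solution space of a linear system has the same dimension over $\gk$ and over $\bar{\gk}$) rather than invoking flatness, and for part (2) it uses the same determinant polynomial argument, noting that $\gk$ is infinite because the standing assumption is characteristic zero.
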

\begin{proof}
Obviously, $\mathcal{S}_1(\gs,W,\rho)\otimes_{\gk}\bar{\gk}\subset 
\mathcal{S}_1(\bar{\gs},\bar{W},\bar{\rho})$. But $\mathcal{S}_1(\bar{\gs},\bar{W},\bar{\rho})$ 
\,is the solution space of linear system (\ref{spl-oper}) interpreted as a 
system over $\bar{\gk}$. So, its dimension over $\bar{\gk}$ coincides with 
that of  the solution space of (\ref{spl-oper}) over $\gk$, i.e., with $\mathcal{S}_1(\gs,W,\rho)$.
Hence $\mathcal{S}_1(\gs,W,\rho)\otimes_{\gk}\bar{\gk}= 
\mathcal{S}_1(\bar{\gs},\bar{W},\bar{\rho})$. Similar arguments prove that
$\mathcal{S}_0(\bar{\rho})=\mathcal{S}_0(\rho)\otimes_{\gk}\bar{\gk}$.

To prove the second assertion consider the polynomial 
$P(t)=\det\,(t_1A_1+\dots+t_mA_m)$ \,in variables $t_i$'s with $A_1\dots 
A_m$ \,being a base of $\mathcal{S}_1(\gs,W,\rho)$. Zeros $t=(t_1,\dots,t_m)$ of 
$P(t)$ with $t_i\in\gk$ correspond to degenerate operators in 
$\mathcal{S}_1(\gs,W,\rho)$. Since $A_1\dots A_m$\, is also a base of 
$\mathcal{S}_0(\bar{\gs},\bar{W},\bar{\rho})$, zeros of $P(t)$ \,with $t_i$'s in 
$\bar{\gk}$ gives degenerate operators in 
$\mathcal{S}_1(\bar{\gs},\bar{W},\bar{\rho})$. So, since 
$\mathcal{S}_1(\bar{\gs},\bar{W},\bar{\rho})$ contains non-degenerate operators 
the polynomial $P(t)$  is nonzero.  But being of zero characteristic $\gk$ is infinite. 
Hence $P(t)$ is a nonzero function on $\gk^m$. 
\end{proof}

Since the structure of representations of simple algebras over 
algebraically closed fields is well-known, this proposition is of help
when looking for $d$-pairs for abelian extensions of simple algebras
over arbitrary ground fields.

\subsection{Some properties of the algebra $\mathcal{S}(\gs,W,\rho)$}
 In this subsection we keep the notation of the previous one.
\begin{lem}\label{k-rootsInS_1}
Let $0\neq A\in\mathcal{S}_1(\gs,W,\rho)$ and $\rho$ is irreducible. If one of 
eigenvalues $\lambda$ of $A$ belongs to $\gk$, then 
\begin{enumerate}
\item $V=\Ker(A^2-\lambda^2I)$ and $\lambda\neq 0$;
\item $V_0=\Ker(A-\lambda I, \,V_1=\Ker(A+\lambda I)$ is a $\rho$--extension of 
$(\gs, W)$. 
\end{enumerate}
\end{lem}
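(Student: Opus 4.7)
The plan is to exploit irreducibility of $\rho$ together with the swapping behaviour of the splitting operator $A$ already recorded in the preceding lemma. The key observation is that for any $\gk$-eigenvalue $\lambda$ of $A$, the ordinary (not generalized) eigenspaces $\Ker(A-\lambda I)$ and $\Ker(A+\lambda I)$ already assemble into a nonzero $\rho(\gG)$-invariant subspace, and irreducibility forces them to exhaust $V$.

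First I would establish $\lambda\neq 0$ by showing that $A$ is nondegenerate. Indeed, $\Ker A$ is stable under $\rho(s)$ for $s\in\gs$ because $A\rho(s)=\rho(s)A$, and it is also stable under $\rho(w)$ for $w\in W$: if $Av=0$, then $A(\rho(w)v)=-\rho(w)(Av)=0$. Since $|\gG|=|\gs|\oplus W$, the subspace $\Ker A$ is $\rho(\gG)$-invariant. Irreducibility of $\rho$ and the hypothesis $A\neq 0$ then force $\Ker A=0$, so $0$ is not an eigenvalue of $A$ and $\lambda\neq 0$.

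Next, set $V_0=\Ker(A-\lambda I)$ and $V_1=\Ker(A+\lambda I)$ (ordinary eigenspaces). For $v\in V_0$ and $s\in\gs$ we have $A(\rho(s)v)=\rho(s)(Av)=\lambda\rho(s)v$, so $\rho(s)(V_0)\subset V_0$, and analogously $\rho(s)(V_1)\subset V_1$. For $w\in W$ and $v\in V_0$, $A(\rho(w)v)=-\rho(w)(Av)=-\lambda\rho(w)v$, so $\rho(w)(V_0)\subset V_1$, and similarly $\rho(w)(V_1)\subset V_0$. These are exactly the conditions defining a $\rho$-extension of $(\gs,W)$, which will yield part (2) once we know $V=V_0\oplus V_1$.

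Finally, $V_0+V_1$ is $\rho(\gG)$-invariant by the above, and nonzero since $\lambda$ is an eigenvalue. Because $\lambda\neq 0$, the intersection $V_0\cap V_1\subset\Ker(2\lambda I)=0$, so the sum is direct. Irreducibility of $\rho$ forces $V_0\oplus V_1=V$, which is precisely $V=\Ker(A^2-\lambda^2 I)$, proving part (1); part (2) is then the content of the previous paragraph. The only point that requires care is the direct sum step, where one tacitly uses that $\gk$ has characteristic $\neq 2$ (fine in the setting of the paper), and the passage from the ordinary eigenspace being nonzero for $\lambda\in\gk$ to the full splitting statement, which is the essential use of irreducibility; no generalized eigenspace considerations are needed.
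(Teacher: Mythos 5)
Your proof is correct and follows essentially the same route as the paper: both arguments rest on the commutation/anticommutation relations defining $\mathcal{S}_1(\gs,W,\rho)$ to produce a nonzero $\rho$-invariant subspace ($\Ker A$ for the nondegeneracy of $A$, and the span of the $\pm\lambda$-eigenspaces, which the paper writes as $\Ker(A^2-\lambda^2 I)$, for the splitting), and then invoke irreducibility. Your direct verification of the swap conditions for $(V_0,V_1)$ simply reproduces what the paper delegates to its Corollary on splitting operators, so the difference is purely organizational.
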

\begin{proof} First, we have $0\neq \Ker(A-\lambda I)\subset\Ker(A^2-\lambda^2I)$.
On the other hand, 
$$
\Ker(A-\lambda I)\stackrel{\rho(w)}{\longleftrightarrow}\Ker(A+\lambda I), \quad \forall w\in W.
$$
Therefore $\Ker(A^2-\lambda^2I)$ is $\rho$--invariant, and $V=\Ker(A^2-\lambda^2)$,
since $\rho$ is irreducible. If $\lambda=0$, then, obviously,
$\Ker\,A$ is $\rho$--invariant and hence $ \Ker\,A=V$, i.e., $A=0$ in contradiction 
with the assumption.

The second assertion is obvious in view of corollary\,\ref{ospl}.
\end{proof}
An immediate consequence of this lemma is
\begin{cor}\label{cor-k-rootsInS_1}
Let $\gk$ be algebraically closed and $\rho$ irreducible. If $\mathcal{S}_1(\gs, W, \rho)$ 
is nontrivial, then there is a $\rho$--extension of $(\gs, W)$. 
\end{cor}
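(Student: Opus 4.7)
The plan is to reduce the statement immediately to Lemma \ref{k-rootsInS_1}. Since $\mathcal{S}_1(\gs,W,\rho)$ is nontrivial by hypothesis, I pick any nonzero element $A \in \mathcal{S}_1(\gs,W,\rho)$. The operator $A$ acts on the finite-dimensional vector space $V$ and so has a characteristic polynomial over $\gk$; because $\gk$ is algebraically closed, this polynomial splits and hence $A$ possesses at least one eigenvalue $\lambda \in \gk$.

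With such a $\lambda$ in hand, I would then directly invoke Lemma \ref{k-rootsInS_1}, whose hypotheses (nonzero $A \in \mathcal{S}_1(\gs,W,\rho)$, an eigenvalue of $A$ lying in $\gk$, and irreducibility of $\rho$) are exactly what we have. The lemma guarantees that $\lambda \neq 0$, that $V = \Ker(A^2 - \lambda^2 I)$, and that the decomposition
\[
V_0 = \Ker(A - \lambda I), \qquad V_1 = \Ker(A + \lambda I)
\]
is a $\rho$-extension of $(\gs,W)$. This would complete the proof.

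There is essentially no obstacle here: the whole content lies in the preceding lemma, and algebraic closure is invoked solely to ensure the existence of an eigenvalue of $A$ in the ground field. The only thing one should verify implicitly is that the proposed decomposition is indeed a direct sum, but this follows from $\lambda \neq 0$ and the standard fact that kernels of $A - \lambda I$ and $A + \lambda I$ intersect trivially when $\lambda \neq -\lambda$, together with $V = \Ker(A^2 - \lambda^2 I) = \Ker(A-\lambda I) \oplus \Ker(A+\lambda I)$ furnished by the lemma.
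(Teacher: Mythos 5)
Your proposal is correct and is exactly the paper's argument: the paper presents the corollary as an immediate consequence of Lemma \ref{k-rootsInS_1}, with algebraic closure serving only to guarantee that a nonzero $A\in\mathcal{S}_1(\gs,W,\rho)$ has an eigenvalue in $\gk$, after which the lemma supplies the $\rho$--extension. Nothing further is needed.
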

\begin{proc}\label{S-asTelo}
Let $\gG$ be simple and $\rho$ irriducible. Then
\begin{enumerate}
\item $\mathcal{S}_0(\rho)$ is a division algebra (over $\gk$).
\item If $\mathcal{S}(\gs, W, \rho)$ is not a division algebra, then
the d-pair $(\gs, W)$ admits a $\rho$--extension.
\end{enumerate}
\end{proc}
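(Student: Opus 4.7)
The plan is to first handle (1) via Schur's lemma, then leverage the $\dF_2$-grading to reduce (2) to an application of lemma \ref{k-rootsInS_1}.

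For part (1), let $A\in\mathcal{S}_0(\rho)$. Since $A$ commutes with $\rho(g)$ for all $g\in\gG$, both $\Ker\,A$ and $\im\,A$ are $\rho$-invariant. By irreducibility of $\rho$ each of these is either $0$ or $V$, so $A$ is either zero or a bijection. In the bijective case the relation $A\rho(g)=\rho(g)A$ immediately gives $\rho(g)A^{-1}=A^{-1}\rho(g)$, so $A^{-1}\in\mathcal{S}_0(\rho)$. Thus every nonzero element of $\mathcal{S}_0(\rho)$ is invertible inside the algebra.

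For part (2), the first step is the observation that every nonzero element of $\mathcal{S}_1(\gs,W,\rho)$ is also invertible in $\End\,V$. Indeed, if $A\in\mathcal{S}_1$ and $v\in\Ker\,A$, then for $s\in\gs$ one has $A\rho(s)v=\rho(s)Av=0$, while for $w\in W$ the anticommutation $\rho(w)A=-A\rho(w)$ gives $A\rho(w)v=-\rho(w)Av=0$. Hence $\Ker\,A$ is $\rho$-invariant, and irreducibility forces $\Ker\,A=0$ whenever $A\neq 0$. Combined with part (1), this shows that a nonzero non-invertible element $A=A_0+A_1\in\mathcal{S}$ must have both components nonzero.

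The key trick is then to manufacture an element of $\mathcal{S}_1$ with eigenvalue $1\in\gk$. Using the graded multiplication $\mathcal{S}_0\cdot\mathcal{S}_1\subseteq\mathcal{S}_1$ (a direct check against $\rho(\gs)$ and $\rho(W)$), set
\[
B=-A_0^{-1}A_1\in\mathcal{S}_1,
\]
which is nonzero because $A_1\neq 0$ and $A_0$ is invertible. Any $v\in\Ker\,A\setminus\{0\}$ satisfies $A_0(v)=-A_1(v)$, equivalently $B(v)=v$, so $1$ is an eigenvalue of $B$ lying in $\gk$. Lemma \ref{k-rootsInS_1} then applies to $B$ and $\lambda=1$, yielding the decomposition $V_0=\Ker(B-I),\ V_1=\Ker(B+I)$ which is a $\rho$-extension of $(\gs,W)$.

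I do not foresee a real obstacle: the argument is essentially a graded version of Schur's lemma, and the only subtlety is the choice of $B$ which is forced by the requirement that the eigenvalue produced by the failure of invertibility be a scalar in $\gk$ (so that lemma \ref{k-rootsInS_1} is applicable without passing to a field extension). The verification that the constructions respect the grading, and that $A_0^{-1}\in\mathcal{S}_0$, $A_1^{-1}\in\mathcal{S}_1$, are routine and follow from the same commutation/anticommutation identities used above.
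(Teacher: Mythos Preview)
Your argument is correct and follows essentially the same route as the paper: Schur for (1), then for (2) use invertibility of $A_0$ to produce an element of $\mathcal{S}_1$ with a $\gk$-rational eigenvalue and invoke lemma~\ref{k-rootsInS_1}. The only cosmetic differences are that the paper multiplies by $A_0^{-1}$ on the right (obtaining $A_1A_0^{-1}$ with eigenvalue $-1$) and handles the case $A_0=0$ via lemma~\ref{k-rootsInS_1} rather than by your direct $\rho$-invariance check on $\Ker A_1$; your remark about $A_1^{-1}\in\mathcal{S}_1$ is true but unused.
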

\begin{proof}
The first assertion is the classical Schur lemma. Next, let $A=A_0+A_1
\in\mathcal{S}(\gs, W, \rho)$ with $\,A_0\in\mathcal{S}_0(\rho), \,  \,A_1\in\mathcal{S}_1(\gs, W, \rho)$
be a degenerate operator. The first assertion of the proposition implies that $A_1\neq 0$ and $A_0^{-1}\in\mathcal{S}_0(\rho)$, if $A_0\neq 0$. In this case the operator $B=AA_0^{-1}=I+A_1A_0^{-1}$
is degenerate too, and hence one of eigenvalues of $B_1= A_1A_0^{-1}\in 
\mathcal{S}_1(\gs, W, \rho)$ is $-1$. Now  lemma \ref{k-rootsInS_1} proves the assertion.
Moreover, this lemma shows that the assumption $A_0\neq 0$ takes place. Indeed, assuming the contrary we see that $A_1$ is degenerate and, therefore, one of its eigenvalues is $0$ in 
contradiction with the lemma.
\end{proof}

Now we shall specify the above results to the case $\gk=\R$.
\begin{proc}\label{AlmExt}
Let the $\gG$ be a simple Lie algebra over $\R$, $\rho:\gG\to \End\,V$ an irreducible 
representation of $\gG$, and $(\gs, W)$ a d-pair in $\gG$. If $\mathcal{S}_1(\gs, W, \rho)$ 
is not trivial, then $(\gs, W)$
admits a $\rho$--extension except, possibly, the case when  $\mathcal{S}(\gs, W, \rho)$
is isomorphic to $\C$.
\end{proc}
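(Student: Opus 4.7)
The plan is to combine Proposition~\ref{S-asTelo} with the classification of finite-dimensional associative division algebras over $\R$ (Frobenius's theorem). First I apply Proposition~\ref{S-asTelo}: if $\mathcal{S}:=\mathcal{S}(\gs,W,\rho)$ is not a division algebra, the proposition directly produces a $\rho$-extension and we are done. So assume $\mathcal{S}$ is a division algebra; by Frobenius, $\mathcal{S}$ is isomorphic to one of $\R$, $\C$, or $\mathbb{H}$. The case $\mathcal{S}\cong\R$ is excluded at once, since $\R\cdot\id\subset\mathcal{S}_0$ would then force $\mathcal{S}_1=0$, contradicting the hypothesis. The case $\mathcal{S}\cong\C$ is precisely the exception in the statement. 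Thus the problem is reduced to the case $\mathcal{S}\cong\mathbb{H}$.

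Next I pin down the $\dF_2$-grading in the quaternion case. The unital subalgebra $\mathcal{S}_0$ of $\mathbb{H}$ is isomorphic to $\R$, $\C$, or $\mathbb{H}$; the last forces $\mathcal{S}_1=0$, and the first is ruled out by a direct computation inside $\mathbb{H}$ (the relation $\mathcal{S}_1\cdot\mathcal{S}_1\subset\R$ forces $\mathcal{S}_1\subset\mathrm{Im}\,\mathbb{H}$, and then $AB\in\R$ for $A,B\in\mathcal{S}_1$ forces $A$ and $B$ to be $\R$-proportional, giving $\dim_\R\mathcal{S}_1\le 1$, a contradiction with $\dim_\R\mathcal{S}_1=3$). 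Hence $\mathcal{S}_0\cong\C$ and $\mathcal{S}_1$ is the unique $2$-dimensional $\C$-sub-bimodule complement of $\mathcal{S}_0$ in $\mathbb{H}$. Fixing $I\in\mathcal{S}_0$ with $I^2=-\id$ turns $V$ into a $\C$-vector space on which $\rho(\gG)$ acts $\C$-linearly, and a normalized $J\in\mathcal{S}_1$ with $J^2=-\id$ acts as a $\C$-antilinear, $\gs$-equivariant operator anticommuting with the $\rho(W)$-action, giving $V$ a quaternionic structure.

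The main obstacle is the quaternion case itself: every nonzero $A\in\mathcal{S}_1$ satisfies $A^2=-\lambda^2\id$ for some $\lambda>0$, hence has only purely imaginary eigenvalues, so $\mathcal{S}_1$ contains no splitting involution and Lemma~\ref{k-rootsInS_1} is inapplicable. The natural circumvention is to complexify: by Proposition~\ref{ext-ext}(1), $\mathcal{S}(\bar{\gs},\bar{W},\bar{\rho})\cong\mathbb{H}\otimes_\R\C\cong M_2(\C)$, which is \emph{not} a division algebra, so Proposition~\ref{S-asTelo} supplies a $\bar{\rho}$-extension of $(\bar{\gs},\bar{W})$. The crux of the argument is to descend this complex extension to $\R$, using the anti-$\C$-linearity of $J$ to combine complex-conjugate pieces of $\bar{V}$ into $\gs$-invariant real subspaces of $V$ swapped by $\rho(W)$. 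Successful descent yields the required $\rho$-extension over $\R$ and completes the proof.
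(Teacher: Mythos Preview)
Your route diverges from the paper's precisely in the quaternionic case, and two gaps appear there.

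First, a technical slip: you invoke Proposition~\ref{S-asTelo} for the complexified representation $\bar\rho$ on $\bar V=V\otimes_\R\C$, but that proposition requires $\bar\rho$ to be irreducible. In the situation at hand $\mathcal{S}_0(\rho)\cong\C$, so by Proposition~\ref{ext-ext}(1) one has $\mathcal{S}_0(\bar\rho)\cong\C\otimes_\R\C\cong\C\times\C$, which has zero divisors; hence $\bar\rho$ is \emph{reducible} and Proposition~\ref{S-asTelo} does not apply. This particular point is easy to repair (take $J\in\mathcal{S}_1$ with $J^2=-\id$; over $\C$ its eigenvalues are $\pm i$, and Corollary~\ref{ospl}, which needs no irreducibility, yields a $\bar\rho$-extension).

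The serious gap is the descent, which you announce but do not carry out. The obstacle is structural: a $\rho$-extension over $\R$ is \emph{the same datum} as a splitting involution in $\mathcal{S}_1(\gs,W,\rho)$, and in the graded quaternion case every nonzero element of $\mathcal{S}_1$ has strictly negative square, so no such involution exists in $\mathcal{S}_1$ itself. Concretely, the $\pm i$-eigenspaces $\bar V_\pm=\ker(J\otimes 1\mp i)$ are swapped by complex conjugation on $\bar V$, hence are not defined over $\R$; the $\C$-antilinearity of $J$ (with respect to $I$) does not help here, since $J\otimes 1$ commutes with the conjugation and cannot undo that swap. So ``successful descent'' is exactly what has to be argued, not assumed.

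The paper avoids the descent altogether by a different device. Instead of enlarging $V$ to $V\otimes_\R\C$, it uses the \emph{internal} complex structure coming from $\mathcal{S}_0(\rho)\cong\C$: the operator $J\in\mathcal{S}_0$ with $J^2=-\id$ (your $I$) makes $V$ itself a complex vector space $V_\C$, and $\rho$ extends to a $\C$-linear representation $\rho_\C$ of $\gG^\C$ on $V_\C$. Any $\rho_\C$-extension $V_\C=U_0\oplus U_1$ consists of $\C$-subspaces of $V_\C$, which are in particular $\R$-subspaces of $V$; it is therefore automatically a $\rho$-extension of $(\gs,W)$ over $\R$, with no descent step needed. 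The paper then appeals to Corollary~\ref{cor-k-rootsInS_1} for the existence of the $\rho_\C$-extension. Whatever one thinks of that last appeal, the point is that the paper's construction, if it succeeds, lands directly in $V$; your construction lands in $V\otimes_\R\C$ and still owes the reader the passage back.
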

\begin{proof}
Proposition \ref{S-asTelo} allows us to restrict to the case when  $\mathcal{S}(\gs, W, \rho)$ is 
a division algebra. Since $\mathcal{S}_1(\gs, W, \rho)$ is nontrivial, the dimension of this 
algebra is greater than 1. So, by  the classical Frobenius theorem, this algebra is
isomorphic either to $\C$, or to $\Q$, and we have to analyze only the second alternative. 

In this case, as it is easy to see, $\mathcal{S}_0(\rho)$ is isomorphic to $\C$, 
and $V$ acquires a structure of a vector 
space over $\C$ by means of the operator $J\in\End\,V$ that corresponds to $\sqrt{-1}$
via this isomorphism. Denote by $V_{\C}$ this complex vector space. The representation
$\rho$ naturally extends to a representation $\rho_{\C}:\gG^{\C}\to \End_{\C}V_{\C}$ of the 
complexification $\gG^{\C}=\gG\otimes_{\R}\C$ of $\gG$ in $V_{\C}$: 
$$
\rho_{\C}(x\otimes \sqrt{-1})\df J(\rho(x)), \,x\in\gG.
$$
By corollary \ref{cor-k-rootsInS_1}, the d-pair $(\gs\otimes_{\R}\C, W\otimes_{\R}\C)$ 
in $\gG^{\C}$ admits a $\rho_{\C}$--extension whose restriction to $\rho$ is, obviously, 
an $\rho$--extension of $(\gs, W)$.
\end{proof}

\subsection{D-pairs associated with 3-dimensional simple subalgebras}

Here we shall construct  simplifying d-pairs for abelian extensions of Lie algebras
possessing a simple $3$--dimensional subalgebra. 

First, we shall collect some necessary facts about simple $3$-dimensional 
algebras (see, for instance, \cite{Jac}). Let $\gH$ be a such one and 
$h\in \gH$ a regular element of it. Then there exists a base 
$(e_1,e_2,e_3=h)$ in $|\gH|$ such that $[e_1,e_2]=h, \;[h,e_1]=\alpha e_2, 
\;[h,e_2]=\beta e_1, \;, \alpha,\beta\in\gk$ (the ground field), 
\;$\alpha\beta\neq 0$. Put $\kappa=\alpha\beta$. So, the characteristic 
polynomial of $\textrm{ad}\,h$ is $t(t^2-\kappa)$. If $\kappa=\lambda^2, 
\;\lambda\in \gk$, then $\gH$ splits and there exists a base 
$(h'=2\lambda^{-1}h,x,y)$ \;of $\gH$,\;called a $\gs\gl_2$-triple, such that 
$[h',x]=2x, \;[h',y]=-2y, \;[x,y]=h'$. If $\kappa$ is not a square in 
$\gk$, i.e., the polynomial $t^2-\kappa$ is irreducible, consider the 
extension $\bar{\gk}$ \;of $\gk$ \; by adding to $\gk$ the roots of 
$t^2-\kappa$. We still denote these roots by 
$\pm\lambda\in\bar{\gk}$. The extended algebra 
$\bar{\gH}=\gH\otimes_{\gk}\bar{\gk}$ splits over $\bar{\gk}$ and, as 
before, one can find an $\gs\gl_2$-triple $(h'=2\lambda^{-1}h,x,y)$ in it. 
Recall also, that if $\varrho$ \;is a representation of $\gH$, or of 
$\bar{\gH}$, then eigenvalues of $\varrho(h')$ are integer and 
multiplicities of opposite eigenvalues are equal. So, eigenvalues of $h$ 
are of the form $\pm(m/2)\lambda$ \; with $\lambda^2=- \kappa, \;m\in \Z$. 
Since the element $h$ is semisimple the operator $\varrho(h)$ is semisimple 
as well (see \cite{Hum}). Therefore the representation space $U$ of $\varrho$ 
splits into a direct sum of 1-dimensional and 2-dimensional 
$\varrho(h)$-invariant subspaces in such a way that 1-dimensional subspaces 
belong to $\ker\,\varrho(h)$ \;while each of 2-dimensional ones is 
annihilated by the operator $\varrho(h)^2-(1/4)m^2\kappa$ for a suitable 
integer $m\neq 0$. We shall call them \emph{eigenlines} and 
\emph{$m$-eigenplanes}, respectively. Obviously, if 
$\gH$  splits, then any eigenplane splits into two eigenlines generated 
by eigenvectors of eigenvalues $\pm(m/2)\lambda$. In the non-split case 
eigenplanes are irreducible with respect to  $\varrho(h)$. 
 
Now we shall associate a d--pair with a simple 3-dimensional subalgebra $\gH$
of a Lie algebra $\gG$. First, we recall the following elementary fact. Let $\gA$ be a Lie 
algebra, $x\in \gA$  and $\gA_{\mu}$ the root space of the 
operator $\textrm{ad}\,x$ \;corresponding to the eigenvalue $\mu$. Then 
\begin{equation}\label{comut}
[\gA_{\mu},\gA_{\nu}]\subset\gA_{\mu+\nu}.
\end{equation}

Let $h\in\gH$ be as above and  $A=\textrm{ad}_{\gG}h$. Put 
$$
\gG_0=\ker\,A, \quad\gG_m=\ker(A^2-(m^2/4)\kappa\;\textrm{id}_{\gG})
$$ 
Then $\gG=\bigoplus_{m\geq 0}\gG_m$ and commutation relations
\begin{equation}\label{g-com}
[\gG_k, \gG_l]\subset \gG_{k+l}\oplus \gG_{k-l}.
\end{equation}
take place. If $\gH$ splits, this directly follows from (\ref{comut}). Indeed, 
in this case  $\gG_m, \;m>0$, \;splits into a direct sum of root spaces 
corresponding to eigenvalues $\pm(m/2)\lambda$ (notice that $\gG_k=\gG_{-k}$). 
If $\gH$ does not split one obtains the result by extending scalars from 
$\gk$ to $\bar{\gk}$. In fact, the extended subalgebra $\bar{\gH}$ of the 
extended algebra $\bar{\gG}$ splits and hence the extended analogues 
$\bar{\gG}_m$'s of subspaces $\gG_m$'s commute according to (\ref{g-com}), 
while $\gG_m\subset \bar{\gG}_m$. 

Relations (\ref{g-com}) show that
\begin{equation}\label{1stD-pair}
\gs=\bigoplus_{m\geq 0}\gG_{2m}, \quad  W=\bigoplus_{m\geq 0}\gG_{2m+1}
\end{equation}
is a d--pair in $\gG$, which will be called the \emph{first d-pair associated with} 
$\gH$ if $W\neq\{0\}$. 
Since $\gH\subset\gs$, this d--pair is trivial iff $\gG=\gs\oplus_{\rho} W$. In 
particular, it is nontrivial and simplifying if  $\gG$ is semisimple.

If $W=\{0\}$, i.e., $\gG=\oplus_{m\geq 0}\gG_{2m}$, then
\begin{equation}\label{2dD-pair}
\gs=\bigoplus_{m\geq 0}\gG_{4m}, \quad  W=\bigoplus_{m\geq 0}\gG_{4m+2}
\end{equation}
is the \emph{second d-pair associated with} $\gH$. Since $h\in\gG_0\subset\gs$ 
and $x,y\in\gG_2\subset W$, this d--pair is nontrivial and, obviously, simplifying 
if $\gG$ is simple.

\subsection{Solution of the disassembling problem
for algebraically closed fields}

D--pairs (\ref{1stD-pair}) and (\ref{2dD-pair}) allow us  to solve
the disassembling problem for Lie algebras over algebraically closed fields.
In this subsection we keep the notation of the previous one and assume the ground 
field $\gk$ to be algebraically closed.

\begin{proc}\label{split-by-3d}
Let $\gG$ be a Lie algebra possessing a simple $3$-dimensional subalgebra 
and $\rho$ a representation of $\gG$ in $V$. Then a nontrivial associated
with $\gH$ d--pair admits a $\rho$--extension.
\end{proc}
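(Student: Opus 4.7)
The plan is to use the $\rho(h)$-weight decomposition of $V$ and build the required $\rho$-extension by grouping weights according to their residue class. Because $\gk$ is algebraically closed, we may choose $h\in \gH$ regular and pass to an $\gs\gl_2$-triple $(h',x,y)$ with $h'=2\lambda^{-1}h$, so that $\kappa=\lambda^2$ is a square. Since $h$ is semisimple, the operator $\rho(h)$ is semisimple as well, and hence
\[
V=\bigoplus_{j\in\Z}V^{(j)},\qquad V^{(j)}=\ker\!\bigl(\rho(h)-\tfrac{j}{2}\lambda\,\id_V\bigr).
\]
Then for any $\ad h$-eigenvector $u\in\gG$ with $[h,u]=c\lambda u$, a one-line commutator computation gives the shift rule $\rho(u)(V^{(j)})\subset V^{(j+2c)}$.

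\textbf{First d-pair} $\gs=\bigoplus_{m\ge 0}\gG_{2m}$, $W=\bigoplus_{m\ge 0}\gG_{2m+1}$. After scalar extension splits each $\gG_{k}$ into $\ad h$-eigenspaces of eigenvalues $\pm(k/2)\lambda$, every element of $\gs$ is a sum of $\ad h$-eigenvectors with \emph{integer} $c$, while every element of $W$ is a sum of eigenvectors with \emph{half-integer} $c$. By the shift rule, $\rho(s)$ for $s\in\gs$ shifts $j$ by an even integer and $\rho(w)$ for $w\in W$ shifts $j$ by an odd integer. Therefore
\[
V_0=\bigoplus_{j\ \mathrm{even}}V^{(j)},\qquad V_1=\bigoplus_{j\ \mathrm{odd}}V^{(j)}
\]
is a $\rho$-extension of $(\gs,W)$.

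\textbf{Second d-pair} $\gs=\bigoplus_{m\ge 0}\gG_{4m}$, $W=\bigoplus_{m\ge 0}\gG_{4m+2}$ (used when the first d-pair is trivial, i.e.\ $\gG=\bigoplus_{m\ge 0}\gG_{2m}$). Here every element of $\gs$ is a sum of $\ad h$-eigenvectors with $c\in 2\Z$ (shift of $j$ divisible by $4$), while every element of $W$ is a sum of eigenvectors with $c\in 2\Z+1$ (shift of $j$ congruent to $2\pmod 4$). Hence
\[
V_0=\bigoplus_{j\equiv 0,1\,(\mathrm{mod}\,4)}V^{(j)},\qquad V_1=\bigoplus_{j\equiv 2,3\,(\mathrm{mod}\,4)}V^{(j)}
\]
is a $\rho$-extension of $(\gs,W)$, since $\gs$-elements preserve the residue of $j$ mod $4$ and $W$-elements interchange the two blocks $\{0,1\}$ and $\{2,3\}$ mod $4$.

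The only non-routine step is the shift rule $\rho(u)V^{(j)}\subset V^{(j+2c)}$, which follows from $[\rho(h),\rho(u)]=\rho([h,u])=c\lambda\,\rho(u)$ together with semisimplicity of $\rho(h)$; everything else is bookkeeping on residue classes, and the non-split case is avoided since $\gk$ is algebraically closed.
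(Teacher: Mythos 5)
Your proof is correct and follows essentially the same route as the paper's: decompose $V$ into $\rho(h)$-eigenspaces, use the shift rule coming from $[\rho(h),\rho(u)]=\rho([h,u])$ (the paper's relation (\ref{comut}) applied to $\gG\oplus_{\rho}V$), and group weights by parity for the first d-pair and by residue mod $4$ for the second; your subspaces coincide with the paper's $\gs_{\rho}, W_{\rho}$ and $\mathcal{V}_0, \mathcal{V}_1$. The only cosmetic difference is that the paper handles the first d-pair via the coarser spaces $\ker\bigl(B^2-(m^2/4)\kappa\,\id_V\bigr)$ precisely so that that half of the argument needs no algebraic closure (it is reused later over $\R$), whereas you work with the split eigenspaces throughout — harmless under the subsection's standing hypothesis.
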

\begin{proof}
We identify $\gG$ (resp., $V$) with subalgebra $\gG\oplus_{\rho}\{0\}$ 
(resp., $\{0\}\oplus_{\rho}V$) in the algebra $\gG\oplus_{\rho}V$. Put 
$$
B=\rho(h), \quad V_0=\ker\,B, \quad V_m=\ker(B^2-(m^2/4)\kappa\;\textrm{id}_V).
$$ 
Obviously, $V=\oplus_{m\geq 0}V_m$. Since $\gG$ is simple, the first d--pair 
$(\gs, W)$ associated with $\gH$ is nontrivial if $W\neq\{0\}$. Then
\begin{equation}\label{1st-ext}
\gs_{\rho}\df\gs\oplus\left(\oplus_{m\geq 0}V_{2m}\right), 
\quad W_{\rho}\df W\oplus_{m\geq}V_{2m+1}
\end{equation}
is the required extension. Indeed, this directly follows from commutation relations
\begin{equation}\label{V-com}
[\gG_k, V_l]\subset V_{k+l}\oplus V_{k-l},
\end{equation}
which can be proved by the same arguments as for (\ref{g-com}). Note that
this part of the proof does not require algebraic closure of $\gk$.

If the first d--pair associated with $\gH$ is trivial, we consider finer
decompositions of $\gG$ and $V$ using the fact that $\gH$ splits if $\gk$ is
algebraically closed. Namely, put
\begin{equation}
\gG_m^{\prime}=\ker\;(A-\frac{m}{2}\lambda\id_{\gG}), \quad
V_m^{\prime}=\ker\;(B-\frac{m}{2}\lambda\id_V). 
\end{equation}
Then, obviously, $\gG_m=\gG_m^{\prime}\oplus\gG_{-m}^{\prime}, 
\;V_m=V_m^{\prime}\oplus V_{-m}^{\prime}$ and (see \ref{comut})
\begin{equation}\label{l-com}
[L_k, L_l]\subset L_{k+l}.
\end{equation}
where $L_s$ stands for one of subspaces $\gG_s^{\prime}, \,V_s^{\prime}$.
Now it immediately follows from relations (\ref{l-com})  that subspaces
\begin{equation}\label{fine-V} 
 \mathcal{V}_0=\bigoplus_{k\in\Z}(V_{4k}^{\prime}\oplus V_{4k+1}^{\prime}),
 \quad \mathcal{V}_1=\bigoplus_{k\in\Z}(V_{4k+2}^{\prime}\oplus V_{4k+3}^{\prime}).
\end{equation}
provides a $\rho$-extension of the second  d--pair associated with $\gH$.
\end{proof}

An important consequence of proposition\,\ref{split-by-3d} is
\begin{cor}\label{nontrivS_1}
Let $\gH$ be a simple 3-dimensional  subalgebra of an algebra Lie $\gG$
over an arbitrary ground field and $(\gs, W)$ the associated with $\gH$ d-pair. 
Then $\mathcal{S}_1(\gs, W, \rho)$ is nontrivial. 
\end{cor}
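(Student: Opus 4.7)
The plan is to exhibit a nontrivial element of $\mathcal{S}_1(\gs,W,\rho)$ by producing a $\rho$-extension of $(\gs,W)$ and using the bijective correspondence between $\rho$-extensions $V=V_0\oplus V_1$ and splitting involutions $\id_{V_0}\oplus(-\id_{V_1})$ noted right after (\ref{spl-oper}). The argument divides into the two cases already built into the definition of ``the'' d-pair associated with $\gH$: the first d-pair (\ref{1stD-pair}), available when $W\neq\{0\}$, and the second d-pair (\ref{2dD-pair}), used otherwise.

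In the first case the construction (\ref{1st-ext}) from the proof of Proposition~\ref{split-by-3d} gives a $\rho$-extension
\[
V_0=\bigoplus_{m\geq 0}V_{2m},\qquad V_1=\bigoplus_{m\geq 0}V_{2m+1},
\]
and, as that proof explicitly remarks, its verification rests only on the commutation relation (\ref{V-com}), which is derived in the same manner as (\ref{g-com}) and requires neither algebraic closure nor splitness of $\gH$. This immediately yields a splitting involution in $\mathcal{S}_1(\gs,W,\rho)$, and hence the desired conclusion in the first case.

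In the second case the extension (\ref{fine-V}) uses the finer root subspaces $V_m^{\prime}=\ker(\rho(h)-(m/2)\lambda\,\id_V)$, so it genuinely requires $\gH$ to split. I would therefore pass to the algebraic closure $\bar\gk$. Since the subspaces $\gG_m$ and $V_m$ are kernels of polynomials in $\ad h$ and $\rho(h)$ with coefficients in $\gk$, scalar extension commutes with their formation: $\bar\gG_m=\gG_m\otimes_\gk\bar\gk$ and $\bar V_m=V_m\otimes_\gk\bar\gk$. Consequently the d-pair associated with $\bar\gH$ in $\bar\gG$ is $(\bar\gs,\bar W)=(\gs\otimes_\gk\bar\gk,\,W\otimes_\gk\bar\gk)$, the hypothesis $W=\{0\}$ that triggered the second-d-pair branch is preserved, and Proposition~\ref{split-by-3d} applied over $\bar\gk$ supplies a $\bar\rho$-extension of $(\bar\gs,\bar W)$. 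Thus $\mathcal{S}_1(\bar\gs,\bar W,\bar\rho)\neq 0$, and Proposition~\ref{ext-ext}(1) identifies this space with $\mathcal{S}_1(\gs,W,\rho)\otimes_\gk\bar\gk$; since $\bar\gk/\gk$ is faithfully flat, nontriviality descends to $\mathcal{S}_1(\gs,W,\rho)$. The main obstacle is exactly this bookkeeping, namely verifying that every piece of structure (the root space decompositions, the d-pair dichotomy, the representation $\rho$) commutes with scalar extension; each check reduces to the fact that the objects involved are cut out by polynomial conditions with $\gk$-coefficients, so nothing beyond routine linear algebra is required.
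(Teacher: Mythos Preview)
Your proof is correct and follows essentially the same route as the paper, which dispatches the corollary in one line by citing Proposition~\ref{ext-ext}(1) (together with the implicit use of Proposition~\ref{split-by-3d} over~$\bar\gk$). Your treatment is more explicit: you split off the first-d-pair case and handle it directly over~$\gk$ using the observation, already made in the proof of Proposition~\ref{split-by-3d}, that the extension~(\ref{1st-ext}) needs no algebraic closure; only in the second-d-pair case do you pass to~$\bar\gk$ and descend via Proposition~\ref{ext-ext}(1). This case distinction is a harmless refinement rather than a different argument, and your bookkeeping about scalar extension of the root-space data is exactly what underlies the paper's terse citation.
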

\begin{proof}
Immediately from proposition\,\ref{ext-ext}, (1).
\end{proof}
\begin{thm}\label{C-dis}
Any finite-dimensional Lie algebra over an algebraically closed field of 
characteristic zero can be completely disassembled. 
\end{thm}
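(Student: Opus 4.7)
The plan is to combine the reductions of the preceding subsections with the existence of an $\gs\gl_2$-subalgebra in every simple Lie algebra, then argue by induction on the complexity $l(\gG)$ of a Lie algebra (the dimension of its Levi subalgebra). By proposition \ref{dis-solv}, every solvable algebra admits a complete disassembling, so in view of corollary \ref{dis-reduct}, corollary \ref{dec-ss} and proposition \ref{s-tr} applied to a Levi--Malcev decomposition, the problem reduces to abelian extensions $\gG\oplus_{\rho}V$ with $\gG$ simple. Since the complexity of such an extension equals $l(\gG)=\dim\gG$, it suffices to completely disassemble $\gG\oplus_{\rho}V$ under the inductive hypothesis that the theorem holds for all Lie algebras of strictly smaller complexity; the base case $l=0$ is the solvable one, already settled.

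For the inductive step I would exploit the 3-dimensional $d$-pair machinery. Over an algebraically closed field of characteristic zero any simple Lie algebra $\gG$ contains an $\gs\gl_2$-triple (take, e.g., a nonzero root with respect to a Cartan subalgebra), hence a 3-dimensional simple subalgebra $\gH\subset\gG$. Proposition \ref{split-by-3d} then furnishes a nontrivial $d$-pair $(\gs,W)$ associated with $\gH$ together with a $\rho$-extension $(\gs_{\rho},W_{\rho})$ in $\gG\oplus_{\rho}V$. The Stripping Lemma \ref{str} now gives
\[
\gG\oplus_{\rho}V \;=\; \bigl(\gs_{\rho}\oplus_{\rho'}W_{\rho}\bigr) \;+\; \gA_{\beta},
\]
where $\gA_{\beta}$ is the associated dressing algebra, which disassembles into $\pitchfork$-lieons by proposition \ref{dress}. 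Since $\gs$ is a proper subalgebra of the simple algebra $\gG$ (e.g.\ $x,y\in\gG_2\subset W$ in the second $d$-pair case), one has
\[
l\bigl(\gs_{\rho}\oplus_{\rho'}W_{\rho}\bigr) \;=\; l(\gs) \;\leq\; \dim\gs \;<\; \dim\gG \;=\; l(\gG),
\]
so the inductive hypothesis applies to $\gs_{\rho}\oplus_{\rho'}W_{\rho}$ and completes the disassembling of $\gG\oplus_{\rho}V$.

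The main obstacle is producing a simplifying $d$-pair, and this is exactly where algebraic closure and characteristic zero enter: they guarantee both the existence of an $\gs\gl_2$-triple in every simple $\gG$ and that such an $\gH$ splits, so that the fine decomposition used in proposition \ref{split-by-3d} to handle the case of a trivial first $d$-pair is available. Once the $\gs\gl_2$-subalgebra is secured, everything else is formal: the stripping lemma strictly lowers complexity, the dressing algebras produced along the way are $\pitchfork$-lieon combinations, and the induction terminates at the solvable level, yielding a complete disassembling into $\between$- and $\pitchfork$-lieons.
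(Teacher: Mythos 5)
Your proposal is correct and follows essentially the same route as the paper: reduction to abelian extensions $\gG\oplus_{\rho}V$ of simple algebras, existence of an $\gs\gl_2$-subalgebra (the paper invokes Morozov's lemma here), a $\rho$-extension of one of the two associated $d$-pairs via proposition \ref{split-by-3d}, and the Stripping Lemma combined with the strict drop in complexity $l(\gs)<l(\gG)$ to close the induction. The only cosmetic difference is that you spell out the induction on complexity explicitly, whereas the paper packages it in the boxed reduction criterion of the stripping-procedure subsection.
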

\begin{proof}
By Morozov lemma, any simple Lie algebra $\gG$ over an algebraically closed field $\gk$ of 
characteristic  zero possesses a 3-dimensional subalgebra $\gH$ isomorphic to 
$\gs\gl(2,{\gk})$ (see \cite{Jac}, \cite{Hum}). If the algebra $\gG$ in proposition
\,\ref{split-by-3d} is simple, then the $\rho$--extension of one of the d--pairs
$(\gs, W)$ associated with $\gH$ is simplifying. Indeed, the Stripping Lemma 
applied to this extended d--pair leads to an algebra of the form $\gs\oplus_{\rho'}V'$
(see the proof of proposition\,\ref{split-by-3d})   whose semisimple part coincides
with that of $\gs$. Hence $l(\gs)<l(\gG)$, since $\gs$ is a proper subalgebra of $\gG$.
\end{proof}

\subsection{Simplest algebras.}

\noindent Simple Lie algebras,  which can not be 
directly disassembled by the above methods,  will be discussed in this section. 
\begin{defi}
A simple Lie algebra is called \emph{simplest} if all its proper 
subalgebras are abelian. 
\end{defi}
This definition is justified by the following
\begin{proc}\label{simplest-in}
A simple Lie algebra $\gG$ over a field $\gk$ of characteristic zero contains either 
a simplest  subalgebra, or an subalgebra isomorphic to $\gs\gl(2,\gk)$. 
\end{proc}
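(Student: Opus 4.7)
The plan is induction on $\dim\gG$. If $\gG$ contains a nonzero ad-nilpotent element $e$, then by the Jacobson--Morozov theorem (applicable since $\gG$ is semisimple and $\mathrm{char}\,\gk=0$), $e$ embeds into an $\gs\gl(2,\gk)$-triple, producing the required subalgebra.

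In the complementary case---$\gG$ has no nonzero ad-nilpotent element---I would establish as a key lemma that \emph{every solvable subalgebra $\mathfrak{t}\subseteq\gG$ is abelian}. To prove it, I pick $y\in[\mathfrak{t},\mathfrak{t}]$: Lie's theorem over $\bar\gk$ forces $\ad_{\mathfrak{t}}(y)$ to be nilpotent, while the abstract Jordan decomposition in the semisimple algebra $\gG$, combined with the no-nilpotents hypothesis, forces $\ad_{\gG}(y)$ to be semisimple. Restricting the latter to the invariant subspace $\mathfrak{t}$ yields an operator that is simultaneously semisimple and nilpotent, hence zero; so $[\mathfrak{t},\mathfrak{t}]\subseteq Z(\mathfrak{t})$ and $\mathfrak{t}$ is two-step nilpotent. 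If $\mathfrak{t}$ were non-abelian I would choose $x,y\in\mathfrak{t}$ with $z:=[x,y]\neq 0$, verify that $x,y,z$ are linearly independent, and exhibit a Heisenberg subalgebra $\mathrm{span}(x,y,z)\cong\pitchfork$ in $\gG$. Since $z$ is ad-semisimple in $\gG$ and commutes with $x$, the centralizer $\gG_{0}:=\ker\ad_{\gG}(z)$ is invariant under the semisimple operator $\ad_{\gG}(x)$; but $\ad(x)|_{\gG_{0}}$ sends $y\mapsto z$ and $z\mapsto 0$, placing the nonzero vector $z$ both in the kernel and in the image---impossible for a semisimple endomorphism.

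With the lemma in hand the induction closes cleanly. If $\gG$ contains a proper simple subalgebra $\gH$, the inductive hypothesis applied to $\gH$ supplies either a copy of $\gs\gl(2,\gk)$ or a simplest subalgebra, and in either case that subalgebra sits inside $\gG$. Otherwise $\gG$ has no proper simple subalgebra, and I claim $\gG$ itself is then simplest: for any proper subalgebra $\mathfrak{t}$, Levi--Malcev writes $\mathfrak{t}$ as the semidirect sum of a semisimple Levi factor and a solvable radical; the simple summands of the Levi factor would be proper simple subalgebras of $\gG$, so the Levi factor vanishes, $\mathfrak{t}$ is solvable, and by the key lemma $\mathfrak{t}$ is abelian.

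The principal obstacle is the key lemma itself: one must combine the \emph{intrinsic} nilpotency of $\ad_{\mathfrak{t}}(y)$ delivered by Lie's theorem with the \emph{ambient} semisimplicity of $\ad_{\gG}(y)$ delivered by the Jordan decomposition in $\gG$, and then convert the resulting Heisenberg inside $\gG$ into the kernel-meets-image contradiction; the Levi--Malcev step that closes the induction is routine bookkeeping.
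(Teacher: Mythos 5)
Your proof is correct and is essentially the paper's argument in contrapositive form: the paper takes a proper non-abelian subalgebra of a non-simplest $\gG$, splits on whether its Levi factor is trivial, and in the solvable case uses Lie's theorem together with the abstract Jordan decomposition to manufacture a nonzero nilpotent element of $\gG$ before invoking Morozov's lemma --- exactly the ingredients of your Case 1 and your key lemma. Your write-up is in fact slightly more careful than the paper's, since it explicitly treats the two-step-nilpotent situation $[\mathfrak{t},\mathfrak{t}]\subseteq Z(\mathfrak{t})$ (where derived-algebra elements act trivially on $\mathfrak{t}$ and one must pass to $\ad x$ and the kernel-meets-image contradiction), a point the paper glosses over.
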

\begin{proof}
If $\gG$ is not simplest, then it contains a proper non-abelian subalgebra 
$\gH$. If the semisimple part $\gH$ is nontrivial, then $\gH$ contains a proper
simple subalgebra, and one gets the desired result by obvious induction arguments. 
If, on the contrary, the semisimple part is trivial, then $\gH$ is a nonabelian
solvable algebra and, therefore, contains a nontrivial nilpotent element 
$g$. Hence the endomorphism $ad_{\gG}g$ of $|\gG|$ has a nontrivial 
nilpotent part. In other words, $g$, considered as an element of $\gG$, has 
a nontrivial nilpotent part $g_n$ which, according to a well-known property 
of semisimple algebras, belongs to $\gG$. Thus $\gG$ possesses a nontrivial 
nilpotent element. By Morozov's lemma, such an element is contained in a 
3-dimensional subalgebra of $\gG$ isomorphic to $\gs\gl(2,\gk)$. 
\end{proof}

\begin{cor}\label{semisimple}
All elements of a simplest algebra are semisimple. 
\end{cor}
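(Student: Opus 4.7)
The plan is to deduce this directly from Proposition \ref{simplest-in} by re-running its Morozov argument, contrapositively: a non-semisimple element would manufacture a non-abelian proper subalgebra, contradicting the definition of a simplest algebra.

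More precisely, I would proceed as follows. Let $\gG$ be simplest and take an arbitrary $g\in\gG$. Since $\gG$ is simple and $\mathrm{char}\,\gk=0$, $\gG$ is semisimple, so the abstract Jordan decomposition is available inside $\gG$ itself: there exist a semisimple $g_s\in\gG$ and a nilpotent $g_n\in\gG$ with $g=g_s+g_n$ and $[g_s,g_n]=0$ (cf.\ the last lines of the proof of Proposition \ref{simplest-in}, which invoke this property of semisimple algebras). I want to rule out $g_n\neq 0$. If $g_n$ were a nontrivial nilpotent element of $\gG$, Morozov's lemma would embed $g_n$ into a 3-dimensional subalgebra $\gH\subset\gG$ isomorphic to $\gs\gl(2,\gk)$; since $\gH$ is non-abelian, it cannot be a proper subalgebra of a simplest algebra, forcing $\gG=\gH\cong\gs\gl(2,\gk)$.

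The remaining step is to observe that $\gs\gl(2,\gk)$ itself is not simplest: the span of the standard generators $h,x$ with $[h,x]=2x$ is a proper 2-dimensional non-abelian subalgebra (isomorphic to $\between$). This contradiction shows $g_n=0$, hence $g=g_s$ is semisimple. There is no real obstacle here; the whole content is packaged in the Jordan decomposition for semisimple Lie algebras together with Morozov's lemma, both of which were already used in the proof of Proposition \ref{simplest-in}, so the corollary is essentially a repackaging of that argument.
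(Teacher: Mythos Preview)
Your proof is correct and follows essentially the same route as the paper: Jordan decomposition in a semisimple algebra produces a nontrivial nilpotent element, Morozov's lemma embeds it in a copy of $\gs\gl(2,\gk)$, and the Borel subalgebra $\mathrm{span}\{h,x\}$ furnishes the forbidden non-abelian proper subalgebra. The only cosmetic difference is that the paper skips your intermediate case split ``either $\gH$ is proper or $\gG=\gH$'' and goes straight to the $2$-dimensional subalgebra, which is proper in $\gG$ regardless; both phrasings reach the same contradiction.
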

\begin{proof}
Assume that $\gG$ has a nonsemisimple element. Then the nilpotent part
of such an element is nontrivial and, by a well-known property of semisimple
Lie algebras, belongs to $\gG$. Hence $\gG$ has a nontrivial nilpotent
element. In its turn this element is contained in a $3$-dimensional 
subalgebra of $\gG$ which is isomorphic to $\gs\gl(2,k)$ (see the proof 
of the above proposition). But $\gs\gl(2,k)$ and, therefore, $\gG$ contains  
$2$-dimensional nonabelian subalgebras in contradiction with the fact that
$\gG$ is a simplest algebra. 
\end{proof}

Existence and diversity of simplest algebras depend exclusively on 
arithmetic properties of the ground field $\gk$. For instance, there are no 
simplest algebras over algebraically closed fields. Indeed, any simple 
algebra over such a field $\gk$ contains a $3$-dimensional simple 
subalgebra isomorphic to $\gs\gl(2,\gk)$, which, in its turn, contain   
proper $2$-dimensional non-abelian subalgebras. On the contrary, there is 
only one (up to isomorphism) simplest algebra over $\R$, namely, 
$\gs\go(3,\R)$ (see proposition\,\ref{R-simp} below). 

Now we shall collect some elementary properties of simplest algebras. Denote
by $C_x$ the centralizer of an element $x\in\gG$. 
. 

\begin{proc}\label{solid property}
Let $\gG$ be a simplest Lie algebra and $0\neq x\in \gG$. Then 
\begin{enumerate}
\item $C_x$ is abelian;
\item if\, $y,z\in\gG, \,y\neq 0$, and
$[x,y]=[z,y]=0$, then $[x,z]=0$; 
\item if\, $0\neq y\in\gG$, then either $C_x=C_y$, or $C_x\cap
C_y=\{0\}$; 
\item $C_x$ is a Cartan subalgebra of $\gG$, i.e., $y\in C_x$ if\, $[x,y]\in 
C_x$;
\item all nonzero elements of $\gG$ are regular;
\item $[\gG,C_x]\cap C_x=\{0\}$.
\end{enumerate}
\end{proc}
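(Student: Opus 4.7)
The plan is to prove the six assertions in sequence, leveraging the fact that any proper subalgebra of a simplest algebra is abelian together with Corollary \ref{semisimple} (all elements are semisimple).

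For (1), I would first note that since $\gG$ is simple its center is trivial, so $x\notin Z(\gG)$; this forces $C_x$ to be a \emph{proper} subalgebra of $\gG$. By the definition of simplest, $C_x$ is therefore abelian. Assertion (2) is then immediate: the hypotheses put both $x$ and $z$ in $C_y$, which is abelian by (1) since $y\neq 0$. For (3), I would take a nonzero $z\in C_x\cap C_y$ and, for any $w\in C_x$, argue as follows: $[w,z]=0$ because $C_x$ is abelian and $[y,z]=0$ by hypothesis, so (2) applied with $z$ in the role of the ``$y$-element'' yields $[w,y]=0$, i.e.\ $w\in C_y$. Hence $C_x\subseteq C_y$, and symmetry finishes it.

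Assertion (4) is where the main work lies, and is the step I expect to be the main obstacle. I would invoke Corollary \ref{semisimple}: $x$ is semisimple, so the endomorphism $\ad_x$ of $|\gG|$ is a semisimple linear operator. This gives an $\ad_x$-invariant splitting $|\gG|=\ker\ad_x\oplus\mathrm{im}\,\ad_x = C_x\oplus \mathrm{im}\,\ad_x$, on which $\ad_x$ acts as an isomorphism on the second summand. If $[x,y]\in C_x$, write $y=y_0+y_1$ accordingly; then $[x,y]=[x,y_1]$ lies in $\mathrm{im}\,\ad_x\cap C_x=\{0\}$, so $y=y_0\in C_x$. (The semisimplicity input is essential: without it $\ad_x$ could have a nontrivial nilpotent part and the argument would fail.)

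For (5), (4) shows that $C_x$ is self-normalizing; being abelian it is nilpotent, hence a Cartan subalgebra of $\gG$. Since in characteristic zero all Cartan subalgebras have the common dimension equal to the rank, $\dim C_x$ is the minimal possible, so every nonzero $x$ is regular. Finally for (6), suppose $0\neq z=[g,c]\in C_x$ with $c\in C_x$; then $c\neq 0$ (else $z=0$), so by (3) applied to $x$ and $c$ we have $C_c=C_x$, giving $[c,g]=-z\in C_c$. Assertion (4) (with $x$ replaced by $c$) then forces $g\in C_c=C_x$, and abelianness of $C_x$ yields $[g,c]=0$, a contradiction. Thus $[\gG,C_x]\cap C_x=\{0\}$.
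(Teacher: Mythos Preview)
Your proof is correct. Items (1)--(3), (5), and (6) match the paper's arguments essentially verbatim. The only genuine divergence is in (4): the paper argues that $C_x$ is an ideal in its normalizer $N_x$, so if $N_x=\gG$ then $C_x$ would be a nonzero proper ideal of the simple algebra $\gG$; hence $N_x$ is proper, therefore abelian (by the simplest hypothesis), and since $x\in N_x$ this forces $N_x\subseteq C_x$. Your route instead invokes Corollary~\ref{semisimple} to get the semisimple splitting $|\gG|=\ker\ad_x\oplus\mathrm{im}\,\ad_x$ and reads off the result linear-algebraically. Both are valid; the paper's argument is more self-contained (it uses only simplicity and the defining property of simplest algebras, not the semisimplicity corollary), while yours is perhaps more transparent once Corollary~\ref{semisimple} is in hand.
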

\begin{proof}

(1) Since $0\neq x\in C_x$, the center of $C_x$ is nontrivial. But  the center
of $\gG$ is trivial. Hence $C_x$ does not coincide with $\gG$, i.e., is a 
proper subalgebra of $\gG$. As such it is abelian. 

\noindent(2) Obviously, $x$ and $z$ belong to $C_y$, which, by (1), is abelian. 

\noindent(3) If $0\neq y\in C_x$, then, according to (2), any $z\in C_y$ 
belongs to $C_x$. 

\noindent(4)  $C_x$ is an ideal in its normalizer $N_x$. Since $\gG$ is simple 
$N_x$ is a proper subalgebra of $\gG$ and, as such, must be abelian. 
So, $N_x\subset C_x$ and hence $N_x=C_x$. 

\noindent(5) Directly from (4). 

\noindent(6) We have to prove that $[y,C_x]\cap C_x=\{0\}, \,\forall y\in 
\gG$. Assuming the contrary consider an element $z\in C_x$ such that $0\neq 
[y,z]\in C_x$. In view of (3) $C_x=C_z\,\Rightarrow\,[y,z]\in C_z$. By (4), 
this implies  that $y\in C_z$ and, therefore, $[y,z]=0$ in contradiction 
with the assumption. 
\end{proof}

To proceed on we need some information about operators of the adjoint 
representation of a simplest algebra. 

\medskip
\subsection{On the adjoint representation of simplest algebras.}

First, we mention without proof the following elementary facts.
\begin{lem}\label{sk-sym}
Let $V$ be a finite-dimensional vector space and $b(\cdot,\cdot)$ a 
nondegenerate, symmetric bilinear form on $V$. If $A:V\rightarrow V$ is a 
linear, skew-symmetric with respect to $b$ operator, i.e., 
$b(Au,v)+b(u,Av)=0, \,u,v\in V$, then the minimal polynomial of $A$ is of 
the form $t^r\varphi(t^2), \varphi(0)\neq0, \,r\geq 0$. If $A$ is 
semisimple and $\varphi=\varphi_1^{n_1}\cdot,\dots,\cdot \varphi_m^{n_m}$ 
is the canonical factorization of the polynomial $\varphi$ into irreducible 
and relatively prime factors, then $r=0,1$ and $n_1=\dots =n_m=1$.  
\end{lem}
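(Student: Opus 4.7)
The plan is to exploit the fact that $A^* = -A$ (where $A^*$ denotes the $b$-adjoint) algebraically at the level of polynomial identities.

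First I would observe that since $b$ is nondegenerate, the $b$-adjoint map $B \mapsto B^*$ is a well-defined anti-involution on $\End V$. From $A^* = -A$ one gets inductively $(A^n)^* = (-A)^n$, and by linearity $p(A)^* = p(-A)$ for every $p \in \gk[t]$. Applying this to the minimal polynomial $\mu(t)$ of $A$, the equality $\mu(A) = 0$ yields $\mu(-A) = 0$, so $\mu(t) \mid \mu(-t)$. Comparing degrees and the monic normalization (the leading coefficient of $\mu(-t)$ is $(-1)^{\deg\mu}$) forces $\mu(-t) = \pm\mu(t)$.

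Next I would factor $\mu(t) = t^r\psi(t)$ with $\psi(0) \neq 0$. The relation $\mu(-t) = \pm\mu(t)$ becomes $(-1)^r\psi(-t) = \pm\psi(t)$, so $\psi(-t) = \pm\psi(t)$. The odd alternative $\psi(-t) = -\psi(t)$ would give $\psi(0) = 0$, contradicting $\psi(0) \neq 0$; hence $\psi$ is an even polynomial and can be written $\psi(t) = \varphi(t^2)$ with $\varphi(0) = \psi(0) \neq 0$. This establishes the first assertion.

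For the semisimple case I would use the standard fact (valid in characteristic zero) that semisimplicity of $A$ is equivalent to the minimal polynomial $\mu$ being squarefree over the algebraic closure $\bar{\gk}$. Since $t^r$ contributes the root $0$ with multiplicity $r$, squarefreeness forces $r \in \{0,1\}$. Then $\varphi(t^2)$ must itself be squarefree; writing $\varphi = \varphi_1^{n_1}\cdots\varphi_m^{n_m}$ as the canonical factorization into distinct irreducible factors (each automatically separable in characteristic zero), squarefreeness of $\varphi(t^2)$ combined with $\varphi(0) \neq 0$ (so that the substitution $t \mapsto t^2$ does not create coincidences at $0$, and distinct roots $\alpha_i \neq \alpha_j$ produce distinct $\pm\sqrt{\alpha_i}, \pm\sqrt{\alpha_j}$) forces each $n_i = 1$.

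No step presents a genuine obstacle; the argument is a routine exercise in adjoint/polynomial manipulation. The only point requiring a little care is the last step, where one must verify that squarefreeness of $\varphi(t^2)$ is equivalent to squarefreeness of $\varphi$ together with $\varphi(0) \neq 0$ — this is immediate because the map $\alpha \mapsto \pm\sqrt\alpha$ from roots of $\varphi$ to roots of $\varphi(t^2)$ is injective precisely when $0$ is not a root of $\varphi$.
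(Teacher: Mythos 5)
Your proof is correct. Note that the paper states this lemma explicitly \emph{without} proof (``we mention without proof the following elementary facts''), so there is no argument to compare against; your route via the anti-involution $B\mapsto B^*$, the identity $p(A)^*=p(-A)$, and the resulting relation $\mu(-t)=\pm\mu(t)$ is the standard one and fills the gap cleanly. Two minor remarks: the evenness step uses that the characteristic is not $2$ (harmless here, since the paper works in characteristic zero), and in the last step the hypothesis $\varphi(0)\neq 0$ is not actually needed for the direction you use --- a repeated irreducible factor of $\varphi$ already forces a repeated irreducible factor of $\varphi(t^2)$, contradicting squarefreeness of the minimal polynomial of a semisimple operator.
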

\begin{cor}\label{sk-sym-sim}
The assertion of the above lemma is valid for operators of the adjoint 
representation of a simplest algebra and  in this case $r=1$. 
\end{cor}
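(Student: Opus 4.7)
The plan is to apply the preceding Lemma~\ref{sk-sym} to the operator $A = \ad_{\gG}x$ for a nonzero $x$ in a simplest algebra $\gG$, using the Killing form as the bilinear form $b$. The three ingredients I need are: (a) a nondegenerate symmetric bilinear form on $|\gG|$ with respect to which $\ad x$ is skew-symmetric, (b) semisimplicity of $\ad x$ so that the finer conclusion of the lemma applies, and (c) a verification that $0$ actually is an eigenvalue so that $r \geq 1$ rather than $r = 0$.

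For (a), I would invoke Cartan's criterion: a simplest algebra is by definition simple, hence semisimple, so its Killing form $K(u,v) = \tr(\ad u \circ \ad v)$ is symmetric and nondegenerate on $|\gG|$. The invariance identity $K([x,u],v) + K(u,[x,v]) = 0$, which follows immediately from the cyclic property of the trace, shows that $\ad x$ is skew-symmetric with respect to $K$. For (b), Corollary~\ref{semisimple} asserts that every element of a simplest algebra is semisimple; since semisimplicity of $x$ in a semisimple Lie algebra is equivalent to semisimplicity of the linear operator $\ad x$ on $|\gG|$, this gives the needed semisimplicity. At this point, Lemma~\ref{sk-sym} applied to $V = |\gG|$, $b = K$, and $A = \ad x$ yields that the minimal polynomial of $\ad x$ has the form $t^r\varphi(t^2)$ with $\varphi(0)\neq 0$, that $r \in \{0,1\}$, and that the irreducible factors of $\varphi$ occur with multiplicity one.

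For (c), note that $\ad x(x) = [x,x] = 0$, so $x \in \ker \ad x$. Since $x \neq 0$, the kernel is nontrivial, hence $0$ is an eigenvalue of $\ad x$, which forces $r \geq 1$. Combined with $r \leq 1$ from the lemma, this gives $r = 1$, completing the proof.

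The only potentially delicate point is the implication "$x$ semisimple as an element of $\gG$ $\Longrightarrow$ $\ad x$ semisimple as an operator on $|\gG|$". For a semisimple Lie algebra this is standard (it follows from the abstract Jordan decomposition, which is preserved by the adjoint representation, see e.g.\ \cite{Hum}), so I would simply cite it rather than reprove it. Everything else is a direct application of the already-established Lemma~\ref{sk-sym} and Corollary~\ref{semisimple}.
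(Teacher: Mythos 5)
Your proof is correct and follows essentially the same route as the paper's: take $b$ to be the Killing form (nondegenerate since a simplest algebra is simple), note that adjoint operators are skew-symmetric with respect to it and semisimple by Corollary~\ref{semisimple}, and conclude $r=1$ from the nontriviality of $\ker \ad x$. Your explicit remarks on the element-versus-operator semisimplicity and on $x\in\ker\ad x$ only spell out what the paper leaves implicit.
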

\begin{proof}
Recall that operators of the adjoint representation of a Lie algebra $\gG$ 
are skew-symmetric with respect to the Killing form, which is nondegenerate 
for a semisimple $\gG$. Moreover, according to corollary \ref{semisimple}, 
these operators are semisimple. So, it suffices to take the Killing form 
for $b$ and to observe that the kernel of an adjoint representation operator
is nontrivial. 
\end{proof}
 
It is not difficult to see that if $g(\tau)$ is an irreducible polynomial, 
then the polynomial $h(t)=g(t^2)$ is either irreducible, or 
$h(t)=\psi(t)\psi(-t)$ with irreducible and relatively prime $\psi(t)$ and 
$\psi(-t)$. Hence, by lemma \ref{sk-sym}, the minimal polynomial 
$f(t)$ of a semisimple skew-adjoint operator $A$ is of the form 
\begin{equation}\label{s-factor}
F(t)=t^{\epsilon}f_1(t^2)\cdot\dots\cdot f_k(t^2)\psi_1(t)\psi_1(-t)\cdots\psi_l(t)\psi_l(-t), 
\quad \epsilon=0,1,
\end{equation}  
with relatively prime and irreducible factors. Under the hypothesis of 
lemma \ref{sk-sym} with $\epsilon=1$ we have the following direct sum 
decomposition 
\begin{equation}\label{sum-dir} 
V=\ker\,A\oplus \ker\,f_1(A^2)\oplus\ldots\oplus \ker\,f_k(A^2)\oplus 
\ker\,g_1(A^2) \oplus\ldots\oplus \ker\,g_l(A^2) 
\end{equation}  
with $g_i(t^2)=\psi_i(t)\psi_i(-t)$.
\begin{lem}\label{sk-sym}
Subspaces in decomposition (\ref{sum-dir}) are mutually orthogonal with 
respect to the form $b$.
\end{lem}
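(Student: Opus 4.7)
My plan is to combine the skew-adjointness of $A$ with a Bezout identity, after comparing factors under $t\mapsto -t$.

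First I would establish the identity $b(r(A)u,v) = b(u, r(-A)v)$ for every polynomial $r \in \gk[t]$ and all $u,v \in V$, which follows by induction on $\deg r$ from the skew-adjointness relation $b(Au,v) = -b(u,Av)$. Equivalently, under $b$ the adjoint of $r(A)$ is $r(-A)$.

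Next I would observe that each irreducible factor appearing in the factorization (\ref{s-factor}) of the minimal polynomial $F$ has a definite parity: the factor $t$ is odd, while every factor of the form $f_i(t^2)$ or $g_j(t^2)$ is even. Consequently, passing from a factor $q(t)$ to $q(-t)$ changes it at most by a sign. Combined with the facts that $f_i(0)\neq 0$ (from lemma \ref{sk-sym}) and $g_j(0)=\psi_j(0)^2\neq 0$ (because $\psi_j(t)$ is irreducible and distinct from $t$, forcing $\psi_j(0)\neq 0$), this shows that any two distinct factors $p(t), q(t)$ of $F$ remain coprime after replacing $q(t)$ by $q(-t)$.

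Now I would finish by the standard Bezout trick. Given distinct factors $p, q$ of $F$ and vectors $u \in \ker p(A)$, $v \in \ker q(A)$, write $\alpha(t) p(t) + \beta(t) q(-t) = 1$ for suitable $\alpha,\beta\in \gk[t]$. Applying this to $u$ and using $p(A)u=0$ gives $u = \beta(A)\, q(-A)\, u$, whence
\[
b(u,v) = b\bigl(\beta(A)\,q(-A)\,u,\, v\bigr) = b\bigl(u,\, \beta(-A)\,q(A)\,v\bigr) = 0,
\]
since $q(A)v = 0$. This establishes the mutual $b$-orthogonality of the summands in (\ref{sum-dir}).

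No serious obstacle is anticipated; the only delicate point is the bookkeeping showing coprimality survives $t \mapsto -t$, which is built into the parity structure of the factorization (\ref{s-factor}).
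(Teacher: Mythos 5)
Your proof is correct and follows essentially the same route as the paper: a Bezout identity between coprime factors combined with the adjoint rule $b(r(A)u,v)=b(u,r(-A)v)$. The only cosmetic difference is that you transfer the sign change onto the factor $q$ (using the parity of the factors in (\ref{s-factor}) to preserve coprimality), whereas the paper splits $\alpha(t)$ into even and odd parts and exploits that $\varphi(A^2)$ is self-adjoint; the underlying mechanism is identical.
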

\begin{proof}
This is a direct consequence of the fact that 
$b(\ker\,\varphi(A^2),\ker\,\phi(A))=0$, if polynomials $\varphi(t^2)$ and 
$\phi(t)$ are relatively prime. To prove this assertion, consider the identity
$$
\varphi(t^2)\alpha(t)+\phi(t)\beta(t)=1
$$
with $\alpha(t), \,\beta(t)$ being some polynomials . Let $u\in 
\ker\,\varphi(A^2), \,v\in\ker\,\phi(A)$ and 
$\alpha(t)=\alpha_0(t^2)+t\alpha_1(t^2)$. Then 
\begin{align*}
0=b([(\alpha_0(A^2)-A\alpha_1(A^2))\varphi(A^2)]u,v)=
b(u,[(\alpha_0(A^2)+A\alpha_1(A^2))\varphi(A^2)]v)=\\b(u,[\alpha(A)\varphi(A^2)]v)=
b(u,v-\phi(A)\beta(A)v)=b(u,v).
\end{align*}
\end{proof} 

Finally, we shall fix some properties of operators of the adjoint representation of a simplest 
Lie algebra $\gG$ (over $\gk$). Below $(\cdot,\cdot)$ stands for the 
Killing form on $\gG$ and  $\gk_f$ for the splitting field of a 
polynomial $f\in \gk[t]$. 
\begin{proc}\label{simpl-ad}
Let $\gG$ be a simplest Lie algebra, $0\neq x\in\gG$ and $A=ad\,x$. Then
\begin{enumerate}
\item the minimal polynomial $F(t)$ of $A$ has the form (\ref{s-factor}) with
$\epsilon=1$ and decomposition (\ref{sum-dir}) with Killing orthogonal 
summands for $V=|\gG|$ holds;
\item $C_x=\ker\,A$; 
\item nonzero roots of $F(t)$ do not belong to $\gk$;
\item the lattice (in $\gk_F$) generated by roots of $F(t)$ coincides with that
of $f_i(t)$ and with that of $\varphi_j(t), \,i=1,\ldots,k, 
\;j=1,\ldots,l$; 
\item $\gk_F=\gk_{f_i}=\gk_{\varphi_j}, \,i=1,\ldots,k, \;j=1,\ldots,l$;
\end{enumerate}
\end{proc}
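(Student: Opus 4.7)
The plan is to address the five items in order, with (1) and (2) being immediate consequences of earlier results and the substantive work going into (3)--(5).

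First I would prove (1) by combining Corollary \ref{sk-sym-sim} with Lemma \ref{sk-sym}. Corollary \ref{sk-sym-sim} applied to $A=\ad x$ already delivers the factorization (\ref{s-factor}) with $\epsilon=1$, since an operator of the adjoint representation of a simplest algebra is semisimple (Corollary \ref{semisimple}), skew-adjoint with respect to the Killing form, and has nontrivial kernel (it annihilates $x$). The primary decomposition of $|\gG|$ corresponding to the coprime factors of $F(A)$ then yields (\ref{sum-dir}), and Lemma \ref{sk-sym} gives the mutual Killing-orthogonality of the summands. Item (2) is the tautology $\ker A=\{y\in\gG:[x,y]=0\}=C_x$.

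For (3) I would argue by contradiction: if $0\neq\lambda\in\gk$ were a root of $F(t)$, there would exist $0\neq y\in|\gG|$ with $[x,y]=\lambda y$, making $\mathrm{span}_\gk\{x,y\}$ a two-dimensional nonabelian subalgebra of $\gG$. Since $\gG$ is simple (hence of dimension at least three), this subalgebra is proper, contradicting the defining property of a simplest algebra.

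For (4) and (5) I would extend scalars to the splitting field $\gk_F$ and work with the weight decomposition $\gG_F=\bigoplus_\mu \gG_{F,\mu}$ of $A$ on $\gG_F=\gG\otimes_\gk\gk_F$, together with the Galois equivariance coming from descent to $\gk$. Nonzero roots partition into $\mathrm{Gal}(\gk_F/\gk)$-orbits, one orbit for each irreducible factor $f_i(t^2)$ and one or two for each pair $\psi_j(t),\psi_j(-t)$. Using $[\gG_{F,\mu},\gG_{F,\nu}]\subseteq \gG_{F,\mu+\nu}$ (relation (\ref{comut})), any proper Galois-stable sub-lattice $L'\subsetneq L$ of the full weight lattice $L$ that meets the root set would yield a Galois-stable subalgebra $\bigoplus_{\mu\in L'\cap\mathrm{Roots}}\gG_{F,\mu}\oplus \ker A$ of $\gG_F$ descending to a proper nonabelian subalgebra of $\gG$, contradicting the simplest property; hence every Galois orbit of nonzero roots generates the full lattice $L$, which proves (4). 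Part (5) then follows formally: if orbits $O$ and $O'$ generate the same $\Z$-lattice, then $O\subseteq \Z\text{-span}(O')\subseteq \gk(O')$, whence $\gk(O)\subseteq \gk(O')$ and by symmetry equality holds, so all the fields $\gk_{f_i}$ and $\gk_{\psi_j}$ coincide with $\gk_F$. The main obstacle I anticipate is rigorously carrying out the Galois-descent argument for (4)---one must verify that the constructed subalgebra is truly proper and nonabelian for each choice of $L'\subsetneq L$, which requires careful control over the interaction of $\ker A=C_x$ with the chosen weight spaces.
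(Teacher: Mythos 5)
Your proof is correct and follows essentially the same route as the paper: items (1)--(3) are handled identically (corollary of the skew-symmetry lemma, the tautology $C_x=\ker\ad x$, and the two-dimensional nonabelian subalgebra spanned by $x$ and an eigenvector), and your Galois-stable-sublattice formulation of (4) is just a repackaging of the paper's argument, which takes the subalgebra generated by $\ker h(A)$ and $C_x$, observes via (\ref{comut}) that its weights lie in the lattice $L_h$, and concludes $L_h=L$ from the fact that a nonabelian subalgebra of a simplest algebra cannot be proper. The descent step you flag as a potential obstacle is harmless here, since the paper's version of the subalgebra is already defined over $\gk$.
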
 
\begin{proof}
(1) Directly from corollary\,\ref{sk-sym-sim} and lemma\,\ref{sk-sym}.

(2) Directly from proposition\,\ref{solid property}.

(3) Assuming the contrary we observe that $x$ and an eigenvector 
$y\in \gG$  corresponding to a nonzero eigenvalue \, of $A$ span a 
$2$-dimensional nonabelian subalgebra of 
$\gG$. Being simplest  $\gG$ must coincide with this subalgebra in 
contradiction with simplicity of $\gG$.

(4) Let $h(t)$ be one of polynomials $f_i$'s, $\varphi_j$'s and 
$\lambda_1,\ldots,\lambda_m\in\gk_h$ be its roots. Consider the subalgebra 
$\gH_h$ of $\gG$ generated by $W=\ker\,h(A)$ and $C_x$. Obviously, $\gH$ is nonabelian 
and hence $\gG=\gH_h$. On the other hand, in view of (\ref{comut}) applied to 
the $\gk_F$-extension of $\gG$, eigenvalues of $A\mid_{\mid\gH\mid}$ belong 
to the lattice $L_h$ in $\gk_F$ generated by $\lambda_1,\ldots,\lambda_m$. 
Lattices $L_{f_i}$'s and $L_{\varphi_j}$'s must coincide, since, otherwise, one
of the subalgebras would be proper in $\gG$.

(5) Directly from (4).
\end{proof}

\subsection{Complete disassembling of real Lie algebras.} 

Now we are ready to prove that any Lie algebra over $\R$ can be 
completely disassembled. The stripping procedure in this case is based 
on the following fact.
\begin{proc}\label{R-simp}
Simplest Lie algebras over $\R$ are isomorphic to $\gs\go(3,\R)$.
\end{proc}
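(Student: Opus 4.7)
The plan is to show in four steps that $\dim\gG=3$ and $\gG\cong\gs\go(3,\R)$. Throughout, I will work in the complexification $\gG\otimes_\R\C$, exploiting the facts that every $\ad x$ ($x\neq 0$) is semisimple (Corollary \ref{semisimple}) and has no nonzero real eigenvalues (Proposition \ref{simpl-ad}(3)).

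First I would prove that every nonzero eigenvalue of $\ad x$ is purely imaginary. Let $y\in\gG\otimes\C$ be an eigenvector of $\ad x$ for the eigenvalue $\lambda=a+ib$, and write $y=p+iq$ with $p,q\in\gG$; since $\ad x$ is real, $\bar y$ is an eigenvector for $\bar\lambda=a-ib$, so $[y,\bar y]=-2i[p,q]$ is an $\ad x$-eigenvector for $\lambda+\bar\lambda=2a$. If $a\neq 0$, then $2a$ is a nonzero real eigenvalue unless $[y,\bar y]=0$; but then $[p,q]=0$, and one checks that $x,p,q$ are $\R$-linearly independent (the operator $\ad x$ acts on $\mathrm{span}_\R(p,q)$ via a matrix with nonzero determinant $a^2+b^2$, so $\mathrm{span}_\R(p,q)\cap C_x=\{0\}$). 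Thus $\mathrm{span}_\R(x,p,q)$ is a 3-dimensional solvable subalgebra with relations $[x,p]=ap-bq$, $[x,q]=bp+aq$, $[p,q]=0$. It is non-abelian (since $b\neq 0$), and is either proper in $\gG$---contradicting the hypothesis that proper subalgebras of a simplest algebra are abelian---or equal to $\gG$, contradicting simplicity. Hence $a=0$.

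Next I would establish that $\dim C_x=1$. Assume $\dim C_x\geq 2$ and pick $y\in C_x\setminus\R x$; by Proposition \ref{solid property}(3), $C_y=C_x$, and the commuting semisimple operators $\ad x,\ad y$ simultaneously diagonalize $\gG\otimes\C$. Joint weights of the form $(\mu,0)$ with $\mu\neq 0$ are excluded (such a weight vector would lie in $C_y\otimes\C=C_x\otimes\C$, on which $\ad x$ vanishes), and symmetrically for $(0,\nu)$, $\nu\neq 0$; combined with Step 1, every nonzero joint weight is of the form $(ib_x,ib_y)$ with $b_x,b_y\in\R^\times$. For such a joint eigenvector $y_+$ the real element $z=b_y x-b_x y$ is nonzero and annihilates $y_+$, so $y_+\in C_z\otimes\C$; but $x\in C_z$ forces $C_z=C_x$ by Proposition \ref{solid property}(3), whence $y_+\in C_x\otimes\C$, contradicting $\ad x(y_+)=ib_x y_+\neq 0$. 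Hence $\gG=C_x$ is abelian, contradicting simplicity. With $C_x=\R x$ established, pick an eigenvector $y_+=p+iq$ of $\ad x$ for an eigenvalue $ib$, $b>0$; then $[p,q]\in C_x=\R x$, so $\mathrm{span}_\R(x,p,q)$ is a 3-dimensional non-abelian subalgebra, which the simplestness argument used in Step 1 forces to be all of $\gG$.

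Finally, the structure constants $[x,p]=-bq$, $[x,q]=bp$, $[p,q]=\gamma x$ ($\gamma\in\R$) determine $\gG$ up to isomorphism. The value $\gamma=0$ makes $\mathrm{span}_\R(p,q)$ an abelian ideal, ruling out simplicity; the value $\gamma>0$ is excluded by computing the characteristic polynomial of $\ad u$ for $u=\alpha x+\beta p+\delta q$, which equals $t(t^2+\alpha^2 b^2-\gamma b(\beta^2+\delta^2))$---setting $u=p$ gives nonzero real eigenvalues $\pm\sqrt{\gamma b}$, contradicting Proposition \ref{simpl-ad}(3). Hence $\gamma<0$, and an explicit rescaling of the basis yields the standard commutation relations of $\gs\go(3,\R)$. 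The main obstacle is the rank-one step: the crux is to combine the rigidity of Proposition \ref{solid property}(3) (linking the centralizers $C_z$ of all nonzero elements of a Cartan subalgebra) with the purely imaginary spectrum from Step 1 to force every joint eigenvector of $\ad x$ and $\ad y$ into $C_x\otimes\C$.
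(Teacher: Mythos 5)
Your proof is correct, but it is organized quite differently from the paper's. Where you begin by re-deriving the fact that the nonzero spectrum of $\ad x$ is purely imaginary (via the bracket $[y,\bar y]$ of conjugate eigenvectors), the paper simply reads this off from the form $t(t^2+\lambda_1^2)\cdots(t^2+\lambda_k^2)$ of the minimal polynomial, which it has already established from proposition\,\ref{simpl-ad}\,(3) and the skew-symmetry of $\ad x$ with respect to the Killing form. The main structural divergence is in the middle: the paper never proves that $\dim C_x=1$ beforehand; instead it takes a Cartan subalgebra $C$ of a priori unknown dimension, invokes a \emph{primitive element} $x$ of the commuting semisimple family $\{\ad y\}_{y\in C}$ to produce a $2$-plane $P=\mathrm{span}(y,Ay)$ that is invariant under all of $\ad C$, checks $[P,P]\subset C$ and $\dim[P,P]=1$, and obtains the $3$-dimensional subalgebra as $[P,P]+P$ (so that rank one falls out only a posteriori). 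Your joint-weight argument, combining proposition\,\ref{solid property}\,(3) with the purely imaginary spectrum to force every joint eigenvector of $\ad x$ and $\ad y$ into $C_x\otimes\C$, establishes $C_x=\R x$ directly and thereby avoids the primitive-element machinery entirely. Finally, the endgames differ: the paper appeals to Bianchi's classification of $3$-dimensional simple real algebras to exclude $\gs\gl(2,\R)$ (which is not simplest), whereas you compute the structure constants, eliminate $\gamma\geq 0$ by exhibiting a real nonzero eigenvalue of $\ad p$, and rescale to $\gs\go(3,\R)$ by hand. Your route is longer but more self-contained; the paper's is shorter at the cost of two external inputs (primitive elements and Bianchi's list).
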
 
\begin{proof}
Let $\gG$ be a simplest real Lie algebra. Note that the minimal polynomial 
$F(t)$ of $A=ad_x, \,x\in\gG$, is of the form 
$F(t)=t(t^2+\lambda_1^2)\ldots(t^2+\lambda_k^2), 
\,\lambda_1,\ldots,\lambda_k\in\R, \,k\geq 1$. Indeed, according to 
proposition \ref{simpl-ad} (3), all nonzero roots of $A$ do not belong to 
$\R$.  Let $C$ be a Cartan subalgebra of $\gG$. Then, according to 
proposition \ref{solid property}, $\{ad_y\}_{y\in C}$ is a family of 
commuting semisimple operators. Recall that such a family possesses a 
\emph{primitive} element, i.e., a such one that any its invariant subspace 
is also invariant with respect to all operators of the family. Let $A=ad_x, 
\,x\in C,$ be primitive for the family $\{ad_y\}_{y\in C}$ and 
$f(t)=t^2+\lambda^2$ be one of irreducible factors of its minimal 
polynomial $F(t)$. If $0\neq y\in\ker\,f(A)$, then $P=\mathrm{span}<y,Ay>$ 
is, obviously, $A$-invariant and hence $ad_z$-invariant for any $z\in C$. 
Equivalently, $[C,P]\subset P$. Obviously, $\dim\,P=2$. Moreover, 
$A([y,Ay])=[Ay,Ay]+[y,A^2y]=0$, since $A^2y=-\lambda^2y$. But, according to 
proposition\,\ref{solid property}, $C=C_x=\ker\,A$. This shows that 
$[y,Ay]\in C\Leftrightarrow [P,P]\subset C$. Hence $\gH=C+P$ is a 
nonabelian subalgebra of $\gG$.

Observe now that $[P,P]\neq 0$. Indeed, otherwise,  elements $x,y$ and $Ay$ 
would span a 3-dimensional  subalgebra of $\gG$, which is nonabelian, since 
$0\neq Ay=[x,y]$, and solvable. But being simplest $\gG$ must coincide with 
this subalgebra in contradiction with simplicity of $\gG$. Moreover, 
$\dim\,[P,P]=1$, since $[P,P]=\mathrm{span}\,[y,Ay]\neq 0$. Therefore 
$[P,P]$ and $P$ span a nonabelian 3-dimensional subalgebra $\gH_0$ of $\gH$. 
So, $\gG=\gH_0$, since $\gG$ is simplest. But any 3-dimensional simple Lie 
algebra over $\R$ is isomorphic either to $\gs\go(3,\R)$, or to  $\gs\gl(2,\R)$
(Bianchi's classification), and the latter is not simplest one.
\end{proof}
\begin{thm}\label{R-dis}
Any finite-dimensional Lie algebra over $\R$ can be completely 
disassembled. 
\end{thm}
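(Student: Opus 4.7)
The plan is to parallel Theorem \ref{C-dis}, using Propositions \ref{R-simp} and \ref{AlmExt} as the real-field substitutes for the algebraically closed ingredients. I would argue by induction on the complexity $l(\gG)$, i.e., the dimension of a Levi subalgebra. The base case $l(\gG)=0$ is handled by Proposition \ref{dis-solv}. The field-independent reductions of Corollary \ref{dis-reduct} and Corollary \ref{dec-ss} reduce the inductive step to an abelian extension $\gG\oplus_{\rho}V$ with simple $\gG$. It then suffices to produce a simplifying d-pair in $\gG\oplus_{\rho}V$: by the Stripping Lemma \ref{str} and Proposition \ref{dress}, such a d-pair exhibits $\gG\oplus_{\rho}V$ as a sum of a fully disassemblable dressing algebra and a semidirect product whose simple part has strictly smaller complexity, which closes the induction together with Proposition \ref{s-tr}.

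To find the d-pair I would start inside $\gG$ itself. By Proposition \ref{simplest-in} the simple algebra $\gG$ contains either a copy of $\gs\gl(2,\R)$ or a simplest subalgebra, and by Proposition \ref{R-simp} every simplest algebra over $\R$ is isomorphic to $\gs\go(3,\R)$, so in either case $\gG$ contains a simple three-dimensional subalgebra $\gH$. The first or second d-pair associated with $\gH$, as in (\ref{1stD-pair})--(\ref{2dD-pair}), is well defined over any field and simplifying in $\gG$. The real content of the theorem is therefore the problem of lifting this d-pair to a $\rho$-extension inside $\gG\oplus_{\rho}V$, which, by taking direct sums, we may assume has irreducible $\rho$. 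When $\gH\cong\gs\gl(2,\R)$ the Cartan element $h$ has real semisimple eigenvalues on both $\gG$ and $V$, so the eigenspace construction of Proposition \ref{split-by-3d} applies verbatim and no algebraic closure is needed.

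The main obstacle is the non-split case $\gH\cong\gs\go(3,\R)$, in which $\ad h$ has no nonzero real eigenvalues and the explicit eigenspace decomposition used for the second d-pair in Proposition \ref{split-by-3d} breaks down. Here I would invoke Corollary \ref{nontrivS_1} to guarantee $\mathcal{S}_{1}(\gs,W,\rho)\neq 0$, and then Proposition \ref{AlmExt}, which yields the required $\rho$-extension unless $\mathcal{S}(\gs,W,\rho)\cong\C$. In that exceptional subcase a splitting operator $A$ satisfies $A^{2}=-\id_{V}$ and endows $V$ with a complex structure that commutes with $\rho|_{\gs}$ and anti-commutes with $\rho|_{W}$. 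To handle it I would complexify, apply Theorem \ref{C-dis} to the algebra $(\gG\oplus_{\rho}V)\otimes_{\R}\C$ (where $\gs\go(3,\R)\otimes_{\R}\C\cong\gs\gl(2,\C)$ splits and the split construction of Proposition \ref{split-by-3d} produces a $\rho_{\C}$-extension), and then descend using the scalar-extension duality of Proposition \ref{ext-ext} to extract a real splitting involution. As a fallback, if $\gG\neq\gs\go(3,\R)$, Morozov's lemma applied to any nonzero nilpotent element of $\gG$ produces a $\gs\gl(2,\R)$-subalgebra and puts us back in the split case; this confines the genuinely delicate situation to $\gG=\gs\go(3,\R)$, whose abelian extensions can be treated by the complexification--descent step above.
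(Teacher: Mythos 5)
Your overall architecture (induction on complexity, reduction to $\gG\oplus_{\rho}V$ with simple $\gG$ and irreducible $\rho$, the split/non-split dichotomy via propositions~\ref{simplest-in} and \ref{R-simp}, and the appeal to corollary~\ref{nontrivS_1} and proposition~\ref{AlmExt}) is exactly the paper's. The gap is in your treatment of the exceptional case $\mathcal{S}(\gs,W,\rho)\cong\C$. In that case $\mathcal{S}_1(\gs,W,\rho)$ is one-dimensional, spanned by an operator $J$ with $J^2=-\id_V$, so \emph{every} nonzero element of $\mathcal{S}_1$ squares to a negative multiple of the identity and has no real eigenvalues. Consequently there is no real splitting involution to extract: proposition~\ref{ext-ext} only guarantees a nondegenerate operator in $\mathcal{S}_1(\gs,W,\rho)$, and corollary~\ref{ospl} requires eigenvalues in the ground field to produce a $\rho$-extension, which is precisely what fails here. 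Your proposed descent from the complexification therefore cannot succeed as stated; nor does applying theorem~\ref{C-dis} to $(\gG\oplus_{\rho}V)\otimes_{\R}\C$ help directly, since a complete disassembling over $\C$ need not be conjugation-invariant and so does not descend to a disassembling of the real form. The paper resolves this case by a quite different argument: it decomposes $V_{\C}$ into the eigenspaces $\V_m=\ker(B-\frac{m}{2}J)$ of $B=\rho(h)$, forms $\boldsymbol{W}_1=\oplus\V_{4m+1}$ and $\boldsymbol{W}_2=\oplus\V_{4m-1}$, and shows by explicit commutation computations that both are $\rho$-invariant and of equal dimension, contradicting irreducibility --- i.e., the exceptional case is shown to be \emph{vacuous} when $V_{even}$ is trivial. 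Some such argument is indispensable; without it the theorem is not proved.

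Your fallback is also incorrect: the claim that $\gG\neq\gs\go(3,\R)$ forces a nonzero nilpotent element (hence an $\gs\gl(2,\R)$ via Morozov) fails for every compact simple real Lie algebra of dimension greater than $3$, e.g.\ $\gs\gu(3)$, all of whose elements are semisimple. So the delicate non-split situation is not confined to $\gG=\gs\go(3,\R)$; it occurs whenever the chosen $3$-dimensional subalgebra $\gH$ is isomorphic to $\gs\go(3,\R)$, its first associated d-pair is trivial, and $V_{even}$ is trivial, regardless of what $\gG$ is. A correct proof must handle this configuration for arbitrary simple $\gG$, as the paper does.
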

\begin{proof}
As we have seen earlier the problem reduces
to existence of simplifying d--pairs for algebras $\gG\oplus_{\rho}V$ with a simple 
$\gG$ and an irreducible $\rho:\gG\to\End V$. We shall construct such a pair with help
of a simple 3-dimensional subalgebra $\gH$ of $\gG$. By propositions\,\ref{simplest-in}
and \ref{R-simp}, $\gG$ contains either a subalgebra isomorphic to $\gs\gl(2,\R)$
or a subalgebra isomorphic to $\gs\go(3,\R)$. In the first  case
proposition\,\ref{split-by-3d} proves existence of a  simplifying d--pair. Hence  we
have to analyze the situation when $\gH$ is isomorphic to $\gs\go(3,\R)$. If
in this case the first  d-pair associated with $\gH$ is nontrivial, then d-pair 
(\ref{1st-ext}) in the proof of proposition\,\ref{split-by-3d} solves the problem.

So, we shall assume that the d-pair $(\gs, W)$ associated with $\gH$ is of the
second type. In particular, in the notation of proposition\,\ref{split-by-3d}, we 
have $\gG=\oplus_{k\geq 0}\gG_{2k}$. Put also
$$
V_{even}=\bigoplus_{k\geq 0}V_{2k} \quad \mathrm{and} \quad
V_{odd}=\bigoplus_{k\geq 0}V_{2k+1}.
$$
Commutation relations (\ref{V-com}) show that subspaces $V_{even}$ and 
$V_{odd}$ are $\rho$-invariant. Since $\rho$ is irreducible, one of these 
subspaces is trivial.

First, assume that $V_{odd}$ is trivial. Once again, relations (\ref{V-com})
show that 
$$
\boldsymbol{V}_0=\bigoplus_{k\geq 0}V_{4k}, \quad\quad  
\boldsymbol{V}_1=\bigoplus_{k\geq 0}V_{4k+2}
$$
is a $\rho$--extension of $(\gs, W)$.

Finally, assume that $V_{even}$ is trivial. First of all, we 
note that, by corollary\,\ref{nontrivS_1}, $\mathcal{S}_1(\gs, W, \rho)$ is nontrivial. 
Moreover, proposition\,\ref{AlmExt} reduces the problem to the case when the 
$\dF_2$-graded algebra $\mathcal{S}(\gs, W, \rho)$ is isomorphic to $\C$. In this 
case $\mathcal{S}_1(\gs, W, \rho)$ is 1-dimensional and contains an operator $J$
such that $J^2=-1$. So, $J$ supplies $V$ with a $\C$-vector space structure
which will be denoted by $V_{\C}$.

Below we shall adopt the notation used in the proof of  proposition \ref{split-by-3d}. 
By construction, operators 
$\rho(z), \,z\in\gs$, commute with $J$, i.e., they are $\C$-linear in $V_{\C}$. 
In particular, $B=\rho(h)$ is such an operator , since $h\in\gs$.
Obviously, eigenvalues of  $B$ are $\frac{m}{2}\sqrt{-1}, \,m\in\Z$, and
\begin{equation}
V_{\C}=\oplus_{m\in\Z}\V_m, \quad \V_m=\ker(B-\frac{m}{2}\sqrt{-1}\id_V)=
\ker(B-\frac{m}{2}J)
\end{equation}
($\V_m$'s are subspaces of  $V_{\C}$). 
Also, observe  that in the considered case $\V_m$ may be nontrivial only for
odd $m$ and consider subspaces
$$
\boldsymbol{W}_{\boldsymbol{1}}=\bigoplus_{m\in\Z}\V_{4m+1} \quad\mathrm{and}
\quad \boldsymbol{W}_{\boldsymbol{2}}=\bigoplus_{m\in\Z}\V_{4m-1}
$$
of $V_{\C}$. Then $V_{\C}=\boldsymbol{W}_1\oplus\boldsymbol{W}_2$ and
$\dim\boldsymbol{W}_1=\dim\boldsymbol{W}_2$, since $V_m=\V_m\oplus\V_{-m}$.
We shall prove that $\boldsymbol{W}_1$ and $\boldsymbol{W}_2$
are $\rho$--invariant. If so, one of these subspaces must be trivial, since $\rho$ is 
irreducible. But $\dim\boldsymbol{W}_1=\dim\boldsymbol{W}_2$ and hence the other 
subspace must be trivial too. This, however, is impossible, since  $V$ is nontrivial.

Let $v\in\V_m$ and $w\in \gG_{4k+2}\subset W$. Then 
$$
Bv=\frac{m}{2}\sqrt{-1}\,v= 
\frac{m}{2}Jv, \quad [h,w]\in W, \quad [h,[h,w]]=-(2k+1)^2w,
$$ 
and, therefore, $$
\rho(w)J+J\rho(w)=0=\rho([h,w])J+J\rho([h,w]).
$$ 
Now we have
\begin{eqnarray}\label{happy1}
B(\rho(w)v)=\rho(w)(Bv)+[B,\rho(w)]v=\frac{m}{2}\rho(w)(Jv)+[\rho(h),\rho(w)]v=
\nonumber\\-\frac{m}{2}J(\rho(w)v)+\rho([h,w])v=
 -\frac{m}{2}\sqrt{-1}\,\rho(w)v+\rho([h,w])v
 \end{eqnarray}
 and
 \begin{eqnarray}\label{happy2}
B(\rho([h,w])v)=\rho([h,w])(Bv)+[B,\rho([h,w])]v=\frac{m}{2}\rho([h,w])(Jv)+
\nonumber \\  +[\rho(h),\rho([h,w])]v=-\frac{m}{2}J(\rho([h,w])v)+\rho([h,[h,w]])v=
\nonumber \\ -\frac{m}{2}\sqrt{-1}\,\rho([h,w])v-(2k+1)^2\rho(w)v
\end{eqnarray}
It follows from (\ref{happy1}) and (\ref{happy2}) that vectors $\rho(w)v$ and 
$\rho([h,w])v$ span a 2-dimensional subspace $\Pi$ in $V_{\C}$, which is 
invariant with respect to $B$. As it is easy to see, eigenvalues of $B\!\mid_{\Pi}$ are 
$\frac{-m\pm(4k+2)}{2}\sqrt{-1}$. This shows that $\Pi$ is spanned by eigenvectors of 
$B\!\mid_{\Pi}$ corresponding to these eigenvalues, i.e., that vectors 
$\rho(w)v$ and $\rho([h,w])v$ belong to $\V_{-m+4k+2}\oplus\V_{-(m+4k+2)}$.
Since $m=4s\pm 1$, the residue of $m \;\mathrm{mod} \;4$ coincides with that 
of $-m\pm(4k+2)$. Therefore, vectors $\rho(w)v$ and $\rho([h,w])v$ belong to 
the same subspace $\boldsymbol{W}_{\boldsymbol{i}}$ as $v$.

If $z\in\gG_{4k}\subset\gs$, then $[h,z]\in\gs, \,[h,[h,z]]=-4k^2z$ and operators
$\rho(z)$ and $\rho([h,z])$ commute with $J$. The same arguments as above
show that vectors $\rho(z)v$ and $\rho([h,z])v$ belong to 
$\V_{m+4k}\oplus\V_{(m-4k)}$, i.e., to the same subspace 
$\boldsymbol{W}_{\boldsymbol{i}}$ as $v$. This proves that subspaces
$\boldsymbol{W}_{\boldsymbol{i}}$'s are $\rho$--invariant.
\end{proof}
\begin{rmk}
The final part of the proof of proposition\,\ref{R-dis} shows that 
$\mathcal{S}(\gs, W, \rho)$ can not be a division algebra, if $S_0(\rho)$
is isomorphic to $\R$ and $\V_{even}$ is trivial.
\end{rmk}

\medskip
\noindent\textit{On the disassembling problem for arbitrary fields.} 


\noindent It seems rather plausible that any finite-dimensional Lie algebra 
over a field of  characteristic zero can be completely disassembled. In view 
of proposition\,\ref{simplest-in} the disassembling problem is reduced to 
simplest algebras and their finite-dimensional representations.  This approach 
presumes a description of simplest algebras over arbitrary ground fields 
$\gk$ and involutions of them. This problem doesn't appear to be extremely 
challenging not to attemp to resolve it.  Probably, the circle of ideas that one can 
find in the last chapter of \cite{Jac} would be sufficient for the solution.

Derived algebras $[\mathcal{A}_{\mathrm{Lie}}, \mathcal{A}_{\mathrm{Lie}}]$  
of Lie algebras $\mathcal{A}_{\mathrm{Lie}}$  associated with 
division algebras $\mathcal{A}$ over $\gk$ give examples of simplest algebras. 
In this connection it should be also stressed
that simplest Lie algebras and their representations have all merits to be 
studied in its own right. For instance, they appear to be  
natural substitutes for $\gs\gl_2$-triples in the perspective of developing 
analogues of root space decompositions for simple Lie algebras 
over arbitrary fields. 

An alternative approach to the disassembling problem could be a direct
description of suitable d--pairs in simple algebras over a given field  
based in its turn on a description of these algebras as, for instance, it is 
done in \cite{Jac}.   However, in order to become practical such a description 
must be duly extended to representations of these algebras. Moreover, a 
serious deficiency of this approach is that it, apart of being rather boring, 
does not reveal the true nature of the phenomenon.

The conjecture that all Lie algebras can be assembled from lions and the 
fact that lions are, in fact, Lie algebras over $\Z$ lead to suspect that
Lie algebras over a field $\gk$ are obtained as specifications  to $\gk$ 
of some universal assemblage schemes. In the rest of this paper some
facts supporting and clarifying this idea will be given.


\section{Matching lieons and first level Lie algebras}\label{1st-level}


A Lie algebra is of the \emph{first level} if it can be completely 
disassembled in one step. In other words, first level Lie algebras are ones that 
can be assembled from a number of mutually compatible lieons. In this section we 
geometrically characterize compatible pairs of lieons and, on this basis, construct 
examples of first level Lie algebras.  


\subsection{Compatible $\pitchfork$-lieons.} 
 
Let $\gG$ be a an $n$-dimensional $\pitchfork$-lieon and  $V=|\gG|$. 
Denote by $C=C_{\gG}$ the center of $\gG$ and put $l=l_{\gG}=[\gG,\gG]$. 
Then $\mathrm{dim}\,C=n-2, 
\,\mathrm{dim}\,l =1$ and $l\subset C$. A basis $e_1,\dots,e_n$ of $V$ such 
that $e_3\in l$ and $e_i\in C$, if $i>2$, will be called \emph{normal} for 
$\gG$. The only nontrivial product in this basis is $[e_1,e_2]=\alpha e_3, 
\,\alpha\neq 0$. The associated Poisson bivector on $V^*$ in the 
corresponding coordinates is $P=P_{\gG}=\alpha x_3\xi_1\xi_2$. This shows 
that, up to proportionality, $\gG$ is uniquely defined by the pair $(C,l)$. 

Consider now two $\pitchfork$-lieons $\gG_1$ and $\gG_2$ on $V$, i.e., 
$|\gG_1|=|\gG_2|=V$, and put $C_i=C_{\gG_i}, \,l_i=l_{\gG_i}, 
\,P_i=P_{\gG_i}\,i=1,2, \,C_{12}=C_1\cap C_2$. Obviously, 
$n-4\leq\mathrm{dim}\,C_{12}\leq n-2$. 

Below we use formula (\ref{Schouten in coordinates}) for computations of the
occurring  Schouten brackets.
\begin{lem}\label{a1}
If $\mathrm{dim}\,C_{12}=n-4$, then $\gG_1$ and $\gG_2$ are compatible iff
\, $l_i\subset C_{12}, i=1,2$.
\end{lem}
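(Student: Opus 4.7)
The plan is to choose a basis of $V$ adapted simultaneously to both $\pitchfork$-lieons and then compute the Schouten bracket $\ldb P_1,P_2\rdb$ directly from formula (\ref{Schouten in coordinates}). Since $\dim C_{12}=n-4$, the subspaces $C_1$ and $C_2$ are transverse modulo $C_{12}$ and $C_1+C_2=V$. I would therefore pick a basis $e_5,\dots,e_n$ of $C_{12}$, extend it by $e_3,e_4$ to a basis of $C_1$, and extend it independently by $e_1,e_2$ to a basis of $C_2$, so that $V=\langle e_1,\dots,e_n\rangle$. In the associated cartesian coordinates $(x_1,\dots,x_n)$ on $V^*$, the only nontrivial bracket for $\gG_1$ is $[e_1,e_2]_1\in l_1\subset C_1$ and the only nontrivial bracket for $\gG_2$ is $[e_3,e_4]_2\in l_2\subset C_2$; hence
$$
P_1=\alpha(x)\,\xi_1\xi_2, \qquad P_2=\beta(x)\,\xi_3\xi_4,
$$
where $\alpha$ is a linear form in $x_3,x_4,\dots,x_n$ (the coefficients of $l_1$ in the basis $e_3,\dots,e_n$ of $C_1$) and $\beta$ is a linear form in $x_1,x_2,x_5,\dots,x_n$ (the coefficients of $l_2$ in the basis $e_1,e_2,e_5,\dots,e_n$ of $C_2$).

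Next I would plug these into (\ref{Schouten in coordinates}). Since $P_1,P_2$ are bivectors the sign $(-1)^{P_1}=1$, and only the indices $i\in\{1,2,3,4\}$ give nonzero contributions (for $i\geq 5$ one of $\partial P_1/\partial\xi_i$, $\partial P_2/\partial\xi_i$ vanishes and $\partial P_2/\partial x_i$ multiplies $\partial P_1/\partial\xi_i=0$). A straightforward expansion then yields
$$
\ldb P_1,P_2\rdb=-\Bigl(\alpha\tfrac{\partial\beta}{\partial x_1}\xi_2\xi_3\xi_4-\alpha\tfrac{\partial\beta}{\partial x_2}\xi_1\xi_3\xi_4+\tfrac{\partial\alpha}{\partial x_3}\beta\,\xi_1\xi_2\xi_4-\tfrac{\partial\alpha}{\partial x_4}\beta\,\xi_1\xi_2\xi_3\Bigr).
$$
The four trivectors on the right are linearly independent, so vanishing of $\ldb P_1,P_2\rdb$ is equivalent to the simultaneous vanishing of all four coefficients.

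Since $\alpha,\beta\neq 0$ (the lieons are nontrivial), these four equations reduce to
$$
\tfrac{\partial\beta}{\partial x_1}=\tfrac{\partial\beta}{\partial x_2}=0\quad\text{and}\quad\tfrac{\partial\alpha}{\partial x_3}=\tfrac{\partial\alpha}{\partial x_4}=0,
$$
i.e.\ $\alpha$ is a linear form in $x_5,\dots,x_n$ alone and $\beta$ is a linear form in $x_5,\dots,x_n$ alone. Translated back, this says precisely that $l_1$ lies in $\langle e_5,\dots,e_n\rangle=C_{12}$ and that $l_2$ lies in $C_{12}$, proving both implications at once. There is no real obstacle: the lemma is a bookkeeping exercise and the only point requiring mild care is choosing the basis so that $P_1$ and $P_2$ acquire their simple canonical form simultaneously, which is exactly what the transversality $C_1+C_2=V$ permits.
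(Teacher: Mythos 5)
Your proof is correct. It rests on the same basic mechanism as the paper's — choose cartesian coordinates adapted to the two centers and evaluate the Schouten bracket via formula (\ref{Schouten in coordinates}) — but it is organized genuinely differently. The paper splits the lemma into four cases (A$_1$--A$_4$) according to whether each $l_i$ lies in $C_{12}$ and whether $l_1=l_2$, adapts the basis separately in each case so that $P_1$ and $P_2$ become monomial bivectors such as $\alpha_1x_1\xi_3\xi_4$, and then checks vanishing or non-vanishing of $\ldb P_1,P_2\rdb$ case by case. You instead fix one basis determined only by the transversality $C_1+C_2=V$, keep the positions of $l_1,l_2$ encoded in general linear forms $\alpha,\beta$, and perform a single bracket computation whose four independent trivector coefficients are exactly the obstructions $\partial\alpha/\partial x_3,\ \partial\alpha/\partial x_4,\ \partial\beta/\partial x_1,\ \partial\beta/\partial x_2$ (multiplied by the nonzero forms $\beta$, resp.\ $\alpha$). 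This delivers both implications of the ``iff'' at once and makes transparent why the condition is precisely $l_i\subset C_{12}$; the price is a marginally heavier single computation, whereas the paper's case-by-case monomial normal forms also hand it, in cases A$_3$ and A$_4$, the explicit isomorphism type of $\gG_1+\gG_2$, which your uniform treatment does not record.
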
 
\begin{proof}
We have to examine the following four cases.

A$_1$\,: \,$l_i$ \emph{does not belong to} $C_{12}, \,i=1,2$. In this case 
a basis $e_5,\dots,e_n$ in $C_{12}$  can be completed by some vectors 
$e_1\in l_1, e_2\in C_1, e_3\in l_2, e_4\in C_2$ up to a basis in $V$. The 
only nonzero product $[e_i,e_j]_1$ in $\gG_1$ is $[e_3,e_4]_1=\alpha_1e_1$, 
and $[e_1,e_2]_2=\alpha_2e_3$ in $\gG_2$ with some
$\alpha_1, \alpha_2\in \gk$. Hence $P_1=\alpha_1x_1\xi_3\xi_4$ and 
$P_2=\alpha_2x_3\xi_1\xi_2$ and a direct computation by using formula 
shows that $\ls P_1,P_2\rs\neq 0$, i.e., that $\gG_1$ and $\gG_2$ are 
not compatible. 

A$_2$\,: \,\emph{One of subspaces} $l_i$\emph{'s, say,} $l_1$, 
\emph{belongs to} $C_{12}$ \emph{and the other,} $l_2$, \emph{does not}. In 
this case we complete a basis $e_5\in l_1,e_6,\dots,e_n$ in $C_{12}$ by 
some vectors $e_1,e_2\in C_1, \,e_3\in l_2,  \,e_4\in C_2$ up to a basis in 
$V$. By similar reasons as above, $P_1=\alpha_1x_5\xi_3\xi_4, 
\,P_2=\alpha_2x_3\xi_1\xi_2$ in the corresponding coordinates. Since $\ls 
P_1,P_2\rs\neq 0$, $\gG_1$ and $\gG_2$ are not compatible. 

A$_3$\,: \,$l_i\subset C_{12}, \,i=1,2,$ \emph{and} $l_1\neq l_2$. In this 
case we consider a basis $e_5\in l_1,e_6\in l_2,\dots,e_n$ in $C_{12}$ and 
complete it by independent $\mathrm{mod}\,C_{12}$ vectors $e_1,e_2\in C_1, 
\,e_3,e_4\in C_2$ up to a basis in $V$. In such a basis 
$P_1=\alpha_1x_5\xi_3\xi_4, \,P_2=\alpha_2x_6\xi_1\xi_2$ and 
$\ls P_1,P_2\rs=0$. So, $\gG_1$ and $\gG_2$ are compatible and it is 
easy to see that $\gG_1+\gG_2$ is isomorphic to 
$\pitchfork\oplus\pitchfork\oplus\gamma_{n-6}$. 

A$_4$\,: \;$l_1=l_2\subset C_{12}$. A basis $e_5\in l_1=l_2,e_6,\dots,e_n$ 
in $C_{12}$ can be completed by independent $\mathrm{mod}\,C_{12}$ vectors 
$e_1,e_2\in C_1, \,e_3,e_4\in C_2$ up to a basis in $V$. Then 
$P_1=\alpha_1x_5\xi_3\xi_4, \,P_2=\alpha_2x_5\xi_1\xi_2$ and $\ls 
P_1,P_2\rs=0$, i.e., $\gG_1$ and $\gG_2$ are compatible. The Poisson 
bivector corresponding to $\gG_1+\gG_2$ is proportional to 
$x_5(\xi_1\xi_2+\xi_3\xi_4)$. 
\end{proof}

\begin{lem}\label{b1}
If $\mathrm{dim}\,C_{12}=n-3$, then $\gG_1$ and $\gG_2$ are compatible.
\end{lem}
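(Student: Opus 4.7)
The plan is to mirror the strategy used in the proof of Lemma~\ref{a1}: choose a basis of $V$ adapted to the filtration $C_{12}\subset C_1, C_2 \subset C_1+C_2\subset V$, write the two linear Poisson bivectors in the associated coordinates, and compute their Schouten bracket via formula (\ref{Schouten in coordinates}). The crucial geometric observation is that in the present case
$$
\dim(C_1+C_2)=2(n-2)-(n-3)=n-1,
$$
so $C_1+C_2$ is a hyperplane in $V$ and there is a unique (up to $C_1+C_2$) transverse direction which will automatically be shared by both nontrivial brackets.

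Concretely I would pick a basis $e_4,\dots,e_n$ of $C_{12}$, complete it to a basis $e_3,e_4,\dots,e_n$ of $C_1$ by choosing any $e_3\in C_1\setminus C_{12}$, complete $e_4,\dots,e_n$ to a basis $e_2,e_4,\dots,e_n$ of $C_2$ by any $e_2\in C_2\setminus C_{12}$, and finally select $e_1\notin C_1+C_2$. In this basis the only vectors outside $C_1$ are $e_1,e_2$, and the only vectors outside $C_2$ are $e_1,e_3$. Since $\gG_i$ is a $\pitchfork$-lieon with center $C_i$, the only nonzero brackets among basis vectors are
$$
[e_1,e_2]_1=\ell_1\in l_1\subset C_1, \qquad [e_1,e_3]_2=\ell_2\in l_2\subset C_2.
$$
By (\ref{Poisson dual}) this yields, up to scalar factors,
$$
P_1=\ell_1(x)\,\xi_1\xi_2,\qquad P_2=\ell_2(x)\,\xi_1\xi_3,
$$
where the linear function $\ell_1(x)=\sum_{i\ge 3}a_ix_i$ has no $x_1,x_2$ dependence and $\ell_2(x)=b_2x_2+\sum_{i\ge 4}b_ix_i$ has no $x_1,x_3$ dependence.

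Now I would plug these expressions into (\ref{Schouten in coordinates}). Every surviving cross term turns out to contain $\xi_1^2=0$: for instance $(\partial P_1/\partial x_i)(\partial P_2/\partial\xi_i)$ can be nonzero only for $i=3$ (where it is $a_3\xi_1\xi_2\cdot(-\ell_2\xi_1)$), and $(\partial P_1/\partial\xi_i)(\partial P_2/\partial x_i)$ only for $i=2$ (where it is $-\ell_1\xi_1\cdot b_2\xi_1\xi_3$); both vanish because of the repeated $\xi_1$. Hence $\ls P_1,P_2\rs=0$, proving compatibility. There is no genuine obstacle here: in contrast with Lemma~\ref{a1}, the smaller codimension of the intersection $C_{12}$ forces the two Heisenberg brackets to share the common transverse direction $e_1$, and this alone guarantees compatibility regardless of where $l_1,l_2$ sit inside $C_1,C_2$.
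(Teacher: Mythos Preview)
Your argument is correct. It follows the same underlying strategy as the paper---choose an adapted basis and compute the Schouten bracket via (\ref{Schouten in coordinates})---but it is organized more efficiently. The paper's proof splits into four subcases $\mathrm{B}_1$--$\mathrm{B}_4$ according to whether each $l_i$ lies in $C_{12}$ and whether $l_1=l_2$, in each case choosing the basis so that $l_1,l_2$ are spanned by basis vectors; this makes $P_1,P_2$ into single monomials and the vanishing of $\ls P_1,P_2\rs$ is checked separately in each case. You instead fix one basis adapted only to the flag $C_{12}\subset C_1,C_2\subset C_1+C_2\subset V$, allow $\ell_1,\ell_2$ to be arbitrary linear forms in the appropriate variables, and observe that both bivectors carry the common factor $\xi_1$, which forces every nonzero cross term in (\ref{Schouten in coordinates}) to contain $\xi_1^2=0$. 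This handles all four of the paper's subcases simultaneously and makes transparent the geometric reason for compatibility: $\dim C_{12}=n-3$ forces $C_1+C_2$ to be a hyperplane, so the two lieons share a transverse direction. The paper's case split buys slightly more, namely explicit normal forms in each subcase (used later, e.g.\ in identifying $\gG_1+\gG_2$ in case $\mathrm{A}_3$), but for the bare compatibility statement your uniform computation is cleaner.
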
 
\begin{proof}
Put $C=C_1+C_2$. Then $\mathrm{dim}\,C=n-1$. We have four 
qualitatively different situations as before. 

B$_1$\,: \;$l_1$ \emph{and} $l_2$ \emph{do not belong to} 
$C_{12}\,\Leftrightarrow\,C=l_1\oplus l_2\oplus C_{12}$. Consider a basis 
$e_1,\dots,e_n$ in $V$ with $e_i\in l_i, \,i=1,2, \,e_i\in C_{12}, \,i>3$. 
Then, in the corresponding coordinates, $P_1=\alpha_1x_1\xi_2\xi_3, 
\,P_2=\alpha_2x_2\xi_1\xi_3$. 

B$_2$\,: \;\emph{One of subspaces} $l_i$\emph{'s, say,} $l_1$, 
\emph{belongs to} $C_{12}$ \emph{and the other}, $l_2$, \emph{does not}. 
Then $V=\langle v\rangle\oplus C$, if $v\in V\setminus C$, and $C=\langle 
w\rangle\oplus C_{12}$, if $w\in C\setminus C_{12}$. So, vectors $e_1=w, 
\,0\leq e_2\in l_1, \,0\leq e_3\in l_2, \,e_4=v, 
\,e_5,\dots,e_n$ form a basis in $V$ assuming that $e_2,e_5\dots,e_n$ is a 
basis in $C_{12}$. In the corresponding coordinates we have 
$P_1=\alpha_1x_2\xi_3\xi_4, \,P_2=\alpha_2x_3\xi_1\xi_4$. 

B$_3$\,: \;$l_i\in C_{12}, \,i=1,2,$ \emph{and} $l_1\neq l_2$. In this case 
there is a basis $e_1\dots,e_n$ in $V$ such that $e_i\in C_i\setminus 
C_{12}, \,e_{i+2}\in l_i, \,i=1,2, \,e_5\in V\setminus C$ and $e_i\in 
C_{12}$ for $i>5$. Then, in the corresponding coordinates, 
$P_1=\alpha_1x_3\xi_2\xi_5, \,P_2=\alpha_2x_4\xi_1\xi_5$. 

B$_4$\,: \;$l_i\in C_{12}, \,i=1,2,$ and $l_1=l_2$. Similarly to the 
preceding case there is a basis $e_1\dots,e_n$ in $V$ such that $e_i\in 
C_i\setminus C_{12}, \,i=1,2, \, e_3\in l_1=l_2, \,e_4\in V\setminus C$ and 
$e_i\in C_{12}$ for $i>4$. Then, in the corresponding coordinates, 
$P_1=\alpha_1x_3\xi_2\xi_4, \,P_2=\alpha_2x_3\xi_1\xi_4$. 

Now a simple computation shows that $\ls P_1,P_2\rs=0$ in any of these cases. 
\end{proof}

\begin{lem}\label{c1}
If $\mathrm{dim}\,C_{12}=n-2$, then $\gG_1$ and $\gG_2$ are compatible. 
\end{lem}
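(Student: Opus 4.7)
The plan is to observe that the dimension hypothesis forces $\gG_1$ and $\gG_2$ to share the same center, which makes the two Poisson bivectors structurally very simple and almost trivially compatible.

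First I would note that $\dim C_1 = \dim C_2 = n-2$ (since each $\gG_i$ is a $\pitchfork$-lieon), so the assumption $\dim C_{12} = n-2$ gives $C_1 = C_2 = C_{12}$. Call this common center $C$, and pick a basis $e_3, e_4, \ldots, e_n$ of $C$ (with the convention, if convenient, that $e_3 \in l_1$), and complete it to a basis $e_1, e_2, e_3, \ldots, e_n$ of $V$. In this basis, for each $\gG_i$, the only possibly nonzero bracket among basis elements is $[e_1, e_2]_i = v_i \in l_i \subset C$. Writing $v_i = \sum_{k \geq 3} \alpha_k^{(i)} e_k$ and passing to the associated linear Poisson bivector on $V^*$ via (\ref{Poisson dual}), we obtain
\begin{equation*}
P_i = f_i(x_3,\ldots,x_n)\,\xi_1\xi_2, \qquad i=1,2,
\end{equation*}
where $f_i = \sum_{k\geq 3}\alpha_k^{(i)} x_k$ is a linear function involving only the ``center'' coordinates $x_3,\ldots,x_n$.

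Next I would compute $\ldb P_1, P_2\rdb$ using the coordinate formula (\ref{Schouten in coordinates}). The $\xi$-part of each $P_i$ is $\xi_1\xi_2$, so $\partial P_i/\partial \xi_k = 0$ for $k \geq 3$ and $\partial P_i/\partial x_k = 0$ for $k = 1,2$. Consequently, in the sum
\begin{equation*}
\ldb P_1,P_2\rdb = -\sum_k\!\left(\frac{\partial P_1}{\partial x_k}\frac{\partial P_2}{\partial \xi_k} + \frac{\partial P_1}{\partial \xi_k}\frac{\partial P_2}{\partial x_k}\right),
\end{equation*}
the first product vanishes termwise because for $k \geq 3$ we have $\partial P_2/\partial \xi_k = 0$, while for $k=1,2$ we have $\partial P_1/\partial x_k = 0$; the second product vanishes for the symmetric reason. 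Hence $\ldb P_1,P_2\rdb = 0$, and by Proposition\,\ref{Poisson-lie dual} this is exactly the compatibility of $\gG_1$ and $\gG_2$.

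There is essentially no obstacle here: once one recognizes that the two lieons share the same center, the wedge part of both Poisson bivectors is the single 2-form $\xi_1\wedge\xi_2$ and the coefficients depend only on variables ``transverse'' to $\xi_1,\xi_2$, so the Schouten bracket is forced to vanish. The only thing to double-check is that $l_1$ and $l_2$ do indeed sit inside the common center $C$, which is immediate from the definition of a $\pitchfork$-lieon (the derived algebra lies in the center). No case distinction analogous to those in lemmas\,\ref{a1} and \ref{b1} is needed.
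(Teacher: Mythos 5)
Your proof is correct and follows essentially the same route as the paper: identify the common center $C_1=C_2=C_{12}$, choose an adapted basis, and check $\ldb P_1,P_2\rdb=0$ via the coordinate formula (\ref{Schouten in coordinates}). The only difference is that by writing the coefficient as a general linear function of the center coordinates you handle the paper's two subcases $l_1\neq l_2$ and $l_1=l_2$ (where the lieons are simply proportional) uniformly, which is a mild streamlining.
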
 
\begin{proof}
In this case $C_1=C_2=C_{12}$ and $V=\langle v_1\rangle\oplus\langle 
v_2\rangle\oplus C_{12}$ for any independent $\mathrm{mod}\,C_{12}$ vectors 
$v_1, v_2$. Here we have two possibilities: 

D$_1$\,: \;$l_1\neq l_2$. Consider a basis $e_1,\dots,e_n$ in $V$ with 
$e_i\in l_i, \,e_{i+2}=v_i,  \,i=1,2, \,e_j\in C_{12}$ for $j>4$. Then, as 
above, $P_1=\alpha_1x_1\xi_3\xi_4, 
\,P_2=\alpha_2x_2\xi_3\xi_4$ and $\ls P_1,P_2\rs=0$. 

D$_2$\,: \;$l_1=l_2$. In this case lieons $\gG_1$ and $\gG_2$ are 
proportional. 
\end{proof}

Using lemmas \ref{a1}, \ref{b1} and \ref{c1} it is not difficult to 
construct various families of mutually compatible $\pitchfork_n$-structures on a 
vector space $V$. Two such constructions are described below. 

A (finite) family 
$\{C_i\}$ of $(n-2)$-dimensional subspaces of $V$ will be called  \emph{tight} if 
$\mathrm{dim}\,C_i\cap C_j>n-4$. Tight families are easily described. 

\begin{lem}\label{tight}
A family $\{C_i\}$ of $(n-2)$-dimensional subspaces of $V$ is \emph{tight} 
if either all $C_i$'s are contained in a common 
$(n-1)$-dimensional subspace \emph{(``co-pencil")}, or  all $C_i$'s have a 
common $(n-3)$-dimensional subspace \emph{(``pencil")}. $\square$
\end{lem}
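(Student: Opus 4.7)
The statement I read as a characterization: tightness is equivalent to being a pencil or a co-pencil. One direction is immediate dimension counting. If all $C_i$ lie in a common hyperplane $H$ of dimension $n-1$, then $\dim(C_i+C_j)\le n-1$ forces $\dim(C_i\cap C_j)\ge 2(n-2)-(n-1)=n-3$. If all $C_i$ contain a common $(n-3)$-dimensional subspace $U$, then $\dim(C_i\cap C_j)\ge n-3$ trivially. In either case the family is tight.

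For the nontrivial direction, assume $\dim(C_i\cap C_j)\ge n-3$ for all $i,j$, and that not all members coincide. Pick two distinct members $C_1\ne C_2$; since $\dim(C_1\cap C_2)\in\{n-3,n-2\}$ and equality with $n-2$ would force $C_1=C_2$, we have $\dim(C_1\cap C_2)=n-3$. Set
\[
U=C_1\cap C_2,\qquad H=C_1+C_2,
\]
so $\dim U=n-3$ and $\dim H=n-1$. I will show that every further member $C_k$ either contains $U$ or is contained in $H$, and moreover that the two alternatives cannot coexist over the family.

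For an arbitrary $C_k$ distinct from $C_1,C_2$, tightness gives $\dim(C_k\cap C_1)=\dim(C_k\cap C_2)=n-3$. Inside the $(n-2)$-dimensional space $C_k$, the two $(n-3)$-dimensional subspaces $C_k\cap C_1$ and $C_k\cap C_2$ meet in $C_k\cap U$, so
\[
\dim(C_k\cap U)\ge 2(n-3)-(n-2)=n-4,
\]
hence $\dim(C_k\cap U)\in\{n-4,n-3\}$. If $\dim(C_k\cap U)=n-3$, then $U\subset C_k$. If $\dim(C_k\cap U)=n-4$, then $(C_k\cap C_1)+(C_k\cap C_2)$ has dimension $2(n-3)-(n-4)=n-2$, so it equals $C_k$; since this sum lies in $C_1+C_2=H$, we get $C_k\subset H$. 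Thus each $C_k$ falls into (at least) one of the two types.

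The final step is to rule out a mixed family. Suppose for contradiction that some $C_k$ contains $U$ but is not contained in $H$, while some $C_l$ lies in $H$ but does not contain $U$. Writing $C_k=U\oplus\langle v\rangle$ with $v\notin H$, any element of $C_k\cap C_l$ has the form $u+\lambda v$ with $u\in U$, and, since $C_l\subset H$ while $v\notin H$, this forces $\lambda=0$. Hence $C_k\cap C_l=U\cap C_l$, which has dimension at most $n-4$ because $U\not\subset C_l$. This contradicts tightness. Therefore every $C_k$ contains $U$, giving a pencil, or every $C_k$ lies in $H$, giving a co-pencil. The only step requiring some care is this last exclusion argument; the rest is bookkeeping in the dimension formula.
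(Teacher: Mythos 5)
Your proof is correct. The paper states this lemma without proof (it is asserted with a $\square$), so there is no argument to compare against; your dimension counts for the sufficiency direction are exactly the expected ones, and your converse direction — each further $C_k$ either contains $U=C_1\cap C_2$ or lies in $H=C_1+C_2$, followed by the exclusion of a mixed family via $C_k\cap C_l=U\cap C_l$ having dimension at most $n-4$ — is sound and closes the one genuinely nonobvious point. Note that for the paper's actual use of the lemma (Proposition \ref{pencil}, compatibility of lieons in a tight family) only the easy sufficiency direction is needed, but reading the lemma as the characterization announced by ``tight families are easily described'' and proving both directions, as you do, is the right call.
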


A finite family $\{\gG_i\}$ of $\pitchfork$-lieons on $V$ will be called 
\emph{tight} if the family $\{C_i\}$ of their centers is tight. It follows 
from lemmas \ref{b1} and \ref{c1} that $\pitchfork$-lieons belonging to a tight 
family are mutually compatible. This observation together with lemma \ref{tight} prove 

\begin{proc}\label{pencil}
Let $\{C_i\}$ be a co-pencil (resp., pencil) of $(n-2)$-dimensional 
subspaces of $V$. Assign to each $C_i$ a 1-dimensional subspace 
$l_i\subset C_i$. Then $\pitchfork$-lieons characterized by pairs $(C_i, l_i)$ 
are mutually compatible so that their linear combinations are first 
level Lie algebras. $\square$
\end{proc}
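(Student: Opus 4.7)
The plan is to reduce the claim to the pairwise compatibility criteria already established in Lemmas \ref{a1}--\ref{c1}. By Proposition \ref{Poisson-lie dual}, compatibility of $\pitchfork$-lieons $\gG_i$ and $\gG_j$ is equivalent to $\ls P_{\gG_i}, P_{\gG_j}\rs = 0$, and once pairwise compatibility is known, bilinearity of the Schouten bracket immediately yields
$$
\ls \textstyle\sum_i \alpha_i P_{\gG_i},\ \sum_j \alpha_j P_{\gG_j} \rs = \sum_{i,j} \alpha_i \alpha_j \ls P_{\gG_i}, P_{\gG_j}\rs = 0
$$
(the diagonal terms vanish because each $P_{\gG_i}$ is a Poisson bivector). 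Hence any linear combination is Poisson and, by Proposition \ref{Poisson-lie dual} again, yields a Lie algebra structure assembled in one step from the lieons $\gG_i$, i.e., a first level algebra.

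The remaining task is therefore to verify $\dim(C_i\cap C_j)\geq n-3$ for any two centers in the family. In the co-pencil case all $C_i$ sit inside a common $(n-1)$-dimensional subspace $C$, so
$$
\dim(C_i\cap C_j) \geq \dim C_i + \dim C_j - \dim C = (n-2)+(n-2)-(n-1) = n-3.
$$
In the pencil case all $C_i$ contain a common $(n-3)$-dimensional subspace $C_0$, whence $\dim(C_i\cap C_j)\geq n-3$ trivially. Since $\dim C_i = n-2$, the intersection dimension is thus either $n-3$ or $n-2$ in both cases.

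Pairwise compatibility then follows case by case: Lemma \ref{b1} handles $\dim(C_i\cap C_j)=n-3$ and Lemma \ref{c1} handles $\dim(C_i\cap C_j)=n-2$. A crucial point is that neither of these two lemmas imposes any restriction on the choice of the distinguished lines $l_i\subset C_i$; the delicate case $\dim(C_i\cap C_j)=n-4$ of Lemma \ref{a1}, which \emph{does} force $l_i\subset C_{12}$, has been excluded a priori by the tightness of the family coming from Lemma \ref{tight}. There is no real obstacle here --- the proposition is essentially a bookkeeping consequence of the classification of tight families together with the pairwise lemmas of the previous subsection.
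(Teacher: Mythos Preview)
Your proof is correct and follows essentially the same route as the paper: the paper simply observes that a co-pencil or pencil is a tight family (Lemma~\ref{tight}), so that $\dim(C_i\cap C_j)\geq n-3$, and then invokes Lemmas~\ref{b1} and~\ref{c1}. Your version makes the dimension count and the bilinearity step for linear combinations explicit, but the underlying argument is identical.
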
 

Also we have.

\begin{proc}\label{pencil1}
Let $\{\gG_1,\dots,\gG_m\}$ be a family of $\pitchfork$-lieons 
characterized by pairs $(C_1,l_1),\dots,(C_m,l_m)$. If 
$\mathrm{span}\,(l_1,\dots,l_m)\subset\bigcap_{i=1}^m C_i$, then $\gG_i$'s 
are mutually compatible so that their linear combinations are first level Lie 
algebras. $\square$ 
\end{proc}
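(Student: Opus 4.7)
The plan is to reduce the statement to pairwise compatibility and then invoke the trichotomy provided by Lemmas \ref{a1}, \ref{b1}, \ref{c1}. Concretely, for mutually compatible Poisson bivectors $P_1,\dots,P_m$ with $\ls P_i,P_j\rs=0$ for all $i,j$, any linear combination $P=\sum c_i P_i$ satisfies
\begin{equation*}
\ls P,P\rs=\sum_{i,j}c_ic_j\ls P_i,P_j\rs=0,
\end{equation*}
so it is again a Poisson bivector; via Proposition \ref{Poisson-lie dual} this translates into the required statement for the Lie algebra structures. Thus it suffices to establish pairwise compatibility of the $\gG_i$'s.

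Fix $i\neq j$ and put $C_{ij}=C_i\cap C_j$. Since $n-4\leq\dim C_{ij}\leq n-2$, I would split into three cases according to this dimension. If $\dim C_{ij}=n-2$ or $n-3$, pairwise compatibility of $\gG_i$ and $\gG_j$ is immediate from Lemma \ref{c1} respectively Lemma \ref{b1}, with no further hypothesis needed. The remaining case $\dim C_{ij}=n-4$ is where the assumption on the $l_k$'s enters: by hypothesis $l_i,l_j\subset\mathrm{span}(l_1,\dots,l_m)\subset C_i\cap C_j=C_{ij}$, so the criterion of Lemma \ref{a1} applies and gives $\ls P_{\gG_i},P_{\gG_j}\rs=0$.

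Combining the three cases yields pairwise compatibility of the whole family, hence, by the opening observation, every linear combination $\sum c_i\gG_i$ carries a Lie algebra structure and is manifestly first level (being a sum of lieons). There is no genuine obstacle here: the proposition is essentially a bookkeeping consequence of the three case-by-case lemmas already proved, the only point requiring attention being to verify that the geometric hypothesis $\mathrm{span}(l_1,\dots,l_m)\subset\bigcap_i C_i$ exactly matches the condition $l_i,l_j\subset C_{ij}$ demanded in the delicate case A$_3$/A$_4$ of Lemma \ref{a1}.
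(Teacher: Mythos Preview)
Your proof is correct and follows essentially the same approach as the paper: reduce to pairwise compatibility, then split according to $\dim C_{ij}$ and invoke Lemmas \ref{b1}, \ref{c1} when $\dim C_{ij}>n-4$, and Lemma \ref{a1} together with the hypothesis $l_i,l_j\subset C_{ij}$ when $\dim C_{ij}=n-4$. The paper's proof is just a two-sentence compression of exactly this argument.
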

\begin{proof}
If $\mathrm{dim}\,C_i\cap C_j>n-4$, then $\gG_i$ and $\gG_j$ are compatible 
by lemmas \ref{b1} and \ref{c1}. Otherwise, they are compatible by lemma 
\ref{a1}.
\end{proof}

These constructions illustrate the diversity of combinations 
of $\pitchfork$-lieons that produce first level Lie algebras. 

\medskip
\subsection{Compatible $\pitchfork$- and $\between$- lieons.} 

A $\between$-lieon  $\gG$ on $V$  is  
up to proportionality characterized by its center $C=C_{\gG}$ and the 
derived algebra $\Delta=\Delta_{\gG}=[\gG,\gG]$. Since both $\Delta$ and 
$C$ are abelian, we identify them with the supporting them subspaces of $V$. 
Obviously, $\mathrm{dim}\,C=n-2, \,\mathrm{dim}\,\Delta=1, \,C\cap 
\Delta=\{0\}$. So, up to a scalar factor $\gG$ is completely determined 
by the pair $(C,\Delta)$ of subspaces of $V$ and vice-versa.

Consider now a $\pitchfork$-lieon $\gG_{\pitchfork}$ and  a $\between$-lieon
$\gG_{\between}$ on $V$ and the 
characterizing them pairs $(C_{\between},\Delta)$ and $(C_{\pitchfork},l)$. 
Put $C_{12}=C_{\between}\cap C_{\pitchfork}$. Then 
$n-4\leq\mathrm{dim}\,C_{12}\leq n-2$.
 
\begin{lem}\label{a2} If $\mathrm{dim}\,C_{12}=n-4$, then $\gG_{\between}$ and 
$\gG_{\pitchfork}$ are incompatible.
\end{lem}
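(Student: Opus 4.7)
The strategy mirrors the proof of the case $\dim C_{12}=n-4$ of Lemma \ref{a1}. The dimension identity $\dim(C_\between+C_\pitchfork)=2(n-2)-(n-4)=n$ yields $V=C_\between+C_\pitchfork$, which lets me adapt a basis to both lieons simultaneously: pick a basis $e_5,\ldots,e_n$ of $C_{12}=C_\between\cap C_\pitchfork$, extend it to a basis of $C_\between$ by two vectors $e_1,e_2$, and to a basis of $C_\pitchfork$ by two vectors $e_3,e_4$. Linear independence of the $e_i$'s follows from $V=C_\between\oplus\mathrm{span}(e_3,e_4)$.

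Next I would enumerate sub-cases according to the relative positions of $l\subset C_\pitchfork$ (either $l\subset C_{12}$ or $l\not\subset C_{12}$) and of $\Delta$ with respect to $C_\pitchfork$ (either $\Delta\subset C_\pitchfork$ or not, keeping in mind that $\Delta\cap C_\between=\{0\}$ is always imposed, which forces $\Delta\cap C_{12}=\{0\}$). Inside each sub-case, an adjustment of $e_1,e_2$ and of $e_3,e_4$ within their $C_{12}$-cosets normalizes $l$ and $\Delta$ to simple coordinate combinations. In the corresponding coordinates $(x_1,\ldots,x_n)$ on $V^*$ with anticommuting duals $\xi_i$, the bivector $P_\pitchfork$ takes the form $\lambda(x)\,\xi_1\xi_2$, where $\lambda(x)$ is a linear function encoding $l\subset C_\pitchfork$, and $P_\between$ takes the form $\mu(x)\,\xi_3\xi_4$ (after a possible change of coordinates inside $\mathrm{span}(e_3,e_4)$), where $\mu(x)$ is a linear function encoding $\Delta$ via its expansion in the decomposition $V=C_\between\oplus\mathrm{span}(e_3,e_4)$.

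For each sub-case I would then apply formula \eqref{Schouten in coordinates} and verify that $\ldb P_\between,P_\pitchfork\rdb$ is a nonzero trivector, thereby establishing incompatibility. The straightforward sub-cases are those where at least one of $l$ or $\Delta$ has a nontrivial component outside the other lieon's center: there exactly one of the terms $\partial_{x_i}P_\between\cdot\partial_{\xi_i}P_\pitchfork$ or $\partial_{\xi_i}P_\between\cdot\partial_{x_i}P_\pitchfork$ survives, producing a manifestly nonzero trivector supported on three distinct $\xi$-directions.

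The main obstacle is the most symmetric sub-case, in which $l\subset C_{12}$ and $\Delta\subset C_\pitchfork$. There the $x$-support of each bivector lies among coordinates dual to the other bivector's centre, so the partial derivatives entering \eqref{Schouten in coordinates} initially appear to pair up with vanishing factors. To extract a nonzero residue one has to exploit that $\Delta\cap C_{12}=\{0\}$ forces $\mu$ to depend genuinely on a coordinate outside $C_{12}$, and to track carefully how the adjustments used to place $\Delta$ into a coordinate direction interact with the $C_{12}$-component of the generator of $\Delta$. Handling this sub-case — and making sure the normalization step does not secretly trivialise a Schouten contribution — is the delicate point on which the argument hinges.
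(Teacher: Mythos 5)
Your basis and the normal forms of the two bivectors are exactly right: with $C_{12}=\langle e_5,\dots,e_n\rangle$, $C_{\between}=\langle e_1,e_2\rangle\oplus C_{12}$ and $C_{\pitchfork}=\langle e_3,e_4\rangle\oplus C_{12}$ one gets $P_{\pitchfork}=\lambda(x)\,\xi_1\xi_2$ with $\lambda$ supported on $x_3,\dots,x_n$, and $P_{\between}=\mu(x)\,\xi_3\xi_4$ with $(\mu_3,\mu_4)\neq(0,0)$ because $\Delta\cap C_{\between}=\{0\}$. Formula (\ref{Schouten in coordinates}) then gives
$$
\ls P_{\between},P_{\pitchfork}\rs=-\lambda\bigl(\mu_1\xi_2\xi_3\xi_4-\mu_2\xi_1\xi_3\xi_4\bigr)-\mu\bigl(\lambda_3\xi_1\xi_2\xi_4-\lambda_4\xi_1\xi_2\xi_3\bigr),
$$
a sum of terms lying in four linearly independent monomial directions, so it vanishes if and only if $\mu_1=\mu_2=0$ and $\lambda_3=\lambda_4=0$, i.e., if and only if $\Delta\subset C_{\pitchfork}$ and $l\subset C_{12}$. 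This confirms all your ``straightforward'' sub-cases, but the residue you hope to extract in the remaining one does not exist: there the bracket is identically zero and the two lieons are genuinely compatible. Concretely, for $n=5$ take $[e_1,e_2]_{\pitchfork}=e_5$ and $[e_4,e_3]_{\between}=e_3$ (all other products zero); then $C_{\pitchfork}=\langle e_3,e_4,e_5\rangle$, $C_{\between}=\langle e_1,e_2,e_5\rangle$, $\dim C_{12}=1=n-4$, and the sum of the two brackets is the direct sum of a Heisenberg algebra on $\langle e_1,e_2,e_5\rangle$ and a $\between$ on $\langle e_3,e_4\rangle$, hence a Lie algebra. So the lemma is false for $n\geq 5$, and your proof cannot be completed precisely at the point you flagged as delicate.

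For comparison, the paper's own proof first factors out $C_{12}$ and works on the $4$-dimensional quotient, where $C_{12}=0$ and the configuration $l\subset C_{12}$ is vacuous; its cases $\mathrm{I}_1$--$\mathrm{I}_3$ all tacitly assume that $l$ survives the quotient. When $l\subset C_{12}$ the quotient $\pitchfork$-lieon is abelian, the quotients are trivially compatible, and the reduction (which is only valid in the direction ``quotients incompatible $\Rightarrow$ originals incompatible'') yields no information. So the paper's argument has the same blind spot that your direct computation exposes. The statement consistent with your computation is: for $\dim C_{12}=n-4$ the lieons are compatible if and only if $l\subset C_{12}$ and $\Delta\subset C_{\pitchfork}$; Lemma \ref{a2}, Proposition \ref{puni} and its corollary should be corrected accordingly.
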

\begin{proof} 
First, note that  $\gG_{\between}$ and $\gG_{\pitchfork}$ are compatible 
iff the factorized lieons $\gG_{\between}/C_{12}$ and 
$\gG_{\pitchfork}/C_{12}$ are compatible. So, we can assume that $n=4$ and 
$C_{12}=0 \Leftrightarrow \;V=C_{\between}\oplus C_{\pitchfork}$ with 
$\mathrm{dim}\,C_{\between}=\mathrm{dim}\,C_{\pitchfork}=2$. Projections 
defined by this splitting of $V$ send the line $\Delta$ to subspaces 
$\Delta^{\between}\subset C_{\between}$ and $\Delta^{\pitchfork}\subset 
C_{\pitchfork}$, respectively. Since $\Delta$ does not belong to 
$C_{\between}, \,\mathrm{dim}\,\Delta^{\pitchfork}=1$. There may occur one 
of the following three cases. 

I$_1$ : $\mathrm{dim}\,\Delta^{\between}=1$ and $C_{\pitchfork}=l\oplus 
\Delta^{\pitchfork}$. Let $e_1\in\Delta^{\between}, ,\ 
e_2\in\Delta^{\pitchfork}$ be such that $e_1+e_2$ generates $\Delta$. If 
$e_3$ generates $l$ and $e_1,e_4$ generate $C_{\between}$, then 
$e_1,\dots,e_4$ is a basis in $V$ and, in the corresponding coordinates, 
Poisson bivectors associated with $\gG_{\between}$ and $\gG_{\pitchfork}$ 
are proportional to $P_1=(x_1+x_2)\xi_2\xi_3$ and $P_2=x_3\xi_1\xi_4$, 
respectively. Now a computation shows that $\ls P_1,P_2\rs\neq 0$. 

I$_2$ : $\mathrm{dim}\,\Delta^{\between}=0$ and $C_{\pitchfork}=l\oplus 
\Delta^{\pitchfork}$. Consider a basis 
$e_1,\dots,e_4$ in $V$ with $e_1,e_2\in C_{\between}, \,e_3\in l, \,e_4\in 
\Delta^{\pitchfork}$. In the corresponding coordinates  $P_1$ and $P_2$ 
are proportional to $x_4\xi_3\xi_4$ and $x_3\xi_1\xi_2$, respectively, and 
 one finds that $\ls P_1,P_2\rs\neq 0$. 

I$_3$ : $\mathrm{dim}\,\Delta^{\between}=0$ and $l=\Delta^{\pitchfork}$. 
Similarly, in a basis $e_1,\dots,e_4$ in $V$ with $e_1,e_2\in C_{\between}, 
\,e_3\in l, \,e_4\in \C_{\pitchfork}$  $P_1$ and $P_2$ 
are proportional to $x_3\xi_3\xi_4$ and $x_3\xi_1\xi_2$, respectively, and a 
computation show that  $\ls P_1,P_2\rs\neq 0$. 
\end{proof}

Put $C=C_{\between}+C_{\pitchfork}$ and note that $\mathrm{dim}\,C=n-1$ iff 
$\mathrm{dim}\,C_{12}=n-3$. In this case there are two possibilities : $\Delta\cap 
C=\{0\}$ and $\Delta\subset C$. 

\begin{lem}\label{b2} If $\mathrm{dim}\,C_{12}=n-3$ and $\Delta\cap 
C=\{0\}$, then $\gG_{\between}$ and $\gG_{\pitchfork}$ are incompatible.
\end{lem}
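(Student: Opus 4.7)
The plan is to mirror the strategy used for Lemma~\ref{a2}: first reduce the problem to a three-dimensional situation by factoring out the common central ideal $C_{12}$, then verify incompatibility by an explicit Schouten bracket calculation.

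Since $C_{12}\subset C_{\between}\cap C_{\pitchfork}$ is contained in the center of both lieons, it is an ideal of each, and compatibility of $\gG_{\between}$ and $\gG_{\pitchfork}$ on $V$ is equivalent to compatibility of the induced lieons on the quotient $V/C_{12}$ (the same argument as in Lemma~\ref{a2}). Under the hypotheses $\dim C_{12}=n-3$ and $\Delta\cap C=\{0\}$ this reduces us to the case $n=3$, $C_{12}=\{0\}$, with $\dim C_{\between}=\dim C_{\pitchfork}=1$, $l=C_{\pitchfork}$, and $V=C_{\between}\oplus C_{\pitchfork}\oplus \Delta$ a direct sum of three lines.

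Next I would pick a basis $e_1\in C_{\between}$, $e_2\in l=C_{\pitchfork}$, $e_3\in\Delta$ of the reduced space $V$. In this basis the only nontrivial bracket of $\gG_{\between}$ is, up to rescaling, $[e_2,e_3]_{\between}=\alpha e_3$ with $\alpha\neq 0$, because $e_1$ is central and $e_3$ spans the derived line; similarly the only nontrivial bracket of $\gG_{\pitchfork}$ is $[e_1,e_3]_{\pitchfork}=\beta e_2$ with $\beta\neq 0$ (here the center and derived line of the $\pitchfork$-lieon coincide with $\langle e_2\rangle$). Therefore the associated linear Poisson bivectors on $V^*$ read
\[
P_{\between}=\alpha\,x_3\,\xi_2\xi_3,\qquad P_{\pitchfork}=\beta\,x_2\,\xi_1\xi_3.
\]

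Finally, applying formula~(\ref{Schouten in coordinates}), a term-by-term inspection shows that the only nonvanishing contribution comes from the index $i=3$ in the first sum (giving $\partial_{x_3}P_{\between}\cdot\partial_{\xi_3}P_{\pitchfork}$), since the would-be cross term at $i=2$ contains the factor $\xi_3^{\,2}=0$. A short calculation yields $\ls P_{\between},P_{\pitchfork}\rs=\alpha\beta\,x_2\,\xi_1\xi_2\xi_3\neq 0$, which proves the incompatibility. The argument contains no real obstacle: the reduction step is forced once $C_{12}$ is identified as a common central ideal, and the remaining Schouten computation is essentially a single nonvanishing three-term.
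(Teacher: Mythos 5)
Your reduction step contains a genuine gap. Factoring out $C_{12}$ is only legitimate when the quotient does not destroy the structures being compared, and here that fails precisely when $l\subset C_{12}$ — a case that is perfectly possible under the hypotheses of the lemma (take $n=4$, $C_{\pitchfork}=\langle e_1,e_4\rangle$, $l=\langle e_4\rangle$, $C_{\between}=\langle e_2,e_4\rangle$, $\Delta=\langle e_3\rangle$, so that $C_{12}=l$). In that situation the quotient of $\gG_{\pitchfork}$ by $C_{12}$ is abelian, hence trivially compatible with anything, while the original pair is \emph{not} compatible: the mixed Jacobiator of the two brackets is nonzero but takes its values inside $C_{12}$ (in the example above, $[[e_3,e_1]_{\between},e_2]_{\pitchfork}=e_4\in C_{12}$, all other terms vanishing), so it is invisible in the quotient. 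The general principle "compatibility on $V$ iff compatibility on $V/C_{12}$ for a common central ideal" is false; what makes it work in Lemma~\ref{a2} and in your subcase is that the mixed Jacobiator lives in $l+\Delta$ and one can check $(l+\Delta)\cap C_{12}=\{0\}$ when $l\not\subset C_{12}$ and $\Delta\cap C=\{0\}$. Your phrase "the reduction step is forced once $C_{12}$ is identified as a common central ideal" is exactly where the argument breaks.

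The paper therefore splits the proof into two cases. When $l\not\subset C_{12}$ it reduces to $n=3$ and computes essentially what you computed (your bivectors $x_3\xi_2\xi_3$ and $x_2\xi_1\xi_3$ are the paper's $x_3\xi_1\xi_3$ and $x_1\xi_2\xi_3$ up to relabelling, and your Schouten computation there is correct). When $l\subset C_{12}$ it writes $C_{12}=l\oplus C'$ with $\dim C'=n-4$, factors only by $C'$, reduces to $n=4$ with $C_{\between}\cap C_{\pitchfork}=l$, and does a second explicit computation ($P_1\sim x_3\xi_1\xi_3$, $P_2\sim x_4\xi_2\xi_3$, $\ls P_1,P_2\rs\neq 0$). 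To repair your proof you must add this second branch; as written it establishes the lemma only on the open stratum $l\not\subset C_{12}$.
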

\begin{proof} 
In this case $V=\Delta\oplus C$. If $l$ does not belong to $C_{12}$, then, 
as in the preceding lemma, the factorization by $C_{12}$ reduces the problem 
to $n=3$. If $n=3$, then 
$\mathrm{dim}\,C_{\between}=\mathrm{dim}\,C_{\pitchfork}=1, 
\,l=C_{\pitchfork}$ and $V=\Delta\oplus C_{\between}\oplus l$. In a basis 
$e_1,e_2,e_3$ of $V$ with $e_1\in l, \,e_2\in C_{\pitchfork}, 
\,e_3\in\Delta$ we have $P_1\sim x_3\xi_1\xi_3, \,P_2\sim x_1\xi_2\xi_3$ 
and find that $\ls P_1,P_2\rs\neq 0$. 

If $l\subset C_{12}$, then $C_{12}=l\oplus C', \mathrm{dim}\,C'=n-4$, and 
the factorization by $C'$ reduces the situation to $n=4$. In this 
particular case $\mathrm{dim}\,C_{\between}=\mathrm{dim}\,C_{\pitchfork}=2$ 
and $C_{\between}\cap C_{\pitchfork}=l$. In 
a basis $e_1,\dots,e_4$  such that $e_1\in C_{\pitchfork}, e_2\in 
C_{\between}, e_3\in\Delta, e_4\in l$ we 
have $P_1\sim x_3\xi_1\xi_3 , P_2\sim x_4\xi_2\xi_3$ with $\ls P_1,P_2\rs\neq 0$. 
\end{proof}

\begin{lem}\label{c2} If $\mathrm{dim}\,C_{12}=n-3$ and $\Delta\subset C$, then 
$\gG_{\between}$ and $\gG_{\pitchfork}$ are compatible.
\end{lem}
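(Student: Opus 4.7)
The plan is to mirror the enumeration-plus-coordinate-computation strategy used in lemmas \ref{a1}--\ref{c1} and \ref{a2}--\ref{b2}, adapted to the hypothesis $\dim C_{12}=n-3$, $\Delta\subset C$. First I would reduce to small $n$ by a factorization argument: since the Schouten bracket of the associated linear Poisson bivectors depends only on the images of $\gG_\between$ and $\gG_\pitchfork$ modulo any central abelian subspace, I can quotient by a subspace $C'\subset C_{12}$ of maximal dimension avoiding $l$. This splits naturally into two principal cases:

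\textbf{Case A:} $l\not\subset C_{12}$ (equivalently $l\not\subset C_\between$). Then $C'$ can be chosen of dimension $n-3$, so after factorization we are in $\dim V=3$ with $\dim C_\between=\dim C_\pitchfork=1$, $C_\between\cap C_\pitchfork=\{0\}$ and $l=C_\pitchfork$. Here $C=C_\between\oplus C_\pitchfork$ is the unique hyperplane and $\Delta\subset C$, $\Delta\neq C_\between$ gives two sub-cases: (A1) $\Delta=l=C_\pitchfork$ and (A2) $\Delta$ is transverse to both lines $C_\between$, $C_\pitchfork$.

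\textbf{Case B:} $l\subset C_{12}$. Then $C'$ is a complement of $l$ in $C_{12}$ of dimension $n-4$, reducing to $\dim V=4$ with $C_{12}=l$, $C_\between=\langle e_2,e_3\rangle$, $C_\pitchfork=\langle e_1,e_3\rangle$ where $e_3\in l$, and $e_4\notin C$ completes the basis. Here $\Delta\subset\langle e_1,e_2,e_3\rangle$ with $\Delta\cap C_\between=\{0\}$, so $\Delta=\langle e_1+\beta e_2+\gamma e_3\rangle$ for some scalars, giving one continuous family of sub-cases.

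For each sub-case I would pick a normal basis as above, read off the only non-trivial structure constants (a single bracket for each lieon, with the derived element as value), and write the Poisson bivectors explicitly: for instance in Case A1, $P_1\sim x_3\xi_1\xi_3$ and $P_2\sim x_3\xi_1\xi_2$; in Case B, $P_1\sim(x_1+\beta x_2+\gamma x_3)\xi_4\xi_1$ and $P_2\sim x_3\xi_2\xi_4$. I would then verify $\ls P_1,P_2\rs=0$ directly from formula (\ref{Schouten in coordinates}). The vanishing is forced by two mechanisms: either the $\xi$-variable that would be produced by a cross-differentiation is already present (squares to zero), or the function being differentiated in $x_k$ does not involve $x_k$. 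A brief inspection shows that in every sub-case listed above, each of the three or four terms in $\ls P_1,P_2\rs$ falls into one of these two categories.

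The main obstacle is purely bookkeeping: ensuring the sub-case list (especially in Case B, where $\Delta$ varies in a 2-parameter family inside $C$) is exhaustive and that the choice of basis respects the constraints $\Delta\cap C_\between=\{0\}$, $l\subset C_\pitchfork$. No new conceptual idea is required beyond the pattern of lemmas \ref{a2}--\ref{b2}; the content of the lemma is that the obstruction to compatibility that was present in lemma \ref{b2} (namely $\Delta\cap C=\{0\}$, forcing a ``transverse'' derived direction) disappears as soon as $\Delta$ is absorbed into $C_\between+C_\pitchfork$, because then $\Delta$ and $l$ can be simultaneously accommodated by commuting anticommutative polynomial factors.
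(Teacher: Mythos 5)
Your proposal is correct and follows essentially the same route as the paper: quotient by a central subspace inside $C_{12}$ (all of $C_{12}$ when $l\not\subset C_{12}$, a complement of $l$ in $C_{12}$ otherwise) to reduce to $n=3$ resp.\ $n=4$, choose an adapted basis, and verify $\ls P_1,P_2\rs=0$ directly from formula (\ref{Schouten in coordinates}); your explicit bivectors in both cases coincide with the paper's up to relabeling, with your parameters $\beta,\gamma$ playing the role of the paper's $\lambda,\mu$. The only cosmetic difference is that you split the first case into the sub-cases $\Delta=l$ and $\Delta$ transverse, which the paper absorbs into the single parameter $\lambda$.
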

\begin{proof} 
In this case $C=\Delta\oplus C_{\between}$ and $V=W\oplus C$ for a 
$1$-dimensional subspace $W$ of $V$. If $l$ does not belong to $C_{12}$, the 
factorization of $V$ by $C_{12}$ reduces the situation to a 
$3$-dimensional one in which $C=C_{\pitchfork}\oplus C_{\between}$, \, 
$\mathrm{dim}\,C_{\between}=\mathrm{dim}\,C_{\pitchfork}=1, 
\,l=C_{\pitchfork}$ and $V=C_{\pitchfork}\oplus C_{\between}\oplus W$. 
Consider a basis $e_1,e_2,e_3$ in $V$ with $e_1\in C_{\pitchfork}, \,e_2\in 
C_{\between}, e_3\in W$. Since $\Delta\subset C_{\pitchfork}\oplus 
C_{\between}$ and $\Delta\cap C_{\between}=\{0\}, \,\Delta$ is generated by 
a vector of the form $e_1+\lambda e_2, \lambda\in \gk,$. In the 
corresponding to such a basis coordinates we have $P_1\sim x_1\xi_2\xi_3, 
P_2\sim (x_1+\lambda x_2)\xi_1\xi_3$ with $\ls P_1,P_2\rs=0$. 

If $l\subset C_{12}$, then $C_{12}=l\oplus C', \mathrm{dim}\,C'=n-4$, and 
the factorization of $V$ by $C'$ reduces the situation to $n=4$. In this 
case $\mathrm{dim}\,C_{\between}= \mathrm{dim}\,C_{\pitchfork}=2$ and 
$\mathrm{dim}\,C_{\between}\cap \mathrm{dim}\,C_{\pitchfork}=l$. Consider a 
basis $e_1\dots,e_4$ in $V$ such that $e_1\in C_{\pitchfork}, e_2\in 
C_{\between}, e_3\in l, e_4\in W$. Since $\Delta\cap C_{\between}=\{0\}$,  
$\Delta\subset C=C_{\between}+C_{\pitchfork}$ is generated by a vector 
of the form  $e_1+\lambda e_2+\mu e_3$. In the 
corresponding coordinates, we have $P_1\sim x_3\xi_2\xi_4, P_2\sim (x_1+\lambda 
x_2+\mu x_3)\xi_1\xi_4$ and $\ls P_1,P_2\rs=0$. 
\end{proof}
\begin{lem}\label{c2} 
If $\mathrm{dim}\,C_{12}=n-2$, then $\gG_{\between}$ and $\gG_{\pitchfork}$ 
are compatible. 
\end{lem}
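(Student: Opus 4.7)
The plan is to observe that the dimension hypothesis forces the two centers to coincide, after which compatibility becomes a triviality. Indeed, $\dim C_\between=\dim C_\pitchfork=n-2$ together with $\dim C_{12}=n-2$ give $C_\between=C_\pitchfork=:C$. I would then fix a $2$-dimensional complement $U$ of $C$ with basis $u_1,u_2$ and extend it by a basis $f_1,\dots,f_{n-2}$ of $C$. Since $C$ is the center of both $\gG_\between$ and $\gG_\pitchfork$, the only basis commutator that can be nonzero in either structure is $[u_1,u_2]$; consequently $C$ is also central for the combined bracket $[\,\cdot\,,\,\cdot\,]\DEF[\,\cdot\,,\,\cdot\,]_\between+[\,\cdot\,,\,\cdot\,]_\pitchfork$, and only $[u_1,u_2]$ gets modified by the sum.

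Next I would verify the Jacobi identity for $[\,\cdot\,,\,\cdot\,]$ on every basis triple $(e_i,e_j,e_k)$. If at least one of them lies in $C$, then each term of the Jacobi identity contains an outer bracket $[v,c]$ with $c\in C$, which vanishes by centrality. If all three lie in $U$, then by pigeonhole two of them coincide, and for such a triple the Jacobi identity is automatically zero from skew-symmetry alone, with no further structural input. Extending by bilinearity, the combined bracket satisfies Jacobi and is therefore a Lie product, which by definition means that $\gG_\between$ and $\gG_\pitchfork$ are compatible.

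An equivalent computational check confirms this via the Schouten bracket: in dual coordinates both Poisson bivectors take the form $P_\between=F_1\,\eta_1\eta_2$ and $P_\pitchfork=F_2\,\eta_1\eta_2$, with $F_1,F_2$ linear and $\eta_1,\eta_2$ the anticommuting variables dual to $u_1,u_2$. Formula\,(\ref{Schouten in coordinates}) then makes $\ldb P_\between,P_\pitchfork\rdb$ vanish term by term: the $\partial/\partial\xi_i$ derivatives annihilate both bivectors since neither contains any $\xi_i$, while the $\partial/\partial\eta_j$ derivatives produce factors $\eta_1\eta_2\eta_k$ that vanish because $\eta_k^2=0$. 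There is no real obstacle here; the full content of the lemma is the forced identification $C_\between=C_\pitchfork$.
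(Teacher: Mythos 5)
Your proof is correct and follows essentially the same route as the paper: the paper likewise notes that $\dim C_{12}=n-2$ forces $C_{\between}=C_{\pitchfork}$, chooses a basis adapted to this common center so that both Poisson bivectors become multiples of $\xi_1\xi_2$ (namely $x_3\xi_1\xi_2$ and $x_1\xi_1\xi_2$ in its coordinates), and concludes $\ls P_1,P_2\rs=0$ from formula (\ref{Schouten in coordinates}). Your direct verification of the Jacobi identity is just the same computation restated at the Lie-bracket level, so there is nothing substantively new in either direction.
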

\begin{proof} 
In this case $C_{\between}=C_{\pitchfork}=C$ and $V=\Delta\oplus W\oplus C, 
\,\mathrm{dim}\,W=1$. In a basis $e_1\dots,e_n$ in $V$ such that $e_1\in 
\Delta, \,e_2\in W, \,e_3\in l\subset C, \,e_4,\dots,e_n\in C$, we have
 $P_1\sim x_3\xi_1\xi_2, P_2\sim x_1\xi_1\xi_2$ and see that
$\ls P_1,P_2\rs=0$.  
\end{proof}

A summary of the above lemmas is
\begin{proc}\label{puni}
A $\between$-lieon and  a 
$\pitchfork$-lieon on $V$ are compatible iff the intersection of their centers is not 
generic, i.e., of dimension grater than $n-4$. 
\end{proc}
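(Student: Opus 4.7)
The strategy is exactly parallel to the analysis carried out for two $\pitchfork$-lieons: perform a case analysis on $d = \dim(C_{\between}\cap C_{\pitchfork})$, which is forced to lie in $\{n-4, n-3, n-2\}$ because both centres have codimension $2$. In each case I would pick a basis of $V$ adapted to the mutual position of the four subspaces $C_{\between}, C_{\pitchfork}, \Delta, l$, write down the two Poisson bivectors $P_{\gG_{\between}}$ and $P_{\gG_{\pitchfork}}$ in the corresponding coordinate chart (they are linear and of the simple form $\mu\,x_i\xi_j\xi_k$), and compute $\ls P_{\gG_{\between}}, P_{\gG_{\pitchfork}}\rs$ directly from formula (\ref{Schouten in coordinates}).

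The main reduction, used throughout, is that compatibility of two lieons is preserved by passing to the quotient by any subspace of $C_{12}$ that is contained in the kernel of both brackets. This cuts the problem down to $n=3$ or $n=4$ and makes the explicit case enumeration finite. The sub-cases to check are organised by (a) whether $l$ lies in $C_{\between}$ (equivalently, in $C_{12}$), and (b) whether $\Delta$ lies in $C_{\pitchfork} + C_{\between}$ or not. For $d=n-4$ there are three essentially distinct configurations of $(\Delta, l)$ relative to $C_{\between}\oplus C_{\pitchfork}$; for $d=n-3$ there are two, distinguished by $\Delta\cap(C_{\between}+C_{\pitchfork})$; and for $d=n-2$ the lieons share a common centre, leaving only the position of $\Delta$ and $l$ inside a $2$-dimensional complement.

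The verifications in each configuration are mechanical: once the normal form for $P_{\gG_{\between}}$ and $P_{\gG_{\pitchfork}}$ is written, the Schouten bracket is a sum of at most a handful of monomials in $(x_i,\xi_j)$ and one sees at a glance whether it is zero. For $d=n-4$ all configurations yield a nonzero bracket (incompatibility); for $d=n-2$ all configurations yield a vanishing bracket (compatibility); for $d=n-3$ the answer depends on the extra arithmetic condition $\Delta\subset C_{\between}+C_{\pitchfork}$, and in the generic position this condition is automatic, which is the sense in which the proposition is phrased.

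The main obstacle is not difficulty but bookkeeping: one has to be sure the list of configurations up to isomorphism of the data $(V; C_{\between}, \Delta; C_{\pitchfork}, l)$ is exhaustive in each of the three dimension regimes, because missing a configuration could hide either a spurious compatibility or a spurious obstruction. The cleanest way I would organise this is to fix, in each case, the flag $C_{12}\subset C_{\between}\cap C_{\pitchfork}\subset C_{\between}+C_{\pitchfork}\subset V$ and then classify where $\Delta$ and $l$ can sit relative to this flag; each choice gives a canonical basis in which the two bivectors take one of a small number of normal forms, and the Schouten bracket is evaluated once and for all. The proposition then follows by assembling the four lemmas that treat these regimes.
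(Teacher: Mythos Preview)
Your approach is exactly the paper's: the proposition is stated there as a summary of four lemmas (\ref{a2}, \ref{b2}, and the two labelled \ref{c2}) which carry out precisely the case split on $d=\dim C_{12}\in\{n-4,n-3,n-2\}$, reduce modulo a subspace of $C_{12}$ to $n=3$ or $4$, enumerate the relative positions of $\Delta$ and $l$, and compute the Schouten bracket in adapted coordinates. You have also correctly flagged that the $d=n-3$ case splits further according to whether $\Delta\subset C_{\between}+C_{\pitchfork}$, and that the proposition as an iff is therefore slightly informal; the paper's own lemmas confirm this (lemma~\ref{b2} gives incompatibility when $\Delta\cap C=\{0\}$). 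One small correction: your remark that ``in the generic position this condition is automatic'' is backwards --- a line $\Delta$ is generically \emph{not} contained in the hyperplane $C$, so the generic configuration at $d=n-3$ is the incompatible one, not the compatible one.
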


The following assertion immediately results from propositions \ref{pencil} 
and \ref{puni}  
 
\begin{cor}
Let $\gG_1,\dots,\gG_m$ be $\pitchfork$-lieons and $\gG$ a 
$\between$-lieon. If centers of all these lieons are contained in a 
common hyperplane in $V$, then  $\gG+\alpha_1\gG_1+\dots+\alpha_m\gG_m, 
\,\alpha_1,\dots,\alpha_m\in \gk$, is a non-unimodular first level Lie 
algebra. 
\end{cor}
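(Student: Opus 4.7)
The plan is to reduce everything to the already-established compatibility propositions and then read off non-unimodularity from the additivity of modular fields (Proposition \ref{PFA}). Let $H \subset V$ denote the common hyperplane containing $C_\gG, C_1, \dots, C_m$, so $\dim H = n-1$.

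First I would establish that the family $\{\gG_1,\dots,\gG_m\}$ of $\pitchfork$-lieons is mutually compatible. Each center $C_i$ has dimension $n-2$ and lies in $H$, so $\{C_i\}$ is a co-pencil in the sense of Lemma \ref{tight}. Hence Proposition \ref{pencil} (with arbitrary assignments of derived lines $l_i \subset C_i$) yields pairwise compatibility of $\gG_1,\dots,\gG_m$, and this is unaffected by rescaling each $\gG_i$ by $\alpha_i \in \gk$ since the associated pair $(C_i, l_i)$ is unchanged.

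Next I would check that $\gG$ is compatible with each $\gG_i$. Since $C_\gG$ and $C_i$ both lie in the $(n-1)$-dimensional hyperplane $H$, a dimension count gives $\dim(C_\gG \cap C_i) \geq (n-2)+(n-2)-(n-1) = n-3$, which is strictly greater than $n-4$. Proposition \ref{puni} (the compatibility criterion for a $\between$-lieon and a $\pitchfork$-lieon) then gives compatibility of $\gG$ with $\alpha_i \gG_i$ for every $i$. Combined with the preceding step, this shows that $\gG, \alpha_1\gG_1, \dots, \alpha_m\gG_m$ are mutually compatible lieons, so their sum is by definition a first level Lie algebra.

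Finally, for non-unimodularity, I would invoke additivity of modular vectors (Proposition \ref{PFA}): the modular field of $\gG+\alpha_1\gG_1+\dots+\alpha_m\gG_m$ equals $\Xi_\gG + \sum_i \alpha_i \Xi_{\gG_i}$. Since each $\pitchfork$-lieon is unimodular, $\Xi_{\gG_i}=0$ for all $i$, while $\Xi_\gG \neq 0$ because $\between$ (and hence $\between_n$) is non-unimodular. Therefore the modular vector of the assembled algebra coincides with the nonzero vector $\Xi_\gG$, proving non-unimodularity. There is no genuine obstacle here: the only minor subtlety is making sure that scaling a lieon by $\alpha_i \in \gk$ preserves both its class and the compatibility criteria, which is immediate since the characterising pair $(C_i, l_i)$ is invariant under rescaling.
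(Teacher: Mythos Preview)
Your argument is correct and follows essentially the same route as the paper, which simply states that the result ``immediately results from propositions \ref{pencil} and \ref{puni}''. You supply the details the paper omits: the co-pencil argument for mutual compatibility of the $\pitchfork$-lieons, the dimension bound $\dim(C_\gG\cap C_i)\ge n-3>n-4$ to invoke Proposition~\ref{puni}, and the additivity of modular fields (Proposition~\ref{PFA}) to conclude non-unimodularity from $\Xi_{\gG}\neq 0$ and $\Xi_{\gG_i}=0$.
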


\medskip
\subsection{Compatible $\between$-lieons.} 
 
Consider two $\between$-lieons $\gG_i, \,i=1,2,$ on a vector space 
$V$ and the characterizing them pairs $(\Delta_i,C_i), \,i=1,2$. Recall 
that $\mathrm{dim}\,\Delta_i=1, \,\mathrm{dim}\,C_i=n-2$ and $\Delta_i\cap 
C_i=\{0\}$. Put $C_{12}=C_1\cap C_2$. Obviously, $n-4\leq 
\mathrm{dim}\,C_{12}\leq n-2$ and compatibility of $\gG_i$'s is 
equivalent to that of factorized $\between$-lieons $\gG_i/C_{12}$'s. 

\begin{lem}\label{a3} 
If $\mathrm{dim}\,C_{12}=n-4$, then $\gG_1$ and $\gG_2$ are compatible iff
$\Delta_1\subset C_2$ and $\Delta_2\subset C_1$. 
\end{lem}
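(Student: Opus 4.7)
The plan is to follow the template of lemmas \ref{a1} and \ref{a2}: first reduce to $n=4$ by quotienting out the common intersection $C_{12}$, then perform a three-case analysis on the relative position of $\Delta_1,\Delta_2$ with respect to $C_1,C_2$, computing the Schouten bracket of $P_1,P_2$ in an adapted basis each time.

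For the reduction, $C_{12}$ lies in the center of both $\gG_1$ and $\gG_2$, and the conditions $\Delta_i\cap C_i=\{0\}$ together with $C_{12}\subset C_i$ force $\Delta_i\cap C_{12}=\{0\}$. Choose cartesian coordinates on $V^*$ so that $x_5,\dots,x_n$ dualize a basis of $C_{12}$; then each $P_i=P_{\gG_i}$ depends only on $x_1,\dots,x_4,\xi_1,\dots,\xi_4$ (its sole $x$-factor is dual to a generator of $\Delta_i\not\subset C_{12}$, and its two $\xi$-factors are dual to the action element and to $\Delta_i$, neither of which lies in $C_i\supset C_{12}$). By (\ref{Schouten in coordinates}), $\ls P_1,P_2\rs$ then coincides with the Schouten bracket of the induced $\between$-lieons on $V/C_{12}$, so we may assume $n=4$ and $V=C_1\oplus C_2$ with $\dim C_i=2$.

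Three cases remain. In Case~A ($\Delta_1\subset C_2$ and $\Delta_2\subset C_1$), choose a basis $(e_1,e_2)$ of $C_1$ with $e_2\in\Delta_2$ and a basis $(e_3,e_4)$ of $C_2$ with $e_4\in\Delta_1$; then $e_3$ is an action element for $\gG_1$ and $e_1$ for $\gG_2$, so $P_1=\alpha_1 x_4\xi_3\xi_4$ and $P_2=\alpha_2 x_2\xi_1\xi_2$. These polynomials use disjoint coordinates, hence $\ls P_1,P_2\rs=0$. In Case~B (only $\Delta_1\subset C_2$, say), a normalization yields $\Delta_1=\mathrm{span}(e_4)$ and $\Delta_2=\mathrm{span}(e_1+\gamma e_3+\delta e_4)$ with $(\gamma,\delta)\neq(0,0)$, giving $P_1\propto x_4\xi_3\xi_4$ and $P_2\propto(x_1+\gamma x_3+\delta x_4)\xi_1\xi_2$; a direct application of (\ref{Schouten in coordinates}) produces $\ls P_1,P_2\rs\propto x_4(\delta\,\xi_1\xi_2\xi_3-\gamma\,\xi_1\xi_2\xi_4)\neq 0$. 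In Case~C (neither inclusion), generic normalization gives $\Delta_1=\mathrm{span}(e_1+e_3)$ and $\Delta_2=\mathrm{span}(e_2+e_4)$, hence $P_1\propto(x_1+x_3)\xi_3\xi_4$ and $P_2\propto(x_2+x_4)\xi_1\xi_2$; the same computation yields $\ls P_1,P_2\rs\propto(x_2+x_4)\xi_2\xi_3\xi_4-(x_1+x_3)\xi_1\xi_2\xi_3\neq 0$, and the degenerate subcases (one coefficient vanishing) are handled analogously.

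The main obstacle is the bookkeeping of the case analysis, not any single computation: the proof amounts to writing $P_1,P_2$ in an adapted basis and applying (\ref{Schouten in coordinates}) four times. The conceptual point is that compatibility at the minimal codimension $\dim C_{12}=n-4$ forces a \emph{variable separation} in which the two Poisson bivectors occupy disjoint coordinate blocks, and for $\between$-lieons this disjointness is captured precisely by the symmetric pair of inclusions $\Delta_1\subset C_2$ and $\Delta_2\subset C_1$.
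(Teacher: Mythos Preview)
Your argument is correct and follows the same template as the paper: reduce to $n=4$ by quotienting out $C_{12}$, then compute Schouten brackets in adapted coordinates. The organization of the case analysis differs, however. The paper splits according to $L=\langle\Delta_1,\Delta_2\rangle$ and the dimensions of $L\cap C_i$, giving four subcases $K_1$--$K_4$; you split directly on the truth values of the two inclusions $\Delta_1\subset C_2$ and $\Delta_2\subset C_1$, giving three cases. Your organization is closer to the statement being proved, while the paper's is closer to a normal-form classification of the pair $(\Delta_1,\Delta_2)$.

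Your Case~C is terse: the phrase ``degenerate subcases \dots\ handled analogously'' in fact covers three of the paper's four cases ($K_2$, $K_3$ with $\lambda\mu\neq 0$, and $K_4$ where $\Delta_1=\Delta_2$). This is not a gap, because with $P_1\propto f\,\xi_3\xi_4$ and $P_2\propto g\,\xi_1\xi_2$ (where $f,g$ are the linear forms dual to generators of $\Delta_1,\Delta_2$), a single application of (\ref{Schouten in coordinates}) yields
\[
\ls P_1,P_2\rs \;\propto\; a_1 g\,\xi_2\xi_3\xi_4 - a_2 g\,\xi_1\xi_3\xi_4 + b_3 f\,\xi_1\xi_2\xi_4 - b_4 f\,\xi_1\xi_2\xi_3,
\]
whose four $\xi$-monomials are independent; this vanishes iff $a_1=a_2=b_3=b_4=0$, which is exactly your Case~A. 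So one general computation in fact subsumes all of A, B, C, and makes the ``handled analogously'' remark unnecessary.
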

\begin{proof} 
Passing to the factorized structures $\gG_i/C_{12}$'s we can assume that 
$\mathrm{dim}\,V=4$. In this particular case 
$\mathrm{dim}\,C_1=\mathrm{dim}\,C_2=2$ and $V=C_1\oplus C_2$. Let $p_i : 
V\rightarrow C_i$ be a natural projection and $L=<\Delta_1,\Delta_2>$ be 
the span of $\Delta_1$ and $\Delta_2$. Examine now various situations  
occurring in this context.

K$_1$ :  $\mathrm{dim}\,L=2$ and $L\cap C_i=\{0\}, \,i=1,2$. Then 
$p_i|_L:L\rightarrow C_i$ is an isomorphism, $i=1,2,$ and hence there is a 
basis  $e_1\dots,e_4$ in $V$ such that $e_1, e_2\in C_1, \,e_3, e_4\in C_2$ 
and $e_1+e_3\in \Delta_1, e_2+e_4\in \Delta_2$. In the corresponding 
coordinates we have $P_1\sim (x_1+x_3)\xi_3\xi_4, P_2\sim (x_2+x_4)\xi_1\xi_2$ 
and a computation shows that $\ls P_1,P_2\rs\neq 0$. 

K$_2$ :  $\mathrm{dim}\,L=2, \,\mathrm{dim}\,L\cap C_1=1$ and $L\cap 
C_2=\{0\}$. Then $p_1|_L$ is an isomorphism. So, if $0\neq\varepsilon_i\in 
\Delta_i, \,i=1,2$, then $e_1=p_1(\varepsilon_1), e_2=p_1(\varepsilon_2)$ 
is a basis in $C_1$. Also $e_3=p_2(\varepsilon_1)\neq 0$, since 
$\Delta_1\cap C_1=\{0\}$, and $p_2(\varepsilon_1)$ and $p_2(\varepsilon_2)$ 
are proportional. If $e_4\in C_2$ is not proportional to $e_3$, then 
$e_1,\dots,e_4$ is a basis in $V$. By construction $\varepsilon_1=e_1+e_3$ 
and $\varepsilon_2=e_2+\lambda e_3$. So, in the corresponding coordinates, 
$P_1\sim (x_1+x_3)\xi_3\xi_4, P_2\sim (x_2+\lambda x_3)\xi_1\xi_2$ and 
we can see that $\ls P_1,P_2\rs\neq 0$. 

K$_3$ :  $\mathrm{dim}\,L=2, \,\mathrm{dim}\,L\cap C_i=1, \,i=1,2$. If 
$\varepsilon_1, \,\varepsilon_2$ are as above, then 
$e_3=p_2(\varepsilon_1)\neq 0, \,e_1=p_1(\varepsilon_2)\neq 0$ and 
$p_2(\varepsilon_2)=\lambda e_3, \,p_1(\varepsilon_1)=\mu e_1$ for some 
$\lambda, \mu\in\gk$. By construction $\varepsilon_1=\mu e_1+e_3, 
\,\varepsilon_2=e_1+\lambda e_3$. Complete vectors $e_1, e_3$ to a basis in 
$V$ by vectors $e_2\in C_1, e_4\in C_2$. Then  $P_1\sim (\mu 
x_1+x_3)\xi_3\xi_4, P_2\sim (x_1+\lambda x_3)\xi_1\xi_2$ in the corresponding 
coordinates, and $\ls P_1,P_2\rs\sim -\lambda(\mu 
x_1+x_3)\xi_1\xi_2\xi_4-\mu(x_1+\lambda x_3)\xi_2\xi_3\xi_4$. Now a
computation shows that $\ls P_1,P_2\rs=0$ iff $\mu=\lambda=0$. Geometrically, 
this condition tells that $\Delta_1\subset C_2, \,\Delta_2\subset C_1$, or, 
equivalently, that $\gG_1+\gG_2$ is isomorphic to $\between\oplus\between$ 
for $n=4$ and to  $\between\oplus\between\oplus\gamma_{n-4}$ in the general 
case. 

K$_4$ :  $\mathrm{dim}\,L=1 \Leftrightarrow \Delta_1=\Delta_2$. In this 
case one easily constructs a basis $e_1,\dots,e_4$ in $V$ with $e_1, e_2\in 
C_1$ and $e_3, e_4\in C_2$ and $e_1+e_3\in \Delta_1=\Delta_2$. As earlier we 
see that $P_1\sim (x_1+x_3)\xi_3\xi_4, \,P_2\sim (x_1+x_3)\xi_1\xi_2$ and $\ls 
P_1,P_2\rs\neq 0$. 
\end{proof}

\begin{lem}\label{b3} 
If $\mathrm{dim}\,C_{12}=n-3$, then $\gG_1$ and $\gG_2$ are compatible 
either if $\Delta_1=\Delta_2 \,\mathrm{mod} \, C_{12}$, or  if $\Delta_i\subset C_1+C_2, \,i=1,2$. 
\end{lem}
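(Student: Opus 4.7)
The plan is to follow the same template as in Lemmas~\ref{a3} and \ref{b1}: reduce to $n=3$ by quotienting out the common central part $C_{12}$, then compute the Schouten bracket of the Poisson bivectors directly.

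First I would observe that $C_{12}=C_1\cap C_2$ is central in both $\gG_1$ and $\gG_2$, so neither $\xi_k$ nor $x_k$ for an index $k$ corresponding to a basis vector of $C_{12}$ appears in $P_1$ or $P_2$. Hence compatibility of $\gG_1,\gG_2$ on $V$ is equivalent to compatibility of the induced structures $\bar\gG_i$ on $\bar V=V/C_{12}$, which has dimension $3$ with $\dim\,\bar C_i=1$ and $\bar C_1\cap\bar C_2=\{0\}$. Fix a basis $(e_1,e_2,e_3)$ of $\bar V$ with $\bar C_1=\mathrm{span}(e_1)$ and $\bar C_2=\mathrm{span}(e_2)$, so that $\bar C_1+\bar C_2=\mathrm{span}(e_1,e_2)$. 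Write $\bar\Delta_1=\mathrm{span}(w_1)$ with $w_1=\alpha e_1+\beta e_2+\gamma e_3$, $(\beta,\gamma)\neq(0,0)$, and $\bar\Delta_2=\mathrm{span}(w_2)$ with $w_2=\alpha'e_1+\beta'e_2+\gamma'e_3$, $(\alpha',\gamma')\neq(0,0)$.

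For $\bar\gG_1$ the only nontrivial bracket is that between $w_1$ and a vector $f_1^{(1)}$ chosen outside $\bar C_1\oplus\bar\Delta_1$; depending on whether $\gamma\neq 0$ or $\gamma=0$ I would take $f_1^{(1)}=e_2$ or $f_1^{(1)}=e_3$, respectively, and analogously for $\bar\gG_2$. In each sub-case a brief structure-constant calculation yields, up to nonzero scalars, the uniform formulas
$$
P_1\sim(\alpha x_1+\beta x_2+\gamma x_3)\,\xi_2\xi_3,\qquad P_2\sim(\alpha' x_1+\beta' x_2+\gamma' x_3)\,\xi_1\xi_3.
$$
The useful feature is that the $\xi$-factor depends only on the center $\bar C_i$, not on the derived line $\bar\Delta_i$. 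Plugging these into the coordinate formula for the Schouten bracket, the $i=1,2$ summands produce $\xi_3^2$ and vanish, and only the $i=3$ term survives, giving
$$
\ldb P_1,P_2\rdb\;\sim\;(\gamma\,w_2-\gamma'\,w_1)\,\xi_1\xi_2\xi_3.
$$

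The last trivector vanishes iff $\gamma w_2=\gamma'w_1$ as linear functions, i.e.\ $\gamma\alpha'=\gamma'\alpha$ and $\gamma\beta'=\gamma'\beta$. If $\gamma=\gamma'=0$, which is exactly the condition $\Delta_1,\Delta_2\subset C_1+C_2$, the identity is automatic. If precisely one of $\gamma,\gamma'$ vanishes, the identity forces $w_1=0$ or $w_2=0$, contradicting $\dim\Delta_i=1$. If both are nonzero, $w_1$ and $w_2$ are proportional, i.e.\ $\Delta_1=\Delta_2\,\mathrm{mod}\,C_{12}$. This is precisely the stated dichotomy. The main obstacle is verifying that $P_1$ (and $P_2$) really does take the uniform form displayed above across both sub-cases $\gamma\neq 0$ and $\gamma=0$; this requires carefully transcribing the only nontrivial product $[f_1^{(i)},w_i]_i=w_i$ into the $(e_1,e_2,e_3)$-basis and checking that the two choices of $f_1^{(i)}$ yield proportional bivectors. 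Once this uniform shape is established the rest is a one-line cancellation.
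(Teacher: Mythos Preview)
Your approach is the paper's approach: reduce modulo $C_{12}$ to a $3$-dimensional problem and compute the Schouten bracket there. Your single unified computation, keeping one basis adapted to $\bar C_1,\bar C_2$ and writing $P_i\sim w_i(x)\,\xi_{j}\xi_3$, is in fact tidier than the paper's case split $\mathrm J_1/\mathrm J_2$, and your formula $\ldb P_1,P_2\rdb\sim(\gamma w_2-\gamma' w_1)\xi_1\xi_2\xi_3$ is correct in the $3$-dimensional quotient.

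There is, however, a genuine gap in the reduction step---one the paper shares. Your justification that ``neither $\xi_k$ nor $x_k$ appears in $P_i$ for $e_k\in C_{12}$'' is only half right: the $\xi_k$ vanish by centrality, but the $x_k$ need not. The generator $d_i$ of $\Delta_i$ satisfies $\Delta_i\cap C_{12}=\{0\}$, yet may have nonzero $C_{12}$-components in the chosen basis; then $P_i$ carries those $x_k$. Consequently $\ldb\bar P_1,\bar P_2\rdb=0$ only forces $\ldb P_1,P_2\rdb$ to lie in the $x_k$-directions, not to vanish. Concretely: take $n=4$, $C_{12}=\langle e_4\rangle$, $C_1=\langle e_1,e_4\rangle$, $C_2=\langle e_2,e_4\rangle$, $\Delta_1=\langle e_3\rangle$, $\Delta_2=\langle e_3+e_4\rangle$. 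Then $\Delta_1=\Delta_2\bmod C_{12}$, neither $\Delta_i$ lies in $C_1+C_2$, and the Jacobiator of the sum on $(e_1,e_2,e_3)$ equals $e_4\neq 0$. So the stated condition ``$\Delta_1=\Delta_2\bmod C_{12}$'' is too weak; the argument (yours and the paper's) actually establishes compatibility iff $\Delta_1=\Delta_2$ in $V$ or both $\Delta_i\subset C_1+C_2$. The reduction is salvageable whenever $(\Delta_1+\Delta_2)\cap C_{12}=\{0\}$, since one may then choose the complement of $C_{12}$ to contain both $\Delta_i$; the exceptional case is exactly $\Delta_1=\Delta_2\bmod C_{12}$ with $\Delta_1\neq\Delta_2$, and there the lemma fails. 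For the base $\between$-lieons used downstream (Proposition~\ref{123}) this distinction is invisible, which is presumably why it went unnoticed.
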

\begin{proof} 
The factorization $\mathrm{mod}\,C_{12}$ reduces the problem, as above,  to 
$n=3$. In this case $\mathrm{dim}\,C_1=\mathrm{dim\,C_2}=1$ and $C_1\cap 
C_2=\{0\}$. Equivalently, if $C=C_1+C_2$, then $\mathrm{dim}\,C=2$. Here two 
possibilities occur: 

J$_1$ : One of $\Delta_i$'s, say, $\Delta_1$, does not belong to $C$. In a 
basis $e_1, e_2, e_3$ in $V$ with $e_i\in C_i, \,e_3\in \Delta_1$ we have 
$P_1\sim x_3\xi_2\xi_3, \,P_2\sim\sum_{i=1}^{3}\alpha_ix_i\xi_1\xi_3$ and $\ls 
P_1,P_2\rs\sim (\alpha_1x_1+\alpha_2x_2)\xi_1\xi_2\xi_3 $. Now a 
computation shows that $\ls P_1,P_2\rs=0$ iff 
$\alpha_1=\alpha_2=0 \Leftrightarrow \Delta_1=\Delta_2$. 
For arbitrary $n$ the last condition means that $\Delta_1\subset 
\Delta_2\oplus C_{12}$. 

J$_2$ : $\Delta_i\subset C, \,i=1,2$. Let $0\neq e_i\in C_i, \,i=1,2$. Then 
$e_1+\lambda e_2\in \Delta_2$ and $\mu e_1+e_2\in \Delta_1$ for some 
$ \lambda, \mu\in\gk$. Complete $e_1, e_2$ to a basis in $V$ by a vector 
$e_3$. Then $P_2\sim (x_1+\lambda x_2)\xi_1\xi_3$,  
$P_1\sim (\mu x_1+x_2)\xi_2\xi_3$ and $\ls P_1,P_2\rs=0$. 
\end{proof}
\begin{rmk}
The results of this section show that compatible configurations of lieons
can be described in a manner which does not refer explicitly to a concrete
ground field  $\gk$. Namely, this description operates with the characterizing 
pairs and the relative position of composing them elements. Concreteness
of  $\gk$ is exclusively confined to coefficients of linear combinations 
of basic ``abstract" lions from which first level Lie algebras over  $\gk$
are made.
\end{rmk}
It is not difficult to extract from the proof of theorems \ref{C-dis} and 
\ref{R-dis} that there is a number $\nu(n)$ such 
that any $n$-dimensional Lie algebra can be assembled from  not more than 
$\nu(n)$ lieons. On the other hand, the results of this section show that 
even first level Lie algebras can be assembled from an unlimited number of 
$\pitchfork$- and $\between$-lieons intertwined one another in a 
chaotic manner. This makes nontrivial the problem of recognizing isomorphic 
Lie algebras on the basis of their $a$-schemes.  By this reason more regular 
assembling procedures are of interest. One of them, in a sense simplest, will
be discussed the sections dedicated to coaxial algebras.

\section{Canonical disassemblings of classical Lie algebras}\label{classical}

In this section we shall describe \emph{canonical}, in a sense, complete
disassemblings of classical Lie algebras.  This will be done in a way which 
simultaneously covers Lie algebras over $\R$ and $\C$. More exactly, classical 
Lie algebras are among symmetry algebras of some bilinear and volume forms, 
and this interpretation make it possible to completely disassemble them over an 
arbitrary ground field $\gk$ of characteristic zero. The techniques we use here
are mainly based on the Schouten bracket formalism and the stripping procedure.

\medskip
\subsection{Disassembling of g-orthogonal algebras.}
\label{SO-dis} 

\noindent Let $g=\sum_1^n a_ix_i^2, \,0\neq a_i\in\gk,$ be a nondegenerate 
quadratic form on a $\gk$-vector space $V$. The Lie algebra $\gs\go(g)$ of 
(infinitesimal) symmetries of $g$ is composed of linear vector fields $X$ 
on $V$ such that $X(g)=0$. Obviously, 
$e_{ij}=a_ix_i\partial_j-a_jx_j\partial_i\in \gs\go(g)$ and 
$[e_{ij},e_{jk}]=a_je_{ik}$. Moreover, fields $\{e_{ij}\}_{i<j}$  form a 
base of $\gs\go(g)$. For instance,  this is a standard base of $\gs\go(p,q), \,q=n-p$,
if $\gk=\R$ and  $a_1=\cdots a_p=1, \,a_{p+1}=\cdots=a_n=-1$. 

Let $x_{ij}$ be the linear function on $|\gs\go(g)|^*$ corresponding to 
$e_{ij}$. Obviously, $x_{ij}=-x_{ji}$ and  $\{x_{ij}\}_{i<j}$ is a 
cartesian chart on $|\gs\go(g)|^*$. The only nonzero Poisson brackets, which 
involve functions $x_{ij}$'s, are $\{x_{ij},x_{jk}\}=a_jx_{ik}$. The Poisson 
bivector   
$$
P=\sum_{i<j,\alpha}a_{\alpha}x_{ij}\xi_{i\alpha}\wedge\xi_{\alpha j} \quad 
\mathrm{with} \quad \xi_{ij}=\frac{\partial}{\partial x_{ij}} 
$$ 
represents the associated with $\gs\go(g)$ Poisson structure on 
$|\gs\go(g)|^*$ in terms of coordinates $\{x_{ij}\}$. Observe that
\begin{equation}\label{dec-ort}
P=\sum_{\alpha}a_{\alpha}P_{\alpha}\quad\mbox{with}\quad 
P_{\alpha}=\sum_{i<j}x_{ij}\xi_{i\alpha}\wedge\xi_{\alpha j}
\end{equation}
Since $P$ is a Poisson bivector for arbitrary $a_{\alpha}$'s, this shows
that $P_1,\dots, P_n$ are mutually compatible Poisson bivectors. 

The same result may be obtained by observing that
$$
2P_{\alpha}=\ls P,X_{\alpha}\rs=\ls P_{\alpha},X_{\alpha}\rs \;\mathrm{with} \; X_{\alpha}=
\sum_sx_{\alpha s}\xi_{\alpha s},
\quad\mathrm{and} \quad \ls P_{\alpha},X_{\beta}\rs=0, \;\forall \alpha\neq\beta .
$$
Indeed, $\ls P, P_{\alpha}\rs=\frac{1}{2}\partial_P^2(X_{\alpha})=0$ and
$$
\ls P_{\alpha}, P_{\beta}\rs=\frac{1}{2}\ls P_{\alpha}, \ls P, X_{\beta}\rs\rs=
\pm\frac{1}{2}\ls\ls P_{\alpha}, P\rs, X_{\beta}\rs
\pm\frac{1}{2}\ls P,\ls P_{\alpha}, X_{\beta}\rs\rs=0.
$$
So, finally, we have
$$
P_{\alpha}=\sum_{i<j}P_{\alpha,ij}\quad\mbox{with}\quad P_{\alpha,ij}=
x_{ij}\xi_{i\alpha}\wedge\xi_{\alpha j} .
$$ 
For a fixed $\alpha$ Poisson bivectors $P_{\alpha,ij}$'s are, obviously,  compatible 
each other. Each of them is associated with an algebra isomorphic to $\pitchfork_m, 
\,m=n(n-1)/2$. This shows that the algebra $\gs\go(g)$ can be assembled in two steps 
from $n(n-1)(n-2)/2$  $\pitchfork$-lieons. 

By translating the above results in terms of Lie brackets one easily finds 
that
$$
[\cdot,\cdot]=[\cdot,\cdot]_1+\cdots+[\cdot,\cdot]_n
$$ 
where $[\cdot,\cdot]$ stands for the Lie bracket in $\gs\go(g)$ and 
the structure $[\cdot,\cdot]_{\alpha}$ is defined by relations 
$$
[e_{i\alpha},e_{\alpha j}]_{\alpha}=a_{\alpha}e_{ij} \quad \mathrm{and}
\quad [e_{ij},e_{kl}]_{\alpha}=0, \;\mathrm{if} \;\alpha\notin \{i,j\}\cap\{k,l\}.
$$  
In its turn, $[\cdot,\cdot]_{\alpha}=\sum_{i<j}[\cdot,\cdot]_{\alpha,ij}$ 
where the only nontrivial product $[\cdot,\cdot]_{\alpha,ij}$ involving 
base vectors $e_{kl}$'s is $[e_{i\alpha},e_{\alpha 
j}]_{\alpha,ij}=a_{\alpha}e_{ij}$. 
\begin{rmk}\label{ort-any-fields}
The Poisson bivectors $P_{\alpha}=\sum_{i<j}x_{ij}\xi_{i\alpha}\wedge\xi_{\alpha j}$
may be interpreted as bivectors over the ring $\Z[x_{ij}]_{1\leq i < j\leq n}$. Any formal
linear combination of these bivectors with coefficients in a field $\gk$ is naturally 
interpreted as a linear bivector over the polinomial algebra $\gk[x_{ij}]_{1\leq i < j\leq n}$,
i.e., as a Lie algebra over $\gk$. In this sense $P_{\alpha}$'s are universal building blocks
for $g$-orthogonal algebras. Fir instance,  if $\gk=\R$, then
$$
P_1+\dots+P_s-P_{s+1}-\dots-P_n, \quad s=n-r.
$$
is the Poisson bivector associated with $\gs\go(r,s)$.
\end{rmk}

\medskip
\subsection{Disassembling of symplectic Lie algebras $\gs\gp(2n)$.}
\label{SP-dis}  

\noindent Let $\beta(v,w)$ be a nondegenerate skew- 
symmetric form on a $\gk$--vector space $V$. Then the dimension of $V$  
is even, say, $2n$, and there exists  a (canonical) basis  
$\{e_1,\ldots,e_n,e_1^{\prime},\ldots,e_n^{\prime}\}$ in $V$ such that 
$$
\beta(e_i,e_j^{\prime})=\delta_{ij}, \quad\beta(e_i,e_j)= 
\beta(e_i^{\prime},e_j^{\prime})=0, \quad i,j=1,\ldots,n.
$$ 
The symplectic Lie algebra  
$\gs\gp(\beta)$ consists of operators 
$A\in \mathrm{End}\,V$ such that 
$$
\beta(Av,w)+\beta(v,Aw)=0, \;v,w\in V .
$$ 
The algebra $\gs\gp(\beta)$ can be completely disassembled essentially by the 
same method as  for orthogonal algebras. It will be described
below in a form, which is better adapted to the symplectic situation. 

Let $(p_1,\ldots,p_n,q_1,\ldots,q_n)$ be coordinates in $V$ with respect to 
the above basis and $\omega=\sum_idp_i\wedge dq_i$. Then the algebra  
$\gs\gp(\beta)$ may be interpreted as the algebra of linear vector fields 
$X$ on $V$ such that $L_X(\omega)=0$. They are 
hamiltonian (with respect to $\omega$) fields $X_f$ corresponding  to
quadratic in $p$'s and $q$'s hamiltonians $f=f(p,q)$.
So, in this interpretation hamiltonian fields corresponding to monomials 
$p_ip_j, q_iq_j, p_iq_j, \,i,j=1,\ldots,n$, form a base of $\gs\gp(\beta)$ and the 
Lie product in $\gs\gp(\beta)$ is interpreted as commutator of vector fields. 
Alternatively, the identification $f\leftrightarrow X_f, 
\;\{f,g\}\leftrightarrow[X_f,X_g]=X_{\{f,g\}}$ allows us to interpret 
$\gs\gp(\beta)$ as the Lie algebra of quadratic polynomials 
$\gk_2[p,q]=\gk_2[p_1,\ldots p_n,q_1,\ldots,q_n]$ in $p$'s and $q$'s with 
respect to the Poisson bracket $\{\cdot,\cdot\}$ determined by the Poisson 
bivector $\Pi=\sum_i\partial_{p_i}\wedge\partial_{q_i}$. In other words, we 
model the algebra $\gs\gp(2n,\gk)$ as the vector space $\gk_2[p,q]$ 
supplied with the bracket $[\cdot,\cdot]=\{\cdot,\cdot\}|_{\gk_2[p,q]}$. 
Observe that $\Pi=\Pi_1+\ldots+\Pi_n$ with 
$\Pi_i=\partial_{p_i}\wedge\partial_{q_i}$ and denote by 
$\{\cdot,\cdot\}^i$ the bracket associated with the Poisson bivector 
$\Pi_i$. Then $[\cdot,\cdot]=[\cdot,\cdot]_1+\cdots+[\cdot,\cdot]_n$ with  
$[\cdot,\cdot]_i=\{\cdot,\cdot\}^i|_{\gk_2[p,q]}$. Since bivectors 
$\Pi_i$'s are compatible each other, the brackets $[\cdot,\cdot]_i$'s are 
mutually compatible as too and we get the disassembling 
$$
(\gk_2[p,q],\,[\cdot,\cdot])=(\gk_2[p,q],\,[\cdot,\cdot]_1)+
\ldots+(\gk_2[p,q],\,[\cdot,\cdot]_n).  
$$
Obviously, Lie algebras $\gs\gp_i(2n,\gk)=(\gk_2[p,q],\,[\cdot,\cdot]_i), 
\,i=1,\ldots,n$, are isomorphic one to another. So, it suffices to 
completely disassemble one of them, say, $\gs\gp_1(2n,\gk)$. To this end, 
observe that the Levi-Malcev decomposition of $\gs\gp_1(2n,\gk)$ is
$$
\gs\gp_1(2n,\gk)=\langle p_1^2,p_1q_1,q_1^2\rangle\oplus
\langle p_1p_i,p_1q_i,q_1p_i,q_1q_i,p_ip_j,p_iq_j,q_iq_j\rangle_{1<i,j\leq n}
$$
where $\langle a,\ldots,b\rangle$ denotes the subalgebra of 
$\gs\gp_1(2n,\gk)$ spanned by $a,\ldots,b$.

The semisimple part $\mathfrak{s}=\langle p_1^2,p_1q_1,q_1^2\rangle$ of 
$\gs\gp_1(2n,\gk)$ is isomorphic to $\gs\gl(2,\gk)$. The radical $\mathfrak{r}$
of it is 
$$
\mathfrak{r}=
\langle p_1p_i,p_1q_i,q_1p_i,q_1q_i,p_ip_j,p_iq_j,q_iq_j\rangle_{1<i,j\leq n}
$$
and
$$
\mathfrak{c}=\langle p_ip_j,p_iq_j,q_iq_j\rangle_{1<i,j\leq n}
$$ 
is the center of $\mathfrak{r}$. Note that 
$ [\mathfrak{r},\mathfrak{r}]\subset \mathfrak{c}$. 

According to 
proposition \ref{str}, the algebra $\gs\gp_1(2n,\gk)$ is assembled from 
$\mathfrak{s}\oplus_{\rho}|\mathfrak{r}|$ and 
$\gamma_m\oplus\,\mathfrak{r}$ for a suitable $m$ where $\rho$ stands for a 
natural representation of $\gs$ in the vector space $|\mathfrak{r}|$  
supporting  the ideal $\mathfrak{r}$. So, it remains to disassemble each of 
these two algebras. 

The algebra $\mathfrak{r}$ contains the following Heisenberg subalgebras: 
\begin{eqnarray}
\mathfrak{h}_{ij}^{pp}=\langle p_1p_i,q_1p_j,p_ip_j\rangle, \quad\quad
\mathfrak{h}_{ij}^{pq}=\langle p_1p_i,q_1q_j,p_iq_j\rangle \\
\qquad\mathfrak{h}_{ij}^{qp}=\langle p_1q_i,q_1p_j,p_jq_i\rangle, 
\quad\quad \mathfrak{h}_{ij}^{qq}=\langle p_1q_i,q_1q_j,q_iq_j\rangle 
\end{eqnarray}

\noindent Each subalgebra $\mathfrak{h}_{ij}^{ab}$
from this list naturally extends to the unique $\pitchfork$--lieon $r$--dumennsional
$\pitchfork_{ij}^{ab}$ on $|\mathfrak{r}|, \,r=\dim\,\mathfrak{r}$,  whose 
center contains all quadratic monomials, which do not figure in the
definition of  $\mathfrak{h}_{ij}^{ab}$. It is easy to check (see subsection\,6.1)
that lieons $\pitchfork_{ij}^{ab}$'s  are 
compatible each other and hence completely disassemble $\mathfrak{r}$. 

Aiming to disassemble the algebra 
$\mathfrak{s}\oplus_{\rho}|\mathfrak{r}|$ consider the following 
$d$-pair in it:
\begin{equation}\label{long-d-pair}
(\,\langle p_1q_1,V_p\rangle, \langle p_1^2,q_1^2,V_q,\mathfrak{c}\rangle\,)
\;\mathrm{with} \; V_p=\langle p_1p_i,p_1q_i\rangle_{1<i\leq n},
V_q=\langle q_1p_i,q_1q_i\rangle_{1<i\leq n}
\end{equation}
Easily verified inclusions
$$
\begin{array}{rcc}
[V_p,V_p]_1=[V_q,V_q]_1=0,& [V_p,V_q]_1\subset\mathfrak{c}, &[\langle p_1^2\rangle,V_p]_1=
[\langle q_1^2\rangle,V_q ]_1=0, \\ 
\quad\quad [\langle p_1^2\rangle,V_q]_1\subset V_p,& [\langle q_1^2\rangle,V_p]_1\subset V_q,&
[\langle p_1^2,q_1^2\rangle,\langle p_1^2,q_1^2\rangle]_1\subset \langle p_1q_1\rangle, \\

&[\langle
p_1^2,q_1^2\rangle , 
V_p]_1\subset V_q,   & 
[\langle p_1^2,q_1^2\rangle,V_q]_1\subset V_p,
\end{array}
$$
prove that (\ref{long-d-pair}) is a d-pair.

Nontrivial relations among quadratic monomials in the dressing algebra 
of this $d$-pair are $[p_1^2,q_1^2]_1=4p_1q_1, \,[p_1^2, q_1p_i]_1=2p_1p_i, 
\,[p_1^2, q_1q_i]_1=2p_1q_i, \,1<i\leq n,$. So, the triples 
$(p_1^2,q_1^2,p_1q_1), (p_1^2,q_1p_i,p_1p_i), \,(p_1^2,q_1q_i,p_1q_i), 
\,1<i\leq n,$ span subalgebras of the dressing algebra, which are isomorphic 
to $\pitchfork$. As in the case of subalgebras $\mathfrak{h}_{ij}^{ab}$, these
subalgebras naturally extend to some $\pitchfork$-lieons. These extensions 
are mutually compatible and, therefore, disassemble the dressing algebra. 

Now it remains to disassemble the algebra 
$$
(\,\langle p_1q_1,V_p\rangle \oplus_{\varrho}\langle p_1^2,q_1^2,V_q,\mathfrak{c}\rangle
$$
where $\varrho$ is a natural representation of the subalgebra $\,\langle 
p_1q_1,V_p\rangle$ in the vector space 
$\langle p_1^2,q_1^2,V_q,\mathfrak{c}\rangle$. This algebras is, in fact, 
the semidirect product
$$
\mathfrak{a}=\langle p_1q_1\rangle\oplus_{\varsigma}(V_p\oplus_{\rho}
\langle p_1^2,q_1^2,V_q,\mathfrak{c}\rangle)
$$
where the action $\varsigma$ of $\langle p_1q_1\rangle$ on $V_p$ and  
$\langle p_1^2,q_1^2,V_q,\mathfrak{c}\rangle$ is induced by the bracket 
$[\cdot,\cdot]_1$. It is easy to see that nontrivial relations among 
elements of the basis of the ideal 
$\mathfrak{i}=V_p\oplus_{\rho}\langle p_1^2,q_1^2,V_q,\mathfrak{c}\rangle
\subset\mathfrak{a}$ are 
$$
[q_1^2,p_1r]_1=2q_1r\in V_q, \;[p_1r,q_1s]_1=rs\in \mathfrak{c} \quad
\mathrm{with} \quad r,s=p_i,q_j, \,0<i,j\leq n.
$$
Now we see that the triples $(q_1^2,p_1r,2q_1r), \,(p_1r,q_1s,rs)$  
span Heisenberg subalgebras in $\mathfrak{i}$. By the same arguments 
as before, their natural extensions disassemble the algebra 
$\mathfrak{i}$  into a number of $\pitchfork$--lieons.

The final step is to disassemble the algebra
$$
\langle p_1q_1\rangle\oplus_{\varrho}|\mathfrak{i}|=\langle p_1q_1\rangle
\oplus_{\varrho}\langle p_1^2,q_1^2,V_p,V_q,\mathfrak{c}\rangle.
$$
Observe that $|\mathfrak{i}|$ is the direct sum of 
$\varrho$-invariant subspaces 
$$
\langle p_1^2,q_1^2\rangle,\quad\langle p_1r,q_1r\rangle \;\mathrm{with} \;r=p_i,q_i, \;0<i\leq n,
\quad \mathrm{and} \quad|\mathfrak{c}|.
$$ 
The action $\varrho$ on $|\mathfrak{c}|$ is trivial and each of  
subalgebras $\langle 
p_1q_1\rangle\oplus_{\varrho}(\langle p_1^2,q_1^2\rangle\oplus|\mathfrak{c}|), 
\,\langle 
p_1q_1\rangle\oplus_{\varrho}(\langle p_1r,q_1r\rangle\oplus|\mathfrak{c}|)$ 
is isomorphic to $2\between_{m}=2\pitchfork_m$ for a suitable $m$ (see 
formula (\ref{2h=2b}) and subsection\,5.2). 
This disassembles the algebra $\langle 
p_1q_1\rangle\oplus_{\varrho}|\mathfrak{i}|$ into
$2n-1$ $\pitchfork$--lieons.

\medskip
\subsection{Disassembling of $\gG\gl(n,\gk), \gs\gl(n,\gk), \gu(n)$ and 
$\gs\gu(n)$.}\label{gl-sl-u-dis} 

First, we shall construct a 3-step disassembling of $\gG\gl(n,\gk)$. It is
convenient to interpret this algebra as the algebra of linear vector fields
on an $n$--dimensional vector space $V$. Vector fields 
$e_{ij}=x_i\frac{\partial}{\partial x_j}, \,1\leq i,j\leq n$, form a natural basis of it. 
The nontrivial Lie products in this algebra are
$$
[e_{i\alpha},e_{\alpha j}]=e_{ij}, \;\mathrm{if} \;i\neq j,  \quad\mathrm{and}\quad 
[e_{i\alpha},e_{\alpha i}]=e_{ii}-e_{\alpha\alpha},  \;\mathrm{if}  \;i\neq \alpha.
$$
Let $z_{ij}$'s be the corresponding to $e_{ij}$'s coordinates on $|\gG\gl(n,\gk)|^*$. 
Then the associated with $\gG\gl(n,\gk)$ Poisson bivector is
\begin{equation}\label{gl-pois}
P=\sum_{1\leq i,j,\alpha\leq n}z_{ij}\xi_{i\alpha}\xi_{\alpha j} \quad \mathrm{with} \quad
\xi_{ij}=\frac{\partial}{\partial x_{ij}}.
\end{equation}
In the basis 
$$
\{e_{ij}^t=t_i t_j e_{ij}\}, \,0\neq t_i\in\gk, \,i=1,\dots,n,
$$ 
we, obviously, have 
$P=\sum t_{\alpha}^2z_{ij}^t \xi_{i\alpha}^t\xi_{\alpha j}^t$ where
$z_{ij}^t$ and $\xi_{ij}^t$ stand for coordinates and partial derivatives with respect
to this basis. The isomorphism identifying the second basis with the first one transforms
$P$ into the Poisson bivector
\begin{equation}\label{1-gl-dis}
P_t=\sum_{1\leq\alpha\leq n} t_{\alpha}^2P_{\alpha} \quad\mathrm{with} \quad P_{\alpha}=
\sum_{(i,j)\neq (\alpha,\alpha)}z_{ij}\xi_{i\alpha}\xi_{\alpha j}, \quad t=(t_1,\dots,t_n).
\end{equation}
This implies that $P_{\alpha}$'s are mutually compatible Poisson bivectors, and, in particular,
that $P=P_1+\dots+P_n$. In its turn, $P_{\alpha}$ disassembles as
\begin{equation}\label{2-gl-dis}
P_{\alpha}=\sum_{i,i\neq\alpha}(z_{i\alpha}\xi_{i\alpha}\xi_{\alpha\alpha}+
z_{\alpha i}\xi_{\alpha\alpha}\xi_{\alpha i})+\sum_{i,j,i\neq\alpha, j\neq \alpha}
(z_{ij}\xi_{i\alpha}\xi_{\alpha j})
\end{equation}
The first of Poisson bivectors in the left hand side of (\ref{2-gl-dis}) corresponds to
an algebra of the form $\Gamma_A$, while the second to a dressing algebra. They 
both can be simply disassembled into a number of $n^2$--dimensional 
$\pitchfork$-lieons (see (\ref{g-dec})).

However, Poisson bivectors $P_{\alpha}$'s in (\ref{1-gl-dis}) do not restrict to the 
subalgebra $\gs\gl(n,\gk)$ of $\gG\gl(n,\gk)$ and hence can not be used to 
disassemble it. With this purpose we consider another basis in $\gG\gl(n,\gk)$:
\begin{equation}\label{basis-gl}
e_{ij}^0=e_{ij}-e_{ji}, \,1\leq i < j\leq n, \;e_{ij}^1=e_{ij}+e_{ji}, \,1\leq i\leq j\leq n,
\end{equation}
which respects the matrix transposition. We also put
$e_{ij}^0=-e_{ji}^0, \,e_{ij}^1=e_{ji}^1$. The nontrivial Lie products of elements
of basis (\ref{basis-gl}) are
\begin{eqnarray}\label{com-for-gl}
[e_{ij}^{\sigma},e_{jk}^{\tau}]=e_{ik}^{\sigma+\tau}, \mathrm{if} \;i\neq k,
\;i\neq j, \;i\neq k;  & \nonumber \\  
\,[e_{ij}^{\sigma},e_{ji}^{\tau}]=2(e_{ii}^1-e_{jj}^1), 
\mathrm{if} \;i\neq j, \;{\sigma}\neq\tau ; 
&\, [e_{ij}^{\sigma},e_{jj}^1]=2e_{ij}^{\sigma+1}, \;\mathrm{if} \;i\neq j.
\end{eqnarray}
Here we interpret  upper indices of vectors $e_{ij}^{\epsilon}$ as elements of $\dF_2$.
The corresponding d--pair in $\gG\gl(\gk,n)$ is $(\gs=\langle e_{ij}^0\rangle, 
W=\langle e_{ij}^1\rangle)$. Obviously, $\gs$ is isomorphic to $\gs\go(g)$ for
$g=\sum_{i=1}^n x_i^2$, $\gs\subset \gs\gl(n,\gk)\subset \gG\gl(n,\gk)$ and 
$$
W_0\df W\cap\gs\gl(n,\gk)=\langle e_{ij}^1, e_{ii}^1-e_{jj}^1\rangle_{1\leq i\neq j\leq n}.
$$
In particular, $(\gs,W_0)$ is a d--pair in $\gs\gl(n,\gk)$. Denote by $\gA_{\beta}$
(resp., $\gA_{\beta_0}$) the dressing algebra (see subsection\,5.3) corresponding to the d--pair
$(\gs,W)$ (resp.,  in $(\gs,W_0)$), and by $\rho$ (resp., $\rho_0$) the corresponding
representation of $\gs$ in $W$ (resp., $W_0$). So, by construction, we have
\begin{equation}\label{dis-gl-sl}
 \gG\gl(n,\gk)=\gs\oplus_{\rho}W+\gA_{\beta},  \qquad
 \gs\gl(n,\gk)=\gs\oplus_{\rho_0}W_0+\gA_{\beta_0}.
\end{equation}
Moreover, we have
\begin{lem}\label{dis-u-su}
For $\gk=\R$ the algebras
$$
\gu(n,\gk)=\gs\oplus_{\rho}W+\gA_{-\beta},  \qquad
 \gs\gu(n,\gk)=\gs\oplus_{\rho_0}W_0+\gA_{-\beta_0}.
$$
are isomorphic to the unitary  and special unitary  Lie 
algebras, respectively.
\end{lem}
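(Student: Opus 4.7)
The plan is to exhibit an explicit $\R$-linear isomorphism from the stated assembly onto $\gu(n)$ by ``multiplying the symmetric part by $\sqrt{-1}$''. Concretely, I would identify the underlying vector space $|\gG\gl(n,\R)| = \gs\oplus W$ (with $\gs$ the skew-symmetric and $W$ the symmetric matrices) with the real vector space of skew-Hermitian matrices via
$$
\Phi:\ B+C\ \longmapsto\ B+\sqrt{-1}\,C,\qquad B\in\gs,\ C\in W.
$$
Since every skew-Hermitian matrix $A$ admits a unique decomposition $A=B+\sqrt{-1}\,C$ with $B$ real skew-symmetric and $C$ real symmetric, $\Phi$ is an $\R$-linear bijection.

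Next I would expand the $\gu(n)$-commutator in this decomposition,
$$
[B_1+\sqrt{-1}\,C_1,\ B_2+\sqrt{-1}\,C_2]=\bigl([B_1,B_2]-[C_1,C_2]\bigr)+\sqrt{-1}\bigl([B_1,C_2]+[C_1,B_2]\bigr),
$$
and use the $d$-pair property of $(\gs,W)$ in $\gG\gl(n,\R)$ (see example~\ref{d-gl-pair}) to verify that the first and second parenthesized terms land in $\gs$ and $W$ respectively. Pulling back through $\Phi$, the $\gu(n)$-bracket therefore reads: $[B_1,B_2]_{\gG\gl}$ on pairs in $\gs$; $\rho(B)(C)=[B,C]_{\gG\gl}$ on mixed pairs; and $-[C_1,C_2]_{\gG\gl}$ on pairs in $W$, i.e.\ the $\gG\gl$-bracket of two symmetric elements with reversed sign.

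Now I would match this, term by term, with the product of $\gs\oplus_{\rho}W+\gA_{-\beta}$. The semidirect summand by construction contributes $[B_1,B_2]_{\gG\gl}$ in $\gs$ and $\rho(B)(C)$ in $W$, and vanishes on $W\times W$; the dressing algebra $\gA_{-\beta}$ contributes precisely $-\beta(C_1,C_2)=-[C_1,C_2]_{\gG\gl}$ in $\gs$ and vanishes on every other pair. Summing these gives exactly the bracket transported by $\Phi$, so $\Phi$ is a Lie algebra isomorphism onto $\gu(n)$. The whole argument is really a single observation: the sign flip $\beta\mapsto -\beta$ in the dressing algebra is the algebraic shadow of the identity $(\sqrt{-1})^2=-1$.

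For the $\gs\gu(n)$ assertion I would simply restrict $\Phi$ to $\gs\oplus W_0$. Elements of $\gs\cong\gs\go(n,\R)$ are automatically traceless, and by the definition of $W_0=\langle e_{ij}^1,e_{ii}^1-e_{jj}^1\rangle$ the space $\Phi(W_0)$ consists precisely of the purely imaginary traceless Hermitian matrices; hence $\Phi(\gs\oplus W_0)$ is the algebra of traceless skew-Hermitian matrices, i.e.\ $\gs\gu(n)$. The bracket computation above restricts verbatim, since $(\gs,W_0)$ is the $d$-pair of $\gs\gl(n,\R)$ and $\beta_0=\beta|_{W_0\times W_0}$. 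No genuine obstacle arises beyond careful sign tracking; the one point warranting verification is that $[W_0,W_0]_{\gG\gl}\subset \gs$ (not just in $W$), which is already implicit in the $d$-pair property used to prove~(\ref{dis-gl-sl}).
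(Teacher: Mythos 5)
Your proposal is correct and is in substance the same verification the paper performs: the paper's proof is ``a direct check on the basis of relations (\ref{com-for-gl})'', and your map $B+C\mapsto B+\sqrt{-1}\,C$ is just a coordinate-free packaging of that same check, with the sign flip $\beta\mapsto-\beta$ accounted for by $(\sqrt{-1})^2=-1$. Your version usefully makes the isomorphism explicit where the paper leaves it implicit, but it is not a different route.
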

\begin{proof}
A direct check on the basis of relations (\ref{com-for-gl}).
\end{proof}
\begin{rmk}\label{field-as-glue}
The isomorphism class of algebras $\gG\gl_{\lambda}(n,\gk)\df\gs\oplus_{\rho}W+
\gA_{\lambda\beta}$ and $\gs\gl_{\lambda}(n,\gk)\df\gs\oplus_{\rho_0}W_0+
\gA_{\lambda\beta_0} \,\lambda\in\gk$, depend on the quadratic residue of $\lambda$. 
Namely, $\gG\gl_{\lambda}$ and  $\gG\gl_{\lambda^{\prime}}$ (resp., $\gs\gl_{\lambda}$ 
and  $\gs\gl_{\lambda^{\prime}}$) are isomorphic iff 
$\lambda^{\prime}=\lambda\mu^2, \,\mu\in\gk$. 
\end{rmk}

Since a dressing algebra can be simply disassembled into a number
of $\pitchfork$--lieons, we shall focus on the algebras $\gs\oplus_{\rho}W$ and
$\gs\oplus_{\rho_0}W_0$. In virtue of (\ref{dis-gl-sl}) and lemma\,\ref{dis-u-su}
a complete disassembling of this algebra automatically gives complete disassemblings
of algebras $ \gG\gl(n,\gk), \,\gs\gl(n,\gk), \,\gu(n,\gk)$ and $\gs\gu(n,\gk)$. 

Denote by  $z_{ij}$ (resp., $w_{pq}$) linear functions on $|\gs|$ (resp., $|W|$) 
corresponding to $e_{ij}^0$ (resp., $e_{pq}^1$). They together form a cartesian 
chart on  $|\gs\oplus_{\rho}W|=|\gs|\oplus|W|$. In this chart the Poisson bivector 
of the algebra $\gs\oplus_{\rho}W$ reads
\begin{equation}\label{gl-2dis}
Q=\sum_{\alpha,  i<j}z_{ij}\xi_{i\alpha}\xi_{\alpha j}+
\sum_{p\neq\alpha\neq q}w_{pq}\xi_{p\alpha}\eta_{\alpha q}+
2\sum_{p\neq q}w_{pq}\xi_{pq}\eta_{qq}
\end{equation}
with $\xi_{ij}=\frac{\partial}{\partial z_{ij}}, \eta_{pq}=\frac{\partial}{\partial w_{pq}}$.
By the same arguments as in subsection\,7.1, we see that if
$$
 Q_{\alpha}=
\sum_{i<j}z_{ij}\xi_{i\alpha}\xi_{\alpha j}+
\sum_{p,q, q\neq\alpha}w_{pq}\xi_{p\alpha}\eta_{\alpha q}+
2\sum_{p}w_{p\alpha}\xi_{p\alpha}\eta_{\alpha\alpha},
$$
then $Q=Q_1+\dots+Q_n$ is a simple disassembling of $Q$. 

In order to disassemble $Q_{\alpha}$ note that  single summand in the
left hand side of (\ref{gl-2dis}) are $n^2$--dimensional $\pitchfork$--lieons, 
and the only incompatible pairs of the are
$$
w_{pq}\xi_{p\alpha}\eta_{\alpha q}  \quad\mathrm{and} \quad 
w_{q\alpha}\xi_{q\alpha}\eta_{\alpha\alpha}, \quad p\neq q, \,p\neq \alpha, \,q\neq \alpha.
$$
This shows that
\begin{equation}\label{3gl-dis}
Q_{\alpha}^1=\sum_{i<j}z_{ij}\xi_{i\alpha}\xi_{\alpha j},\quad
Q_{\alpha}^2=\sum_{p,q, q\neq\alpha}w_{pq}\xi_{p\alpha}\eta_{\alpha q}, \quad
Q_{\alpha}^3=\sum_{p}w_{p\alpha}\xi_{p\alpha}\eta_{\alpha\alpha}
\end{equation}
are Poisson bivectors and $\ls Q_{\alpha}^1,Q_{\alpha}^2\rs=
\ls Q_{\alpha}^1,Q_{\alpha}^3\rs=0$. Moreover,  by using formula (\ref{Schouten 
in coordinates}) we easily find that $\ls Q_{\alpha}^2,Q_{\alpha}^3\rs=0$. Hence
$Q_{\alpha}=Q_{\alpha}^1+Q_{\alpha}^2+Q_{\alpha}^3$ is a simple disassembling
of $Q_{\alpha}$. Finally, it follows from formula (\ref{3gl-dis}) that Poisson bivectors 
$Q_{\alpha}^i$ are assembled from mutually compatible $\pitchfork$--lieons. 
This way we get a complete  common disassembling  of $\gG\gl(n,\gk)$ and 
$\gu(n,\gk)$ in 4 steps.

In order to apply a similar aproach to the algebra $\gs\oplus_{\rho_0}W_0$ 
we have to choose a base in $W_0$. A such one is 
$$
e_{ij}^1, \,1\leq i< j\leq n, \qquad e_i=\frac{1}{2}(e_{ii}-e_{11})=
x_i\partial_i-x_1\partial_1, \,1<i\leq n.
$$
Denote by $w_{ij}$ and $w_i$ the corresponding linear functions on 
$W_0^*$, respectively. Together with functions $z_{ij}$'s 
they form a cartesian chart on $|\gs\oplus_{\rho_0}W_0|^*=|\gs|^*\oplus|W_0|^*$.  
Also, put $\eta_{ij}=\frac{\partial}{\partial_{w_{ij}}}, 
\,\eta_i=\frac{\partial}{\partial_{w_{i}}}$.
As it follows from (\ref{com-for-gl}), in this chart the Poisson bivector of 
$\gs\oplus_{\rho_0}W_0$  reads
\begin{eqnarray}\label{pois-for-sl}
Q^0=\sum_{j,  i<k}z_{ik}\xi_{ij}\xi_{jk}+
\sum_{\i\neq j,j\neq k,k\neq i }w_{ik}\xi_{ij}\eta_{jk}+
2\sum_{1< i,1< j, i<j }(w_i-w_j)\xi_{ij}\eta_{ji} \nonumber \\
-2\sum_{1<i}w_i\xi_{1i}\eta_{i1}+
\sum_{1<j,1<j,\i\neq j}w_{ij}\xi_{ij}\eta_{j}+
2\sum_{1<i}w_{1i}\xi_{1i}\eta_i+
\sum_{1<i,1<j,i\neq j}w_{1i}\xi_{1i}\eta_j.
\end{eqnarray}
Now apply the trick we have used in subsection 7.1 to $Q^0$.

The expression of $Q^0$ in the basis
$$
t_it_je_{ij}^0, \,1\leq i< j\leq n, \qquad t_it_je_{ij}^1, \,1\leq i< j\leq n, 
\qquad t_i^2e_i, \,1<i\leq n.
$$
of  $\gs\oplus_{\rho_0}W_0$ is of the form $Q^0=t_1^2Q^0_1+\dots+t_n^2Q^0_n$ 
with $Q^0_j$ non depending on $t_i$'s. This proves that $Q_j^0$'s are mutually
compatible Poisson bivectors. In particular, $Q^0=Q^0_1+\dots+Q^0_n$ is a 
simple disassembling of $Q^0$. Exact expressions of bivectors $Q^0_j$'s are 
easily obtained from (\ref{pois-for-sl}). Namely, we have
\begin{equation}\label{pois-for-dis1-sl}
Q^0_1=\sum_{i<k}z_{ik}\xi_{i1}\xi_{1k}+
\sum_{i\neq 1,k\neq 1}w_{ik}\xi_{i1}\eta_{1k}
-2\sum_{1<i}w_i\xi_{1i}\eta_{i1}
\end{equation}
and for $j>1$
\begin{eqnarray}\label{pois-for-dis2-sl}
Q^0_j=\sum_{i<k,  j\notin\{i,k\}}z_{ik}\xi_{ij}\xi_{jk}+
\sum_{\ i\neq k, j\notin\{i,k\} }w_{ik}\xi_{ij}\eta_{jk}+
2\sum_{i,1< i\neq j }w_i\xi_{ij}\eta_{ji}+ \nonumber \\
\sum_{i,1<i\neq j}w_{ij}\xi_{ij}\eta_{j}+
2w_{1j}\xi_{1j}\eta_j+
\sum_{i,1<i\neq j}w_{1i}\xi_{1i}\eta_j.
\end{eqnarray}
Each single term of summations (\ref{pois-for-dis1-sl}) and  (\ref{pois-for-dis2-sl})
is an $(n^2-1)$--dimensional $\pitchfork$--lieon. It is easy to verify by a direct 
check or by using lemmas\,\ref{a1}, \ref{b1} and \ref{c1} that
\begin{itemize}
\item all $\pitchfork$--lieons in  (\ref{pois-for-dis1-sl}) are mutually compatible;
\item incompatible pairs of  $\pitchfork$--lieons in (\ref{pois-for-dis2-sl}) are 
\begin{eqnarray}\label{comp-fork-pairs}
z_{1k}\xi_{1j}\xi_{jk} \;\mathrm{and}  \;w_{1k}\xi_{1k}\eta_j, \;k>1; \quad
w_{1k}\xi_{1j}\eta_{jk} \;\mathrm{and}  \;w_{kj}\xi_{kj}\eta_j, \;k>1;\nonumber \\
w_{1k}\xi_{kj}\eta_{j1} \;\mathrm{and}  \;w_{1j}\xi_{1j}\eta_j, \;k>1;
\end{eqnarray}
\end{itemize}
This shows that 
\begin{itemize}
\item $Q_1^0$ is simply disassembled into a number of $\pitchfork$--lieons;
\item $Q_j^0, \,j>1$, is simply disassembled into a number of $\pitchfork$--lieons
and structures
\begin{eqnarray}
Q_j^{\prime}=\sum_{k>1, k\neq j}z_{1k}\xi_{1j}\xi_{jk}+\sum_{k>1, k\neq j}w_{1k}\xi_{1j}\eta_{jk}+
2w_{1j}\xi_{1j}\eta_j \\
Q_j^{\prime\prime}=\sum_{k>1, k\neq j}w_{1k}\xi_{1k}\eta_j+
\sum_{k>1, k\neq j}w_{kj}\xi_{kj}\eta_j+\sum_{k>1, k\neq j}w_{1k}\xi_{kj}\eta_{j1} 
\end{eqnarray}
\end{itemize}
Indeed, by (\ref{comp-fork-pairs}), $Q_j^{\prime}$ and $Q_j^{\prime\prime}$
are composed of mutually compatible $\pitchfork$--lieons and, therefore,
are Poisson bivectors. Moreover, a direct computation by using formula 
(\ref{Schouten in coordinates}) proves that $\ls Q_j^{\prime},Q_j^{\prime\prime}\rs=0$.

Finally, by the above said both $Q_j^{\prime}$ and $Q_j^{\prime\prime}$ can be simply
disassembled into a number of $\pitchfork$--lieons. Thus we have explicitly 
described a 4-step complete disassembling of considered simple Lie algebras.
\begin{rmk}\label{can-dis-repr}
Identify  the algebra $\gs\go(n,\gk)$ with $\gs\go(g)$, $g$ being the standard `scalar' 
product on $V=\gk^n$. Then the above discussed representation $\rho$ of the algebra 
$\gs\go(n,\gk)$ is identified with the canonical representation of this algebra in $S^2V^*$.
It is easy to see that the disassembling procedure discussed in this subsection can be
applied to all canonical representations of $\gs\go(n,\gk)$ in tensor powers of $V$.
\end{rmk}

By concluding this section we note that the discussed in it simple algebras belonging 
to the same series are assembled from the same `universal elements' with coefficients
belonging to a given ground field (see remarks \ref{ort-any-fields} and \ref{field-as-glue}). 
The same can be said about their representations (see remark\,\ref{can-dis-repr}). These
observations lead to a natural conjecture. To state it we, first, recall that the type of a 
central simple Lie algebra $\gG$ over a field is the type of a splitting  simple algebra 
obtained from $\gG$ by an extension of scalars. Second, we say that two Lie algebras
are assembled from the same elements if the corresponding a--schemas are 
equivalent and the corresponding terms of them are proportional.
\begin{conj}
All central simple Lie algebras of the same type can be assembled from the same 
``universal'' elements which are Lie algebras structures over $\Z$.
\end{conj}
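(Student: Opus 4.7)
The plan is to verify the conjecture by constructing, for each Dynkin type $X_n$, a universal a-scheme $\gS_{X_n}$ all of whose terms are Lie algebra structures on free $\Z$-modules, and showing that every central simple $\gk$-algebra of type $X_n$ is obtained by specializing the coefficients entering the vertices of $\gS_{X_n}$ to elements of $\gk$ determined by the descent data (quadratic, hermitian, or Galois-cohomological) of the algebra.

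First, I would handle the classical types by exploiting realizations through an associative algebra with involution, as already illustrated in section\,\ref{classical}. The decompositions $P=\sum a_\alpha P_\alpha$ for $\gs\go(g)$ (remark\,\ref{ort-any-fields}), the three-index decomposition (\ref{3gl-dis}) for $\gG\gl(n,\gk)$, and the $\gl_\lambda$-family from remark\,\ref{field-as-glue} already exhibit universal $\Z$-bivectors $P_\alpha, Q_\alpha^j$ whose linear combinations over $\gk$ produce the split forms together with their standard non-split twists ($\gs\go(r,s)$, $\gu(n)$, $\gs\gu(n)$, etc.). The task for a general central simple algebra of classical type is to verify that an arbitrary involution of the first or second kind on a central simple associative algebra descends the stripping procedure of section\,\ref{striptease} without altering its combinatorial shape---only the diagonal coefficients of the form/hermitian matrix and the entries of the involution's defining cocycle should appear as scalars multiplying the universal $\Z$-pieces.

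Second, the exceptional types demand a uniform model, for which I would use the Tits construction and the Freudenthal--Tits magic square: $F_4$, $E_6$, $E_7$, $E_8$ (and the attached $G_2$) arise from a pair $(A,J)$ of a composition algebra and a Jordan algebra, both definable over $\Z$ with their structure constants. Applying the stripping lemma with respect to the canonical $3$- or $5$-gradings of these models reduces the disassembling to abelian extensions of algebras of smaller rank, and induction together with propositions\,\ref{split-by-3d} and \ref{s-tr} supplies the universal a-scheme. The dependence on the specific $\gk$-form enters only through the Cayley--Dickson parameters of $A$ and the norm form of $J$, i.e.\ through the scalar multipliers of the $\Z$-universal lieons appearing at the ends of $\gS_{X_n}$.

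The main obstacle will arise in two places. The less serious is bookkeeping: one must verify in each exceptional case that the d-pairs constructed from $\gs\gl(2)$- or $\gs\go(3)$-triples (proof of theorems\,\ref{C-dis} and \ref{R-dis}) can be chosen in a $\Z$-rational way, independent of the particular twist. The serious obstacle concerns simplest algebras: by corollary\,\ref{semisimple} and proposition\,\ref{simpl-ad}, they are intrinsically attached to the arithmetic of $\gk$ (division algebras over $\gk$), and the $\rho$-extension used to strip them (proposition\,\ref{ext-ext}) invokes a splitting operator that exists only after an algebraic extension. One must therefore show that, although individual splitting operators are not defined over $\Z$, the pair of modular structures they produce may always be re-expressed as a $\gk$-linear combination of a fixed $\Z$-universal pair (in the spirit of theorem\,\ref{LieMathing}). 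If this descent can be carried out uniformly for every form of every simple type, the conjecture follows by assembling $\gS_{X_n}$ from the resulting universal levels.
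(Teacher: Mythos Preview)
The statement you are attempting to prove is labelled \emph{Conjecture} in the paper, and the paper does \emph{not} prove it. Immediately after stating the conjecture the author writes only: ``An approach, which appear more boring than difficult, is to apply the techniques of this section to the known description of simple Lie algebras over arbitrary fields of characteristic zero (see, for instance, chapter\,X of Jacobson's book).'' That single sentence is the full extent of the paper's discussion; there is no proof to compare your proposal against.

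Your document is therefore not a proof of a proven result but a research plan for an open problem. As such a plan it is broadly consonant with the author's one-line hint for the classical types (your first paragraph essentially elaborates what section~\ref{classical} already does for split and a few non-split forms), and it goes well beyond the paper's suggestion in proposing the Tits construction and magic square for the exceptional types. You also correctly isolate the genuine difficulty: the stripping procedure via simplest subalgebras (propositions~\ref{simplest-in}, \ref{R-simp}, and the proof of theorem~\ref{R-dis}) depends on the arithmetic of $\gk$ through the structure of $\mathcal{S}(\gs,W,\rho)$, and nothing in the paper establishes that the resulting splitting involutions, or the lieons they eventually produce, can be chosen uniformly over $\Z$ across all forms of a given type.

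So your proposal is a reasonable outline, but it is not a proof and cannot be one without resolving precisely the descent obstacle you yourself flag at the end. The paper offers no mechanism for that step; it leaves the conjecture open.
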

An approach, which appear more boring than difficult, is to apply the techniques of
this section to the known description of simple Lie algebras over arbitrary fields of
characteristic zero (see, for instance, chapter\,X of Jacobson's book \cite{Jac}).

\section{Coaxial Lie algebras}\label{coaxial}

 \emph{Coaxial}  Lie algebras form a natural subclass of first level Lie
algebras, which is, in a sense, attached to a chosen basis of the supporting 
vector space $V$ (see below). Informally speaking, these are ``molecules"
which can be directly, i.e., in one step, ``synthesized'' from lions which play the
role of "atoms" in this context.  Description of these ``molecules" is a combinatorial
problem which is solved in this and the subsequent  sections. Coaxial algebras provide 
us with necessary ingredients for synthesis of more complicated ``molecules". 
For instance, such ones have already appeared in the procedure of disassembling of 
classical Lie algebras in the previous section. 
 
The central point in this section is a description of some ``maximal" families of mutually
compatible lions, called \emph{clusters}. It is  rather instructive to see how lions in clusters
are self-organized in structural groups, surrounded by \emph{casings} and tied one another
by means of \emph{connectives}. All this looks like a kind of chemistry, and from this point
of view the study of coaxial algebras may be thought as the first step toward the general
``chemistry" of Lie algebras.

Another interesting aspect of coaxial Lie algebras is that they are easily deformable. As such
they furnish the deformation theory with numerous examples, which, in particular, may serve
as an useful ``experimental" material. Finally, it should be mentioned that infinite-dimensional 
version of coaxial Lie algebras  brings in light a new class of Lie algebras, which will be
illustrated by some examples.

\subsection{Coaxial algebras: definitions}
 
 Fix  a basis $\mathcal{B}=\{e_1,\dots,e_m\}$  in $V$. Suppose that numbers 
 $1\leq i,j,k\leq n$  differ each other and denote by $\lfloor 
 i,j|k\rceil$ \,(resp., by $\lfloor i|j\rceil$) the $n$--dimensional
 $\pitchfork$--lieon (resp., the $\between$--lieon) for which 
 $[e_i,e_j]=-[e_j,e_i]=e_k$ (resp., $[e_i,e_j]=-[e_j,e_i]=e_j$) are the 
 only nontrivial of Lie products involving base vectors. Call them 
 $\mathcal{B}$-\emph{base}, or, simply, \emph{base} $\pitchfork$- and 
 $\between$-lieonss, respectively. We also shall use the notation like
 $\lfloor A,B|C\rceil$ or $\lfloor A|B\rceil$ instead of $\lfloor i,j|k\rceil$
 and $\lfloor i|j\rceil$, respectively, if $A=e_i, B=e_j$ and $C=e_k$.
 \begin{defi}
 A linear combination (over $\gk$) of 
 some mutually compatible  $\mathcal{B}$-\emph{base} lieons is 
 called a $\mathcal{B}$-\emph{coaxial}, or, simply, \emph{coaxial} (Lie 
 algebra) structure. Such a structure will be called 
 \emph{$\pitchfork$-coaxial} (resp., \emph{$\between$-coaxial}) if it is
 composed only of base $\pitchfork$--lieons  (resp., of 
 $base \between$--lieons). 
 \end{defi}
 A coaxial Lie algebra $\gG$ may be presented in the form
  \begin{equation}\label{Lie-base-comp}
  \gG=\sum\alpha_{(i,j|k)}\lfloor i,j|k\rceil\,+
 \,\sum\beta_{(p|q)}\lfloor p|q\rceil, \quad \alpha_{(i,j|k)}, \;\beta_{(p|q)}
 \in\gG,
\end{equation}
where figuring in it base structures with nonzero coefficients are compatible
each other.
 \begin{figure}[h]
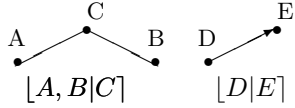
 
\figthree
\caption{Base lieons.}
\end{figure}
 Vectors $e_i,e_j$ and $e_k$ (resp., $e_i,e_j$) are called \emph{vertices} 
 of $\lfloor i,j|k\rceil$ (resp., of $\lfloor i|j\rceil$). Vectors 
 $e_i,e_j$ are \emph{ends} of $\lfloor i,j|k\rceil$, while $e_k$ is its 
 \emph{center}. The \emph{origin} and the \emph{end} of $\lfloor i|j\rceil$ 
 are $e_i$ and $e_j$, respectively. In the sequel we do not distinguish 
 between $\lfloor i,j|k\rceil$ and $\lfloor i,j|k\rceil=-\lfloor 
 j,i|k\rceil$, since they have identical compatibility properties. 

 Poisson bivectors on $V^*$ corresponding to $\lfloor i,j|k\rceil$ and 
 $\lfloor i|j\rceil$  are $\langle i,j|k\rangle=x_k\xi_i\xi_j$ and $\langle 
 i|j\rangle=x_j\xi_i\xi_j$, respectively. They will be called \emph{base bivectors.} 
 Obviously, the coordinate 
 expression of the Poisson bivector $P_{ \gG}$ associated with a Lie 
 algebra structure $ \gG$ on $V$ is a linear combination of base bivectors:
  \begin{equation}\label{Pois-base-comp}
 P_{\gG}=\sum\alpha_{(i,j|k)}\langle
 i,j|k\rangle\,+\,\sum\beta_{(p|q)}\langle p|q\rangle, \quad
 \alpha_{(i,j|k)}, \;\beta_{(p|q)}\in\gk.
 \end{equation}

 These bivectors are not generally mutually compatible. For instance, such
are base bivectors occuring in coordinate expressions of Poisson bivectors of
 classical Lie algebras which were discussed in section\,7.
 The Poisson bivector 
 associated with a $\pitchfork-,\between$--coaxial algebra will be also 
 called $\pitchfork-,\between$--coaxial, respectively. 
 
Base lieons are conveniently, up to proportionality, represented as diagrams in 
Fig. 3. In the sequel such diagrams will be systematically used in construction of 
families of mutually compatible base lieons.

 \medskip
 \noindent \textit{Compatibility of base structures.}
 
 \noindent We shall say that two base lieons 
 are \emph{trivially compatible} if they either 
 coincide up to the sign, or have no common vertices at 
 all.
 \begin{proc}\label{123}
 1) Two base $\pitchfork$-lieons are nontrivially compatible if and only if  they 
 have either a common center vertex, or a common end vertex at least.\\ 
 2) Two base $\between$-lieons are  incompatible if and only if  the origin
 of one of them is the end of the other one and they have no other common 
 vertices.\\ 
 3) A $\between$-lieon is nontrivially compatible with a 
$\pitchfork$-lieon if and only if its origin coincides with one of the 
ends of this $\pitchfork$-lieon.
 \end{proc}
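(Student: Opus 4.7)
\emph{Plan.} Each of the three assertions is a purely combinatorial characterization, and in each case the proof reduces to applying the lieon-compatibility lemmas of Section \ref{1st-level} specialized to the coaxial base case. The key dictionary is: for a base $\pitchfork$-lieon $\lfloor i,j|k\rceil$ the center is $C = \mathrm{span}\{e_s : s\neq i,j\}$ and the derived subspace is $l = \mathrm{span}\{e_k\}\subset C$, while for a base $\between$-lieon $\lfloor i|j\rceil$ one has $C = \mathrm{span}\{e_s : s\neq i,j\}$ and $\Delta = \mathrm{span}\{e_j\}$ with $\Delta\cap C=\{0\}$. Consequently $\dim(C_1\cap C_2)=n-|T_1\cup T_2|$, where $T_\ell$ denotes the pair of end/origin indices of the $\ell$-th lieon, and every containment condition appearing in the compatibility lemmas reduces to a set-membership statement about index tuples. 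I would use this dictionary rather than compute Schouten brackets by hand.

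\emph{Part (1).} I split by $|\{i,j\}\cap\{p,q\}|$. If this equals $2$ or $1$, lemmas \ref{c1} and \ref{b1} give compatibility, and these are exactly the cases of sharing both or one end vertex. If it equals $0$, lemma \ref{a1} requires $l_1, l_2\subset C_{12}$, i.e.\ $k, r\notin\{i,j,p,q\}$; under this constraint a vertex is shared only if $k=r$ (a common center) or not at all (trivially compatible), while the remaining configurations $k\in\{p,q\}$ or $r\in\{i,j\}$ are incompatible. These latter are precisely the patterns in which a vertex is shared only through a center--end coincidence, so the vanishing of the bracket matches exactly ``common center or common end,'' as claimed.

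\emph{Parts (2) and (3).} The same three-step split on $|\{i,j\}\cap\{p,q\}|$ works. For (2): if the intersection has size $2$ then $\{i,j\}=\{p,q\}$, and a short direct Schouten computation shows $[\langle i|j\rangle,\langle p|q\rangle]=0$ in both sub-cases $(p,q)=(i,j)$ and $(p,q)=(j,i)$; if it has size $1$, lemma \ref{b3} gives compatibility iff $\Delta_1\equiv\Delta_2\bmod C_{12}$ or both $\Delta_\ell\subset C_1+C_2$, and the unique failure is when the shared index is the origin of one lieon and the end of the other; if it has size $0$, lemma \ref{a3} requires $\Delta_1\subset C_2$ and $\Delta_2\subset C_1$, both automatic here. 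Collecting cases produces exactly statement (2). For (3): lemma \ref{a2} makes the disjoint-end-pair case incompatible; lemmas \ref{b2} and \ref{c2} (codimension $n-3$) give compatibility iff $\Delta\subset C_1+C_2$, which translates to $q\notin\{i,j\}$, i.e.\ the shared vertex is $p$ and lies in $\{i,j\}$; and the codimension-$(n-2)$ case $\{p,q\}=\{i,j\}$ is always compatible and also has $p\in\{i,j\}$. Thus in every compatible configuration the origin $p$ of the $\between$-lieon matches an end of the $\pitchfork$-lieon, and conversely.

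\emph{Main obstacle.} There is no conceptual obstacle; the only work is to enumerate the vertex-coincidence patterns (up to nine between two triples, fewer between pairs) and verify in each case that the index-theoretic conditions produced by the dictionary select exactly the configurations described. A clean organization by $|\{i,j\}\cap\{p,q\}|$ first, and then by whether the center of one lieon lands on an end of the other, keeps the subcases mechanical. The only small subtlety is the codimension-$(n-2)$ case for two $\between$-lieons, which is not covered by a named lemma from Section \ref{1st-level} and requires a two-line direct bracket calculation.
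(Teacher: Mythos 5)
Your proof is correct and follows essentially the same route as the paper, whose own proof of this proposition is just the one-line remark that it is a direct consequence of Lemmas \ref{a1}--\ref{b3}; your index dictionary ($C=\mathrm{span}\{e_s: s\neq i,j\}$, $l$ resp.\ $\Delta$ spanned by the third vertex) is exactly the right way to specialize those lemmas to base lieons. Your observation that the case $\dim C_{12}=n-2$ for two $\between$-lieons (the ``double'' $\{\lfloor i|j\rceil,\lfloor j|i\rceil\}$) is not covered by any of the cited lemmas and needs a separate short bracket computation is accurate --- this is a small gap in the paper's own one-line proof that your argument correctly closes.
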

 \begin{proof}
 A direct consequence of lemmas \ref{a1}-\ref{b3}.
 \end{proof}
 A graphical interpretation of proposition \ref{123} is given in Fig. 4.
 \begin{figure}[htb]
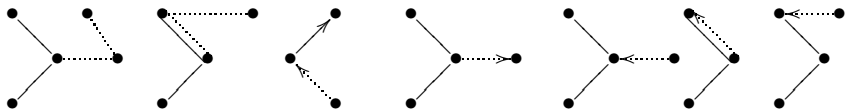

$$
\xy
(-6,-6)*{\bullet}="A"; (-6,6)*{\bullet}="B"; (0,0)*{\bullet}="C"; 
"A";"C" **\dir{-}; "B";"C" **\dir{-}; 
(8,0)*{\bullet}="D"; "D";"C" **\dir{.}; (4,6)*{\bullet}="E"; 
{\ar@{.} "E"; "D"}; 
(14.4,6.4)*{}="BA"; (14,6)*{\bullet}="BE"; (13.6,5.6)*{}="BB"; 
(20.4,0.4)*{}="BC"; (20,0)*{\bullet}="BF"; (19.6,-0.4)*{}="BD"; 
"BB";"BD" **\dir{-}; "BA";"BC" **\dir{.}; (14,-6)*{\bullet}="BG";
"BG";"BF"; **\dir{-}; (26,6)*{\bullet}="BK"; "BE";"BK"; **\dir{.};
(31,0)*{\bullet}="A1";(37,6)*{\bullet}="B1";(37,-6)*{\bullet}="C1";
{\ar@{->} "A1"; "B1"}; 
{\ar@{.>} "C1"; "A1"}; 
(47,6)*{\bullet}="E1"; (53,0)*{\bullet}="E2";
(47,-6)*{\bullet}="G1"; (61,0)*{\bullet}="G2";
{\ar@{-} "E1"; "E2"}; {\ar@{-} "G1"; "E2"}; {\ar@{.>} "E2"; "G2"};
(68,6)*{\bullet}="B2"; (74,0)*{\bullet}="A2"; 
(68,-6)*{\bullet}="C2"; (82,0)*{\bullet}="D2";
{\ar@{-} "A2"; "B2"}; {\ar@{-} "C2"; "A2"}; {\ar@{.>} "D2"; "A2"};
(84.4,6.4)*{}="A3A"; (84,6)*{\bullet}="A3"; (83.6,5.6)*{}="A3B";
(84,-6)*{\bullet}="B3"; (90.4,0.4)*{}="C3A"; (90,0)*{\bullet}="C3";
(89.6,-0.4)*{}="C3B"; "C3";"B3"; **\dir{-};
"A3B";"C3B"; **\dir{-}; {\ar@{.>} "C3A"; "A3A"};
(96,6)*{\bullet}="A4"; (96,-6)*{\bullet}="B4"; (102,0)*{\bullet}="C4";
(104,6)*{\bullet}="D4"; "C4";"B4"; **\dir{-};"A4";"C4"; **\dir{-};
{\ar@{.>} "D4"; "A4"};
\endxy
$$
\caption{Incompatible configurations of 2 base lieons}\label{incompatible}
\end{figure}
 
 \begin{ex}
 Poisson bivectors $P_{\alpha}$'s that disassemble the algebra $\gs\go(g)$ 
 (see subsection \ref{SO-dis}) are coaxial with respect to the base 
 $\{e_{ij}\}$. The corresponding to $P_{\alpha}$ coaxial Lie algebra is 
 isomorphic to the direct sum of $(n-1)(n-2)/2$ copies of the Heisenberg 
 algebra and an abelian one. This algebra is, obviously, coaxial. So, 
 $\gs\go(g)$ is assembled from $n$ coaxial Lie algebras of this kind.
 Poisson bivectors $Q_j^{\prime}$ and $Q_j^{\prime\prime}$ which 
 appear in the disassembling procedure in subsection\,\ref{gl-sl-u-dis} are 
 also $\pitchfork$--coaxial.
 \end{ex}

\subsection{Clusters.}

Lie algebra  structures (resp., Poisson bivectors), which are compatible 
with two proportional ones, are, obviously, the same. So, it is convenient to
work with classes of proportional base lieons or correspomding Poisson bivectors.
A class of proportional base   $\pitchfork$-lieons (resp., $\between$-lieons)
will be called a \emph{tee} (resp., a \emph{dee}). We shall use keep the notation  
$\lfloor i,j|k\rceil$ (resp., $\lfloor i|j\rceil$) for the corresponding tee (resp., dee).
The \emph{ends and center of a tee} are those of the corresponding base
$\pitchfork$-lieon, and similarly, for the  \emph{end and origin of a dee}.

 A family of tees (resp., dees) will be called a \emph{tee} (resp., 
 \emph{dee})   \emph{family}. Union of a tee and of a dee families
 will be called a  \emph{base family}.
 A graph $\Upsilon_{\Phi}$ is 
 naturally associated with a base family $\Phi$. Namely, let 
 $S(\Phi)=\{e_{i_1},\dots,e_{i_m}\}\subset \{e_1,\dots,e_n\}$ be the 
 totality of vertices of all tees and dees composing $\Phi$ and 
 $I(\Phi)=\{i_1,\dots,i_m\}$.  Vertices $v_1,\dots, v_m$ of 
 $\Upsilon_{\Phi}$ are in one-to-one correspondence with base vectors 
 $e_{i_1},\dots, e_{i_m}$. Vertices $v_k$ and $v_l$ of $\Upsilon_{\Phi}$ 
 are connected by an unique edge iff $e_{i_k}, e_{i_l}$ are either vertices 
 of a dee, or the center and one of the ends of a 
 tee belonging to $\Phi$. For example, the graph 
 corresponding to $\Phi=(\lfloor i,j|k\rceil, \lfloor i|k\rceil, \lfloor 
 l,k|j\rceil)$ has four vertices and three edges. Base vectors belonging
 to $S(\Phi)$ will be called \emph{vertices} of $\Phi$. Obviously,
 $\Phi=\Phi^{\between}\cup\Phi^{\pitchfork}$ where $\Phi^{\between}$
 (resp., $\Phi^{\pitchfork}$) is the set of all dees (resp., tees) belonging 
 to $\Phi$.

 Base families $\Phi$ 
 and $\Phi^{\prime}$ are \emph{equivalent} if there exists a one-to-one 
 correspondence $S(\Phi)\leftrightarrow S(\Phi^{\prime})$ which
 induces a one-to-one correspondence between tees and dees belonging 
 to $\Phi$ and those belonging to $\Phi^{\prime}$.
  
 A base family  will be called  \emph{compatible} if composing it lieons 
 are mutually compatible. A compatible family, denoted by $\Phi_{\gG}$, is 
 naturally associated with a coaxial Lie algebra $\gG$. Namely, it consists 
 of tees and dees corressponding to nonzero terms in expression
 (\ref{Lie-base-comp}) for $\gG$.
 Denote by $\Phi_{\gG}^{\between}$ 
 (resp.,$\Phi_{\gG}^{\pitchfork}$) the $\between$-family (resp., 
 $\pitchfork$--family) composed of dees (resp., tees) belonging to 
 $\Phi_{\gG}$. Finally, two 
 compatible families will be called \emph{compatible} if composing them 
tees and dees are mutually compatible.  A Lie algebra $\gG$ will be 
called \emph{associated} with a base family $\Phi$ if $\Phi=\Phi_{\gG}$.
 
 The associated with $\Phi_{\gG}$ graph will be denoted by 
 $\Upsilon_{\gG}$, i.e., $\Upsilon_{\gG}=\Upsilon_{\Phi_{\gG}}$. Let 
 $\Upsilon$ be a connected component of $\Upsilon_{\gG}, 
 \{\,e_{i_1},\dots,e_{i_s}\}$ the vertices of $\Upsilon$ and 
 $I=\{i_1,\dots,i_s\}$. A Lie algebra structure on the subspace of $V$ 
 generated by $e_{i_1},\dots,e_{i_s}$ is defined as the linear combination 
  \begin{equation}\label{Lie-base-comp-comp}
  \gG_I=\sum_{i,j,k\in I}\alpha_{(i,j|k)}\lfloor i,j|k\rceil\,+
 \,\sum_{p,q,\in I}\beta_{(p|q)}\lfloor p|q\rceil, \quad \alpha_{(i,j|k)}, \;\beta_{(p|q)}
 \in\gk,
\end{equation}
assuming that $\gG$ is given by (\ref{Lie-base-comp}). Obviously,
 \begin{equation}\label{componentwise}
 \gG=\gG_{I_1}\oplus\dots\oplus\gG_{I_m}\oplus\gamma_l, 
\qquad l=n-(\mathrm{dim}\,\gG_{I_1}+\dots+\mathrm{dim}\,\gG_{I_m}). 
 \end{equation}
 where $I_j$'s are sets of indices corresponding to connected 
 components of $\Upsilon_{\gG}$. 
 
 A compatible base family is \emph{maximal} if it is not contained in a larger 
compatible one, which have the same set of vertices. A maximal compatible 
family will be called a  \emph{cluster} if the 
 corresponding to it graph is connected. The number of vertices of the 
 graph associated with a cluster is called the \emph{dimension} of it. 
 Obviously, a compatible family with the connected graph is contained in a 
 cluster. So, in view of decomposition (\ref{componentwise}), the problem 
 of description of coaxial algebras is reduced to description of clusters. 
 Similarly are defined maximal  compatible tee-- and  dee--families and,
 accordingly, \emph{tee-clusters} and  \emph{dee-clusters}.
 It should be stressed that a \emph{tee-cluster}, or a 
 \emph{dee-cluster} is not necessarily a cluster (see below). 
 
 According to the above said we see that 
 \begin{center}
 {\it the problem of description of coaxial Lie algebras \\ is reduced  to 
 description of clusters.}
 \end{center}
 By this reason in the sequel we shall concentrate on study of clusters.
 
 The following terminology will be useful in our further analysis of the structure 
of clusters. We shall say that a tee/dee $\vartheta$ \emph{blocks} (alternatively, 
is \emph{blocking}) a tee/dee $\theta$ if it is incompatible with $\theta$. So, 
we have the following 
\newline\newline
\noindent
{\bf Blocking rule:}
{\it Let $\Phi$ be a (tee/dee-)cluster and vertices of $\theta$ belong to $S(\Phi)$. Then
$\theta$ belongs to $\Phi$, if $\Phi$ does not contain tees/dees that block $\theta$.}

Base families are conveniently represented by means of their diagrams (see, for
instance, Fig. 4). The use of such diagrams makes much more clear proofs of 
various assertions about compatibility of base families appearing in the forthcoming
analysis of the structure of clusters. It turned out impossible to accompany
these proofs, due to their numerosity, by the corresponding pictures. So, the 
reader is strongly suggested to do that on his own.

\subsection{Structure of dee-clusters}\label{dee-clusters} 
 
Dee-clusters are easily classified. With this purpose we shall introduce the
following compatible dee-families.

\textbf{Double}. This is a compatible dee-family of the form
$\{\lfloor p|q\rceil, \lfloor q|p\rceil\}$.

\noindent\emph{Basic property of doubles}: If a base lieon is compatible with a 
double $D$, then it is a $\pit$--lieon whose ends are vertices of $D$ 
(proposition\,\ref{123}).

In particular, if $D$ belongs to a compatible dee-family $\Phi$, then 
a dee $\vartheta\in \Phi\setminus D$ has no common vertices with $D$.
Vertices of $D$ will be called \emph{double vertices} of $\Phi$. Therefore if $\nu$ 
is not a double vertex of $\Phi$, then $\nu$ is either the common origin, 
or the common end of all dees $\vartheta\in \Phi$ that have  $\nu$ as one of 
its vertices. Accordingly, vertices of a compatible dee-family are subdivided into 
\emph{double, initial} and \emph{end} vertices, respectively.

\textbf{Spider}.  Let $I_0=\{i_1,\dots,i_k\}$ and $I_1=\{j_1,\dots,j_l\}$ be nonempty 
subsets of $\{1,\dots,n\}$ such that $I_0\cap I_1=\emptyset$. The dee-familly
$$
Sp(I_0,I_1)\df\{\lfloor p|q\rceil \,| \,p\in I_0, \,q\in I_1\}
$$
will be called a \emph{$(k,l)$-spider}. Vertices
$e_{i_1},\dots,e_{i_k}$ (resp., $e_{j_1},\dots,e_{j_l}$) will be called 
\emph{initial} (resp., \emph{end}) vertices of the spider $Sp(I_0,I_1)$.
Obviously, all $(k,l)$-spiders are equivalent. We shall use the notation 
 $Sp_k^l$ when referring  to a $(k,l)$-spider.
\begin{proc}\label{doubles}
\begin{enumerate}
\item If a double $D$ is contained in a dee-cluster $\Phi$, then $n=2$ and 
$D=\Phi$.
\item If $n>2$, then a dee-cluster is a (m,n-m)--spider, $1\leq m<n$.
\end{enumerate}
\end{proc}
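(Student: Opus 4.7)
The plan is to treat the two parts of the proposition separately, leaning almost entirely on proposition \ref{123}(2), (3) (the compatibility criteria for dees with dees and for dees with tees) together with the Blocking Rule and the connectedness requirement built into the definition of a cluster.

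For part~(1), I would invoke the basic property of doubles stated just above the proposition: a base lieon sharing a vertex with $D=\{\lfloor p|q\rceil,\lfloor q|p\rceil\}$ and compatible with both elements of $D$ must be a $\pitchfork$--lieon whose two ends are exactly $e_p$ and $e_q$. Since $\Phi$ is a \emph{dee}-cluster, it contains no tees; hence every dee $\vartheta\in\Phi\setminus D$ must be vertex-disjoint from $D$. But then in the graph $\Upsilon_\Phi$ the pair $\{e_p,e_q\}$ and the vertices of any such $\vartheta$ would lie in different connected components, contradicting the connectedness required of a cluster. Therefore $S(\Phi)=\{e_p,e_q\}$, $\Phi=D$, and its dimension is $2$.

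For part~(2), assume $\Phi$ is a dee-cluster of dimension $n>2$, so by part~(1) it contains no double. The key observation I would establish first is that each vertex $\nu\in S(\Phi)$ plays a \emph{consistent role} across all dees of $\Phi$ containing it: it is either the origin of every such dee, or the end of every such dee. Indeed, if $\lfloor\nu|a\rceil$ and $\lfloor b|\nu\rceil$ both belonged to $\Phi$, then either $a=b$ (yielding a double, forbidden) or these two dees share exactly one vertex $\nu$ which is the origin of one and the end of the other, so by proposition \ref{123}(2) they would be incompatible. This produces a well-defined partition $I(\Phi)=I_0\sqcup I_1$ of the index set into ``origin'' indices and ``end'' indices, and every dee of $\Phi$ has the form $\lfloor p|q\rceil$ with $p\in I_0$, $q\in I_1$, so $\Phi\subseteq Sp(I_0,I_1)$.

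To conclude equality, I would verify directly from proposition \ref{123}(2) that any two dees $\lfloor p|q\rceil,\lfloor p'|q'\rceil\in Sp(I_0,I_1)$ are compatible: the only obstruction is an origin-end collision with a single shared vertex, but that would require either $p=q'$ or $q=p'$, each contradicting $I_0\cap I_1=\emptyset$. Hence $Sp(I_0,I_1)$ is a compatible dee-family with the same vertex set as $\Phi$, so the Blocking Rule (maximality of $\Phi$) forces $\Phi=Sp(I_0,I_1)$. Since $\Phi$ is nonempty both $I_0$ and $I_1$ are nonempty, so $1\leq m=|I_0|<n$; the associated graph is the complete bipartite graph on $(I_0,I_1)$ and is manifestly connected, confirming that $Sp(I_0,I_1)$ is indeed a cluster. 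The only step requiring any care is the consistent-role observation — everything else is a mechanical application of proposition \ref{123}(2) — so I would not expect a real obstacle, but that partition is the conceptual pivot of the argument.
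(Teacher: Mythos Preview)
Your proof is correct and follows essentially the same approach as the paper: both use proposition~\ref{123}(2) to get the consistent-role partition into initial and end vertices, then invoke maximality to fill in the full spider. The only difference is organizational: the paper establishes the initial/end partition in the paragraph preceding the proposition (so its proof of (2) can jump straight to the ``cross dees'' $\lfloor p_1|q_2\rceil,\lfloor p_2|q_1\rceil$), whereas you rederive that partition inside the proof itself.
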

\begin{proof}
(1) If a dee $\delta$ has a common vertex with a double $D$ and $\delta\notin D$,
then, according to proposition\,\ref{123}, 2),  $\delta$ is incompatible with $D$.

(2) Let $\Phi$ be a dee-cluster and $\lfloor p_i|q_i\rceil\in \Phi, \,i=1,2$. By assertion 
(1), $\Phi$ does not contain doubles. So, by proposition\,\ref{123}, 2), $p_i$  (resp., $q_i$)
is not the end (resp., the origin) of a dee belonging to $\Phi$. Therefore the
dees $\lfloor p_1|q_2\rceil$ and $\lfloor p_2|q_1\rceil$ are compatible with all dees
from $\Phi$, and, by maximality property of $\Phi$, they must belong to $\Phi$.
\end{proof}
Put 
$$
C_{m,n}^{\between}=Sp(I_0,I_1) \quad\mbox{with}  \quad I_0=\{1,\dots,m\}, \,I_1=
\{m+1,\dots,n\},  \;1\leq m< n. 
$$   
 \begin{proc}
 \begin{enumerate}
 \item $C_{1,2}^{\between}=\{\lfloor 1|2\rceil, \lfloor 2|1\rceil\}$ is the 
 unique $2$-dimensional dee--cluster and, at the same time, the 
 unique $2$-dimensional cluster. 
 \item For $n>2$ there exists an unique $n$-dimensional cluster $C_{m,n}$ 
 containing $C_{m,n}^{\between}$. Namely, \\
 $\qquad\qquad\qquad C_{1,n}=C_{1,n}^{\between}\cup\{\lfloor 1,k|l\rceil\;|\; 2\leq k,l\leq n\}$;\\
 $\qquad\qquad\qquad C_{2,n}=C_{2,n}^{\between}\cup\{\lfloor 1,2|k\rceil|\;3\leq k\leq n\}$;\\
$\qquad\qquad\qquad C_{m,n}=C_{m,n}^{\between}$, \; $m\geq 3$. 
\end{enumerate}
 \end{proc}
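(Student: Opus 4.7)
The plan is to reduce the statement to a combinatorial enumeration, using a single compatibility rule that I would first extract from proposition~\ref{123}(3) together with the definition of trivial compatibility: a base dee $\lfloor p|q\rceil$ and a base tee $\lfloor i,j|k\rceil$ are compatible iff either they share no common vertex at all, or the origin $p$ of the dee lies in $\{i,j\}$.

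Part~(1) is immediate: for $n=2$ no tee is possible (tees require three distinct vertices), so a $2$-dimensional cluster is a $2$-dimensional dee-cluster, and inspection shows $\{\lfloor 1|2\rceil,\lfloor 2|1\rceil\}=C_{1,2}^{\between}$ is the only such family and it is already maximal.

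For part~(2), fix $1\leq m<n$ and determine which tees $\lfloor i,j|k\rceil$ are compatible with every dee in $C_{m,n}^{\between}$, organizing the analysis by the location of $\{i,j\}$ relative to $I_0\cup I_1$:
\begin{itemize}
\item $\{i,j\}\subset I_0$ with $m\geq 3$: choosing $p\in I_0\setminus\{i,j\}$ and then either $p=k$ (if $k\in I_0$) or $q=k$ (if $k\in I_1$) exhibits a dee in $C_{m,n}^{\between}$ violating the rule, so no such tee is admissible;
\item $i\in I_0,\ j\in I_1$: the choice $p\in I_0\setminus\{i\},\ q=j$ gives an incompatibility, so compatibility forces $m=1$, in which case $p=1\in\{i,j\}$ holds automatically and every $\lfloor 1,k|l\rceil$ with $k,l\in I_1,\ k\neq l$, is admissible;
\item $\{i,j\}\subset I_1$: for any $p\in I_0$ the choice $q=i$ gives an incompatible dee, so no tee of this type is admissible;
\item $\{i,j\}=I_0$ (forcing $m=2$): every $p\in I_0=\{i,j\}$ satisfies the rule, so every $\lfloor 1,2|k\rceil$ with $k\in I_1$ is admissible.
\end{itemize}
These four sub-cases together produce exactly the three families of tees listed in the statement.

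To finish, I would verify the remaining three points. First, any two admitted tees are pairwise compatible: for both $m=1$ and $m=2$ they share the end $1$ (and, when $m=2$, also $2$), so proposition~\ref{123}(1) applies. Second, the graph of $C_{m,n}$ is connected, because the bipartite dee-subgraph of $C_{m,n}^{\between}$ is already connected. Third, $C_{m,n}^{\between}$ is a maximal dee-cluster by proposition~\ref{doubles}, so no further dee can be adjoined. Combined with the case analysis above, this shows that $C_{m,n}$ is maximal, hence a cluster, and that any cluster containing $C_{m,n}^{\between}$ must equal $C_{m,n}$. The main bookkeeping obstacle is the first sub-case, where one must split on the location of the center $k$; everything else is mechanical once the compatibility rule is in hand.
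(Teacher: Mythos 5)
Your proof is correct and follows essentially the same route as the paper's: the paper simply asserts that proposition\,\ref{123} identifies the tees compatible with $C_{m,n}^{\between}$ (and gives their mutual compatibility via proposition\,\ref{123},\,(1)), while you make explicit the underlying rule from proposition\,\ref{123},\,(3) --- a dee and a tee are compatible iff they are disjoint or the dee's origin is an end of the tee --- and carry out the resulting case analysis. The only difference is that you spell out the enumeration and the maximality/uniqueness bookkeeping that the paper leaves implicit.
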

 \begin{proof}
 The first assertion is obvious. 
 
 If $n>2$, then, as it follows from proposition\,\ref{123}, 2), $C_{m,n}$ contains 
all tees that are compatible with  $C_{m,n}^{\between}$. In particular, there are no 
such ones, if $m\geq 3$. Moreover, by proposition\,\ref{123}, 1), these tees are 
mutually compatible.
 \end{proof}
As it is easy to see, a $\between$-coaxial Lie algebra $\gG$,
which is associated with $C_{m,n}^{\between}, \;n>\!2,$ is of the form
$\gG=\gA\oplus_{\rho}W$ with $\rho$ being a representation of an $m$-dimensional
abelian algebra $\gA$ in $W$ whose operators have a common diagonalizing basis.  
The corresponding Poisson bivector  is
\begin{equation}\label{Pmn}
 P_{m,n}^{\between}(\alpha)=\sum_{1\leq i\leq m,
 m<j\leq n}\alpha_{ij}x_{j}\xi_{i}\xi_{j}, \quad \alpha=\{\alpha_{ij}\}, \;\alpha_{ij}\in\gk.
\end{equation}                     
 
 Poisson bivectors representing coaxial Lie algebras associated with 
 clusters $C_{m,n}$ are
 \begin{eqnarray}
 \label{p1}P_{1,n}(\alpha,\beta)=P_{1,n}^{\between}(\alpha)+
 \sum_{2\leq k,l\leq n}\beta_{kl}x_l\xi_1\xi_k, 
 \quad \beta=\{\beta_{kl}\}_{2\leq k,l\leq n;}\\
 \label{p2}P_{2,n}(\alpha,\tau)= P_{2,n}^{\between}(\alpha)+
 \sum_{3\leq k\leq n}\tau_{k}x_k\xi_1\xi_2, \quad \tau=\{\tau_k\}_{3\leq k\leq n};\\
 \label{p3}P_{m,n}(\alpha)= P_{m,n}^{\between}(\alpha), \quad m\geq 3.
 \end{eqnarray} 
 
The Lie algebra corresponding to Poisson bivector  (\ref{p1}) is isomorphic to an
algebra $\Gamma_A, \,A:W \rightarrow W, \;A=\ad\,e_1,$ (see subsection 
\ref{ReductionSolvable}). Here the operator $A$ can be arbitrary.

The Lie algebra  $\gG$ corresponding to Poisson bivector  (\ref{p2}) has an 
abelian ideal $\mathcal{I}, |\mathcal{I}|=\langle e_3,\dots,e_n\rangle,$ of 
codimension 2 such that $[\gG,\gG]\subset\mathcal{I}$. In particular, $\gG$ is 
solvable and its derived series consists of two nontrivial terms.

\subsection{Structural groups of tee-clusters} 
\label{TeeClusters}
 
Tee--clusters, unlike dee--clusters,  are much more diversified and have a quite 
complex structure. Their description is a specific combinatorial problem 
whose solution requires, like in chemistry, determination of basic structural elements.

First of all, it is useful to distinguish vertices of a compatible tee-family $\Phi$. Namely, 
a vertex $\nu\in S(\Phi)$ will be called an \emph{end} (resp., a \emph{center}) 
vertex of $\Phi$ if it is an end (resp., the center) vertex of any tee $\vartheta\in\Phi$
such that $\nu$ is one of vertices of $\vartheta$. Otherwise, $\nu$ will be 
called a \emph{mixing} vertex. Vertices of the graph $\Upsilon_{\Phi}$ will be called 
accordingly. 

The role of various type vertices is illustrated by the following proposition. 
\begin{proc}\label{Lie-ecm}
Let $\gG$ be a coaxial Lie algebra such that $\Phi_{\gG}=\Phi_{\gG}^{\pitchfork}$. 
Then  
 \begin{enumerate} 
\item The subspace of $|\gG |$ generated by all center vertices of $\Phi_{\gG}$ 
supports a central ideal $\mathcal{I}$ of $\gG$.
\item Let $\gG=\sum_{\vartheta\in\Phi_{\gG}}a_{\vartheta}\vartheta$ and 
$\Psi\subset\Phi_{\gG}$ be the family formed by the tees $\vartheta\in\Phi_{\gG}$ 
whose centers are not centers of $\Phi_{\gG}$. Then for a suitable $m$ the algebra 
$\gG/\mathcal{I}\oplus\gamma_m $ is isomorphic 
to the algebra $\sum_{\vartheta\in\Psi}a_{\vartheta}\vartheta$;
\item $|[\gG,\gG]|$ belongs to the subspace generated by all mixing and 
center vertices of $\Phi$. 
\end{enumerate}
\end{proc}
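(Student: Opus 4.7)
The plan is to exploit the defining identity that in any base $\pitchfork$-lieon $\vartheta=\lfloor a,b|c\rceil$ the only nontrivial bracket among basis vectors is $[a,b]_\vartheta=c$. Consequently, a basis vector which never serves as an end of any tee in $\Phi_\gG$ automatically commutes with everything in $\gG$. This single observation will dispatch (1) and (3) essentially by inspection, and reduce (2) to a direct-sum bookkeeping argument.

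To prove (1), let $\nu$ be a center vertex of $\Phi_\gG$. By definition $\nu$ is never an end of any tee in $\Phi_\gG$, hence $[\nu,e_i]_\vartheta=0$ for every basis vector $e_i$ and every $\vartheta\in\Phi_\gG$; summing over $\Phi_\gG$ gives $[\nu,e_i]=\sum_\vartheta a_\vartheta[\nu,e_i]_\vartheta=0$ in $\gG$, so $\mathcal{I}$ lies in the center of $\gG$ and in particular is a central ideal. For (3), every bracket $[e_i,e_j]$ in $\gG$ is a linear combination of centers of tees in $\Phi_\gG$; since such a center manifestly occurs as a center, it cannot be an end vertex of $\Phi_\gG$, and is thus either a center or a mixing vertex.

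The substantive point is (2). I would split $\Phi_\gG=\Psi\sqcup\Phi^{\sharp}$ where $\Phi^{\sharp}$ collects those tees whose center lies in the set $C_0$ of center vertices, so that the tees of $\Phi^{\sharp}$ contribute brackets landing in $\mathcal{I}$. Then the induced bracket on $\gG/\mathcal{I}$ agrees with the bracket of $\gG^{\Psi}:=\sum_{\vartheta\in\Psi}a_\vartheta\vartheta$ reduced modulo $\mathcal{I}$. The crucial observation is that no vertex of $C_0$ appears in any tee of $\Psi$: the center of such a tee is by construction outside $C_0$, and its ends cannot lie in $C_0$ either, because center vertices never appear as ends of any tee in $\Phi_\gG$. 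Consequently, in $\gG^{\Psi}$ the subspace $\mathcal{I}$ is a central abelian ideal, and its linear complement $W$, spanned by the remaining basis vectors, is a Lie subalgebra — since brackets in $\gG^{\Psi}$ are centers of $\Psi$-tees, which are mixing vertices of $\Phi_\gG$ and hence outside $C_0$. This yields a direct sum $\gG^{\Psi}\cong W\oplus\gamma_m$ with $m=\dim\mathcal{I}$, and $W\cong\gG/\mathcal{I}$ because both Lie algebras have the same induced bracket on the same quotient space. Combining gives (2).

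The main obstacle is not technical difficulty but conceptual bookkeeping: one must carefully distinguish the three vertex types and track which basis vectors occur in which subfamily of tees. Once the key point—that centers of tees in $\Phi_\gG$ are never end vertices of $\Phi_\gG$—is isolated, both (3) and the direct-sum splitting in (2) follow cleanly.
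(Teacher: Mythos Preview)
Your proof is correct and is essentially the paper's argument spelled out in full; the paper dismisses the whole proposition as ``Straightforwardly from proposition\,\ref{123} and the definitions.'' Your observation that the center of any tee in $\Phi_{\gG}$ can never be an end vertex of $\Phi_{\gG}$, together with the vertex-type bookkeeping for $\Psi$ and its complement, is exactly the content the paper leaves implicit.
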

\begin{proof}
Straightforwardly from proposition\,\ref{123} and the definitions. 
\end{proof}

Now we pass to describe  \emph{structural groups}, which are some special
compatible tee-families. They are building blocks of which  tee-clusters are made.
This description is accompanied by \emph{basic properties} of structural groups.
They are direct consequences of proposition\,\ref{123} and by this reason the 
proofs are omitted.  \newline

\textbf{Triangle}. A compatible tee-family of the form
$$
\Delta_{ijk}=\{\lfloor i,j|k\rceil, \lfloor j,k|l\rceil,  
\lfloor k,l|i\rceil\}, \,1\leq i,k,l,\leq n.
$$
will be called a \emph{triangle}. 
It is easy to see (proposition\,\ref{123}) that triangles are 
$3$-dimensional clusters and vice versa. Lie algebras associated with a 
triangle are of the form $\gs\go(g)$ with $g$ being a $3$-dimensional 
nondegenerate quadratic form, and hence are simple.

\emph{Basic property of triangles}: The ends of a tee 
nontrivially compatible with a triangle are vertices of this triangle. All 
such tees structures are compatible each other. 
\newline\newline
\textbf{Hedgehog}. Consider a disjoint, i.e., without common vertices, 
family of triangles $\blacktriangle_1,\dots,\blacktriangle_p$  and base 
vectors $\varepsilon_1=e_{i(1)},\dots,\varepsilon_q=e_{i(q)}$ which are 
not vertices of these triangles. Enlarge the family 
$\blacktriangle_1\cup\dots\cup\blacktriangle_p$ by adding to it all 
tees whose centers are in 
$(\varepsilon_1\dots,\varepsilon_q)$ and ends in one of triangles 
$\blacktriangle_i$'s. The so-obtained family is compatible and 
is called a \emph{hedgehog} of type $(p,q)$, or a $(p,q)$-hedgehog. 
Vertices $\varepsilon_i$'s of the hedgehog are its \emph{thorns} and 
$\blacktriangle_i$'s are its \emph{base triangles}. A triangle can be 
viewed as a $(1,0)$-hedgehog. 

A $(p,q)$-hedgehog is a $(3p+q)$-dimensional cluster.  
A Lie algebras associated with a $(p,q)$--hedgehog is a central 
extension of the direct sum of $p, \,3$-dimensional 
simple algebras (see proposition\,\ref{Lie-ecm}). 

\emph{Basic property of hedgehogs}: If a tee $\vartheta\notin\Phi$ 
is nontrivially compatible with a hedgehog $\Phi$, then either its center 
is one of thorns of $\Phi$, while its ends do not belong to $S(\Phi)$, or 
its ends belong to one of triangles $\blacktriangle_i$'s, while its center
is not a thorn of $\Phi$.  In particular, if $\Phi$ is contained in a compatible
tee family $\Psi$, then the thorns of $\Phi$ are center points of $\Psi$.

\textbf{Twain}. A family of the form $\wedge_{ij}^k=\{\lfloor i,k|j, 
\rceil,\,\lfloor j,k|i\rceil\}$ will be called a \emph{twain}. The vertex 
$e_k$ is the \emph{top} of $\wedge_{ij}^k$, while $e_i$ and $e_j$ 
form its \emph{bottom}. The unique triangle containing $\wedge_{ij}^k$ is 
$\Delta_{ijk}$. 

\emph{Basic property of twains}: If a tee $\vartheta$ is compatible with 
a twain $\wedge$ and the center of $\vartheta$ is the top of $\wedge$, 
then the ends of $\vartheta$ belong to the bottom of $\wedge$, i.e., 
$\wedge\cup\{\vartheta\}$ is a triangle. Consequently, if a twain $\wedge$ belongs 
to a compatible family $\Phi$, then either its top is an end vertex of 
$\Phi$, or $\wedge\subset\Delta\subset\Phi$ where $\Delta$ is the unique 
triangle which contains $\wedge$. 

An associated with a twain Lie algebra is the algebra of infinitesimal symmetries 
of a plain "metric" $adx^2+bdy^2, \,a,b,\in\Bbbk $, or, in other words, the Killing 
algebra of this metric. 

\textbf{Trey.} An $m$-\emph{trey} is a family equivalent to
$$
\top_m=\{\wedge_{2,3}^1, \lfloor 2,3|4\rceil,\dots,\lfloor 2,3|m+4\rceil\}, \quad m>0.
$$
 Note that $\wedge_{2,3}^1$ is the unique twain contained in $\top_m$ and 
 $e_4,\dots,e_{m+4}$ are center vertices of $\top_m$.  Bottom vertices of this 
 twain will be called \emph{side vertices} of this $m$-trey. Center 
vertices of an $m$-trey are a center vertices of any containing it compatible tee-family.
A $1$-trey will be called simply a \emph{trey}. 

\emph{Basic property of treys}: If a tee $\vartheta$  is compatible with an $m$-trey 
$\top_m$ and the center of $\vartheta$ is a side vertex of $\top_m$ , then 
$\vartheta$ belongs to the twain contained in $\top_m$. 

A Lie algebra associated with an $m$-trey is an 
$m$-dimensional central extension of a Lie algebra associated with the 
contained in it twain. \newline

\textbf{Pyramid}.  An $m$-dimensional \emph{pyramid}, or, shortly, 
$m$-\emph{pyramid}, is a tee-family equivalent to 
$$ 
\bigtriangledown^m_1\,=\,\bigcup_{2\leq i,j\leq m+1}\wedge^1_{i,j} 
$$
The common end of all composing a pyramid tees is the 
\emph{top} of it, while other vertices form its \emph{bottom}. 
A 2-pyramid is a twain.

Notice that a pyramid is a tee-cluster if $m>2$, but not a cluster. Indeed,  
$\{\lfloor 1\,|\,i\rceil, \;2\leq i\leq m+1\}$ are all compatible with $\bigtriangledown^m_1$ 
dees. They are also mutually compatible. Hence 
$$
C_1^m\,=\,\bigtriangledown^m_1\bigcup C_{1,m+1}^{\between}
$$ 
is the unique containing $\bigtriangledown^m_1$ cluster of the same 
dimension called a \emph{dressed pyramid}. 
\newline\newline
\textbf{Multiplex}. An $m$-\emph{plex} is a family composed of $m$ tees, 
which have one common end and one common center vertex.  For instance, 
$$
\bot_m=\bigcup_{3\leq i\leq m+2}\lfloor 2,i|1\rceil.
$$
is a such one. 
The common end (resp., center) of these tees will be called the 
\emph{origin} (resp., \emph{center}) of the $m$-plex. All other  
vertices of it are its  \emph{ends}. 

\emph{Basic property of $m$-plexes}: If a family $\Phi$ contains an 
$m$-plex, $m\geq 3$, then the origin of this $m$-plex is an end vertex of 
$\Phi$. 

\textbf{Multiped}. An $(p,q)$-\emph{ped}, $p\geq 2$, is a family equivalent to
\begin{equation}\label{ship}
\sqcap_p^q=\bigcup_{1\leq i,j\leq p, p+1\leq k\leq p+q}\lfloor i,j|k\rceil.
\end{equation}
A $(p,q)$-multiped has $p+q$ vertices $q$ of which are center and $p$ 
end vertices of it. Multipeds with one center and 
$m$ ends will be called $m$-\emph{peds}. Among them \emph{tripods} 
(=$3$-peds) are of a special interest. A $(p,q)$-multiped is the union of $q$ 
$p$-peds which have common ends. All tees composing an $m$-ped 
that have a common end vertex form an $(m-1)$-plex. 
Multipeds with more than three ends are clusters. 

\emph{Basic property of multipeds}: If a compatible tee-family $\Phi$ contains a 
$(p,q)$-ped $\sqcap$, $p\geq 3$, then any center vertex of $\sqcap$ is a center 
vertex of $\Phi$. 
\newline\newline
\textbf{Hybrid}. A $(p,q|r)$-\emph{hybrid} is union of a $(p,r)$-multiped and a 
$(q,r)$-hedgehog, which have common center vertices and no other common 
ones. Here $p\geq 2, \,q\geq 1, \,r\geq 1$. A $(p,q|r)$-hybrid is a cluster 
and hence a tee-cluster iff $p>3$. This directly follows from the 
basic property of hedgehogs and proposition\,\ref{123}, 3). 
\newline\newline
\textbf{Cross}. Two tees with the common center and mutually
different end vertices form a  \emph{cross}. For instance, 
$(\lfloor 2,3|1\rceil, \lfloor 4,5|1\rceil)$ is a cross. The common 
center of these tees is called the \emph{center} of the cross.

\emph{Basic property of crosses}: The center  of a cross belonging to a 
compatible tee-family $\Phi$ is a center vertex of $\Phi$. 
\newline\newline
\textbf{Catena}. An \emph{open m-catena} is a family equivalent to
\begin{equation}\label{catena}
\wr_m=\bigcup_{2\leq l\leq m+1}\lfloor 1,l+1|l\rceil, \quad m\geq 2.
\end{equation}
An open $m$-catena has two end and one center vertices. The common end of 
tees belonging to a catena is the  \emph{initial vertex} of it ($e_1$ for catena 
(\ref{catena}). The other end vertex of it is its  \emph{final} vertex ($e_{m+1}$
for catena (\ref{catena}).  There is only one tee belonging to an open catena whose
end vertices coincide with end vertices of this catena. 

A \emph{closed (m,k)-catena} is a tee-family equivalent to 
\begin{equation}\label{cl-catena}
\wr_m^k=\wr_{m-1}\cup\lfloor 1,m\mid k-1\rceil, \quad m\geq 2, \quad 3\leq k\leq m.
\end{equation}
For instance, $\wr_2^1$ is a twain. If $k\geq 2$ a closed catena has only one end 
vertex and one center vertex. For catena (\ref{cl-catena}) these vertices are
$e_1$ and $e_2$, respectively.

Diagrams of described above structural groups together with their icons are
presented in Fig. 5.

\scalebox{1} 
{
\begin{pspicture}(0,-6.5464373)(11.040375,6.5264373)
\definecolor{color12471b}{rgb}{0.8,0.8,0.8}
\psdots[dotsize=0.152,fillstyle=solid,dotstyle=o](9.784375,-2.5695624)
\psline[linewidth=0.02cm,doubleline=true,doublesep=0.12,doublecolor=white](9.784375,-1.8295625)(9.784375,-2.5695624)
\psline[linewidth=0.024cm](9.384375,-1.6295625)(10.184375,-2.0295625)
\psline[linewidth=0.024cm](9.384375,-2.0295625)(10.184375,-1.6295625)
\psdots[dotsize=0.152,fillstyle=solid,dotstyle=o](9.784375,-1.8295625)
\psdots[dotsize=0.152](2.264375,4.1304374)
\psdots[dotsize=0.152](0.324375,4.1104374)
\psdots[dotsize=0.152](1.304375,5.5504375)
\psline[linewidth=0.024cm](2.324375,4.1504374)(1.324375,5.6104374)
\psline[linewidth=0.024cm,linestyle=dashed,dash=0.16cm 0.16cm](1.264375,5.6304374)(0.264375,4.1904373)
\psline[linewidth=0.04cm,linestyle=dotted,dotsep=0.06cm](1.324375,5.4904375)(0.364375,4.1304374)
\psline[linewidth=0.024cm,linestyle=dashed,dash=0.16cm 0.16cm](0.344375,4.1704373)(2.224375,4.1904373)
\psline[linewidth=0.024cm](0.344375,4.0504375)(2.284375,4.0704374)
\psline[linewidth=0.04cm,linestyle=dotted,dotsep=0.06cm](2.184375,4.1904373)(1.204375,5.5704374)
\psdots[dotsize=0.152](4.824375,5.5144377)
\psdots[dotsize=0.152](2.884375,5.5344377)
\psdots[dotsize=0.152](3.864375,4.0944376)
\psline[linewidth=0.024cm](4.884375,5.4944377)(3.884375,4.0344377)
\psline[linewidth=0.024cm,linestyle=dashed,dash=0.16cm 0.16cm](3.824375,4.0144377)(2.824375,5.4544377)
\psline[linewidth=0.024cm,linestyle=dashed,dash=0.16cm 0.16cm](2.904375,5.4744377)(4.784375,5.4544377)
\psline[linewidth=0.024cm](2.904375,5.5944376)(4.844375,5.5744376)
\psdots[dotsize=0.152](7.724375,4.9544377)
\psdots[dotsize=0.152](5.784375,4.9744377)
\psdots[dotsize=0.152](6.764375,3.5344374)
\psline[linewidth=0.024cm](7.784375,4.9344373)(6.784375,3.4744375)
\psline[linewidth=0.024cm,linestyle=dashed,dash=0.16cm 0.16cm](6.724375,3.4544375)(5.724375,4.8944373)
\psline[linewidth=0.024cm,linestyle=dashed,dash=0.16cm 0.16cm](5.804375,4.9144373)(7.684375,4.8944373)
\psline[linewidth=0.024cm](5.804375,5.0344377)(7.744375,5.0144377)
\psdots[dotsize=0.152](6.744375,6.3744373)
\psline[linewidth=0.04cm,linestyle=dotted,dotsep=0.06cm](7.764375,4.9744377)(6.764375,6.4344373)
\psline[linewidth=0.04cm,linestyle=dotted,dotsep=0.06cm](6.704375,6.4544377)(5.704375,5.0144377)
\psdots[dotsize=0.152](9.764375,4.9904375)
\psdots[dotsize=0.152](9.764375,6.4304376)
\psdots[dotsize=0.152](8.564375,3.6104374)
\psdots[dotsize=0.152](10.944375,3.6104374)
\psline[linewidth=0.04cm,linestyle=dotted,dotsep=0.06cm](8.644375,3.5904374)(9.764375,4.9504375)
\psline[linewidth=0.04cm,linestyle=dotted,dotsep=0.06cm](9.764375,4.9504375)(10.864375,3.5904374)
\psline[linewidth=0.024cm,linestyle=dashed,dash=0.16cm 0.16cm](10.984375,3.6704376)(9.824375,5.0304375)
\psline[linewidth=0.024cm,linestyle=dashed,dash=0.16cm 0.16cm](9.824375,5.0304375)(9.844375,6.4504375)
\psline[linewidth=0.024cm](9.684375,6.4704375)(9.684375,5.0304375)
\psline[linewidth=0.024cm](9.684375,5.0104375)(8.504375,3.6704376)
\psdots[dotsize=0.152](1.744375,-5.8095627)
\psdots[dotsize=0.152](0.164375,-4.9695625)
\psdots[dotsize=0.152](1.384375,-4.5895624)
\psdots[dotsize=0.152](2.544375,-4.5695624)
\psline[linewidth=0.024cm](0.124375,-5.0095625)(1.704375,-5.8295627)
\psline[linewidth=0.024cm](1.824375,-5.7895627)(2.604375,-4.6095624)
\psline[linewidth=0.024cm,linestyle=dashed,dash=0.16cm 0.16cm](0.204375,-4.8895626)(1.684375,-5.6895623)
\psline[linewidth=0.024cm,linestyle=dashed,dash=0.16cm 0.16cm](1.664375,-5.6895623)(1.324375,-4.6295624)
\psline[linewidth=0.04cm,linestyle=dotted,dotsep=0.06cm](1.424375,-4.5495625)(1.804375,-5.7295623)
\psline[linewidth=0.04cm,linestyle=dotted,dotsep=0.06cm](1.764375,-5.6295624)(2.464375,-4.5295625)
\psdots[dotsize=0.152](1.744375,-3.6495626)
\psline[linewidth=0.04cm,linestyle=dotted,dotsep=0.06cm](1.444375,-4.5895624)(1.804375,-3.6295626)
\psline[linewidth=0.04cm,linestyle=dotted,dotsep=0.06cm](1.664375,-3.6295626)(2.484375,-4.6095624)
\psline[linewidth=0.024cm](2.584375,-4.5095625)(1.804375,-3.6095624)
\psdots[dotsize=0.152](0.164375,-3.6095624)
\psline[linewidth=0.024cm](0.104375,-3.5895624)(0.104375,-4.9095626)
\psline[linewidth=0.024cm](0.184375,-3.5295625)(2.584375,-4.5095625)
\psline[linewidth=0.04cm,linestyle=dotted,dotsep=0.06cm](0.204375,-3.5695624)(1.424375,-4.5495625)
\psline[linewidth=0.04cm,linestyle=dotted,dotsep=0.06cm](0.144375,-3.6695626)(2.504375,-4.6095624)
\psline[linewidth=0.024cm,linestyle=dashed,dash=0.16cm 0.16cm](0.244375,-3.5695624)(0.224375,-4.9095626)
\psline[linewidth=0.024cm,linestyle=dashed,dash=0.16cm 0.16cm](0.104375,-3.6295626)(1.324375,-4.6095624)
\psline[linewidth=0.024cm](0.224375,-4.8495626)(0.844375,-4.3095627)
\psline[linewidth=0.024cm](1.684375,-3.6095624)(1.324375,-3.9095626)
\psline[linewidth=0.024cm,linestyle=dashed,dash=0.16cm 0.16cm](0.284375,-4.9295626)(0.904375,-4.3895626)
\psline[linewidth=0.024cm,linestyle=dashed,dash=0.16cm 0.16cm](1.404375,-3.9495625)(1.764375,-3.6695626)
\psline[linewidth=0.024cm](0.984375,-4.1695623)(1.104375,-4.0695624)
\psline[linewidth=0.024cm](1.104375,-4.2295623)(1.204375,-4.1495624)
\psline[linewidth=0.024cm,linestyle=dashed,dash=0.16cm 0.16cm](1.704375,-3.6695626)(1.324375,-4.5295625)
\psline[linewidth=0.024cm](6.584375,0.0704375)(7.484375,0.0704375)
\psline[linewidth=0.024cm](7.484375,0.0704375)(6.984375,-0.4295625)
\psline[linewidth=0.024cm](6.984375,-0.4295625)(6.584375,0.0704375)
\psline[linewidth=0.024cm](6.584375,0.0704375)(6.984375,-1.2295625)
\psline[linewidth=0.024cm](6.984375,-1.2295625)(7.484375,0.0704375)
\psline[linewidth=0.024cm](6.984375,-1.2295625)(6.984375,-0.4295625)
\psline[linewidth=0.024cm](6.684375,-0.0295625)(7.384375,-0.0295625)
\psline[linewidth=0.024cm](6.984375,-0.5695625)(7.384375,-0.1495625)
\psline[linewidth=0.024cm](6.984375,-0.5695625)(6.664375,-0.1695625)
\psdots[dotsize=0.152](10.284375,0.1704375)
\psdots[dotsize=0.152](9.784375,-0.2295625)
\psdots[dotsize=0.152](9.784375,-1.1295625)
\psline[linewidth=0.024cm](9.784375,-1.1295625)(9.784375,-0.2295625)
\psline[linewidth=0.024cm,linestyle=dashed,dash=0.16cm 0.16cm](9.844375,-1.1095625)(9.844375,-0.2095625)
\psline[linewidth=0.04cm,linestyle=dotted,dotsep=0.06cm](9.724375,-0.2095625)(9.724375,-1.1295625)
\psline[linewidth=0.024cm](9.764375,-0.2095625)(10.284375,0.1904375)
\psline[linewidth=0.024cm,linestyle=dashed,dash=0.16cm 0.16cm](9.784375,-0.2095625)(10.584375,-0.3295625)
\psdots[dotsize=0.152](10.584375,-0.3295625)
\psdots[dotsize=0.152](8.984375,-0.2295625)
\psline[linewidth=0.04cm,linestyle=dotted,dotsep=0.06cm](9.784375,-0.2295625)(8.984375,-0.2295625)
\psdots[dotsize=0.08](9.184375,-0.0295625)
\psdots[dotsize=0.08](9.584375,0.1704375)
\psdots[dotsize=0.08](9.984375,0.1704375)
\pspolygon[linewidth=0.024,fillstyle=solid,fillcolor=color12471b](0.384375,-0.2295625)(0.784375,-0.8295625)(1.584375,-0.2295625)(1.584375,-0.2295625)
\psline[linewidth=0.08cm]{cc-cc}(0.784375,-1.5695626)(1.584375,-0.2295625)
\psline[linewidth=0.08cm]{cc-cc}(0.784375,-1.6095625)(0.384375,-0.1895625)
\psline[linewidth=0.08cm](0.384375,-0.2295625)(1.184375,0.3704375)
\psline[linewidth=0.08cm]{-cc}(1.184375,0.3704375)(1.604375,-0.2895625)
\psline[linewidth=0.08cm]{cc-cc}(0.784375,-0.8495625)(1.204375,0.4104375)
\pstriangle[linewidth=0.024,dimen=outer,fillstyle=solid,fillcolor=color12471b](1.264375,1.9704375)(0.8,0.6)
\rput{-180.0}(7.64875,4.580875){\pstriangle[linewidth=0.024,dimen=outer](3.824375,1.9904375)(0.8,0.6)}
\psline[linewidth=0.024cm](3.544375,2.4304376)(4.104375,2.4304376)
\psline[linewidth=0.024cm](6.384375,2.3104374)(6.784375,2.9104376)
\psline[linewidth=0.024cm](6.784375,2.9104376)(7.184375,2.3104374)
\psline[linewidth=0.024cm](6.384375,2.3104374)(7.184375,2.3104374)
\psline[linewidth=0.024cm](7.184375,2.3104374)(6.784375,1.7104375)
\psline[linewidth=0.024cm](6.784375,1.7104375)(6.384375,2.3104374)
\psline[linewidth=0.024cm](6.484375,2.1504376)(7.064375,2.1504376)
\psline[linewidth=0.08cm]{cc-cc}(9.804375,2.2104375)(9.804375,2.7504375)
\psline[linewidth=0.08cm]{cc-cc}(9.804375,2.2104375)(9.404375,1.8104374)
\psline[linewidth=0.08cm]{cc-cc}(9.804375,2.2104375)(10.204375,1.8104374)
\psline[linewidth=0.08cm]{cc-cc}(0.784375,-1.5895625)(0.804375,-0.8095625)
\psdots[dotsize=0.152](3.984375,-0.6295625)
\psdots[dotsize=0.152](4.584375,-0.0295625)
\psdots[dotsize=0.152](4.584375,-1.2295625)
\psdots[dotsize=0.152](3.384375,-1.2295625)
\psdots[dotsize=0.152](3.384375,-0.0295625)
\psline[linewidth=0.024cm,linestyle=dashed,dash=0.16cm 0.16cm](4.584375,-1.2295625)(3.984375,-0.6295625)
\psline[linewidth=0.024cm,linestyle=dashed,dash=0.16cm 0.16cm](3.984375,-0.6295625)(4.584375,-0.0295625)
\psline[linewidth=0.024cm](3.384375,-1.2295625)(3.984375,-0.6295625)
\psline[linewidth=0.024cm](3.984375,-0.6295625)(3.384375,-0.0295625)
\psdots[dotsize=0.152](3.384375,-4.8295627)
\psdots[dotsize=0.152](3.784375,-4.2295623)
\psdots[dotsize=0.152](4.584375,-4.2295623)
\psdots[dotsize=0.152](5.184375,-4.8295627)
\psdots[dotsize=0.152](4.184375,-5.6295624)
\psline[linewidth=0.024cm](3.384375,-4.8295627)(3.784375,-4.2295623)
\psline[linewidth=0.024cm,linestyle=dashed,dash=0.16cm 0.16cm](4.184375,-5.6295624)(5.184375,-4.8295627)
\psline[linewidth=0.04cm,linestyle=dotted,dotsep=0.06cm](4.184375,-5.6295624)(4.584375,-4.2295623)
\psline[linewidth=0.024cm](4.184375,-5.6295624)(3.784375,-4.2295623)
\psline[linewidth=0.04cm,linestyle=dotted,dotsep=0.06cm](4.584375,-4.2295623)(3.784375,-4.2295623)
\psline[linewidth=0.024cm,linestyle=dashed,dash=0.16cm 0.16cm](5.184375,-4.8295627)(4.584375,-4.2295623)
\psellipse[linewidth=0.024,dimen=outer](6.984375,-1.8295625)(0.412,0.212)
\psline[linewidth=0.024cm](6.984375,-2.8178437)(7.384375,-1.8295625)
\psline[linewidth=0.024cm](6.584375,-1.8295625)(6.984375,-2.8178437)
\usefont{T1}{ptm}{m}{it}
\rput(0.361875,2.2804375){icons}
\usefont{T1}{ptm}{m}{it}
\rput(1.3404688,1.2704375){\small triangle}
\usefont{T1}{ptm}{m}{it}
\rput(3.7554688,1.2704375){\small twain}
\usefont{T1}{ptm}{m}{it}
\rput(6.851875,1.2704375){\small trey}
\usefont{T1}{ptm}{m}{it}
\rput(9.819375,1.2704375){\small tripod}
\usefont{T1}{ptm}{m}{it}
\rput(1.6148437,-6.3295627){\small (3,2)-ped}
\usefont{T1}{ptm}{m}{it}
\rput(8.361875,-2.3195624){icons}
\usefont{T1}{ptm}{m}{it}
\rput(7.2548437,-3.1295626){\small (3-) pyramid}
\usefont{T1}{ptm}{m}{it}
\rput(10.035781,-3.1295626){\small multiplex}
\usefont{T1}{ptm}{m}{it}
\rput(1.2514062,-2.1295626){\small (1,3)-hedgehog}
\usefont{T1}{ptm}{m}{it}
\rput(3.9546876,-2.1195624){cross}
\usefont{T1}{ptm}{m}{it}
\rput(4.373594,-6.3195624){open (3-) catena}
\end{pspicture} }
\begin{center} Fig. 5. Structural groups and their icons. \end{center}

\subsection{Types of vertices of a compatible tee-familiy and casings}
\label{TypeVertex} 

The following terminology will be useful in our further analysis of tee-clusters.
We shall say that a tee/dee $\vartheta$ \emph{blocks} (alternatively, is 
\emph{blocking}) a tee/dee $\theta$ if it is incompatible with $\theta$. So, 
we have the following 
\newline\newline
\noindent
{\bf Blocking rule:}
{\it If $\Phi$ is a (tee/dee-)cluster and vertices of $\theta$ belong to $S(\Phi)$, then
$\theta$ belongs to $\Phi$, if there are no blocking $\theta$ elements in $\Phi$.}
\newline\newline
Tees of a compatible tee-family 
$\Phi$ are naturally subdivided into six classes: 
$$
 ece-,\quad ee-,\quad ec-,\quad e-,\quad c-\quad \mathrm{and} \quad 0-\mathrm{tees}
$$  
according to their vertices which are mixing (in $\Phi$). For instance, 
a tee $\theta\in \Phi$ is an $ec$-tee if one of its end vertices and the center 
vertex are mixing, while the remaining third one is another end 
vertex. End vertices of an $ee$--tee are mixing but the center is not. 
All vertices of a $0$-tee are not mixing, etc. 

In what follows we shall determine the structure of tee-clusters 
by analyzing "neighborhoods" of tees of each of these  types. Schematically,  
a tee-cluster consists of "neighborhoods", called \emph{casings}, of the above 
described structural groups which are tied together by means of tees called 
\emph{connectives}. Exact meaning of these terms is explained below.

\medskip
\noindent \textit{Hedgehogs and $\textbf{ece}$-tees.} 

\noindent First, we have
\begin{lem}\label{ece-str}
Let $\Phi$ be a compatible tee-family. Then any 
$ece$-structure in $\Phi$ belongs to an unique  contained in $\Phi$ 
triangle. 
\end{lem}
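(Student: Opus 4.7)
The plan is to exploit the compatibility criterion for base $\pitchfork$-lieons (proposition~\ref{123}(1)) together with the very definition of mixing vertex. Let $\theta=\lfloor i,j\mid k\rceil$ be the given $ece$-tee. By hypothesis all three vertices $e_i,e_j,e_k$ are mixing in $\Phi$, so witnesses exist: tees $\vartheta_i,\vartheta_j\in\Phi$ whose centers are $e_i$ and $e_j$ respectively, and a tee $\vartheta_k\in\Phi$ having $e_k$ as one of its ends. The goal is to show that mutual compatibility of $\theta,\vartheta_i,\vartheta_j,\vartheta_k$ forces $\vartheta_i$ and $\vartheta_j$ to be precisely the two missing tees of the triangle $\Delta_{ijk}$, which is then the unique triangle of $\Phi$ containing $\theta$.

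First I would pin down $\vartheta_i$. Its center is $e_i$, while $\theta$ has center $e_k\neq e_i$, so by proposition~\ref{123}(1) they can only be compatible through a common end vertex. Since $e_i$ is the center of $\vartheta_i$ it cannot simultaneously be an end of $\vartheta_i$, and the only vertex of $\theta$ still available to play the role of common end is $e_j$. Hence $\vartheta_i=\lfloor j,m\mid i\rceil$ for some $m\notin\{i,j\}$. Symmetrically, $\vartheta_j=\lfloor i,m'\mid j\rceil$ for some $m'\notin\{i,j\}$.

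Next, I would run the same compatibility check on the pair $(\vartheta_i,\vartheta_j)$. They share $e_i$ and $e_j$ in opposite roles, so no common center exists and a common end is unavoidable; with the bookkeeping above the only way this can happen is $m=m'$, which I denote again by $m$. To force $m=k$, I would bring in $\vartheta_k=\lfloor k,p\mid q\rceil$. Compatibility of $\vartheta_k$ with $\theta$ (by the same criterion, since their centers $e_q,e_k$ differ) forces one of $e_i,e_j$ to be an end of $\vartheta_k$, i.e.\ $p\in\{i,j\}$. Then comparing $\vartheta_k$ with whichever of $\vartheta_i,\vartheta_j$ shares with it a centre--end conflict yields that the remaining end $e_k$ of $\vartheta_k$ must coincide with $e_m$, giving $m=k$.

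This produces $\vartheta_i=\lfloor j,k\mid i\rceil$ and $\vartheta_j=\lfloor i,k\mid j\rceil$, so $\Delta_{ijk}\subset\Phi$. Uniqueness is immediate, since any triangle containing $\theta$ has vertex set $\{e_i,e_j,e_k\}$, and a triangle is determined by its vertex set. The main obstacle is the role--bookkeeping in the pairwise compatibility analysis (which vertex is centre vs.\ end in each tee); in particular the two sub-cases $p=i$ and $p=j$ for $\vartheta_k$ must each be handled, but they are symmetric and both collapse to $m=k$, so the argument reduces to a short routine case analysis.
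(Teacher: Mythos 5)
Your proof is correct and is essentially the paper's own argument: the paper likewise uses the mixing-ness of $e_i$ and $e_j$ to produce tees $\theta_1=\lfloor j,l|i\rceil$ and $\theta_2=\lfloor i,m|j\rceil$ and forces $l=m$ by pairwise compatibility, exactly as you do. The only divergence is in the last step, and it is cosmetic: where you introduce a witness $\vartheta_k$ with $e_k$ as an end and run the compatibility check directly to get $m=k$, the paper notes that $m\neq k$ would make $\{\theta,\theta_1,\theta_2\}$ a trey whose basic property forces $e_k$ to be a center vertex of $\Phi$, contradicting that $e_k$ is mixing — your computation with $\vartheta_k$ is precisely the inline verification of that trey property.
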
 
\begin{proof}
Let $\theta=\lfloor i,j|k\rceil\in \Phi$ be an ${ece}$-structure. Since 
$e_i$ (resp., $e_j$) is mixing, there is a tee 
$\theta_1\in \Phi$ (resp., $\theta_2\in \Phi$) whose center vertex is $e_i$ 
(resp., $e_j$). Since $\theta_1$ (resp., 
$\theta_2)$ is compatible with $\theta$, it  must be of the form 
$\theta_1=\lfloor j,l|i\rceil$ (resp., 
$\theta_2=\lfloor i,m|j\rceil$). Moreover, compatibility of $\theta_1$ and 
$\theta_2$ implies $l=m$. If $l=m\neq k$, then $\theta_1, \theta_2$ and 
$\theta$ form a trey contained in $\Phi$. In this case the center vertex 
$e_k$ of this trey is a center vertex of $\Phi$ by one of basic properties 
of treys in contradiction with the hypothesis of the lemma. Hence $l=m=k$ and  
$\theta_1, \theta_2$ together with $\theta$ form a triangle. 
\end{proof}

\begin{cor}
If all vertices of a compatible tee-family $\Phi$ 
are mixing, then $\Phi$ is a disjoint, i.e., without common vertices, union 
of triangles. In particular, if all vertices of a  tee-cluster are  mixing, then it 
is a triangle. 
\end{cor}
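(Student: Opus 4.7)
The plan is to derive this corollary directly from the preceding lemma on $ece$-structures, together with the basic property of triangles stated in subsection on structural groups.

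First, I would observe that under the hypothesis every tee $\theta\in\Phi$ is automatically an $ece$-tee: all three of its vertices are mixing in $\Phi$ by assumption. Applying the previous lemma to each such $\theta$ yields a unique triangle $\Delta(\theta)\subset\Phi$ containing $\theta$. In particular, $\Phi$ is the union of the triangles $\{\Delta(\theta)\mid \theta\in\Phi\}$.

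The main point is then to verify that distinct triangles occurring in this list have no common vertex. Suppose $\Delta_1,\Delta_2\subset\Phi$ are two triangles sharing a vertex $v$, and let $\vartheta\in\Delta_2$ be the tee whose center vertex is $v$. Since $v$ is a vertex of $\Delta_1$ too, $\vartheta$ is nontrivially compatible with $\Delta_1$; by the basic property of triangles, the two end vertices of $\vartheta$ must themselves be vertices of $\Delta_1$. Hence all three vertices of $\Delta_2$ lie in $\Delta_1$, so $\Delta_2=\Delta_1$. This rules out any overlap, and so $\Phi$ decomposes as a disjoint union of triangles as claimed. The only subtlety to double-check is the case where $v$ appears in $\Delta_2$ only as an end vertex, but this is taken care of by picking the tee in $\Delta_2$ that has $v$ as center (which always exists, since in a triangle every vertex occurs as the center of exactly one tee).

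For the second assertion, assume $\Phi$ is a tee-cluster with all vertices mixing. By what precedes, $\Phi=\Delta_1\sqcup\cdots\sqcup\Delta_r$ is a disjoint union of triangles. The associated graph $\Upsilon_{\Phi}$ is then a disjoint union of the graphs of these triangles, which forces $r=1$ since $\Upsilon_{\Phi}$ is connected by the definition of a cluster. Thus $\Phi$ is a single triangle. I expect no real obstacle beyond the bookkeeping in the disjointness argument; the whole content is already packaged in Lemma on $ece$-structures and the basic property of triangles.
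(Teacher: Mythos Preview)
Your proof is correct and follows essentially the same outline as the paper's: every tee is an $ece$-tee, the preceding lemma puts each into a unique triangle, and then one argues disjointness. The only difference is in the disjointness step: the paper simply observes that two distinct triangles sharing one or two vertices must contain an incompatible pair of tees (a direct appeal to proposition\,\ref{123}), whereas you route through the basic property of triangles. Both arguments are short and ultimately rest on the same compatibility criteria, so this is a minor variation rather than a genuinely different approach.
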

\begin{proof}
If all vertices of a compatible tee-family $\Phi$ are mixing, then all belonging 
to it tees are $ece$--tees. Therefore, $\Phi$ is a union of triangles. On the 
other hand, if two triangles have one or two common vertices, then, obviously, 
they contain incompatible tees.
\end{proof}

Let $\Delta\subset \Phi$ be a triangle. It is easy to see that any 
nontrivially compatible with $\Delta$ tee, whose center is a center 
vertex of $\Phi$, is also compatible with all tees belonging to $\Phi$. 
Moreover, all such tees are, obviously, compatible each other. They all
form the \emph{casing} of  $\Delta$ (in $\Phi$). So, if $\Phi$ is a tee-cluster
this casing belongs to $\Phi$. Hence all triangles in a 
tee-cluster $\Phi$ together with their casings form a hedgehog 
contained in $\Phi$. Denote it by  $\Phi_{\textbf{h}}$. Emhasize that the 
thorns of $\Phi_{\textbf{h}}$ are center vertices of $\Phi$ and vice versa, 
and the tees forming this casing are ${ee}$-tees. 

By summing up  these observations and lemma\;\ref{ece-str} we get
\begin{proc}\label{max-hedgehog}
Let $\Phi$ be a tee-cluster. Then $\Phi_{\textbf{h}}$ is not empty 
if and only if $\Phi$ has at least one center vertex and at least one 
${ece}$-tee. 
\end{proc}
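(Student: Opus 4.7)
The plan is to establish the two directions of the equivalence separately, with the reverse direction carrying essentially all the content.

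For the implication $(\Rightarrow)$, I will unpack the construction of $\Phi_\textbf{h}$: nonemptiness of the hedgehog forces the presence of at least one triangle $\Delta\subset\Phi$ together with a casing tee over it. Every vertex of a triangle is simultaneously the center of one of its three tees and an end of the other two, hence mixing in $\Phi$, so each of the three tees of $\Delta$ is automatically an $ece$-tee. For the center vertex, I invoke the remark made right after the definition of $\Phi_\textbf{h}$ that the thorns of $\Phi_\textbf{h}$ are exactly the center vertices of $\Phi$; the existence of a casing tee produces a thorn, which is the desired center vertex of $\Phi$.

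For the implication $(\Leftarrow)$, starting from an $ece$-tee $\theta$ and a center vertex $v$, I will use Lemma~\ref{ece-str} to embed $\theta$ in a unique triangle $\Delta\subset\Phi$ with vertex set $\{a,b,c\}$. Since $v$ is a center vertex of $\Phi$ while $a,b,c$ are mixing, $v\notin\{a,b,c\}$, and in particular the expression $\vartheta=\lfloor a,b\mid v\rceil$ defines a genuine base $\pitchfork$-lieon. The plan is then to show that $\vartheta$ is compatible with every $\tau\in\Phi$; by maximality of the tee-cluster $\Phi$ this forces $\vartheta\in\Phi$, and since $v$ is a center vertex and the ends of $\vartheta$ lie in $\Delta$, $\vartheta$ is then a casing tee of $\Delta$, making $\Phi_\textbf{h}$ a proper hedgehog containing $\Delta\cup\{\vartheta\}$.

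The compatibility check will proceed by case analysis on the overlap of a generic $\tau\in\Phi$ with $\{v,a,b\}$, using two key preliminary facts. First, since $v$ is a center vertex of $\Phi$, any $\tau\in\Phi$ containing $v$ has $v$ as its center, so $\tau$ and $\vartheta$ share the center $v$ and are compatible by proposition~\ref{123}(1). Second, any $\tau\in\Phi\setminus\Delta$ containing a vertex of $\Delta$ must have that vertex as an end of $\tau$: a tee with, say, $a$ as center, by proposition~\ref{123}(1) applied to the two tees of $\Delta$ having $a$ as end, would be incompatible unless its ends were exactly $b$ and $c$, which would place it in $\Delta$. Given these two facts, any $\tau\in\Phi$ not containing $v$ but containing $a$ or $b$ shares an end with $\vartheta$ and is compatible with it; compatibility with $\tau\in\Delta$ is the basic property of triangles; and any $\tau$ disjoint from $\{v,a,b\}$ is trivially compatible.

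The main obstacle will be this second preliminary fact --- that no $\tau\in\Phi\setminus\Delta$ can have a vertex of $\Delta$ as its center --- since it is the only place in the argument where all three tees of $\Delta$ must be used simultaneously and where the somewhat subtle center-versus-end incompatibility criterion of proposition~\ref{123}(1) plays a decisive role. Once this is in place, the remaining case analysis and the appeal to maximality are routine.
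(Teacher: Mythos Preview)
Your $(\Leftarrow)$ direction is correct but does far more than needed. By definition $\Phi_{\textbf{h}}$ is the union of all triangles in $\Phi$ together with their casings, so once Lemma~\ref{ece-str} produces a triangle $\Delta\subset\Phi$ from the given $ece$-tee, you already have $\Delta\subset\Phi_{\textbf{h}}$ and hence $\Phi_{\textbf{h}}\neq\emptyset$. The entire verification that the casing tee $\vartheta=\lfloor a,b\mid v\rceil$ lies in $\Phi$ is unnecessary for this implication; it merely reproves the paper's preceding ``easy to see'' claim about casings.

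The genuine gap is in your $(\Rightarrow)$ direction. You write that ``nonemptiness of the hedgehog forces the presence of at least one triangle $\Delta\subset\Phi$ together with a casing tee over it,'' and it is only from the casing tee that you then extract a thorn and hence a center vertex. But the definition of $\Phi_{\textbf{h}}$ does not require any casing tee: the casings can be empty. Take $\Phi$ to be a single triangle $\Delta$ (a $3$-dimensional tee-cluster). All three vertices are mixing, so $\Phi$ has $ece$-tees but no center vertex, and $\Phi_{\textbf{h}}=\Delta$ is a nonempty $(1,0)$-hedgehog with no casing tees and no thorns. Your argument breaks exactly at the unjustified ``together with a casing tee'' clause. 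This example also shows that the proposition, read literally, is false; what the surrounding text and Lemma~\ref{ece-str} actually yield is the cleaner equivalence $\Phi_{\textbf{h}}\neq\emptyset\iff\Phi$ contains an $ece$-tee (equivalently, a triangle), with the center-vertex condition being superfluous --- or, alternatively, the conclusion should be read as ``$\Phi_{\textbf{h}}$ has at least one thorn.''
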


\medskip
\noindent \textit{Pendent twains and $\textbf{ee}$-tees.} 

\noindent  
Let $\Phi$ be a compatible tee-family. A twain $\wedge\subset \Phi$ will be 
called \emph{pendent}, or, shortly,  \emph{p-twain}, if it belongs to a trey 
$\top\subset \Phi$ and the top of $\wedge$  is an end vertex of $\Phi$. 

\begin{lem}\label{ee-str}
\begin{enumerate}
\item The ends of an $ee$-tee are either side vertices of a $p$--twain, or 
vertices of a triangle belonging to $\Phi$. The center of such a tee 
is a center vertex of $\Phi$.
\item If the ends of a tee $\theta$ are bottom vertices of a pendent twain 
$\wedge\subset \Phi$ and the center of $\theta$ is a center vertex of $\Phi$, 
then $\theta$ is compatible with $\Phi$.
\item Let $\top_1\neq\top_2$ be two treys such that $S(\top_1)\cap S(\top_2)\neq\emptyset$. 
If belonging to them tees are compatible each other, then either $\top_1\cap\top_2$
is a twain, or $\top_1\cap\top_2=\emptyset$. In the last case $\top_1$ and $\top_2$ 
have a common center, or a common end vertex, or both.
\end{enumerate}
\end{lem}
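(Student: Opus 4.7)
For (1), use the fact that both ends $e_i,e_j$ of $\theta=\lfloor i,j|k\rceil$ are mixing to produce tees $\theta_1,\theta_2\in\Phi$ in which $e_i$ and $e_j$ appear as centers. Writing $\theta_1=\lfloor a,b|i\rceil$, proposition \ref{123}(1) applied to $(\theta,\theta_1)$ forces a shared end (otherwise the single shared vertex $e_i$ would be in a center-end conflict); since $a,b\neq i$ this pins $\theta_1=\lfloor j,b|i\rceil$, and symmetrically $\theta_2=\lfloor i,b'|j\rceil$. Compatibility of $\theta_1$ with $\theta_2$ forces $b=b'$, so $\{\theta_1,\theta_2\}$ is the twain $\wedge_{ij}^b$. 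If $b=k$, these three tees constitute the triangle $\Delta_{ijk}$; if $b\neq k$, then $\wedge_{ij}^b\cup\{\theta\}$ is a $1$-trey, and the basic property of twains applied to $\wedge_{ij}^b\subset\Phi$ splits into the $p$-twain alternative (top $e_b$ is an end vertex of $\Phi$, whose side vertices are then the ends of $\theta$) and the triangle alternative $\Delta_{ijb}\subset\Phi$. The center $e_k$ of an $ee$-tee is by definition non-mixing and hence a center vertex of $\Phi$.

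For (2), fix the $p$-twain $\wedge_{ij}^b$ inside a trey $\wedge_{ij}^b\cup\{\lfloor i,j|c\rceil\}\subset\Phi$, with $e_b$ an end vertex, and $\theta=\lfloor i,j|k\rceil$ with $e_k$ a center vertex. Verify compatibility with an arbitrary $\vartheta\in\Phi$ by cases on shared vertices: sharing $e_k$ forces $\vartheta$ to have $e_k$ as its center, giving a common center; sharing $e_i$ or $e_j$ as an end of $\vartheta$ gives a common end; no sharing is trivial. The only delicate case is $\vartheta=\lfloor a,b'|i\rceil$ with $e_i$ as its center. Compatibility with the twain tee $\lfloor i,b|j\rceil\in\Phi$ forces $b\in\{a,b'\}$, and compatibility with the trey tee $\lfloor i,j|c\rceil\in\Phi$ rules out $b'=c$ (which would leave only a center-end conflict on $e_c$), forcing $b'=j$. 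Hence $\vartheta=\lfloor j,b|i\rceil$, and it shares the end $e_j$ with $\theta$.

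For (3), split on $|\top_1\cap\top_2|\in\{0,1,2,3\}$; the value $3$ contradicts $\top_1\neq\top_2$. For two shared tees, enumerate the possible pairs (two twain tees, twain plus trey tee, two trey tees) and use proposition \ref{123} to determine which roles the coinciding tees can play in $\top_2$; every pattern other than ``both twain tees coincide'' either forces $\top_1=\top_2$ or produces an incompatible pair among the remaining tees, leaving exactly a shared twain. For one shared tee, show that the single coincidence propagates through mutual compatibility of the remaining four tees, by the same rigidity used in (1), into a forced second coincidence, contradicting the assumption. When $\top_1\cap\top_2=\emptyset$ but vertices are shared, case-analyze each shared vertex by its role (side vertex, pure end, pure center) in each trey; proposition \ref{123} blocks role-mixed coincidences such as a pure end of $\top_1$ coinciding with a pure center of $\top_2$, so only common centers and common end vertices remain. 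The main obstacle is the bookkeeping in this last step: isolating precisely those configurations in which no pair among the six tees falls into the incompatible case of proposition \ref{123} requires careful enumeration across the twelve vertex slots of the two treys.
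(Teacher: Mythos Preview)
Parts (1) and (2) are correct and follow the paper's line: for (1) you reproduce the $ece$-argument with the extra branch $b\neq k$ yielding a trey and hence the p-twain/triangle split via the basic property of twains; for (2) you unwind by hand what the paper packages as a single appeal to the basic property of treys (any blocking $\vartheta$ must have its center at a side vertex of the ambient trey, whence $\vartheta\in\wedge$ and $\vartheta$ already shares an end with $\theta$).

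Part (3), however, has a real gap. Your claim that a single shared tee ``propagates\dots into a forced second coincidence'' fails without further hypotheses. Take
\[
\top_1=\{\lfloor 1,3|2\rceil,\ \lfloor 2,3|1\rceil,\ \lfloor 1,2|4\rceil\}\quad\text{and}\quad
\top_2=\{\lfloor 1,2|3\rceil,\ \lfloor 1,3|2\rceil,\ \lfloor 2,3|5\rceil\},
\]
with tops $3$ and $1$, side vertices $\{1,2\}$ and $\{2,3\}$, centers $4$ and $5$. Every pair among the six tees shares a common end vertex, so by Proposition~\ref{123} they are all mutually compatible; yet $\top_1\cap\top_2=\{\lfloor 1,3|2\rceil\}$ is a single tee --- neither a twain nor empty --- and the treys have neither a common center nor a common top. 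The basic property of treys does not obstruct this configuration (the only tees whose centers land on side vertices are already in the corresponding twains), so the rigidity you invoke from (1) cannot close this case. The paper's own proof is the one-line ``direct check by paying attention to the basic property of treys,'' which gives no further help; the assertion evidently needs the ambient hypothesis that both treys lie in a common compatible family $\Phi$ with \emph{pendent} twains, so that each top is an end vertex of $\Phi$. In the example above the top $3$ of $\top_1$ is the center of $\lfloor 1,2|3\rceil\in\top_2$, which that hypothesis would forbid. As the lemma is literally stated, the $|\top_1\cap\top_2|=1$ case survives and your sketch cannot rule it out.
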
 
\begin{proof}
(1) The proof literally repeats that of lemma\,\ref{ece-str} with the exception that in 
this case $k$ may differ from $l=m$. If $l=m\neq k$, then tees $\theta, \theta_1$ 
and $\theta_2$ (see the proof of  lemma\,\ref{ece-str}) form a contained in 
$\Phi$ trey. The second alternative takes place 
if the triangle containing the twain $\{\theta_1, \theta_2\}$ 
belongs to $\Phi$. 

(2)A tee $\vartheta\in \Phi$ might be incompatible with $\theta$ only if its center 
is one of end vertices of  $\theta$. Therefore, the center
of $\vartheta$ is a side point of a trey $\top\subset\Phi$, which contains 
$\wedge$. Then, by basic property of treys,  $\vartheta\in\wedge$ . Hence  
$\vartheta$ and  $\theta$ have a common end vertex and
as such are compatible. 

(3) A direct check by paying attention to the basic property of treys. 
\end{proof}

All tees described in assertion (2) of the above lemma 
constitute the \emph{ee-casing} of the pendent twain $\wedge$. 
By this assertion, this casing  belongs to $\Phi$ if $\Phi$ is a 
tee-cluster. 

Now consider a tee such that its end vertices are end and bottom vertices
of a p-twain $\wedge\subset \Phi$, while its center is a center vertex of $\Phi$.
All such tees form the 
\emph{e-casing} of  $\wedge$. We shall see below (lemma\,\ref{e-str}) that 
the \emph{e-}casing of  a p-twain $\wedge$ is compatible with $\Phi$ and 
hence belongs to $\Phi$ if $\Phi$ is a  tee-cluster. Obviously, 
all tees composing an \emph{e}-casing (resp., an 
\emph{ee}-casing) are \emph{e}-tees (resp., \emph{ee}-tees). A 
trey together with its \emph{e}-casing will be called \emph{completed}. 
A completed trey is obtained from an (1,q)-hedgehog by removing 
one tee from its base triangle. So, it is not a tee-cluster.
\begin{ex}
\textbf{1-trey cluster.}  Let $E,C,B_1,B_2$ be the end, center and side 
vertices of a trey $\top$, respectively. Add to them a new vertex $D$ and consider 
tees $\lfloor E,B_i|D\rceil,  \,\lfloor E,B_i|C\rceil,  \,i=1,2,$ and $\lfloor 
E,D|C\rceil$. These 5 tees together with $\top$  form a tee-cluster, 
called 1-trey cluster and denoted $1\top$. By adding to it the dee $\lfloor E|D\rceil$ 
one gets a cluster. 
\end{ex}

\medskip
\noindent \textit{Standing twains, $\textbf{ec}$-tees and pyramids.}  

\noindent A twain $\wedge\subset \Phi$ which does belong neither to a 
triangle, nor to a trey 
contained in $\Phi$ will be called \emph{standing}, or, shortly,  an
\emph{s-twain}. Obviously, the top of an s-twain is an end vertex of 
$\Phi$. Standing twains can be characterized as those which have no common
vertices with \emph{ee}-tees. They naturally appear in connection with 
\emph{ec}-tees. 

To proceed on we need the following terminology. A tee 
$\theta\in\Phi$ (resp., a twain, or a pyramid belonging to $\Phi$)  one 
end of which (resp., the top) is an end vertex $E$ of $\Phi$ will be called 
\emph{rooted} at $E$.  Let $\wedge\subset\Phi$ be a twain. A tee 
$\theta\in\Phi, \,\theta\notin\wedge$, which is rooted at the same vertex as  
$\wedge$, will be called a  \emph{side} (resp., \emph{lateral}) tee for
$\wedge$ if the center (resp., the second end) of $\theta$ is one of bottom 
vertices of $\wedge$.
\begin{lem}\label{s-twain-str}
Let $\Phi$ be a tee-cluster and $\wedge\subset \Phi$ be rooted at 
$E$ s-twain. Then
\begin{enumerate}
\item if $\Phi$ 
possesses a center vertex, then at least one of side tees of $\wedge$ 
belongs to $\Phi$;
\item all nontrivially compatible with $\wedge$ tees are 
rooted at $E$ and hence are compatible each other;
\item if  $\theta\in\Phi$ is a lateral 
tee of $\wedge$ and the center of $\theta$ is not a center vertex of $\Phi$, then 
$\theta$ belongs to an s-twain $\wedge'\subset\Phi$.
\item the set of bottom vertices of all rooted at $E$ s-twains form the 
bottom of a rooted at $E$ pyramid $P\subset\Phi$. The pyramid $P$
contains any belonging to $\Phi$ pyramid, which is rooted at $E$, and any
lateral tee of it does not belong to $\Phi$;
\item two compatible pyramids rooted at different vertices have no common 
vertices. 
\end{enumerate}
\end{lem}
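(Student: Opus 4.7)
The plan is to prove the five assertions in the order $(2)\Rightarrow(1)\Rightarrow(3)\Rightarrow(4)\Rightarrow(5)$, since (2) is the workhorse for the others. The foundational devices are the compatibility criterion in Proposition~\ref{123}, the cluster blocking rule, and two structural facts baked into the hypotheses: that $\wedge=\wedge_{ij}^k$ is standing forbids every $\lfloor i,j|\cdot\rceil$ from $\Phi$, and that $E=e_k$ is an end vertex of $\Phi$ forbids every $\lfloor\cdot,\cdot|k\rceil$ from $\Phi$.

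For (2), I would take $\vartheta\in\Phi$ nontrivially compatible with both tees of $\wedge=\{\lfloor i,k|j\rceil,\lfloor j,k|i\rceil\}$ and perform a short case analysis on which vertex of $\vartheta$ is a shared end (or shared center) with each of the two. All configurations force either that $e_k$ is an end of $\vartheta$, or that $\vartheta$ has both $e_i,e_j$ as ends (i.e.\ $\vartheta=\lfloor i,j|c\rceil$); the standing hypothesis rules out the second alternative. Pairwise compatibility of all such tees is then immediate from Proposition~\ref{123}(1) via the common end $e_k$. For (1), the decisive observation is that the forbidden tee $\lfloor i,j|c\rceil$ does not belong to $\Phi$ -- its presence together with $\wedge$ would produce a trey, violating standingness -- while all its vertices lie in $S(\Phi)$. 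Hence by the blocking rule some $\nu\in\Phi$ is incompatible with $\lfloor i,j|c\rceil$. The center-vertex status of $e_c$ blocks any role conflict at $e_c$, so the conflict must occur at $e_i$ or $e_j$, forcing one of these to be the center of $\nu$. But $\nu\in\Phi$ is compatible with $\wedge$, so by (2) it is rooted at $e_k$, giving $\nu=\lfloor k,x|i\rceil$ or $\nu=\lfloor k,x|j\rceil$ with $x\notin\{i,j,k\}$: a side tee.

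For (3), let $\theta=\lfloor k,i|m\rceil$ be a lateral tee whose center $e_m$ is not a center vertex of $\Phi$. Since $\theta$ has $e_m$ as center, $e_m$ is not an end vertex either, hence $e_m$ is mixing and some $\vartheta\in\Phi$ has $e_m$ as an end; (2) then gives $\vartheta=\lfloor m,k|y\rceil$. The only candidate twain containing $\theta$ that avoids the $e_k$-as-center obstruction is $\wedge'=\wedge_{im}^k=\{\theta,\lfloor m,k|i\rceil\}$. I would show $\lfloor m,k|i\rceil\in\Phi$ by the blocking rule: any potential blocker reduces to one of the form $\lfloor u,v|m\rceil$ with $u,v\notin\{i,j,k,m\}$, which is itself incompatible with $\vartheta$, contradicting $\vartheta\in\Phi$. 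Standingness of $\wedge'$ then follows because $\lfloor i,m|k\rceil\notin\Phi$ and any $\lfloor i,m|\ell\rceil\in\Phi$ would be incompatible with $\lfloor j,k|i\rceil\in\wedge$.

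Assertions (4) and (5) follow by structural consolidation. For (4), let $B$ denote the union of bottoms of all rooted-at-$E$ s-twains of $\Phi$; (2) makes these s-twains pairwise compatible via the common end $e_k$, and the blocking rule (combined with (3) to rule out competing blockers at each cross-tee) places the full pyramid $P=\{\lfloor k,b|b'\rceil:b,b'\in B,\, b\neq b'\}$ into $\Phi$. Any rooted-at-$E$ pyramid in $\Phi$ has bottom contained in $B$ by definition of $B$; any lateral tee of $P$ lying in $\Phi$ would, by (3) or (1) (depending on whether its center is or is not a center vertex of $\Phi$), produce a further s-twain enlarging $B$, contradicting maximality. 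For (5), if pyramids rooted at distinct end vertices $E=e_k$, $E'=e_{k'}$ shared a vertex $v$, the end-vertex status of $e_k$ and $e_{k'}$ excludes $v\in\{e_k,e_{k'}\}$ from lying in the bottom of the other pyramid (it would appear as a forbidden center), so $v$ must lie in both bottoms; but then a tee of one pyramid making $v$ an end and a tee of the other making $v$ a center are incompatible by Proposition~\ref{123}(1), contradicting $P\cup P'\subset\Phi$. The principal obstacle is the blocking enumeration in (3) and in the last clause of (4): each potential blocker must be traced through Proposition~\ref{123}(1), the standing property of $\wedge$, and the end/center-vertex constraints, with the enumeration bifurcating into subcases that must each terminate in a compatibility violation produced by (2) or by $\wedge$ itself.
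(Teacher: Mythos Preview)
Your approach is essentially the paper's: (2) is the workhorse (the paper's proof of (1) invokes it as well), (3) goes via the blocking rule applied to the companion tee, and (4)--(5) follow by structural consolidation using the same devices. One small slip: in the last clause of (4), invoking (1) when the lateral tee's center is a center vertex of $\Phi$ does not produce a further s-twain---such tees \emph{can} lie in $\Phi$ (they form the casing of $P_E$, introduced just after this lemma)---so that clause should be read only for the non-center-vertex case, which your appeal to (3) already covers and which is all the paper itself uses (``directly follows from assertion (3)'').
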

\begin{proof}
(1) Let $B_1, \,B_2$ be bottom vertices of $\wedge$ and $C$ a center vertex
of $\Phi$. The tee $\vartheta=\lfloor B_1,B_2|C\rceil$ does not
belong to $\Phi$, since $\wedge$ is an s-twain. On the other hand, since $\Phi$
is a tee-cluster, there is a tee $\theta\in\Phi$ which is incompatible with  $\vartheta$.
Since $C$ is a center vertex of $\Phi$, the center of $\theta$ is either
$B_1$, or  $B_2$. But being compatible with $\wedge$ the tee $\theta$ is rooted 
at $E$ and hence is a side tee for $\wedge$.

(2) It can be easily checked that a tee which is nontrivially compatible with a twain 
$\wedge$ and not rooted at its top vertex is of the form 
$\vartheta=\lfloor B_1,B_2|C\rceil$ with $B_1, B_2$ being bottom vertices of $\wedge$.
Obviously, this is impossible for an s-twain. 

(3) Let $B_1,B_2\in S(\Phi)$ be bottom vertices of $\wedge$ and 
$\theta=\lfloor E,B_2|C\rceil$. It suffices to show that $\vartheta=\lfloor 
E,C|B_2\rceil$ belongs to $\Phi$, i.e. there is no incompatible with
$\vartheta$ tee $\varrho\in\Phi$. Such a  tee must have either its 
center at $C$, or one 
of its ends at $B_2$. In the first case one of the ends of  $\varrho$ must 
be either $B_2$, or $E$. Indeed, otherwise, $\varrho$ and $\vartheta$ would 
form a cross whose center $C$ will be, by the basic property of crosses, a 
center vertex of $\Phi$.  But a tee of the form $\varrho=\lfloor D,B_2|C\rceil, \,C\neq E$, 
is compatible with $\theta^{\prime}=\lfloor E,B_1|B_2\rceil\in \wedge$ iff $D=B_1$.
This is impossible, since $\varrho=\lfloor B_1,B_2|C\rceil$ together with
$\wedge$ form a trey in contradiction with the hypothesis that $\wedge$ is
an s-twain. On the other hand, $\varrho$ can not have an end at $E$,
since any rooted at $E$ tee is compatible with $\vartheta$.
 
Finally, if one end of $\varrho$ is $B_2$, then $B_1$ is its second end, 
since $\varrho$ is compatible with $\wedge$ but not compatible with $\theta$.
But we have already seen that this is impossible.

(4) Let  $\wedge_1,\dots,\wedge_m$ be rooted at $E$ s-twains that belong to 
$\Phi$ and $\mathcal{P}$ the mentioned in the statement pyramid. If 
$\theta\in \mathcal{P}$, then $\theta$ is either a lateral tee for
one of twains $\wedge_i$'s, or belongs to one of them. In the first case
$\theta$ belongs to $\Phi$ in virtu os assertion (3). This proves that 
$\mathcal{P}\subset\Phi$.

The remaining part of (4) also directly follows from assertion (3).

(5) Directly from proposition \ref{123}. 
\end{proof} 

The pyramid figuring in assertion (4) of the above lemma will be denoted 
by $P_E=P_E(\Phi)$.  Its \emph{casing} is composed of all tees of the 
form $\lfloor E,B|C\rceil$ with $B$ being a bottom vertex of $P_E$ and
$C$ a center vertex of $\Phi$.
It will be shown that the casing of $P_E$ belongs to $\Phi$ (see lemma\,\ref{e-str} below). 

As in the case of twains we shall call a rooted at $E$ 
tee a \emph{side} (resp., \emph{lateral}) tee of 
$P_E$ if its center (resp., second end) belongs to the bottom of  $P_E$, 
while the remaining third vertex of it is not a vertex of $P_E$. 

\subsection{Connectives and $\textbf{nec}$-tees.}\label{con-nec-tees}  

Now we shall describe the situations when 
\emph{ec}-tees appear in a \emph{non-subbordinate} manner, i.e., 
not as elements of p-, or s-twains. Such  a tee will be 
called a \emph{nec-tee}. 

\begin{lem}\label{nec-str} 
Let $\Phi$ be a  tee-cluster and $\theta=\lfloor E,A|C\rceil\in\Phi$ 
a rooted at $E$ \emph{nec}-tee. Then it holds:
\begin{enumerate} 
\item $\theta$ is included into a 3-catena of the form 
$$
\wr=\{\theta_{or}=\lfloor E,C|B\rceil,\,\theta, \,\theta_{end}=\lfloor 
E,D|A\rceil\} \quad \mbox{with}  \quad A\neq B  \quad\mbox{and}  \quad C\neq D;
$$ 
\item $D\neq B$; 
\item If $P_E\neq \emptyset$, then any 
$\varrho=\lfloor E,C|B'\rceil$ with $B'\in S(P_E)$ belongs to $\Phi$. In 
particular, any such a tee can be taken for $\theta_{or}$. 

\item  If $P_E\neq \emptyset$, then 
$\theta$ is a side tee of $P_E$. 

\item $\Phi$ contains a tee of the form $\varrho=\lfloor 
A,D|Q\rceil$ with $Q\neq C$, i.e., $\theta$ is contained in the compatible 
family $\Psi(\theta)=\{\varrho\}\cup\wr\subset\Phi$. If $Q=B$, then $B$ is 
a center vertex. 

\item If $D$ is a mixing vertex, then $A$ is a bottom vertex of a p-twain, and,  
therefore, $\theta$ is a lateral tee of it. 

\item If $P_E=\emptyset$, then the center $B$ of  $\theta_{or}$ is a center 
vertex of $\Phi$. 

\item If $C$ is a vertex of a tee  $\vartheta$, which is 
compatible with $\wr$, then  $\vartheta$ is rooted at $E$. 

\item Let $C'$ be the center of a \emph{nec}-tee  $\vartheta$, which is 
rooted at $E$. If $C\neq C'$, then 
$\{\lfloor E,C|C'\rceil, \,\lfloor E,C'|C\rceil\}$ is an s-twain in $\Phi$. 
In particular, $P_E\neq \emptyset$. 

\item If $A$ is the center vertex of a tee, which is compatible 
with $\Psi(\theta)$ (see assertion (5)), then this tee coincides with 
$\theta_{end}$. 
\end{enumerate}
\end{lem}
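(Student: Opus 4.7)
The overall strategy is a systematic case analysis driven by two tools: the compatibility criterion for pairs of base $\pitchfork$-lieons from Proposition~\ref{123}, and the Blocking rule (maximality of the tee-cluster $\Phi$). The starting observation, used repeatedly, is that since $\theta = \lfloor E, A | C\rceil$ is a rooted-at-$E$ \emph{nec}-tee, the vertices $A$ and $C$ are mixing in $\Phi$ while $E$ is an end vertex (never appearing as a center of any tee of $\Phi$). Combined with the nec-tee hypothesis, which forbids $\theta$ from being an element of any p-twain or s-twain, this will rigidly constrain every tee of $\Phi$ that shares a vertex with $\theta$.

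For assertion (1), I would use the mixing of $A$ to produce some $\vartheta \in \Phi$ having $A$ as its center, and the mixing of $C$ to produce some $\tau \in \Phi$ having $C$ as an end. A routine Proposition~\ref{123} compatibility check between $\theta$ and $\vartheta$ (resp. $\tau$), combined with the fact that $E$ cannot be a center in $\Phi$, forces $\vartheta = \lfloor E, D | A\rceil$ and $\tau = \lfloor E, C | B\rceil$. The alternative $\tau = \lfloor A, C | Y\rceil$ is eliminated using Blocking together with the nec-tee property. Finally, $B = A$ or $C = D$ would make $\{\theta, \tau\}$ or $\{\theta, \vartheta\}$ a twain, whose position relative to $\Phi$ would be either p or s, contradicting the assumption that $\theta$ is a nec-tee. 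Assertion (2) follows in the same spirit: assuming $B = D$, the Blocking rule applied to the would-be cycle $B \to A \to C \to B$ produces further tees that force $\theta$ into a triangle or a twain, again a contradiction.

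Assertions (3)--(7) combine the catena just built with the pyramid machinery of Subsection~\ref{TypeVertex}. For (3) and (4), each $B'\in S(P_E)$ produces a candidate $\varrho = \lfloor E, C | B'\rceil$ that one checks compatible with every tee of $\Phi$, using Proposition~\ref{123} together with the basic property of pyramids; Blocking then yields $\varrho \in \Phi$, and in particular exhibits $\theta$ itself as a side tee of $P_E$. Assertion (5) produces $\varrho = \lfloor A, D | Q\rceil \in \Phi$ by Blocking applied to the tee that would otherwise be incompatible with $\theta_{end}$ through $A$. The refinements (6)--(7) then follow from the basic property of crosses (any cross center is a center vertex of $\Phi$) and of twains, applied to the triples one is forced to consider. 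Assertions (8)--(10) are then direct compatibility verifications of an arbitrary hypothetical tee against the already-known family $\Psi(\theta) = \wr \cup \{\varrho\}$, with the constraint forms dictated by (1) and (5).

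The main obstacle will lie in assertion (5) and its interplay with (9): one must track the nec-tee hypothesis through a chain of Blocking-rule inferences while excluding label coincidences (such as $Q = C$, $Q = E$, or $B = D$) without inadvertently using the assumption $P_E \neq \emptyset$ in cases where $P_E$ may be empty. In particular, for (9) the delicate point is that the s-twain $\{\lfloor E,C|C'\rceil,\lfloor E,C'|C\rceil\}$ is produced by Blocking, but one must verify it is genuinely \emph{standing}, i.e.\ belongs neither to a triangle (ruled out by $E$ being a non-mixing end vertex) nor to a trey (ruled out by the nec-tee status of $\theta$ together with the basic property of treys). The real labor of the lemma is this careful bookkeeping of distinct versus coinciding vertex labels across the ten interlocking claims.
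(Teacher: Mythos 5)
Your overall machinery --- Proposition~\ref{123}, the Blocking rule, and the basic properties of the structural groups --- is exactly what the paper uses, and your plans for (1)--(4) and (7)--(10) reconstruct the actual arguments (your alternative justification in (9), ruling out a trey via the basic property of treys plus the \emph{nec}-status of $\theta$, also works; the paper instead derives this from assertion (8)). However, your account of assertion (5), which you yourself identify as the crux, aims the maximality argument at the wrong tee. The correct argument is: since $\theta$ is a \emph{nec}-tee, the tee $\lfloor E,C|A\rceil$ completing $\theta$ to a twain does \emph{not} belong to $\Phi$, so maximality forces some $\varrho\in\Phi$ to be incompatible with $\lfloor E,C|A\rceil$; comparing such a $\varrho$ with the catena $\wr$ pins it down to the form $\lfloor A,D|Q\rceil$, and compatibility with $\theta_{or}$ then gives $Q\neq C$. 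Your phrase ``Blocking applied to the tee that would otherwise be incompatible with $\theta_{end}$ through $A$'' does not describe a workable step: the tee $\lfloor A,D|Q\rceil$ one is after is in fact \emph{compatible} with $\theta_{end}=\lfloor E,D|A\rceil$ (they share the end $D$), so no tension with $\theta_{end}$ can produce it; the tee whose exclusion from $\Phi$ drives the argument is the twain-completion of $\theta$ itself.

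The second gap is assertion (6), which you propose to deduce from the basic properties of crosses and twains. Neither is the relevant tool. The point is that if $D$ is mixing then the tee $\varrho=\lfloor A,D|Q\rceil$ obtained in (5) has both of its ends mixing, i.e.\ it is an \emph{ee}-tee, and it is Lemma~\ref{ee-str} that then locates its ends $A$ and $D$ as bottom vertices of a rooted-at-$E$ p-twain. Without routing the argument through the classification of \emph{ee}-tees there is no passage from ``$D$ is mixing'' to ``$A$ is a bottom vertex of a p-twain.'' The rest of your outline would go through essentially as written.
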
 
\begin{proof}
(1) Since $C$ is a mixing vertex, there is a compatible with $\theta$ 
tee $\vartheta$ one of whose ends is $C$. Another end vertex of such a
$\vartheta$ must be either $A$, or $E$. The first of these alternative is 
impossible. Indeed, since $A$ is a mixing vertex there is a tee $\varrho\in\Phi$
with the center at $A$. The only such tee, which is compatible with
$\theta$ and $\vartheta$, is $\varrho=\lfloor E,C|A\rceil\}$. But  
$\{\theta, \varrho\}\subset\Phi$ is a containing $\theta$ twain in 
contradiction with the hypothesis that $\theta$ is a  \emph{nec}-tee. So, 
we may take $\vartheta=\lfloor E,C|A\rceil$ for $\theta_{or}$.

Next, since $C$ is a mixing vertex, there is a compatible with $\theta$ 
tee $\vartheta'$ whose center is $A$. Any such tee is
of the form $\lfloor E,D|A\rceil$. Finally, any of equalities $A=B$, or $C=D$
implies that $\theta$ belongs to a twain contained in $\Phi$.  But this is
impossible, since $\vartheta'$ is a \emph{nec}-tee.

(2) Assume that $B=D$. In this case any tee, which is nontrivially
compatible with the family $\{\theta_{or},\,\theta,\,\theta_{end}\}\subset\Phi$, 
is rooted at $E$. By this reason, such a tee and, in particular, any  
$\varrho\in\Phi$ is compatible with the tee $\vartheta=\lfloor E,C|A\rceil$. This 
tee forms a twain together with $\theta$ and belongs to $\Phi$, since $\Phi$ is 
a cluster.  But this contradicts the hypothesis that $\theta$ is a 
\emph{nec}-tee.

(3) A tee $\varrho$, which is incompatible with $\vartheta=\lfloor E,C|B'\rceil$,
must have either its center at $C$, or one of its ends at $B^{\prime}$. If, 
additionally, $\varrho\in\Phi$, then $\varrho$ is compatible with $P_E\cap\wr$.
But all such tees are rooted at $E$ and hence are
compatible with $\vartheta$. This proves that  all tees belonging to $\Phi$ 
are compatible with  $\vartheta$ and hence $\vartheta\in\Phi$.            

(4) According to (3) we can assume that $B$ is a bottom vertex of $P_E$. 
It ifollows that a tee, which has the center (resp., one of its ends) at $B$ 
(resp., at $C$) and is compatible with $\wr$ and $P_E$, is rooted at $E$. As
previously, this shows that $\vartheta=\lfloor E,B|C\rceil$ is compatible with
$\Phi$ and hence belongs to $\Phi$. So, the twain $\{\lfloor E,C|B\rceil,\,\lfloor 
E,B|C\rceil\})$ belongs to $\Phi$ and, therefore, to $P_E$ 
(lemma\, \ref{s-twain-str}, (3)). This proves that $C$ is a bottom vertex of $P_E$,

(5) Since $\theta$ is a  \emph{nec}-tee, the tee $\vartheta=\lfloor E,C|A\rceil$, which forms 
a twain with $\theta$, does not belong to $\Phi$. Therefore, $\Phi$, being a tee-cluster,
contains a tee $\varrho$, which is  incompatible with $\vartheta$. But any tee, which is
incompatible with $\vartheta$ and at the same time is compatible with  the catena 
$\wr\subset\Phi$, is of the form $\lfloor A,D|Q\rceil$. Moreover, $Q\neq C$, since, 
otherwise, $\varrho$ and $\theta_{or}$ would be incompatible. Finally, if $Q=B$, 
then, in view of assertion (1), $\varrho$ and $\theta_{or}$ form a cross. So, $B$ is 
a center vertex of $\Phi$ by the basic property of crosses. 

(6) In this case the tee $\lfloor A,D|Q\rceil$ from (5) is an  
\emph{ee}-tee. So, by lemma\,\ref{ee-str}, $A$ and $D$ are bottom 
vertices of a rooted at $E$ p-twain. 

(7) The tee $\vartheta=\lfloor E,C|B\rceil$ does not belong to $\Phi$. Indeed, 
otherwise, the twain $\{\lfloor E,C|B\rceil,\,\lfloor E,B|C\rceil\})$ would belong to $\Phi$
in contradiction with  the hypothesis that $P_E=\emptyset$. Hence there is a
tee $\bar{\vartheta}\in\Phi$ which is incompatible with  $\vartheta$. On the other hand, 
$\bar{\vartheta}$, as a tee from $\Phi$,  is compatible with $\wr$. It remain to
observe that any tee which satisfies these conditions has its center at $B$ and form
a cross with $\theta_{or}$. Hence $B$ is a center vertex of $\Phi$ by the basic 
property of crosses.

(8) Immediately from assertion (1).

(9) A tee from $\Phi$, which is incompatible with one of tees 
$\lfloor E,C|C'\rceil, \,\lfloor E,C'|C\rceil\}$, has one of its vertices either at $C$, 
or at $C'$. But this is impossible, since, according to assertion (8), any such 
tee is rooted at $E$.

(10) Obvious. 
\end{proof}

The fact that any \emph{nec}-structure $\theta$ is included in a family of 
the form  $\Psi(\theta)$ (assertion (5) of the preceding lemma) will be often
used in the sequel. It is worth noticing that 
$\Psi(\theta)$ is not unique and  tees composing  
$\Psi(\theta)$ have different center vertices except, possibly, $B$ and $Q$ 
which may coincide. 

\medskip
\noindent\textit{Pendent \emph{nec}-tees}

\noindent  Below we shall keep the notation of  lemma\,\ref{nec-str} 
for vertices of $\Psi(\theta)$. A \emph{nec}-tee will be called \emph{pendent} 
if it is not a side tee of a contained in $\Phi$ pyramid. 
\begin{cor}\label{nec-cor}
Let $\Phi$ be a tee-cluster, $E\in S(\Phi)$ an end vertex of it and 
$\theta$ a rooted at $E$ \emph{nec}-tee. Then 
\begin{enumerate}
\item $\theta$ is pendent if and only if $P_E=\emptyset$.

\item $\theta$ is a side tee of $P_E$, if $P_E\neq\emptyset$ or
a pendent one, if $P_E=\emptyset$. In both cases it may at the 
same time be a lateral tee of a p-twain. 

\item If a side tee of  $P_E$ is also lateral for a 
p-twain in $\Phi$, then it belongs to  $\Phi$. 

\item If $\theta$ is pendent, then the center of any other rooted at $E$ 
pendent tee coincides with the center of $\theta$. 

\item Any tee $\lfloor E,C|B'\rceil$, which is lateral 
for a p-twain in $\Phi$, belongs to  $\Phi$. 

\item If the center vertex of $\theta$ is a vertex of $\vartheta\in\Phi$, 
then $\vartheta$ is rooted at $E$. 
\end{enumerate}
\end{cor}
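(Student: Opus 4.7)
My plan is to derive the six assertions as essentially direct harvests from lemma\,\ref{nec-str} and lemma\,\ref{s-twain-str}, viewing the corollary as a repackaging of those facts in the dichotomy ``$P_E$ empty vs.\ nonempty''. I would do parts (1), (2), (4) and (6) first, since each is immediate, and leave (3) and (5) for last, as they alone require a genuine compatibility check.

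For (1), I would use lemma\,\ref{s-twain-str}(4), which shows that every rooted-at-$E$ pyramid contained in $\Phi$ is a subpyramid of $P_E$. Hence $\theta$ is a side tee of some pyramid in $\Phi$ precisely when $P_E\neq\emptyset$ and $\theta$ is a side tee of $P_E$; by lemma\,\ref{nec-str}(4) the second condition is automatic once the first holds. Part (2) is then just the combination of (1) with lemma\,\ref{nec-str}(4) for the main dichotomy, and with lemma\,\ref{nec-str}(6) for the possibility of being lateral to a p-twain. Part (6) is an immediate application of lemma\,\ref{nec-str}(8), since $\vartheta\in\Phi$ is automatically compatible with the catena $\wr\subset\Phi$ furnished by lemma\,\ref{nec-str}(1). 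Part (4) is read off lemma\,\ref{nec-str}(9): if the centers $C,C'$ of two rooted-at-$E$ pendent \emph{nec}-tees were distinct, that lemma would produce an s-twain in $\Phi$ rooted at $E$, hence $P_E\neq\emptyset$, contradicting (1).

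For (3) and (5) I would argue by maximality of the cluster: it suffices to show that the candidate tee is compatible with every $\varrho\in\Phi$. Any such $\varrho$ that fails compatibility with a rooted-at-$E$ tee $\tau$ must, by proposition\,\ref{123}(1), share a vertex with $\tau$ as a mismatch of types (end versus center), and in particular cannot itself be rooted at $E$ (since sharing the end $E$ already gives compatibility). I would then run a short case analysis over which of the remaining vertices of $\tau$ is the offending one, using the p-twain structure through the basic property of twains and the structure of $P_E$ through lemma\,\ref{s-twain-str}(3) to rule out each case; in (3) the nec-catena $\Psi(\theta)$ of lemma\,\ref{nec-str}(5) also enters. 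Once no $\varrho\in\Phi$ can be incompatible with $\tau$, maximality of $\Phi$ forces $\tau\in\Phi$.

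The main obstacle I anticipate is exactly this bookkeeping in (3) and (5): several subcases must be treated according to whether the shared vertex is $E$, $A$, $C$ or $B'$, and in each one the correct invocation among the twain/pyramid/nec-catena properties has to be selected. I expect no new conceptual input beyond lemmas already proved in the excerpt; the difficulty is purely combinatorial housekeeping, and drawing the associated diagrams (in the spirit of Fig.\,\ref{incompatible}) should make each subcase transparent.
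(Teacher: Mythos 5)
Your proposal is correct and follows essentially the same route as the paper: parts (1), (2), (4) and (6) are direct harvests of lemma\,\ref{nec-str} exactly as you describe (the paper re-derives the s-twain of lemma\,\ref{nec-str}(9) inside its proof of (4) rather than citing it, but the content is identical), and parts (3) and (5) are settled by the blocking rule with precisely the two-case split you outline --- a blocking tee must either have its center at the p-twain's bottom vertex, which is excluded by the basic property of treys, or have an end in $S(P_E)$, which is excluded because every tee nontrivially compatible with $P_E$ is rooted at $E$. The one small slip is that the catena $\Psi(\theta)$ serves as the substitute for $P_E$ in part (5) (where $P_E=\emptyset$), rather than entering part (3).
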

\begin{proof}
(1) Directly from the definition and lemma\,\ref{nec-str}, (4).

(2) Obviously from assertion (1) and  lemma\,\ref{nec-str}, (6). 

(3) Let $\vartheta=\lfloor E,Q|R\rceil$ be lateral for a p-twain $\wedge\subset\Phi$. 
It could be blocked either by a tee with the center at $Q$, or by 
a tee with an end at $R$. In the first case $Q$ is a side vertex of the 
trey, which contains $\wedge$. But, by the basic property of treys,  
$\vartheta\in \wedge$ and hence does not block $\vartheta$. 
In the second case $R\in S(P_E)$. Since any nontrivially compatible 
with $P_E$ tee is rooted at $E$, it does not block $\vartheta$. So, 
by the blocking rule, $\vartheta$ belongs to $\Phi$.

(4) Let $\theta=\lfloor E,A|C\rceil$ and $\theta'=\lfloor E,A'|C'\rceil$ be  
pendent tees and $C\neq C'$. If one of vertices of a tee $\vartheta$ is the 
center $C$ of $\theta$ and $\vartheta$ is compatible with $\Psi(\theta)$, 
then, as it is easy to see,  $\vartheta$ is rooted at $E$ and similarly for 
$\theta'$. By this reason, tees forming the rooted at $E$ twain $\wedge$ 
with bottom vertices $C$ and $C'$ can not be blocked by a tee from $\Phi$. 
So, by the blocking rule $\wedge\subset\Phi$. Obviously, $\wedge$ is an s-twain. 
This shows that $P_E\neq\emptyset$ in contradiction with the made assumption. 

(5) To prove this assertion it suffices to substitute $\Psi(\theta)$ 
for $P_E$ in the proof of assertion (3). 

(6) Immediately from assertions (1) and (8) of lemma\,\ref{nec-str}.
\end{proof}

According to assertion (4) of this corollary, all pendent \emph{nec}-tees rooted 
at $E$ have the common center vertex. Denote it by $C_E^{pn}$ and consider 
the multiplex $\bot_E^{pt}$ with the origin $E$ and the center $C_E^{pn}$ 
whose ends are bottom vertices of all rooted at $E$ p-twains. It will be 
called the \emph{twain multiplex at E}. Assertions (4) and (5) of the 
corollary\,\ref{nec-cor} show that $\bot_E^{pt}$ belongs to $\Phi$. When $\Phi$ does 
not have rooted at $E$ s-twains, i.e., $P_E=\emptyset$, then the``rod" with ends $E$ 
and $C_E^{pn}$ may be viewed as the ``collapsed" $P_E$.  The 
\emph{casing} of $\bot_E^{pt}$ is composed of all tees with 
ends at $E$ and $C_E^{pn}$ and whose centers are center vertices of 
$\Phi$. If $\Phi$ is a  tee-cluster, then this casing belongs to $\Phi$ 
(see lemma\,\ref{e-str} below).  

A \emph{nec}-tee  will be called a \emph{pyt-connective} (resp., 
\emph{pt-connective}) at $E$ if it is lateral for a rooted at $E$ p-twain 
and simultaneously a side tee for $P_E$ (resp., a rooted at $E$ 
\emph{nec}--pendent tee if $P_E=\emptyset$). Informally speaking, these 
connectives ``consolidate" the system of rooted at $E$ p-twains around the 
``central pillar" $P_E$ (resp., the ``rod" $\overline{EC_E^{pn}}$) into a 
``rigid structure". Both \emph{pyt}- and \emph{pt}-connectives are 
\emph{internal} in the sense that they join p- and s-twains rooted at the 
same vertex. \emph{External} connectives will be discussed below. 

\medskip
\subsection{C- and e-tees.}\label{c-str-all} 

Let $\Phi$ be a compatible tee-family. Denote by $\Phi_E$ the set of all 
rooted at $E$ \emph{ec}-tees. A \emph{c}-tee with end vertices 
$E$ and $D$ connects families $\Phi_E$ and $\Phi_D$. The following lemma 
allow us to subdivide \emph{c}-tees into two classes. 
\begin{lem}\label{two-c-str}
Let $C,D\in S(\Phi)$ be end vertices of $\Phi$ and 
$$
\wr_2=\{\lfloor E,D|C\rceil,\,\lfloor E,C|B\rceil\}\subset\Phi.
$$
Then it holds:
\begin{enumerate}
\item $\theta=\lfloor E,D|C\rceil\in\wr_2$ is a \emph{c}-tee and, conversely, 
any  \emph{c}-tee is contained in a 2-catena $\wr_2\subset\Phi$.

\item If $C$ is a vertex of a tee $\varrho$ which is 
nontrivially compatible with $\wr_2$, then $\varrho$ is of one of the 
following types: $\lfloor E,A|C\rceil$ (type I at $E$), or $\lfloor 
D,C|Q\rceil$ (type II at $E$), or $\lfloor E,C|B'\rceil$ (type III at $E$). 
Tees of type I and II at $E$ are incompatible. 

\item If $\Phi$ is a tee-cluster and $\varrho\in\Phi$ is of type I 
at $E$, then any $\theta'=\lfloor E,E'|C\rceil$ with $E'$ being an end 
vertex belongs to $\Phi$.

\item Let $\varrho\in\Phi$ be of type I at $E$ and $C$ be a vertex of a 
tee $\vartheta\in\Phi$. Then $\vartheta$ is rooted at 
$E$. 
\end{enumerate}
\end{lem}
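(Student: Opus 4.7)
The plan is to work entirely through proposition\,\ref{123} and the local compatibility analysis from lemmas\,\ref{a1}--\ref{b3}, treating each assertion as a casework problem on which vertices are shared between two base $\pitchfork$-lieons and in what role (end versus center).

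For assertion (1), I would first observe that in $\wr_2$ the vertex $C$ is the center of $\lfloor E,D|C\rceil$ and an end of $\lfloor E,C|B\rceil$, hence $C$ is mixing in $\Phi$; meanwhile $E$ and $D$ are end vertices of $\Phi$ by hypothesis. Thus only the center of $\theta=\lfloor E,D|C\rceil$ is mixing, which is the definition of a $c$-tee. For the converse, given any $c$-tee $\theta=\lfloor E,D|C\rceil$, the mixing property of $C$ supplies some $\vartheta\in\Phi$ with $C$ as an end. Compatibility of $\vartheta$ with $\theta$ in the setting where $C$ plays mismatched roles is covered by lemma\,\ref{a1}: it forces $\vartheta$ to share an additional vertex with $\theta$ in matching role, which (since $E,D$ are the only remaining vertices of $\theta$ and both are ends) means $\vartheta$ has the form $\lfloor E,C|B\rceil$ or $\lfloor D,C|B\rceil$. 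Either choice yields the desired $2$-catena.

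For assertion (2), I would split on whether $C$ is center or end of $\varrho$. If $C$ is the center, $\varrho=\lfloor X,Y|C\rceil$, and compatibility with $\theta_2=\lfloor E,C|B\rceil$ (where $C$ is an end) requires a matching-role shared vertex, forcing $\{X,Y\}\cap\{E\}\neq\emptyset$; this gives type~I. If $C$ is an end, $\varrho=\lfloor C,X|Y\rceil$, and compatibility with $\theta_1=\lfloor E,D|C\rceil$ (where $C$ is a center) forces $X\in\{E,D\}$, yielding types~III and~II respectively. The incompatibility of type~I with type~II then follows because $C$ is the center of the type~I tee and an end of the type~II tee while all remaining ends are distinct; case A$_2$ of lemma\,\ref{a1} applies and forbids this configuration.

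For assertions (3) and (4) the driving observation is that since $\varrho\in\Phi$ is of type~I, the incompatibility just proved excludes every type~II tee from $\Phi$. By (2), any tee in $\Phi$ meeting the vertex set $\{E,E',C\}$ of $\theta'=\lfloor E,E'|C\rceil$ must therefore be of type~I or III at $E$ (both sharing end $E$), or must share the end $E'$ (compatible automatically by lemma\,\ref{b1} since $E'$ is an end vertex of $\Phi$), or share only $C$ as a common center (also compatible). No candidate blocker survives, so the blocking rule places $\theta'$ in $\Phi$, giving (3). Assertion (4) is the same dichotomy read differently: any $\vartheta\in\Phi$ containing $C$ is forced to be of type~I or~III, each of which is rooted at $E$.

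The main obstacle I expect is the bookkeeping in (2): one has to verify, for each subcase of which vertex of $\varrho$ coincides with which of $E,D,C,B$, that the distinction between a genuine center–end mismatch (handled by lemma\,\ref{a1}) and a rescuing shared end (handled by lemma\,\ref{b1}) is applied correctly. Once that casework is laid out cleanly, (1), (3), and (4) follow quickly by the blocking rule and the elementary mixing-vertex observations above.
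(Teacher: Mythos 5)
Your proof is correct and follows essentially the same route as the paper: assertions (1), (2) and (4) by direct casework on shared vertices via proposition\,\ref{123} (which the paper dismisses as obvious), and assertion (3) by excluding type~II tees from $\Phi$ through the type~I/type~II incompatibility and then invoking the blocking rule. The only (harmless) looseness is the same one implicit in the paper, namely that the type~I/type~II incompatibility presumes the second end $A$ of the type~I tee differs from $D$.
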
  
\begin{proof}
(1)-(2) Obviously.

(3) A tee $\vartheta\in\Phi$ which could block $\theta'$ must have  
one of its ends at $C$. But being compatible with $\wr_2$ 
$\vartheta$ is either of type II, or of type III at $E$. The first alternative is 
impossible, since tees of types  I and II are incompatible.
On the other hand, a tee of type III does not block $\theta'$. 

(4) An obvious consequence of compatibility of $\vartheta$ with 
$\left(\wr_2\cup\{\varrho\}\right)$. 
\end{proof}

Let $\theta\in\left(\wr_2\cup\{\varrho\}\right)\subset\Phi$ be as in 
lemma\,\ref{two-c-str}. The \emph{c}-tee $\theta$ will be called a 
\emph{hook} at $E$ (resp., a \emph{bridge}) if $\varrho$ is of type I at 
$E$ (resp., of type II). We shall say that the bridge $\lfloor E,D|Z\rceil$ 
\emph{connects} vertices $E$ and $D$. 

\medskip
\noindent\textit{Hooks.}

\noindent Now we shall describe ``environments" of hooks.
\begin{lem}\label{hooks}
Let $\Phi$ be a tee-cluster. Then it holds:
\begin{enumerate}
\item Let $\theta=\lfloor E,D|C\rceil\in\Phi$ be a hook at $E$ and $C$ a
vertex of $\vartheta\in\Phi$. Then $\vartheta$ is rooted at E. 

\item Let $\theta_i=\lfloor E,E_i|C_i\rceil\in\Phi, \,i=1,2,$ be hooks at 
$E$. Then either $C_1=C_2$, or $\wedge=\{\lfloor 
E,C_1|C_2\rceil\},\,\{\lfloor E,C_2|C_1\rceil\}$ is an s-twain. 

\item Let $\theta_1=\lfloor E,E_1|C_1\rceil$ be a hook at $E$ and 
$\theta_2=\lfloor E,E_2|C_2\rceil$ be a pending \emph{nec}--tee at $E$. 
Then either $C_1=C_2$ or 
$\wedge=\{\lfloor E,C_1|C_2\rceil\},\,\{\lfloor E,C_2|C_1\rceil\}$ is an s-twain.

\item Let $E$ and $E'$ be end vertices and $C$ be a bottom vertex of $P_E$, 
or the center vertex of a pending \emph{nec}--tee rooted at $E$, or the center 
vertex of a hook. Then $\theta=\lfloor E,E'|C\rceil$ is a hook at $E$. 

\item Let $\lfloor E,E'|C\rceil$ be a hook at $E$ and $S_1,\,S_2$ be bottom 
vertices of a rooted at E p-twain $\wedge$. Then $\theta_i=\lfloor 
E,S_i|C\rceil\in\Phi, \,i=1,2$. 
\end{enumerate} 
\end{lem}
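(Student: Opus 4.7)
The unifying method is the \emph{blocking rule}: since $\Phi$ is a tee-cluster, a base tee whose vertices lie in $S(\Phi)$ belongs to $\Phi$ as soon as no element of $\Phi$ is incompatible with it. Throughout I will pair this with proposition\,\ref{123}, which characterizes compatibility of base lieons combinatorially (common end or common center), and with the standing data of a hook at $E$: the $c$-tee $\theta=\lfloor E,D|C\rceil$ together with its catena partner $\lfloor E,C|B\rceil\in\Phi$ and a type~I witness $\varrho=\lfloor E,A|C\rceil\in\Phi$. Assertion~(1) is then immediate: this witness $\varrho$ is exactly the tee required to apply lemma\,\ref{two-c-str}(4), which yields that every $\vartheta\in\Phi$ having $C$ as a vertex is rooted at $E$.

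For (2) and (3), assume $C_1\neq C_2$. By (1) in the case of hooks and by corollary\,\ref{nec-cor}(6) in the case of pending nec-tees, every tee of $\Phi$ that contains $C_1$ or $C_2$ as a vertex is rooted at $E$, and hence shares the common end $E$ with $\vartheta_1=\lfloor E,C_1|C_2\rceil$; while a tee of $\Phi$ whose only shared vertex with $\vartheta_1$ is $E$ must have $E$ as an end (since $E$ is an end vertex of $\Phi$), again yielding a common end. Thus no element of $\Phi$ blocks $\vartheta_1$, and the blocking rule forces $\vartheta_1\in\Phi$; symmetrically $\vartheta_2=\lfloor E,C_2|C_1\rceil\in\Phi$, so $\wedge=\{\vartheta_1,\vartheta_2\}\subset\Phi$ is a twain with top $E$ and bottom $\{C_1,C_2\}$. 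Finally, $\wedge$ cannot belong to a triangle in $\Phi$ (that would require $\lfloor C_1,C_2|E\rceil\in\Phi$, impossible since $E$ is an end vertex) nor to a trey in $\Phi$ (that would require some $\lfloor C_1,C_2|Z\rceil\in\Phi$ with $Z\neq E$, incompatible with the hook $\theta_1=\lfloor E,E_1|C_1\rceil$, as the shared vertex $C_1$ is an end of one and the center of the other), so $\wedge$ is an s-twain. For (4) the same argument places $\lfloor E,E'|C\rceil$ in $\Phi$, the rooted-at-$E$ property for tees of $\Phi$ containing $C$ being granted respectively by lemma\,\ref{s-twain-str}(2), corollary\,\ref{nec-cor}(6), and assertion~(1); the hook structure is then read off from the catena partner $\lfloor E,C|B\rceil\in\Phi$ and the type~I witness $\lfloor E,A|C\rceil\in\Phi$ supplied, respectively, by the s-twain through $C$ inside $P_E$, by the catena of lemma\,\ref{nec-str}(1), or by the original hook.

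The delicate assertion is (5), because the mixing vertices $C$ and $S_i$ of the candidate $\theta_i=\lfloor E,S_i|C\rceil$ come from structurally unrelated sources (the hook and the p-twain $\wedge$). The blocker enumeration reproduces the standard cases---blockers meeting $C$ are rooted at $E$ by (1) and hence compatible; blockers meeting only $E$ either have $E$ as end (compatible) or as center (impossible); blockers meeting $S_i$ only as an end share the common end $S_i$ with $\theta_i$---except for the genuinely new case of a tee $\vartheta=\lfloor A,B|S_i\rceil\in\Phi$ with $A,B\notin\{E,S_i,C\}$, which has $S_i$ as a center in end-versus-center collision with $\theta_i$. Here I will invoke the \emph{basic property of treys} applied to the trey $\top\supset\wedge$: if such $\vartheta$ lay in $\Phi$ it would be compatible with $\top$, and hence forced by that property to lie in the twain $\wedge$, but then one of $A,B$ would have to equal $E$, contradicting the hypothesis. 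With this last blocker ruled out, the blocking rule delivers $\theta_i\in\Phi$. This step is the main obstacle: unlike (1)--(4), its resolution must route through the trey containing $\wedge$ rather than through any rooted-at-$E$ pyramid or catena structure attached to $C$.
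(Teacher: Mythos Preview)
Your proof is correct and follows essentially the same approach as the paper: each assertion is handled by the blocking rule together with the appropriate ``rooted-at-$E$'' fact (lemma\,\ref{two-c-str}(4) for hooks, corollary\,\ref{nec-cor}(6) for pending nec-tees, lemma\,\ref{s-twain-str}(2) for pyramid bottoms), and (5) is resolved via the basic property of treys. Your write-up is in fact more explicit than the paper's in a couple of places---you spell out the s-twain verification in (2)--(3), you cite lemma\,\ref{s-twain-str}(2) for the pyramid case in (4) where the paper's citation list is slightly loose, and you verify the hook structure (catena partner and type~I witness) in (4) rather than stopping at $\theta\in\Phi$---but the underlying argument is the same.
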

\begin{proof}
(1) A particular case of lemma\,\ref{two-c-str}, (4). 

(2) Assume that $C_1\neq C_2$. If $C_i, \,i=1,2,$ is a vertex of a 
tee $\vartheta$, then, in virtue of assertion (1), 
$\vartheta$ is rooted at $E_i$ and, therefore, does not block  tees 
that compose $\wedge$. i.e., $\wedge$ belongs to $\Phi$. 
Moreover, tees of the form $\lfloor C_1,C_2|Q\rceil$ are 
incompatible with $\theta_1$ and $\theta_2$ and hence do not belong to 
$\Phi$, i.e., $\wedge$ is an s-twain. 

(3) Observe that one of vertices of a tee $\vartheta\in\Phi$ which 
could block a  belonging to $\wedge$ tee must be either 
$C_1$ or $C_2$. But any such structure is rooted at $E$ 
(corollary\,\ref{nec-cor}, (6),  and lemma\,\ref{two-c-str}, (4)) and 
hence  can not block them. 

(4) Observe that if a tee $\vartheta\in\Phi$ blocks 
$\theta$, then $C$ must be one of its vertices.  But all such tees are 
rooted at $E$ (corollary\,\ref{nec-cor}, (6),  and lemma\,\ref{two-c-str}, 
(4)) and hence  can not block $\theta$.  

(5) By the basic property of treys and lemma\,\ref{two-c-str}, 
(4), $\Phi$ does not contain blocking $\theta_i$ tees.
\end{proof}
According to assertion (2) of lemma\,\ref{hooks} all hooks at $E$ have a common 
center vertex if $P_E= \emptyset$. Denote it by $C_E^{hk}$. Denote also 
by $\bot_E^{hk}$ the multiplex constituted by all tees 
$\lfloor E, E'|C_E^{hk}\rceil$ with $E'$ running all different from $E$ end 
vertices of $\Phi$. It will be called the \emph{hook multiplex} at E.
\begin{cor}\label{hook-str}
Let $\Phi$ be a tee-cluster and $E$ an end vertex of it. Then
\begin{enumerate}
\item all hooks at $E$ are side tees of $P_E$ if $P_E\neq\emptyset$.

\item If $\Phi$ contains  at least one pendent tee and one 
hook at $E$, then $C_E^{pn}=C_E^{hk}$. 

\item $\bot_E^{hk}\subset\Phi$.
\end{enumerate}
\end{cor}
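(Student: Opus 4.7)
The plan is to prove the three assertions in reverse order of difficulty, exploiting lemma \ref{hooks} and the s-twain machinery of lemma \ref{s-twain-str}. Assertion (3) is essentially immediate: once $C_E^{hk}$ is defined (which presupposes $P_E=\emptyset$ and at least one hook at $E$), we invoke lemma \ref{hooks}, (4), with $C=C_E^{hk}$, the center of a hook. This asserts that for every end vertex $E'\neq E$ the tee $\lfloor E,E'|C_E^{hk}\rceil$ is a hook at $E$, hence lies in $\Phi$. The union of these hooks is exactly the multiplex $\bot_E^{hk}$, yielding $\bot_E^{hk}\subset\Phi$.

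For (2), the hypothesis of a pendent tee forces $P_E=\emptyset$ by corollary \ref{nec-cor}, (1), so both $C_E^{pn}$ and $C_E^{hk}$ are defined. Pick any hook $\theta_1=\lfloor E,E_1|C_E^{hk}\rceil$ and any pendent \emph{nec}-tee $\theta_2=\lfloor E,A|C_E^{pn}\rceil$. Lemma \ref{hooks}, (3), yields the dichotomy: either $C_E^{hk}=C_E^{pn}$, or the pair $\{\lfloor E,C_E^{hk}|C_E^{pn}\rceil,\lfloor E,C_E^{pn}|C_E^{hk}\rceil\}$ is an s-twain in $\Phi$. The second alternative would force $P_E\neq\emptyset$ (by lemma \ref{s-twain-str}, (4)), contradicting pendency. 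So the centers coincide.

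The substantive case is (1). Let $\theta=\lfloor E,D|C\rceil$ be a hook at $E$ and assume $P_E\neq\emptyset$; by lemma \ref{s-twain-str}, (4), fix an s-twain $\wedge_0=\{\lfloor E,C_1|C_2\rceil,\lfloor E,C_2|C_1\rceil\}\subset\Phi$ with $C_1,C_2\in S(P_E)$. If $C\in\{C_1,C_2\}$ we are done, so suppose otherwise. My plan is to show that $\wedge'=\{\lfloor E,C|C_1\rceil,\lfloor E,C_1|C\rceil\}$ is itself an s-twain in $\Phi$; then its bottom vertex $C$ belongs to $S(P_E)$, and the hook $\theta$ is a side tee of $P_E$ by definition. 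To place $\wedge'$ in $\Phi$, I use the blocking rule: any blocking tee $\mu\in\Phi$ must share one of $\{E,C,C_1\}$ with a tee of $\wedge'$. A $\mu$ sharing $E$ has $E$ as an end (since $E$ is end of $\Phi$) and is thus compatible; a $\mu$ sharing $C$ is rooted at $E$ by lemma \ref{hooks}, (1), hence compatible; and a $\mu$ sharing $C_1$ is nontrivially compatible with $\wedge_0$ and so rooted at $E$ by lemma \ref{s-twain-str}, (2). To verify that $\wedge'$ is \emph{standing}, note that it cannot sit in a triangle (this would require a tee with $E$ as center) nor in a trey, since any completing tee $\lfloor C,C_1|X\rceil$ would contain $C$ and thus, again by lemma \ref{hooks}, (1), be rooted at $E$, which is impossible for a tee whose ends are $C,C_1$.

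The main obstacle is the last step, checking that $\wedge'$ is not swallowed by a trey; everything else is bookkeeping with the blocking rule. The key leverage is that $E$ being an end vertex of $\Phi$ prohibits $E$ from ever being the center of any tee in $\Phi$, and this, combined with lemma \ref{hooks}, (1), forces the would-be trey-extending tees out of $\Phi$.
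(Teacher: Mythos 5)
Your proofs of assertions (2) and (3) coincide with the paper's: (3) is exactly the specialization of lemma~\ref{hooks},\,(4) to $C=C_E^{hk}$, and (2) is the dichotomy of lemma~\ref{hooks},\,(3) killed off by corollary~\ref{nec-cor},\,(1). For assertion (1) you take a genuinely different, more hands-on route. The paper first invokes lemma~\ref{hooks},\,(4) to manufacture a second hook $\lfloor E,E'|C_1\rceil$ whose center $C_1$ is already a bottom vertex of $P_E$, and then applies lemma~\ref{hooks},\,(2) to the pair of hooks: either the centers coincide or they bound an s-twain, and in both cases $C$ lands in the bottom of $P_E$ by lemma~\ref{s-twain-str},\,(4). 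You instead build the s-twain $\{\lfloor E,C|C_1\rceil,\lfloor E,C_1|C\rceil\}$ directly, re-running the blocking argument (every potential blocker shares $E$, $C$ or $C_1$ and is forced to be rooted at $E$ by proposition~\ref{123}, lemma~\ref{hooks},\,(1) and lemma~\ref{s-twain-str},\,(2)) and then checking by hand that the twain sits in neither a triangle (which would need $E$ as a center) nor a trey (whose completing tee $\lfloor C,C_1|X\rceil$ cannot be rooted at $E$ yet must be by lemma~\ref{hooks},\,(1)). Your argument is correct and self-contained, but it essentially re-proves the content of lemma~\ref{hooks},\,(2) in a slightly shifted setting; the paper's reduction buys economy by reusing that lemma verbatim, at the cost of the extra (but already available) input from lemma~\ref{hooks},\,(4). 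One small point worth making explicit in your version: the second end $D$ of the hook is an end vertex of $\Phi$ and hence cannot be a bottom vertex of $P_E$ (those are centers of tees, hence mixing), so $\theta$ really is a \emph{side} tee and not merely a tee whose center lies in the bottom of $P_E$.
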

\begin{proof}

Assertion (1) directly follows from  assertions (2) and 
(4) of lemma\,\ref{hooks}, and assertion (2) from corollary\,\ref{nec-cor}, (1)
and lemma\,\ref{hooks} (2), (3).
Assertion (3) is a particular case of assertion (4) of this lemma.   
\end{proof}

In view of corollary\,\ref{hook-str}, (2), there would be no confusion to use 
the common notation $C_E$ for centers $C_E^{hk}$ and  $C_E^{pn}$. 

The \emph{casing }of $\bot_E^{hk}$ is composed of all 
tees $\lfloor E,C_E^{hk}|C\rceil$ with $C$ running all 
center vertices of $\Phi$. Obviously, it coincides with the casing of $\bot_E^{pn}$ 
assuming that $\bot_E^{pt}\neq\emptyset$.  

\medskip
\noindent \textit{Bridges.}  

\noindent Bridges join different families 
$\Phi_E$'s and in this sense are external connectives. On the other hand, 
they are naturally combined with internal connectives as will be shown 
below. First, we need the following

\begin{lem}\label{pend-str}
Let $\Phi$ be a tee-cluster and $\theta=\lfloor 
E,A|C\rceil\in\Phi$ be a \emph{nec}-tee rooted at $E$. Then $A$ is 
either a bottom vertex of a rooted at $E$ p-twain, or the center vertex of 
a bridge one end vertex of which is $E$. 
\end{lem}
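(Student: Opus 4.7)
The plan is to apply assertions (1), (5), and (6) of lemma \ref{nec-str} to expose the hidden 3-catena containing $\theta$ and then dichotomize on the type of one of the ``outer'' vertices that this lemma produces. By lemma \ref{nec-str}, (1), $\Phi$ contains a 3-catena
$$
\wr=\{\theta_{or}=\lfloor E,C|B\rceil,\;\theta,\;\theta_{end}=\lfloor E,D|A\rceil\}
$$
with $A\neq B$ and $C\neq D$, and by lemma \ref{nec-str}, (5), $\Phi$ also contains a tee $\varrho=\lfloor A,D|Q\rceil$ with $Q\neq C$. The vertex $D$, appearing as an end of $\theta_{end}$, cannot be a center vertex of $\Phi$; hence either $D$ is mixing, or $D$ is a (non-mixing) end vertex of $\Phi$.

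If $D$ is mixing, lemma \ref{nec-str}, (6), gives the first alternative at once: $A$ is a bottom vertex of a rooted at $E$ p-twain. In the remaining case $D$ is an end vertex; then both end vertices $E,D$ of $\theta_{end}$ are non-mixing while its center $A$ is mixing, so $\theta_{end}$ is a \emph{c}-tee. The pair
$$
\wr_2=\{\theta_{end},\theta\}=\{\lfloor E,D|A\rceil,\;\lfloor E,A|C\rceil\}
$$
is a 2-catena of exactly the shape required by lemma \ref{two-c-str}, (1), so it witnesses that $\theta_{end}$ is a c-tee contained in a 2-catena. The tee $\varrho=\lfloor A,D|Q\rceil\in\Phi$ furnished by lemma \ref{nec-str}, (5), is precisely of type II at $E$ relative to $\wr_2$ in the sense of lemma \ref{two-c-str}, (2). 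By the definition of bridges and hooks it follows that $\theta_{end}$ is a bridge; its end vertices are $E$ and $D$, and its center is $A$, which yields the second alternative.

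The only step requiring attention is the unambiguous classification of $\theta_{end}$ in the second case as a bridge rather than a hook; this is automatic because lemma \ref{two-c-str}, (2), asserts that tees of type I and type II at $E$ are mutually incompatible, so the presence of $\varrho$ in $\Phi$ excludes the competing type I companion and forces the bridge classification. No further compatibility checks are needed, since $\theta_{end}$, $\theta$, and $\varrho$ already lie in the cluster $\Phi$.
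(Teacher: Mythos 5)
Your proof is correct and follows essentially the same route as the paper's: both invoke the family $\Psi(\theta)$ of lemma \ref{nec-str}, split on whether the vertex $D$ is mixing or an end vertex, use assertion (6) of that lemma in the first case, and in the second case recognize $\theta_{end}=\lfloor E,D|A\rceil$ as a bridge by exhibiting $\varrho=\lfloor A,D|Q\rceil$ as its type-II companion. The only (harmless) difference is that you rule out the hook alternative via the type I/II incompatibility of lemma \ref{two-c-str},(2), whereas the paper appeals to the uniqueness statement of lemma \ref{nec-str},(10).
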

\begin{proof}
Recall that $\theta\in\Psi(\theta)$ (see lemma\,\ref{nec-str}, (5)). If, in 
the notation of lemma\,\ref{nec-str}, the vertex $D$ is mixing, then the 
first alternative occurs. If $D$ is an end vertex, then the 
\emph{c}-tee $\lfloor E,D|A\rceil\in\Psi(\theta)$ is a bridge, since 
the unique tee with the center at $A$ which is 
compatible with $\Psi(\theta)$ is, obviously, $\theta$. 
\end{proof}

The following lemma shows the role of bridges in the structure of 
tee-clusters. 
\begin{lem}\label{bridge-str}
Let $\Phi$ be a tee-cluster and $\varrho=\lfloor 
E,D|Z\rceil\in\Phi$ be a bridge. Then it holds:
\begin{enumerate}
\item If $\vartheta\in\Phi$, $\varrho\neq\vartheta$, and $Z$ is a vertex of 
$\vartheta$, then $Z$ is an end vertex of $\vartheta$. 

\item If one of end vertices of a side structure $\theta$ of $P_E$ is $Z$,
then $\theta$ belongs to $\Phi$. 

\item The tee $\theta=\lfloor E,Z|C_E\rceil$ belongs to 
$\Phi$. 
\end{enumerate}
\end{lem}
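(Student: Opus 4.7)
The key observation is that the bridge hypothesis gives $\Phi$ a symmetric structure at both endpoints $E$ and $D$: by lemma\,\ref{two-c-str}\,(1), $\varrho=\lfloor E,D|Z\rceil$ lies in a 2-catena $\wr_2=\{\varrho,\theta'=\lfloor E,Z|B\rceil\}\subset\Phi$, and since $\varrho$ is a bridge (not a hook) there is a type~II tee $\mu=\lfloor D,Z|Q\rceil\in\Phi$ at $E$. Observe that $\theta'$ is itself a type~II tee at $D$ relative to the catena $\{\varrho,\mu\}$. Hence lemma\,\ref{two-c-str}\,(2) implies that $\Phi$ contains no tee of the form $\lfloor E,A|Z\rceil$ nor $\lfloor D,A|Z\rceil$ with $A$ not already forced by the catena.

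For assertion (1) the plan is a proof by contradiction. Suppose $\vartheta=\lfloor X,Y|Z\rceil\in\Phi$ with $\vartheta\neq\varrho$. Compatibility of $\vartheta$ with $\theta'$ (which share only the vertex $Z$ in mismatched center/end roles) forces $\{X,Y\}\cap\{E,B\}\neq\emptyset$; compatibility with $\mu$ forces $\{X,Y\}\cap\{D,Q\}\neq\emptyset$. The two type~I exclusions just noted rule out $E,D\in\{X,Y\}$, so the only possibility is $\vartheta=\lfloor B,Q|Z\rceil$. A direct application of lemma\,\ref{a1} at the $\dim C_{12}=n-4$ level then shows that $\vartheta$ and $\mu$ are in fact incompatible (since $l_\vartheta=\langle Z\rangle$ is not contained in the common centralizer $C_{12}$), which contradicts $\vartheta\in\Phi$.

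For assertions (2) and (3) I would invoke the blocking rule of subsection\,8.2: the tee $\theta=\lfloor E,Z|Y\rceil$ (with $Y$ either a bottom vertex of $P_E$, or $Y=C_E$) belongs to $\Phi$ provided no $\sigma\in\Phi$ is incompatible with it. Any potential blocker $\sigma$ must meet $\theta$ at one of the vertices $E$, $Z$, $Y$ in a role-mismatched fashion. Since $E$ is an end vertex of $\Phi$, no tee has center $E$; by (1), only $\varrho$ has center $Z$ in $\Phi$, and $\varrho$ shares the end $E$ with $\theta$, hence is compatible. The remaining case is $Y$ being the shared vertex: for (2), lemma\,\ref{s-twain-str}\,(2) together with the construction of $P_E$ forces any tee of $\Phi$ meeting a bottom vertex of $P_E$ to be rooted at $E$, so it shares the end $E$ with $\theta$; for (3), corollary\,\ref{nec-cor}\,(6) and lemma\,\ref{hooks}\,(1) play the analogous role for $C_E=C_E^{pn}=C_E^{hk}$. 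In every case no blocker exists, and the blocking rule concludes.

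The principal obstacle is assertion (1) — specifically, eliminating the residual configuration $\vartheta=\lfloor B,Q|Z\rceil$ is not a purely combinatorial move about shared-vertex counts, but depends on the geometric compatibility criterion of lemma\,\ref{a1} applied to a specific incidence pattern of derived subspaces and centralizers. Once (1) is secured, (2) and (3) reduce to careful bookkeeping with the blocking rule and the structural classification of vertices around an end point already established in the preceding lemmas of the section.
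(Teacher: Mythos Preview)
Your argument for (2) and (3) is essentially the paper's own: use the blocking rule, exclude center-at-$Z$ blockers via part~(1), and show that any tee in $\Phi$ meeting the remaining vertex is rooted at $E$ by the structural lemmas already in hand. That part is fine.

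The issue is in your analysis of (1). Your claim that compatibility of $\vartheta=\lfloor X,Y|Z\rceil$ with $\theta'=\lfloor E,Z|B\rceil$ forces $\{X,Y\}\cap\{E,B\}\neq\emptyset$ is too weak, and in fact misreads Proposition~\ref{123}. Two base $\pitchfork$-lieons are compatible iff they share a common \emph{end} vertex or a common \emph{center} vertex. The center of $\vartheta$ is $Z$, the center of $\theta'$ is $B$; these differ, so a common center is impossible. The ends of $\theta'$ are $E$ and $Z$; since $Z$ is the center of $\vartheta$ it cannot be one of its ends, so a common end forces $E\in\{X,Y\}$. The vertex $B$ plays no role here: if $B\in\{X,Y\}$ but $E\notin\{X,Y\}$, then $B$ is an end of $\vartheta$ and the center of $\theta'$, which is a mismatched role and does \emph{not} yield compatibility. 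The same reasoning with $\mu=\lfloor D,Z|Q\rceil$ gives $D\in\{X,Y\}$. Hence $\{X,Y\}=\{E,D\}$ and $\vartheta=\varrho$ immediately---this is what the paper means by ``directly from the definition of a bridge.''

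Your ``residual configuration'' $\vartheta=\lfloor B,Q|Z\rceil$ therefore never arises: it is already incompatible with $\theta'$ (and with $\mu$) by the plain criterion of Proposition~\ref{123}, with no appeal to Lemma~\ref{a1} needed. Your subsequent elimination of it via Lemma~\ref{a1} is correct as a computation, but it is solving a problem you created by loosening the compatibility condition. So the proof as written reaches the right conclusion, but the passage you flag as the ``principal obstacle'' is an artifact of this slip rather than a genuine difficulty.
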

\begin{proof}

(1) Directly from the definition of a bridge. 

(2)-(3). Let $\theta=\lfloor E,Z|B\rceil$ where $B$ is either a bottom 
vertex of $P_E$, or $B=C_E$. $\theta$ can be blocked by a 
tee $\vartheta\in\Phi$, which either has  the center at $Z$
or one one of its ends at  $B$. Assertion (1) of this lemma excludes 
the first of these possibilities. Next, $B$ is either the center of a 
\emph{nec}-structure, or the center of a hook rooted at $E$. In each of 
these cases a  tee one of whose vertices is $B$ is 
rooted at $E$ (corollary\,\ref{nec-cor}, (6), and lemma\,\ref{hooks}, 
(1)). But such a tee does not block $\theta$. 
\end{proof} 

A \emph{pyb-connective} (resp., \emph{pb-connective}) at an end vertex $E$ 
is a side structure of $P_E$ (resp., $\lfloor E,Z|C_E\rceil$), one of whose 
ends is the center $Z$ of a bridge connecting $E$ with another end vertex. 
All \emph{pb}-connectives at $E$ form a multiplex, denoted by 
$\bot_E^{pb}$. If $\Phi$ is a tee-cluster, then, by  
lemma\,\ref{bridge-str},(3), this multiplex belongs to $\Phi$. Also, denote by  
$\bot_E^{pyt}$ (resp., $\bot_E^{pyb}$) the family of all 
\emph{pyt}-connectives  (resp., \emph{pyb}-connectives) at $E$. If $\Phi$ 
is a tee-cluster, then both $\bot_E^{pyt}$ and $\bot_E^{pyb}$ 
belong to $\Phi$. Each of these families is the union of multiplexes with 
common origin and ends whose centers run bottom vertices of $P_E$. The 
\emph{casing} of a bridge $\lfloor E_1,E_2|Z\rceil$ is composed of all 
tees of the form $\lfloor E_i, Z|C\rceil, \,i=1,2,$ with 
$C$ being a center vertex of $\Phi$. In contrast to all previously 
introduced casings, it is naturally subdivided into two parts that are 
composed of rooted at $E_1$ and of rooted at $E_2$ tees, 
respectively. Lemma\,\ref{e-str} below tells that this casing 
belongs to $\Phi$ as well. 

\medskip
\noindent\textit{Casings and e-tees.} 

\noindent Now we shall describe 
\emph{e}-structures. 
\begin{lem}\label{e-str}Let $\Phi$ be a tee-cluster and $E$ an end 
vertex of it. Then  $\theta=\lfloor E,Z|C\rceil\in\Phi$ is a (rooted at 
$E$) \emph{e}-tee if and only if $Z$ is the center either of an  
\emph{ec}-, or of an \emph{c}-tee rooted at $E$. 
\end{lem}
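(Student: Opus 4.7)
My plan is to unfold the definitions of $e$-tee and mixing vertex and then apply proposition\,\ref{123}(1) directly. Recall that, by the classification in subsection\,\ref{TypeVertex}, a tee $\theta = \lfloor E,Z|C\rceil \in \Phi$ rooted at $E$ (with $E$ an end vertex of $\Phi$, hence non-mixing) is an $e$-tee precisely when $Z$ is mixing while $C$ is a center vertex of $\Phi$.

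For the forward direction, the mixing of $Z$ yields some $\vartheta = \lfloor A,B|Z\rceil \in \Phi$ having $Z$ as its center. Since $\theta$ and $\vartheta$ share $Z$ only in cross roles (end of one, center of the other), proposition\,\ref{123}(1) forces them to share a common end or a common center; the centers $C$ and $Z$ are distinct, so they must share a common end. Now $C$ cannot appear as an end of $\vartheta$, since $C$ is a center vertex of $\Phi$, and $Z$ cannot occur twice as a vertex of $\vartheta$; hence $E \in \{A,B\}$, say $A = E$, giving $\vartheta = \lfloor E,B|Z\rceil$ rooted at $E$. The remaining vertex $B$ lies outside $\{C,E,Z\}$: if $B$ is an end vertex of $\Phi$ then $\vartheta$ is a $c$-tee (only its center $Z$ is mixing), while if $B$ is mixing then $\vartheta$ is an $ec$-tee.

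For the backward direction, the existence of $\vartheta = \lfloor E,B|Z\rceil \in \Phi$ as a $c$- or $ec$-tee rooted at $E$ immediately makes $Z$ a mixing vertex, since $Z$ is the center of $\vartheta$ and the end of $\theta \in \Phi$. Combined with $E$ being an end vertex, this makes $\theta$'s sole ``end-type'' mixing vertex its other end $Z$; the companion condition distinguishing $e$-tee from $ec$-tee, namely that $C$ be a center vertex, is what the assertion ``$\theta$ is an $e$-tee'' itself encodes.

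The main obstacle I anticipate is precisely the backward direction's handling of the center $C$: the strict biconditional requires $C$ to be non-mixing, and this is the subtle interpretive point of the statement rather than any computational difficulty. The forward implication is otherwise a mechanical application of proposition\,\ref{123}(1) together with the definitions of mixing, end and center vertices in subsection\,\ref{TypeVertex}, and no use of the more delicate lemmas (\ref{nec-str}, \ref{hooks}) about \emph{nec}-tees, hooks or bridges is required.
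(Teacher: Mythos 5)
Your forward direction is correct and is essentially the paper's own argument: the mixing of $Z$ produces a tee of $\Phi$ with center $Z$, proposition~\ref{123}\,(1) forces it to share the end $E$ with $\theta$ (the centers differ and $Z$ cannot be an end of it), and its type is then read off from whether its remaining vertex is mixing. The gap is in the converse. In the paper the clause ``$\theta=\lfloor E,Z|C\rceil\in\Phi$'' is part of what must be \emph{proved} in the backward direction, not a standing hypothesis: the lemma is invoked immediately afterwards to conclude that all the previously constructed casings --- which consist precisely of tees $\lfloor E,Z|C\rceil$ with $C$ a center vertex of $\Phi$ --- actually belong to the tee-cluster. Accordingly the paper's converse is a blocking-rule argument: since $C$ is a center vertex and $E$ an end vertex of $\Phi$, a tee of $\Phi$ incompatible with $\theta$ would have to have its center at $Z$; but by the structural lemmas on rooted-at-$E$ \emph{ec}- and \emph{c}-tees (lemma~\ref{nec-str}\,(8), corollary~\ref{nec-cor}\,(6), lemma~\ref{hooks}\,(1), lemma~\ref{two-c-str}\,(4), lemma~\ref{bridge-str}\,(1), and the basic property of treys) every tee of $\Phi$ having $Z$ as a vertex is itself rooted at $E$, hence shares the end $E$ with $\theta$ and cannot block it; so $\theta$ is unblocked and belongs to $\Phi$ by maximality. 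You skip this entirely --- and indeed you explicitly claim that lemmas~\ref{nec-str} and \ref{hooks} are not needed, whereas they are exactly what the converse rests on.

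Even under your weaker reading, with $\theta\in\Phi$ taken as a hypothesis, the backward direction is not closed: to conclude that $\theta$ is an \emph{e}-tee rather than an \emph{ec}-tee you must show that the center $C$ is non-mixing, and your remark that this ``is what the assertion itself encodes'' is circular. The intended resolution is that $C$ denotes a center vertex of $\Phi$ throughout (as in every casing construction of subsections~7.5--7.7); that settles the typing at once, but it also makes clear that the only substantive content of the converse is the membership $\theta\in\Phi$, which your proposal does not establish.
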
 
\begin{proof}
First, note that if $\theta$ is an  \emph{e}-tee, then $Z$ is 
the center vertex of a tee $\varrho\in\Phi$. Being 
compatible with $\theta$ the tee $\varrho$ is rooted at $E$. Moreover, $\theta$ and 
$\varrho$ form an open 2-catena $\wr_2$ whose initial vertex is $E$. This 
shows that $\varrho$ is either an  \emph{ec}-, or an \emph{c}-tee 
rooted at $E$. 

Conversely, assume that $Z$ is the center vertex of an  \emph{ec}-, or of 
an \emph{c}-tee $\varrho\in\Phi$ which is rooted at $E$. Since $C$ is a 
center vertex of $\Phi$,  $\theta$ can be 
blocked only by a tee $\vartheta\in\Phi$ whose center 
vertex is $Z$. But from the previous description of rooted at $E$ \emph{ec}-, 
and \emph{c}-tees we see that all such tees are rooted at 
$E$ too. Hence they can not block $\theta$. 
\end{proof}
Now we can state that if $\Phi$ is a tee-cluster, then all 
previously considered casings belong to $\Phi$ and any \emph{e}-tee 
belongs to one of these casings.

\medskip
\noindent\textit{0-tees.} 

\noindent These are tees whose end and center vertices are 
end and center vertices of  $\Phi$, respectively. If $\Phi$ is tee-cluster, 
then they, obviously, belongs to it and constitute a 
multiped $\Phi_{mp}\subset\Phi$. If $\Phi$ has at least two ends 
vertices and one \emph{ece}-structure, then both $\Phi_h$ and 
$\Phi_{mp}$ are not empty. In this case they form the hybrid 
$\Phi_{hyb}=\left(\Phi_h\cup\Phi_{mp}\right)\subset\Phi$.

\scalebox{0.95} 
{
\begin{pspicture}(0,-6.0210633)(12.492,6.039077)
\psline[linewidth=0.024cm,linestyle=dashed,dash=0.16cm 0.16cm](5.972,-3.783251)(6.412,-2.4832509)
\psline[linewidth=0.024cm](5.952,-3.823251)(7.152,-3.163251)
\psline[linewidth=0.024cm,linestyle=dashed,dash=0.16cm 0.16cm](0.5123489,3.3761523)(2.1716511,5.017346)
\psline[linewidth=0.04cm,linestyle=dotted,dotsep=0.06cm](0.46157473,3.4469297)(2.0824254,5.0465684)
\psline[linewidth=0.023999996cm,doubleline=true,doublesep=0.12,doublecolor=white](10.972,4.956749)(10.972,3.616749)
\psdots[dotsize=0.16](10.972,3.596749)
\psline[linewidth=0.024cm,linestyle=dashed,dash=0.16cm 0.16cm](11.052,4.996749)(12.032,5.456749)
\psline[linewidth=0.024cm,linestyle=dashed,dash=0.16cm 0.16cm](12.032,5.456749)(11.072,3.636749)
\psdots[dotsize=0.16,dotangle=-90.0,dotstyle=otimes](12.092,5.476749)
\psline[linewidth=0.024cm](10.452,4.576749)(11.492,5.416749)
\psline[linewidth=0.024cm](10.332,5.216749)(11.452,4.796749)
\psdots[dotsize=0.16,fillstyle=solid,dotstyle=o](10.972,4.976749)
\usefont{T1}{ptm}{m}{n}
\rput{0.9395907}(0.058443736,-0.17528975){\rput(10.71119,3.4636457){\small E}}
\usefont{T1}{ptm}{m}{n}
\rput{0.9395907}(0.09300857,-0.17725097){\rput(10.841815,5.5736456){C}}
\usefont{T1}{ptm}{m}{n}
\rput(11.146844,5.741749){\tiny pn}
\usefont{T1}{ptm}{m}{n}
\rput(11.0745,5.421749){\tiny E}
\pstriangle[linewidth=0.024,dimen=outer](0.472,4.636749)(0.8,1.0)
\rput{-180.0}(0.984,8.073498){\pstriangle[linewidth=0.024,dimen=outer](0.492,3.436749)(0.8,1.2)}
\psline[linewidth=0.024cm](0.132,4.496749)(0.852,4.496749)
\psdots[dotsize=0.16,dotangle=-90.0,dotstyle=otimes](0.472,5.716749)
\usefont{T1}{ptm}{m}{it}
\rput(1.5004375,5.751749){\footnotesize e-casing }
\usefont{T1}{ptm}{m}{it}
\rput(1.6009063,5.411749){\footnotesize of p-twain}
\psdots[dotsize=0.16,dotangle=70.32086](0.492,3.396749)
\psline[linewidth=0.024cm,linestyle=dashed,dash=0.16cm 0.16cm](0.101220064,4.631051)(2.1027799,5.082447)
\psdots[dotsize=0.16,fillstyle=solid,dotstyle=o](2.112,5.036749)
\psline[linewidth=0.04cm,linestyle=dotted,dotsep=0.06cm](2.035763,5.005274)(0.9082369,4.648224)
\psline[linewidth=0.032cm,arrowsize=0.05291667cm 2.0,arrowlength=1.4,arrowinset=0.4]{->}(1.492,5.256749)(1.712,5.076749)
\pstriangle[linewidth=0.024,linestyle=dashed,dash=0.16cm 0.16cm,dimen=outer](3.9320002,4.616749)(0.8,1.12)
\rput{-180.0}(7.824,8.073498){\pstriangle[linewidth=0.024,dimen=outer](3.9120002,3.436749)(0.8,1.2)}
\psline[linewidth=0.024cm](3.552,4.496749)(4.272,4.496749)
\psline[linewidth=0.024cm,linestyle=dashed,dash=0.16cm 0.16cm](4.312,4.6367493)(4.912,5.836749)
\psline[linewidth=0.024cm,linestyle=dashed,dash=0.16cm 0.16cm](4.912,5.836749)(3.512,4.6367493)
\usefont{T1}{ptm}{m}{it}
\rput(5.6704373,4.651749){\footnotesize ee-casing }
\psline[linewidth=0.024cm,linestyle=dashed,dash=0.16cm 0.16cm](4.312,4.6367493)(3.112,5.836749)
\psline[linewidth=0.024cm,linestyle=dashed,dash=0.16cm 0.16cm](3.112,5.836749)(3.512,4.6367493)
\psdots[dotsize=0.16,dotangle=-90.0,dotstyle=otimes](3.912,5.736749)
\psdots[dotsize=0.16,dotangle=-90.0,dotstyle=otimes](4.952,5.876749)
\usefont{T1}{ptm}{m}{it}
\rput(5.7009063,4.311749){\footnotesize of p-twain}
\rput{-58.291996}(-1.6308925,6.6888905){\psarc[linewidth=0.024,arrowsize=0.05291667cm 2.0,arrowlength=1.4,arrowinset=0.4]{->}(5.182,4.806749){0.51}{73.92642}{158.0744}}
\psdots[dotsize=0.16,dotangle=3.17983,dotstyle=otimes](3.072,5.896749)
\psdots[dotsize=0.16](3.912,3.436749)
\psline[linewidth=0.024cm](8.360697,3.4751618)(7.5905952,4.4786277)
\psline[linewidth=0.024cm](8.360697,3.4751618)(8.346695,4.7399955)
\rput{19.069193}(1.9431888,-2.350497){\psellipse[linewidth=0.024,dimen=outer](7.968645,4.6093116)(0.412,0.212)}
\usefont{T1}{ptm}{m}{n}
\rput(7.282781,4.826749){P}
\usefont{T1}{ptm}{m}{n}
\rput(7.3945,4.701749){\tiny E}
\psline[linewidth=0.024cm,linestyle=dashed,dash=0.16cm 0.16cm](8.352,4.736749)(9.292,5.536749)
\psline[linewidth=0.024cm,linestyle=dashed,dash=0.16cm 0.16cm](9.212,5.476749)(8.372,3.436749)
\usefont{T1}{ptm}{m}{it}
\rput(7.493094,5.431749){\footnotesize casing of}
\usefont{T1}{ptm}{m}{n}
\rput(8.282782,5.426749){P}
\usefont{T1}{ptm}{m}{n}
\rput(8.3945,5.301749){\tiny E}
\psline[linewidth=0.024cm,arrowsize=0.05291667cm 2.0,arrowlength=1.4,arrowinset=0.4]{->}(8.527454,5.3971896)(8.7765465,5.1963086)
\psdots[dotsize=0.16,dotangle=-90.0,dotstyle=otimes](9.212,5.516749)
\usefont{T1}{ptm}{m}{n}
\rput(7.9885626,3.576749){\small E}
\psdots[dotsize=0.16](8.372,3.496749)
\usefont{T1}{ptm}{m}{n}
\rput(1.0885625,-0.08325096){\small E}
\psline[linewidth=0.023999996cm,doubleline=true,doublesep=0.12,doublecolor=white](0.652,-2.4632509)(0.652,-3.803251)
\psdots[dotsize=0.16](0.652,-3.823251)
\psline[linewidth=0.024cm,linestyle=dashed,dash=0.16cm 0.16cm](0.732,-2.423251)(1.712,-1.963251)
\psline[linewidth=0.024cm,linestyle=dashed,dash=0.16cm 0.16cm](1.712,-1.963251)(0.752,-3.783251)
\psdots[dotsize=0.16,dotangle=-90.0,dotstyle=otimes](1.772,-1.943251)
\psline[linewidth=0.024cm](0.132,-2.843251)(1.172,-2.003251)
\psline[linewidth=0.024cm](0.012,-2.203251)(1.132,-2.623251)
\psdots[dotsize=0.16,fillstyle=solid,dotstyle=o](0.652,-2.443251)
\usefont{T1}{ptm}{m}{n}
\rput{0.9395907}(-0.06232828,-0.006383091){\rput(0.35118994,-3.8163543){\small E}}
\usefont{T1}{ptm}{m}{n}
\rput{0.9395907}(-0.030053819,-0.0090190815){\rput(0.52181494,-1.8463544){C}}
\usefont{T1}{ptm}{m}{n}
\rput(0.827,-1.6782509){\tiny hk}
\usefont{T1}{ptm}{m}{n}
\rput(0.7545,-1.998251){\tiny E}
\psline[linewidth=0.024cm,linestyle=dashed,dash=0.16cm 0.16cm](2.312,-3.803251)(2.752,-2.363251)
\psline[linewidth=0.024cm,linestyle=dashed,dash=0.16cm 0.16cm](2.752,-2.383251)(3.512,-3.163251)
\psline[linewidth=0.024cm](2.312,-3.803251)(3.512,-3.143251)
\psline[linewidth=0.024cm](3.512,-3.163251)(4.512,-3.803251)
\usefont{T1}{ptm}{m}{it}
\rput(3.4705937,-3.7432508){\small bridge}
\usefont{T1}{ptm}{m}{n}
\rput{0.9395907}(-0.06038616,-0.077924564){\rput(4.7146273,-3.7363544){\small E'}}
\usefont{T1}{ptm}{m}{n}
\rput{0.9395907}(-0.0614357,-0.03523857){\rput(2.1111898,-3.7763543){\small E}}
\psdots[dotsize=0.16,dotangle=-90.0,dotstyle=otimes](2.732,-2.363251)
\psdots[dotsize=0.16](2.332,-3.803251)
\psdots[dotsize=0.16,fillstyle=solid,dotstyle=o](3.492,-3.143251)
\psdots[dotsize=0.16](4.492,-3.803251)
\psline[linewidth=0.024cm](5.9286594,-3.7473743)(5.5145626,-2.5521657)
\psline[linewidth=0.024cm](5.9286594,-3.7473743)(6.314507,-2.5427492)
\rput{0.67443496}(-0.029575959,-0.06979552){\psellipse[linewidth=0.024,dimen=outer](5.9145346,-2.5474575)(0.412,0.212)}
\usefont{T1}{ptm}{m}{n}
\rput{0.27316308}(-0.009727639,-0.028191924){\rput(5.9024253,-2.0636768){P}}
\usefont{T1}{ptm}{m}{n}
\rput{0.27316308}(-0.0103265215,-0.028721401){\rput(6.0147457,-2.1881504){\tiny E}}
\psdots[dotsize=0.16,dotangle=-36.82971](5.933575,-3.768413)
\usefont{T1}{ptm}{m}{n}
\rput{0.9395907}(-0.060303785,-0.09328288){\rput(5.65119,-3.7363544){\small E}}
\psline[linewidth=0.024cm](7.152,-3.183251)(8.152,-3.823251)
\psdots[dotsize=0.16,fillstyle=solid,dotstyle=o](7.132,-3.163251)
\usefont{T1}{ptm}{m}{it}
\rput(7.090594,-3.803251){\small bridge}
\psline[linewidth=0.024cm,linestyle=dashed,dash=0.16cm 0.16cm](6.412,-2.4632509)(7.092,-3.103251)
\usefont{T1}{ptm}{m}{n}
\rput{0.9395907}(-0.059894037,-0.13794203){\rput(8.374627,-3.7363544){\small E'}}
\psdots[dotsize=0.16](8.112,-3.803251)
\psline[linewidth=0.023999996cm,doubleline=true,doublesep=0.12,doublecolor=white](9.732,-2.403251)(9.732,-3.7432508)
\psdots[dotsize=0.16](9.732,-3.763251)
\psline[linewidth=0.024cm](9.212,-2.783251)(10.252,-1.943251)
\psline[linewidth=0.024cm](9.092,-2.143251)(10.212,-2.563251)
\psdots[dotsize=0.16,fillstyle=solid,dotstyle=o](9.732,-2.383251)
\usefont{T1}{ptm}{m}{n}
\rput{0.9395907}(-0.027849032,-0.15790682){\rput(9.601815,-1.7863543){C}}
\usefont{T1}{ptm}{m}{n}
\rput(9.8345,-1.938251){\tiny E}
\psline[linewidth=0.024cm,linestyle=dashed,dash=0.16cm 0.16cm](9.872,-3.703251)(9.852,-2.343251)
\psline[linewidth=0.024cm](9.812,-3.763251)(11.012,-3.103251)
\psline[linewidth=0.024cm](11.012,-3.123251)(12.012,-3.763251)
\usefont{T1}{ptm}{m}{it}
\rput(10.970593,-3.703251){\small bridge}
\usefont{T1}{ptm}{m}{n}
\rput{0.9395907}(-0.059380397,-0.20058323){\rput(12.194628,-3.7363544){\small E'}}
\psdots[dotsize=0.16,fillstyle=solid,dotstyle=o](10.992,-3.103251)
\psdots[dotsize=0.16](11.992,-3.763251)
\psline[linewidth=0.024cm,linestyle=dashed,dash=0.16cm 0.16cm](9.832,-2.323251)(10.932,-3.043251)
\usefont{T1}{ptm}{m}{n}
\rput{0.9395907}(-0.05979822,-0.15494017){\rput(9.41119,-3.7363544){\small E}}
\psline[linewidth=0.024cm](1.4726223,-0.08489519)(0.42413107,0.6226826)
\psline[linewidth=0.024cm](1.4926223,-0.0648952)(1.0783759,1.1302617)
\rput{37.551476}(0.6816754,-0.27224696){\psellipse[linewidth=0.024,dimen=outer](0.7412535,0.8664721)(0.412,0.212)}
\psdots[dotsize=0.16,dotangle=-7.689285](1.5167608,-0.09752419)
\rput{-33.74507}(-0.30587023,1.5931488){\pstriangle[linewidth=0.024,dimen=outer](2.4734254,0.800812)(0.8,1.0)}
\rput{-213.32481}(3.6738877,-0.2989726){\pstriangle[linewidth=0.024,dimen=outer](1.881685,-0.19969015)(0.8,1.2)}
\psline[linewidth=0.024cm](1.8511575,1.0036348)(2.4405773,0.59013206)
\psdots[dotsize=0.16,dotangle=-125.051285,dotstyle=otimes](2.770152,1.7471085)
\psline[linewidth=0.024cm,linestyle=dashed,dash=0.16cm 0.16cm](1.552,-0.023250965)(1.112,1.1967491)
\psline[linewidth=0.024cm,linestyle=dashed,dash=0.16cm 0.16cm](1.8925388,1.095745)(1.1514612,1.097753)
\usefont{T1}{ptm}{m}{n}
\rput(0.22278126,1.186749){P}
\usefont{T1}{ptm}{m}{n}
\rput(0.3345,1.061749){\tiny E}
\usefont{T1}{ptm}{m}{it}
\rput(1.453875,2.056749){\small pyt-connective}
\psline[linewidth=0.024cm](8.048659,0.25262564)(7.6345625,1.4478341)
\psline[linewidth=0.024cm](8.048659,0.25262564)(8.434507,1.4572508)
\rput{0.67443496}(0.017654266,-0.09447258){\psellipse[linewidth=0.024,dimen=outer](8.034534,1.4525425)(0.412,0.212)}
\usefont{T1}{ptm}{m}{n}
\rput{0.27316308}(0.009366763,-0.03825373){\rput(8.022426,1.936323){P}}
\usefont{T1}{ptm}{m}{n}
\rput{0.27316308}(0.0087678805,-0.038783208){\rput(8.134746,1.8118497){\tiny E}}
\psdots[dotsize=0.16,dotangle=-36.82971](8.053575,0.23158689)
\usefont{T1}{ptm}{m}{n}
\rput{0.9395907}(0.0055741337,-0.12750925){\rput(7.77119,0.2636457){\small E}}
\usefont{T1}{ptm}{m}{n}
\rput{0.9395907}(0.0054891896,-0.15183732){\rput(9.254627,0.2436457){\small E'}}
\psdots[dotsize=0.16,dotangle=-90.0](9.012,0.19674903)
\usefont{T1}{ptm}{m}{it}
\rput(9.93825,-0.34325096){\small Hooks}
\psline[linewidth=0.024cm,linestyle=dashed,dash=0.16cm 0.16cm](8.092,0.21674904)(8.532,1.516749)
\psline[linewidth=0.024cm,linestyle=dashed,dash=0.16cm 0.16cm](8.972,0.23674904)(8.532,1.536749)
\psline[linewidth=0.023999996cm,doubleline=true,doublesep=0.12,doublecolor=white](10.932,1.316749)(10.932,-0.023250965)
\psdots[dotsize=0.16](10.932,-0.043250963)
\psline[linewidth=0.024cm](10.412,0.93674904)(11.452,1.776749)
\psline[linewidth=0.024cm](10.292,1.5767491)(11.412,1.156749)
\psdots[dotsize=0.16,fillstyle=solid,dotstyle=o](10.932,1.3367491)
\usefont{T1}{ptm}{m}{n}
\rput{0.9395907}(0.001364499,-0.17411783){\rput(10.61119,-0.016354306){\small E}}
\usefont{T1}{ptm}{m}{n}
\rput{0.9395907}(0.033313684,-0.17708448){\rput(10.801815,1.9336457){C}}
\usefont{T1}{ptm}{m}{n}
\rput(11.107,2.101749){\tiny hk}
\usefont{T1}{ptm}{m}{n}
\rput(11.0345,1.781749){\tiny E}
\psline[linewidth=0.024cm,linestyle=dashed,dash=0.16cm 0.16cm](11.072,0.016749036)(11.052,1.376749)
\psline[linewidth=0.024cm,linestyle=dashed,dash=0.16cm 0.16cm](11.052,1.376749)(11.972,0.036749035)
\usefont{T1}{ptm}{m}{n}
\rput{0.9395907}(0.001629033,-0.20106693){\rput(12.254627,-0.016354306){\small E'}}
\psdots[dotsize=0.16,dotangle=-90.0](12.012,-0.023250965)
\psline[linewidth=0.024cm](5.0926223,-0.4048952)(4.6783757,0.7902617)
\rput{-35.47871}(0.5699898,3.7034116){\pstriangle[linewidth=0.024,dimen=outer](6.0734253,0.460812)(0.8,1.0)}
\rput{-214.90776}(9.95023,-2.9678724){\pstriangle[linewidth=0.024,dimen=outer](5.441685,-0.51969016)(0.8,1.2)}
\psline[linewidth=0.024cm](5.4111576,0.66363484)(6.000577,0.25013208)
\psdots[dotsize=0.16,dotangle=-125.051285,dotstyle=otimes](6.390152,1.4071084)
\psline[linewidth=0.024cm,linestyle=dashed,dash=0.16cm 0.16cm](5.112,-0.34325096)(4.712,0.85674906)
\psline[linewidth=0.024cm,linestyle=dashed,dash=0.16cm 0.16cm](5.492538,0.7780061)(4.751462,0.775492)
\usefont{T1}{ptm}{m}{n}
\rput(4.6285625,-0.52325094){\small E}
\psline[linewidth=0.024cm](4.092,0.43674904)(5.132,1.276749)
\psline[linewidth=0.024cm](3.9976184,1.135331)(5.0663815,0.598167)
\psline[linewidth=0.024cm](4.5873613,0.7951585)(5.0166388,-0.44166043)
\psdots[dotsize=0.16](5.072,-0.42325097)
\usefont{T1}{ptm}{m}{n}
\rput{0.9395907}(0.022939488,-0.07188592){\rput(4.381815,1.3536457){C}}
\usefont{T1}{ptm}{m}{n}
\rput(4.686844,1.521749){\tiny pn}
\usefont{T1}{ptm}{m}{n}
\rput(4.6145,1.2017491){\tiny E}
\psdots[dotsize=0.16,fillstyle=solid,dotstyle=o](4.612,0.836749)
\usefont{T1}{ptm}{m}{it}
\rput(5.313875,2.056749){\small pt-connective}
\psdots[dotsize=0.216,dotstyle=otimes](1.552,-4.983251)
\usefont{T1}{ptm}{m}{it}
\rput(2.8560624,-4.953251){- center vertex}
\psdots[dotsize=0.18](5.252,-4.983251)
\usefont{T1}{ptm}{m}{it}
\rput(6.4060626,-4.953251){- end vertex}
\psdots[dotsize=0.18,fillstyle=solid,dotstyle=o](8.612,-4.983251)
\usefont{T1}{ptm}{m}{it}
\rput(9.936063,-4.953251){- mixing vertex}
\psdots[dotsize=0.16,fillstyle=solid,dotstyle=o](6.372,-2.4632509)
\psdots[dotsize=0.16,fillstyle=solid,dotstyle=o](8.472,1.536749)
\psdots[dotsize=0.16,fillstyle=solid,dotstyle=o](1.092,1.116749)
\psdots[dotsize=0.16,fillstyle=solid,dotstyle=o](8.352,4.776749)
\usefont{T1}{ptm}{m}{n}
\rput(6.142625,-5.793251){Dashed and dotted lines show tees of casings and connectives}
\end{pspicture} 
 }
 \begin {center} Fig. 6. Casings and connectives. \end{center}
 
\noindent A graphical summary of various kinds of casings and connectives is given in Fig. 6.   


\subsection{The card of a tee-cluster.}\label{tee-cards}  

The above analysis 
revealed basic structural units of which all tee-clusters are made. 
On this ground we can now describe all tee-clusters.  

First of all, observe that a tee-cluster $\Phi$ is naturally 
divided into two parts, $\Phi_h$ and $\Phi_{end}$. The family $\Phi_{end}$ 
is composed of all tees $\theta\in\Phi$ one
of whose vertices is an end vertex of  $\Phi$. It is easy to see that 
$\Phi_{end}=\Phi\setminus\Phi_h$ and $\Phi_{mp}\subset\Phi_{end}$. So, 
$\Phi=\Phi_h\cup\Phi_{end}$, $\Phi_h\cap\Phi_{end}=\emptyset$ and  
$S(\Phi_h)\cap S(\Phi_{end})$ consists of all center vertices of $\Phi$. 
As motivated by proposition\,\ref{coax-str} below, $\Phi_h$ (resp., 
$\Phi_{end}$) will be called the \emph{semi-simple} (resp., 
\emph{solvable}) \emph{part} of $\Phi$. 

The data characterizing  a  compatible tee-family $\Phi$ are: $n_c$=(the 
number of center vertices),  $n_e$=(the number of end vertices), 
$n_{tr}$=(the number of triangles), $t_E$=(the number of p-twains rooted at 
the end vertex $E$),  $p_E$=(the dimension of $P_E$) and $b_{E,D}$=(the 
number of bridges connecting end vertices $E$ and $D$). The numbers $t_E$ 
and $p_E$  will be called the \emph{twain} and \emph{pyramid numbers} at 
$E$, respectively, and $b_{E,D}$ the \emph{bridge number} at $(E,D)$. Each 
of these numbers is a nonnegative integer. Since the dimension of a true 
pyramid is greater then $1$, the value $p_E=1$ requires a comment. Namely, 
it is interpreted as existence of the vertex $C_E\in S(\Phi)$, the common 
center of all connectives and hooks rooted at $E$. Informally speaking, 
$p_E=1$ refers to the "collapsed" pyramid $P_E$, that is the 
``rod" $\overline{E\,C_E}$.

All above numbers forms the \emph{card} of $\Phi$, which will be 
denoted by $\mathcal{C}(\Phi)$. More precisely, numerate 
end vertices of $\Phi$ and put $t_i=t_E, \,p_i=p_E$ if $E$ is the $i$-th 
end vertex and, similarly, $b_{ij}=b_{E,D}$ if $D$ is the $j$-th end 
vertex. Thereby we have the \emph{twain vector} 
$\textbf{t}=(t_1,...,t_{n_e})$, the \emph{pyramid vector} 
$\textbf{p}=(p_1,...,p_{n_e})$ and the \emph{bridge matrix}  
$\textbf{B}=\|b_{ij}\|$. A renumbering of end vertices corresponds to a 
simultaneous permutation of components of these vectors and the matrix. 
So, the triple $(\textbf{t},\textbf{p},\textbf{B})$ will be considered as a 
representative of the corresponding equivalence class  
$[\textbf{t},\textbf{p},\textbf{B}]$ modulo these permutations. Thus 
\begin{equation}\label{id-card}
\mathcal{C}(\Phi)=(n_c,n_e,n_{tr},[\textbf{t},\textbf{p},\textbf{B}]).
\end{equation}
\begin{proc}\label{ID-CARD}
Two tee-clusters are equivalent if and only if their  
cards are equal. 
\end{proc}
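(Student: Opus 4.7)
The forward implication is essentially a bookkeeping exercise: any equivalence of tee-clusters is induced by a bijection of vertex sets that carries center vertices to center vertices, end vertices to end vertices, and sends each structural unit (triangle, p-twain, pyramid, bridge) to a structural unit of the same type, hence preserves $n_c$, $n_e$, $n_{tr}$ and the equivalence class $[\mathbf{t},\mathbf{p},\mathbf{B}]$. I would only verify that the characterizations of p-twain, pyramid and bridge given in subsections\,\ref{TeeClusters}--\ref{con-nec-tees} are intrinsic to the cluster (independent of any choice of base vectors), which is immediate from the definitions.

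The substantive direction is the converse: given the card, reconstruct $\Phi$ uniquely up to equivalence. The plan is to assemble $\Phi$ piece by piece from its card, following the semi-simple/solvable split $\Phi=\Phi_h\cup\Phi_{end}$ introduced just before the statement. First, I would build $\Phi_h$: by proposition\,\ref{max-hedgehog}, $\Phi_h$ is a $(p,q)$-hedgehog determined (up to equivalence) by $p=n_{tr}$ and $q=n_c-3n_{tr}+|\text{thorns shared with triangles}|$; since hedgehogs with prescribed numbers of base triangles and thorns are pairwise equivalent, $\Phi_h$ is fixed by $(n_c,n_{tr})$. Next, at each end vertex $E_i$ I would reconstruct the "local solvable tower" from $(t_i,p_i)$: the pyramid $P_{E_i}$ is determined (up to equivalence) by its dimension $p_i$ (with the convention $p_i=1$ meaning a single rod $\overline{E_iC_{E_i}}$ and $p_i=0$ meaning absence of $P_{E_i}$); the $t_i$ rooted p-twains contribute their bottom vertices; by lemma\,\ref{s-twain-str}\,(4) and the casing lemmas (lemmas\,\ref{ee-str}, \ref{e-str}, corollary\,\ref{nec-cor} and corollary\,\ref{hook-str}), all the \emph{ee}-, \emph{ec}-, \emph{nec}- and \emph{e}-tees attached to this tower are forced to belong to $\Phi$ by the blocking rule, so no free choice remains. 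Then I would insert the bridges: for each pair $(i,j)$ prescribe $b_{ij}$ bridges connecting $E_i$ and $E_j$, which by lemma\,\ref{bridge-str} automatically forces their \emph{pyb}- and \emph{pb}-connectives and their casings into $\Phi$. Finally, the $0$-tees constitute the multiped $\Phi_{mp}$, whose end vertices are exactly the $n_e$ prescribed end vertices and whose centers are the $n_c$ center vertices of $\Phi_h$, so $\Phi_{mp}$ is determined by $(n_e,n_c)$ alone.

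The key verification is that the family $\Phi'$ so assembled coincides with any tee-cluster $\Phi$ having the same card. The inclusion $\Phi'\subset\Phi$ follows from the casing/connective lemmas (every casing of an admitted structural unit, together with the \emph{pyt}-, \emph{pt}-, \emph{pyb}- and \emph{pb}-connectives forced by the admitted bridges and twains, must lie in $\Phi$ by the blocking rule). The reverse inclusion $\Phi\subset\Phi'$ is obtained by classifying each tee $\theta\in\Phi$ by the type of its vertices (\emph{ece}, \emph{ee}, \emph{ec}, \emph{e}, \emph{c}, $0$) and invoking the corresponding structural lemma to locate $\theta$ inside one of the prescribed pieces: \emph{ece}-tees land in triangles of $\Phi_h$ (lemma\,\ref{ece-str}); \emph{ee}-tees land in casings of p-twains or triangles (lemma\,\ref{ee-str}); \emph{ec}-tees are either sides/laterals of pyramids and p-twains or pendent \emph{nec}-tees (lemma\,\ref{s-twain-str}, lemma\,\ref{nec-str}, corollary\,\ref{nec-cor}); \emph{c}-tees split into hooks (absorbed by $\bot_{E}^{hk}$ or by the pyramid) and bridges (lemma\,\ref{hooks}, lemma\,\ref{bridge-str}); \emph{e}-tees fill the designated casings (lemma\,\ref{e-str}); $0$-tees form $\Phi_{mp}$.

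The main obstacle, as I see it, is the bookkeeping at end vertices when several features coexist: specifically, the identification $C_E^{pn}=C_E^{hk}=:C_E$ when both pendent \emph{nec}-tees and hooks are present, the coherence of the twain multiplex $\bot_E^{pt}$ with the hook multiplex $\bot_E^{hk}$ and with $P_E$, and the correct handling of the degenerate case $p_i=1$ (collapsed pyramid). These compatibilities have already been isolated in corollary\,\ref{nec-cor}\,(4) and corollary\,\ref{hook-str}\,(2), so the proof reduces to a careful case analysis checking that the data $(t_i,p_i,\{b_{ij}\}_j)$ at each end vertex reconstruct the "star" around $E_i$ without ambiguity, after which the pieces are glued along the shared center vertices (parameters $n_c$, $n_{tr}$) in the unique way compatible with the equivalence class of graphs.
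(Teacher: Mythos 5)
Your proof follows essentially the same route as the paper's: both rest on the observation that once the layout of center and end vertices, triangles, p-twains, pyramids and bridges is fixed, all remaining tees (connectives, hooks, casings) are forced by the blocking rule, so equal cards yield a vertex bijection matching these structural units and hence an equivalence of clusters. One small slip: the thorns of $\Phi_h$ are exactly the $n_c$ center vertices of $\Phi$, so $\Phi_h$ is an $(n_{tr},n_c)$-hedgehog and your formula for the thorn count $q$ is off, but your conclusion that $(n_c,n_{tr})$ determines $\Phi_h$ up to equivalence is the correct and relevant one.
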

\begin{proof}
First, observe that if the layout of center and end vertices, triangles, 
p-twains, pyramids and bridges of a tee-cluster $\Phi$ is known, 
then $\Phi$ is automatically and uniquely restored just by adding to these 
data all possible connectives, hooks and the casing. 

So, it suffices to show that  if $\Phi$ and $\Phi^{\prime}$ are 
tee-clusters with equal cards, then there exists a 
one-to-one correspondence $\zeta:S(\Phi)\rightarrow S(\Phi^{\prime})$ which identifies  
center and end vertices of these clusters as well as their triangles, p-twains, 
pyramids (including ``collapsed") and bridges. We shall construct such a map 
gradually by starting from a map $\zeta_1:S(\Phi_h)\rightarrow 
S(\Phi_h^{\prime})$ which establishes an equivalence of 
$(n_{tr},\,n_c)$-hedgehogs $\Phi_h$ and $\Phi_h^{\prime}$. After that we 
shall extend $\zeta_1$ to a biunique correspondence $\zeta_2$ of end vertices of  
$\Phi$ and $\Phi^{\prime}$ in such a way that 
$t_E(\Phi)=t_{\zeta_2(E)}(\Phi^{\prime}), 
p_E(\Phi)=p_{\zeta_2(E)}(\Phi^{\prime})$ and 
$b_{E,D}(\Phi)=B_{\zeta_2(E),\zeta_2(D)}(\Phi^{\prime})$ for all end 
vertices of $\Phi$. This is , obviously, possible. Since $p_E=p_{\zeta_2(E)}$, 
there is a bijection between bottom vertices of $P_E$ and $P_{\zeta_2(E)}$. 
This way $\zeta_2$ is extended to pyramid, and we  shall proceed on 
similarly for p-twains and bridges.
\end{proof}

\noindent\textit{Abstract cards.} 

\noindent Proposition\,\ref{ID-CARD} reduces the 
classification of tee-clusters to description of their  
cards. Namely, an \emph{abstract} card is an ordered set of the form 
$(k,l,m, [\textbf{t},\textbf{p},\textbf{B}])$ where $k,l,m\in \mathbb{N}_0$, 
$\textbf{t},\,\textbf{p}\in\mathbb{N}_0^l$ and $\textbf{B}$ is a symmetric 
$l\times l$-matrix with entries in $\mathbb{N}_0$ and zero diagonal 
elements. As earlier, $[\textbf{t},\emph{\textbf{p}},\textbf{B}]$ stands for 
the orbit of the triple  $(\textbf{t},\emph{\textbf{p}},\textbf{B})$ under 
a natural action of the symmetric group $S_l$. The number 
$$
\mathrm{card}\,\mathcal{J}=k+l+3m+\sum_{i=1}^{l}p_i+
2\sum_{i=1}^{l}t_i+\sum_{1\leq i<j\leq l}b_{ij}
$$
will be called the \emph{dimension} of $\mathcal{J}$. If 
$\mathcal{J}=\mathcal{C}(\Phi)$, then 
$\mathrm{card}\,\mathcal{J}=\mathrm{dim}\,\Phi$. 

Obviously, the card of a tee-cluster is an abstract card 
but the converse is not true. So, the problem is to find exact conditions 
that distinguish cards of tee-clusters among other abstract 
cards. To this end the notion of a \emph{realization} of an abstract 
card will be useful. Namely, choose among base vectors the 
following disjoint groups: 

\begin{eqnarray}\label{abs-verticies}
\{C_1,\dots,C_k \}, \quad  \{E_1,\dots,E_l\}, & \quad 
\{T_{i1},T_{i2},T_{i3}\}, \;1\leq i\leq m, \nonumber \\
\{W_{j1}^i,W_{j2}^i\}, \;1\leq i\leq l, 
\;1\leq j\leq t_i, & \, \{P_{ir}\}, \;1\leq i\leq l, \,1\leq r\leq p_i,\\ 
\quad \{C_{ij}^s\}, \;1\leq i<j\leq l,& 1\leq s\leq b_{ij} \nonumber 
\end{eqnarray}   

Vectors $C_i$'s (resp., $E_i$'s) will be  called \emph{declared} center 
(resp., end) vertices.  Similarly, $T_{iq}, \,1\leq q\leq3,$ are vertices 
of the $i$-th  \emph{declared} triangle,  $\{W_{j1}^i,W_{j2}^i\}$ are 
bottom vertices of the $j$-th \emph{declared} p-twain  rooted at $E_i$,  
$P_{ir}, \,1\leq r\leq p_i,$ are bottom vertices of the  \emph{declared} 
pyramid   rooted at $E_i$ and $C_{ij}^s$ is the center vertex of the  $s$-th 
\emph{declared} bridge connecting $E_i$ and $E_j$. Then we 
shall consider the corresponding declared triangles, p-twains, pyramids and 
bridges by adding to them all possible \emph{declared} connectives and the 
casing. For instance, an declared \emph{pyb}-connective rooted at an 
declared end vertex $E_i$ is of the form $\lfloor 
E_i,W_{jq}^i|P_{ir}\rceil$, etc. The so-constructed family, which is, 
obviously, compatible, will be called a \emph{realization} of $\mathcal{J}$ and  
denoted $\Phi_{\mathcal{J}}$. Two realizations of a given abstract card 
are, obviously, equivalent. Also, if $\mathcal{J}=\mathcal{C}(\Phi)$, 
then, as it is easy to see, $\Phi_{\mathcal{J}}=\Phi$. Hence the above 
problem can be reformulated as: 
\begin{center}
\emph{ what are abstract cards $\mathcal{J}$ 
such that $\Phi_{\mathcal{J}}$  a tee-cluster}.
\end{center}

The following necessary conditions are on the surface. First, the graph of 
$\Phi_{\mathcal{J}}$ must be connected and, second, 
$\mathcal{J}$ must be equal to $\mathcal{C}(\Phi_{\mathcal{J}})$, i.e., that
the \emph{declared 
parameters}  must coincide with \emph{actual} ones.  For instance, if the 
declared parameters are $k=0, \,l=m=p_1=t_1=1$, i.e., 
$\mathcal{J}=(0,1,1,[(1),(1),(0)])$, then $\Phi_{\mathcal{J}}$ consists of 
two rooted at the same vertex twains and one disjoint from them triangle. 
So, the graph of $\Phi_{\mathcal{J}}$ is disconnected and, moreover, none 
of these two twains can be distinguished as a p-twain, i.e., the declared 
value $t_1=1$ differs from the actual. If the declared parameters are $k=0, 
l=m=1, p_1=t_1=0,$ then $\Phi_{\mathcal{J}}$ is a triangle. So, in this case, 
the realization does not have the declared end vertex $E_1$. 

These necessary conditions (resp., an satisfying them abstract card) will be 
called \emph{consistency conditions} (resp., a \emph{consistent card}). 

In subsections \ref{TypeVertex} - \ref{c-str-all} we have established the role 
of various kinds of tees in the construction of a tee-cluster. 
Now it is convenient to bring  together the obtained results in order to ease  
further discussion of consistency conditions. \newline

\begin{center}
\begin{tabular}{| l | | c |}
\multicolumn{2}{ c }{DISTRIBUTION OF TEES IN A TEE-CLUSTER} \\
\hline
{\bf type} & {\bf } \\
\hline\hline
{\it ece-} & belongs to a triangle \\
\hline 
{\it ec-} & belongs to a p-, or s-twain, or is a connective    \\
\hline
{\it ee-} & belongs to a hedgehog but not to a traingle,\\  &or to the 
\emph{ee}--casing of a p-twain\\
\hline
{\it c-} & is a bridge \\
\hline
{\it e-} & belongs to a casing different from \emph{ee}--type\\
\hline
 0 - & belongs to a multiped \\
\hline
\end{tabular}
\end{center}

\medskip
\noindent This table will be referred as the \emph{DT-table}.

\medskip
\noindent\textit{Join operations.} 

\noindent In order to explicitly describe consistent 
cards we need the following four operations with 
tee-clusters. Below $\Phi$ stands for a tee-cluster. 
\newline\newline
\emph{Joining a triangle}. Assumption: $n_c\neq 0$. Include the 
$(n_{tr},n_c)$-hedgehog $\Phi_h$ into an $(n_{tr}+1,n_c)$-hedgehog 
$\bar{\Phi}_h$ by adding three new vertices to $S(\Phi)$. 
The new tee-cluster is $\bar{\Phi}_h\cup \Phi_{end}$.
\newline\newline
\emph{Joining  a p-twain}. Assumption: $n_c\neq 0, n_e\neq 0$. Let $E$ be 
an end vertex of $\Phi$ and $B_1,B_2\notin S(\Phi)$. First, add to $\Phi$ the twain 
$\bigwedge=\{\lfloor E,B_1|B_2\rceil,\,\lfloor E,D_2|B_1\rceil\}$ and all 
tees of the form $\lfloor B_1,B_2|C\rceil$ with $C$ 
being a center vertex of $\Phi$. Then add all the connectives and 
casings to the so-obtained family. The resulting compatible tee-family will be 
denoted by $\Phi_{\bigwedge,E}$, or, simply, $\Phi_{\bigwedge}$.  
\newline\newline
\emph{Joining  a pyramid}. Assumption: $p_E\neq 0$. Let $E$ be an end 
vertex of $\Phi$ and $B_1\dots,B_r$ base vectors not belonging 
to $S(\Phi)$. We assume that $r\geq 1$, if at least one of families 
$P_E,  \,\bot_E^{pn},  \,\bot_E^{pt}$ is nonempty (equivalently, $p_E >0$), and $r>1$
otherwise. Consider the pyramid $\mathbf{\nabla}$ whose top vertex is 
$E$ and bottom vertices are that of $P_E$, if $P_E\neq\emptyset$, or $C_E$,
if $P_E=\emptyset$ and $C_E$ exists, and
$B_1\dots,B_r$. Now we get a new compatible tee-family by
adding all new connectives, hooks and casings to $\Phi\cup\mathbf{\nabla}$.

If $p_E=0$, then the ``collapsed pyramid", i.e., a new vertex interpreted as 
$C_E$, can be created, assuming that $n_{c}>0$ and $\Phi$ contains either
or both a rooted at $E$ trey and a bridge with one end at $E$. Namely,
first, we add the tees $\lceil E,A|C_E\rceil$ and $\lceil E,C_E|E^{\prime}\rceil$ to $\Phi$ 
where $A$ is the center of a bridge, or a side vertex of a rooted at $R$ trey, and
$E^{\prime}$ is a center vertex of $\Phi$. Then we complete the so-obtained  tee-family
by adding to it all possible new connectives, hooks and casings.
\newline\newline

\emph{Joining a bridge}. Assumption: $n_e\geq 2$. First, add to $\Phi$ the tee 
$\theta=\lfloor E_1,E_2|C\rceil$ where $E_1$ and $E_2$ are end vertices of $\Phi$ 
and $C\notin S(\Phi)$. Then add to $\Phi\cup\{\theta\}$ all tees of the form
$\lfloor E_i,C|A\rceil$ where $A$ is  a bottom vertex of $P_{E_i}$, or $C_{E_i}$, or
a center vertex of $\Phi$. 

Observe that these join operations commute, preserve both end and center 
vertices of the original tee-cluster and do not create new ones. 

An end (resp., center) vertex of a compatible tee-family $\Psi$ remaining be
such in any containing $\Psi$ compatible tee-family will be 
called \emph{stable}. The following assertion is obvious.
\begin{lem}\label{stable} We have:
\begin{enumerate}
\item An end vertex $E$ of $\Psi$ is stable if there are at least three rooted at 
$E$ tees with mutually different second ends. 
\item The center of a cross or a tripod belonging to  $\Psi$ is a stable center vertex
of $\Psi$. $\square$
\end{enumerate}
\end{lem}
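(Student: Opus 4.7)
The plan is to verify both parts directly from Proposition \ref{123}(1), which forces two compatible tees sharing at least one vertex to either coincide (up to sign) or to share a center or to share an end. The key observation is that the only way an end vertex $E$ of $\Psi$ can cease to be an end vertex in a larger compatible family $\Psi'\supset\Psi$ is for $\Psi'$ to contain a tee $\vartheta$ having $E$ as its \emph{center} (since $E$ remains an end of every tee of $\Psi$ and hence of $\Psi'$, so it is either an end vertex or a mixing vertex of $\Psi'$; the mixing case still requires such a $\vartheta$). Analogously, the only way a center vertex $C$ of $\Psi$ can fail to be a center vertex of $\Psi'$ is for $\Psi'$ to contain a tee having $C$ as one of its ends.

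For part (1), I would fix three rooted-at-$E$ tees $\theta_i=\lfloor E,A_i\,|\,C_i\rceil\in\Psi$, $i=1,2,3$, with $A_1,A_2,A_3$ distinct, and suppose $\vartheta=\lfloor X,Y\,|\,E\rceil\in\Psi'$. Since $\vartheta$ and each $\theta_i$ share the vertex $E$ but do not coincide, they must be nontrivially compatible. Their centers are $E$ and $C_i$, which differ because $E$ is an end vertex of $\Psi$, so Proposition \ref{123}(1) forces a common end, i.e., $A_i\in\{X,Y\}$. This must hold for $i=1,2,3$, which is impossible since $\{X,Y\}$ has only two elements. Hence no such $\vartheta$ exists, proving stability of $E$.

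For part (2), the argument is the symmetric mirror. Suppose a cross $\{\lfloor A,B\,|\,C\rceil,\lfloor D,F\,|\,C\rceil\}\subset\Psi$ with $\{A,B\}\cap\{D,F\}=\emptyset$, and assume $\vartheta=\lfloor C,X\,|\,Y\rceil\in\Psi'$. Nontrivial compatibility of $\vartheta$ with $\lfloor A,B\,|\,C\rceil$ (they share $C$) requires either a common center (impossible, as $Y\ne C$) or a common end; since $C$ is an end of $\vartheta$ but not of $\lfloor A,B\,|\,C\rceil$, this forces $X\in\{A,B\}$. The same reasoning applied to $\lfloor D,F\,|\,C\rceil$ gives $X\in\{D,F\}$, contradicting disjointness. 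For a tripod $\{\lfloor A_1,A_2\,|\,C\rceil,\lfloor A_1,A_3\,|\,C\rceil,\lfloor A_2,A_3\,|\,C\rceil\}$ the same argument yields $X\in\{A_1,A_2\}\cap\{A_1,A_3\}\cap\{A_2,A_3\}=\emptyset$, again impossible.

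There is essentially no hard step here: the whole lemma is a pigeonhole application on top of Proposition \ref{123}(1), and the verifications are elementary. The only point requiring minor care is ruling out the ``mixing vertex'' scenario in part (1), which is handled by the preliminary observation that mixing still requires the existence of a tee with $E$ as its center.
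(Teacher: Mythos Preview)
Your argument is correct and is precisely the routine verification the paper has in mind when it declares the lemma ``obvious'' and omits any proof. The pigeonhole reasoning via Proposition~\ref{123}(1) is the natural (and essentially unique) way to see it.
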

\begin{proc}\label{join-oper}
If the ends of a tee-cluster $\Phi$ are stable, then the result of 
any of the above joining procedures is a  tee-cluster. 
\end{proc}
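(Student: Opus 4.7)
The plan is to verify, for each of the four join operations separately, the three defining properties of a tee-cluster: (i) the resulting family $\Phi'$ is compatible; (ii) the associated graph $\Upsilon_{\Phi'}$ is connected; and (iii) $\Phi'$ is maximal in the sense of the blocking rule stated in section\,8.2. The stability hypothesis on ends of $\Phi$ is precisely what guarantees that the roles (end/center/mixing) of vertices inherited from $\Phi$ do not change in $\Phi'$, so that the blocking analysis developed in sections\,\ref{TypeVertex}--\ref{c-str-all} can be reused verbatim in $\Phi'$.

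First I would dispatch compatibility and connectedness. Each join operation has been defined by first adjoining a single structural group (triangle, p-twain, pyramid, or bridge) together with certain new vertices, and then closing up by adding all tees that the basic properties of sections\,\ref{TypeVertex}--\ref{c-str-all} force to be present as connectives or casings. Compatibility of the added tees with one another and with $\Phi$ follows directly from the basic properties of triangles, twains, pyramids and bridges (together with lemmas\,\ref{ee-str}, \ref{s-twain-str}, \ref{nec-str}, \ref{hooks}, \ref{bridge-str}, and \ref{e-str}), since by hypothesis $\Phi$ is already a tee-cluster, so all dangerous blocking configurations are absent. Connectedness is immediate: joining a triangle uses $n_c\neq 0$ to attach new thorn-style \emph{ee}-tees to an existing center vertex; joining a p-twain or a pyramid attaches the new vertices to an existing end vertex $E$; joining a bridge connects two existing end vertices through a new center.

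Second, I would establish maximality. By the blocking rule it suffices to show that any base tee $\theta$ with $S(\theta)\subset S(\Phi')$ is either in $\Phi'$ or incompatible with some element of $\Phi'$. Using stability of ends of $\Phi$, the end/center status of every old vertex in $\Phi'$ is known, and by construction the status of every newly introduced vertex is prescribed by the join procedure (e.g.\ bottom vertices of a joined p-twain are mixing in $\Phi'$; the new center of a joined bridge is a center vertex; thorns of a joined triangle become center vertices, etc.). One now runs through the six rows of the DT-table. For an \emph{ece}-tee, lemma\,\ref{ece-str} forces it to lie in one of the hedgehog triangles of $\Phi'$; for \emph{ee}-tees, lemma\,\ref{ee-str} together with the definitions forces it into $\Phi_h$ or into an \emph{ee}-casing already present; \emph{ec}-tees split into the p-twain/s-twain/connective subcases analyzed in lemma\,\ref{nec-str} and corollary\,\ref{nec-cor}; \emph{c}-tees are handled by lemmas\,\ref{two-c-str}--\ref{bridge-str} as hooks or bridges whose centers and casings have been added in the closure step; \emph{e}-tees are treated by lemma\,\ref{e-str}; and $0$-tees by the multiped construction. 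In each case, the tee is either already present in $\Phi'$ (from the closure step of the join operation) or is blocked by an element of $\Phi'$ via proposition\,\ref{123}.

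The main obstacle is the third property for the two most delicate operations, joining a p-twain and joining a (possibly collapsed) pyramid, where the interplay between the newly introduced bottom vertices, the existing pyramid $P_E$, existing bridges at $E$, and the hook and twain multiplexes $\bot_E^{hk}, \bot_E^{pt}, \bot_E^{pb}, \bot_E^{pyb}, \bot_E^{pyt}$ must be unpacked. Here stability plays a decisive role: it prevents an added tee from turning an old end vertex into a mixing vertex, which would otherwise invalidate the characterization of hooks, pendent \emph{nec}-tees, and their common centers $C_E$. With this under control the verification reduces, through corollary\,\ref{nec-cor}, lemma\,\ref{hooks} and corollary\,\ref{hook-str}, to the already-closed casings and connectives, completing the argument.
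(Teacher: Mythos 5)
Your plan is correct and follows essentially the same route as the paper's own proof: the paper likewise reduces everything to the maximality check, classifies the candidate tees according to which of their vertices are newly adjoined and what role (center/mixing) the old ones play --- using stability of the ends of $\Phi$ to fix those roles --- and concludes that each candidate is either present by construction of the closure step or blocked, invoking the DT-table for the pyramid and bridge cases. The one streamlining the paper makes that you do not state explicitly is that only tees containing at least one newly adjoined vertex need be examined, since a compatible tee supported entirely on $S(\Phi)$ already lies in $\Phi$ by the maximality of $\Phi$ itself.
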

\begin{proof}
For triangles the assertion is obvious. In order to prove that  
$\Phi_{\bigwedge}$ is a tee-cluster we have to show that any 
compatible with $\Phi_{\bigwedge}$ tee $\theta$ at least one of 
whose vertices is $B_i, \,i=1,2$, and others are in $S(\Phi_{\bigwedge})$ 
belongs to $\Phi_{\bigwedge}$. But such a tee,  due to stability of end vertices 
of $\Phi$, is either of the form 
$\lfloor B_1,B_2|C\rceil$ with $C$ being a center vertex of $\Phi$
or of the form $\lfloor E,B_i|Z\rceil$ where $E$ is the top of $\wedge$ and $Z$ 
is a center/mixing vertex of $\Phi$. In the first case $\theta$ belongs 
to $\Phi_{\bigwedge}$ by construction as well as in the case when $Z$ is a
center vertex $\Phi$. If $Z$ is mixing, then it may be a bottom vertex of a 
p-twain, or of a pyramid $P_D$, or $C_D$, or the center of a bridge in $\Phi$ 
as it follows from the description of mixing vertices of a tee-cluster.The first and 
the fourth of these possibilities are manifestly impossible. For the rest,  
compatibility with $\Phi$ conditions imply that 
$\theta$ must be rooted at $D$ and, therefore, that $E=D$. In other words, $Z$
is a bottom vertex of $P_E$, or $C_E$ and hence, by construction, 
$\lfloor E,B_i|Z\rceil\in\Phi$.

Similar arguments together with   
DT-table prove the remaining two assertions.    
\end{proof} 
\begin{cor}\label{generic ID}
Let $\mathcal{J}=(k,l,m,[\textbf{t},\textbf{p,\textbf{B}}])$ be an abstract 
card. If $k\geq 1$ and $l\geq 4$, then $\Phi_{\mathcal{J}}$ is a 
tee-cluster. 
\end{cor}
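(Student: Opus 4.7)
My plan is to build $\Phi_{\mathcal{J}}$ from a sufficiently robust initial tee-cluster by successively applying the four join operations, then check that the result coincides with the abstract realization $\Phi_{\mathcal{J}}$ defined through the declared vertices \eqref{abs-verticies}.

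First, I would take as the starting tee-cluster the $(l,k)$-multiped $\Phi_0$ built on the declared vertices $\{E_1,\dots,E_l\}$ (ends) and $\{C_1,\dots,C_k\}$ (centers). Since $l\geq 4$ and $k\geq 1$, this multiped is a cluster by the remark following \eqref{ship}. Its end vertices are stable because each $E_i$ sits in at least $l-1\geq 3$ tees $\lfloor E_i,E_j|C_s\rceil$ with mutually different second ends, so lemma~\ref{stable}(1) applies; likewise each $C_s$ is stable via the crosses it carries, by lemma~\ref{stable}(2). Note that $\Phi_0$ contains no triangles, no twains (a twain would force two tees sharing a top end vertex and two bottom vertices, impossible in a pure multiped with stable structure), no pyramids, no bridges, and its only $0$-tees are the multiped tees. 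Thus its card is $(k,l,0,[\mathbf{0},\mathbf{0},\mathbf{0}])$.

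Next, I would apply the joining procedures of proposition~\ref{join-oper} in the following order: join $m$ triangles, then the required pyramids at each $E_i$ (size $p_i$, where $p_i=1$ is realized via the ``collapsed'' option permitted by the join-a-pyramid procedure whenever a bridge or a suitable trey is later to be attached), then the $t_i$ p-twains at each $E_i$, and finally the $b_{ij}$ bridges between $E_i$ and $E_j$. By proposition~\ref{join-oper} each intermediate family is again a tee-cluster, and stability of the end vertices is preserved at every step (joining a twain, pyramid, bridge or disjoint triangle never lowers the number of rooted tees at any existing end vertex). For the order issue, I would invoke the fact, noted after the definitions of the operations, that joins commute; so the final tee-cluster depends only on the totals $(m,t_i,p_i,b_{ij})$ and not on the order, which lets me freely reorganize when a collapsed $C_{E_i}$ needs a bridge or trey already present.

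After the construction, I would verify that the resulting tee-cluster $\Phi$ equals $\Phi_{\mathcal{J}}$ vertex-by-vertex and tee-by-tee. The declared vertices \eqref{abs-verticies} are precisely the vertices produced by the joins: centers $C_i$ remain from $\Phi_0$, ends $E_i$ remain from $\Phi_0$, triangle vertices $T_{i*}$ are introduced by the triangle joins, bottom twain vertices $W_{j*}^i$ and pyramid vertices $P_{ir}$ by the corresponding joins, and bridge centers $C_{ij}^s$ by the bridge joins. The tees filling $\Phi$ are the multiped tees (the $0$-tees of the final cluster), the triangles, the declared p-twains, pyramids, bridges, together with all connectives and casings; these are exactly what the abstract realization prescribes. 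Thus $\Phi=\Phi_{\mathcal{J}}$, and $\Phi_{\mathcal{J}}$ is a tee-cluster.

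The main obstacle is the consistency check, i.e. that no unintended triangle, p-twain, pyramid, bridge, or extra center/end vertex is produced as a side-effect of the joins. For triangles this is clear since all triangles of $\Phi$ belong to $\Phi_{\mathbf{h}}$ by the analysis of \emph{ece}-tees in lemma~\ref{ece-str}, and the hedgehog part is built purely by the triangle joins. For p-twains versus s-twains the distinction is controlled by the presence of an associated trey in $\Phi$; joining a p-twain at $E_i$ by construction attaches a trey (via a center vertex of $\Phi$, which exists since $k\geq 1$), whereas the twain embedded in a pyramid join receives no such trey and so remains an s-twain, exactly as required by $\mathbf{t}$ and $\mathbf{p}$. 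For bridges the \emph{c}-tees produced by a bridge join are non-hook because their centers $C_{ij}^s$ are freshly introduced and hence fail lemma~\ref{two-c-str}(2)'s type-I condition. Finally, connectedness of the graph follows from connectedness of $\Phi_0$ together with the fact that every joined structure shares vertices with $\Phi_0$.
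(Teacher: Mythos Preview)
Your approach is essentially the same as the paper's: the paper starts from the $(l,m\,|\,k)$-hybrid $\Psi=\Phi_h\cup\Phi_{mp}$ contained in $\Phi_{\mathcal{J}}$, verifies its end and center vertices are stable (using $l\geq 4$ and $k\geq 1$ exactly as you do), and then invokes proposition~\ref{join-oper} to reach $\Phi_{\mathcal{J}}$ via the remaining join operations. You instead start from the bare $(l,k)$-multiped and join the $m$ triangles first, which produces precisely the hybrid, so the two routes converge immediately; your additional consistency discussion is more thorough than the paper's terse argument but not essential, since the identification of the result with $\Phi_{\mathcal{J}}$ is already covered by proposition~\ref{ID-CARD} once the card is known.
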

\begin{proof}
Consider the contained in $\Phi_{\mathcal{J}}$ $(m,l|k)$-hybrid $\Psi$. It 
is a tee-cluster. Since $k\geq 1$, any center vertex of $\Psi$ contains a tripod
and hence is stable. Also, since $l\geq 4$, at least three 0-tees rooted at an 
end vertex of $\Psi$ have different second end vertices. By this reason end
vertices of $\Psi$ are stable too. Now it remains to observe that 
$\Phi_{\mathcal{J}}$ is obtained from $\Psi$ by a series of join
operations and apply  proposition\,\ref{join-oper}
\end{proof}

\medskip
\subsection{Exceptional cards.}\label{except-cards} 

Corollary\,\ref{generic ID} shows that 
nontrivial consistency conditions may occur only if $k=0$ (case I), or if 
$k>0, l<4$ (case I\!I). Consider them separately by anticipating the 
following evident facts (see subsection\,\ref{c-str-all} for the notation):
\begin{lem} \label{end-center}
Let $\Psi$ be a compatible tee-family, $E,\,E^{\prime}$  end 
vertices of $\Psi$ and $\theta\in\Psi$. Then
\begin{enumerate}
\item if the center of $\theta$ is in $S(\Psi_E)$, then $\theta$ is rooted in $E$;
\item if  the ends of $\theta$ are in $S(\Psi_E)$ and $S(\Psi_{E^{\prime}})$, 
respectively, then they coincide with $E$ and $E^{\prime}$. 
\end{enumerate}
\end{lem}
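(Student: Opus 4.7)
The approach is to use proposition~\ref{123}, which characterises nontrivial compatibility of two $\pitchfork$-lieons as sharing either a common end or a common center. By definition of $\Psi_E$, any vertex in $S(\Psi_E)$ is either $E$ itself or a mixing vertex of $\Psi$ that appears as the second end $A$ or the center $C$ of some ec-tee $\vartheta=\lfloor E,A\mid C\rceil\in\Psi_E$; recall that both $A$ and $C$ are then automatically mixing vertices of $\Psi$. The plan is therefore to chase compatibility constraints between $\theta$ and a short sequence of tees from $\Psi$ chosen to witness the mixing status of the vertices involved.

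For assertion (1), let $Z$ be the center of $\theta$. Since an end vertex of $\Psi$ is never the center of any tee in $\Psi$, one has $Z\neq E$, so $Z$ is realised either as the second end or as the center of some ec-tee $\vartheta\in\Psi_E$. If $Z$ is the second end of $\vartheta=\lfloor E,Z\mid C\rceil$, then $\theta$ and $\vartheta$ share $Z$ in incompatible roles (end versus center); a common center would demand $C=Z$, which is absurd, and so proposition~\ref{123} forces a common end, giving $E$ as an end of $\theta$. If instead $Z$ is the center of $\vartheta=\lfloor E,A\mid Z\rceil$, I invoke the mixing of $Z$ to produce some $\eta\in\Psi$ with $Z$ as an end; compatibility of $\eta$ with $\vartheta$ forces $\eta$ to take one of the two forms $\lfloor Z,E\mid Y\rceil$ or $\lfloor Z,A\mid Y\rceil$. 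In the first case, compatibility of $\eta$ with $\theta$ (they share $Z$ in different roles) immediately yields $E$ as an end of $\theta$. In the second case I pick one more tee $\mu\in\Psi$ with $A$ as center (available since $A$ is mixing); compatibility of $\mu$ with $\vartheta$ forces $\mu=\lfloor E,V\mid A\rceil$, compatibility of $\mu$ with $\eta$ pins down $V=Z$, and finally compatibility of $\theta$ with $\mu=\lfloor E,Z\mid A\rceil$ once more produces $E$ as an end of $\theta$.

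For assertion (2), write the ends of $\theta$ as $P\in S(\Psi_E)$ and $Q\in S(\Psi_{E'})$, and observe that $E\notin S(\Psi_{E'})$ and $E'\notin S(\Psi_E)$ since both sets contain only their distinguished end vertex together with mixing vertices, whereas $E,E'$ are end vertices and $E\neq E'$. The strategy is to exclude, in turn, each subcase in which $P\neq E$ or $Q\neq E'$ by running the cascade from (1) symmetrically on both sides: the mixing status of a putative $P$ forces auxiliary tees rooted at $E$, the mixing status of $Q$ forces auxiliary tees rooted at $E'$, and their mutual compatibility---two tees sharing a mixing vertex in incompatible roles, with no common end available because $E\neq E'$---collapses the alleged configuration back to $P=E$ and $Q=E'$. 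I expect the heaviest work to be precisely this double cascade: several auxiliary tees must be juggled at once, and each subcase (whether $P$, respectively $Q$, is a second-end or a center mixing vertex of its ec-tee) has to be disposed of separately by showing that the resulting compatibility constraints admit no solution other than the claimed one. This case-by-case verification, rather than any conceptual step, is the main obstacle.
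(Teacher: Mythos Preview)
The paper gives no proof of this lemma at all: it is introduced as ``the following evident facts'' and left unproved. Your argument for assertion~(1) is correct and is exactly the kind of compatibility chase the paper has in mind; nothing more is needed there.

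Your sketch for assertion~(2), however, cannot be completed, and the obstacle is not merely bookkeeping. The key claim in your cascade is that the auxiliary tees produced on the $E$-side and on the $E'$-side will share a mixing vertex ``with no common end available because $E\neq E'$''. This fails whenever the mixing vertex in question lies in $S(\Psi_E)\cap S(\Psi_{E'})$: then the forced auxiliary tee with that vertex as center may be $\lfloor E,E'\mid A\rceil$, rooted at \emph{both} $E$ and $E'$, and no contradiction arises. Concretely, on eight distinct base vectors $E,E',A,C,C',R,Y,Y'$ take
\[
\Psi=\bigl\{\lfloor E,A\mid C\rceil,\ \lfloor E',A\mid C'\rceil,\ \lfloor E,E'\mid A\rceil,\ \lfloor E,C\mid Y\rceil,\ \lfloor E',C'\mid Y'\rceil,\ \lfloor A,E'\mid R\rceil\bigr\}.
\]
All fifteen pairs are compatible; $E,E'$ are end vertices; $A,C,C'$ are mixing; the only $ec$-tee rooted at $E$ is $\lfloor E,A\mid C\rceil$ and the only one rooted at $E'$ is $\lfloor E',A\mid C'\rceil$, so $S(\Psi_E)=\{E,A,C\}$ and $S(\Psi_{E'})=\{E',A,C'\}$. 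But $\theta=\lfloor A,E'\mid R\rceil\in\Psi$ has one end $A\in S(\Psi_E)$ and the other end $E'\in S(\Psi_{E'})$, while $A\neq E$. Thus assertion~(2), read literally for an arbitrary compatible tee-family, is false; the paper is tacitly using it only in situations where the sets $S(\Psi_E)$ are pairwise disjoint (as they are in the realizations $\Phi_{\mathcal{J}}$ of abstract cards, where the vertices attached to distinct declared ends are chosen disjoint by construction). Your proof of (2) therefore needs either that extra hypothesis or a restriction to the specific families where the lemma is actually applied.
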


\textbf{Case I:} \,$\mathbf{k=0.}$ If $\Phi$ is a tee-cluster, 
then $n_c=0$ implies that $n_{tr}\leq 1, \,\textbf{t}=0, ,p_i\neq 1, \,\forall 
i$. Indeed, all triangles of $\Phi$ belong to the hedgehog $\Phi_h$ which 
has  at least one thorn, if $n_{tr}\geq 2$. Also, existence of p-twains and
``collapsed pyramids" in $\Phi$ presumes (see lemma\,\ref{nec-str},\,(6),\,(7)
existence of center vertices in $\Phi$.
So, the consistency conditions in this case are: 
$k=0\Rightarrow m\leq 1, \,\textbf{t}=0, p_i\neq 1, \,\forall i$. 

If, moreover, $n_{tr}=1$, then   $\Phi_{end}=\emptyset$, since 
$S(\Phi_h)\cap S(\Phi_{end})$ consists of center vertices of $\Phi$. In 
other words,  $n_c=0\Rightarrow n_e=0$ and the corresponding 
consistency condition is: $k=0, \,m=1\Rightarrow l=0$ and hence 
$\textbf{t}=\textbf{p}=0, \,\textbf{B}=0$, i.e., 
$\mathcal{J}=(0,0,1,[\textbf{0},\textbf{0},\textbf{0}])$ and 
$\Phi_{\mathcal{J}}$ is a triangle. 

If, on the contrary, $n_{tr}=0$, then $n_e\neq 0$ (see DT-table). To analyze 
this case denote by $n_{e,0}$ (resp., $l_0$) the number of end vertices 
$E$ of $\Phi$ for which $P_E=\emptyset$ (resp., the number of components 
$p_i$ of $\textbf{p}$ in an abstract card, which are equal to zero). Consider 
cases $n_{e,0}=0$ and $n_{e,0}\neq 0$ separately. 

If $n_{e,0}=0$ and $n_e=1$, then, obviously, $\Phi=P_E$ 
where $E$ is the unique end vertex of $\Phi$. But $P_E$ is a 
tee-cluster iff its dimension is greater than 2. So, the corresponding 
card is $(0,1,0,[(0),(p),\mathbf{0}])$ with $p\geq 3$. 

If $l>1$,  then the family $\mathbf{St}_l=\Phi_{\mathcal{J}}$ for 
$\mathcal{J}=(0,l,0,[(0), (2,\dots,2),\mathbf{0})$ is a tee-cluster. Indeed, 
by definition, $\mathbf{St}_l$ consists of mutually disjoint s-twains
$\wedge_1,\dots,\wedge_l$ rooted at some vertices $E_1,\dots,E_l$,
and all hooks of the form $\lfloor E_i,E_j|B_i\rceil\in\Phi, \, i\neq j$,
with $B_i$ being a bottom vertex of $\wedge_i$ (see also DT-table).
$E_1,\dots,E_l$ are end vertices of $\mathbf{St}_l$. By lemma\,\ref{stable}, 
they are stable if $l>1$. So, if a tee $\theta$ is compatible with $\mathbf{St}_l$ and
its vertices belong to $S(\mathbf{St}_l)$, then the center of $\theta$ is
a bottom vertex a twain $\wedge_i$. It implies that $\theta$ is a rooted at $E_i$
hook and hence belong to $\mathbf{St}_l$.

Now, by appropriately  joining pyramids and bridges to the 
tee-cluster $\mathbf{St}_l$, we can construct a 
tee-cluster with arbitrary card of the form $(0,l, 
0,[(0),(p_1,\dots,p_l),\mathbf{B})$,  $l>1, \,p_i\geq 2, \forall 
i$ (see proposition\,\ref{join-oper}).   

Assume now that $l_0\neq 0, \,\mathcal{J}=(0,l,0,[(0), 
(p_1,\dots,p_l),\mathbf{B}])$ and the end vertices $E_1,\dots,E_{l}$ of 
$\Phi_{\mathcal{J}}$ are numbered in such a way that $p_i=0$, if 
$i\leq l_0$, and $p_i\geq 2$, if $i>l_0$. If $l-l_0=1$, then $\theta=\lfloor 
B,E_l,|E_1\rceil$ with $B$ being a bottom vertex of $P_{E_l}$ is compatible 
with  $\Phi_{\mathcal{J}}$ but does not belong to $\Phi_{\mathcal{J}}$. So, 
$\Phi_{\mathcal{J}}$ is not a tee-cluster. If $l-l_0=2$, then
$l\geq 3$ and $\lfloor E_{l-1},E_l|E_1\rceil\notin\Phi_{\mathcal{J}}$ 
is compatible with  $\Phi_{\mathcal{J}}$. So, $\Phi_{\mathcal{J}}$ is not a 
tee-cluster in this case too. On the contrary, all end vertices of 
$\Phi_{\mathcal{J}}$ are stable if $l-l_0\geq 3$. Moreover, similar 
arguments as above show that in this case $\Phi_{\mathcal{J}}$ a 
tee-cluster. In other words, $l-l_0\geq 3$ is the consistency 
condition in the case when $k=m=0, l_0\neq 0$. 

Thus cards of tee-clusters without center vertices are:
\begin{eqnarray}\label{nc-0-cards}
(0,0,1,[(0),(0),\mathbf{0}]) &\;(\mathrm{triangle}), \; \nonumber\\
(0,1,0,[(0),(p),\mathbf{0}]), &\; p\geq 3 \;(p-\mathrm{pyramid}) \nonumber \\
(0,l,0,[(0),(p_1\dots,p_l),\mathbf{B}]),& \quad l>1, \quad p_i\geq 2, \quad 1\leq i\leq l; \\
(0,l,0,[(0),(0,\dots,0,p_{l_0+1}, \dots,p_l), \mathbf{B}]), &\quad l-l_0\geq 3, 
\quad p_i\geq 2, \quad i\geq l_0.\nonumber
\end{eqnarray}

Denote by $\mathbf{O}_s\mathbf{St}_r$ a tee-cluster whose 
card is $(0,r+s,0,[(0),(0,\dots,0,2\dots,2),\mathbf{0}])$ with 
$l=r+s$ and $l_0=s$ and by $\mathbf{Pr_k}$ the $k$-dimensional pyramid. 
Then all tee-clusters from the above list, except the triangle, 
are obtained from $\mathbf{Pr_3}, \;\mathbf{St_l}, \,l>1,$ and  
$\mathbf{O}_s\mathbf{St}_r, \,s>0, \,r>2$,  by   
joining to them pyramids and bridges. \newline

\textbf{Case I\!I:} \,$\mathbf{k>0, \,0\leq l\leq 3}.$  This case is 
subdivided into four subcases, $\mathrm{I\!I}_0,...,\mathrm{I\!I}_3$, according 
to the value of $l$. 

$\mathrm{I\!I}_0.$ If $l=0\Leftrightarrow\Phi_{end}=\emptyset$, then 
$\Phi=\Phi_h$ is an $(m,k)$-hedgehog.

$\mathrm{I\!I}_1.$  If $\mathcal{J}=(k,1,0,[(1),(1),\mathbf{0}])$,
then $\Phi_{\mathcal{J}}$  is a tee-cluster. This is easily verified
by a direct check using basic properties of treys. Denote the class 
of equivalent to it tee-clusters by $\mathbf{Pr_1}\mathbf{Pt}^k$. 
Ends and center vertices of such a  cluster 
are stable. Realizations of cards  $(k,1,0,[(r),(s), 
\mathbf{0}]), \;r\geq 1, \;s\geq 1,$ are obtained by  joining 
p-twains and pyramids to $\mathbf{Pr_1}\mathbf{Pt}^k$. By  
proposition\,\ref{join-oper} they are tee-clusters. So, within the 
considered case it remains to check consistence of abstract
cards with  $(t)=(0)$ and with $(p)=(0)$. 

In the first of these cases $\mathcal{J}=(k,1,m,[(0),(p),\mathbf{0}])$ and 
$(\Phi_{\mathcal{J}})_{end}$ is the $p$-pyramid together with tees of the 
form $\lfloor E,B|C_i\rceil, \,i=1,\dots,k,$ where $E=E_1$ and $B$ is a 
bottom vertex of $P_E$ (see (\ref{abs-verticies})). If $m=0$, then 
$\Phi_{\mathcal{J}}=(\Phi_{\mathcal{J}})_{end}$ belongs to the rooted at $E$ 
pyramid whose bottom vertices are those of $P_E$ and also declared center 
vertices $C_i$. Hence it is not a tee-cluster. Also, $\Phi_{\mathcal{J}}$ is not
a cluster if $m>0, p=2$. Indeed, in this case the tee
$\lfloor B_1,B_2|E\rceil\notin\Phi_{\mathcal{J}}$ with $B_i$'s being the bottom 
vertices of the twain $P_E$ is compatible with $\Phi_{\mathcal{J}}$. On the
contrary, if $m>0, p\geq 2$, then end and center vertices of $\Phi_{\mathcal{J}}$
are stable and it is a tee-cluster.

In the second case $\mathcal{J}=(k,1,m,[(t),(0),\mathbf{0}])$ and 
$\Phi_{\mathcal{J}}$ is a tee-cluster iff $t\geq 2$. It is easily follows
from the basic property of treys.

Thus the list 
of cards of tee-clusters in the considered case is: 
\begin{eqnarray}\label{list-k1}
(k,1,m,[(t),(p),\mathbf{0}]), \quad k>0, & \; m\geq 0, \; t\geq 1, \; p\geq 1;\nonumber\\
(k,1,m,[(0),(p),\mathbf{0}]), & \; k>0, \; m> 0, \; p\geq 3; \\
(k,1,m,[(t),(0),\mathbf{0}]), & \; k>0, \; m\geq 0, \; t\geq 2. \nonumber
\end{eqnarray}
 
$\mathrm{I\!I}_2.$  Let $\mathcal{J}=(k,2,0,[(0),(0),\mathbf{B}])$  with the
$2\times 2$ bridge matrix $\mathbf{B}=\|b_{ij}\|, \,b_{12}=2$. Put
$\mathbf{Br}_k=\Phi_{\mathcal{J}}$.  By definition, $\mathbf{Br}_k$ contains
a $(2,k)-multiped$. The casing of each of two bridges that are contained in 
$\mathbf{Br}_k$ consists of $2k$ tees. By using lemmas\,\ref{end-center}
and \ref{stable},
it is not difficult to verify that $\mathbf{Br}_k$ is a tee-cluster with stable 
end and center vertices. Now, by using the join operations and 
proposition\,\ref{join-oper}, we see that realizations of abstract cards of 
the form  $(k,2,m,[(t_1,t_2),(p_1,p_2),\mathbf{B}]$ with $b_{12}\geq 
2$ are tee-clusters.

Similarly, a direct check shows that a realization of 
$(k,2,0,[(1,1),(0,0),\mathbf{0}])$ is a tee-cluster with stable end and center
vertices. Hence, by proposition\,\ref{join-oper}, realizations of absrtact cards
of the form $(k,2,m|[(t_1,t_2),(p_1,p_2),\mathbf{B}])$ such that $t_1t_2\geq 1$ are
tee-clusters. So, within the considered case we have to analyze abstract cards 
with $b_{12}\leq 1, t_1t_2=0$.

First, assume that $\mathbf{t}=(t_1,0), \,t_1\geq 1$. If 
$\mathcal{J}=(k,2,0,[(t_1,0),(p_1,p_2),\mathbf{B}])$ with $p_2<2, 
\,b_{12}\leq1$, then $\Phi_{\mathcal{J}}$ is not a tee-cluster. Indeed, 
observe that in this case $C_{E_2}$ does not exist so that 
$p_2\neq 1$ and hence $p_2=0$. If $b_{12}=0$, then the tee
$\lfloor E_1, B|E_2\rceil$ with $B$ being a bottom vertex of a rooted at 
$E_1$ p-twain in $\Phi_{\mathcal{J}}$ is compatible with  $\Phi_{\mathcal{J}}$
but does not belong to it.  If $b_{12}=1$, then such is the tee
$\lfloor E_1, C|E_2\rceil$ with $C$ being the center of the unique 
connecting $E_1$ and $E_2$ bridge in $\Phi_{\mathcal{J}}$.
This proves that $p_2\geq 2$.

On the other hand, if $\mathcal{J}=(k,2,0,[(1,0),(0,2),\mathbf{B}]), 
b_{12}\leq 1,$, then $\Phi_{\mathcal{J}}$ is a tee-cluster with stable 
end and center vertices. Now proposition\,\ref{join-oper} shows 
that realizations of  abstract cards $(k,2,m,[(t_1,0),(p_1,p_2),\mathbf{B}])$
with $k\geq 1, \,t_1\geq 1, \,p_2\geq 2$ are tee-clusters. 

Now it remains to 
examine abstract cards with $\mathbf{t}=(0,0), b_{12}\leq 1$. 
First of all, observe that a realization of the abstract card
$(k,2,0,[(0,0),(2,2),\mathbf{0}])$ is a  tee-cluster 
with stable end and center vertices. By the same arguments as earlier this 
implies that realizations of cards $(k,2,m,[(0,0),(p_1,p_2),\mathbf{B}])$ with 
$p_i\geq 2, i=1,2,$ are tee-clusters too. So, the next step is to examine 
realization of abstract cards $\mathcal{J}=(k,2,m,[(0,0),(p_1,p_2),\mathbf{B}])$
with $p_1\geq 2, \,p_2\leq 1$. Since $t_2=0$, equality $p_2=1$, i.e., 
existence of the vertex $C_{E_2}$ in $\Phi_{\mathcal{J}}$, is possible, iff 
$b_{12}=1$. But, as earlier, the  tee 
$\lfloor E_1, C|E_2\rceil\notin\Phi_{\mathcal{J}}$ with $C$
being the center of the unique bridge in $\Phi_{\mathcal{J}}$ is compatible
with $\Phi_{\mathcal{J}}$. So, in this case $\Phi_{\mathcal{J}}$ is not a tee-cluster.
If $p_2=0$, then the tee $\lfloor E_1, B|E_2\rceil\notin\Phi_{\mathcal{J}}$ 
with $B$ being a bottom vertex of $P_{E_1}$ is compatible with 
$\Phi_{\mathcal{J}}$, so that $\Phi_{\mathcal{J}}$ is not a tee-cluster 
in this case too.

The same arguments show that realizations of abstract cards with 
$p_i\leq 1, \,i=1,2,$ and $b_{12}=1$ are not tee-clusters. If $b_{12}=0$,
then $p_1=p_2=0$, as we have observed  earlier. In this case $m=0$ is,
obviously, impossible, while realization of the abstract card
$(k,2,m,[(0,0),(0,0),\mathbf{0}]), \,m>0,$ is a $(2,k|m)$-hybrid. 
A peculiarity of this cluster is that its ends are not stable. 

Thus cards of  tee-clusters in the considered case are: 
\begin{eqnarray}\label{list-k2}
(k,2,m,[(t_1,t_2),(p_1,p_2),\mathbf{B}]), &\quad b_{12}\geq 2; \nonumber\\
(k,2,m,[(t_1,t_2),(p_1,p_2),\mathbf{B}]),  & t_1t_2\neq 0;\nonumber\\
(k,2,m,[(t_1,0),(p_1,p_2),\mathbf{B}]), &\quad t_1\geq 1, 
\quad p_2\geq 2, \quad b_{12}\leq 1; \quad \\
(k,2,m,[(0,0),(p_1,p_2),\mathbf{B}]), & p_1\geq 2, \; p_2\geq 2, \; b_{12}\leq 1;\nonumber\\
\quad  (k,2,m,[(0,0),(0,0),\mathbf{0}]), &\quad m>0. \nonumber
\end{eqnarray}
   
$\mathrm{I\!I}_3.$ The $(k,3)$-ped  $\Phi_{mp}$ contained in a 
tee-cluster $\Phi$ with $n_c=k>0, n_e=3$ is not a  tee-cluster. 
Center vertices of $\Phi_{mp}$ are, obviously, stable, while the end 
ones are not. Indeed, if $E_i, \,i=1,2,3,$ are end vertices of  $\Phi$ 
and, therefore, of $\Phi_{mp}$, then tees 
$\theta_{rs|t}=\lfloor E_r,E_s|E_t\rceil, \,\{r,s,t\}=\{1,2,3\}$, being compatible 
with $ \Phi_{mp}$ do not belong to it. Moreover, they do not belong to $\Phi$,
since, otherwise, $E_i$'s the would not be end vertices.
So, $\Phi$ must contain tees that block $\theta_{rs|t}$'s. But a tee
blocking $\theta_{rs|t}$ must be rooted at $E_t$ and  have a mixing second end. 
According to DT-table this happens  iff $\Phi$ contains at least one of the following
structure group rooted at $E_t$: a pyramid (possibly "collapsed"),  a p-twain, 
a bridge. Moreover, a bridge 
connecting $E_s$ and $E_t$ simultaneously blocks  $\theta_{rs|t}$ and 
$\theta_{rt|s}$. The following abstract cards describe all minimal combinations of 
these groups,  which simultaneously block all $\theta_{rs|t}$'s: 
\begin{eqnarray*}
(k,3,0,[(0,0,0),(0,0,0),\mathbf{B}]), \;& \;b_{12}=b_{13}=1, \;b_{23}=0;\\
(k,3,0,[(\varepsilon,0,0,),(1-\varepsilon,0,0),\mathbf{B}]), \;& 
\;b_{12}=b_{13}=0, \;b_{23}=1, \;\varepsilon=0,1;\\
(k,3,0,[(\varepsilon_1,\varepsilon_2,\varepsilon_3),
(1-\varepsilon_1,1-\varepsilon_2,1-\varepsilon_3),\mathbf{0}]), 
&  \;\varepsilon_1,\varepsilon_2,\varepsilon_3=0,1;
\end{eqnarray*}

A simple direct check shows that realizations of these abstract cards
are, in fact, tee-clusters with stable center and end 
vertices. Now proposition\,\ref{join-oper} allows to obtain the full list of 
tee-clusters in the considered case in which is assumed that
$\mathbf{t}=(t_1,t_2,t_3),\mathbf{p}=(p_1,p_2,p_3)$ and 
$\mathbf{B}=|\!|b_{ij}|\!|, \,1\leq i,j\leq 3$.

\begin{equation}\label{list-k3}
(k,3,m,[(\mathbf{t},\mathbf{p},\mathbf{B}]) \quad \mathrm{with}\quad \left \{
\begin{array}{l}
(1) \quad b_{12}>0, \; b_{13}>0, \; b_{23}=0.\\
(2) \quad b_{23}>0, \; t_1\geq \varepsilon, \; p_1\geq 1-\varepsilon,  \;\varepsilon=0,1.\\
(3) \quad t_i\geq\varepsilon_i,  \;p_i\geq 1-\varepsilon_i, \;\varepsilon_i=0,1.
\end{array}
\right. 
\end{equation}

\subsection{Classification of tee-clusters. Generators}\label{tee-classification} 

By summing up the results of subsequences \ref{tee-cards} 
(corollary\;\ref{generic ID}) and \ref{except-cards}
we get the following description of  tee-clusters.
\begin{thm}
Tee-clusters are in one-to-one correspondence with their cards. An abstract card 
$(k,l,m,[\mathbf{t},\mathbf{p},\mathbf{B}])$ is the card of a tee-cluster if and only if
$k>0, \,l>3$, or it belongs to one of lists (\ref{list-k1}), (\ref{list-k2}), (\ref{list-k3}).
\end{thm}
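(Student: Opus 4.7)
The one-to-one correspondence is already in place: by Proposition \ref{ID-CARD}, two tee-clusters with the same card are equivalent, and conversely any tee-cluster $\Phi$ produces the card $\mathcal{C}(\Phi)$. Hence what remains is the characterization of those abstract cards $\mathcal{J}=(k,l,m,[\mathbf{t},\mathbf{p},\mathbf{B}])$ for which the canonical realization $\Phi_{\mathcal{J}}$ is actually a tee-cluster, i.e.\ a maximal compatible tee-family with a connected graph. My plan is to show (i) the stated cards are realizable, and (ii) no other abstract card is. Both directions will reduce to the join-operation machinery and to the case analysis carried out in subsections \ref{tee-cards}--\ref{except-cards}.

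For realizability (sufficiency), the generic range $k\geq 1,\ l\geq 4$ is already handled by Corollary \ref{generic ID}: one starts from the $(m,l|k)$-hybrid, whose end and center vertices are stable (at least three $0$-tees rooted at each end and a tripod at each center, by Lemma \ref{stable}), and successively joins the prescribed p-twains, pyramids and bridges, each time invoking Proposition \ref{join-oper}. For the exceptional ranges $k=0$ and $k>0,\ l\leq 3$ I would exhibit, in each row of lists (\ref{nc-0-cards}), (\ref{list-k1}), (\ref{list-k2}), (\ref{list-k3}), a ``seed'' tee-cluster whose end/center vertices are stable, and then reach any card in the list by a finite sequence of the four join operations from subsection \ref{tee-cards}. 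Natural seeds are: the triangle; $\mathbf{Pr}_3$; $\mathbf{St}_l$ ($l>1$) and $\mathbf{O}_s\mathbf{St}_r$ ($s>0,\ r\geq 3$) for the no-center case; an $(m,k)$-hedgehog for subcase $\mathrm{II}_0$; the family $\mathbf{Pr}_1\mathbf{Pt}^k$ together with the other seeds displayed in the $\mathrm{II}_1$ analysis; $\mathbf{Br}_k$ and the minimal configurations $(k,2,0,[(1,1),(0,0),\mathbf 0])$, $(k,2,0,[(1,0),(0,2),\mathbf B])$, $(k,2,0,[(0,0),(2,2),\mathbf 0])$ and the $(2,k|m)$-hybrid for $\mathrm{II}_2$; and the three minimal combinations exhibited in $\mathrm{II}_3$ (bridge pairs/triples, or a p-twain/pyramid at each non-bridged end). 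In each instance stability of the end and center vertices of the seed is a direct application of Lemma \ref{stable}, after which Proposition \ref{join-oper} propagates the tee-cluster property through the joins.

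For necessity (no other abstract cards occur), I would suppose $\mathcal{J}=\mathcal{C}(\Phi)$ for a tee-cluster $\Phi$ and derive the consistency conditions exactly as in subsection \ref{except-cards}. The constraints $n_c=0\Rightarrow n_{tr}\leq 1,\ \mathbf{t}=0,\ p_i\neq 1$ follow from the DT-table (p-twains and collapsed pyramids require a center vertex) and from the fact that every triangle in $\Phi_h$ beyond the first has a thorn. For the subcase $k=0,\ n_{tr}=0$ with $n_{e,0}\neq 0$, the inequality $l-l_0\geq 3$ is forced by the existence of the compatible but not belonging tee $\lfloor B,E_l\,|\,E_1\rceil$ (or $\lfloor E_{l-1},E_l\,|\,E_1\rceil$) when this inequality fails. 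In the subcases $k>0,\ l\leq 3$ the same style of argument produces an explicit blocker-free tee compatible with $\Phi_{\mathcal{J}}$ whenever the card lies outside the listed families (for instance $\lfloor E_1,C\,|\,E_2\rceil$ when $b_{12}=1$ and $p_2\leq 1$, or the three tees $\theta_{rs|t}$ when $l=3$ and none of the minimal blocking combinations is present).

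The main obstacle I anticipate is the $l\leq 3$ bookkeeping: there are many subcases involving the interplay of bridges, pyramids (including the collapsed ones $p_E=1$) and p-twains, and in each one both the compatibility of a candidate blocker with $\Phi_{\mathcal{J}}$ and the identification of a suitable stable seed have to be checked. The key conceptual point that keeps this manageable is the DT-table together with corollary \ref{nec-cor} and lemmas \ref{hooks}--\ref{bridge-str}, which pin down the very restricted list of mixing-vertex configurations; every would-be blocker must fit one of these patterns, so the case analysis terminates in finitely many explicit tees to test. Once each subcase is verified, the theorem follows by combining Proposition \ref{ID-CARD}, Corollary \ref{generic ID}, the seed constructions, and the consistency computations of subsection \ref{except-cards}.
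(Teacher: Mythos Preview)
Your plan is correct and mirrors the paper's approach exactly: the theorem there is stated as a summary of the preceding two subsections, and the proof is literally ``By summing up the results of subsections \ref{tee-cards} (corollary~\ref{generic ID}) and \ref{except-cards}''. You have reconstructed precisely that logic --- Proposition~\ref{ID-CARD} for the bijection, Corollary~\ref{generic ID} for the generic range, the seed-plus-join arguments via Proposition~\ref{join-oper} for sufficiency in the exceptional cases, and the explicit compatible-but-extraneous tees for necessity --- so there is nothing to add.
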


An alternative way to describe tee clusters is as follows. Let $\Phi$ be a tee-cluster, 
$\mathcal{C}(\Phi)=(k,l,m,[\mathbf{t},\mathbf{p},\mathbf{B}])$. Denote by 
$\langle\Phi\rangle$, or, equivalently, by 
$\langle k,l,m|\mathbf{t},\mathbf{p},\mathbf{B}\rangle$ 
the set of equivalence classes of tee-clusters that are obtained by 
successively applying to $\Phi$ join operations. We shall say that  they are
equivalence classes of tee-clusters tee-clusters \emph{generated} by $\Phi$.  
The totality $\mathfrak{T}_{\pitchfork}$ of equivalence classes of all tee-clusters 
can be described by indicating a \emph{base} of 
it, i.e., a system of tee-clusters $\Phi_{\alpha}$ such that 
$\mathfrak{T}_{\pitchfork}=\cup\langle\Phi_{\alpha}\rangle$.   
One of such bases, which is easily extracted from the above description of 
tee-clusters (see lists (\ref{list-k1}), (\ref{list-k2}), (\ref{list-k3})), is the following. 
In the list below is assumed that $k>0$ and $m>0$. 

\begin{eqnarray*}\label{fork-list}
\mathbf{Tr}: \; (0,0,1,[(0,0,0),(0,0,0),\mathbf{0}]), &\;(\mathrm{triangle}); \\
\mathbf{Pr}_3: \; (0,1,0,[(0),(3),\mathbf{0}]), &\;(3-\mathrm{pyramid});\\
\mathbf{St}_r: \; (0,r,0,[(0,\dots,0),(2\dots,2),\mathbf{0}]), & r >1 \; (\mathrm{r-twain});\\
{\mathbf{O}_s\mathbf{St}_r}: \; (0,r+s,0,[\underbrace{(0,\dots,0)}_{r+s \;times},
(\underbrace{(0,\dots,0)}_{s \;times},\underbrace{(2,\dots,2)}_{r \;times}),\mathbf{0}]), &
 r\geq 3, \,s>0 \; (\mathrm{rooted \;r-twain});\\
----------------&\\
\mathbf{Hg}_1^k: \; (k,0,1,[\emptyset,\emptyset,\emptyset]), & 
\;((k,1)-\mathrm{hedgehog});\\
----------------&\\
\mathbf{Pr}_1\mathbf{Pt}_1^k: \;(k,1,0,[(1),(1),\mathbf{0}]), &\;(k-\mathrm{trey \;cluster});\\
\mathbf{S}^k\mathbf{Pr}_3: \;(k,1,1,[(0),(3),\mathbf{0}]), 
\; &(\mathrm{suspended} \;3-\mathrm{pyramid}); \\
\mathbf{Pt}_2^k: \;(k,1,0,[(2),(0),\mathbf{0}]), &\; 
(\mathrm{double} \;k-\mathrm{trey \;cluster});\\
----------------&\\
\mathbf{Br}_k:\;(k,2,0,[(0,0),(0,0),\mathbf{B}]), & b_{12}=2, \\
(k,2,0,[(1,1),(0,0),\mathbf{0}]), &\\
(k,2,0,[(1,0),(0,2),\mathbf{0}]), & \\
(k,2,0,[(0,0),(2,2),\mathbf{0}]), &\\
\quad\quad  (k,2,m,[(0,0),(0,0),\mathbf{0}]),&  \quad ((2.k|m)-\mathrm{hybrid});\\
----------------&\\
(k,3,0,[(0,0,0),(0,0,0),\mathbf{B}]), & b_{12}=b_{13}=1, \;b_{23}=0;\\
(k,3,0,[(\varepsilon,0,0,),(1-\varepsilon,0,0),\mathbf{B}]), & 
b_{12}=b_{13}=0, \;b_{23}=1, \;\varepsilon=0,1;\\
(k,3,0,[(\varepsilon_1,\varepsilon_2,\varepsilon_3),
(1-\varepsilon_1,1-\varepsilon_2,1-\varepsilon_3),\mathbf{0}]), &
\quad \varepsilon_1,\varepsilon_2,\varepsilon_3=0,1;\\
----------------&\\
\mathbf{Mp}_k^l: \;(k,l,0,[(0,\dots,0),(0,\dots,0),\mathbf{0}]), & \quad l>3.
\end{eqnarray*}
\begin{ex}
\emph{(4- and 5-dimensional  tee-clusters.)} The following lists of  4- and 
5-dimensional  tee-clusters are easily extracted from the above description.
\newline

\emph{4-dimensional} tee-clusters: $\mathbf{Hg}_1^1$ 
\;\emph{((1,1)-hedgehog)}, \; $\mathbf{Pr}_3$ 
\;\emph{(3-pyramid)}.  \\

\emph{5-dimensional} tee-clusters: $\mathbf{Hg}_2^1$ 
\;\emph{((2,1)-hedgehog)}, \; $\mathbf{Pr}_4$ \;\emph{(4-pyramid)}, \\
$\qquad\mathbf{Pt}_1^1\mathbf{O}$ \;\emph{(1-trey cluster)}, \; $\mathbf{Br}_2$ 
\;\emph{(2-bridge cluster)}, \;$\mathbf{Mp}_4^1$ \emph{(4-ped)}. 
\end{ex}
\subsection{Coaxial Lie algebras associated with tee-clusters.} 

Now we are ready to answer the question: what are  coaxial Lie algebra
associated with tee-clusters.
The following observation is useful to this end.
\begin{lem}\label{coax-sub-id}
Let  $\gG$ be a Lie algebra associated with a compatible tee-family 
$\Phi$, $S_i\subset S(\Phi)$, \,i=1,2, and $V_i$ the subspace of $|\gG|$
spanned by $S_i$.  Then 
\begin{enumerate} 
\item the subspace $[V_1,V_2]=\mathrm{Span}\{[v_1,v_2]\,|\,v_i\in V_i, \,i=1,2\}$ 
of $|\gG|$ is spanned by center vertices 
of all tees $\lfloor E_1,E_2|C\rceil\in\Phi$ such that $E_i\in S_i$.
\item the subspace $V$ of $|\gG|$ spanned by a subset $S\subset S(\Phi)$
is a subalgebra (resp., an ideal) of $\Phi$, if the center vertex of any tee 
$\theta\in\Phi$ with ends in $S$ (resp., with one end in $S$) also belongs to $S$.
\item $|[\gG,\gG]|$ belongs to the subspace spanned by center and mixing
vertices of $\Phi$.
\end{enumerate}                    
\end{lem}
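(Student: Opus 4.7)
The plan is to reduce all three assertions to a direct computation with the defining formula of the bracket of a coaxial Lie algebra. Since $\Phi$ is a tee-family, the associated Lie algebra $\gG$ has bracket
\[
[e_i,e_j]=\sum_{\lfloor i,j|k\rceil\in\Phi}\alpha_{(i,j|k)}\,e_k
\]
on base vectors, with only base vectors belonging to $S(\Phi)$ ever appearing in nontrivial products (vectors outside $S(\Phi)$ generate an abelian direct summand by the decomposition (\ref{componentwise})). The key observation is that the right-hand side is, by construction, a linear combination of centers of tees $\lfloor i,j|k\rceil\in\Phi$ with ends $e_i,e_j$.

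First I would prove (1). Writing $v_1=\sum_{E\in S_1}a_E\,E$ and $v_2=\sum_{D\in S_2}b_D\,D$, bilinearity gives
\[
[v_1,v_2]\;=\;\sum_{E\in S_1,\,D\in S_2}a_E b_D\sum_{\lfloor E,D|C\rceil\in\Phi}\alpha_{(E,D|C)}\,C,
\]
which shows that $[V_1,V_2]$ is contained in the subspace spanned by the centers $C$ of tees $\lfloor E_1,E_2|C\rceil\in\Phi$ with $E_i\in S_i$. Conversely, for any such tee $\lfloor E_1,E_2|C\rceil$, the bracket $[E_1,E_2]$ is a linear combination of the centers of all tees in $\Phi$ with ends $E_1,E_2$; since distinct tees have distinct centers and these centers are linearly independent base vectors, each individual center $C$ is recovered as a linear combination of such brackets $[E_1,E_2]$ (by inverting the finite linear system, or simply by noting that every summand is itself in $[V_1,V_2]$ once all relevant brackets are available). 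This gives the reverse inclusion.

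Assertion (2) is then immediate from (1): taking $S_1=S_2=S$, the subspace $[V,V]$ is spanned by centers of tees with both ends in $S$; if all these centers belong to $S$ then $[V,V]\subset V$, so $V$ is a subalgebra. Similarly, taking $S_1=S$ and $S_2=S(\Phi)$, one sees that $[V,|\gG|]$ is spanned by centers of tees with one end in $S$; if all such centers belong to $S$ then $V$ is an ideal. For (3), apply (1) with $S_1=S_2=S(\Phi)$, which shows that $|[\gG,\gG]|$ is spanned by center vertices of tees in $\Phi$. It suffices to observe, from the type classification of vertices in subsection \ref{TypeVertex}, that a vertex which serves as the center of at least one tee of $\Phi$ is either a center vertex (if it is the center of every tee in which it occurs) or a mixing vertex (otherwise), and never a pure end vertex.

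There is no real obstacle here: the lemma is essentially a translation of the formula for the bracket into the language of tees. The only step requiring a brief justification is the reverse inclusion in (1), where one must use the linear independence of base vectors $e_k$ to isolate individual centers from the sum defining $[E_1,E_2]$.
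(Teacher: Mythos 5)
Your forward inclusion --- that $[V_1,V_2]$ lies in the span of the centers of the tees $\lfloor E_1,E_2|C\rceil\in\Phi$ with $E_i\in S_i$ --- is correct, and it is in substance all that the paper itself establishes: its proof of this lemma is the one-liner that assertion (1) is obvious and the others are immediate consequences, and both assertions (2)--(3) and the later application in proposition~\ref{coax-str} use only the containment. Your deductions of (2) and (3) from that containment are fine, including the observation that a vertex occurring as the center of some tee of $\Phi$ cannot be an end vertex of $\Phi$ and is therefore either a center vertex or a mixing one.

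The reverse inclusion is where your argument genuinely fails, and in fact the equality that a literal reading of (1) asserts is false. For a fixed pair of ends $(E_1,E_2)$ there is exactly \emph{one} bracket
$[E_1,E_2]=\sum_{\lfloor E_1,E_2|C\rceil\in\Phi}\alpha_{(E_1,E_2|C)}\,C$;
there is no finite linear system to invert, only a single vector, and the linear independence of the base vectors does not let you extract the individual summands of one vector from its span. Concretely, take $\Phi=\{\lfloor 1,2|3\rceil,\,\lfloor 1,2|4\rceil\}$ (a $(2,2)$-ped, compatible by lemma~\ref{c1} since the two tees share both ends), $\gG=\lfloor 1,2|3\rceil+\lfloor 1,2|4\rceil$, and $S_1=\{e_1\}$, $S_2=\{e_2\}$: then $[V_1,V_2]=\mathrm{Span}\{e_3+e_4\}$ is one-dimensional, while the span of the relevant centers is $\mathrm{Span}\{e_3,e_4\}$. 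So assertion (1) must be read as a containment; your attempt to upgrade it to an equality should be dropped rather than repaired, after which the rest of your proof stands.
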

\begin{proof}
The first assertion is obvious, while the others are immediate consequences of it.
\end{proof}

\begin{proc}\label{coax-str}
Let $\gG$ be a coaxial Lie algebra such that $\Phi=\Phi_{\gG}$ is a 
tee-cluster. Then
\begin{enumerate} 
\item the subspace  of $|\gG|$ spanned by $S(\Phi_h)$ (resp., by 
$S(\Phi_{end})$) supports an ideal $\gH$ (resp., $\gR$)  of $\gG$.
\item $[\gH,\gR]=0$;
\item  $\gu=\gH\cap\gR$ is a central ideal of $\gG$ whose support 
is the subspace spanned by center vertices of $\Phi$;  
\item the quotient algebra $\gG/\gR=\gH/\gu$ is the direct sum of 
3-dimensional simple Lie algebras, associated with triangles contained in $\Phi$; 
\item the second derived ideal $\gR^{(2)}=[\gR^{(1)},\gR^{(1)}]$ of $\gR$ is abelian;
$\gR^{(1)}=[\gR,\gR]$ is abelian if $\Phi$ does not contain p-twains.
\item  $\gR$ is the radical of $\gG$. 
\end{enumerate} 
\end{proc}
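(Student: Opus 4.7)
The plan is to reduce each assertion to a combinatorial statement about tees in $\Phi$ via lemma \ref{coax-sub-id}. The key preliminary observation is that $S(\Phi_h)\cap S(\Phi_{end})$ coincides exactly with the set of center vertices of $\Phi$: in $S(\Phi_h)$ the thorns are the center vertices of $\Phi$ while the triangle vertices are mixing (and so belong only to $S(\Phi_h)$), and in $S(\Phi_{end})$ the end vertices together with pyramid bottoms, p-twain bottoms, bridge centers, and the possible $C_E^{pn}$'s are all either end or mixing, never center. Crucially, a center vertex of $\Phi$ cannot appear as an end of any tee in $\Phi$, by definition of a center vertex.

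For (1), apply lemma \ref{coax-sub-id}(2): if $\theta\in\Phi$ has an end in $S(\Phi_h)$, that end is not a center vertex and hence must be a triangle vertex. The basic property of hedgehogs (subsection \ref{TeeClusters}), combined with the compatibility criterion of proposition \ref{123}, then forces $\theta$ to lie in $\Phi_h$ outright, or to have both ends in the same triangle with its center also in $S(\Phi_h)$ — any other placement of the center yields an incompatibility with a tee of a pyramid, twain, or bridge already present. In all surviving cases all three vertices of $\theta$ lie in $S(\Phi_h)$, so $\gH$ is an ideal; the mirrored argument gives $\gR$. Assertion (2) follows from the same analysis via lemma \ref{coax-sub-id}(1), since no tee can have one end at a triangle vertex and the other end in $S(\Phi_{end})$. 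For (3), $\gu$ is an intersection of ideals, supported on the span of the center vertices of $\Phi$, and these are central in $\gG$ because no tee has a center vertex as an end. Claim (4) is then the isomorphism theorem $\gG/\gR\cong\gH/\gu$, followed by the observation that in $\gH/\gu$ the ee-casings of triangles die (their centers are thorns, hence in $\gu$), leaving only the triangle tees; as the triangles of the hedgehog are pairwise vertex-disjoint, $\gH/\gu$ decomposes into the direct sum of the three-dimensional simple Lie algebras associated with the triangles.

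For (5), the decisive enumeration uses the DT-table of subsection \ref{tee-cards}. Since every tee in $\Phi_h$ has its ends at triangle vertices (outside $S(\Phi_{end})$), tees in $\Phi_h$ contribute nothing to $[\gR,\gR]$; hence $\gR^{(1)}$ is spanned by centers of tees in $\Phi_{end}$, which, inspected type-by-type (0-tees, bridges, ec-tees in p-/s-twains and pyramids, pendent nec-tees, various e-casings, and ee-casings of p-twains), are either center vertices of $\Phi$ or mixing vertices of the specific kinds listed above. Computing $\gR^{(2)}$ then amounts to asking which tees in $\Phi$ have \emph{both} ends among these mixing vertices: scanning the same table, only the ee-tees lying in the casing of a p-twain qualify, and their centers are center vertices of $\Phi$ and hence lie in $\gu$. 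Therefore $\gR^{(2)}\subseteq\gu$ is central, and $\gR^{(3)}=0$, so $\gR^{(2)}$ is abelian. If $\Phi$ contains no p-twains, these ee-tees disappear and already $\gR^{(1)}\subseteq\gu$ is central, hence abelian. Finally, (6) is immediate: $\gR$ is solvable by (5) while $\gG/\gR$ is semisimple by (4), so $\gR$ is the radical. The main obstacle is executing the enumeration in (5) rigorously, i.e.\ verifying from the exhaustive classification of tees in $\Phi_{end}$ in subsections \ref{TypeVertex}--\ref{c-str-all} that the p-twain ee-tee is the \emph{only} tee with two mixing-vertex ends; the DT-table organizes this but the case analysis must be checked exhaustively.
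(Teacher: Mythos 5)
Your proof follows the same route as the paper's: assertions (1)--(4) are read off from lemma\,\ref{coax-sub-id}, assertion (5) is obtained by locating the centers of the tees contributing to $[\gR,\gR]$ and then to $[\gR^{(1)},\gR^{(1)}]$ via the DT-table (with the ee-casing of p-twains as the only source of second-derived brackets), and (6) combines (4) and (5). The paper dispatches (1)--(4) with a one-line appeal to lemma\,\ref{coax-sub-id}; your expanded justification of those items is consistent with it, and your treatment of (5) is the paper's argument.

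One sentence should be corrected. In the no-p-twain case of (5) you assert that ``already $\gR^{(1)}\subseteq\gu$''. That inclusion is false in general: $|\gR^{(1)}|$ is spanned by the centers of tees with both ends in $S(\Phi_{end})$, and these centers include mixing vertices of $\Phi$ --- bottom vertices of pyramids $P_E$ built from s-twains, centers of bridges, and so on --- which are not center vertices of $\Phi$ and hence do not lie in $|\gu|$, even when $\Phi$ has no p-twains. The conclusion you want follows instead from the computation you have already carried out for $\gR^{(2)}$: when $\Phi$ contains no p-twains there are no ee-tees in $\Phi_{end}$, so no tee of $\Phi$ has both of its ends among the vertices spanning $|\gR^{(1)}|$, and therefore $[\gR^{(1)},\gR^{(1)}]=0$ directly, i.e., $\gR^{(1)}$ is abelian. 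Nothing else in your argument is affected by this repair.
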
 
\begin{proof}
Assertions (1)-(4 ) directly follow from lemma\,\ref{coax-sub-id}. By the third assertion
of this lemma, $|[\gR,\gR]|$ belongs to the subspace  spanned by center and mixing
vertices of $\Phi_{end}$. Therefore, $|[\gR^{(1)},[\gR^{(1)}]|$ 
belongs to the subspace spanned by centers of \emph{ee}-tees contained in
$\Phi_{end}$. But, according to DT-table, \emph{ee}-tees form the \emph{ee}-casing
of p-twains contained in $\Phi$, and hence their centers are center vertices of  $\Phi$. 
This shows that  $[\gR^{(1)},[\gR^{(1)}]\subset\gu$. In particular, 
$[\gR^{(1)},[\gR^{(1)}]=\{0\}$, if $\Phi$ does not contain p-twains. This proves assertion
(5). Finally, the last assertion directly follows from $(4)$ and $(5)$ ones.
\end{proof}
\begin{cor}
Let $\gG$ be a coaxial Lie algebra and $\gR$ the radical of it. Then $\gR^{(3)}=0$ 
and the semisimple part of $\gG$ is isomorphic to a direct sum of 3-dimensional 
simple Lie algebras.
\end{cor}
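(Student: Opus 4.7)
The plan is to bootstrap from proposition \ref{coax-str}, which already gives both conclusions when $\Phi_{\gG}$ is a tee-cluster, and to dispatch the remaining cases by a case split on connected components and on the presence of triangles.

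First I would reduce to the connected case via the direct-sum decomposition (\ref{componentwise}): writing $\gG = \gG_{I_1}\oplus\cdots\oplus\gG_{I_m}\oplus\boldsymbol{\gamma}_l$ according to the connected components of $\Upsilon_{\gG}$, one observes that taking the radical commutes with direct sums and so does forming the derived series, while the abelian summand contributes only to the radical (and trivially to its derived series). It therefore suffices to establish $\gR^{(3)}=0$ and the claim on the semisimple part under the additional hypothesis that $\Upsilon_{\gG}$ is connected.

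Next I would split the connected case using proposition \ref{123}(3). If $\Phi_{\gG}$ contains a triangle $\Delta$, then $\Phi_{\gG}$ can contain no dee: every vertex of a triangle is the center of one of its tees, whereas a dee $\lfloor p|q\rceil$ compatible with a tee requires its origin $p$ to be an \emph{end} of that tee, and connectedness forbids the supports to be disjoint. Hence in the triangle-containing case $\Phi_{\gG}=\Phi_{\gG}^{\pitchfork}$ is a pure tee-family, contained in some tee-cluster. The proof of proposition \ref{coax-str} relies solely on lemma \ref{coax-sub-id} and on the assignment of center/end/mixing labels to vertices, both intrinsic to $\Phi_{\gG}$ itself rather than to a maximal cluster containing it; so that proposition's conclusions transfer verbatim, yielding a semisimple quotient $\gG/\gR$ which is a direct sum of 3-dimensional simple algebras (one per triangle) and a radical $\gR$ with $\gR^{(2)}$ abelian, so $\gR^{(3)}=0$.

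In the triangle-free connected case I would show that $\gG$ is solvable with derived length at most three, so the semisimple part is zero and $\gR=\gG$. Using lemma \ref{coax-sub-id}(1), $\gR^{(1)}=[\gG,\gG]$ lies in the span of centers of tees and ends of dees in $\Phi_{\gG}$; absence of triangles forces all tees contributing to $\gR^{(2)}$ to be \emph{ee}-tees (these exclusively populate \emph{ee}-casings of p-twains per the DT-table), whose centers are automatically center vertices of $\Phi_{\gG}$ and hence central in $\gG$. This places $\gR^{(2)}$ in an abelian subspace, giving $\gR^{(3)}=0$. For connected subfamilies containing dees the classification of dee-clusters (proposition \ref{doubles} and the clusters $C_{m,n}$ of subsection \ref{dee-clusters}) shows the associated algebra is an explicit semidirect product of an abelian piece with an abelian ideal plus, at most, the compatible tees of $C_{1,n}$ or $C_{2,n}$; direct inspection via lemma \ref{coax-sub-id} gives $\gR^{(2)}=0$ there.

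The main obstacle is the triangle-free step: one must check uniformly, across pure-tee, pure-dee and mixed connected compatible families, that $\gR^{(2)}$ always falls into a central subspace of $\gG$. The cleanest route I foresee is to prove a strengthening of lemma \ref{coax-sub-id}(3) stating that $[\gR^{(1)},\gR^{(1)}]$ is spanned by centers of \emph{ee}-tees in $\Phi_{\gG}$, combined with the basic property of p-twains (which forces those centers to be center vertices of $\Phi_{\gG}$, hence central in $\gG$); everything else then collapses to bookkeeping on the structural groups (twains, pyramids, multipeds, spiders) already inventoried.
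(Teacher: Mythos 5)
There is a genuine gap, and it sits in the hinge of your case split. You claim that a connected $\Phi_{\gG}$ containing a triangle can contain no dee, on the grounds that ``connectedness forbids the supports to be disjoint.'' That inference is false: connectedness of $\Upsilon_{\Phi}$ only guarantees a \emph{path} between the dee and the triangle, not a common vertex. A dee can be trivially compatible (vertex-disjoint) with every tee of the triangle while being tied to it through intermediate structures. Concretely, take the triangle on $e_1,e_2,e_3$, its casing tees $\lfloor e_i,e_j|e_4\rceil$ (so $e_4$ is a center vertex), the tee $\lfloor e_5,e_6|e_4\rceil$, and the dee $\lfloor e_5|e_7\rceil$: every pair is compatible by proposition~\ref{123}, the graph is connected through $e_4$, yet the family contains both a triangle and a dee. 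The paper's framed tee-clusters with nonempty hedgehog part are a whole class of such examples. So the mixed case (triangles together with dees) falls through both branches of your argument: it is not covered by proposition~\ref{coax-str} (which needs a pure tee-family) and not by your triangle-free solvability analysis. You also concede that the triangle-free branch itself still needs a uniform verification that $\gR^{(2)}$ lands in a central subspace across pure-dee and mixed families; the dee-cluster classification you invoke only covers pure dee-families, not connected families mixing dees with twains, pyramids, bridges, etc.

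For comparison, the paper disposes of this corollary in one line: $\Phi_{\gG}$ is contained in a tee-cluster, so proposition~\ref{coax-str} applies. That argument is only meaningful when $\Phi_{\gG}$ is a tee-family (a family containing dees is not contained in a tee-cluster), i.e.\ the corollary as proved there really concerns $\pitchfork$-coaxial algebras; the genuinely mixed case is handled later, in the analysis of generic clusters (subsection on the structure of coaxial Lie algebras in section~\ref{clusters}), where the radical $\gG_{rad}$ is exhibited explicitly and its derived series shown to have length $\leq 3$. If you want a self-contained proof of the general statement, the right move is not a triangle/no-triangle dichotomy but the decomposition used there: pass to the cluster containing $\Phi_{\gG}$, split off the ideals supported on the hedgehog part, the rotator part, and the rest, and track the derived series through the commutation relations among them.
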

\begin{proof}
This is a direct consequence of proposition\,\ref{coax-str} and the fact that 
$\Phi_{\gG}$ is contained in a tee-cluster.
\end{proof}

\section{Generic clusters}\label{clusters} 
\bigskip

Let $\Phi$ be a compatible family of base structures.
Then $\Phi=\Phi_0\cup\Phi_1$ where $\Phi_0$ is a tee-family and
$\Phi_1$ is a dee-family. A cluster will be called \emph{generic}, if 
$\Phi_0\neq \emptyset, \Phi_1\neq\emptyset$.  
The previous analysis of dee- and tee-clusters naturally extends to generic 
ones whose structure will be described in this section.

\subsection{Framed tee-clasters}\label{fr-tee-clusters}

A natural question is: what are generic clusters one can construct by adding
some dees to a tee-cluster. To answer it we need the following 
\begin{lem}\label{diode-tee}
Let $\Phi$ be a compatible family and $\varrho\in\Phi_1$. Then 
\begin{enumerate}
\item if the origin of $\varrho$ belongs to $S(\Phi_0)$, then it is an end 
vertex of $\Phi_0$;
\item if the end of $\varrho$ belongs to $S(\Phi_0)$, then the origin of $\varrho$ 
also belongs to $S(\Phi_0)$;
\item if $\Phi_0$ is a tee-cluster, then the end of $\varrho$ can not 
be a center vertex of $\Phi_0$;
\item if $\Phi_0$ is a tee-cluster, then the end of $\varrho$ is neither a 
bottom vertex of a p-twain in $\Phi_0$, nor a center vertex of a bridge.
\end{enumerate}
\end{lem}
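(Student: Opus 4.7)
The plan is to deduce everything from Proposition~\ref{123}(3), which says that a dee $\lfloor p|q\rceil$ sharing a vertex with a tee is compatible with it precisely when $p$ is an end of the tee, combined with the structural theory of tee-clusters developed in Section~\ref{coaxial}. For~(1), I would pick any tee $\theta\in\Phi_0$ containing $p$; Proposition~\ref{123}(3) forces $p$ to be an end of $\theta$, and since this holds for every such $\theta$, the vertex $p$ is an end vertex of $\Phi_0$. For~(2), the same criterion applied to any tee $\theta\in\Phi_0$ containing $q$ yields that the origin $p$ of $\varrho$ must be an end of $\theta$, so $p\in S(\Phi_0)$. Both statements are essentially immediate applications of the compatibility criterion.

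For~(4), I would argue case by case. If $q$ is a bottom vertex of a p-twain $\wedge=\wedge_{i,j}^k\subset\Phi_0$, say $q=i$, then the tees $\lfloor j,k|i\rceil$ and $\lfloor i,k|j\rceil$ of $\wedge$ force $p\in\{j,k\}$ and $p\in\{i,k\}$ respectively, hence $p=k$; but being pendent, $\wedge$ lies inside a trey $\top\subset\Phi_0$, which contains a tee $\lfloor i,j|c\rceil$ with $c\notin\{i,j,k\}$, and compatibility of $\varrho$ with this tee forces $p\in\{i,j\}$, a contradiction. If $q$ is the center of a bridge $\beta=\lfloor E_1,E_2|q\rceil\in\Phi_0$, compatibility with $\beta$ forces $p\in\{E_1,E_2\}$, say $p=E_1$; the classification of tee-clusters containing a bridge (list~(\ref{list-k2})) ensures that at $E_2$ either $P_{E_2}\neq\emptyset$ or $C_{E_2}$ is defined, so by Lemma~\ref{bridge-str}(2)--(3) a casing tee of $\beta$ of the form $\lfloor E_2,q|B\rceil$ lies in $\Phi_0$, and compatibility of $\varrho$ with this tee forces $p=E_2$, contradicting $p=E_1$.

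For~(3), assume for contradiction that $q$ is a center vertex of $\Phi_0$. Every tee of $\Phi_0$ with center $q$ must have $p$ as one of its ends, so fixing any such $\theta=\lfloor a,b|q\rceil$ we get $p\in\{a,b\}$; by~(1) the vertex $p$ is then an end vertex of $\Phi_0$, and the collection of all tees in $\Phi_0$ with center $q$ forms a multiplex with common origin $p$ and common center $q$. The main obstacle is ruling out this configuration, and I expect this to be the hardest step: the argument proceeds by analyzing which structural group is responsible for producing $q$ as a center vertex (thorn of a hedgehog, ee-casing of a p-twain, casing of a pyramid, multiplex, or bridge), and in each case exhibiting two tees in the casing with center $q$ whose end-pairs have empty intersection. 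For instance, if $q$ is a thorn of a hedgehog whose base triangle is $\{e_i,e_j,e_k\}$, then $\lfloor e_i,e_j|q\rceil$, $\lfloor e_j,e_k|q\rceil$, $\lfloor e_k,e_i|q\rceil$ all belong to $\Phi_0$ and no single vertex can serve as $p$; the remaining cases are handled uniformly using Lemmas~\ref{ee-str}, \ref{s-twain-str}, and \ref{bridge-str}.
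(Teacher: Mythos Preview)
Your arguments for (1), (2), and the p-twain half of (4) are correct and match the paper's (which just cites Proposition~\ref{123} and the fact that a p-twain sits inside a trey). For (3) your case analysis is also sound, but the paper packages it more efficiently: it simply observes that every center vertex $q$ of a tee-cluster is the center of a \emph{cross} or a \emph{tripod} (this is what your case-by-case check amounts to), and then notes that a dee $\lfloor p|q\rceil$ is incompatible with any cross or tripod centered at $q$, since the intersections of end-pairs are empty. This is the same content, just stated once instead of several times.

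The bridge half of (4) has a genuine gap. Your claim that ``at $E_2$ either $P_{E_2}\neq\emptyset$ or $C_{E_2}$ is defined'' is drawn from list~(\ref{list-k2}), which only covers $n_e=2$; but bridges occur for $n_e\geq 3$ as well (Corollary~\ref{generic ID} allows $\mathbf{p}=\mathbf{t}=0$ with $\mathbf{B}\neq 0$ whenever $k\geq 1,\ l\geq 4$), and there the casing tee $\lfloor E_2,q|B\rceil$ you invoke need not exist in the form you want. The paper avoids this entirely by going back to the \emph{definition} of a bridge: $\lfloor E_1,E_2|q\rceil$ is a bridge precisely when the 2-catena tee $\lfloor E_1,q|B\rceil$ and a type~II tee $\lfloor E_2,q|Q\rceil$ both lie in $\Phi_0$ (Lemma~\ref{two-c-str}). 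Compatibility of $\lfloor p|q\rceil$ with the first forces $p=E_1$, with the second forces $p=E_2$; contradiction. This is what ``manifestly impossible by Proposition~\ref{123}'' means, and it needs no classification at all.
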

\begin{proof}
First two assertions are direct consequences of proposition\,\ref{123} and 
definitions. The third one follows from the fact that a dee whose end is 
the center of a cross (resp., a tripod) is not compatible with this cross 
(resp., the tripod). Indeed, if  $C$ is a center vertex of a tee-cluster 
$\Psi$, then  $\Psi$ contains a cross, or a tripod with the center at 
$C$ as one can see from the description of tee-clusters.
Concerning the last assertion, observe that a dee whose end is a side 
vertex of a trey is not compatible with this trey. But a p-twain is contained 
in a trey belonging to $\Phi_0$. Finally, the center of
a bridge is  manifestly impossible by proposition\,\ref{123}.
\end{proof}

Diagrams of doubles, rotators and spiders are illustrated in Fig. 6.

\begin{figure}[htb]
$$
\xy
(1,-6)*{\bullet};    (1,6)*{\bullet};     (0.4,-5.6)*{}="p1";   (1.6,-6)*{}="p2";   
(0.4,6)*{}="q1";   (1.6,5.6)*{}="q2";   {\ar@{->} "q1"; "p1"};     {\ar@{.>} "p2"; "q2"};
(1,11)*{{\text {\it \tiny Double and its icon}}}; (12,-3)*{\circ}; (12,6)*{\circ}; (11.4,-2.6)*{}="p11";   
(12.6,-2.6)*{}="p12";   (11.4,5.6)*{}="q11";   (12.6,5.6)*{}="q12"; 
"p11";"q11"; **\dir{-}; "p12";"q12"; **\dir{-};
(43,11)*{{\text {\it \tiny (2,3)-rotator}}};(37,-6)*{\bullet};    (37,6)*{\bullet};     
(36.4,-5.6)*{}="r1";   (37.6,-6)*{}="r2";   
(36.4,6)*{}="s1";   (37.6,5.6)*{}="s2";   {\ar@{->} "s1"; "r1"};     {\ar@{.>} "r2"; "s2"};
(43,1.6)*{*}="r11"; (43,-3.6)*{*}="r21"; (43,6.8)*{*}="r31"; "r11";"s2" **\dir{--};  
"r11";"r2" **\dir{--};  "r21";"s2" **\dir{.}; "r21";"r2" **\dir{.}; "r31";"s2" **\dir{-};    
"r31";"r2" **\dir{-};  (49,-6)*{\bullet}; (49,6)*{\bullet};  (48.4,-5.6)*{}="t1";   (49.6,-6)*{}="t2";   
(48.4,6)*{}="u1";   (49.6,5.6)*{}="u2";   {\ar@{->} "u1"; "t1"};     {\ar@{.>} "t2"; "u2"};
 "r31";"t1" **\dir{-};  "r31";"u1" **\dir{-};  "r21";"t1" **\dir{.}; "r21";"u1" **\dir{.}; 
 "r11";"t1" **\dir{--}; "r11";"u1" **\dir{--};  
 (75,11)*{{\text {\it \tiny (3,2)-spider}}}; (75,-6)*{\bullet}="a1";   (77.5,-0.5)*{\bullet}="a2"; 
 (83,1)*{\bullet}="a3";  
 (70,1.5)*{\bullet}="a4"; (79,7)*{\bullet}="a5";    {\ar@{->} "a2"; "a1"};  
  {\ar@{.>} "a4"; "a1"};   {\ar@{-->} "a3"; "a1"};  
  {\ar@{-->} "a4"; "a5"};   {\ar@{.>} "a2"; "a5"};{\ar@{->} "a3"; "a5"}; 
\endxy
$$
\end{figure}
\begin{center}Fig. 7. Doubles, rotators and spiders. \end{center}
\medskip

Let $\Psi$ be a  tee-cluster. Add to it all dees of the form 
$\lfloor E|B\rceil$ with $E$ being an end vertex of $\Psi$ and $B$ a 
bottom vertex of $P_E$  ($B=C_E$, if $P_E=\emptyset$). The so-obtained 
family, denoted by $\bar{\Psi}$, is, obviously, compatible. It will be 
called a \emph{framed}  tee-cluster. It will be shown (proposition\,\ref{framed cluster}) 
that a framed tee-cluster, which is different from a $(k,2|m)$-hybrid, is also a cluster.                    

We stress that the framed twain is a 3-dimensional cluster. So, a framed $k$-pyramid
is a cluster, if $k\geq2$. The \emph{framing} of a framed $k$-pyramid $\bar{P}$ is the
$(1,k)$-spider, which is formed by all dees $\lfloor E|B\rceil$ where $E$ and $B$ are 
top  and bottom vertices of $P$, respectively.

Framed tee-clusters do not exhaust generic ones as the following 
two examples show. 

\textbf{Rotator.} An $m$-\emph{rotator}, $m\geq 0$, is a family equivalent 
to 
$$
\{\lfloor e_1|e_2\rceil, \lfloor e_2|e_1\rceil, \lfloor e_1,e_2|e_3\rceil,\dots,\lfloor e_1,e_2|e_{m+2}\rceil\}.
$$
It is easily verified that an $m$-rotator is a generic cluster if 
$m>0$. The double $\{\lfloor e_1|e_2\rceil, \lfloor e_2|e_1\rceil\}$ is its 
\emph{axis}, $e_1, e_2$ are its \emph{ends} and $ e_i$'s, $i=3,\dots,m+2$, 
are its \emph{thorns}. 

\emph{Basic properties of rotators}. No dee is nontrivially compatible 
with a rotator. A tee $\theta=\lfloor E_1,E_2|C\rceil$ is nontrivially 
compatible with a rotator $\mathbf{r}$ if either $E_1,E_2$ are ends of 
$\mathbf{r}$, or $C$ is a thorn of $\mathbf{r}$, which is the unique common 
vertex of $\theta$ and $\mathbf{r}$. 

\textbf{D-bridge.} This is a family equivalent to $\{\lfloor 
e_1|e_3\rfloor, \lfloor e_2|e_3\rceil, \lfloor e_1,e_2|e_3\rceil\}$. It is, 
obviously, a (generic) cluster. We shall refer to $e_1, e_2$ as the ends of 
this d-bridge, to $e_3$ as its center and to $\lfloor e_1,e_2|e_3\rceil\}$ as its axis.
We shall say also that a d-bridge \emph{connects} its ends. $\mathbf{Bd}$ will 
be standard notation of a d-bridge.

\emph{Basic properties of d-bridges}. If one vertex of a  tee/dee, which is 
compatible with a d-bridge $\mathbf{Bd}$, is the center vertex of $\mathbf{Bd}$, 
then this tee/dee belongs to $\mathbf{Bd}$. 

\textbf{Raft.} Let $\mathcal{R}=\{\mathbf{Bd}_1,\dots,\mathbf{Bd}_k\}$ be a compatible 
family of d-bridges whose end vertices belongs to a subset $\{E_1,\dots,E_l\}$ of base 
vectors. Obviously, $2\leq l\leq 2k$. The family $\mathcal{R}$ is compatible iff center 
vertices of d-bridges $\mathbf{Bd}_i$'s differ each other. 

The union $\mathbf{Bd}_1\cup\dots\cup\mathbf{Bd}_k$ 
is called a $(k,l)$-\emph{raft}.  If $l$ differs from 2 and $2k$, then there are more
than one nonequivalent $(k,l)$-rafts. By basic property of d-bridges, a raft is a cluster
if its graph is connected. Such a cluster will be called a \emph{raft cluster}.  

Let $\mathcal{R}$ be as above and  $\{C_1,\dots,C_m\}\cap S(\mathcal{R})=\emptyset$. 
A \emph{suspended} from $\{C_1,\dots,C_m\}$ \emph{raft} (schortly, s-raft) is the union 
of  $\mathcal{R}$ and the $(l,m)$-ped whose center vertices are $C_i$'s and the end 
vertices are those of $\mathcal{R}$. As it is easy to see, a suspended raft is a cluster, 
if $l>2$. Diagrams of d-bridges and rafts are shown in Fig. 8.
                           
\scalebox{1} 
{
\begin{pspicture}(0,-1.2047187)(10.344375,1.1647187)
\psline[linewidth=0.024cm](5.7464943,-0.46303362)(6.1754313,0.25919443)
\psline[linewidth=0.024cm](6.4922557,-0.4830336)(6.0633187,0.23919444)
\psdots[dotsize=0.152](6.129375,-0.41128126)
\psdots[dotsize=0.152](6.529375,-0.41128126)
\psdots[dotsize=0.152](5.729375,-0.41128126)
\psline[linewidth=0.024cm](5.709375,-0.47128126)(6.549375,-0.47128126)
\psline[linewidth=0.024cm](5.709375,-0.35128126)(6.509375,-0.35128126)
\psdots[dotsize=0.152,dotangle=59.293606](5.909375,-0.07128125)
\psdots[dotsize=0.152,dotangle=59.293606](6.113631,0.27263686)
\psline[linewidth=0.024cm](5.649375,-0.39128125)(6.05183,0.2860793)
\psdots[dotsize=0.15048584,dotangle=-59.293606](6.329375,-0.07128125)
\psline[linewidth=0.024cm](6.589375,-0.37128124)(6.169375,0.32871875)
\psline[linewidth=0.024cm](1.329375,0.84871876)(2.089375,0.08871875)
\psdots[dotsize=0.152](0.349375,0.10871875)
\psdots[dotsize=0.152](1.349375,0.90871876)
\psdots[dotsize=0.152](2.149375,0.10871875)
\psline[linewidth=0.024cm,linestyle=dashed,dash=0.16cm 0.16cm,arrowsize=0.05291667cm 2.0,arrowlength=1.4,arrowinset=0.4]{->}(2.209375,0.12871875)(1.429375,0.92871875)
\psline[linewidth=0.024cm,linestyle=dashed,dash=0.16cm 0.16cm,arrowsize=0.05291667cm 2.0,arrowlength=1.4,arrowinset=0.4]{->}(0.349375,0.18871875)(1.289375,0.90871876)
\psline[linewidth=0.024cm](0.369375,0.068718754)(1.329375,0.8287187)
\psdots[dotsize=0.152](1.349375,-0.49128124)
\psdots[dotsize=0.152](1.749375,-0.49128124)
\psdots[dotsize=0.152](0.949375,-0.49128124)
\psdots[dotsize=0.152](0.949375,-0.49128124)
\psline[linewidth=0.024cm](0.929375,-0.5512813)(1.769375,-0.5512813)
\psline[linewidth=0.024cm](0.929375,-0.43128124)(1.729375,-0.43128124)
\psdots[dotsize=0.152,dotangle=-39.45292](3.569375,0.18871875)
\psdots[dotsize=0.152,dotangle=-39.45292](3.2605162,0.44289634)
\psdots[dotsize=0.152,dotangle=-39.45292](3.878234,-0.06545883)
\psdots[dotsize=0.152,dotangle=-39.45292](3.878234,-0.06545883)
\psline[linewidth=0.024cm](3.85555,-0.12449653)(3.2069466,0.4092764)
\psline[linewidth=0.024cm](3.9318035,-0.031838883)(3.3140857,0.47651628)
\psellipse[linewidth=0.024,dimen=outer](4.429375,0.02871875)(0.572,0.312)
\psellipse[linewidth=0.024,dimen=outer](4.419375,-0.13128126)(0.562,0.312)
\psdots[dotsize=0.152](4.389375,0.24871875)
\psdots[dotsize=0.152](4.409375,-0.35128126)
\psdots[dotsize=0.152](4.949375,-0.07128125)
\psellipse[linewidth=0.024,dimen=outer](8.709375,-0.07128125)(0.572,0.312)
\psellipse[linewidth=0.024,dimen=outer](8.699375,-0.23128125)(0.562,0.312)
\psdots[dotsize=0.152](8.669375,0.14871874)
\psdots[dotsize=0.152](8.689375,-0.45128125)
\psdots[dotsize=0.152](9.229375,-0.17128125)
\psline[linewidth=0.024cm]{cc-}(8.769375,1.0687188)(8.089375,-0.13128126)
\psline[linewidth=0.024cm]{cc-}(8.769375,1.0687188)(9.289375,-0.111281246)
\psdots[dotsize=0.152](8.769375,1.0687188)
\psdots[dotsize=0.152](8.169375,-0.15128125)
\usefont{T1}{ptm}{m}{it}
\rput(0.29421875,-0.48128125){icon}
\usefont{T1}{ptm}{m}{it}
\rput(5.219531,-0.95128125){\small (3,3)-rafts}
\usefont{T1}{ptm}{m}{it}
\rput(8.931719,-0.93128127){\small suspendent (2,2)-raft}
\usefont{T1}{ptm}{m}{it}
\rput(1.4992187,-0.9912813){\small d-bridge}

\end{pspicture} 
}
\begin{center} Fig. 8.D-bridges and rafts. \end{center}


\subsection{Types of dees in a generic cluster}

Now we pass to a systematic study of generic clusters by starting with the 
necessary terminology. Let $\Phi$ be a compatible family. A vertex $e_i\in 
S(\Phi)$ will be called an \emph{end vertex} of $\Phi$, if $e_i$ is either or
both the origin of a dee $\varrho\in\Phi_1$ and an end vertex of $\Phi_0$. 
Assertion (1) of Lemma\,\ref{diode-tee} guarantees correctness of this 
terminology. A center vertex of $\Phi_0$, which is not a vertex of 
$S(\Phi_1)$, will be called a \emph{t-center vertex} of $\Phi$.  Similarly, 
the end of a dee in $\Phi_1$, which does not belong to $S(\Phi_0)$, will be 
called a \emph{d-center vertex} of $\Phi$. 
\begin{lem}\label{cluster-vertex} Let $\Phi$ be a generic cluster. Then it holds:
\begin{enumerate}
\item  If a double belongs to $\Phi_1$, then it is the axis of the 
belonging to $\Phi$ rotator whose thorns are t-center vertices of 
$\Phi$. 
\item If $E$ and $D$ are an end and an d-center vertices of $\Phi$, respectively, 
then the dee $\lfloor E|D\rceil$ belongs to $\Phi$.
\item If $E_1, E_2$ are end vertices of  $\Phi$, while $C$ is an t-center of  it, then 
the tee $\lfloor E_1,E_2|C\rceil$ belongs to $\Phi$.
\end{enumerate}
\end{lem}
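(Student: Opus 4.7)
The plan is to apply the blocking rule from subsection 8.2: because $\Phi$ is a maximal compatible family, a base lieon whose vertices all lie in $S(\Phi)$ will belong to $\Phi$ as soon as one verifies that no element of $\Phi$ is incompatible with it. Incompatibilities between base lieons are classified by proposition \ref{123}, and I will combine that classification with the definitions of end, t-center and d-center vertices together with the basic property of doubles from subsection \ref{dee-clusters} (and, for parts (2)--(3), with part (1) itself applied to any double that happens to appear).

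For (1), starting from $D=\{\lfloor p|q\rceil,\,\lfloor q|p\rceil\}\subset\Phi_1$, the basic property of doubles forces every base lieon of $\Phi$ compatible with $D$ to be a $\pitchfork$-lieon with both ends in $\{p,q\}$; hence $\Phi_1=D$, $S(\Phi_1)=\{p,q\}$, and each tee of $\Phi_0$ has the form $\lfloor p,q|C\rceil$. Since any two such tees are compatible by Lemma \ref{c1}, for every t-center $C$ of $\Phi$ the tee $\lfloor p,q|C\rceil$ will be unblocked, and cluster maximality will place it in $\Phi_0$. Conversely, a thorn $C$ of the resulting rotator is the center of a tee in $\Phi_0$, and since $C\notin\{p,q\}=S(\Phi_1)$ it lies outside $S(\Phi_1)$, so it is a t-center.

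For (2), setting $\tau=\lfloor E|D\rceil$, I will rule out blockers as follows. A potential blocker $\theta\in\Phi_0$ would, by proposition \ref{123}(3), require either $E$ to be the center of $\theta$ or $D$ to be a vertex of $\theta$; but $E$ is never a center of a tee of $\Phi_0$ (as an end vertex of $\Phi$, either because $E\notin S(\Phi_0)$ or because $E$ is an end of every tee of $\Phi_0$ it belongs to) and $D\notin S(\Phi_0)$ by the d-center hypothesis, so no such $\theta$ exists. A potential blocker $\rho\in\Phi_1$ must, by proposition \ref{123}(2), be of the form $\lfloor D|y\rceil$ with $y\neq E$ or $\lfloor x|E\rceil$ with $x\neq D$; in either case the shared vertex is simultaneously the origin of one dee and the end of another, which will either yield outright incompatibility with the dee witnessing the d-center (respectively the end-vertex) status, or will produce a double. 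In the latter case part (1) together with the basic property of doubles leaves room only for $\tau$ itself being the second dee of that double, which already places $\tau$ in $\Phi_1$. Part (3), for $\tau=\lfloor E_1,E_2|C\rceil$, will be dispatched along the same lines: the role constraints force every tee of $\Phi_0$ to meet $\{E_1,E_2\}$ only through its ends and $C$ only through its center, so Lemmas \ref{a1}--\ref{c1} rule out tee blockers; dee blockers are excluded by $C\notin S(\Phi_1)$ together with the same double-versus-end-vertex analysis used in part (2).

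The hard part will be the bookkeeping for vertices playing several roles at once; the only genuinely delicate situation is the appearance of a double involving $D$ in (2) or an $E_i$ in (3), and this is precisely where part (1) must be invoked to pin down the structure and close the case analysis.
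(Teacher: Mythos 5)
Your argument for part (1) rests on a misreading of the basic property of doubles. That property (like all of Proposition \ref{123}) constrains only base lieons that are \emph{nontrivially} compatible with the double, i.e.\ that share a vertex with $\{p,q\}$; a lieon disjoint from the double is trivially compatible with it and is not constrained at all. So from $D\subset\Phi_1$ you may conclude that every element of $\Phi$ \emph{meeting} $\{p,q\}$ is a tee $\lfloor p,q|\cdot\rceil$, but not that $\Phi_1=D$ or that every tee of $\Phi_0$ has ends $p,q$: a generic cluster can contain a double together with triangles, framed pyramids, further dees and even d-center vertices, all attached through the thorns (this is exactly what the rotator join operation produces, and the cards of type I admit $n_r>0$ and $n_d>0$ simultaneously). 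The conclusion of (1) is still reachable --- for a t-center $C$ the tee $\lfloor p,q|C\rceil$ can be blocked neither by a tee (anything meeting $\{p,q\}$ shares an end with it, anything meeting only $C$ has $C$ as its center since $C$ is a center vertex of $\Phi_0$) nor by a dee (a dee meeting $C$ would need its origin in $\{p,q\}$, which the double forbids) --- but your identification of $\Phi$ with the rotator proves something false, and it is precisely this false global rigidity that you later invoke to close the delicate cases of (2) and (3).

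The more serious gap is in the dee-blocker analysis of (2) (and, identically, of (3)). You exclude a blocking dee $\lfloor x|E\rceil$ by playing it against ``the dee witnessing the end-vertex status'' of $E$. But by the definition in this section an end vertex of $\Phi$ need only be an end vertex of $\Phi_0$; it need not be the origin of any dee, and then there is no witnessing dee and no clash-or-double alternative. Proposition \ref{123}(3) is perfectly consistent with a dee $\lfloor x|E\rceil$ whose end $E$ is an end vertex of $\Phi_0$ (it merely forces every tee through $E$ to have $x$ as its other end), and such a dee is genuinely incompatible with $\lfloor E|D\rceil$. To close this case one must show that in a generic cluster the end of a dee is never an end vertex of $\Phi_0$ outside the double situation --- equivalently, that an immersed dee always completes to a poker, so that its end is the center of some tee. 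That is a maximality argument (essentially Lemma \ref{immersed}(1), proved later in the paper), not a pairwise application of Proposition \ref{123}; and your fallback ``if a double appears, part (1) pins down the structure'' does not rescue it, for the reason given above. These are exactly the points that the paper's one-line ``direct consequence of Proposition \ref{123}'' also leaves unaddressed, so the burden of supplying them is real.
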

\begin{proof}
A direct consequence of proposition\,\ref{123}.
\end{proof}

Denote by $\Phi_1^{db}$ (resp., $\Phi_1^{br}$) the union of all doubles 
(resp., dees in d-bridges) that belong to $\Phi$. A dee $\varrho\in\Phi$ will be 
called \emph{immersed} if its vertices belong to $S(\Phi_0)$ and
$\varrho\notin\Phi_1^{db}\cup\Phi^{br}$. A \emph{spike} (resp., a 
\emph{poker}) is a family equivalent to $\{\lfloor e_i|e_j\rceil, \lfloor 
e_i,e_j|e_k\rceil\}$ (resp., $\{\lfloor e_i|e_j\rceil, \lfloor 
e_i,e_k|e_j\rceil\}$). Obviously, an immersed dee belongs to at least one 
spike or poker contained in $\Phi$.  Denote by $\Phi_1^{im}$ the set of all 
immersed dees in $\Phi$ and put 
$\Phi_1^{sp}=\Phi_1\setminus(\Phi_1^{im}\cup\Phi_1^{db}\cup\Phi_1^{br})$. 
So, $\Phi_1$ is the disjoint union 
$$
\Phi_1=\Phi_1^{im}\cup\Phi_1^{db}\cup\Phi_1^{sp}\cup\Phi_1^{br}.
$$ 
Obviously, the end of a dee in $\Phi_1^{sp}$ is a d-center vertex, and, 
as it follows from lemma\,\ref{cluster-vertex}\,(2), $\Phi_1^{sp}$ is a 
$(k,l)$-spider (see subsection\,\ref{dee-clusters} ) where $k$ (resp., $l$) 
is the number of end (resp., d-center) vertices of $\Phi$.    

\begin{lem}\label{cluster-d-center} 
Let $\Phi$ be a generic cluster with $k$ end vertices. 
Then $k\geq 3$, if $\Phi$ has at least one d-center vertex.
\end{lem}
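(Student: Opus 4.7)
The plan is to argue by contradiction: assume $\Phi$ contains a d-center $D$ but has $k\le 2$ end vertices, and then exhibit a tee with $D$ as its center that is compatible with every element of $\Phi$. Maximality of the cluster $\Phi$ will then force this tee into $\Phi$, so $D$ would lie in $S(\Phi_0)$, contradicting the very definition of a d-center. Since $D$ is the end of some dee whose origin is an end vertex of $\Phi$, one automatically has $k\ge 1$, and lemma\,\ref{cluster-vertex}(2) gives $\lfloor F|D\rceil\in\Phi$ for every end vertex $F$; proposition\,\ref{123} will be the workhorse for all the compatibility bookkeeping.

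For $k=2$, with end vertices $E,F$, I would take $\tau=\lfloor E,F|D\rceil$. Compatibility with the dees $\lfloor E|D\rceil,\lfloor F|D\rceil$ — and with any other dee $\lfloor E|X\rceil$, $\lfloor F|X\rceil$, or possible double $\lfloor E|F\rceil,\lfloor F|E\rceil$ — follows from proposition\,\ref{123}(3), since the origin of each such dee is an end of $\tau$. For any tee $\theta'\in\Phi_0$: if $E\in\theta'$ (resp.\ $F\in\theta'$), then $E$ (resp.\ $F$) must be an end of $\theta'$ (end vertices of $\Phi$ lying in $S(\Phi_0)$ are end vertices of $\Phi_0$ by lemma\,\ref{diode-tee}(1)), giving a common end with $\tau$; if $\theta'$ contains neither, then since $D\notin S(\Phi_0)$ it shares no vertex with $\tau$. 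Proposition\,\ref{123}(1) then yields compatibility, maximality forces $\tau\in\Phi$, and the contradiction follows.

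For $k=1$, with $E$ the unique end vertex, connectedness of $\Upsilon_\Phi$ combined with the fact that $D$ is linked to the tee-part only through $\lfloor E|D\rceil$ and lemma\,\ref{diode-tee}(2) forces $E\in S(\Phi_0)$, so $E$ is an end vertex of $\Phi_0$. I pick any tee $\theta=\lfloor E,A|B\rceil\in\Phi_0$ and consider $\tau=\lfloor E,A|D\rceil$. The genuinely non-routine step — and the place I expect the main obstacle — is ruling out ``blocker'' tees $\theta'\in\Phi_0$ with $A$ as center and not containing $E$: such a $\theta'$ would share only $A$ with $\tau$, have distinct center ($D$ vs.\ $A$) and no common end, and hence be incompatible with $\tau$ by proposition\,\ref{123}(1). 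But the same proposition applied to $\theta'$ and the witness $\theta$ demands a common center (impossible, the centers are $A$ and $B\ne A$) or a common end; the only candidate is $E$, forcing $E\in\theta'$ and contradicting the case hypothesis. All remaining compatibility checks for $\tau$ are immediate, and maximality once more delivers the contradiction $D\in S(\Phi_0)$.
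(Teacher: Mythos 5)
Your proof is correct, and for $k=2$ it coincides with the paper's: both force the tee $\lfloor E,F|D\rceil$ into $\Phi$ by maximality. The genuine divergence is in the $k=1$ case. The paper first eliminates $t$-center vertices (via $\lfloor E,D|C\rceil$), then takes a rooted-at-$E$ tee $\theta=\lfloor E,P|Q\rceil$ and forces in $\lfloor E,D|Q\rceil$, i.e.\ a tee with $D$ as an \emph{end}; excluding its blockers requires identifying a twain $\{\lfloor E,P|Q\rceil,\lfloor E,Q|P\rceil\}$ completed to a trey whose center would be a forbidden $t$-center vertex. You instead force in $\lfloor E,A|D\rceil$, with $D$ as the \emph{center}: since $D\notin S(\Phi_0)$ a common center with any tee of $\Phi_0$ is impossible, so the only conceivable blocker is a tee with center $A$ avoiding $E$, and that is killed in one line by its incompatibility with the witness $\lfloor E,A|B\rceil$ via proposition\,\ref{123}(1). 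Your route buys a shorter argument that bypasses both the preliminary ``no $t$-center vertices'' step and the trey machinery; the paper's route, as a by-product, records the structural fact that a one-ended generic cluster with a d-center vertex would have no $t$-center vertices, which is in the spirit of the later structure theory but is not needed for the lemma itself. One small point to make explicit when writing this up: the existence of a tee rooted at $E$ (your $\theta$) rests on $E\in S(\Phi_0)$, which you correctly derive from connectedness of $\Upsilon_\Phi$ together with lemma\,\ref{diode-tee}(1),(2); state that $E$, being an end vertex of $\Phi_0$, is an end of \emph{every} tee through it, since that is what both your compatibility checks and the blocker exclusion rely on.
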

\begin{proof}
First, assume that $k=2$. Let $E_1,E_2$ be end vertices of $\Phi$. If $D$ 
is a d-vertex of $\Phi$, then $\lfloor E_i|D\rceil, \,i=1,2,$ are the only 
dees in $\Phi$ that have $D$ as one of their vertices. On the other hand, 
$\theta=\lfloor E_1,E_2|D\rceil$ is the only tee, which is compatible with these 
dees and has $D$ as one of its vertices. This shows that $\theta$ is compatible 
with $\Phi_1$ and, therefore, belongs to $\Phi$ in contradiction with the 
assumption that $D$ is a d-center vertex.

Now assume that $k=1$. Let $E$ be the end vertex  and $D$  
one of d-center vertices of $\Phi$. So, by lemma\,\ref{diode-tee}, 
$\lfloor E|D\rceil$ is the unique dee in $\Phi$ with the end at $D$. 
If $C$ is an t-center vertex of $\Phi$, then, as it is easy to check, the tee 
$\lfloor E,D|C\rceil$ is compatible with $\Phi$ and hence belongs to 
$\Phi$. This is, however, impossible, since $D$ is a d-center vertex. 
So, $\Phi$ has no t-center vertices. Consider now a rooted at $E$ tee 
$\theta=\lfloor E,P|Q\rceil\in \Phi$. Then the tee $\vartheta=\lfloor 
E,D|Q\rceil$ is compatible with $\Phi$. Indeed, $\vartheta$ can not be 
blocked by a dee $\sigma\in \Phi$, since its origin is at $E$ by the 
assumption $k=1$. On the other hand,  $\vartheta$ can be blocked only 
by a tee $\varrho\in \Phi$ one whose ends is at $Q$ and which is not rooted 
at $E$, i.e., $\varrho=\lfloor P,Q|R\rceil\in \Phi$, $R\in S(\Phi)$. Since $k=1$, 
$P$ is not the end vertex of a dee in $\Phi$ (lemma\,\ref{diode-tee}). 
Moreover, $P$ is not an end 
vertex of $\Phi$, since $P\neq E$. This implies that there is a tee 
$\rho\in \Phi$ with the center at $P$. But the unique tee, which is 
compatible with $\theta$ and $\varrho$, is 
$\rho=\lfloor E,Q|P\rceil$. Since $\{\theta,\varrho,\rho\}$ is a trey, the 
center vertex $R$ of it is an t-center vertex of $\Phi$. But, as it was
already proved, this is impossible.  
\end{proof}

\begin{cor}\label{spider in}
 Let $\Phi$ be a cluster. Then it holds:
\begin{enumerate}
\item If $\Phi_1^{sp}\neq\emptyset$, then the number $k$ of end vertices of 
$\Phi$ is greater than 2 and $\Phi_1^{sp}$ is a $(k,l)$-spider with $l$ being 
the number of d-center vertices of $\Phi$. 
\item If the center of a $(2,1)$-spider $\Sigma\subset\Phi_1$ is not 
a d-center vertex of $\Phi$,  then $\Sigma$ belongs to an unique 
d-bridge contained in $\Phi$. 
\end{enumerate}  
\end{cor}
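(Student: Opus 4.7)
The plan is to verify both assertions by combining lemmas \ref{cluster-vertex} and \ref{cluster-d-center} with the elementary compatibility rules of proposition \ref{123}.

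For (1), first observe that $\Phi_1^{sp}\neq\emptyset$ forces $\Phi$ to be generic. Indeed, if $\Phi_0=\emptyset$ then $\Phi$ is a dee-cluster, and proposition \ref{doubles} shows it is either a double (giving $\Phi_1=\Phi_1^{db}$, whence $\Phi_1^{sp}=\emptyset$) or an $(m,n-m)$-spider with $n>2$, which is never maximal among mixed compatible families because any tee $\lfloor e_{i_1},e_{i_2}|e_{j_s}\rceil$ joining two initial vertices of the spider to one of its end vertices is compatible with it by proposition \ref{123}(3). Next, every $\varrho\in\Phi_1^{sp}$ has its end outside $S(\Phi_0)$ (otherwise it would be immersed), so the end is a d-center vertex of $\Phi$, while the origin is automatically an end vertex of $\Phi$. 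Hence $\Phi_1^{sp}\neq\emptyset$ yields at least one d-center vertex, and lemma \ref{cluster-d-center} gives $k\geq 3$. Conversely, for any end vertex $E$ and d-center vertex $D$, lemma \ref{cluster-vertex}(2) places $\lfloor E|D\rceil$ in $\Phi$, and this dee lies in $\Phi_1^{sp}$ since it is not immersed ($D\notin S(\Phi_0)$), not in a double (a d-center vertex is the origin of no dee), and not in a d-bridge (whose center lies in $S(\Phi_0)$ as a vertex of the axis tee). Thus $\Phi_1^{sp}$ is precisely the $(k,l)$-spider whose initial vertices are the end vertices of $\Phi$ and whose end vertices are its d-centers.

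For (2), write $\Sigma=\{\lfloor E_1|D\rceil,\lfloor E_2|D\rceil\}$, so that the common end $D$ is the center of $\Sigma$. The hypothesis that $D$ is not a d-center vertex, combined with the fact that $D$ is the end of a dee in $\Phi_1$, means $D\in S(\Phi_0)$. Pick any tee $\tau\in\Phi_0$ having $D$ as a vertex. Since $\tau$ and $\lfloor E_i|D\rceil$ share $D$, they are not trivially compatible; by proposition \ref{123}(3) their nontrivial compatibility requires the origin $E_i$ of the spider dee to be an end of $\tau$. As $\tau$ has exactly two ends, they must be $\{E_1,E_2\}$, pinning down $\tau=\lfloor E_1,E_2|D\rceil$. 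Consequently $T:=\lfloor E_1,E_2|D\rceil$ lies in $\Phi_0\subset\Phi$, and $\Sigma\cup\{T\}$ is a d-bridge in $\Phi$ containing $\Sigma$. Uniqueness is immediate: any d-bridge containing $\Sigma$ must have $D$ as its center and $\{E_1,E_2\}$ as its ends, so its axis tee can only be $T$.

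No genuine obstacle is expected; the whole argument is a combinatorial check matching the definitions of end vertex and d-center vertex against the compatibility dichotomies of proposition \ref{123}. The only subtlety worth flagging is the reduction in (2) to the existence of a single tee in $\Phi_0$ passing through $D$: once this is secured, the two forced compatibilities with the spider dees leave no freedom at all, and $\tau=T$ follows.
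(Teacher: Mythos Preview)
Your argument for (2) is correct and is exactly what the paper does: once $D\in S(\Phi_0)$, any tee through $D$ is forced by proposition~\ref{123}(3) to have $E_1,E_2$ as its two ends, hence equals $\lfloor E_1,E_2|D\rceil$, and uniqueness of the d-bridge follows. Your argument for (1) also tracks the paper's proof (lemma~\ref{cluster-vertex}(2) plus lemma~\ref{cluster-d-center}) in its essential steps.

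There is, however, a slip in the preliminary reduction you add. You claim an $(m,n-m)$-spider with $n>2$ is never a cluster because the tee $\lfloor e_{i_1},e_{i_2}|e_{j_s}\rceil$ is compatible with it. This fails for $m\geq 3$: if $i_3\in I_0\setminus\{i_1,i_2\}$, the dee $\lfloor e_{i_3}|e_{j_s}\rceil$ shares only $e_{j_s}$ with your tee, and since its origin $e_{i_3}$ is not an end of the tee, they are incompatible by proposition~\ref{123}(3). In fact the paper records explicitly that $C_{m,n}=C_{m,n}^{\between}$ for $m\geq 3$, so such spiders \emph{are} clusters with $\Phi_0=\emptyset$. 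This does not damage the corollary: in that case $\Phi_1^{sp}=\Phi$ is already the $(m,n-m)$-spider and $k=m\geq 3$ directly. The cleanest repair is either to treat the statement as implicitly about generic clusters (both cited lemmas are stated under that hypothesis, and this is the ambient section), or to dispose of the $\Phi_0=\emptyset$ case by the one-line observation just given rather than by your compatibility claim.
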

\begin{proof}
The first assertion is a direct consequence of 
lemma\,\ref{cluster-vertex},\,(2) and lemma\,\ref{cluster-d-center}. Next, 
the center of $\Sigma$ is a vertex of a tee $\theta\in\Phi$. But being
compatible with $\Sigma$ the tee $\theta$ 
has common ends with $\Sigma$. 
\end{proof}

The following lemma clarifies the status of immersed dees.
\begin{lem}\label{immersed}
Let $\Phi$ be a cluster, $E$ be an end vertex of $\Phi$ and 
$\varrho=\lfloor E|D\rceil\in\Phi_1^{im}$. Then
\begin{enumerate}
\item $\varrho$ belongs to 
a contained in $\Phi$ poker. 
\item If $A\in S(\Phi), \,A\neq E$, is an end vertex 
of $\Phi$, or a bottom vertex of a rooted at $E$ twain in $\Phi_0$, or 
the center of a bridge in $\Phi_0$ with one end at $E$, then $\lfloor 
E,A|D\rceil\in\Phi$. 
\item If $\varrho_i=\lfloor E|D_i\rceil\in\Phi_1^{im}, \,i=1,\dots,m$, then 
$D_1,\dots,D_m$ are bottom vertices of the (possibly, ``collapsed") pyramid 
$P_E\subset\Phi_0$.
\item If $\varrho$ is the unique dee in $\Phi_1^{im}$, which is rooted at $E$,
then $D=C_E$ and vice versa. 
\end{enumerate}  
\end{lem}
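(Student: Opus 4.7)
The plan is to analyze the admissible positions of the vertex $D$ within $\Phi_0$ using proposition~\ref{123}\,(3): any tee $\theta\in \Phi_0$ sharing a vertex with $\varrho = \lfloor E|D\rceil$ must have $E$ among its ends, so $\theta$ has the form $\lfloor E,D|C\rceil$ (a spike configuration for $\varrho$) or $\lfloor E,A|D\rceil$ (a poker configuration). Since $D\in S(\Phi_0)$ by the immersion assumption, at least one such $\theta$ exists, and a repeated application of the blocking rule derived from maximality of $\Phi$ will drive the whole argument.

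For part (1), I would rule out that only spikes occur at $\varrho$. If $\lfloor E,D|C\rceil\in\Phi_0$ but no poker containing $\varrho$ lies in $\Phi$, one proposes the tee $\vartheta = \lfloor E,C|D\rceil$: a blocker of $\vartheta$ would have to share the vertex $C$ as its center or $D$ as an end with some tee in $\Phi$, yet any such candidate must itself be compatible with $\varrho$, and proposition~\ref{123}\,(3) forces it to be rooted at $E$, hence compatible with $\vartheta$. The blocking rule then puts $\vartheta\in\Phi$, so $\{\varrho,\vartheta\}$ is a poker. For part (2), given a candidate $\vartheta = \lfloor E,A|D\rceil$ with $A$ of the prescribed form, I would verify case by case (end vertex of $\Phi$, bottom of a rooted-at-$E$ twain, center of a bridge with endpoint $E$) that every potential blocker of $\vartheta$ must itself be rooted at $E$, appealing respectively to the stability of end vertices in clusters, to the basic property of twains, and to lemma~\ref{bridge-str}\,(1); the blocking rule again delivers $\vartheta\in\Phi$. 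Part (3) then follows by applying part (2) with $A = D_j$ for $j\neq i$ to produce the tees $\lfloor E,D_j|D_i\rceil\in\Phi$, which together with the cluster description in subsection~\ref{TeeClusters} exhibit the $D_i$'s as the bottom vertices of the (possibly collapsed) pyramid $P_E$.

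Part (4) will be the main hurdle. The ``only if'' direction requires ruling out that $D$ is a bottom vertex of a true, non-collapsed $P_E$ when $\varrho$ is the unique immersed dee rooted at $E$, since part (3) would then force immersed dees $\lfloor E|D'\rceil$ for every bottom vertex $D'$ of $P_E$---contradicting uniqueness unless $P_E$ collapses to $C_E$. The converse follows by combining part (1), which provides a poker tee with $E$ as end and $D=C_E$ as center, with the characterization of $C_E$ from subsection~\ref{con-nec-tees}: any additional immersed dee at $E$ would, by part (3), necessitate a second bottom of $P_E$ distinct from $C_E$, contradicting the collapsed configuration.
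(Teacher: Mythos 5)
The central difficulty is in part (1), and your argument for it has a genuine gap. You build the candidate poker tee $\vartheta=\lfloor E,C|D\rceil$ from the center $C$ of a spike tee $\theta=\lfloor E,D|C\rceil$ and claim every potential blocker of $\vartheta$ ``must itself be compatible with $\varrho$'' and hence, by proposition\,\ref{123}\,(3), be rooted at $E$. But proposition\,\ref{123}\,(3) only constrains objects that actually share a vertex with $\varrho$: a tee $\tau=\lfloor X,Y|C\rceil$ with $X,Y\notin\{E,D\}$ is disjoint from $\varrho$ (trivially compatible with it), is compatible with $\theta$ (common center $C$), and yet blocks $\vartheta$, since their only common vertex $C$ is an end of $\vartheta$ and the center of $\tau$. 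This is not a hypothetical: in the five-dimensional cluster ``trey\,$\Join$\,single'' from the paper's list of low-dimensional clusters --- the trey $\{\lfloor e_1,e_2|e_3\rceil,\lfloor e_1,e_3|e_2\rceil,\lfloor e_2,e_3|e_4\rceil\}$ together with the single $\lfloor e_1|e_5\rceil$, the connectives $\lfloor e_1,e_2|e_5\rceil,\lfloor e_1,e_3|e_5\rceil$ and the casing tee $\lfloor e_1,e_5|e_4\rceil$ --- the unique spike tee for $\varrho=\lfloor e_1|e_5\rceil$ is $\lfloor e_1,e_5|e_4\rceil$, so your recipe proposes $\lfloor e_1,e_4|e_5\rceil$, which is incompatible with $\lfloor e_2,e_3|e_4\rceil$ and does not lie in the cluster. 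A poker does exist there ($\{\lfloor e_1|e_5\rceil,\lfloor e_1,e_2|e_5\rceil\}$), but its second end is a side vertex of the trey, not the center of the spike. The paper's proof takes a different route that you would need to adopt: since $\varrho\notin\Phi_1^{db}$, the reversed dee $\lfloor D|E\rceil$ must be blocked by something in $\Phi$, and a case analysis of what can block it --- a tee with center $D$ (already a poker), a rooted-at-$E$ tee with second end $A\neq D$, or a dee $\lfloor E|D'\rceil$ --- supplies in each case a second end for which the poker tee ($\lfloor E,A|D\rceil$, resp.\ $\lfloor E,D'|D\rceil$) can be checked to be unblocked.

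Parts (2)--(4) of your plan are closer to the paper's, which handles them by the same sort of direct blocking-rule verifications. One caveat on (3): you cannot literally ``apply part (2) with $A=D_j$,'' because nothing is known a priori that puts $D_j$ in one of the three classes of vertices to which (2) applies (that $D_j$ is a bottom vertex of a rooted-at-$E$ twain is essentially what (3) is trying to establish). The correct move, as in the paper, is to rerun the blocking argument directly for $\lfloor E,D_j|D_i\rceil$, using that every tee of $\Phi$ through $D_i$ or $D_j$ is forced by proposition\,\ref{123}\,(3) (applied to $\varrho_i$, $\varrho_j$) to be rooted at $E$; one must also add the converse observation that every bottom vertex of a rooted-at-$E$ s-twain is the end of a dee in $\Phi$, which is needed to identify the $D_i$'s with the bottom of $P_E$ rather than merely with the bottom of some pyramid contained in it.
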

\begin{proof}
(1). Since a dee in $\Phi_1^{im}$ belongs either to a spike, or to a poker, 
only the former case is to be examined. Let $\theta=\lfloor E,D|B\rceil\in\Phi$  
complements $\varrho$ to a spike. Since $\varrho\notin\Phi_1^{db}$, the 
dee $\varrho '=\lfloor D|E\rceil$ is blocked. As it is easy to see, 
$\varrho'$ can be blocked either by a tee forming a poker with $\varrho$, or 
by a rooted at $E$ tee whose second end $A$ is different from $D$, or  by 
a dee with the origin at $E$. In the second of these cases let 
$\vartheta=\lfloor E,A|C\rceil\in\Phi$ 
be a blocking $\varrho '$ tee. Then a direct check shows that the tee 
$\lfloor E,A|D\rceil$, which forms a poker with $\varrho$, is compatible with
any tee/dee, which is compatible with tees $\varrho, \,\theta$ and $\vartheta$.
Hence $\lfloor E,A|D\rceil$ belongs to $\Phi$. In the third case let
$\sigma=\lfloor E|D'\rceil\in\Phi$ be a dee, which blocks $\varrho'$,
and  $\rho=\lfloor E,D'|D\rceil$. Observe now two facts. First, any tee whose
vertices are $E, D, D'$, which is incompatible with the family 
$\Sigma=\{\varrho, \,\theta, \,\sigma\}$, is rooted at $E$ and hence does not
block $\rho$. Second, a dee, which is compatible with $\Sigma$ but not 
compatible with $\rho$, is of the form $\sigma'=\lfloor E'|D'\rceil, \,E\neq E'$. So, 
the tee $\rho$, which forms a poker with $\varrho$, can be blocked only by such 
a dee belonging to
$\Phi$. This proves the assertion assuming that $\Phi$ does not contain dees
of this kind. If, on the contrary, $\{\sigma, \,\sigma'\}\subset \Phi$, then, according
to corollary\,\ref{spider in}, either $D$ is a d-center vertex or $\{\sigma, \,\sigma'\}$
belongs to a d-bridge in $\Phi$. In each of 
these cases $\Phi$ has more than one end vertices (see 
lemma\,\ref{cluster-d-center}). If $E'\neq E$ is a such one, then the tee 
$\lfloor E,E'|D\rceil$, which forms a poker with $\varrho$, is not blocked
and hence belongs to $\Phi$.

(2) By using basic property of twains and lemma\,\ref{cluster-d-center}, (2), one
easily verifies that $\Phi$ does not contain tees/dees that block $\lfloor E,A|D\rceil$. 

(3) If $\lfloor E|D\rceil, \lfloor E|D'\rceil\in \Phi_1^{im}$, then, by the same 
reasons as previously,
is easy to verify, the tee $\lfloor E,D'|D\rceil$ is not blocked and hence 
belongs  to $\Phi$.  This shows that that the rooted at $E$ pyramid with 
bottom vertices $D_1,\dots,D_m$ belongs to $\Phi$. Moreover, if $D$ is a 
bottom vertex of a rooted at $E$ s-twain in $\Phi_0$, then, obviously, 
$\lfloor E|D\rceil\in\Phi$. These two facts proves the assertion. 

(4) A direct check by using proposition\,\ref{123} and assertion (1).
\end{proof}

The dee described in lemma\,\ref{immersed},\,(4), will be called a \emph{single}
(rooted at $E$). A single may be thought as the framing of a ``collapsed pyramid".
According to lemma\,\ref{immersed}  a rooted at $E$ single belongs to a nonzero number of 
rooted at $E$ pokers, which belong to $\Phi$.

\begin{cor}\label{immersed-class}
Let $\Phi$ be a cluster and $E_1,\dots,E_m$ end vertices of it.
All s-twains $\wedge\in\Phi_0$ rooted at $E_i$ form a (possibly empty) pyramid
$P_{E_i}$. The framing $\Phi_{E_i}^{im}$ of $P_{E_i}$ belongs to $\Phi$.
$\Phi_1^{im}$ is disjoint union of framings $\Phi_{E_i}^{im}$ and singles that
correspond to end vertices $E_j$ such that $p_{E_j}=1$.
\end{cor}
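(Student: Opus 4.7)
The plan is to assemble the corollary from several already-established assertions about $P_E$ and about immersed dees, treating separately the genuine-pyramid case $p_{E_i}\geq 2$, the collapsed-pyramid case $p_{E_i}=1$, and the empty case $p_{E_i}=0$.

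First I would note that the statement about the pyramid is an immediate instance of assertion (4) of Lemma \ref{s-twain-str}: for each end vertex $E_i$, the set of bottom vertices of all $E_i$-rooted s-twains in $\Phi_0$ forms the bottom of a (possibly empty) pyramid $P_{E_i}\subset\Phi_0$, denoted here as $P_{E_i}$ in the usual sense. Second, to see that $\Phi_{E_i}^{im}\subset\Phi$, I would take any bottom vertex $D$ of $P_{E_i}$; by definition $D$ belongs to some $E_i$-rooted s-twain $\wedge\subset\Phi_0$, and the observation at the end of the proof of Lemma \ref{immersed}, (3) (which is a direct application of the blocking rule: a dee blocking $\lfloor E_i|D\rceil$ would have to be rooted at $D$, but $D$ is mixing in $\Phi_0$ so no such dee exists in $\Phi$) shows $\lfloor E_i|D\rceil\in\Phi$. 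Thus the full framing $\Phi_{E_i}^{im}$ lies in $\Phi$ whenever $p_{E_i}\geq 2$, and in the collapsed case $p_{E_i}=1$ the corresponding single $\lfloor E_i|C_{E_i}\rceil$ lies in $\Phi$ by Lemma \ref{immersed}, (4).

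For the third assertion I would organize $\Phi_1^{im}$ by origin. By Lemma \ref{diode-tee}, (1), the origin of every immersed dee $\varrho=\lfloor E|D\rceil\in\Phi_1^{im}$ is an end vertex of $\Phi_0$, hence of $\Phi$, so
\[
\Phi_1^{im}\;=\;\bigsqcup_{i}\Phi_{1,E_i}^{im},
\]
where $\Phi_{1,E_i}^{im}$ consists of the immersed dees rooted at $E_i$; disjointness is automatic from distinct origins. By Lemma \ref{immersed}, (3), the end $D$ of any $\varrho\in\Phi_{1,E_i}^{im}$ is a bottom vertex of $P_{E_i}$ (in the collapsed sense when $p_{E_i}=1$, namely $D=C_{E_i}$ by part (4) of that lemma). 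Combined with the preceding step this identifies $\Phi_{1,E_i}^{im}$ with $\Phi_{E_i}^{im}$ when $p_{E_i}\geq 2$, with the single at $E_i$ when $p_{E_i}=1$, and with the empty set when $p_{E_i}=0$, which is exactly the claimed decomposition.

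The real work has already been done in Lemmas \ref{s-twain-str}, \ref{diode-tee} and above all \ref{immersed}; there is no serious obstacle here. The only thing to be careful about is the bookkeeping between the three regimes for $p_{E_i}$ and, in particular, the convention (already introduced in \S\ref{con-nec-tees}) that $p_{E_i}=1$ refers to the ``collapsed pyramid'' consisting of the single vertex $C_{E_i}$, so that the framing in this case reduces to a single dee rather than to a spider of dees.
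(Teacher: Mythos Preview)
Your proposal is correct in substance and follows the paper's (implicit) approach: the paper gives no proof of this corollary, treating it as immediate from Lemma~\ref{immersed}, and you have correctly unpacked that dependence via parts (3) and (4) together with Lemma~\ref{diode-tee},\,(1) for the partition by origin.

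One citation needs adjusting. For the first assertion you invoke Lemma~\ref{s-twain-str},\,(4), but that lemma is stated and proved for \emph{tee-clusters}, and in a generic cluster $\Phi$ the tee-part $\Phi_0$ need not be a tee-cluster (so the blocking arguments there, which consider only tees, do not literally apply). The pyramid structure in the present setting is in fact established inside the proof of Lemma~\ref{immersed},\,(3): that proof shows, for any two immersed dees $\lfloor E|D\rceil,\lfloor E|D'\rceil\in\Phi_1^{im}$, that the tee $\lfloor E,D'|D\rceil$ is unblocked in the full cluster $\Phi$ and hence belongs to $\Phi_0$, and conversely that every bottom vertex of an $E$-rooted s-twain in $\Phi_0$ is the end of an immersed dee. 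Together these give exactly that the $E_i$-rooted s-twains assemble into the pyramid $P_{E_i}$. So both the pyramid assertion and the framing assertion should be cited back to Lemma~\ref{immersed},\,(3), not to the tee-cluster section.
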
 

The following lemma describes how a poker $\Pi\in\Phi$ is attached to the rest of $\Phi$ .
\begin{lem}\label{poker-in}
Let $\Pi=\lfloor E|B\rceil,\lfloor E,C|B\rceil\in\Phi$ and $\lfloor E|B\rceil\in\Phi_1^{im}$. 
Then one of the following possibilities takes place:
\begin{enumerate}
\item $\Pi\subset \bar{P}_E$.
\item $C$ is an end vertex of $\Phi$.
\item $C$ is a side vertex of a rooted at $E$ trey.
\item $C$ is the center vertex of a bridge $\lfloor E,E'|C\rceil\in\Phi$. If 
$\lfloor C,E'|B'\rceil\in\Phi$, then either $\lfloor E'|B'\rceil\in\Phi$, or $B'$ is a center vertex
of $\Phi$. \end{enumerate}
\end{lem}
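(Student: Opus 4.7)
The plan is to analyze the possible kinds of the vertex $C$ appearing in the tee $\lfloor E,C|B\rceil$ of the poker. Since $\lfloor E|B\rceil \in \Phi_1^{im}$, lemma\,\ref{immersed} together with corollary\,\ref{immersed-class} places $B$ inside $\bar{P}_E$: either $B$ is a bottom vertex of the pyramid $P_E$, or $B = C_E$ if $p_E = 1$. Because $C$ appears as an end of $\lfloor E,C|B\rceil$, the vertex $C$ cannot be a center vertex of $\Phi$; hence $C$ is either an end vertex of $\Phi$ (yielding case (2) immediately) or a mixing vertex.

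Suppose $C$ is mixing. The three possible kinds of mixing vertex occurring in a cluster are: (i) a bottom vertex of some pyramid $P_D$, (ii) a side vertex of an end-vertex-rooted trey (equivalently, a bottom vertex of a p-twain), or (iii) the center of a bridge. In each sub-case I will invoke proposition\,\ref{123} to show that the structural group carrying $C$ must be rooted at $E$. For (i), lemma\,\ref{s-twain-str}(2) forces any tee nontrivially compatible with the $D$-rooted s-twain through $C$ to be rooted at $D$; since $\lfloor E,C|B\rceil$ has $E$ as its only end vertex, $D=E$, whence $C$ is a bottom of $P_E$ and case (1) holds. For (ii), compatibility of $\lfloor E,C|B\rceil$ with the p-twain tee having $C$ as center requires a common end or common center, again pinning $D=E$ and giving case (3). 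For (iii), compatibility with the bridge $\lfloor F_1,F_2|C\rceil$ (which has $C$ as center) requires $E\in\{F_1,F_2\}$, yielding the first assertion of case (4).

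For the second assertion of case (4), let $\lfloor E,E'|C\rceil$ be the bridge and assume $\lfloor C,E'|B'\rceil\in\Phi$. Suppose, aiming for contradiction, that $B'$ is not a center vertex of $\Phi$ and $\lfloor E'|B'\rceil\notin\Phi$. Since $B'$ is the center of a tee, $B'$ must be mixing, and I apply the trichotomy (i)--(iii) to $B'$. The possibility that $B'$ is a bridge center is excluded by lemma\,\ref{bridge-str}(1): any tee containing a bridge center has that vertex as an end, while $B'$ is the center of $\lfloor C,E'|B'\rceil$, and this tee cannot itself be the bridge at $B'$ because $C$ is mixing, not an end vertex. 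If $B'$ is a side vertex of a trey, compatibility of $\lfloor C,E'|B'\rceil$ with a tee of the embedded p-twain forces the trey to be $E'$-rooted, so it contains a third tee $\lfloor B',Y|Z\rceil$ with $Z$ a center vertex of $\Phi$; a direct check via proposition\,\ref{123} then shows this tee is incompatible with $\lfloor C,E'|B'\rceil$ (no common end or common center, using that $C$ cannot coincide with the mixing vertex $Y$ by the previous bridge-center rigidity), contradicting compatibility of $\Phi$. Finally, if $B'$ is a bottom of $P_D$, the same compatibility argument forces $D=E'$, and then corollary\,\ref{immersed-class} places $\lfloor E'|B'\rceil$ in $\Phi_{E'}^{im}\subset\Phi$, contradicting $\lfloor E'|B'\rceil\notin\Phi$.

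The main obstacle I anticipate is justifying the exhaustiveness of the mixing-vertex classification in a generic cluster (as opposed to a pure tee-cluster), since $\Phi_0$ need not itself be a tee-cluster once dees are adjoined to form $\Phi$; the local compatibility rules of proposition\,\ref{123} and lemma\,\ref{diode-tee}, however, should be strong enough to substitute for tee-cluster maximality wherever the earlier arguments implicitly used it.
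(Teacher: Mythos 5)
Your overall strategy --- classify the vertex $C$ according to the known types of mixing vertices and treat each type separately --- is not the paper's route, and it has a genuine gap that you yourself flag at the end but do not close. The trichotomy ``bottom vertex of a pyramid / side vertex of a trey (bottom of a p-twain) / center of a bridge'' is extracted from the structure theory of \emph{tee-clusters} (the DT-table and subsections 8.4--8.7). In the setting of lemma\,\ref{poker-in} the family $\Phi$ is a generic cluster, and $\Phi_0$ need not be a tee-cluster: it is maximal only subject to compatibility with $\Phi_1$ as well (the paper points this out explicitly, e.g.\ for $\bar{\Psi}_{k,l}$ with $k\geq 2$). So the exhaustiveness of your case list is exactly what needs proving, and ``the local compatibility rules should be strong enough to substitute for tee-cluster maximality'' is an assertion, not an argument. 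Even granting the tee-cluster theory, the list is not obviously exhaustive as stated (a mixing vertex could a priori be a vertex of a triangle of $\Phi_h$, a case you never exclude --- it does fail, but only after an argument that $E$, being the origin of a dee, cannot be a triangle vertex). The same unproven trichotomy is then used a second time, on $B'$, in your treatment of the second half of case (4).

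The paper avoids the classification entirely by a local blocking argument: since $\lfloor E|B\rceil\in\Phi_1^{im}$, lemma\,\ref{immersed},(3) shows that $\Pi\not\subset\bar{P}_E$ forces $\lfloor E|C\rceil\notin\Phi$, hence $\lfloor E|C\rceil$ is blocked by some element of $\Phi$. A blocking dee compatible with $\Pi$ must be $\lfloor C|B'\rceil$, giving case (2); a blocking tee must be $\lfloor C,E'|B'\rceil$, and then one asks only whether $C$ (and subsequently $E'$) is an end or a mixing vertex of $\Phi_0$ --- using nothing beyond the \emph{definition} of a mixing vertex (existence of a tee centered there) and proposition\,\ref{123} to pin down the unique compatible candidates $\lfloor E,E'|C\rceil$ and $\lfloor E,C|E'\rceil$. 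That yields cases (4) and (3) respectively without ever invoking the global tee-cluster classification. To repair your proof you would either have to reprove the mixing-vertex trichotomy for the tee-part of a generic cluster, or switch to the blocking-rule argument; as written, the proposal rests on an unestablished premise.
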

\begin{proof}
If $\Pi\not\subset \bar{P}_E$, then the dee $\varrho=\lfloor E|C\rceil$ is blocked. But 
all other dees, which are compatible with $\Pi$, are of the form $\lfloor C|B'\rceil$. So,  
$C$ is an end vertex of $\Phi$ if $\varrho$ is blocked by such a dee. Let now $\theta$ 
be a compatible with  $\Pi$ tee, which 
blocks $\varrho$.  A simple prove by exhaustion shows that $\theta$ must be of the form  
$\lfloor C,E'|B'\rceil$ with vertices $E', B'$ not belonging to $\{E, B, C\}$. The vertex $C$ 
is either an end or a mixing vertex of $\Phi_0$. In the first case $C$ is an end vertex of 
$\Phi$ too. Indeed, otherwise, $C$ would be the end vertex of a dee  belonging to $\Phi$.
But any such dee is incompatible with $\Pi\cup\{\theta\}$.
So, it remains to analize the second of these possibilities only.

In this case $\Phi_0$ contains a tee $\rho$ whose center vertex is $C$. The only such tee, 
which is compatible with $\Pi\cup\{\theta\}$, is $\rho=\lfloor E,E'|C\rceil$. Note that a dee is
incompatible with $\Pi\cup\{\theta, \rho\}$, if $E'$ is the end vertex of it. By this reason, $E'$ is an end 
vertex of $\Phi$, if it is an end vertex of $\Phi_0$. So, in this case $\rho$ is  the bridge from
assertion (4). Moreover, the second part of this assertion follows the fact that the dee
$\lfloor E'|B'\rceil$ can be blocked either by the tee $\lfloor E,C|B'\rceil$ or by a tee which
forms a cross with $\lfloor E',C|B'\rceil$.

Finally, if $E'$ is a mixing vertex of $\Phi_0$, then there is a tee $\sigma\in\Phi_0$ with the
center at $E'$. But the only such one, which is compatible with  $\Pi\cup\{\theta, \rho\}$, is
$\lfloor E,C|E'\rceil$. This tee completes $\{\lfloor E,C|B\rceil, \,\lfloor C,E'|B'\rceil\}$ up
to a trey.
\end{proof}
\begin{cor}\label{status-poker}
A poker in a cluster belongs to a framed pyramid or to a d-bridge, or the belonging to it tee is a connective.            
\end{cor}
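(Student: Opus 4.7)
The plan is to invoke Lemma \ref{poker-in}, which partitions a poker $\Pi=\{\lfloor E|B\rceil,\lfloor E,C|B\rceil\}\subset\Phi$ with $\lfloor E|B\rceil\in\Phi_1^{im}$ into four mutually exhaustive cases, and then to match each case with one of the three alternatives of the corollary. In parallel I would appeal to Lemma \ref{immersed}, items (3) and (4), to fix the status of the vertex $B$: it is either a bottom vertex of the pyramid $P_E\subset\Phi_0$, or $B=C_E$ when $P_E=\emptyset$.

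Case (1) of Lemma \ref{poker-in} is immediate: if $\Pi\subset\bar P_E$, then $\Pi$ sits inside the framed pyramid by definition of framing (Subsection \ref{fr-tee-clusters}), so the first alternative of the corollary holds. For case (4), where $C$ is the center of a bridge $\lfloor E,E'|C\rceil\in\Phi$, the tee $\lfloor E,C|B\rceil$ has $C$ at the center of a bridge and $B$ a bottom vertex of $P_E$ (or $B=C_E$); this matches verbatim the definition of a \emph{pyb}-connective at $E$ when $P_E\neq\emptyset$, and of a \emph{pb}-connective at $E$ when $P_E=\emptyset$ (Subsection \ref{c-str-all}). Hence the third alternative of the corollary holds. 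For case (3), where $C$ is a side vertex of a trey $\top\subset\Phi_0$ rooted at $E$, the tee $\lfloor E,C|B\rceil$ is lateral for the p-twain contained in $\top$ and simultaneously a side tee of $P_E$ (resp.\ of the ``collapsed'' pillar $\overline{EC_E}$), which is precisely a \emph{pyt}- (resp.\ \emph{pt}-)connective at $E$ in the sense of Subsection \ref{con-nec-tees}.

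The remaining case (2), where $C$ is an end vertex of $\Phi$, is the subtle one and I expect it to be the main obstacle. Here the tee $\lfloor E,C|B\rceil$ has both ends $E,C$ among end vertices of $\Phi$ and its center $B$ is mixing, so it is a $c$-tee; the claim to prove is that the dee $\lfloor C|B\rceil$ belongs to $\Phi$ as well, so that $\Pi$ together with $\lfloor C|B\rceil$ forms a d-bridge and the second alternative of the corollary holds. I would apply the blocking rule: any tee or dee in $\Phi$ that could block $\lfloor C|B\rceil$ must share a vertex with it. Candidate blockers at $C$ are excluded because $C$ is an end vertex of $\Phi$ (no tee in $\Phi_0$ has its center at $C$ and no dee in $\Phi_1$ has its end at $C$, by Lemma \ref{diode-tee}). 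Candidate blockers at $B$ are then controlled by Lemma \ref{immersed} and Corollary \ref{immersed-class}: every tee of $\Phi_0$ with a vertex at $B$ is rooted at $E$, hence compatible with $\lfloor C|B\rceil$, while every dee of $\Phi_1^{im}$ ending at $B$ originates at $E$ and is therefore compatible with $\lfloor C|B\rceil$ as well. By the blocking rule this forces $\lfloor C|B\rceil\in\Phi$, producing the required d-bridge.

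Assembling these four case analyses yields the trichotomy asserted in the corollary. The bookkeeping is light, but the verification in case (2) — ensuring exhaustively that no element of $\Phi$ can block the fourth edge of the d-bridge — is where the bulk of the care is required, since it rests on combining all the structural facts about immersed dees, rooted pyramids, and end-vertex stability accumulated in Subsections \ref{TypeVertex}--\ref{c-str-all}.
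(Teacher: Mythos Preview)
Your treatment of cases (1), (3) and (4) of Lemma~\ref{poker-in} is correct and matches the paper. The error is in case~(2). You claim that the dee $\lfloor C\,|\,B\rceil$ always belongs to $\Phi$, so that the poker completes to a d-bridge, and you justify this via the blocking rule using the inference ``every tee of $\Phi_0$ with a vertex at $B$ is rooted at $E$, hence compatible with $\lfloor C\,|\,B\rceil$''. That inference is wrong: rooted at $E$ means $E$ is an \emph{end} of the tee, whereas Proposition~\ref{123}\,(3) says a dee is compatible with a tee only when the \emph{origin} of the dee (here $C$) is one of the ends of the tee. Concretely, if $p_E\geq 2$ and $B'$ is another bottom vertex of $P_E$, then $\lfloor E,B\,|\,B'\rceil\in P_E\subset\Phi$ is rooted at $E$, has vertex $B$, but its ends are $E$ and $B$, neither equal to $C$; so it blocks $\lfloor C\,|\,B\rceil$, and the d-bridge does not form.

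The paper's proof handles case~(2) differently: it notes only that $\Phi$ \emph{may} contain a dee $\lfloor C\,|\,B'\rceil$; when $B'=B$ one gets a d-bridge, and otherwise one falls back on Lemma~\ref{poker-in}. In the residual situation the tee $\lfloor E,C\,|\,B\rceil$ has both ends $E,C$ among end vertices of $\Phi$ and center $B$ a bottom vertex of $P_E$ (or $B=C_E$); this is a hook at $E$ (Lemma~\ref{hooks}\,(4)), i.e.\ what is later called an e-connective, and is covered by the ``connective'' alternative of the corollary rather than the d-bridge one.
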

\begin{proof}
If in the notation of lemma\,\ref{poker-in} $C$ is and end vertex, then $\Phi$ may possess
a dee of the form $\lfloor C|B'\rceil$. If $B=B'$, then $\Pi$ belongs to a d-bridge. Otherwise,
the considered poker is as in  lemma\,\ref{poker-in}.
\end{proof}

\subsection{Join operations and cards of clusters}

The join operations defined earlier for tee-clusters can be applied also to arbitrary
clusters. Their definitions remain literally the same except the pyramid join operation 
where pyramids we must be replaced by framed pyramids. Additionally, we have three
new join operations we are passing to describe. Below $\Phi$ stands for a cluster.

\emph{Joining a rotator.} Add two new vertices $A_1$ and $A_2$ to $S(\Phi)$ and 
consider the rotator $\mathbf{r}$ whose axis is $\lfloor A_1|A_2\rceil$ and thorns 
are t-center vertices of $\Phi$. Then, by the basic property of rotators, 
$\Phi\cup\mathbf{r}$ is a cluster.

\emph{Joining a d-bridge.} There are two versions of this procedure. \\
1) Consider the d-bridge 
$\bar{\frown}=\{\lfloor E_1|C\rceil, \lfloor E_2|C\rceil, \lfloor E_1,E_2|C\rceil\}$
with $E_1, E_2$ being some end vertices of $\Phi$ and $C\notin S(\Phi)$.
Then,  by the basic property of d-bridges,$\Phi\cup\bar{\frown}$ is a cluster.\\
2) Let $\bar{\frown}$ be as above but $S(\Phi)\cap S(\bar{\frown})=\{E_1\}$ with 
$E_1$ being an end vertex of $\Phi$. By adding to $\Phi\cup \bar{\frown}$ new
connectives and all dees $\{\lfloor E_2|C\rceil$ with $C$ running through d-center
vertices of $\Phi$, we get a cluster

\emph{Joining a spider.} Add a new vertex $C$ to $S(\Phi)$ and consider the
spider $\mathbf{S}$ composed of all dees $ \{\lfloor E|C\rceil$ with $E$ running 
through end vertices of $\Phi$. Then $\Phi\cup\mathbf{S}$ is a cluster and $C$ is
a d-center vertex of it, if the number of end vertices of $\Phi$ is not less than 3.

These new join operations preserve original t-center and end vertices and do 
not create new ones except the second join bridge operation. In this case
one new end vertex comes out. These operations also commute 
each other as well as with old join operationsl. Moreover, we have
\begin{proc}\label{tee-generated}
Let $\Phi$ be a cluster such that $\Phi_0$ is a tee-cluster. Then $\Phi$ is obtained
from the framed tee-cluster $\bar{\Phi}_0$ by applying to it rotator, d-bidge and spider
join operations.
\end{proc}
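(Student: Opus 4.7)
My plan is to exhibit $\Phi$ as the result of applying join operations to $\bar{\Phi}_0$ by partitioning the dee part $\Phi_1$ of $\Phi$ into the four disjoint pieces
\[
\Phi_1=\Phi_1^{im}\cup\Phi_1^{db}\cup\Phi_1^{sp}\cup\Phi_1^{br}
\]
introduced just before Lemma~\ref{immersed}, and matching each piece with one of the three new join operations. Corollary~\ref{immersed-class} already identifies $\Phi_1^{im}$ with the framings of the (possibly collapsed) pyramids $P_E$ attached to the end vertices of $\Phi_0$, and this collection is precisely what the framing procedure adds when passing from $\Phi_0$ to $\bar{\Phi}_0$. Hence $\bar{\Phi}_0=\Phi_0\cup\Phi_1^{im}$, and everything reduces to showing that $\Phi_1^{db}$, $\Phi_1^{br}$, and $\Phi_1^{sp}$ can be introduced by successive rotator, d-bridge and spider join operations.

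Next I would treat the three remaining pieces in turn. For $\Phi_1^{db}$, Lemma~\ref{cluster-vertex}(1) says that every double in $\Phi$ is the axis of a rotator $\mathbf{r}\subset\Phi$ whose thorns are exactly the t-center vertices of $\Phi$; the tees of $\mathbf{r}$ already lie in $\Phi_0$, so adding the double amounts to attaching the whole rotator, which is precisely a rotator join operation (with the two axis endpoints identified with the end vertices of $\Phi_0$ that already carry the double in $\Phi$). For $\Phi_1^{br}$, each d-bridge $\mathbf{Bd}\subset\Phi$ has its axis tee already in $\Phi_0$, so it suffices to add the two dees of $\mathbf{Bd}$: the first version of the d-bridge join is used when both endpoints of $\mathbf{Bd}$ are already end vertices of the current partial cluster, and the second version when one endpoint is still to be introduced (together with the spider dees to already existing d-center vertices, exactly as described in the second-version formula). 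Finally, Corollary~\ref{spider in}(1) shows that $\Phi_1^{sp}$ is a $(k,\ell)$-spider whose $\ell$ tips are the d-center vertices of $\Phi$; each such tip is attached by one spider join operation, which is legal because at that stage the current partial cluster already has at least three end vertices (Lemma~\ref{cluster-d-center}).

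The main technical obstacle will be ordering these operations so that after every step the running family is again a cluster and no extra base lieon is inadvertently forced in by the blocking rule. A safe order is: first the rotators (by the basic property of rotators they commute with everything outside themselves once installed), then the d-bridges of the first version (which rely only on existing end vertices), then the spiders, and finally the d-bridges of the second version (which themselves produce new end vertices and must be harmonized with previously installed d-centers and t-centers). At each stage compatibility of the piece being attached with the current cluster follows from the basic properties of the structural group in question, and maximality of $\Phi$ together with the blocking rule guarantees that any compatible base lieon forced by the new addition already belongs to the class of $\Phi_1$ currently being introduced. Since the four classes partition $\Phi_1$ disjointly and $\Phi_0$ is recovered from $\bar{\Phi}_0$ trivially, the process terminates with exactly $\Phi$, completing the proof.
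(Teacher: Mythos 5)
Your proposal is correct and follows essentially the same route as the paper: the paper's proof likewise observes that $\bar{\Phi}_0=\Phi_0\cup\Phi_1^{im}$ and then accounts for $\Phi_1^{sp}$, $\Phi_1^{br}$ and $\Phi_1^{db}$ by spider, d-bridge and rotator join operations, citing corollary\,\ref{spider in}\,(2) and lemma\,\ref{cluster-vertex}\,(1) just as you do. Your extra care about the ordering of the operations is detail the paper omits; note also that the second version of the d-bridge join never arises here, since the axis of any d-bridge already lies in $\Phi_0$ and so all its vertices are in $S(\bar{\Phi}_0)$.
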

\begin{proof}
First of all, observe that $\bar{\Phi}_0=\Phi_0\cup\Phi_1^{im}$. So, $\Phi$ is obtained
from the framed tee-cluster $\bar{\Phi}_0$ by adding to it the spider $\Phi_1^{sp}$, all 
d-bridges corresponding to the forming $\Phi_1^{br}$ $(2,1)$-spiders 
(corollary\,\ref{spider in},\;(2)) and all rotators whose axes are the composing $\Phi_1^{db}$ 
doubles and thorns are t-center vertices of $\Phi$. 
\end{proof}

By summing up the previous results of this section and that of the preceding one 
we see that clusters are made of \emph{basic groups}, 
their \emph{casings} and \emph{connectives}. Namely, denote by $n_t, n_d$ and $n_e$ 
numbers of t-center, d-center and end vertices of $\Phi$, respectively, and by $n_{tr}, n_{r}$ 
numbers of triangles and doubles in $\Phi$.  The list of basic groups is as follows:

\begin{tabbing}
\qquad $\bullet \quad(n_{tr}, n_t)$-hedgehog \= \quad $\Phi_h$, 
\qquad \qquad \qquad \= $\bullet  \quad n_t$-treys, \\
\qquad $\bullet \quad(n_e,n_t)$-multiped \> \quad $\Phi_{mp}$,  \> $\bullet$ \quad framed pyramids, \\
\qquad $\bullet \quad(n_r,n_t)$-rotator \> \quad $\Phi_{rt}$,  \> $\bullet$ \quad bridges \\
\qquad $\bullet \quad(n_e,n_d)$-spider \> \quad $\Phi_{sp}$, \> $\bullet$ \quad d-bridges. 
\end{tabbing}
Here $\Phi_h=(\Phi_0)_h$. We also emphasize that the end and center vertices of 
$\Phi_{mp}$ are those of $\Phi$, and, similarly, for $\Phi_{rt}$ and $\Phi_{sp}$.
Also, associate with $\Phi$ the s-raft $\Phi_{rf}\subset\Phi$ composed of all d-bridges 
belonging to $\Phi$, which are  suspended from center vertices of  $\Phi$.

$\Phi_{FW}=\Phi_{h}\cup\Phi_{mp}\cup\Phi_{rt}\cup\Phi_{sp}$ is the 
\emph{framework} of $\Phi$. This part of $\Phi$ is well defined by numbers 
$n_t, n_d, n_e, n_{tr}$ and $n_r$, while basic groups in the right column of this
list are attached to end vertices of $\Phi$ and their numbers may vary almost arbitrarily 
when the end vertices of $\Phi$ remain fixed. Let $E_1\dots,E_{n_e}$ be end vertices of
$\Phi$. Put $\mathbf{t}=(t_1\dots,t_{n_e})$ (resp., $\mathbf{p}=(p_1\dots,p_{n_e})$) with
$t_i$ (resp., $p_i$) being the number of $n_t$-treys rooted at $E_i$ (resp., the dimension of 
$\bar{P}_{E_i}$). Here  $\bar{P}_{E_i}$ is the framed pyramid $P_{E_i}\in\Phi_0$. Denote 
by $b_{ij}$ (resp., $d_{ij}$) the number of bridges (resp., d-bridges) connecting $E_i$ and 
$E_j$. By putting $b_{ii}=d_{ii}=0$ we have symmetric matrices $\mathbf{B}=|\!|b_{ij}|\!|$ and
$\mathbf{D}=|\!|d_{ij}|\!|$. The \emph{card} of a cluster $\Phi$ is
$$
\mathrm{Card}(\Phi)=(n_t, n_e, n_d, n_{tr}, n_r, [\mathbf{t}, \mathbf{p}, \mathbf{B}, \mathbf{D}]) . 
$$    
Here the meaning of the bracket $[\dots ]$ is as for tee-clusters (see subsection\,\ref{tee-cards}).
The \emph{tee-part} (resp.,  \emph{dee-part}) of $\mathrm{Card}(\Phi)$ is 
$\mathcal{C}_t(\Phi)=(n_t, n_e, n_{tr}, [\mathbf{t}, \mathbf{p}, \mathbf{B}])$ (resp, 
$\mathcal{C}_d(\Phi)=(n_t, n_e, n_d, n_r, [\mathbf{p}, \mathbf{D}])$. Obviously, 
$\mathcal{C}_t(\Phi)=\mathcal{C}(\Phi_0)$ and
$$
\dim\Phi=n_t+n_e+n_d+3n_{tr}+2n_r+\sum_{i=1}^{n_e}(p_i+2t_i)+
\sum_{1\leq i<j\leq n_e}(b_{ij}+d_{ij})
$$
\begin{proc}\label{CARD}
Two clusters are equivalent if and only if their cards are equal. 
\end{proc}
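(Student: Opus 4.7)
The plan is to mimic and extend the proof of Proposition \ref{ID-CARD}, using the structural decomposition of a cluster developed in this section. The direction $\Phi\simeq\Phi'\Rightarrow\mathrm{Card}(\Phi)=\mathrm{Card}(\Phi')$ is immediate: any vertex bijection realizing an equivalence of the two base families automatically identifies t-centers with t-centers, d-centers with d-centers, end vertices with end vertices, and all composing basic groups (triangles, rotators, p-twains, framed pyramids, bridges, d-bridges) with their counterparts, so the numerical data and equivalence class $[\mathbf{t},\mathbf{p},\mathbf{B},\mathbf{D}]$ in the cards coincide.

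For the converse, suppose $\mathrm{Card}(\Phi)=\mathrm{Card}(\Phi')$. First, the tee-parts $\mathcal{C}_t(\Phi)$ and $\mathcal{C}_t(\Phi')$ agree, so by Proposition \ref{ID-CARD} there is a bijection $\zeta_0:S(\Phi_0)\to S(\Phi'_0)$ providing an equivalence of the tee-clusters $\Phi_0$ and $\Phi'_0$. In particular $\zeta_0$ sends t-centers, mixing vertices, and end vertices of $\Phi_0$ to those of $\Phi'_0$, and maps the bottom vertices of each $P_{E_i}$ bijectively onto those of $P_{\zeta_0(E_i)}$. By Corollary \ref{immersed-class} this already induces an identification of the framed pyramids $\bar{P}_{E_i}$ with $\bar{P}_{\zeta_0(E_i)}$, and thus of the immersed dee-families $\Phi_1^{im}$ and $(\Phi')_1^{im}$.

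Next, I would extend $\zeta_0$ to a bijection $\zeta:S(\Phi)\to S(\Phi')$ by choosing arbitrary bijections (respecting the end-vertex pair of attachment, where relevant) between the d-center vertices, between the axis vertex pairs of rotators, and between the center vertices of d-bridges connecting each pair of end vertices. This is possible because $n_d$, $n_r$, and the d-bridge matrix $\mathbf{D}$ coincide on both sides. By Corollary \ref{spider in}(1) the spider parts $\Phi_1^{sp}$ and $(\Phi')_1^{sp}$ are the full $(n_e,n_d)$-spiders from end vertices to d-centers, so $\zeta$ identifies them; by Lemma \ref{cluster-vertex}(1) each double in $\Phi_1^{db}$ is the axis of a rotator whose thorns are t-centers already matched by $\zeta_0$, so the rotators are identified as well; the remaining d-bridges in $\Phi_1^{br}$ match under $\zeta$ by construction.

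It remains to verify that \emph{all} tees and dees are accounted for by $\zeta$. On the tee side, the DT-table shows that every tee in $\Phi_0$ belongs to a structural group or to a connective/casing uniquely determined by the positions of these groups, so $\zeta_0$ already identifies $\Phi_0$ with $\Phi'_0$ as sets of tees. On the dee side, the partition $\Phi_1=\Phi_1^{im}\cup\Phi_1^{db}\cup\Phi_1^{sp}\cup\Phi_1^{br}$ together with Proposition \ref{tee-generated} shows that $\Phi_1$ is recovered from $\bar{\Phi}_0$ by rotator, d-bridge, and spider join operations, each of which is completely determined by the data we have matched. The main obstacle, as in Proposition \ref{ID-CARD}, is the bookkeeping verification that the arbitrary choices in the extension of $\zeta_0$ introduce no inconsistency; this is guaranteed by Lemma \ref{diode-tee}, which shows that dees of $\Phi_1\setminus\Phi_1^{im}$ interact with $S(\Phi_0)$ only through end vertices, and by Lemma \ref{cluster-vertex}, which places rotator thorns precisely at t-center vertices, so all vertices of $\Phi_0$ touched by the dee-part of $\Phi$ have already been matched coherently by $\zeta_0$. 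Hence $\zeta$ is an equivalence $\Phi\simeq\Phi'$.
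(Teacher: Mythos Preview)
Your overall strategy matches the paper's: the card encodes the layout of basic groups, and once these are fixed the casings and connectives are uniquely restored, so a bijection of vertex sets that matches basic groups yields an equivalence of clusters. The paper's own proof is just a one-line reference back to the argument of Proposition~\ref{ID-CARD}.

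There is, however, a genuine gap in the way you invoke earlier results. You apply Proposition~\ref{ID-CARD} as a black box to get an equivalence of $\Phi_0$ and $\Phi'_0$, and later invoke Proposition~\ref{tee-generated} to reduce $\Phi_1$ to join operations on $\bar{\Phi}_0$. Both of these statements carry the hypothesis that $\Phi_0$ is a \emph{tee-cluster}, i.e., that $\Phi$ is tee-generated. But this fails in general: for instance, the clusters $\bar{\Psi}_{k,l}$ of Lemma~\ref{psi-clusters} with $k\geq 2$ and $(k,l)\neq(2,0)$ have $\Phi_0$ not a tee-cluster, and the paper explicitly notes this just after that lemma. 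So neither proposition applies directly, and your reduction to the tee-generated case does not cover all clusters.

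The repair is straightforward and is what the paper actually intends: do not pass through $\Phi_0$ as an intermediate object, but rather build the vertex bijection $\zeta$ directly by matching \emph{all} basic groups of the full cluster---t-centers, d-centers, end vertices, triangles, rotator axes, p-twains, framed pyramids, bridges and d-bridges---using only the card data, exactly as in the \emph{proof} of Proposition~\ref{ID-CARD} (not its statement). Then observe, as the paper says, that the remaining tees and dees (connectives, hooks, casings, the spider $\Phi_{sp}$, the immersed dees $\Phi_1^{im}$) are forced by maximality once the basic groups are placed. Your detailed bookkeeping in the last two paragraphs is then valid and completes the argument.
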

\begin{proof}
If all basic groups composing a cluster $\Phi$ are known, then completing it casings and 
connectives are uniquely restored. The proof is essentially the same as that 
of proposition\,\ref{ID-CARD}, and we omit the details.
\end{proof}

\subsection{Tee-generated-clusters and similar.}\label{t-generated}

A cluster $\Phi$ such that $\Psi=\Phi_0$ is a tee-cluster will be called \emph{tee-generated}, 
or, more precisely, $\Psi$-\emph{generated}. Framed tee-clusters belongs to this class but
do not exhaust it. Namely, we have
\begin{proc}\label{framed cluster}
Let $\Phi$ be a tee-generated cluster.Then either $\Phi=\overline{\Phi_0}\cup\Phi_{sp}$, 
or $\Phi_0$ is a $(n_t,2\,|\,n_{tr})$-hybrid and $\Phi=\Phi_0\cup\{\theta\}$ where $\theta$ is the 
double whose ends are end vertices of $\Phi_0$.
\end{proc}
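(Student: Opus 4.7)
The plan is to split on whether $\Phi_1$ contains a double, after first establishing the universal observation that $\Phi_1^{br}=\emptyset$. For the latter, lemma \ref{diode-tee}(3) shows that in any d-bridge $\{\lfloor E_1|C\rceil,\lfloor E_2|C\rceil,\lfloor E_1,E_2|C\rceil\}\subset\Phi$ the vertex $C$ cannot be a center vertex of $\Phi_0$, so $\lfloor E_1,E_2|C\rceil$ must be a bridge of the tee-cluster $\Phi_0$; the classification of subsection \ref{tee-classification} then guarantees the presence in $\Phi_0$ of casing or pyb/pb-connective tees of the form $\lfloor E_i,C|\cdot\rceil$, at least one of which blocks $\lfloor E_j|C\rceil$ ($j\neq i$) via proposition \ref{123}(3).

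In Case 1 ($\Phi_1^{db}=\emptyset$), we have $\Phi_1=\Phi_1^{im}\cup\Phi_1^{sp}$. Corollary \ref{immersed-class} identifies $\Phi_1^{im}$ with the disjoint union of the framings of the pyramids $P_{E_i}$ together with the singles at ``collapsed'' pyramids, which is exactly the set of dees adjoined to $\Phi_0$ to produce $\overline{\Phi_0}$. Consequently $\Phi=\overline{\Phi_0}\cup\Phi_1^{sp}=\overline{\Phi_0}\cup\Phi_{sp}$, the first alternative.

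In Case 2, fix a double $\theta=\{\lfloor p|q\rceil,\lfloor q|p\rceil\}\subset\Phi_1$. By lemma \ref{cluster-vertex}(1), $\theta$ is the axis of a rotator whose thorns are the t-center vertices of $\Phi$, so $\lfloor p,q|r\rceil\in\Phi_0$ for every center vertex $r$ of $\Phi_0$. Proposition \ref{123}(3) applied to both dees of $\theta$ forces every tee of $\Phi_0$ containing $p$ or $q$ as an end to have both $p$ and $q$ as its ends, whence $p,q$ are the only end vertices of $\Phi_0$ and no p-twain, pyramid, or bridge is rooted at either. Compatibility with any rotator tee $\lfloor p,q|r_i\rceil$, computed via the $\dim C_{12}=n-4$ criterion of lemma \ref{a1}, then forces every remaining tee $\lfloor A,B|C\rceil\in\Phi_0$ to have $A,B$ outside the thorn set and $C$ a thorn --- exactly the skeleton of an $(n_t,2\,|\,n_{tr})$-hybrid. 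Maximality of $\Phi_0$ as a tee-cluster rules out $n_{tr}=0$, since the card $(n_t,2,0,[(0,0),(0,0),\mathbf{0}])$ is absent from the classification of subsection \ref{tee-classification} while any enlargement on the vertex set of the pure multiped (by tees like $\lfloor p,r_i|q\rceil$ or $\lfloor r_i,r_j|r_k\rceil$) is blocked either by $\theta$ or by a rotator tee.

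To close Case 2, it remains to check that $\Phi_1=\theta$: additional doubles cannot exist since their origins would have to be end vertices of $\Phi_0$ but $n_e=2$; spider dees are excluded by lemma \ref{cluster-d-center} ($n_e<3$); immersed dees are absent because the hybrid has no rooted pyramids or collapsed pyramids at $p,q$; and $\Phi_1^{br}=\emptyset$ was handled at the outset. Hence $\Phi=\Phi_0\cup\theta$. The main obstacle will be the hybrid determination, which requires combining the external blocking condition imposed by $\theta$ with the internal maximality of $\Phi_0$ and playing both off against the explicit tee-cluster list --- a task made tractable but laborious by the work already done in subsection \ref{tee-classification}.
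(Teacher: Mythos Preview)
Your overall strategy matches the paper's: eliminate d-bridges first, then split on whether $\Phi_1$ contains a double. Case~1 is correct, and your d-bridge elimination is a valid variant of the paper's argument (the paper instead observes that the basic property of d-bridges forces $C$ to be a center vertex of $\Phi_0$ touched by a single tee, which no tee-cluster admits; your route via lemma~\ref{diode-tee}(3) and the existence of a 2-catena tee $\lfloor E_i,C|\cdot\rceil$ blocking $\lfloor E_j|C\rceil$ reaches the same contradiction).

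There is one genuine slip in Case~2. You claim that compatibility with a rotator tee $\lfloor p,q|r_i\rceil$ forces every remaining tee $\lfloor A,B|C\rceil\in\Phi_0$ to have ``$A,B$ outside the thorn set and $C$ a thorn''. The first half is right: if the tee shares a vertex with $\lfloor p,q|r_i\rceil$, proposition~\ref{123}(1) forces that shared vertex to be the common center $r_i=C$, so $A,B$ cannot be thorns. But the second half is false: a tee sharing \emph{no} vertex with any rotator tee is trivially compatible, and the triangle tees $\lfloor T_1,T_2|T_3\rceil$ of the hybrid are precisely of this type --- their centers are mixing vertices, not thorns. So ``$C$ a thorn'' is both unjustified and inconsistent with your own conclusion that $\Phi_0$ is an $(n_t,2\,|\,n_{tr})$-hybrid with $n_{tr}>0$.

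Fortunately the argument survives once you drop that clause. What you actually need (and implicitly use) is: every tee in $\Phi_0$ touching $p$ or $q$ has both as ends, hence $\mathbf{t}=\mathbf{p}=0$, $\mathbf{B}=0$, and $n_e(\Phi_0)=2$ (the last via the multiped $\Phi_{mp}$, since a third end vertex $r$ would force $\lfloor p,r|c\rceil\in\Phi_0$). The tee-cluster classification of subsection~\ref{except-cards} then leaves only the hybrid card $(k,2,m,[(0,0),(0,0),\mathbf{0}])$ with $m>0$, which is exactly the step you carry out next.
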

\begin{proof}
First, note that $\Phi$ does not contain d-bridges. Indeed, the axis of a d-bride 
$\mathbf{Bd}\in\Phi$ belongs to $\Phi_0$. On the other hand, by the basic property of
d-bridges, the center $C$ of this axis, which is also the center of $\mathbf{Bd}$, is a
center vertex of $\Phi_0$. The axis is the unique tee in  $\Phi_0$ that passes
through $C$. But there are no tee-clusters with such a center vertex (see, for instance, 
the list of tee-clusters in subsection \ref{tee-classification}).

Second, if $\Phi$ contains rotators $\mathbf{r_1},\dots,\mathbf{r_l}$, then the ends 
$E_{i1}, E_{i2}$ of the axis of $\mathbf{r_i}, \,i=1,\dots,l,$ are end vertices of $\Phi_0$. 
If $l>1$, then the tee $\lfloor E_{i1},E_{j1}|C\rceil$ with $C$ being a center vertex of 
$\Phi_0$ and $i\neq j$ is compatible with $\Phi_0$ but does not belong to it.  This 
proves that  $\Phi_0$ is not a tee-cluster if $l>1$. If $l=1$, then the tee 
$\lfloor E_{11}, E|C\rceil$ with $C$ being a center vertex of $\Phi_0$ is compatible with 
$\Phi_0$ but does not belong to it if $E$ is an end vertex of $\Phi_0$, which is different
from $E_{11}$ and  $E_{12}$. So, $\Phi_0$ is not a tee-cluster, if $n_e(\Phi_0)>2$. 
The last condition is, obviously, equivalent to $n_e(\Phi)>0$. On the contrary, if 
$n_e(\Phi)=0$ and $l=n_r(\Phi)=1$, $\Phi_0$ is a tee cluster, namely, a 
$(n_c,2\,|\,n_{tr})$-hybrid. In this case $\Phi_{sp}=\Phi_1^{im}=\emptyset$. 

Thus if $n_r=0$, then $\Phi_1=\Phi_1^{im}\cup\Phi_1^{sp}\Leftrightarrow\Phi=
\Phi_0\cup\Phi_1^{im}\cup\Phi_1^{sp}=\overline{\Phi_0}\cup\Phi_{sp}$. To 
conclude the proof it remains to note that $\overline{\Phi_0}$ is a cluster,
if $\Phi_0$ is a tee-cluster and $n_r=0$.
\end{proof}
A direct consequence of the above proposition is:
\begin{cor}\label{double clusters}
A tee-cluster $\Psi$ is also a cluster if and only if it is not a $(k,2|m)$-hybrid 
and $\mathbf{p}(\Psi)=0.  \quad\square$
\end{cor}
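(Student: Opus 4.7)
The plan is to derive this corollary directly from proposition \ref{framed cluster}. The key observation is that a tee-cluster $\Psi$ is already maximal among compatible tee-families on $S(\Psi)$, so $\Psi$ fails to be a cluster precisely when some dee with vertices in $S(\Psi)$ can be added while preserving compatibility. Thus the question reduces to: which tee-clusters $\Psi$ cannot be enlarged by any dee?

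First I would handle the "only if" direction by contrapositive. Suppose $\Psi$ is a $(k,2\,|\,m)$-hybrid. Proposition \ref{framed cluster} (case~2) explicitly produces the cluster $\Psi\cup\{\theta\}$, where $\theta$ is the double joining the two end vertices of $\Psi$; this is a strictly larger compatible family on the same vertex set, so $\Psi$ is not maximal. Now suppose instead $\mathbf{p}(\Psi)\neq 0$, so some $p_{E_i}>0$, meaning either $P_{E_i}\neq\emptyset$ or the collapsed vertex $C_{E_i}$ exists. By the very definition of the framed tee-cluster $\overline{\Psi}$ (noted in subsection~\ref{fr-tee-clusters} as obviously compatible), the dee $\lfloor E_i\,|\,B\rceil$ for any $B\in S(P_{E_i})$ (or $B=C_{E_i}$) extends $\Psi$ compatibly. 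So again $\Psi$ is not maximal. This establishes that if $\Psi$ is a cluster, it is not a hybrid and $\mathbf{p}(\Psi)=0$.

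For the "if" direction, assume $\Psi$ is not a $(k,2\,|\,m)$-hybrid and $\mathbf{p}(\Psi)=0$. Let $\Phi$ be any maximal compatible base family containing $\Psi$ with $S(\Phi)=S(\Psi)$; I will show $\Phi=\Psi$. The tee-part $\Phi_0$ is a compatible tee-family with $\Psi\subseteq\Phi_0$ and $S(\Phi_0)\subseteq S(\Psi)$, so by maximality of $\Psi$ among tee-families on $S(\Psi)$ we get $\Phi_0=\Psi$. Hence $\Phi$ is a tee-generated cluster with tee-part $\Psi$, and proposition \ref{framed cluster} applies. Since $\Psi$ is not a hybrid, we are in the first case, so $\Phi=\overline{\Psi}\cup\Phi_{sp}$. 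Any d-center vertex of $\Phi_{sp}$ lies by definition outside $S(\Phi_0)=S(\Psi)$, but $S(\Phi)=S(\Psi)$, forcing $\Phi_{sp}=\emptyset$. From $\mathbf{p}(\Psi)=0$ there are no framing dees to add, so $\overline{\Psi}=\Psi$, whence $\Phi=\Psi$. Together with the connectedness of the graph of $\Psi$ inherited from its being a tee-cluster, this shows $\Psi$ is itself a cluster.

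The proof is essentially bookkeeping around proposition \ref{framed cluster}; the only subtle point is observing that the vertex-preservation condition $S(\Phi)=S(\Psi)$ kills the spider summand $\Phi_{sp}$ in the first case of that proposition, since spider dees introduce d-center vertices by construction. No calculations beyond unpacking definitions are required.
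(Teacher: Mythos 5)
Your proof is correct and follows the same route the paper intends: the corollary is stated there as a direct consequence of Proposition~\ref{framed cluster}, and your argument simply spells out that deduction (framing dees or the double witness non-maximality in one direction; in the other, any maximal enlargement on the same vertex set must, by that proposition, reduce to $\overline{\Psi}\cup\Phi_{sp}=\Psi$ since spiders introduce new vertices and $\mathbf{p}(\Psi)=0$ kills the framing). Nothing to add.
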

 If $\Phi$ is a framed tee-cluster, then 
$\Phi=\Phi_0\cup\Phi_1^{im}$. The converse is not true as the following examples show.

Consider the compatible family $\Psi_{k,l}$ composed of dees 
$\lfloor e_i|e_{k+i}\rceil, \;1\leq i\leq k,$ and tees 
$\lfloor e_i,e_j|e_{k+i}\rceil$ where $1\leq i\leq k,$ and $j\in\{1,\dots,k\}\cup\{2k+1,\dots,2k+l\}$. 
We also assume that $k\geq 1$ and $l\geq 1$, if $k=1$. 

\begin{lem}\label{psi-clusters}
If $(k,l)\neq (1,1)$, then the family $\Psi_{k,l}$ is included in a unique cluster, denoted 
$\bar{\Psi}_{k,l}$ such that $S(\bar{\Psi}_{k,l})=S(\Psi_{k,l})$. More exactly, we have:
\begin{enumerate}
\item  $\bar{\Psi}_{k,l}=\Psi_{k,l}$, i.e., $\Psi_{k,l}$ is a cluster, if $k\geq 3$.
\item  $\bar{\Psi}_{1,l}$ is an $(l+1)$-dimensional framed pyramid, and $e_1$ is its top vertex.
\item $\bar{\Psi}_{2,0}\setminus\Psi_{2,0}=\{\lfloor e_1|e_{4}\rceil, \,\lfloor e_2|e_{3}\rceil\}$,
i.e., $\bar{\Psi}_{2,0}$ is composed of two d-bridges, which have common end vertices 
$e_1$ and $e_2$. 
\item If $l\geq 1$, then 
$\bar{\Psi}_{2,l}\setminus\Psi_{2,l}=\{\lfloor e_1, e_2|e_{4+i}\rceil\}_{1 \leq i\leq l}$, i.e.,
$\bar{\Psi}_{2,l}$ is the system of $l$ bridges connecting $e_1$ and $e_2$, which are 
``suspended" on the corresponding connectives to singles $\lfloor e_1, e_3\rceil$ and 
$\lfloor e_2, e_4\rceil$. 
\end{enumerate}
\end{lem}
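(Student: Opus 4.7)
\medskip

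The plan is to verify the lemma case by case, using Proposition \ref{123} and the blocking rule as the only tools. First I would check that $\Psi_{k,l}$ is itself a compatible family. For two tees $\lfloor e_i,e_j|e_{k+i}\rceil$ and $\lfloor e_{i'},e_{j'}|e_{k+i'}\rceil$ with $i\neq i'$, the centers $e_{k+i},e_{k+i'}$ are distinct and no end of one can be the center of the other (the indices live in disjoint blocks), so compatibility follows from the common end $e_i=e_{j'}$ or $e_j=e_{i'}$ whenever vertices overlap, and from triviality otherwise. For a dee $\lfloor e_i|e_{k+i}\rceil$ and any tee $\lfloor e_{i'},e_j|e_{k+i'}\rceil$, a common vertex can only occur at $e_{k+i}=e_{k+i'}$ (i.e.\ $i=i'$), in which case the dee's origin $e_i$ is indeed an end of the tee by construction; and for two distinct dees the supports are disjoint.

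Next, for each of the four cases I would exhibit the proposed $\bar{\Psi}_{k,l}$ and check pairwise compatibility of the additions with $\Psi_{k,l}$ and with each other. For (1) nothing is added. For (2) the added pyramid tees $\lfloor e_1,e_{2+i}|e_{2+j}\rceil$ and framing dees $\lfloor e_1|e_{2+j}\rceil$ are all compatible because they share the common end $e_1$ with every member of $\Psi_{1,l}$. For (3), the two dees $\lfloor e_1|e_4\rceil,\lfloor e_2|e_3\rceil$, together with the two tees $\lfloor e_1,e_2|e_3\rceil,\lfloor e_1,e_2|e_4\rceil$, form the two d-bridges with common ends $e_1,e_2$. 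For (4), the added bridges $\lfloor e_1,e_2|e_{4+i}\rceil$ share common ends $e_1,e_2$ with everything in $\Psi_{2,l}$ and their centers $e_{4+i}$ are fresh as tee-centers. In each case these checks are short applications of Proposition \ref{123}.

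The combinatorial heart of the proof is maximality: I would show that any base element $\theta$ whose vertices lie in $S(\Psi_{k,l})$ and which is compatible with $\bar{\Psi}_{k,l}$ already belongs to $\bar{\Psi}_{k,l}$. The argument is by exhaustion over the location of the vertices of $\theta$ in the three blocks $\{e_1,\dots,e_k\}$, $\{e_{k+1},\dots,e_{2k}\}$, $\{e_{2k+1},\dots,e_{2k+l}\}$. The dominant blocking mechanism is this: if $e_{k+i}$ appears as a vertex of $\theta$ but $e_i$ does not, then the dee $\lfloor e_i|e_{k+i}\rceil$ blocks $\theta$ by Proposition \ref{123}(3), since its origin $e_i$ is not among the ends of $\theta$; symmetrically for the second diagonal dee when $k=2$. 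The distinction between the cases arises exactly here. For $k\geq 3$ we have at least three diagonal dees, and one verifies that every candidate $\theta\notin\Psi_{k,l}$ is blocked by one of them, so $\Psi_{k,l}$ is already maximal. For $k=1$ there is only one diagonal dee, and the candidates surviving the block are precisely the extra pyramid tees and framing dees of $\bar{\Psi}_{1,l}$. For $k=2$ only two columns are available; the candidates surviving the block are the two ``cross'' dees $\lfloor e_1|e_4\rceil,\lfloor e_2|e_3\rceil$ in case (3) and the bridges $\lfloor e_1,e_2|e_{4+i}\rceil$ in case (4), while putative additional tees centred in $\{e_1,\dots,e_{2k}\}$ are always blocked by the ``other'' diagonal dee because $e_2\notin\{e_1,e_3\}$ and $e_1\notin\{e_2,e_4\}$. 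Uniqueness of $\bar{\Psi}_{k,l}$ then follows from maximality: any cluster $\Phi\supset\Psi_{k,l}$ with $S(\Phi)=S(\Psi_{k,l})$ is a compatible family inside the maximal such family $\bar{\Psi}_{k,l}$, so $\Phi\subset\bar{\Psi}_{k,l}$, and being itself a cluster it must equal $\bar{\Psi}_{k,l}$.

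The main obstacle will be presenting the case analysis of the previous paragraph compactly rather than as an unenlightening list of subcase verifications; I would organize it as a small table indicating, for each candidate $\theta$, the blocking element of $\Psi_{k,l}$. It is also worth a brief remark explaining the exclusion $(k,l)\neq(1,1)$: in $\Psi_{1,1}=\{\lfloor e_1|e_2\rceil,\,\lfloor e_1,e_3|e_2\rceil\}$ both the dee $\lfloor e_1|e_3\rceil$ (extending to a framed twain) and the dee $\lfloor e_3|e_2\rceil$ (extending to a d-bridge) are compatible with $\Psi_{1,1}$, but by Proposition \ref{123}(2) they are mutually incompatible; hence two non-equivalent clusters extend $\Psi_{1,1}$ on the same support, and the uniqueness claim genuinely fails in this borderline case.
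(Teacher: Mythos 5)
Your proposal is correct in substance and follows essentially the same route as the paper, which for assertion (1) merely observes that the end vertices of $\Psi_{k,l}$ are stable when $k\geq 3$ and dismisses (2)--(4) as ``a simple direct check''; your version is that direct check carried out explicitly via proposition\,\ref{123} and the blocking rule, together with a welcome explanation of why $(k,l)=(1,1)$ must be excluded. One concrete slip should be repaired: the claim that for $k\geq 3$ every candidate $\theta\notin\Psi_{k,l}$ is blocked by one of the diagonal dees is an overstatement. For instance, in $\Psi_{3,0}$ the dee $\lfloor e_2|e_4\rceil$ is compatible with all three dees $\lfloor e_i|e_{3+i}\rceil$ (two dees with a common end and distinct origins are compatible by proposition\,\ref{123},\,(2)), and in $\Psi_{3,1}$ the tee $\lfloor e_1,e_2|e_7\rceil$ is likewise compatible with every diagonal dee; in both cases the blocking elements are tees of $\Psi_{k,l}$, namely $\lfloor e_1,e_3|e_4\rceil$ and $\lfloor e_3,e_7|e_6\rceil$, respectively. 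So the table you propose must list tees of $\Psi_{k,l}$, and not only the diagonal dees, among the blocking elements; with that correction the exhaustion goes through, and since the set of all base lieons supported in $S(\Psi_{k,l})$ and compatible with $\Psi_{k,l}$ turns out to be itself a compatible family, your maximality-implies-uniqueness conclusion is sound.
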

\begin{proof}
If $k\geq 3$, then all end vertices of $\Psi_{k,l}$ are stable. This proves the first assertion.
The remaining ones are by a simple direct check. 
\end{proof}
It follows from this lemma that $\Phi=\bar{\Psi}_{k,l}$ is a cluster of the form 
$\Phi=\Phi_0\cup\Phi_1^{im}$.  If $k\geq 2$ and $(k,l)\neq (2,0)$, then
 $\Phi_0$ is not a tee-cluster,
 
 In the sequel we shall use the notation  $\bar{\Psi}_{k,l}$ also for a cluster, which is equivalent 
 to the above described model. Let $p_i\geq 1, \,i=1,\dots,k,$ and $\mathbf{p'}=(p_1,\dots,p_k)$.
 The cluster denoted by $\bar{\Psi}_{k,l}(\mathbf{p'})=\bar{\Psi}_{k,l}(p_1,\dots,p_k)$ is obtained 
 from $\bar{\Psi}_{k,l}$ by applying to the latter $(p_i-1)$-times the pyramid join operation at 
 $e_i$ for all $i=1,\dots, k$.
 
 The ``suspended" version of $\bar{\Psi}_{k,l}(\mathbf{p}')$  is obtained by adding to it $m$ 
 center vertices and the corresponding casings. It will be denoted by 
 $\hat{\Psi}_{k,l}(\mathbf{p}';m)$, or simply $\hat{\Psi}_{k,l}(m)$, if $\mathbf{p}'=(1,\dots,1)$. 
 For instance, $\hat{\Psi}_{1,l}(m), \,l>1,$ is a suspended framed pyramid.

\subsection{Description of clusters}\label{Construction clusters}

As in the case of tee-clusters proposition\,\ref{CARD} reduces classification of generic 
clusters to description of their cards. We subdivide this problem into four separate cases:
\begin{enumerate}
\item clasters $\Phi$ with $n_d>0\Leftrightarrow \Phi_{sp}\neq\emptyset$.
\item clasters $\Phi$ with $n_d=0, \;n_{tr}+n_r>0\Leftrightarrow \Phi_{sp}=\emptyset, 
\;\Phi_h\cup\Phi_{rt}\neq\emptyset$.
\item clasters $\Phi$ with $n_d=n_{tr}=n_r=0, \;n_t>0\Leftrightarrow \Phi_{sp}\cup 
\;\Phi_h\cup\Phi_{rt}=\emptyset$.
\item clasters $\Phi$ with $n_d=n_{tr}=n_r=n_t=0\Leftrightarrow \Phi_{FW}=
\emptyset$.
\end{enumerate}
According to this subdivision, clusters will be called of \emph{types} I,...,IV, respectively.
\newline

 {\bf Type I.} Recall that  $n_e\geq 3$, if $n_d>0$ (lemma\,\ref{cluster-d-center}), i.e., 
 $\Phi_{sp}$ has at least 3 end vertices,  which, at the same time,  are end vertices 
of $\Phi$. If $n_t>0$, then $\Phi_{mp}\neq\emptyset$. Any center vertex of the 
multiped $\Phi_{mp}$ is the center vertex of a contained in $\Phi_{mp}$ tripod and, 
therefore, is stable. By this reason, $\tilde{\Phi}=\Phi_{mp}\cup\Phi_{sp}$ is a cluster,
and $\Phi$ is obtained from $\tilde{\Phi}$ by means of suitable join operations. So,
in this case, $n_{tr}, n_r$ and $[\mathbf{t},\mathbf{p}, \mathbf{B},\mathbf{D}]$ may be 
arbitrary. 

If $n_t=0$, then, obviously,   $n_{tr}=n_r=0, \,\mathbf{t}=0$. In this case, $\Phi$ 
is obtained from $\tilde{\Phi}=\Phi_{sp}$ by means of bridge, d-bridge and (framed) 
pyramid join operations. Since $n_t=0$, a bridge connecting end points $E_i$ and 
$E_j$ may exist only if $p_i$ and $p_j$ are nonzero. Since $p_i, p_j, b_{ij}$ are 
nonnegative, this condition is equivalent to $p_ i p_j b_{ij}\geq b_{ij}$. 
Thus cards of clusters of type I are completely described by the following relations:
$$\mathbf{\mbox{\bf{\underline{Type I}\,:}}\quad n_d>0.}$$
\begin{equation}\label{list-type I}
\begin{array}{l}
\mathrm{I_{+}}:\;n_t>0, \,n_e\geq 3.\\
\mathrm{I_0}:\; n_t=n_{tr}=n_r=0, \,n_e\geq3,  \,p_ i p_j b_{ij}\geq b_{ij}, 
\,1\leq i,j \leq n_e.
\end{array} 
\end{equation}

 {\bf Type II.} In this case $n_t>0$ if $\dim\,\Phi>3$. If $n_e>3$, then $\Phi_{mp}$ is a cluster
 with stable ends as well as $\hat{\Phi}=\Phi_h\cup\Phi_{rt}\cup\Phi_{mp}$. Moreover, 
 $\Phi$ and $\hat{\Phi}$ have the same center and end vertices. This shows that 
 $\Phi$ is obtained from $\hat{\Phi}$ by means
 of p-twain, (framed) pyramid, bridge  and d-bridge join operations, i.e., in this case 
 $[\mathbf{t},\mathbf{p}, \mathbf{B},\mathbf{D}]$ may be arbitrary. So, it remains to 
 analyze the case $n_e\leq 3$. We subdivide it into three subcases according to the 
 number $n_{rf}$ of end vertices of $\Phi_{rf}$. Obviously, in the considered case 
 $n_{rf}=0, 2, 3$.
 
 If $n_{rf}=3$, then the s-raft $\Phi_{rf}$ contains two d-bridges, which have one common 
 end vertex, say, $E_1$. In other words, ends of $\Phi_{rf}$ coincide with ends of $\Phi$, and, 
 moreover, $\Psi=\Phi_h\cup\Phi_{rt}\cup\Phi_{rf}\subset\Phi$ is a cluster with the same 
 center and end vertices as $\Phi$. By this reason   $\Phi$ is obtained from $\Psi$ by means 
 of p-twain, (framed) pyramid and bridge join operations. This shows that in the considered 
 case $[\mathbf{t},\mathbf{p}, \mathbf{B}]$ may be arbitrary.
 
 In the case $n_{rf}=2, \,n_e=3$ numerate end vertices $E_1, E_2, E_3$  of $\Phi$ so 
 that $E_1, E_2$ be common ends of the belonging to $\Phi$ d-bridges. So, end vertices 
 $E_1, E_2$ are automatically stable, while $E_3$ is stable if and only if there is either a 
 dee belonging to $\Phi$ with the origin at $E_3$, or
 a rooted at $E_3$ tee $\theta\in\Phi$ whose second end differs from 
 $E_1, \,E_2$. A direct item-by-item examination shows that this occurs only if one of the
 following three conditions holds:
 \begin{enumerate}
 \item $p_3>0\Leftrightarrow$ the framing of the pyramid $p_{E_3}$ is nonempty.
 \item $p_3=0, \,t_3>0$.
 \item $p_3=t_3=0, \,\mathbf{B}\neq 0$.
 \end{enumerate}
 By using suitable join operations we easily find that there are no more limitations
 on $\mathrm{Card}(\Phi)$. 
 
 Let $n_{rf}=2, \,n_e=2$ and $\mathbf{Br}$ be the union of d-bridges in $\Phi$. In this case
 $\tilde{\Phi}=\mathbf{Br}\cup\Phi_{mp}\cup\Phi_h\cup\Phi_{rt}$ is a 
 cluster such that $S(\tilde{\Phi})=S(\Phi)$. Hence $\Phi$ is obtained from $\tilde{\Phi}$
 by means of suitable join operations. This shows that $[\mathbf{t},\mathbf{p}, \mathbf{B}]$
 is arbitrary in the considered case.
 
 If $n_{rf}=0$, then $\Phi_1=\Phi_1^{im}\cup\Phi_1^{db}$. Denote by $n_e^+$ the number of 
 end vertices of $\Phi$ for which $p_E\neq 0$ and by $\mathbf{p}^+$ the arithmetic vector 
 obtained from the vector $\mathbf{p}$ by canceling all its zero components. In the 
 considered case $0\leq n_e^+\leq 3$. We shall label the occurring subcases by couple
 $(n_e, n_e^+)$. Below  $E_1, E_2, E_3$ stand for end vertices of $\Phi$.
 
 $(n_e, n_e^+)=(3,3)$: In this case $\Phi$ contains the cluster 
 $\hat{\Psi}_{3,0}(\mathbf{p}^+;n_t)$ (see subsection\,\ref{t-generated}).
 Then $\tilde{\Phi}=\hat{\Psi}_{3,0}(\mathbf{p^+};n_t)\cup\Phi_{h}\cup\Phi_{rt}$ is a cluster 
 as well and $S(\Phi)=S(\tilde{\Phi})$. So, $\Phi$ is obtained from $\tilde{\Phi}$ by means
 of s-twain and bridge join operations. Hence in this case $p_i>0, \,i=1, 2, 3,$
 and $[\mathbf{t}, \mathbf{B}]$ is arbitrary. 
 
 $(n_e, n_e^+)=(3, 2)$: Assume that the end vertices $E_1, E_2, E_3$ are enumerated 
 in such a way that $p_3=0$ and consequently $p_1p_2>0$. Then  $E_1, E_2$ are stable 
 end vertices, while $E_3$ is such one if either $t_3>0$, or $\Phi$ contains a bridge with
 one of its ends at $E_3$ (equivalently, $b_{13}+b_{23}>0$). Indeed, otherwise
 $\lfloor E_1, E_2|E_3\rceil\notin \Phi$ is compatible with $ \Phi $. Moreover, any of
 this conditions implies that $\Phi$ is a cluster.
 
$(n_e, n_e^+)=(3,1)$: Similarly, assume that $p_1>0, \,p_2=p_3=0$.  
Tees $\lfloor E_1, E_2|E_3\rceil$ and $\lfloor E_1, E_3|E_2\rceil$ deprive $E_2$ and 
$E_3$ of their status of end vertices of $\Phi$. Therefore, they are blocked. It is easy 
to see that in the considered situation the tees are among tees belonging to
treys and bridges of $\Phi$. An item-by-item examination shows that it occurs only
 in one of the following situations:
  \begin{enumerate}
 \item $t_2t_3>0$.
 \item $t_2>0, \,t_3=0, b_{13}>0$.
 \item $t_2=t_3=0, \,b_{12}b_{13}>0\Leftrightarrow$ there are bridges in $\Phi$ 
 connecting $E_1$ with $E_2$ and $E_3$.
 \item $t_2=t_3=b_{12}=b_{13}=0, \,b_{23}>0\Leftrightarrow$ there is a bridge in $\Phi$ 
 connecting $E_2$ and $E_3$.
 \end{enumerate}
 Alternatively, this list is equivalent to the following list of inequalities
 $$
 (1) \;t_2t_3>0,\quad(2)\;t_2b_{13}>0 \quad(3)\;b_{12}b_{13}>0 \quad(4)\;b_{23}>0.
 $$
Moreover, these conditions, as it is easily verified, guarantee that $\Phi$ is
a cluster.
 
$(n_e, n_e^+)=(3,0)$:  Since $n_e=3$, the multiped $\Phi_{mp}$ has stable center vertices.
By this reason, $\tilde{\Phi}=\Phi\setminus(\Phi_{h}\cup\Phi_{rt})$ is a cluster as well. On 
the other hand,  $\tilde{\Phi}$ does not contain dees and, therefore, is a tee-cluster. The 
clusters that simultaneously  are tee-clusters (``double clusters") are described in 
corollary\,\ref{double clusters}, and their cards are easily extracted from the lists in 
section\,\ref{coaxial}. So, in the considered case all clusters are obtained from ``double 
clusters" with $n_e=3$ by means of double and triangle join operations.

$(n_e, n_e^+)=(2,2)$: Let $E_1, E_2$ be end vertices of $\Phi$ and $\Xi$ a tee-family
composed of tees of the form $\lfloor E_i, B_i|C\rceil$, \,i=1, 2, with $B_i$ running through 
bottom vertices of $P_{E_i}$ and $C$ trough center vertices of $\Phi$. Obviously, $Xi$
belongs to the union of casings of $P_{E_i}$'s. Then
$\Psi_{2,0}\cup\Xi\cup\Phi_{mp}\subset\Phi$ is a cluster and $\Phi$ is obtained from it
by means of double, p-twain, bridge and triangle join operations. Hence in the 
considered case $n_{r}, n_{tr}, \mathbf{t}, \mathbf{B}$ are arbitrary, while $\mathbf{p}$
is subjected by the condition $p_1p_2>0$.

$(n_e, n_e^+)=(2,1)$: Let $E_1, E_2$ be end vertices of $\Phi$. We may assume that 
$p_1>0, \,p_2=0$. Obviously, tees $\lfloor E_1, B|E_2\rceil$ with $B$ running bottom 
vertices of $P_{E_1}$ do not belong to $\Phi$ and,
therefore, are blocked. As earlier, the blocking base structures in the considered context 
are among tees belonging to treys and bridges of $\Phi$. A simple check shows that the 
blocking tees may come either from a rooted at $E_2$ trey, or from two bridges connecting
$E_1$ and $E_2$. This implies that, in the considered case $\Phi$ is a cluster iff either
or both of the inequalities $t_2>0$ and $b_{12}\geq 2$ holds.

$(n_e, n_e^+)=(2,0)$: In this case $\Phi_h\cup\Phi_{rt}\cup\Phi_{mp}$ is, as it is easily verified,
a cluster with the same set of vertices as $\Phi$. So, $\Phi$ is obtained from it by means of
p-twain and bridge join operations. This shows that $[\mathbf{t}, \mathbf{B}]$ is arbitrary in
the considered case.

$(n_e, n_e^+)=(1,1)$: In this case $\Phi_1^{im}$ contains only one dee, say, 
$\theta=\lfloor E|B\rceil$ and, obviously, $\mathbf{B}=0$. This dee belongs to a poker 
$\Pi=\{\theta,  \lfloor E, C|B\rceil\}\subset \Phi$. In the considered context $C$ may be 
either a bottom vertex of $P_E$, or a side vertex of a trey in $\Phi$.  As previously, this
proves that in the considered case $\Phi$ is a cluster iff one of the inequalities 
$p=p_1\geq 2$ or $t=t_1>0$ holds.

$(n_e, n_e^+)=(1,0)$: The family $\Phi\setminus(\Phi_h\cup\Phi_{rt})$ consists of a
number of multi-treys, which are rooted at the unique end vertex, and their casings. 
This number is positive. Indeed, otherwise the graph $\Upsilon_{\Phi}$ would non be 
connected. Since $\Phi$ is a cluster, this end vertex is stable. This is so iff 
$t=t_1\geq2$. Moreover, this condition guarantees that $\Phi$ is a cluster.\newline
 
Ultimately, we get the following list of clusters of type II where
$\mathbf{D}=\|d_{ij}\|, \,\mathbf{B}=\|b_{ij}\|$. In this and other lists that follow the
alternatives which are ``embraced" by square bracket do not exclude one another.

$$\mathbf{\mbox{\bf{\underline{Type II}\,:}}\quad n_t>0, \,n_d=0, \;n_{tr}+n_r>0}.$$
 \begin{equation}\label{list-type II}
\begin{array}{l}
\mathrm{II_{+}}: \,n_e>3.\\
\mathrm{II_{32}}: \,n_e=3, \; d_{12}d_{13}>0 . \\
\mathrm{II_{31}}: \,n_e=3,  \;d_{12}>0 \Rightarrow
\left \{
\begin{array}{l}
 p_3>0.\\
p_3=0, \,t_3>0.\\
p_3=t_3=0, \,\mathbf{B}\neq 0.
\end{array}
\right. \\
\mathrm{II_{30}}: \,n_e=3, \,\mathbf{D}=0 \Rightarrow
\left \{
\begin{array}{l}
p_1p_2p_3>0.\\
p_1p_2>0, \,p_3=0 \Rightarrow 
\left [
\begin{array}{l}
t_3>0.\\b_{13}+b_{23}>0.
\end{array}
\right. \\
p_1>0, \,p_2=p_3=0 \Rightarrow 
\left [
\begin{array}{l} 
t_2t_3>0. \\t_2b_{13}>0. \\b_{12}b_{13}>0. \\b_{23}>0.
\end{array}
\right. \\
\end{array} 
\right.\\
\mathrm{II_{300}}: \,n_e=3, \,\mathbf{D}=0, \,\mathbf{p}=0 \Rightarrow
\left \{
\begin{array}{l}
t_1t_2t_3>0.\\
t_1t_2>0, \,t_3=0, \,b_{13}+b_{23}>0.\\
t_1>0, \,t_2=t_3=0 \Rightarrow 
\left [
\begin{array}{l}
b_{23}>0.\\b_{12}b_{13}>0.
\end{array} 
\right.\\
\end{array} 
\right.
\end{array}
\end{equation}
\begin{equation}\label{list-type II2}
\begin{array}{l}
\mathrm{II_2}: \,n_e=2, \;\mathbf{D}\neq 0. \\
\mathrm{II_{22}}: \,n_e=2, \,p_1p_2>0, \,\mathbf{D}=0.\\
\mathrm{II_{21}}: \,n_e=2, \,p_1>0, \,p_2=0,  \,\mathbf{D}=0 \Rightarrow
\left [
\begin{array}{l}
t_2>0.\\ b_{12}\geq 2.
\end{array} 
\right.\\
\mathrm{II_{20}}: \,n_e=2, \,p_1=p_2=0, \,\mathbf{D}=0.\\
\mathrm{II_{11}}: \,n_e=1 \Rightarrow
\left \{
\begin{array}{l}
p_1t_1>0.\\
p_1\geq 2, \,t_1=0.
\end{array} 
\right.\\
\mathrm{II_{10}}: \,n_e=1, \,p_1=0, \,t_1\geq 2.
\end{array}
\end{equation}
\newline

 {\bf Type III.} First, note that joining to a cluster of type III some rotators and triangles
 we get a cluster of type II. So, clusters of type III are among families of the form
 $\Phi_{cstr}=\Phi\setminus(\Phi_h\cup\Phi_{rt})$ with $\Phi$ being a cluster of type II. 
 So, we shall get a description of clusters of type III just by running through lists 
 (\ref{list-type II}) and  (\ref{list-type II2}) of clusters of type II and singling out those of
 them for which families $\Phi_{cstr}$ are clusters. It should be stressed that the end 
 and center vertices of $\Phi$ are those of $\Phi_{cstr}$.
 
 It is easy to see that  a compatible family $\Phi_{cstr}$
 is a cluster iff its center vertices are stable. This is so, if $n_e\geq 3$.
 Indeed, in this case the multiped $\Phi_{mp}\subset\Phi_{cstr}$ has stable center 
 vertices, and these are center vertices of $\Phi_{cstr}$. Hence relations of list
 (\ref{list-type II}) describe clusters  also in the case when
 $\Phi_h\cup\Phi_{rt}=\emptyset$ also.
 
 Let $n_e=2$ and $E_1, E_2$ be end vertices of $\Phi$.  In this case center vertices 
 of $\Phi_{cstr}$ are stable, if $\Phi$ (equivalently, $\Phi_{cstr}$) contains a trey or a 
 bridge or two nonempty pyramids $P_{E_1}$ and $P_{E_2}$. Indeed, 
 $\Phi_{pm}$ together with the casing of 
 any of these families contains a tripod or a cross and, at the same time, is contained 
 in $\Phi_{cstr}$. In other words, $\Phi_{cstr}$ may not be a cluster only if relations
 \begin{equation}\label{list-3}
 p_1p_2=0, \quad\mathbf{t}=0, \quad\mathbf{B}=0.
 \end{equation}
 holds. So, it remains to describe clusters whose cards satisfy relations (\ref{list-type II2})
 and (\ref{list-3}). We shall examine cases $\mathrm{II_2},\dots,\mathrm{II_{20}}$  (see
 \ref{list-type II2}) one after another. 

 $\mathbf{II_2.}$ Assume that $P_{E_1}\neq\emptyset, \,P_{E_2}=\emptyset$,
 i.e., that $p_1>0, \,p_2=0$, and let $C$ be a center vertex of $\Phi$. Then 
 $\theta=\lfloor E_1, E_2|C\rceil$ is the only tee in $\Phi_{cstr}$ with the center
 at $C$. Since $\mathbf{t}=0, \,\mathbf{B}=0$, this shows that all tees from
 $\Phi_{cstr}$ are rooted at $E_1$. Let $B$ be a bottom vertex of $P_{E_1}$ 
 or $C_{E_1}$ if $p_1=1$. Then $\rho=\lfloor E_1, C|B\rceil\notin\Phi_{cstr}$. 
 But being rooted at $E_1 \,\rho$ is compatible  with $\Phi_{cstr}$. 
 Hence in the considered case $\Phi_{cstr}$ is not a cluster, if one of 
 pyramids $P_{E_i}$ is nonempty.
 
 In the remaining subcase $\mathbf{p}=0$ assume that $C_1, C_2$ are center
 vertices of $\Phi_{cstr}$. Then $\theta=\lfloor E_1, C_1|C_2\rceil\notin\Phi$. But
by the same reasons as above, $\theta$ is compatible with $\Phi_{cstr}$. So, 
$\Phi_{cstr}$ is not a cluster, if $n_t\geq 2$. On the contrary, this is so, if $n_t=1$. 
The corresponding cluster is an s-raft composed of one d-bridge and having one 
center vertex. Thus under conditions  $\mathbf{II_2}$ and (\ref{list-3}) $\Phi_{cstr}$ 
is a cluster iff $\mathbf{p}=0, \,n_t=1$.
   
$\mathbf{II_{22}} - \mathbf{II_{21}.}$ The corresponding relations in (\ref{list-type II2})
are, obviously, inconsistent with (\ref{list-3}). So, in the considered case $\Phi_{cstr}$
is a cluster.
 
 $\mathbf{II_{20.}}$ It follows from (\ref{list-3}) that in this case $\Phi_{cstr}$ consists
 of only one tee and hence is not a cluster.\newline

 Finally, consider the situation when $n_e=1$. 
 In the case $\mathbf{II_{11}}$ center vertices of $\Phi_{cstr}$ are stable only if 
 $p_1t_1>0$. Indeed, the dee $\lfloor E|C\rceil$ where $E=E_1$ is the end vertex 
 of $\Phi$ and $C$ is one of its center vertices is compatible with $\Phi_{cstr}$ but
 does not belong to it. The condition $t=t_1\geq 2$ guarantees stability of 
 $\Phi_{cstr}$ in the case $\mathbf{II_{10}}$. So, if $n_e=1$, then $\Phi_{cstr}$ is a
 cluster either or both $p_1t_1>0$ and $t_1\geq 2$.
 
 The results concerning type III are synthesized in the following list:\newline
 $$\mathbf{\mbox{\bf{\underline{Type III}\,:}}\quad n_t>0, \,n_d=n_{tr}=n_r=0}.$$
 \begin{equation}\label{list-type III}
\begin{array}{l}
\mathrm{III_3}: \mbox{the same relations as in (\ref{list-type II})}.\\ 
\mathrm{III_2}: \,n_e=2, \;p_1p_2>0.\\
\mathrm{III_{2d}}: \,n_e=2, \;n_t=1,  \;\mathbf{p}=\mathbf{t}=0,  \;\mathbf{B}=0, 
\;\mathbf{D}\neq 0.\\
\mathrm{III_{21}}: \,n_e=2, \,p_1>0, \,p_2=0,  \,\mathbf{D}=0 \Rightarrow
\left [
\begin{array}{l}
t_2>0.\\ b_{12}\geq 2.
\end{array} 
\right.\\
\mathrm{III_{11}}: \,n_e=1, \,p_1t_1>0.\\
\mathrm{III_{10}}: \,n_e=1, \,p_1=0, \,t_1\geq 2.
\end{array}
\end{equation}
\newline

 {\bf Type IV.} Clusters of this type are made of framed pyramids (including singles), 
 bridges, d-bridges and the corresponding connectives. These connectives are of 
 the form $\lfloor E,A|B\rceil$ where $E$ is an end vertex of  $\Phi, \,B$ is a 
 bottom vertex of $P_{E}$ and $A$ is either an end vertex  of  $\Phi$ or the center 
 of a bridge contained in  $\Phi$. They will be called \emph{e-connectives} and 
 \emph{b-connectives}, respectively. Denote by $\Psi_{ \Phi}$ the family composed 
 of all framings of $P_{E}$'s and all e-connectives. Also, recall that $n_e^+$ stands 
 for the number of 
 end vertices $E$ of $\Phi$ such that $p_E\neq 0$ and $\mathbf{p}^+$ for the 
 vector obtained from $\mathbf{p}$ by canceling its zero components. End vertices 
 $E$ of $\Phi$ for which $p_E=0$ will be reffered as \emph{free}. We shall 
 classify clusters of type IV according to the value of  $n_e^+$.  
 
 $n_e^+\geq 3$: Since $n_e^+\geq 3$, it follows from 
 lemma \,\ref{psi-clusters},\,(1), that $\Psi_{ \Phi}$ is equivalent to 
 $\bar{\Psi}_{k,l}(\mathbf{p}^+)$ with $k=n_e^+$ and $l=n_e-n_e^+$ 
 (see subsection\,\ref{t-generated}) and hence is a cluster. Since  
 $\Psi_{ \Phi}$ and of $\Phi$ have common end vertices, $\Phi$ is obtained
 from $\Psi_{ \Phi}$ by means of bridge and d-bridge join operations. Hence 
 $[\mathbf{B}, \mathbf{D}]$ may be arbitrary in this case.
  
 $n_e^+=2$: Let $E_1, E_2$ be non-free end vertices of $\Phi$, i.e., $p_1p_2>0$ 
 and $p_i=0$ if $i>2$. A cluster of the considered type is completely characterized 
 by the following two tautological properties:
 \begin{enumerate}
 \item all its free end vertices are stable;
 \item dees $\lfloor E_1 | B_2\rceil$ and $\lfloor E_2 | B_1\rceil$ with $B_i$ being a bottom
 vertex of $P_{E_i}, \,i=1,2,$ are blocked.
 \end{enumerate}
 Examine them separately.
 
(1) First, note that free end vertices of $\Phi$ are not stable as end vertices of 
 $\Psi_{ \Phi}$ (see lemma\,\ref{psi-clusters}).  Since such a free  end vertex is not
 a vertex of a b-connective, it is stable only if it is an end of a d-bridge belonging to $\Phi$.
 In other words,  in the considered context free end vertices of a cluster $\Phi$ are among 
 end vertices of the raft $\mathcal{R}_{\Phi}$ formed by all d-bridges contained in $\Phi$.
 This may by expressed algebraically in terms of the following inequalities:
\begin{equation}\label{br-IV}
\alpha_i(\mathbf{p}, \mathbf{D})\df p_i+\sum_{j=1}^{n_e}d_{ij}>0, \quad i=1,\dots,n_e.
\end{equation}

(2) The dees in question are blocked if any bottom vertex $B_i$ of $P_{E_i}$ is the center
of at least two pokers, which belong to $\Phi$ rooted at $E_i, \,i=1,2$. This manifestly 
takes place if one of the following conditions is fulfilled: (a) $P_{E_i}$ is not a single 
$\Leftrightarrow p_i>1$ for $i=1,2$; (b) there is at least one bridge in $\Phi$
$\Leftrightarrow\mathbf{B}\neq 0$; (c) $\Phi$ possesses at least one free end vertex
$\Leftrightarrow n_e>2$. On the other hand, one easily sees that $\Phi$ is not a cluster,
if none of this conditions is satisfied. 

Thus in the considered case an abstract card is the card of a cluster if inequalities
(\ref{br-IV}) and one of conditions (a) - (c) hold.

$n_e^+=1$: First, note that $\mathbf{B}=0$ in this case. Assume that $p_1>0$ and 
$p_i=0$ if $i>1$. The same arguments as previously show that free end vertices
of  $\Phi$ are end vertices of the raft $\mathcal{R}_{\Phi}$, i.e., inequalities (\ref{br-IV})
hold in the considered situation too. Moreover, it is easily verified that this is an unique
restriction on $\mathrm{Card}(\Phi)$, if $\Phi$ has at least two free end vertices, i.e., if
$n_e\geq 3$. If $n_e=2$, then, obviously, $\Phi$ is a cluster iff 
$d_{12}>0\Leftrightarrow \mathbf{D}\neq 0$ and $p_1\geq 2$. Finally, a framed 
pyramid is the only cluster if $n_e=1$, i.e., if $p_1\geq 2$.

 $n_e^+=0$:  In this case $\Phi=\mathcal{R}_{\Phi}$ is a raft cluster. Its card is 
 characterized by two obvious requirements: $n_e\geq 2$ and the matrix 
 $\mathbf{D}$ is ``connected".  The last means that the associated with $\mathbf{D}$ 
 graph $\Upsilon_{\mathbf{D}}$ whose $i$-th and $j$-the vertices are connected by 
 $d_{ij}$ edges, $1\leq i,j,\leq n_e,$ is \emph{connected}.  Indeed, in the considered
context connectedness of $\Upsilon_{\mathbf{D}}$ is equivalent to that of 
$\Upsilon_{\Phi}$. 
 
 $$\mathbf{\mbox{\bf{\underline{Type IV}\,:}}\quad n_t=n_d=n_{tr}=n_r=0}.$$
 \begin{equation}\label{list-type IV}
\begin{array}{l}
\mathrm{IV_+}: p_1p_2p_3>0.\\ 
\mathrm{IV_2}: p_1p_2>0, \,p_i=\alpha_i(\mathbf{p}, \mathbf{D})=0, 
\mathrm{if} \,i>2 \Rightarrow
\left [
\begin{array}{l}
p_1>1, \,p_2>1.\\
\mathbf{B}\neq 0.\\
n_e>2.
\end{array} 
\right.\\
\mathrm{IV_1}: \,p_1>0,  \,p_i=0, \mathrm{if} \,i>1 \Rightarrow
\left \{
\begin{array}{l}
n_e\geq 3.\\
n_e=2, \,p_1\geq 2, \,\mathbf{D}\neq 0.\\
n_e=1, \,p_1\geq 2.
\end{array} 
\right.\\
\mathrm{IV_1}: \,\mathbf{p}=0, \,\mathbf{D}  \;\mbox{is connected}.
\end{array}
\end{equation}

Thus we have proven
\begin{thm}\label{cluster classification}
Equivalence classes of clusters are in one-to-one correspondence with equivalence 
classes of cards listed in
 $\mathrm{(\ref{list-type I}),
(\ref{list-type II}), (\ref{list-type II2}), (\ref{list-type III}), (\ref{list-type IV})}$.
\end{thm}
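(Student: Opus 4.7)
The plan is to reduce the classification to cards via proposition \ref{CARD} and then, type by type, match the list of consistency conditions against the possible configurations of a cluster. Since proposition \ref{CARD} already shows that two clusters are equivalent if and only if their cards agree, it suffices to prove that an abstract card $\mathcal{J}=(n_t,n_e,n_d,n_{tr},n_r,[\mathbf{t},\mathbf{p},\mathbf{B},\mathbf{D}])$ is realized by some cluster $\Phi$ (with $\mathcal{J}=\mathrm{Card}(\Phi)$) precisely when $\mathcal{J}$ appears in one of the lists (\ref{list-type I}), (\ref{list-type II}), (\ref{list-type II2}), (\ref{list-type III}), (\ref{list-type IV}).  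The argument splits into the four types I--IV defined by the vanishing pattern of $n_d$, $n_{tr}+n_r$, and $n_t$, mirroring the subdivision already performed in subsection \ref{Construction clusters}.

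For sufficiency in each type I would exhibit, for every card $\mathcal{J}$ in the corresponding list, a ``seed'' cluster with the required framework (a hedgehog, rotator, multiped, spider, raft, or one of the clusters $\bar\Psi_{k,l}(\mathbf{p}^+)$ of lemma \ref{psi-clusters}) whose stable end and center vertices are automatic from lemma \ref{stable} and the basic properties collected in subsection \ref{TeeClusters}.  Proposition \ref{join-oper}, together with its obvious extension to rotator, d-bridge and spider join operations (verified by the basic properties of doubles, d-bridges and spiders), then guarantees that iterating the join operations on the seed to reach exactly $\mathcal{J}$ produces a cluster.  This produces, for each entry of the lists, a realization with the prescribed card.

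For necessity the crucial step is the \emph{blocking test}: given a candidate realization $\Phi_{\mathcal{J}}$ of an abstract card not satisfying any of the listed conditions, I would exhibit a base structure $\theta\notin\Phi_{\mathcal{J}}$ whose vertices lie in $S(\Phi_{\mathcal{J}})$ and which is not blocked by any element of $\Phi_{\mathcal{J}}$; by the blocking rule this forces $\theta\in\Phi_{\mathcal{J}}$, contradicting either maximality or the declared value of some parameter.  Here lemma \ref{diode-tee}, lemma \ref{cluster-vertex}, lemma \ref{cluster-d-center}, corollary \ref{spider in}, lemma \ref{immersed}, lemma \ref{poker-in} and its corollary \ref{status-poker} furnish all the local rigidity one needs: each mixing vertex, each immersed dee and each poker is pinned down to a specific role (bottom of a pyramid, thorn of a rotator, casing element or connective).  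The exceptional small-$n_e$ subcases ($n_e\le 3$) are exactly those in which these local constraints admit several non-equivalent global completions, and that is why the lists (\ref{list-type II2}) and (\ref{list-type III}) contain disjunctive alternatives.

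The hard part will be the bookkeeping in type II with $n_e\le 3$ and in type IV with $n_e^+\le 2$: in both regimes the stability of end vertices of $\Phi_{\mathcal{J}}$ fails generically and must be restored by forcing the presence of a p-twain, a bridge, a d-bridge, or a second pyramid in precisely the combinations recorded in (\ref{list-type II2}), (\ref{list-type III}) and (\ref{list-type IV}).  I expect these to be handled by a mechanical but somewhat lengthy case analysis, running through the values of $(n_e,n_e^+)$ and the zero pattern of $\mathbf{t},\mathbf{p},\mathbf{B},\mathbf{D}$, in each case either producing a non-blocked tee or dee (and hence ruling the card out) or pointing to a seed cluster plus a sequence of join operations (and hence ruling the card in). Once the four lists have been verified exhaustive and non-redundant, the one-to-one correspondence with equivalence classes follows from proposition \ref{CARD}.
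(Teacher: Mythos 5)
Your proposal follows essentially the same route as the paper: reduction to cards via proposition \ref{CARD}, the split into types I--IV by the vanishing pattern of $n_d$, $n_{tr}+n_r$, $n_t$, sufficiency by exhibiting a seed sub-cluster with stable vertices and applying the join operations of proposition \ref{join-oper}, and necessity via the blocking rule in the small-$n_e$ (and small-$n_e^+$) regimes. The only caveat is that the paper's proof of the theorem \emph{is} the ``mechanical but somewhat lengthy case analysis'' you defer (carried out in subsection \ref{Construction clusters}), so your text is a correct outline of that argument rather than an independent completion of it.
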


\subsection{Low dimensional clusters}
The use of join operations simplifies much classification of clusters. This, however,
leads to a certain loss of control of dimensions of the so-constructed clusters. In 
particular, the only way to describe $n$-dimensional clusters for a given $n$ is 
to extract a list of them from lists 
$\mathrm{(\ref{list-type I}), (\ref{list-type II}),  (\ref{list-type II2}),  (\ref{list-type III})}$
and $\mathrm{(\ref{list-type IV})}$. This, however, would be  hardly instructive. Moreover, 
the expected result would be too cumbersome to report it here. Below we shall list 
clusters of dimensions $\leq 5$ to illustrate the situation.  To this end we have to
introduce three special clusters before.
\begin{itemize}
\item A \emph{single-center-single cluster}  has one center and two end vertices 
and is composed of two singles and the corresponding connectives and casings.\\
\item \emph{single-twain cluster} has $2$ end vertices in which are rooted a framed
twain and a single together with necessary external connectives.\\
\item  \emph{bridge-d-bridge cluster} consists of one bridge and one d-bridge with
common end vertices, which are suspended from one t-center vertex.
\end{itemize}
A \emph{d-multiplex} is a multiplex to which is added the dee $\lfloor O|C\rceil$ where
$O$ (resp., $C$ is the origin (resp., center) of the multiplex. 

Below we use the notation $\Psi=\Psi'\Join J$, which tells that the family $\Psi$ 
is obtained from the family $\Psi'$ by means of the join operation $J$. 
\newline

\noindent{\bf n=2:} double. \\
{\bf n=3:} triangle, (1,1)-rotator, framed twain, d-bridge.\\
{\bf n=4:} (3,1)-spider, (1,1)-hedgehog, (1,2)-rotator, (2,2)-raft, 
framed 3-pyramid. \\
{\bf n=5:}  (4,1)- and (3,2)-spiders, (3,1)-spider$\Join$d-bridge, 
tripod\,$\Join$(3,1)-spider, (3,1)-spider$\Join$single, (1,2)-hedgehog,  
(1,3)- and (2,1)-rotators,   (4,1)-multiped, 2-bridge cluster, 
single-center-single cluster, bridge-d-bridge cluster,
suspended (2,2)-raft,  trey$\Join$single, 
d-2-plex$\Join$bridge, (2,3)-raft, single-twain cluster, 
(d-bridge$\Join$single)$\Join$single, d-bridge$\Join$(framed twain), 
(3,2)-raft, framed 4-pyramid.\\
\newline

One can see from these lists that their irregularity is due to a small number of end
and center vertices. In that case their stability is not guaranteed on a common basis.
By this reason, it is natural to call a cluster $\Phi$  \emph{stable}, if $n_t\geq 1$ and 
$n_\geq 4$. Indeed, in this case $\Phi_{mp}$ is a cluster with stable end and center 
vertices, so that  the numbers of structural groups (triangles, doubles, etc) composing 
$\Phi$ are no longer constrained by some conditions. \newline

\subsection{On the structure of coaxial Lie algebras}

Basic structure elements of a coaxial algebra $\gG$ can be directly 
read from $\Phi_{\gG}$. The key is the following simple lemma, an analogue of 
lemma\,\ref{coax-sub-id}.
\begin{lem}\label{coax-s-i}
Let  $\gG$ be a Lie algebra associated with a compatible family $\Phi$,
$S_i\subset S(\Phi)$, \,i=1,2, and $V_i$ the subset of $|\gG|$
spanned by $S_i$.  Then 
\begin{enumerate} 
\item the subspace $[V_1,V_2]=\mathrm{Span}\{[v_1,v_2]\,|\,v_i\in V_i, \,i=1,2\}$ of 
$|\gG|$ is spanned by center vertices of all tees $\lfloor e_i,e_j|e_k\rceil\in\Phi$  and 
end vertices of all dees $\lfloor e_p|e_q\rceil\in\Phi$ such that 
$e_i, e_j$ and $e_p, e_q$ belong to different subsets $S_i$, respectively.
\item the subspace of $|\gG|$ spanned by a subset $S\subset S(\Phi)$
is a subalgebra of $\gG$, if center vertices of tees $\theta\in\Phi$ with ends in $S$ 
also belong to $S$.
\item the subspace of $|\gG|$ spanned by a subset $S\subset S(\Phi)$ is an ideal 
of $\gG$, if (1) the center vertex of any tee in $\Phi$  with one end in $S$, and (2) 
the end vertex of any dee in $\Phi$ with the origin in $S$ also belong to $S$.
\item $|[\gG,\gG]|$ belongs to the subspace spanned by center and mixing
vertices of $\Phi_0$ and end vertices of dees  $\varrho\in\Phi_1$.
\item The modular vector of $\gG$ is a linear combination of end vertices of dees 
belonging to $\Phi$.
\end{enumerate}                    
\end{lem}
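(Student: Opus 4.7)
The plan is to derive all five assertions from a single bookkeeping formula for the Lie bracket of basis vectors in a coaxial algebra. By bilinearity and the expansion $\gG=\sum\alpha_{(i,j|k)}\lfloor i,j|k\rceil+\sum\beta_{(p|q)}\lfloor p|q\rceil$, the bracket of two basis vectors in $S(\Phi)$ is
\begin{equation*}
[e_a,e_b] \;=\; \sum_{\lfloor a,b|k\rceil\in\Phi}\alpha_{(a,b|k)}\,e_k \;+\; \sum_{\lfloor a|b\rceil\in\Phi}\beta_{(a|b)}\,e_b \;-\; \sum_{\lfloor b|a\rceil\in\Phi}\beta_{(b|a)}\,e_a,
\end{equation*}
while $[e_a,e_b]=0$ if $e_a$ or $e_b$ lies outside $S(\Phi)$. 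Every claim of the lemma is read off this formula against the right choice of subset.

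Assertion (1) is immediate from bilinearity: $[V_1,V_2]$ is spanned by $[e_a,e_b]$ with $e_a\in S_1,\,e_b\in S_2$, and the right-hand side above is exactly the combination of center vertices of tees whose ends split between $S_1$ and $S_2$, plus end vertices of dees joining $S_1$ to $S_2$. Assertions (2) and (3) are then specializations. For (2), take $S_1=S_2=S$; dees with both endpoints in $S$ contribute their end, which is already in $S$, so the only extra requirement for $[V,V]\subset V$ is that each tee with both ends in $S$ have its center in $S$. For (3), take $S_1=S,\,S_2=S(\Phi)$; a dee $\lfloor p|q\rceil$ with origin $e_p\in S$ forces $e_q\in S$ (condition (2) of the statement), while a dee with only the end $e_q\in S$ contributes $\pm e_q$ which is already in $S$, so the two listed hypotheses are exactly $[V,|\gG|]\subset V$. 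Assertion (4) is (1) applied with $S_1=S_2=S(\Phi)$: basis vectors outside $S(\Phi)$ are central and do not contribute to $[\gG,\gG]$, while every center vertex of a tee in $\Phi_0$ is by the definition of \emph{mixing vertex} either a pure center or a mixing vertex of $\Phi_0$.

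The one step requiring a separate computation is (5). Using (\ref{tra}), $\theta(e_i)=-\mathrm{tr}(\ad_\gG\,e_i)$, and the bracket formula above shows that the matrix of $\ad_\gG\,e_i$ has only off-diagonal entries coming from tees through $e_i$ (the three indices in $\lfloor i,j|k\rceil$ being distinct, no diagonal contribution arises), whereas each dee $\lfloor i|q\rceil\in\Phi$ contributes the diagonal entry $\beta_{(i|q)}$ in position $(q,q)$. Hence $\mathrm{tr}(\ad_\gG\,e_i)=\sum_{\lfloor i|q\rceil\in\Phi}\beta_{(i|q)}$, which vanishes unless $e_i$ is an endpoint of some dee in $\Phi_1$, so $\theta$ is a linear combination of basis covectors dual to vertices of dees in $\Phi$, as asserted. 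No step in the argument presents a genuine obstacle: the lemma is essentially a catalogue of consequences of the base-lieon bracket formula, and the only care needed is the sign bookkeeping between opposite dees $\lfloor p|q\rceil$ and $\lfloor q|p\rceil$ when both belong to $\Phi$.
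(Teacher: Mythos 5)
Your proof is correct and, for assertions (1)--(4), takes essentially the paper's route: the paper simply declares (1) obvious and (2)--(4) its immediate consequences, and writing out the bracket of two basis vectors as you do is exactly the intended justification. For (5) you diverge mildly: the paper invokes additivity of modular vector fields (proposition~\ref{PFA}) together with the modular vector of a single $\between$-lieon, whereas you compute $\tr(\ad_{\gG}e_i)$ directly from the bracket formula; the two arguments are equivalent, since additivity of the modular vector is just additivity of the trace. One point should be made precise rather than blurred. Your computation gives $\tr(\ad_{\gG}e_i)=\sum_{q:\,\lfloor i|q\rceil\in\Phi}\beta_{(i|q)}$, so by (\ref{tra}) the modular covector $\theta=-\sum_i\tr(\ad_{\gG}e_i)\,e_i^*$ is supported on the \emph{origins} of the dees of $\Phi$: for a single $\lfloor e_p|e_q\rceil$ one has $\tr\,\ad\,e_p=1$ and $\tr\,\ad\,e_q=0$, so $\theta$ is proportional to the coordinate $x_p$ dual to the origin $e_p$, not to $e_q$. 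Your closing sentence retreats to ``vertices of dees,'' which is true but strictly weaker than what you computed, and it does not literally match the assertion's ``end vertices.'' In fact your calculation shows that the statement (and the paper's own proof, which claims proportionality to $e_q$) has the origin and the end interchanged unless one reads ``end vertices'' in the cluster terminology of section~\ref{clusters}, where origins of dees are by definition end vertices of $\Phi$. Keep your trace computation, but state its conclusion sharply: the modular covector pairs nontrivially only with origins of dees.
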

\begin{proof}
The first assertion is obvious, while 2) - 4) are its immediate consequences. The last 
assertion follows from proposition\,\ref{PFA} and the fact that the modular vector of a 
$\between$-lieon which is proportional to $\lfloor e_p|e_q\rceil$ is proportional to 
$e_q$.
\end{proof}
Now, let $\gG$ be a coaxial Lie algebra such that $\Phi=\Phi_{\gG}$ is a cluster. 
Lemma\,\ref{coax-s-i} allows us to single out  a some basic ideals in $\gG$. For
a subset $S\subset S(\Phi)t$ we shall 
use the notation $S\Rightarrow \gG_{(an \;index)}\subset\gG$ to express
the fact that the subspace $\mathrm{Span}(S)\subset |\gG|$ supports the ideal
$\gG_{(an \;index)}$ of $\gG$.
\begin{equation}\label{cluster id}
\begin{array}{cc}
\mathrm{Span}\{\mbox{t-center vertices}\}\Rightarrow \gG_{c}, & 
 \mathrm{Span}\{\mbox{d-center vertices}\}\Rightarrow \gG_{cd}, \\
S(\Phi_h)\Rightarrow \gG_h,  \quad S(\Phi_{rt}) \Rightarrow \gG_{rt}, &
 S(\Phi\setminus \Phi_h) \Rightarrow \gG_{rad},  \\
S(\Phi\setminus (\Phi_h\cup\Phi_{rt})) 
 \Rightarrow \gG_{sr}, & S(\Phi\setminus \Phi_{rt} )\Rightarrow \gG_{0} .
\end{array}
\end{equation}
The ideal $\gG_c$ belongs to the center of $\gG$. Even more, it coincides with it if the 
linear combination of base lieons of $\Phi$ that defines $\gG$ is generic.  In particular,
$\gG_c$ commute with all ideals (\ref{cluster id}). Nontrivial commutation relations
involving these ideals are
\begin{equation}\label{com-rel-id}
\begin{array}{ccc}
[\gG_{cd}, \gG_{rad}]\subset\gG_{cd},  & [\gG_{cd}, \gG_{sr}]\subset\gG_{cd}, &
[\gG_{cd}, \gG_{0}]\subset\gG_{cd}, \\ 
{[ \gG_{rad}, \gG_{sr}}]\subset \gG_{sr}, &
[\gG_{rad}, \gG_{0}]\subset\gG_{sr},  & [\gG_{sr}, \gG_{0}]\subset\gG_{sr}. 
\end{array}
\end{equation}

Obviously, the quotient algebra $\gG_{smpl}=\gG_h/\gG_c=\gG/\gG_{rad}$ is associated 
with the family $\Phi_0^{tr}$ of al triangles contained in $\Phi$. Hence it is isomorphic to 
the  direct sum of 3-dimensional simple Lie algebras associated with these triangles. In its 
turn this shows that the ideal $\gG_h$ is an abelian extension of $\gG_{smpl}$, i.e., that
$\gG_h=\gG_{smpl}\oplus_{\rho}|\gG_c|$ for a suitable representation $\rho$. 

On the other hand, $\gG_{rad}$ is the radical of $\gG$. To prove this it is sufficient to 
show that $\gG_{rad}$ is solvable, since $\gG_{smpl}=\gG/\gG_{rad}$is semisimple. 
Obviously,  solvability of  $\gG_{rad}$ is equivalent to solvability of 
$\gG_{rad}^{\prime}=\gG_{rad}/\gG_c$. To this end note that the algebra 
$\gG_{rad}^{\prime}$ is associated with the family composed of  $\Phi_1$ and the 
family $\Phi_0^{mix}$ of all tees with mixing (in $\Phi$) center vertices, which are 
rooted at end vertices of $\Phi$. It is easy to see that  $\gG_{rad}^{\prime}$ splits into 
a direct sum of an algebra associated with $\Phi_1^{db}$ and an algebra $\gG''_{rad}$ 
associated with $(\Phi_1\setminus\Phi_1^{db})\cup\Phi_0^{mix}$. The first of them is 
isomorphic to the direct sum of $n_{rt}$ copies of $\between$ and an abelian algebra 
and, therefore, is solvable. On the other hand, $[\gG''_{rad}, \gG''_{rad}]\subset\gG'''_{rad}$ 
where $\gG'''_{rad}\subset\gG''_{rad}$ is an abelian subalgebra generated by mixing 
vertices of $\Phi_0$, center vertices of d-bridges and d-center vertices of $\Phi$. This 
shows that the derived series of $\gG_{rad}^{\prime}$ is of length 2. Consequently,  the 
derived series of $\gG_{rad}$ is of length 3.  Finally, we see that the image of a natural 
imbedding $\gG_{smpl}\rightarrow\gG_{smpl}\oplus_{\rho}|\gG_c|=\gG_h\subset\gG$ is 
the Levi subalgebra of $\gG$.

The algebra $\gG_{\between}=\gG_{rt}/\gG_c$ is associated with $\Phi_1^{db}$
and $\gG_{rt}$ is isomorphic to $\gG_{\between}\oplus_{\varrho}|\gG_c|$ for a suitable
representation $\varrho$. There is a certain similitude between ideals $\gG_h$ and 
$\gG_{rt}$. Namely, $\gG_h$ is an abelian extension of a direct sum of simple 
3-dimensional Lie algebras. These are associated with triangles, which are simplest 
tee-clusters. Similarly, the ideal $\gG_{rt} $ is an abelian extension of a direct sum of 
nonabelian 2-dimensional Lie algebras, which are associated with doubles, i.e., with 
simplest dee-clusters. Since the semisimple part of $\gG$ intrinsically defined, the number
$n_{tr}$ does not depend on the way $\gG$ is assembled from basic lieons. Similarly,
it can be shown that the number $n_{rt}$ is intrinsically defined for ``generic" in a sense
coaxial algebras. This simple example illustrates the fact that $\Phi_{\gG}$ reveal some 
finer details of the structure of $\gG$, which go unnoticed in the standard approach.

\subsection{Infinite-dimensional ``disassemblable" Lie algebras}

There are natural infinite-dimensional analogues of coaxial Lie algebras. Namely,
let $\{e_i\}_{1\leq i<\infty}$ be a base of a numerable vector space $V$ over a field 
$\gk$. Recall that elements of $V$ are,  by definition, linear combinations 
$\sum_{i<\infty}\lambda_ie_i, \,\lambda_i\in\gk,$ with finitely many nonzero 
coefficients. Then a formal combination 
\begin{equation}\label{infinite coaxial}
\sum_{1\leq i,j,k<\infty}\alpha_{ijk}\lfloor e_i, e_j|e_k\rceil +
\sum_{1\leq p,q<\infty}\beta_{pq}\lfloor e_p|e_q\rceil, \;\alpha_{ijk}, \,\beta_{pq}\in\gk,
\end{equation}
defines a Lie algebra structure in $V$, if the figuring in (\ref{infinite coaxial}) base lieons
with nonzero coefficients are mutually compatible. We shall call \emph{countable}
this kind of coaxial algebras.

The pro-finite version of this construction is as follows. Let $W$ be a pro-finite vector space 
and $\{e_i\}_{1\leq i<\infty}$ a (pro-finite)
base of it. Elements of $W$ are formal linear combinations 
$\sum_{i<\infty}\lambda_ie_i, \,\lambda_i\in\gk$. Assume that formal combination 
(\ref{infinite coaxial}) is \emph{locally finite}. This means that for a given $k$ (resp., $q$)
only a finite number of coefficients $\alpha_{ijk}$ (resp., $\beta_{pq}$) is different from 
zero. Then a locally finite combination correctly defines a Lie algebra structure in $W$,
if the occurring base structures with nonzero coefficients are mutually compatible.
\begin{ex}
Let $\{e_{ij}\}_{-\infty< i,j<\infty}$ be a base in a pro-finite vector space $W$. The tee-family 
$\Theta_{ij}, \,i,j\in \Z,$ composed of four tee-structures 
$\lfloor e_{i\pm 1,j}, e_{i,j\pm 1}|e_{ij}\rceil$, is compatible. It is easy to see that $\Theta_{ij}$ 
and $\Theta_{kl}$ are compatible iff $i+j\equiv k+l  \;\mathrm{mod}\,2$. So, each of families
$$
\Theta_{even}=\bigcup_{i+j \,\mathrm{is \;even}}\Theta_{ij}, \qquad 
\Theta_{odd}=\bigcup_{i+j \,\mathrm{is \;odd}}\Theta_{ij}
$$
is compatible but they are not compatible one another. Similarly, the dee-family 
$\mathrm{D}_{ij}$ composed of four dee-structures $\lfloor e_{ij}|e_{i\pm 1,j}\rceil$ and 
$\lfloor e_{ij}|e_{i,j\pm 1}\rceil$ is compatible as well as families 
$$
\mathrm{D}_{even}=\bigcup_{i+j \,\mathrm{is \;even}}\mathrm{D}_{ij}, \qquad 
\mathrm{D}_{odd}=\bigcup_{i+j \,\mathrm{is \;odd}}\mathrm{D}_{ij},
$$
which are not compatible one another. Also, a family $\Theta$ and  a family $\mathrm{D}$
are compatible iff they are of different ``parities". In particular, any formal linear combination
of base structures belonging to $\Theta_{even}\cup\mathrm{D}_{odd}$ defines a Lie algebra
structure in $W$. 

An interesting property of families $\Theta_{even}$ and $\Theta_{odd}$ is that they are
\emph{absolutely incompatible}. This means that any Lie algebras associated
with $\Theta_{even}$ is incompatible with any algebra associated with 
$\Theta_{odd}$. 
\end{ex}
\noindent The above example is naturally related with a 2-dimensional lattice and is easily 
generalized to lattices of higher dimensions.

By combining compatible countable/pro-finite coaxial Lie algebras we can construct infinite-
dimensional  ``disassemblable" Lie algebras of second level, etc. Infinite  analogues of
classical Lie algebras are examples of this kind algebras. For instance, the countable special
orthogonal algebra $\gs\go_{\infty}$ is given by the formal combination
$$
\gs\go_{\infty}(\lambda_1, \lambda_2,\dots)=\sum_{k=1}^{\infty}\lambda_k\mathbf{P}_k \quad\mathrm{with}\quad
\mathbf{P}_k=\sum_{1\leq i,j<\infty}\lfloor e_{ik}, e_{kl}|e_{ij}\rceil, \quad\lambda_k\in\gk,
$$
assuming that $e_{ij}=-e_{ji}$. Countable coaxial algebras corresponding to $\mathbf{P}_k$'s
are compatible each other  and hence disassemble algebra $\gs\go_{\infty}(\lambda_1, \lambda_2,\dots)$.

This way we get a new class of infinite-dimensional Lie algebras, which merits to be studied 
in its own right (see \cite{K} in this connection). 

\section{Some problems and perspectives}\label{problems}
In connection with topics discussed in this paper a series of natural questions arise 
togetherwith various perspective applications. Below we shall mention some of them, 
which, at present, look most interesting. 

{\bf Complete disassembling for arbitrary ground fields}. It is rather plausible that the 
complete disassembling theorem takes place for arbitrary ground fields of characteristic 
zero(see a more detailed discussion at the end of section\,\ref{dis-probl}). But what
about nonzero characteristic?

{\bf Algebraic variety of Lie algebra structures.} The algebraic variety $\mathcal{L}ie(V)$ 
of all Lie algebra structures is an intersection 
 of quadrics in $\mathcal{A}(V)=\mathrm{Hom}_{\gk}(V\otimes V,V)$. The 
subspace of $\mathcal{A}(V)$ spanned by a family of mutually compatible structures 
belongs to $\mathcal{L}ie(V)$. In this sense $\mathcal{L}ie(V)$ is ``woven" of such 
subspaces. This suggests to use this ``web structures" in describing $\mathcal{L}ie(V)$.
An instructive example of this kind is given in \cite{Mor}. Also, a more deep understanding
of the structure of $\mathcal{L}ie(V)$ for algebraically closed ground field $\gk$ could
shed some light on the general disassembling problem.

{\bf Deformations.} On the basis of a disassembling of a Lie algebra $\gG$ one can 
construct some deformations of it by substituting  $\lambda_v\gG_{v}, \,\lambda_n\in\gk,$ 
for $\gG_{v}$ in the corresponding a-scheme (see subsection\,\ref{statemant}). Here 
factors $\lambda_n$ are constrained by some relations, which are absent in the case
of first level algebras. The conjecture that all essential deformations of a given Lie 
algebra are of this kind is, at present, rather plausible.

{\bf Length of disassemblings.} The procedure we have used in the proof of the complete 
disassembling theorem allows to estimate the minimal number of necessary 
for this steps as $n+\mathrm{const}$. On the other hand, classical Lie algebras can be completely 
disassembled in $\leq 4$ steps independently of their dimensions (see section\,\ref{classical}). 
So, a natural question is: whether there is an universal constant $N$ such that any Lie algebra 
can be completely disassembled in no more than $N$ steps. In the case of positive answer 
the problem of describing the variety $\mathcal{L}ie(V)$ passes to a more constructive basis.

{\bf Lie algebras of second level.} It seems to be still possible to  explicitly describe second 
level Lie algebras, i.e., Lie algebras, which can be completely disassembled in two 
steps, as it has been done for coaxial algebras in section\,\ref{coaxial}.  
This could shed, among other things, a new light  on problems mentioned before. 

{\bf Invariants of Lie algebras.} A natural question is : how to construct invariants of a Lie 
algebra if a disassembling of it is known. This question has an evident homological flavour 
and, probably, leads to \emph{constructive} theory of Lie algebras, i.e., a theory, which from
the very beginning considers Lie algebras as compound structures.

{\bf Cohomological aspects.} Poisson (resp., Lie algebra) structures, which are compatible 
with a given one, are closed 2-forms in the associated Lichnerowicz-Poisson 
(resp.,  Chevalley-Eilenberg) complex. This fact was not explicitly exploited in this paper. 
Nevertheless, a more deep
understanding of topics we have discussed here is related with this cohomological aspect.
In this connection we mention that one of methods to completely disassemble a classical 
Lie algebra $\gG$ is to represent the corresponding Poisson bivector $P_{\gG}$ in the 
form $P_{\gG}=\sum_i\ls X_i, P_{\gG}\rs$ for suitable vector fields $X_i$ on $|\gG|$. Here
the terms $\ls X_i, P_{\gG}\rs$ are exact 2-cohains in the associated with $P_{\gG}$ 
Lichnerowicz-Poisson complex.

It is naturally to think that the cohomology a Lie algebra assembled from some other ones 
is, in a sense, ``assembled" from their cohomology. An exact 
formalization of this idea requires some special techniques of homological algebra and
will be discussed in our subsequent paper. 

{\bf Generalizations.} Assemblage techniques and some results of this paper can be directly 
extended in many directions. First of all, there should be mentioned graded and multiple 
Lie algebras (see \cite{Fil, HW, MVV, VV1, VV2}). An interesting point here is that natural 
compatibility of hereditary structures associated with an n-ary Lie algebra links the 
disassembling problems for Lie algebra of different multiplicities together. More generally,
compatibility problems for any kind of Poisson structures, say, algebroids and their n-ary
analogues (see \cite{Mac}), are tightly intertwined.

{\bf Physical applications.} The concept of two compatible Poisson structures was
introduced by F.Magri at 1977 (see \cite{Mag}) and since that was studied and widely 
exploited by numerous authors in the context integrable systems. We do not discuss 
here these well-known aspects. The fundamental question arising in connection with  
\emph{compound nature} of Lie algebras is :
\begin{quote}
{\it Let $S$ be a physical system and $\gG$ a Lie algebra of its infinitesimal symmetries. 
What one can say about intrinsic structure of $S$,  if a disassembling $\gG$  is known?}
\end{quote}
This question is, of course, too general to allow an universal answer. It should be duly 
specified  each time according to the nature of the system in question. 

Interpretation of compounds of  the symmetry algebra as \emph{symmetry waves} is 
suggested itself. Since these waves are nonlinear, it leads to the conclusion that 
characterizing them quantities need not be additive. Probably, this kind
of considerations could be useful in the theory of quarks.

 \smallskip
{\bf Acknowledgements.} The author is very grateful to M.M. and D.A. Vinogradov for
their help in preparing figures for this paper.

\end{document}